%

\documentclass[dsc,wide,cover,mac]{puc-rio_thesis}
\usepackage{float}
\usepackage[brazil]{babel}   


\author{Emília Carolina Santana Teixeira Alves}

\advisor{Nicolau Cor\c c\~ao Saldanha}

\advisorInst{Pontif\'{\i}cia Universidade Cat\'{o}lica do Rio de Janeiro}

\title{Topology of the space of locally convex curves on the $3$-sphere}
\titleBR{Topologia do espa\c co das curvas localmente convexas na $3$-esfera}
\CDD{00xx}

\department{Matem\'{a}tica}
\program{Matem\'{a}tica}
\programBR{Matem\'{a}tica}
\school{Centro T\'{e}cnico Cient\'{\i}fico}
\university{Pontif\'{\i}cia Universidade Cat\'{o}lica do Rio de Janeiro}
\uni{PUC--Rio}

\city{Rio de Janeiro}
\day{5} \month{April} \year{2016}

\jury
{
   \jurymember{Umberto Leone Hryniewicz}{Universidade Federal do Rio de Janeiro}
   
   \jurymember{Leonardo Navarro}{Universidade Federal Fluminense}
   
   \jurymember{Ricardo S\'{a} Earp}{Pontif\'{\i}cia Universidade Cat\'{o}lica do Rio de Janeiro}

   \jurymember{Paul Schweitzer}{Pontif\'{\i}cia Universidade Cat\'{o}lica do Rio de Janeiro}
   
   \jurymember{Carlos Tomei}{Pontif\'{\i}cia Universidade Cat\'{o}lica do Rio de Janeiro}
   
   \jurymemberDr{Pedro  Z\"{u}hlke}{Universidade de S\~ao Paulo}
   
}

\resume{

Gradute at Mathematics from Universidade Federal Fluminense (2004 - 2009). Master from Pontif\'{\i}cia Universidade Cat\'{o}lica do Rio de Janeiro (2010 - 2012) under the supervision of Nicolau Cor\c c\~ao Saldanha, at the time we worked on Theory Groups. Phd (2012 - 2016) at the same institution and with the same advisor, studying the homotopy type of locally convex curves on the $3$-sphere.

}



\acknowledgment
\dedication
{
	
}

\keywords
{
   \key{Geometry}
   \key{Topology}
   \key{Locally convex curves}
   \key{Bruhat cells}
}

\abstract
{

Given an integer $n \geq 2$ and $z \in \mathrm{Spin}_{n+1}=\widetilde{SO}_{n+1}$, let $\mathcal{L}\mathbb{S}^n(z)$ be the set of all locally convex curves $\gamma: [0,1] \rightarrow \mathbb{S}^n$ with fixed initial and final lifted Frenet frame $\tilde{\mathcal{F}}_\gamma(0)=\mathbf{1}$ and $\tilde{\mathcal{F}}_\gamma(1)=z$. Saldanha and Shapiro proved that there are just finitely many non-homeomorphic spaces among $\mathcal{L}\mathbb{S}^n(z)$ when $z$ varies in $\mathrm{Spin}_{n+1}$ (in particular, at most $3$ for $n=2$ and at most $5$ for $n=3$). For any $n \geq 2$, the topology of one of these spaces is well-known and coincides with the topology of a space of generic curves. But the topology of the other spaces is unknown in general. For $n=2$, Saldanha completely determined the topology of the other $2$ spaces, proving in particular that they are different from the space of generic curves.

The purpose of this thesis is to study the case $n=3$. We will obtain information on the topology of $2$ of these $4$ other spaces, allowing us to conclude that they are different from the space of generic curves. To do this, we will prove that any generic curve in $\mathbb{S}^3$ can be decomposed as a pair of generic curves in $\mathbb{S}^2$ (a generic curve in $\mathbb{S}^2$ is just an immersion); moreover, if the curve in $\mathbb{S}^3$ is locally convex, then one of the associated curve in $\mathbb{S}^2$ is also locally convex. Using this latter result, which is of independent interest, we will also be able to determine, in some cases, when a locally convex curve in $\mathbb{S}^3$ is globally convex by just looking at the corresponding locally convex curve in $\mathbb{S}^2$.

}

\keywordsBR
{
   \key{Geometria}
   \key{Topologia}
   \key{Curvas localmente convexas}
   \key{C\'{e}lulas de Bruhat}
}

\abstractBR
{

Dado um inteiro $n \geq 2$ e $z \in \mathrm{Spin}_{n+1}=\widetilde{SO}_{n+1}$, seja $\mathcal{L}\mathbb{S}^n(z)$ o conjunto de todas as curvas localmente convexas $\gamma: [0,1] \rightarrow \mathbb{S}^n$ com levantamento do Frenet frame inicial e final $\tilde{\mathcal{F}}_\gamma(0)=\mathbf{1}$ and $\tilde{\mathcal{F}}_\gamma(1)=z$. Saldanha and Shapiro provaram que s\'{o} tem um n\'{u}mero finito de espa\c cos $\mathcal{L}\mathbb{S}^n(z)$ quando $z$ varia em $\mathrm{Spin}_{n+1}$ (em particular, s\~{a}o no m\'{a}ximo $3$ quando $n=2$ e no m\'{a}ximo $5$ quando $n=3$). Para qualquer $n \geq 2$, a topologia de um desses espa\c cos \'{e} bem conhecida e coincide com a topologia do espa\c co das curvas gen\'{e}ricas. Mas a topologia dos outros espa\c cos \'{e} em geral desconhecida. Para $n=2$ , Saldanha determinou completamente a topologia dos outros 2 espa\c cos, provando em particular que estes s\~{a}o diferentes do espa\c co das curvas gen\'{e}ricas.

O objetivo desta tese \'{e} estudar o caso $n=3$. N\'{o}s iremos obter informa\c c\~{o}es sobre a topologia de 2 destes 4 espa\c cos, permitindo-nos concluir que estes s\~{a}o diferentes do espa\c co das curvas gen\'{e}ricas. Para isso, n\'{o}s iremos provar  que qualquer curva gen\'{e}rica em $\mathbb{S}^3$ pode ser decomposta como um par de curvas gen\'{e}ricas em $\mathbb{S}^2$ (uma curva gen\'{e}rica em $\mathbb{S}^2$ \'{e} apenas uma imers\~{a}o); mais ainda, se a curva em $\mathbb{S}^3$ \'{e} localmente convexa, ent\~{a}o uma das curvas associadas em $\mathbb{S}^2$ \'{e} tamb\'{e}m localmente convexa. Usando este \'{u}ltimo resultado, que \'{e} de interesse independente, n\'{o}s iremos tamb\'{e}m ser capazes de determinar, em alguns casos, quando uma curva localmente convexa em $\mathbb{S}^3$ \'{e} globalmente convexa apenas olhando a curva localmente convexa correspondente em $\mathbb{S}^2$.       

}

\tablesmode{fig}
%

%



\begin{document}
\def\N{\mathbb N}
\def\C{\mathbb C}
\def\Q{\mathbb Q}
\def\R{\mathbb R}
\def\T{\mathbb T}
\def\SS{\mathbb S}
\def\H{\mathbb H}
\def\A{\mathbb A}
\def\Z{\mathbb Z}
\def\1{\mathbf 1}
\def\k{\mathbf k}
\def\j{\mathbf j}
\def\i{\mathbf i}
\def\a{\mathbf a}

\newtheorem{Main}{Theorem}
\newtheorem{Coro}{Corollary}
\newtheorem{Conj}{Conjecture}
\newtheorem{Prop}{Proposition}

\renewcommand{\theMain}{\Alph{Main}}
\renewcommand{\theCoro}{\Alph{Coro}}
\renewcommand{\theProp}{\Alph{Prop}}


\chapter{Introduction}

\label{chapter1}

The purpose of this thesis is to study the topology of the space of locally convex (or non-degenerate) curves on the $n$-dimensional sphere, particularly for $n=3$. These are the curves $\gamma$ for which at all time, the derivatives $\gamma(t), \gamma'(t), \dots, \gamma^{(n)}(t)$ are linearly independent. To motivate the problem, let us start by recalling previous works on this question and related problems.

\section{History of the problem}

Locally convex curves are a particular class of immersed (or regular) curves, which are curves for which $\gamma'(t)$ is never zero. In \cite{Whi37}, Whitney completely classified immersion of closed curves in the plane $\R^2$, with the $C^r$-topology (for $r \geq 1$). There is an obvious invariant which is the rotation (or winding) number, and he proved that any two such immersions are homotopic (by a homotopy of immersion) if and only if they have the same rotation number. Later, in~\cite{Sma58}, Smale generalized this result of Whitney by studying immersions (up to homotopy of immersions) of $\SS^1$ into an arbitrary Riemannian manifold $M$: he proved that such curves are classified by the fundamental group of the unit tangent bundle of $M$. Smale went on to further classify immersions of the $2$-sphere $\SS^2$ (\cite{Sma59}) and then the $n$-sphere $\SS^n$ (\cite{Sma59b}), and with further work of Hirsch (\cite{Hir59}), one has a complete classification of immersions of a manifold $N$ into another manifold $M$ (where the dimension of $M$ is strictly bigger than the dimension of $N$).

In a different direction, in \cite{Lit70}, Little studied the space of closed curves $\gamma$ on $\SS^2$ with non-zero geodesic curvature, and with a fixed initial and final frame equal to the identity (in this context, this means that $\gamma(0)=\gamma(1)$ is the first vector of the canonical basis of $\R^3$ and $\gamma'(0)=\gamma'(1)$ is the second vector of the canonical basis of $\R^3$). These are a particular class of immersions, and we will see later (in Chapter~\ref{chapter2}) that locally convex curves in $\SS^2$ are the same as immersions with non-zero geodesic curvature. Little proved that this space has $6$ connected components (and if we fix an orientation, there are only $3$ connected components). In~\cite{Lit71}, he also investigated the corresponding problem in $\R^3$ and showed that there are $4$ connected components (and $2$ connected components if we fix an orientation). Let us also mention that Feldman also studied the topology of the space of immersion of $\SS^1$ on a Riemannian manifold of dimension at least $3$ (first for $\R^3$ in~\cite{Fel68} and then in~\cite{Fel71} in the general case) with non-zero geodesic curvature, but in higher dimension these curves are not necessarily locally convex.

The study of locally convex curves in higher dimensional spaces ($n$-sphere $\SS^n$ or $\R^{n+1}$, for $n \geq 2$) regain interest in the nineties as they are deeply related to the study of linear ordinary differential equations of order $n+1$ (see for instance Chapter~\ref{chapter3}, \S\ref{s34}).

First, in \cite{SS91} and \cite{Sha93}, M. Z. Shapiro and B. Z. Shapiro counted the number of connected components of the space of closed curves which are locally convex (in the Euclidean space, in the sphere or in the projective space). Fixing an orientation in the case of the Euclidean space or the sphere, this number turns out to be $2$ or $3$ depending on the parity of the dimension (in the projective case, this number is either $3$ or $10$, depending on the parity of the dimension). The case of not necessarily closed locally convex curves in the $2$-sphere $\SS^2$ was then studied by B. Z. Shapiro and Khesin in \cite{KS92} and \cite{KS99} (\cite{KS92} is only a partial announcement, the proof of which are contained in \cite{KS99}), allowing the initial and final frame to be distinct and related by a matrix $Q \in SO_3$ (called the monodromy); the case where $Q$ is the identity was the case studied by Little. Depending on the Jordan normal form of $Q$, they prove that the corresponding space of curves has $2$ or $3$ connected components. 

Even though the number of connected components of those spaces has been intensively studied, little information on the cohomology or higher homotopy groups was available, even on the $2$-sphere. It follows from~\cite{Sha93} that among the $3$ connected components of the space of locally convex curves in $\SS^2$ (in the orientation preserving case, with the initial and final frame being the identity), one connected component consists of simple curves and this component is contractible. The topology of these last two components remained mysterious until the works of Saldanha in \cite{Sal09I}, \cite{Sal09II} and \cite{Sal13}. After preliminary results in \cite{Sal09I} and \cite{Sal09II} where some information on higher homotopy and cohomology groups were obtained, a complete answer in the case $\SS^2$ was given in \cite{Sal13}; in particular the homotopy types of the remaining two components studied by Little are known. These results are in fact deduced from more general results of~\cite{Sal13}, in which the study the space of locally convex curves (not necessarily closed) whose initial frame is the identity and final frame an arbitrary matrix $Q \in SO_3$ (even more generally, one can consider an arbitrary final ``lifted" frame $z \in \mathrm{Spin}_3$, where $\mathrm{Spin}_3$ is the universal double cover of $SO_3$).

For an arbitrary $n \geq 2$, there are few results. In~\cite{SS12}, Saldanha and B. Z. Shapiro studied the following problem. Consider the spaces of locally convex curves in $\SS^n$ with an initial frame equal to the identity and a final frame equal to an arbitrary matrix $Q \in SO_{n+1}$ (or more generally, an arbitrary final "lifted" frame $z \in \mathrm{Spin}_{n+1}$). Theses spaces of locally convex curves are contained in a space of "generic" curves (a generalization, for any $n \geq 2$, of the space of immersions on $\SS^2$), and the topology of this space of generic curves is well-known. When $Q$ (or $z$) varies, Saldanha and B. Z. Shapiro proved that there are only a finite number of non-homeomorphic spaces of locally convex curves. They also proved that each of these spaces contains at least the topology of the space of generic curves (in the sense that the inclusion map induces a surjective homomorphism between homotopy and homology groups), and moreover that at least one them is always homotopy equivalent to the space of generic curves (see Chapter~\ref{chapter4}, \S\ref{s41} for a detailed exposition of their result). A first problem left open is to decide whether the other spaces of locally convex curves are homotopically equivalent to the space of generic curves or not. A second, more precise problem, is to understand if these finitely many spaces of locally convex curves are pairwise non-homeomorphic.

For the case $n=2$, there are only two such spaces of locally convex curves (when $Q$ varies in $SO_3$) and three such spaces of locally convex curves (when $z \in \mathrm{Spin}_3$). Since the topology of these spaces are known according to~\cite{Sal13}, the other spaces of locally convex curves are not homotopically equivalent to the space of generic curves, and moreover, all these spaces are in fact pairwise non-homeomorphic. 

However, for $n \geq 3$, nothing is known. It is the purpose of this thesis to study this problem for $n=3$, by studying in more detail the topology of the space of locally convex curves in $\SS^3$. For $n=3$, there are only $4$ spaces of locally convex curves for which one does not even know if they are homotopically equivalent to the space of generic curves. We will obtain information on the topology of $2$ of these $4$ spaces, proving in particular that they are not homotopically equivalent to the space of generic curves. Moreover, that they are pairwise non-homeomorphic.

\section{Plan of the thesis}

In Chapter~\ref{chapter2} we start with some algebraic preliminaries. In \S\ref{s21}, we recall some basic notions on the special orthogonal group and its universal cover the spin group. We also explain how $\mathrm{Spin}_3$ is isomorphic to the $3$-sphere $\SS^3$ and how $\mathrm{Spin}_4$ is isomorphic to the product $\SS^3 \times \SS^3$. In \S\ref{s22}, we recall some basic notions on signed permutation matrices which will be necessary in \S\ref{s23} to explain the Bruhat decomposition of the special orthogonal group and the lifted decomposition to the spin group. This decomposition was already an important tool in~\cite{SS12}, and it will be also very important for us.

In Chapter~\ref{chapter3} we present some basic notions on locally convex curves and generic curves. The section \S\ref{s31} is devoted to their definition and basic properties, while in \S\ref{s32}, we define and study the corresponding properties of globally convex curves, which are of fundamental importance in the study of locally convex curves. In \S\ref{s33} we properly define the Frenet frame curve associated to a locally convex curve, and more generally to a generic curve, so that we can define in \S\ref{s34} the spaces of curves we will be interested in. Still in \S\ref{s34} we will explain how the Bruhat decomposition already simplifies the study of our spaces of curves. In \S\ref{s35} we introduce another class of curves, the Jacobian curves and quasi-Jacobian, which are nothing but a different point of view on Frenet frame curves associated to locally convex curves and generic curves. This is part of the motivation to study the topology of the space of locally convex curves, and we will actually use this point of view in \S\ref{s36} to introduce a convenient topology for our spaces of curves.

In Chapter~\ref{chapter4}, we state the main results of this thesis. But prior to this, in \S\ref{s41}, \S\ref{s42} and \S\ref{s43} we state precisely some previous results from~\cite{Sal13} and~\cite{SS12}, since our results are a continuation of theirs (\S\ref{s41} deals with the case of an arbitrary $n \geq 2$, while \S\ref{s42} and \S\ref{s43} deal respectively with the special cases $n=2$ and $n=3$). Our main results are stated in \S\ref{s44}.

In Chapter~\ref{chapter5}, we recall some basic operations one can perform on the space of locally convex curves. In \S\ref{s51} we explain how to reverse time properly, in \S\ref{s52} we describe a certain duality in the space of locally convex curves and in \S\ref{s53}, we study the important operation of producing a new locally convex curve by cutting a little piece (at the beginning or at the end) of an old locally convex curve. 

In Chapter~\ref{chapter6}, we prove a result which will be crucial in the sequel, namely any generic curve in $\SS^3$ can be decomposed as a pair of curves in $\SS^2$. Moreover, if the curve in $\SS^3$ is locally convex, then one of the associated curve in $\SS^2$ is also locally convex. This will be the content of \S\ref{s61}. In \S\ref{s62}, we will give various examples, that will be used many times throughout this thesis, illustrating this decomposition result. This result, together with the examples we will give, will be used in \S\ref{s63} and \S\ref{s64} to characterize, in two different situation, when a curve in $\SS^3$ is globally convex by simply looking at its corresponding locally convex curve in $\SS^2$.   

In Chapter~\ref{chapter7} we will explain the link between convex arcs and the Bruhat decomposition in the special orthogonal group and the spin group. In \S\ref{s71} we give a characterization of convex arcs while in \S\ref{s72} we give the complete list of convex matrices and spins in the case $n=3$. In \S\ref{s73}, we will give a more convenient expression of the $4$ spaces of locally convex curves we are mainly in interested.   

Finally, Chapter~\ref{chapter8} is devoted to the proof of our main result, namely that $2$ of these $4 $ spaces are not homotopically equivalent to the space of generic curves. To do this, in \S\ref{s81} we will introduce a notion of ``adding a pair of loops" to a given curve. This notion will be detailed in the case $n=2$ in \S\ref{s82}, where we will recall important results from~\cite{Sal13}. In   \S\ref{s83} we will slightly modify this operation of adding a pair of loops for $n=3$, in order to transfer more easily the results from~\cite{Sal13} in the case $n=2$ in our case $n=3$. This transfer will be done in \S\ref{s83} and \S\ref{s84}, while  the main result will be proved in \S\ref{s85}.  



%
%

\chapter{Algebraic preliminaries}

\label{chapter2}

In this chapter we start with some algebraic preliminaries: first we recall some definitions and basic properties of the special orthogonal groups and the spin groups, and then we explain a decomposition of these groups (the Bruhat decomposition) into finitely many subsets which will play an important role in this thesis.

\section{Orthogonal groups and spin groups}\label{s21}

We denote by $M_{n+1}$ the space of square matrices of size $n+1$ with real coefficients, then we define the general linear group
\[  GL_{n+1}:=\{ A \in M_{n+1} \; | \; \mathrm{det}\;A \neq 0\}, \]
its connected component of the identity
\[  GL_{n+1}^+:=\{ A \in M_{n+1} \; | \; \mathrm{det}\;A > 0\} \]
and the special linear group 
\[  SL_{n+1}:=\{ A \in M_{n+1} \; | \; \mathrm{det}\;A=1\}. \] 
It is well-known that $GL_{n+1}$ is a Lie group of dimension $(n+1)^2$, and $GL_{n+1}^+$ and $SL_{n+1}$ are Lie subgroups (the first because it it is the identity component, the second because it is a closed subgroup), of dimension respectively $(n+1)^2$ and $(n+1)^2-1$. 

The Lie algebra of $GL_{n+1}$, and of $GL_{n+1}^+$ is nothing but the space of matrices $M_{n+1}$, while the Lie algebra of $SL_{n+1}$ is the subspace of matrices with zero trace. 

We define the orthogonal and special orthogonal group 
\[ O_{n+1}:=\{ A \in M_{n+1} \; | \; A^\top A=\mathrm{Id}\}, \quad SO_{n+1}:=\{ A \in O_{n+1} \; | \; \mathrm{det}\;A=1\} \]
where $A ^\top$ denotes the transpose of a matrix $A$. These are Lie subgroups of $GL_{n+1}$, and $SO_{n+1}$ is path-connected as it is the identity component of $O_{n+1}$. Their dimension is equal to $n(n+1)/2$. Moreover, the groups $O_{n+1}$ and $SO_{n+1}$ are compact, and their Lie algebra is the subspace of skew-symmetric matrices.

The group $SO_2$ is obviously homeomorphic to $\SS^1$, and it is not simply connected. For $n \geq 2$, the groups $SO_{n+1}$ are no longer simply connected; their fundamental groups $\pi_1(SO_{n+1})$ are isomorphic to $\Z_2$, the group with two elements. By definition, $n \geq 2$, the spin group $\mathrm{Spin}_{n+1}$ is the universal cover of $SO_{n+1}$, and it comes with a natural projection
\[ \Pi_{n+1} : \mathrm{Spin}_{n+1} \rightarrow SO_{n+1} \]
which is a double covering map. The group $\mathrm{Spin}_{n+1}$ is therefore a simply connected Lie group, which is also compact, and it has the same Lie algebra (and hence the same dimension) as $SO_{n+1}$. Throughout this thesis, the unit element in the group $\mathrm{Spin}_{n+1}$ will be denoted by $\1 \in \mathrm{Spin}_{n+1}$.  

For our purposes it will be sufficient to give description of $\mathrm{Spin}_{n+1}$ in the cases $n=2$ and $n=3$ where we will have $\mathrm{Spin}_3 \simeq \SS^3$ and $\mathrm{Spin}_4 \simeq \SS^3 \times \SS^3$.  

But first we need to recall the definition of the algebra of quaternions:
\[ \mathbb{H}:=\{a+b\i+c\j+d\k =a\1+b\i+c\j+d\k \; | \; (a,b,c,d) \in \R^4\} \]
where $\1=1$, and $\i,\j,\k$ satisfies the product rules
\[ \i^2=\j^2=\k^2=\i\j\k=-1. \]
Like complex numbers, a quaternion $q=a+b\i+c\j+d\k$ has a conjugate $\bar{q}=a-b\i-c\j-d\j$. As a real vector space, $ \mathbb{H}$ is isomorphic to $\R^4$, hence one can define a Euclidean norm on $\mathbb{H}$ and the set of quaternions with unit norm, $U(\mathbb{H})$, can be naturally identified with $\SS^3$. The space of imaginary quaternions
\[ \mathrm{Im}\mathbb{H}:=\{b\i+c\j+d\k \; | \; (b,c,d) \in \R^3\} \]
is naturally identified to $\R^3$, and given a unit quaternion $z \in U(\mathbb{H}) \simeq \SS^3$, it acts on $\mathrm{Im}\mathbb{H} \simeq \R^3$ in the following way:
\[ h \in \mathrm{Im}\mathbb{H} \simeq \R^3 \mapsto R_z(h):=zh\bar{z} \in  \mathrm{Im}\mathbb{H} \simeq \R^3. \]
The fact that $R_z(h)$ is indeed in $\mathrm{Im}\mathbb{H}$ follows from a simple computation. It is clear that $||R_z(h)||=||h||$, hence $R_z$ is an isometry. It is moreover a direct isometry: indeed, $R_1$ is nothing but the identity, and since $\SS^3$ is path-connected, $R_z$ is in the connected component of the identity. This means that $R_z \in SO_3$. We have thus found a map
\[ z \in \SS^3 \mapsto R_z \in SO_3 \]
which is a homomorphism, surjective and 2-to-1 ($R_{-z}=R_z$). So this is a covering map, and as $\SS^3$ is simply connected, this identifies $\SS^3$ with the spin group $\mathrm{Spin}_3$ and the above map is identified with the canonical projection $\Pi_3 : \mathrm{Spin}_3 \rightarrow SO_3$. In matrix notations, this map can be defined by
\begin{equation*}
\Pi_3(a+b\i+c\j+d\k)=\begin{pmatrix} 
  a^2+b^2-c^2-d^2    & -2ad+2bc & 2ac+2bd \\ 
   2ad+2bc  &  a^2-b^2+c^2-d^2 & -2ab+2cd \\
   -2ac+2bd & 2ab+2cd & a^2-b^2-c^2+d^2 
\end{pmatrix}.
\end{equation*}
Under the identification $\mathrm{Spin}_3 \simeq \SS^3$, the unit element $\1 \in \mathrm{Spin}_3$ is identified with the quaternion $\1=1$.

Using quaternions again, we can also identify $\mathrm{Spin}_4$ with the product $\SS^3 \times \SS^3$. Indeed, consider a pair of unit quaternions $(z_l,z_r) \in \SS^3 \times \SS^3$, it acts on $\mathbb{H} \simeq \R^4$ by
\[ q \in \mathbb{H} \simeq \R^4 \rightarrow R_{z_l,z_r}(q):=z_lq\bar{z_r}.  \]
As before, this defines a map 
\[ (z_l,z_r) \in \SS^3 \times \SS^3 \mapsto R_{z_l,z_r} \in SO_4 \]
which can be identified with the universal covering map $\Pi_4 : \mathrm{Spin}_4 \rightarrow SO_4$. One recovers the previous construction by embedding $\SS^3$ into the diagonal of $\SS^3 \times \SS^3$ and observing that $R_z=R_{z,z}$ leaves $\mathrm{Im}\mathbb{H}$ invariant. The canonical projection $\Pi_4 : \mathrm{Spin}_4 \rightarrow SO_4$ can also be given in matrix notations, even though here this is much more cumbersome:
\begin{equation*}
\Pi_4(a_l+b_l\i+c_l\j+d_l\k,a_r+b_r\i+c_r\j+d_r\k)=
\begin{pmatrix}
C_1 & C_2 & C_3 & C_4
\end{pmatrix}
\end{equation*}
where the columns $C_i$, for $1 \leq i \leq 4$, are given by
\begin{equation*}
C_1=
\begin{pmatrix} 
a_la_r+b_lb_r+c_lc_r+d_ld_r  \\ 
-a_lb_r+b_la_r-c_ld_r+d_lc_r  \\
-a_lc_r+b_ld_r+c_la_r-d_lb_r \\
-a_ld_r-b_lc_r+c_lb_r+d_la_r 
\end{pmatrix}
\quad C_2=
\begin{pmatrix} 
a_lb_r-b_la_r-c_ld_r+d_lc_r  \\ 
a_la_r+b_lb_r-c_lc_r-d_ld_r  \\
a_ld_r+b_lc_r+c_lb_r+d_la_r  \\
-a_lc_r+b_ld_r-c_la_r+d_lb_r  
\end{pmatrix}
\end{equation*}
\begin{equation*}
C_3=
\begin{pmatrix} 
a_lc_r+b_ld_r-c_la_r-d_lb_r  \\ 
-a_ld_r+b_lc_r+c_lb_r-d_la_r  \\
a_la_r-b_lb_r+c_lc_r-d_ld_r   \\
a_lb_r+b_la_r+c_ld_r+d_lc_r  
\end{pmatrix}
\quad C_4=
\begin{pmatrix} 
a_ld_r-b_lc_r+c_lb_r-d_la_r \\ 
a_lc_r+b_ld_r+c_la_r+d_lb_r \\
-a_lb_r-b_la_r+c_ld_r+d_lc_r \\
a_la_r-b_lb_r-c_lc_r+d_ld_r
\end{pmatrix}.
\end{equation*}
As before, under the identification $\mathrm{Spin}_4 \simeq \SS^3 \times \SS^3$, the unit element $\1 \in \mathrm{Spin}_4$ is identified with the pair of quaternions $(\1,\1)=(1,1)$.

\section{Signed permutation matrices}\label{s22}

Let $S_{n+1}$ be the group of permutations on the set of $n+1$ elements $\{1, \dots, n+1\}$. An inversion of a permutation $\pi \in S_{n+1}$ is a pair $(i,j) \in \{1, \dots, n+1\}^2$ such that $i<j$ and $\pi(i)>\pi(j)$.
The number of inversions of a permutation $\pi \in S_{n+1}$, that we denote by $\#\mathrm{inv}(\pi)$, is by definition the cardinal of the set of inversions. It is well-known that the number of inversion is at most $n(n+1)/2$, and that this number of inversions is only reached by the permutation $\rho \in S_{n+1}$ defined by $\rho(i)=n+2-i$ for all $i \in \{1, \dots, n+1\}$. In other words, $\rho$ is the product of transpositions
\[ \rho=(1\;n+1)(2\; n) ... \in S_{n+1}. \]

Now a matrix $P \in M_{n+1}$ is a permutation matrix if each column and each row of $P$ contains exactly one entries equal to $1$, and the others entries are zero. Permutation matrices forms a finite sub-group of $O_{n+1}$. There is an obvious isomorphism between the group of permutation matrices and $S_{n+1}$: to a permutation $\pi \in S_{n+1}$ we can associated a permutation matrix $P_\pi=(p_{i,j})$ where

\[ P_\pi(e_i)=e_{\pi(i)}, \] where $e_i$ denotes the $i$-th vector of the canonical basis of $\R^{n+1}$.

In particular, both groups have cardinal $(n+1)!$. Given a permutation matrix $P$, it can be written $P=P_\pi$ for a unique $\pi \in S_{n+1}$ and we define the number of inversions $\#\mathrm{inv}(P)$ to be the number of inversions of the associated permutation $\#\mathrm{inv}(\pi)$.  

More generally, a signed permutation matrix is a matrix for which each column and each row contains exactly one entries equal to $1$ or $-1$, and the others entries are zero. The set of signed permutation matrices will be denoted by $B_{n+1}$: it is easy to see that this is a finite sub-group of $O_{n+1}$, and its cardinal is equal to $2^{n+1}(n+1)!$. Given a signed permutation matrix $P$, let $abs(P)$ be the associated permutation matrix obtained by dropping the signs (put differently, the entries of $abs(P)$ are the absolute values of the entries of $P$). This defines a map $P \mapsto \pi$, where $abs(P)=P_\pi$, which is a homomorphism from $B_{n+1}$ to $S_{n+1}$, and we can define the number of inversions $\#\mathrm{inv}(P)$ as before. 

The group of signed permutation matrices of determinant one is $B_{n+1}^+=B_{n+1} \cap SO_{n+1}$, and it has a cardinal equal to $2^{n}(n+1)!$. 

\section{Bruhat decomposition}\label{s23}

Let us denote by $Up^+_{n+1}$ the group of upper triangular matrices with positive diagonal entries.

\begin{definition}
Given $Q \in SO_{n+1}$, we define the \emph{Bruhat cell} $\mathrm{Bru}_Q$ as the set of matrices $UQU' \in SO_{n+1}$, where $U$ and $U'$ belong to $Up^+_{n+1}$. 
\end{definition}

In other words, two matrices $Q \in SO_{n+1}$ and $Q' \in SO_{n+1}$ belong to the same Bruhat cell if and only if there exist $U$ and $U'$ in $Up^+_{n+1}$ such that $Q'=UQU'$. It is easy to see that given $Q \in SO_{n+1}$ and $Q' \in SO_{n+1}$, $\mathrm{Bru}_Q$ and $\mathrm{Bru}_{Q'}$ are either equal or disjoint. In fact, Bruhat cells can be considered as the orbits of a group action. First observe that if $Q' \in \mathrm{Bru}_Q$, then we can write $Q'=UQU'$ for some $U$ and $U'$ in $Up^+_{n+1}$ which are in general not unique. But if we further require that $U$ belongs to $Up^1_{n+1}$, the group of upper triangular matrices with diagonal entries equal to one, then $U'$ is uniquely defined.

\begin{definition}
The map $B : Up^1_{n+1} \times SO_{n+1} \mapsto SO_{n+1}$ defined by
\[ B(U,Q)=UQU' \]
where $U'$ is the unique matrix in $Up^+_{n+1}$ such that $UQU' \in SO_{n+1}$ is called the \emph{Bruhat action}. 
\end{definition}

The Bruhat action is clearly a group action of $Up^1_{n+1}$ on $SO_{n+1}$, and its orbits coincide with the Bruhat cells. 

It is well-known that each Bruhat cell contains a unique signed permutation matrix $P \in B_{n+1}^+$, hence two Bruhat cells associated to two different signed permutation matrix are disjoint. We summarize this in the following theorem.

\begin{theorem}[Bruhat decomposition for $SO_{n+1}$]\label{Bruhat1}
We have the decomposition
\[ SO_{n+1}=\bigsqcup_{P \in B_{n+1}^+}\mathrm{Bru}_P. \]
\end{theorem}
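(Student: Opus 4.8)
The plan is to establish the Bruhat decomposition for $SO_{n+1}$ by reducing it to the classical Bruhat decomposition for $GL_{n+1}$ (or $SL_{n+1}$) and then intersecting carefully with the orthogonal group. First I would recall that for $GL_{n+1}$ one has the standard Bruhat decomposition $GL_{n+1} = \bigsqcup_{\pi \in S_{n+1}} Bor \cdot P_\pi \cdot Bor$, where $Bor$ is the Borel subgroup of all invertible upper triangular matrices, and that the double coset of a matrix $A$ is determined by the ranks of the lower-left submatrices of $A$ (equivalently, by the associated permutation obtained from Gaussian elimination). This is obtained by performing row operations (multiplying on the left by elements of $Up^+_{n+1}$, together with positive diagonal scalings) and column operations, normalizing $A$ to a permutation matrix. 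For $SO_{n+1}$ the signs become relevant, so one works with $B^+_{n+1}$ instead of mere permutation matrices.

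The key steps, in order, would be: (1) Show that every $Q \in SO_{n+1}$ lies in $\mathrm{Bru}_P$ for some signed permutation matrix $P \in B^+_{n+1}$. For this, apply the Bruhat action: choose $U \in Up^1_{n+1}$ via a Gaussian-elimination-type procedure that clears entries below and to the left, producing $UQ$ whose shape forces $U'$ (the unique element of $Up^+_{n+1}$ completing $UQU' \in SO_{n+1}$) to straighten $UQU'$ into a signed permutation matrix; since $Q, U, U'$ all have determinant one, $P \in B^+_{n+1}$. (2) Show uniqueness: if $P, P' \in B^+_{n+1}$ and $\mathrm{Bru}_P \cap \mathrm{Bru}_{P'} \neq \emptyset$, then $P = P'$. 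Here one uses the rank invariants — the ranks of the submatrices formed by the first $k$ rows and last $\ell$ columns are unchanged under left and right multiplication by upper triangular matrices — to conclude $abs(P) = abs(P')$, i.e. the underlying permutations agree; then a finer invariant (e.g. looking at the actual signed entries, which are also constrained by the orthogonality condition $Q^\top Q = \mathrm{Id}$ together with the upper-triangular conjugation) forces the signs to agree as well. (3) Combine (1) and (2): the cells $\mathrm{Bru}_P$ for $P \in B^+_{n+1}$ cover $SO_{n+1}$ and are pairwise disjoint (using the already-observed fact from the text that two Bruhat cells are equal or disjoint), which is exactly the claimed decomposition.

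The main obstacle I expect is step (2), specifically pinning down the \emph{signs}: the rank argument immediately gives that the associated permutation is a Bruhat-cell invariant (this is the classical $GL$ statement), but showing that the \emph{signed} permutation matrix is unique requires genuinely using that we are inside $SO_{n+1}$ rather than $GL_{n+1}$. The cleanest way is probably to note that if $P' = UPU'$ with $U, U' \in Up^+_{n+1}$ and $P, P'$ both signed permutation matrices, then comparing entries row by row (starting from the row of $P$ whose nonzero entry is in the last column, and inducting) shows $U$ and $U'$ must themselves be diagonal with positive entries, whence $P' = P$. Alternatively one can invoke that $SO_{n+1} \cap Bor = Up^+_{n+1} \cap SO_{n+1}$ is trivial and run the standard argument that $B^+_{n+1}$ is a set of double-coset representatives, but the sign bookkeeping is the part that needs care and is where a detailed proof would spend most of its effort.
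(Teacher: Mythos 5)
The paper does not itself prove this theorem: it asserts the unique-representative property as ``well-known,'' states the theorem as a summary, and then gives an algorithm (row and column operations with positive scalings) that realizes the existence direction. Your step (1) is essentially that algorithm. The uniqueness, your step (2), is where the paper is silent and where your sketch contains a concrete error. The claim that $P' = U P U'$ with $U, U' \in Up^+_{n+1}$ and $P, P' \in B_{n+1}^+$ forces $U$ and $U'$ to be diagonal is false: for $n+1=2$ take $P = P' = I$ and
\[ U = \begin{pmatrix} 1 & 1 \\ 0 & 1 \end{pmatrix}, \quad U' = \begin{pmatrix} 1 & -1 \\ 0 & 1 \end{pmatrix}, \quad UPU' = UU' = I, \]
so neither factor is diagonal. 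What is true, and what suffices, is weaker: after the rank argument fixes $abs(P) = abs(P') = P_\pi$, write $P = D_1 P_\pi$ and $P' = D_2 P_\pi$ with $D_1, D_2$ diagonal sign matrices. Rearranging $P' = UPU'$ gives $D_2^{-1} U D_1 = P_\pi (U')^{-1} P_\pi^{-1}$; the right-hand side has positive diagonal entries (they are a permutation of the diagonal of $(U')^{-1} \in Up^+_{n+1}$), while the $i$-th diagonal entry of the left-hand side is $(D_2)_{ii}(D_1)_{ii} U_{ii}$ with $U_{ii}>0$. Comparing forces $D_1 = D_2$, hence $P = P'$, with no control over the off-diagonal entries of $U$ and $U'$.

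Your alternative route also contains a misstatement: $SO_{n+1} \cap Bor$ is not trivial. Upper-triangular orthogonal matrices are exactly the diagonal sign matrices, so $SO_{n+1} \cap Bor = \mathrm{Diag}_{n+1}^+$, of order $2^n$. What is trivial is $SO_{n+1} \cap Up^+_{n+1} = \{I\}$ (the positive-diagonal condition then kills all sign ambiguity), and that is the fact one would actually use; your equality sign conflates the two intersections.
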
   

Therefore there are $2^n(n+1)!$ different Bruhat cells. Each Bruhat cell $\mathrm{Bru}_P$ is diffeomorphic to $\R^{\#\mathrm{inv}(P)}$, hence they are open if and only if they have maximal dimension, that is if they correspond to the permutation $\rho$ we previously defined by $\rho(i)=n+2-i$ for all $i \in \{1, \dots, n+1\}$. Taking into account signs, there are $2^n$ open Bruhat cells which are
\[ \mathrm{Bru}_{DP_\rho}, \quad D \in \mathrm{Diag}_{n+1}^+ \]
where $P_\rho$ is the permutation matrix associated to $\rho$, and $\mathrm{Diag}_{n+1}^+ \subset B_{n+1}^+$ is the subgroup consisting of diagonal matrices with entries $\pm 1$ and determinant $1$. Observe that $\mathrm{Diag}_{n+1}^+$ is isomorphic to $(\Z/2\Z)^n$ and has a cardinal equal to $2^n$. The union of these top-dimensional cells 
\[ \bigsqcup_{D \in \mathrm{Diag}_{n+1}^+ }\mathrm{Bru}_{DP_\rho} \subset SO_{n+1} \]
is therefore a dense subset of $SO_{n+1}$. 

Given a matrix $Q \in SO_{n+1}$, it will be useful to know how to find in practice the unique signed permutation matrix $P \in B_{n+1}^+$ such that $Q \in \mathrm{Bru}_P$. Let us briefly recall, following~\cite{SS12}, an algorithm that produces this signed permutation matrix. Start with the first column of $Q$, and look for the non-zero lowest entry, for instance assume it is $Q_{i,1}$. Up to multiplying $Q$ by a diagonal matrix $D \in Up_{n+1}^+$ we may assume that $Q_{i,1}=\pm 1$. Then we can perform row operations on $Q$ to clean the first column above row $i$; in this way we find $U_1 \in Up_{n+1}^+$ such that $Q_1=U_1Q$ satisfies $Q_{1}e_1=\pm e_i$. Then perform column operations on $Q_1$ to clean row $i$ to the right of the first column, that is obtain $U_2 \in Up_{n+1}^+$ such that $Q_2=Q_1U_2$ satisfies $e_i^\top Q_2=\pm {} e_1^\top$. Repeating the process for each column we eventually find a signed permutation matrix $P$ such that $P=U_1QU_2$ for some $U_1,U_2 \in Up_{n+1}^+$, that is $P \in B_{n+1}^+$ such that $Q \in \mathrm{Bru}_P$.   

\medskip

The Bruhat decomposition of $SO_{n+1}$ can be lifted to the universal double cover $\Pi_{n+1} : \mathrm{Spin}_{n+1} \rightarrow SO_{n+1}$. Let us define the following sub-group of $\mathrm{Spin}_{n+1}$: 
\[ \tilde{B}_{n+1}^+:=\Pi_{n+1}^{-1}(B_{n+1}^+). \]
The cardinal of $\tilde{B}_{n+1}^+$ is twice the cardinal of $B_{n+1}^+$, that is $2^{n+1}(n+1)!$. 

\begin{definition}
Given $z \in \mathrm{Spin_{n+1}}$ we define the \emph{Bruhat cell} $\mathrm{Bru}_z$ as the connected component of $\Pi_{n+1}^{-1}(\mathrm{Bru}_{\Pi_{n+1}(z)})$ which contains $z$. 
\end{definition}

It is clear,  from the definition of $\Pi_{n+1}$, that $\Pi_{n+1}^{-1}(\mathrm{Bru}_{\Pi_{n+1}(z)})$ is the disjoint union of $\mathrm{Bru}_z$ and $\mathrm{Bru}_{-z}$, where each set $\mathrm{Bru}_{z}$, $\mathrm{Bru}_{-z}$ is contractible and non empty.

Since the group $Up_{n+1}^1$ is contractible, its Bruhat action on $SO_{n+1}$ lifts to a Bruhat action on $\mathrm{Spin}_{n+1}$ that, for simplicity, we still denote by $B : Up_{n+1}^1 \times \mathrm{Spin}_{n+1} \rightarrow \mathrm{Spin}_{n+1}$. As before, the Bruhat cells on $\mathrm{Spin}_{n+1}$ are the orbits of the Bruhat action.

From Theorem~\ref{Bruhat1} we have the following result.

\begin{theorem}[Bruhat decomposition for $\mathrm{Spin}_{n+1}$]\label{Bruhat2}
We have the decomposition
\[ \mathrm{Spin}_{n+1}=\bigsqcup_{\tilde{P} \in \tilde{B}_{n+1}^+}\mathrm{Bru}_{\tilde{P}}. \]
\end{theorem}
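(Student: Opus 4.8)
The plan is to deduce Theorem~\ref{Bruhat2} directly from Theorem~\ref{Bruhat1} by pulling the decomposition back along the double cover $\Pi_{n+1}\colon \mathrm{Spin}_{n+1}\to SO_{n+1}$; essentially all of the content is covering-space bookkeeping, since the definition of $\mathrm{Bru}_z$ and the remark following it already record the key structural fact. Let me first fix that fact in the form I will use: for $z\in\mathrm{Spin}_{n+1}$ write $P=\Pi_{n+1}(z)$; the downstairs cell $\mathrm{Bru}_P$ is diffeomorphic to $\R^{\#\mathrm{inv}(P)}$, hence contractible, so the restriction of $\Pi_{n+1}$ over $\mathrm{Bru}_P$ is a trivial double cover. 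Therefore $\Pi_{n+1}^{-1}(\mathrm{Bru}_P)$ is the disjoint union of exactly two connected components, namely $\mathrm{Bru}_z$ and $\mathrm{Bru}_{-z}$, and $\Pi_{n+1}$ maps each of them homeomorphically onto $\mathrm{Bru}_P$.

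Next I would establish that the union in the statement is exhaustive. Take any $z\in\mathrm{Spin}_{n+1}$ and let $P=\Pi_{n+1}(z)$. By Theorem~\ref{Bruhat1} there is a unique signed permutation matrix $P_0\in B_{n+1}^+$ with $P\in\mathrm{Bru}_{P_0}$, i.e.\ $\mathrm{Bru}_P=\mathrm{Bru}_{P_0}$. Its two preimages under $\Pi_{n+1}$, say $\tilde{P}_0$ and $-\tilde{P}_0$, both lie in $\tilde{B}_{n+1}^+=\Pi_{n+1}^{-1}(B_{n+1}^+)$, and they lie in $\Pi_{n+1}^{-1}(\mathrm{Bru}_{P_0})=\mathrm{Bru}_{\tilde{P}_0}\sqcup\mathrm{Bru}_{-\tilde{P}_0}$. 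The antipodal deck transformation $w\mapsto -w$ is fixed-point-free and permutes these two components; it cannot fix either one, because on a component $\Pi_{n+1}$ is a homeomorphism onto $\mathrm{Bru}_{P_0}$ and the only deck transformation of a trivial cover preserving a given sheet is the identity. Hence it swaps the two components, so $\tilde{P}_0\in\mathrm{Bru}_{\tilde{P}_0}$ and $-\tilde{P}_0\in\mathrm{Bru}_{-\tilde{P}_0}$, consistent with the notation. Since $z\in\Pi_{n+1}^{-1}(\mathrm{Bru}_{P_0})$, it belongs to exactly one of these two cells, and both are indexed by elements of $\tilde{B}_{n+1}^+$; this gives $\mathrm{Spin}_{n+1}=\bigcup_{\tilde{P}\in\tilde{B}_{n+1}^+}\mathrm{Bru}_{\tilde{P}}$.

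Finally, for disjointness, suppose $\tilde{P},\tilde{P}'\in\tilde{B}_{n+1}^+$ and $\mathrm{Bru}_{\tilde{P}}\cap\mathrm{Bru}_{\tilde{P}'}\neq\emptyset$. Applying $\Pi_{n+1}$ and using that it maps $\mathrm{Bru}_{\tilde{P}}$ onto $\mathrm{Bru}_{\Pi_{n+1}(\tilde{P})}$ and $\mathrm{Bru}_{\tilde{P}'}$ onto $\mathrm{Bru}_{\Pi_{n+1}(\tilde{P}')}$, we get $\mathrm{Bru}_{\Pi_{n+1}(\tilde{P})}\cap\mathrm{Bru}_{\Pi_{n+1}(\tilde{P}')}\neq\emptyset$; by Theorem~\ref{Bruhat1} two downstairs cells are equal or disjoint and each contains a unique element of $B_{n+1}^+$, so $\Pi_{n+1}(\tilde{P})=\Pi_{n+1}(\tilde{P}')$, i.e.\ $\tilde{P}'\in\{\tilde{P},-\tilde{P}\}$. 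If $\tilde{P}'=-\tilde{P}$ then $\mathrm{Bru}_{\tilde{P}}$ and $\mathrm{Bru}_{\tilde{P}'}$ are the two distinct connected components of $\Pi_{n+1}^{-1}(\mathrm{Bru}_{\Pi_{n+1}(\tilde{P})})$, hence disjoint, a contradiction; so $\tilde{P}'=\tilde{P}$. The only genuinely non-formal point is the step in which the deck transformation swaps the two sheets over a cell, ensuring the two signed lifts $\pm\tilde{P}$ land in different cells; alternatively one can run the whole argument through the orbits of the lifted Bruhat action (which are connected since $Up^1_{n+1}$ is connected, and project onto the downstairs Bruhat cells), but it reduces to exactly the same bookkeeping.
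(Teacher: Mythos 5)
Your proof is correct and is, in substance, the same covering-space argument the paper implicitly relies on (the paper states the facts — contractibility of each downstairs cell, the two-component preimage $\mathrm{Bru}_z\sqcup\mathrm{Bru}_{-z}$, the lifted Bruhat action — and then deduces Theorem~\ref{Bruhat2} from Theorem~\ref{Bruhat1} without writing out the bookkeeping; you have written it out). One small notational slip: for a general $z\in\mathrm{Spin}_{n+1}$ the image $P=\Pi_{n+1}(z)$ need not be a signed permutation matrix, so $\#\mathrm{inv}(P)$ is not defined for that $P$; what you actually use is only that $\mathrm{Bru}_P=\mathrm{Bru}_{P_0}$ is diffeomorphic to a Euclidean space for the unique $P_0\in B_{n+1}^+$ in its cell, which is correct and suffices for triviality of the restricted cover.
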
   

In $\mathrm{Spin}_{n+1}$, there are $2^{n+1}(n+1)!$ disjoint Bruhat cells. Each lifted Bruhat cell $\mathrm{Bru}_{\tilde{P}}$ is still diffeomorphic to $\R^{\#\mathrm{inv}(P)}$, where $P=\Pi_{n+1}(\tilde{P}) \in B_{n+1}^+$. Let us write
\[ \widetilde{\mathrm{Diag}}_{n+1}^+:=\Pi_{n+1}^{-1}(\mathrm{Diag}_{n+1}^+) \]
and $\pm \tilde{P}_\rho$ the two preimages of $P_\rho$ under the map $\Pi_{n+1}$. There are $2^{n+1}$ open Bruhat cells, which are given by
\[ \bigsqcup_{\tilde{D} \in \widetilde{\mathrm{Diag}}_{n+1}^+ }\mathrm{Bru}_{\tilde{D}\tilde{P}_\rho} \subset \mathrm{Spin}_{n+1} \]
and their union is dense in $\mathrm{Spin}_{n+1}$.

Let us conclude with the following definition.

\begin{definition}\label{bruhateq}
Two matrices $Q \in SO_{n+1}$ and $Q' \in SO_{n+1}$ (respectively two spins $z \in \mathrm{Spin}_{n+1}$ and $z' \in \mathrm{Spin}_{n+1}$) are said to be \emph{Bruhat equivalent} if they belong to the same Bruhat cell. 
\end{definition}


%
%

\chapter{Space of curves}

\label{chapter3}

Given an integer $n \geq 1$, we let
\[ \SS^n:=\{x\in \R^{n+1} \; | \, ||x||^2=1 \} \]
be the $n$-dimensional sphere, where $||\,.\,||$ denotes the Euclidean norm in $\R^{n+1}$. 

From now on, unless stated otherwise, we will denote by $M$ either $\SS^n$ or $\R^{n+1}$. A curve in $M$ is simply the image of a map (called parametrization) $\gamma : [0,1] \rightarrow M$; the curve is of class $C^k$, $k \in \N$ if the map $\gamma : [0,1] \rightarrow M$ is of class $C^k$. A reparametrization is a smooth diffeomorphism $\phi : [a,b] \rightarrow [0,1]$, where $[a,b]$ is a non-trivial segment; the image of the curve $\gamma \circ \phi : [a,b] \rightarrow M$ is the same as the image of the curve $\gamma : [0,1] \rightarrow M$. The derivative $\phi'(t)$ is always non-zero hence it has a constant sign for any $t \in (a,b)$: the reparametrization is said to be positive (or orientiation preserving) if the sign of $\phi'(t)$ is positive.

In the sequel, we will always identify a curve to one of its parametrizations $\gamma : [0,1] \rightarrow M$, since all the properties we will be interested in will be independent of the choice of a parametrization.

\section{Locally convex curves and generic curves}\label{s31}

Given a smooth curve $\gamma : [0,1] \rightarrow M$ and an integer $j \geq 0$, we denote by $\gamma^{(j)}(t)$ the $j^{th}$-derivative of $\gamma$ at $t \in (0,1)$; we have $\gamma^{(0)}(t)=\gamma(t)$ and we will more simply write $\gamma^{'}(t)=\gamma^{(1)}(t)$ and $\gamma^{''}(t)=\gamma^{(2)}(t)$.

Let us start with a very simple definition.

\begin{definition}
A smooth curve $\gamma:[0,1] \rightarrow M$ is an \emph{immersion} if $\gamma'(t)\neq 0$ for all $t \in (0,1)$. 
\end{definition}

Immersions are also called regular curves. This is clearly independent of the choice of a parametrization: if $\phi : [a,b] \rightarrow [0,1]$ is a reparametrization, then $(\gamma \circ \phi)'(t)=\phi'(t)\gamma'(\phi(t)) \neq 0$ for all $t \in (a,b)$. For such a curve, it is well-known that one can choose a parametrization by arc-length with the property that $||\gamma'(t)||=1$ for all $t \in (0,1)$. 

We can now introduce the main object of our study.

\begin{definition}\label{deflc}
A smooth curve $\gamma:[0,1] \rightarrow M$ is called \emph{locally convex}, or \emph{$n$-order free},  if 
\[ \mathrm{det}(\gamma(t),\gamma'(t),\gamma''(t),\dots,\gamma^{(n)}(t)) \neq 0\] 
for all $t \in (0,1)$. In particular, $\gamma(t) \neq 0 $ for all $t \in (0,1)$.
\end{definition}

Obviously, the sign of the determinant in the above definition is constant, and a locally convex curve is said to be positive (respectively negative) if this sign is positive (respectively negative). Without loss of generality, from now on we will always assume it to be positive. Let us give examples of locally convex curves on $\R^{n+1}$ and $\SS^n$.

\begin{example}\label{ex1}
Consider the curve $c :[0,1] \rightarrow \R^{n+1}$ defined by
\[ c(t)=(a_0,a_1t,a_2t^2, \dots, a_nt^n), \quad a_0, \dots, a_n>0. \]
An easy calculation shows 
\[ \mathrm{det}(c(t),c'(t),\dots,c^{(n)}(t))=\prod_{i=0}^ni!a_i\]
so that $c$ is locally convex for any $a_0, \dots, a_{n}>0$.
\end{example}

\begin{example}\cite{SS12}\label{ex2}
Consider the curve $\xi :[0,1] \rightarrow \SS^{n}$ defined as follows. For $n+1=2k$, take positive numbers $c_1,\dots, c_k$ such that $c_1^2+\cdots c_k^2=1$ and $a_1, \dots, a_k>0$ mutually distinct, and set
\[ \xi(t)=(c_1\cos(a_1t),c_1\sin(a_1t),\dots,c_k\cos(a_kt),c_k\sin(a_kt)).  \]
For $n+1=2k+1$, take positive numbers $c_0,c_1,\dots, c_k$ such that $c_0^2+c_1^2+\cdots c_k^2=1$ and set
\[ \xi(t)=(c_0,c_1\cos(a_1t),c_1\sin(a_1t),\dots,c_k\cos(a_kt),c_k\sin(a_kt)). \]
In both cases, the fact that the curve $\xi$ is locally convex follows from a simple computation. In the first case, we find
\[ \mathrm{det}(\xi(t),\xi'(t),\dots,\xi^{(n)}(t))=\left(\prod_{i=1}^kc_i^2a_i\right)\left( \prod_{1 \leq i<j \leq k}(a_i-a_j)^2(a_i+a_j)^2 \right) \]
and in the second case
\[ \mathrm{det}(\xi(t),\xi'(t),\dots,\xi^{(n)}(t))=c_0\left(\prod_{i=1}^kc_i^2a_i^3\right)\left( \prod_{1 \leq i<j \leq k}(a_i-a_j)^2(a_i+a_j)^2 \right). \]
In the case $n=3$, a locally convex curve looks like an ancient phone wire (see the Figure~\ref{fig:a} below).
\end{example}

\begin{figure}[h]
\centering
\includegraphics[scale=0.5]{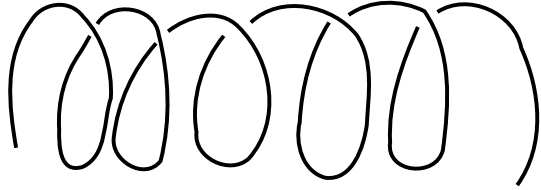}
\caption{An ancient phone wire is locally convex in $\SS^3$.}
\label{fig:a}
\end{figure}

In the sequel, we will need several elementary properties of locally convex curves in $M$. They are contained in the proposition below.

\begin{proposition}\label{proplocconv}
Let $\gamma : [0,1] \rightarrow M$ be a smooth curve. 
\begin{itemize}
\item[(i)] If $\phi : [a,b] \rightarrow [0,1]$ is a positive reparametrization, then $\gamma$ is locally convex if and only if $\gamma \circ \phi$ is locally convex. \\
\item[(ii)] If $A \in GL_{n+1}^+$, then $\gamma$ is locally convex if and only if the curve
\[ A\gamma : t \in [0,1] \mapsto A\gamma(t) \in \R^{n+1} \]
is locally convex. Moreover, if $A \in SO_{n+1}$, then $\gamma : [0,1] \rightarrow \SS^n$ is locally convex if and only $A\gamma : [0,1] \rightarrow \SS^n$ is locally convex.\\
\item[(iii)] If $g : [0,1] \rightarrow \R$ is a smooth positive function, that is $g(t)>0$ for all $t \in [0,1]$, then $\gamma$ is locally convex if and only if the curve
\[ g\gamma : t \in [0,1] \mapsto g(t)\gamma(t) \in \R^{n+1}\]
is locally convex.
\end{itemize}
\end{proposition}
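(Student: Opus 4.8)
The plan is to prove each of the three statements by reducing everything to the behavior of the determinant $\det(\gamma, \gamma', \dots, \gamma^{(n)})$ under the relevant operation, exploiting the fact that local convexity is exactly the non-vanishing (and, by our sign convention, positivity) of this determinant.

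For (i), I would use the chain rule to express the derivatives of $\gamma \circ \phi$ in terms of the derivatives of $\gamma$. The key observation is the classical fact that, by Fa\`a di Bruno's formula, $(\gamma\circ\phi)^{(j)}(t) = \phi'(t)^j\,\gamma^{(j)}(\phi(t)) + (\text{linear combination of } \gamma^{(i)}(\phi(t)) \text{ for } i<j)$. Hence the matrix $\big(\gamma\circ\phi, (\gamma\circ\phi)', \dots, (\gamma\circ\phi)^{(n)}\big)(t)$ is obtained from $\big(\gamma,\gamma',\dots,\gamma^{(n)}\big)(\phi(t))$ by right-multiplication by an upper-triangular matrix whose diagonal entries are $1, \phi'(t), \phi'(t)^2, \dots, \phi'(t)^n$. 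Therefore $\det(\gamma\circ\phi, \dots, (\gamma\circ\phi)^{(n)})(t) = \phi'(t)^{n(n+1)/2}\det(\gamma,\dots,\gamma^{(n)})(\phi(t))$, and since $\phi$ is a positive reparametrization ($\phi'>0$), this is non-zero (and of the same sign) if and only if the right-hand determinant is. This proves (i).

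For (ii), since $A$ is a constant linear map, $(A\gamma)^{(j)}(t) = A\gamma^{(j)}(t)$ for all $j$, so the matrix $\big(A\gamma, (A\gamma)', \dots, (A\gamma)^{(n)}\big)(t)$ equals $A$ times the matrix $\big(\gamma, \gamma', \dots, \gamma^{(n)}\big)(t)$, hence its determinant is $(\det A)\cdot \det(\gamma,\dots,\gamma^{(n)})(t)$. Since $A \in GL_{n+1}^+$ means $\det A > 0$, local convexity is preserved, and the sign is preserved. The second assertion of (ii) is immediate because $SO_{n+1} \subset GL_{n+1}^+$ and $A$ maps $\SS^n$ to $\SS^n$, so $A\gamma$ is again a curve in $\SS^n$.

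For (iii), I would expand $(g\gamma)^{(j)} = \sum_{i=0}^{j}\binom{j}{i}g^{(j-i)}\gamma^{(i)}$ by the Leibniz rule; again each column of $\big(g\gamma, (g\gamma)', \dots, (g\gamma)^{(n)}\big)$ is a combination of $\gamma^{(i)}$'s, with the $j$-th column equal to $g\,\gamma^{(j)}$ plus lower-order terms, so the transition matrix is upper-triangular with diagonal $g, g, \dots, g$ ($n+1$ copies). Thus $\det(g\gamma,\dots,(g\gamma)^{(n)})(t) = g(t)^{n+1}\det(\gamma,\dots,\gamma^{(n)})(t)$, and since $g(t)>0$ this preserves non-vanishing and sign. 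None of the three parts presents a genuine obstacle; the only mild care needed is the bookkeeping in the Fa\`a di Bruno/Leibniz expansions to confirm the transition matrices are triangular with the claimed diagonals — but this is routine linear algebra, and the positivity hypotheses ($\phi'>0$, $\det A>0$, $g>0$) are exactly what is needed to preserve the sign convention.
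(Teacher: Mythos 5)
Your proof is correct and takes essentially the same route as the paper: in all three parts you reduce to the transformation law of $\det(\gamma,\gamma',\dots,\gamma^{(n)})$, showing it picks up the factor $\phi'(t)^{n(n+1)/2}$, $\det A$, or $g(t)^{n+1}$ respectively via an upper-triangular transition matrix (chain rule / Fa\`a di Bruno for (i), Leibniz for (iii)) or left multiplication by $A$ for (ii). The paper states the same identities, merely citing "the chain rule" and "column operations" where you spell out the combinatorics.
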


Observe that the point $(i)$ of the above proposition ensures that being locally convex is independent of the choice of a parametrization, so that this is a well-defined property of the curve.

\begin{proof}
For $(i)$, a computation using the chain rule show that \[ (\gamma \circ \phi(t),(\gamma \circ \phi)'(t),\dots,(\gamma \circ \phi)^{(n)}(t))=(\gamma(\tau),\gamma'(\tau),\dots,\gamma^{(n)}(\tau))U \]
where $\tau=\phi(t)$, and where $U$ is a upper triangular matrix whose diagonal is given by $(1,\phi'(t), \dots,(\phi'(t)^n)$. This proves that $\gamma$ is locally convex if and only if $\gamma \circ \phi$ is locally convex.

For $(ii)$, the equality of matrices
\[ (A\gamma(t),(A\gamma)'(t),\dots,(A\gamma)^{(n)}(t))=A(\gamma(t),\gamma'(t),\dots,\gamma^{(n)}(t)) \]
implies that
\[\mathrm{det}(A\gamma(t),(A\gamma)'(t),\dots,(A\gamma)^{(n)}(t))=(\mathrm{det}A)\mathrm{det}(\gamma(t),\gamma'(t),\dots,\gamma^{(n)}(t))\]
and this proves $\gamma$ is locally convex if and only if $A\gamma$ is locally convex.

Finally, for $(iii)$, by using column operations one can check that 
\[ \mathrm{det}(g\gamma(t),(g\gamma)'(t),\dots,(g\gamma)^{(n)}(t))=g(t)^{n+1}\mathrm{det}(\gamma(t),\gamma'(t),\dots,\gamma^{(n)}(t)) \]
for all $t \in (0,1)$. This shows that $\gamma$ is locally convex if and only if $g\gamma$ is locally convex. 
\end{proof}

\medskip

We have the following straightforward corollary of Proposition~\ref{proplocconv}

\begin{corollary}\label{corlocconv}
Let $\gamma : [0,1] \rightarrow \R^{n+1}$ be a smooth curve.
\begin{itemize}
\item[(i)] If $\gamma(t)\neq 0$ for all $t \in [0,1]$, then $\gamma$ is locally convex if and only if the curve
\[ t \in [0,1] \mapsto \frac{\gamma(t)}{||\gamma(t)||} \in \SS^n \]
is locally convex. \\
\item[(ii)] If $\gamma_1(t)>0 $ for all $t \in [0,1]$, then $\gamma$ is locally convex if and only if the curve
\[ t \in [0,1] \mapsto \left(1,\frac{\gamma_2(t)}{\gamma_1(t)}, \dots,\frac{\gamma_n(t)}{\gamma_1(t)} \right) \in \R^{n+1}  \]
is locally convex.
\end{itemize}
\end{corollary}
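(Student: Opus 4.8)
The plan is to deduce both statements directly from part $(iii)$ of Proposition~\ref{proplocconv}, by choosing in each case an appropriate smooth positive scalar function $g$ on $[0,1]$ and writing the new curve as $g\gamma$. The only point requiring a word of care is that local convexity of a curve with values in $\SS^n$ is, by Definition~\ref{deflc}, governed by exactly the same determinant condition as local convexity of a curve with values in $\R^{n+1}$, so passing between the two ambient manifolds introduces no ambiguity.

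For $(i)$, assume $\gamma(t)\neq 0$ for all $t \in [0,1]$. Then $t \mapsto \langle \gamma(t),\gamma(t)\rangle$ is smooth and strictly positive on $[0,1]$, hence $g(t):=\|\gamma(t)\|^{-1}=\langle \gamma(t),\gamma(t)\rangle^{-1/2}$ is a smooth positive function on $[0,1]$. Since $g(t)\gamma(t)=\gamma(t)/\|\gamma(t)\|$ has unit norm, the curve $g\gamma$ takes values in $\SS^n$, and Proposition~\ref{proplocconv}$(iii)$ gives that $\gamma$ is locally convex (as a curve in $\R^{n+1}$) if and only if $g\gamma$ is locally convex; by the remark above this is the same as saying $t \mapsto \gamma(t)/\|\gamma(t)\|$ is locally convex as a curve in $\SS^n$.

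For $(ii)$, assume $\gamma_1(t)>0$ for all $t \in [0,1]$, and set $g(t):=1/\gamma_1(t)$, which is again a smooth positive function on $[0,1]$. Then $g(t)\gamma(t)=\bigl(1,\gamma_2(t)/\gamma_1(t),\dots,\gamma_{n+1}(t)/\gamma_1(t)\bigr)$, and Proposition~\ref{proplocconv}$(iii)$ yields that $\gamma$ is locally convex if and only if $g\gamma$ is. There is essentially no obstacle here: the whole content is the bookkeeping identity $\mathrm{det}(g\gamma,(g\gamma)',\dots,(g\gamma)^{(n)})=g^{n+1}\,\mathrm{det}(\gamma,\gamma',\dots,\gamma^{(n)})$ already established in Proposition~\ref{proplocconv}, together with the observation that the two scaling functions chosen above are genuinely smooth and positive on the \emph{closed} interval $[0,1]$, which is exactly the hypothesis needed to invoke that proposition.
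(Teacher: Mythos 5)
Your proof is correct and matches the paper's argument exactly: apply Proposition~\ref{proplocconv}$(iii)$ with $g(t)=\|\gamma(t)\|^{-1}$ for $(i)$ and $g(t)=\gamma_1(t)^{-1}$ for $(ii)$. (You also silently corrected a small typo in the statement of $(ii)$, writing $\gamma_{n+1}/\gamma_1$ where the paper's display ends at $\gamma_n/\gamma_1$.)
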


\begin{proof}
Apply the point $(iii)$ of Proposition~\ref{proplocconv},  with $g(t)=||\gamma(t)||^{-1}$ to prove $(i)$ and with $g(t)=\gamma_1(t)^{-1}$ to prove $(ii)$.   
\end{proof}

\medskip

Even though we will be mainly interested in locally convex curves, it will be useful in the sequel to consider a larger class of curves.

\begin{definition}\label{defgeneric}
A smooth curve $\gamma:[0,1] \rightarrow M$ is called \emph{generic}, or \emph{$(n-1)$-order free}, if the vectors $\gamma(t), \gamma'(t),\gamma''(t),\dots,\gamma^{(n-1)}(t)$ are linearly independent for all $t \in (0,1)$. In particular, $\gamma(t) \neq 0 $ for all $t \in (0,1)$. 
\end{definition}

It is clear that a locally convex curve is generic in the above sense. It is also clear that Proposition~\ref{proplocconv} and Corollary~\ref{corlocconv} are valid if one replaces all instances of "locally convex" by "generic". 

In \S\ref{s33}, we will see a simple geometric characterization of generic curves and locally convex curves in the special cases $M=\SS^2$ and $M=\SS^3$. 

In the case $M=\SS^2$, since $\gamma'(t)$ is always orthogonal to $\gamma(t)$ (as one can see by differentiating the equality $\gamma(t).\gamma(t)=1$), $\gamma(t)$ and $\gamma'(t)$ are linearly independent if and only if $\gamma'(t) \neq 0$, that, is generic curves on $\SS^2$ are just immersions. For immersions on $\SS^2$ we will define the notion of geodesic curvature, and we will see that locally convex curves on $\SS^2$ are immersions with positive geodesic curvature.

In the case $M=\SS^3$, generic curves no longer coincide with immersions, but can be seen as immersions with non-zero geodesic curvature. For such a curve, one can further define the notion of geodesic torsion and we will see that locally convex curves are generic curves with positive geodesic torsion.

\section{Convex curves}\label{s32}

Next, let us introduce a special class of locally convex curves, which will be fundamental in the study of the space of all locally convex curves. 

\begin{definition}\label{defc}
A smooth curve $\gamma : [0,1] \rightarrow M$ is called \emph{globally convex} if any hyperplane $H \subseteq \mathbb{R}^{n+1}$ intersects the image of $\gamma$ in at most $n$ points, counting with multiplicity. 
\end{definition}

We need to clarify the notion of multiplicity in this definition. First, endpoints of the curve are not counted as intersections. Then, if $\gamma(t) \in H$ for some $t \in (0,1)$, the multiplicity of the intersection point $\gamma(t)$ is the smallest integer $k \geq 1$ such that
\[ \gamma^{(j)}(t) \in H, \quad 0 \leq j \leq k-1. \] 
So the multiplicity is one if $\gamma(t) \in H$ but $\gamma'(t) \notin H$, it is two if $\gamma(t) \in H$, $\gamma'(t) \in H$ but $\gamma''(t) \notin H$, and so on.

The following proposition is then obvious.

\begin{proposition}
Globally convex curves are locally convex.
\end{proposition}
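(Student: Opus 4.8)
The statement to prove is: \emph{Globally convex curves are locally convex.}

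\medskip

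The plan is to argue by contraposition: if a smooth curve $\gamma : [0,1] \rightarrow M$ fails to be locally convex at some interior time, then one can produce a hyperplane meeting the image of $\gamma$ in at least $n+1$ points counted with multiplicity, hence $\gamma$ is not globally convex. Suppose $\det(\gamma(t_0),\gamma'(t_0),\dots,\gamma^{(n)}(t_0)) = 0$ for some $t_0 \in (0,1)$. Then the $n+1$ vectors $\gamma(t_0),\gamma'(t_0),\dots,\gamma^{(n)}(t_0)$ span a subspace of $\mathbb{R}^{n+1}$ of dimension at most $n$, so there is a hyperplane $H$ (through the origin, a codimension-one linear subspace) containing all of them: $\gamma^{(j)}(t_0) \in H$ for $0 \leq j \leq n$. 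By the very definition of multiplicity given just above the statement, the intersection point $\gamma(t_0)$ then has multiplicity at least $n+1$ (the smallest $k$ with $\gamma^{(j)}(t_0) \in H$ for all $0 \leq j \leq k-1$ is at least $n+1$, since this holds for $k = n+1$). Thus $H$ meets the image of $\gamma$ in at least $n+1$ points counted with multiplicity, contradicting global convexity. This gives the result for $M = \mathbb{R}^{n+1}$ directly, and for $M = \SS^n$ as well, since a curve in $\SS^n$ is in particular a curve in $\mathbb{R}^{n+1}$ and both the notion of locally convex and of globally convex are phrased identically in terms of the ambient $\mathbb{R}^{n+1}$.

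\medskip

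One small point I would want to nail down is the conventions around hyperplanes and multiplicity. Definition~\ref{defc} speaks of a hyperplane $H \subseteq \mathbb{R}^{n+1}$; I will read this as an affine hyperplane, but the argument works the same way: if the $n+1$ derivative vectors at $t_0$ are linearly dependent, they lie in a common $n$-dimensional linear subspace $V$, and then the affine hyperplane $H := \gamma(t_0) + V$ contains $\gamma(t_0)$ and, since $\gamma^{(j)}(t_0) \in V$ for $1 \leq j \leq n$, the multiplicity condition $\gamma^{(j)}(t_0) \in H$ for $0 \leq j \leq k-1$ (interpreted as the affine translate, as is standard for counting contact order) is satisfied for $k = n+1$. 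Either reading yields a hyperplane with an intersection of multiplicity $\geq n+1$. I would also note explicitly that $\gamma(t_0) \neq 0$ is not needed here — even the degenerate possibility that $\gamma(t_0) = 0$ is absorbed, since then $\det(\gamma(t_0),\dots) = 0$ automatically and the conclusion (not locally convex) holds trivially.

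\medskip

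There is essentially no hard step here; the proof is a direct unwinding of the two definitions together with the elementary linear-algebra fact that $n+1$ vectors in $\mathbb{R}^{n+1}$ with vanishing determinant lie in a common hyperplane. The only thing requiring a modicum of care is matching the combinatorial definition of multiplicity to the dimension count, i.e.\ checking that ``all of $\gamma(t_0), \dots, \gamma^{(n)}(t_0)$ lie in $H$'' translates to ``the multiplicity of $\gamma(t_0)$ is at least $n+1$'' — which is immediate from the stated definition, since that multiplicity is by definition the least $k \geq 1$ with $\gamma^{(j)}(t_0) \in H$ for $0 \leq j \leq k-1$, and here we may take $k = n+1$. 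Hence no hyperplane can be excluded, and a globally convex curve, which by definition never has an intersection of multiplicity exceeding $n$, must be locally convex everywhere on $(0,1)$.
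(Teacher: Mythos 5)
Your proof is correct and follows essentially the same argument as the paper's: argue by contraposition, observe that linear dependence of $\gamma(t_0),\dots,\gamma^{(n)}(t_0)$ places them in a common hyperplane, and conclude that the multiplicity there is at least $n+1$. The extra remarks on affine versus linear hyperplanes and the $\gamma(t_0)=0$ case are harmless elaborations of the same one-step observation.
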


\begin{proof}
Assume $\gamma$ is not locally convex. Then at some $t \in (0,1)$, the vectors $\gamma(t),\gamma'(t),\gamma''(t),\dots,\gamma^{(n)}(t)$ are linearly dependent, hence they are contained in some hyperplane of $\R^{n+1}$. But then the point $\gamma(t)$ intersects this hyperplane with a multiplicity at least $n+1$, so the curve cannot be globally convex. 
\end{proof}
 
\medskip 

Equivalently, the curve $\gamma=(\gamma_1,\dots,\gamma_{n+1})$ is globally convex if and only if given any $(a_1,\dots,a_{n+1}) \in \R^{n+1}\setminus\{0\}$, the function
\[ t \in [0,1] \mapsto a_1\gamma_1(t)+\cdots a_{n+1}\gamma_{n+1}(t) \in \R \]
has at most $n$ zeroes, counting multiplicities. For globally convex curves, we have the following analogues of Proposition~\ref{proplocconv} and Corollary~\ref{corlocconv}.

\begin{proposition}\label{propconv}
Let $\gamma : [0,1] \rightarrow M$ be a smooth curve. 
\begin{itemize}
\item[(i)] If $\phi : [a,b] \rightarrow [0,1]$ is a reparametrization, then $\gamma$ is globally convex if and only if $\gamma \circ \phi$ is globally convex. \\
\item[(ii)] If $A \in GL_{n+1}^+$, then $\gamma$ is globally convex if and only if the curve
\[ A\gamma : t \in [0,1] \mapsto A\gamma(t) \in \R^{n+1} \]
is globally convex. Moreover, if $A \in SO_{n+1}$, then $\gamma : [0,1] \rightarrow \SS^n$ is globally convex if and only if $A\gamma : [0,1] \rightarrow \SS^n$ is globally convex.\\
\item[(iii)] If $g : [0,1] \rightarrow \R$ is a smooth non-zero function, that is $g(t)\neq 0$ for all $t \in [0,1]$, then $\gamma$ is globally convex if and only if the curve
\[ g\gamma : t \in [0,1] \mapsto g(t)\gamma(t) \in \R^{n+1}\]
is globally convex.
\end{itemize}
\end{proposition}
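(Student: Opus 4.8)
The plan is to prove each of the three items by reducing to the characterization of global convexity in terms of zeros of linear functionals: namely, $\gamma = (\gamma_1, \dots, \gamma_{n+1})$ is globally convex if and only if for every $(a_1, \dots, a_{n+1}) \in \R^{n+1} \setminus \{0\}$, the function $f(t) = a_1\gamma_1(t) + \cdots + a_{n+1}\gamma_{n+1}(t)$ has at most $n$ zeros on $(0,1)$, counting multiplicities. The strategy for each item is to show that the reparametrized, transformed, or rescaled curve has the same number of zeros (with the same multiplicities) for every such functional, possibly after a change of the functional itself.

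For item $(i)$, given a reparametrization $\phi : [a,b] \to [0,1]$, I would note that $f_{\gamma \circ \phi}(t) = f_\gamma(\phi(t))$ for the same vector $(a_1, \dots, a_{n+1})$, and since $\phi$ is a diffeomorphism, it sets up a bijection between the zeros of $f_\gamma$ on $(0,1)$ and the zeros of $f_{\gamma \circ \phi}$ on $(a,b)$. One must check that multiplicities are preserved: if $\tau = \phi(t)$ is a zero of multiplicity $k$ for $f_\gamma$, then by the chain rule (the lower-order derivatives of $f_{\gamma\circ\phi}$ at $t$ are triangular combinations of derivatives of $f_\gamma$ at $\tau$ with nonzero leading coefficients $\phi'(t)^j$), $t$ is a zero of multiplicity $k$ for $f_{\gamma \circ \phi}$. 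Note that, unlike in Proposition~\ref{proplocconv}(i), we do not even need $\phi$ to be orientation-preserving here, since the number of zeros is insensitive to orientation. For item $(ii)$, if $A \in GL_{n+1}^+$ (in fact any $A \in GL_{n+1}$ works) and $b = A^\top a$, then $\langle b, \gamma(t)\rangle = \langle a, A\gamma(t)\rangle$, so $f_{A\gamma}$ with functional $a$ equals $f_\gamma$ with functional $b$; since $A^\top$ is invertible, $a$ ranges over $\R^{n+1}\setminus\{0\}$ exactly when $b$ does, and the hyperplane count is unchanged. The $\SS^n$ statement follows because $A \in SO_{n+1}$ maps $\SS^n$ to itself. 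For item $(iii)$, with $g$ nowhere zero and $h = g\gamma$, we have $f_h(t) = g(t) f_\gamma(t)$ for the same functional $a$; since $g(t) \neq 0$, the zero sets of $f_h$ and $f_\gamma$ coincide, and at a common zero $t$ the multiplicities agree because $f_h^{(j)}(t)$ is a combination of $f_\gamma^{(i)}(t)$ for $i \le j$ with the coefficient of $f_\gamma^{(j)}(t)$ equal to $g(t) \neq 0$, so the smallest $k$ with $f_\gamma, \dots, f_\gamma^{(k-1)}$ vanishing at $t$ equals the smallest such $k$ for $f_h$.

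The only mildly delicate point, and the one I would treat most carefully, is the bookkeeping of multiplicities of zeros under these three operations. The key uniform observation is that in each case the new linear functional applied to the new curve produces a function of $t$ of the form (old function composed with a diffeomorphism) or (nonzero smooth factor times old function), and in both situations the order of vanishing at a point is preserved; I would record this as a small lemma on orders of zeros of smooth functions (composition with a local diffeomorphism, or multiplication by a non-vanishing smooth function, preserves the order of a zero) and then invoke it three times. With that lemma in hand, each of $(i)$, $(ii)$, $(iii)$ is immediate from the zero-counting reformulation of global convexity stated just before the proposition.
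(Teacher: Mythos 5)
Your proof is correct. The paper does not actually write out a proof for this proposition---it just says ``The proofs of Proposition~\ref{propconv} and Corollary~\ref{corconv} are analogous to the proofs of Proposition~\ref{proplocconv} and Corollary~\ref{corlocconv}''---so there is nothing to compare against line by line. Your argument is the natural way to make that remark precise: you translate the determinant/matrix-factorization identities from the proof of Proposition~\ref{proplocconv} into statements about preservation of the order of zeros of the scalar function $t \mapsto \langle a,\gamma(t)\rangle$, which is exactly what the definition of global convexity via hyperplane intersection multiplicities requires. The three small checks (composition with a diffeomorphism preserves orders of zeros and sends interior points to interior points; pulling back the functional by $A^\top$ gives an identical scalar function for the transformed curve; multiplication by a nowhere-vanishing smooth $g$ preserves orders of zeros) are all correct, and your observations that orientation of the reparametrization and sign of $\det A$ are irrelevant here, unlike in the locally convex case, are also accurate.
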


\begin{corollary}\label{corconv}
Let $\gamma : [0,1] \rightarrow \R^{n+1}$ be a smooth curve.
\begin{itemize}
\item[(i)] If $\gamma(t)\neq 0$ for all $t \in [0,1]$, then $\gamma$ is globally convex if and only if the curve
\[ t \in [0,1] \mapsto \frac{\gamma(t)}{||\gamma(t)||} \in \SS^n \]
is globally convex. \\
\item[(ii)] If $\gamma_1(t)>0 $ for all $t \in [0,1]$, then $\gamma$ is globally convex if and only if the curve
\[ t \in [0,1] \mapsto \left(1,\frac{\gamma_2(t)}{\gamma_1(t)}, \dots,\frac{\gamma_n(t)}{\gamma_1(t)} \right) \in \R^{n+1}  \]
is globally convex.
\end{itemize}
\end{corollary}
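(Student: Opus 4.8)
The plan is to deduce Corollary~\ref{corconv} from part $(iii)$ of Proposition~\ref{propconv}, in exact parallel to the way Corollary~\ref{corlocconv} was obtained from Proposition~\ref{proplocconv}. The only thing to supply in each case is a suitable smooth, nowhere-vanishing scalar function $g : [0,1] \to \R$ so that the curve $g\gamma$ is precisely the curve appearing in the statement; after that the corollary is an immediate application of the proposition.

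For $(i)$, I would take $g(t) := ||\gamma(t)||^{-1}$. Since $\gamma$ is smooth and $\gamma(t) \neq 0$ for all $t \in [0,1]$, the function $t \mapsto ||\gamma(t)||^2 = \gamma(t)\cdot\gamma(t)$ is smooth and strictly positive, hence $g$ is smooth and strictly positive (in particular non-zero) on $[0,1]$. Applying Proposition~\ref{propconv}$(iii)$ with this $g$ shows that $\gamma$ is globally convex if and only if $g\gamma = \gamma/||\gamma||$ is globally convex. Note that $g\gamma$ takes values in $\SS^n$, but by Definition~\ref{defc} global convexity of a curve into $\SS^n$ is literally the same condition as global convexity of the same curve viewed in $\R^{n+1}$ (both refer to intersections with hyperplanes of $\R^{n+1}$), so no ambiguity arises.

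For $(ii)$, I would take $g(t) := \gamma_1(t)^{-1}$. Since $\gamma$ is smooth and $\gamma_1(t) > 0$ for all $t \in [0,1]$, the function $g$ is again smooth and non-zero on $[0,1]$. Applying Proposition~\ref{propconv}$(iii)$ with this $g$ gives that $\gamma$ is globally convex if and only if the curve $g\gamma = (1, \gamma_2/\gamma_1, \dots, \gamma_{n+1}/\gamma_1)$ is globally convex, which is exactly the assertion.

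There is essentially no obstacle here: granting Proposition~\ref{propconv}, the corollary is immediate, and the only (routine) point is checking smoothness and non-vanishing of $g$, which is precisely where the hypotheses $\gamma(t) \neq 0$ and $\gamma_1(t) > 0$ enter. If one preferred to avoid invoking the proposition, one could argue directly from the zero-counting reformulation recalled just before Proposition~\ref{propconv}: multiplying $\gamma$ by a nowhere-zero function $g$ does not change the zeros, nor their multiplicities, of any linear functional $t \mapsto a_1\gamma_1(t) + \cdots + a_{n+1}\gamma_{n+1}(t)$, so the ``at most $n$ zeros'' property is preserved in both directions. Either route yields the statement with the same elementary verifications.
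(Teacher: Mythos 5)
Your proof is correct and follows exactly the route the paper indicates: the paper states that the proof of Corollary~\ref{corconv} is analogous to that of Corollary~\ref{corlocconv}, which is precisely an application of part $(iii)$ of the corresponding proposition with $g(t)=||\gamma(t)||^{-1}$ for $(i)$ and $g(t)=\gamma_1(t)^{-1}$ for $(ii)$. Your additional remarks (smoothness and non-vanishing of $g$, and the direct zero-counting argument) are harmless elaborations consistent with the paper's intent.
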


The proofs of Proposition~\ref{propconv} and Corollary~\ref{corconv} are analogous to the proofs of Proposition~\ref{proplocconv} and Corollary~\ref{corlocconv}.

For simplicity, from now on globally convex curves will be simply referred to as convex curves. 

Here are some examples of convex curves, related to the examples~\ref{ex1} and~\ref{ex2} of locally convex curves.

\begin{example}
Let $b_0$, $b_1$ $\in \R$. The curve $c :[b_0,b_1] \rightarrow \R^{n+1}$ defined by
\[ c(t)=(a_0,a_1t,a_2t^2, \dots, a_nt^n), \quad a_0, \dots, a_n>0 \]
is convex. Indeed, it is given by a polynomial of degree $n$ which has then at most $n$ roots counted with multiplicities: it follows that any hyperplane of $\R^{n+1}$ intersects the image of $c$ in at most $n$ points, counting with multiplicity. 
\end{example}

\begin{example}\label{exclosed}
For $n=2k$, the curve $\xi :[-\pi,\pi] \rightarrow \SS^{n}$ defined by
\[ \xi(t)=(c_0,c_1\cos t,c_1 \sin t, \dots, c_k\cos (kt),c_k \sin (kt)), \]
where $c_0,c_1, \cdots, c_k$ are positive numbers and $c_0^2+c_1^2+\cdots + c_k^2=1$, is a closed convex curve. To see this, consider the positive reparametrization
\[ t \in (-\pi,\pi) \mapsto x=\tan(t/2) \in \R.  \] 
With this change of variables, the curve $\xi$ is given by a polynomial of degree at most $n=2k$, so exactly like for the curve $c$ above, it is convex.

For $n=2k-1$, the curve $\xi :[-\pi/2,\pi/2] \rightarrow \SS^{n}$ defined by
\[ \xi(t)=(c_1\cos t,c_1 \sin t,c_2\cos (3t),c_2 \sin (3t), \dots, c_k\cos ((2k-1)t),c_k \sin ((2k-1)t)), \]
where $c_1, \cdots, c_k$ are positive numbers $c_1^2+\cdots + c_k^2=1$, is a convex curve, but it is not closed. To prove that this is convex, consider the positive reparametrization
\[ t \in (-\pi/2,\pi/2) \mapsto x=\tan(t) \in \R\]
and proceeds as before. We will explain later why there cannot exist closed globally convex curves on odd-dimensional spheres.
\end{example}

\section{Frenet frame curves}\label{s33}

Now to a locally convex curve $\gamma:[0,1] \rightarrow M$, we associate a smooth curve 
\[ \mathcal{F_{\gamma}}:[0,1] \rightarrow SO_{n+1} \] 
as follows. By assumption, the map
\[ t \in [0,1] \mapsto (\gamma(t),\gamma'(t),\dots,\gamma^{(n)}(t)) \]
takes values in $GL_{n+1}^+$. By Gram-Schmidt orthonormalization, there exist unique $\mathcal{F}_\gamma(t) \in SO_{n+1}$ and $R_\gamma(t) \in Up_{n+1}^+$ such that
\begin{equation}\label{frenet_gram}
(\gamma(t),\gamma'(t),\dots,\gamma^{(n)}(t))=\mathcal{F_{\gamma}}(t) R_\gamma(t),
\end{equation}
where we recall that $Up_{n+1}^+$ is the space of upper triangular matrices with positive diagonal entries and real coefficients. 

\begin{definition}
The curve $\mathcal{F_{\gamma}}:[0,1] \rightarrow SO_{n+1}$ defined by~\eqref{frenet_gram} is called the \emph{Frenet frame curve} of the locally convex curve $\gamma:[0,1] \rightarrow M$. 
\end{definition}

This definition does not depend on the choice of a (positive) parametrization: indeed, recall that (proof of point $(i)$ in Proposition~\ref{proplocconv}) 
\[ (\gamma \circ \phi(t),(\gamma \circ \phi)'(t),\dots,(\gamma \circ \phi)^{(n)}(t))=(\gamma(\tau),\gamma'(\tau),\dots,\gamma^{(n)}(\tau))U \]
where $\tau=\phi(t)$ is a positive reparametrization, and where $U$ is a upper triangular matrix whose diagonal is given by $(1,\phi'(t), \dots,(\phi'(t)^n)$. So $U \in Up_{n+1}^+$, which implies that $\mathcal{F_{\gamma \circ \phi}}(t)=\mathcal{F_{\gamma}}(\tau)$. Let us also remark that the Frenet frame curve $\mathcal{F}_\gamma$ uniquely determines the curve $\gamma$.

One can still define a Frenet frame for generic curves which are not necessarily locally convex. Indeed, one can apply Gram-Schmidt to the linearly independent vectors $\gamma(t),\gamma'(t),\dots,\gamma^{(n-1)}(t)$ to obtain $n$ orthonormal vectors $u_0(t),u_1(t), \dots, u_{n-1}(t)$. Then, there is a unique vector $u_n(t)$ for which $u_0(t),u_1(t), \dots, u_{n-1}(t), u_n(t)$ is an positive orthonormal basis. We may thus set
\begin{equation}\label{frenet_gram2}
\mathcal{F_{\gamma}}(t)=(u_0(t),u_1(t), \dots, u_{n-1}(t), u_n(t)) \in SO_{n+1}
\end{equation}
and make the following more general definition. 

\begin{definition}
The curve $\mathcal{F_{\gamma}}:[0,1] \rightarrow SO_{n+1}$ defined by~\eqref{frenet_gram2} is called the \emph{Frenet frame curve} of the generic curve $\gamma:[0,1] \rightarrow M$. 
\end{definition}

Clearly, the latter definition coincides with the former when $\gamma$ is locally convex.

\medskip

Let us look at the special case $M=\SS^2$. As we already explained, generic curves are just immersions. Let us denote by $\mathbf{t}_\gamma(t)$ the unit tangent vector of $\gamma$ at the point $\gamma(t)$, that is
\[ \mathbf{t}_\gamma(t):=\frac{\gamma'(t)}{||\gamma'(t)||} \in \SS^2, \] 
and by $\mathbf{n}_\gamma(t)$ be the unit normal vector of $\gamma$ at the point $\gamma(t)$, that is 
\[ \mathbf{n}_\gamma(t):= \gamma(t) \times \mathbf{t}_\gamma(t) \]
where $\times$ is the cross-product in $\R^3$. We then have
\[ \mathcal{F_{\gamma}}(t)=(\gamma(t),\mathbf{t}_\gamma(t),\mathbf{n}_\gamma(t)) \in SO_3 \]
where $\mathbf{t}_\gamma(t)$ is the unit tangent and $\mathbf{n}_\gamma(t)$ the unit normal we defined above.  
The geodesic curvature $\kappa_\gamma(t)$ is by definition
\[ \kappa_\gamma(t):=\mathbf{t}_\gamma'(t)\cdot\mathbf{n}_\gamma(t) \]
where $\cdot$ is the Euclidean inner product. Here's a geometric definition of locally convex curves in $\SS^2$.

\begin{proposition}
A generic curve $\gamma:[0,1] \rightarrow \SS^2$ is locally convex if and only if $\kappa_\gamma(t)> 0$ for all $t \in (0,1)$. 
\end{proposition}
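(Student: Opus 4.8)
The plan is to write everything in terms of the Frenet frame curve $\mathcal{F}_\gamma = (\gamma, \mathbf{t}_\gamma, \mathbf{n}_\gamma) \colon [0,1] \to SO_3$ and compare the sign of the Frenet determinant $\det(\gamma(t), \gamma'(t), \gamma''(t))$ with the sign of $\kappa_\gamma(t)$. First I would use Proposition~\ref{proplocconv}(i) to reduce to the arc-length parametrization, so that $||\gamma'(t)|| \equiv 1$, hence $\gamma'(t) = \mathbf{t}_\gamma(t)$ and $\gamma''(t) = \mathbf{t}_\gamma'(t)$. Local convexity says exactly $\det(\gamma, \gamma', \gamma'') \neq 0$, and since the sign is constant and we have normalized to the positive case, local convexity (given that $\gamma$ is already generic, i.e.\ an immersion) is equivalent to $\det(\gamma(t), \mathbf{t}_\gamma(t), \mathbf{t}_\gamma'(t)) > 0$ for all $t$. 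Actually, genericity already guarantees $\gamma'(t) \neq 0$, so the only content to check is the sign, which is what we want to match with $\kappa_\gamma$.

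The key computation is to express $\mathbf{t}_\gamma'(t)$ in the orthonormal basis $(\gamma(t), \mathbf{t}_\gamma(t), \mathbf{n}_\gamma(t))$. Differentiating $\mathbf{t}_\gamma \cdot \mathbf{t}_\gamma = 1$ gives $\mathbf{t}_\gamma' \cdot \mathbf{t}_\gamma = 0$; differentiating $\gamma \cdot \mathbf{t}_\gamma = 0$ gives $\gamma' \cdot \mathbf{t}_\gamma + \gamma \cdot \mathbf{t}_\gamma' = 0$, i.e.\ $\gamma \cdot \mathbf{t}_\gamma' = -\mathbf{t}_\gamma \cdot \mathbf{t}_\gamma = -1$ (using $\gamma' = \mathbf{t}_\gamma$ in arc-length). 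Hence
\[
\mathbf{t}_\gamma'(t) = -\gamma(t) + \kappa_\gamma(t)\,\mathbf{n}_\gamma(t),
\]
where the coefficient along $\mathbf{n}_\gamma$ is $\kappa_\gamma(t)$ by the very definition of geodesic curvature. Plugging this in,
\[
\det(\gamma(t), \mathbf{t}_\gamma(t), \mathbf{t}_\gamma'(t)) = \det\bigl(\gamma(t), \mathbf{t}_\gamma(t), -\gamma(t) + \kappa_\gamma(t)\mathbf{n}_\gamma(t)\bigr) = \kappa_\gamma(t)\,\det(\gamma(t), \mathbf{t}_\gamma(t), \mathbf{n}_\gamma(t)),
\]
and since $(\gamma, \mathbf{t}_\gamma, \mathbf{n}_\gamma) = \mathcal{F}_\gamma(t) \in SO_3$ has determinant $1$, this equals $\kappa_\gamma(t)$. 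Therefore $\det(\gamma(t), \gamma'(t), \gamma''(t)) = \kappa_\gamma(t)$ in the arc-length parametrization, and the equivalence ``$\gamma$ locally convex (positive) $\iff \kappa_\gamma > 0$'' follows immediately.

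There is no real obstacle here; the only thing to be careful about is that $\kappa_\gamma$, $\mathbf{t}_\gamma$, $\mathbf{n}_\gamma$ were defined without reference to a parametrization up to the sign conventions chosen in the excerpt, so I should note that passing to arc-length does not change $\mathbf{t}_\gamma$, $\mathbf{n}_\gamma$ or $\kappa_\gamma$ (a positive reparametrization leaves the unit tangent and unit normal invariant, and $\kappa_\gamma$ is likewise reparametrization-invariant since it is $\mathbf{t}_\gamma' \cdot \mathbf{n}_\gamma$ computed with respect to arc-length), while by Proposition~\ref{proplocconv}(i) it does not change whether $\gamma$ is locally convex. Alternatively, one can avoid the reduction to arc-length by writing $\gamma' = ||\gamma'||\,\mathbf{t}_\gamma$ and $\gamma'' = ||\gamma'||'\,\mathbf{t}_\gamma + ||\gamma'||\,\mathbf{t}_\gamma'(t)\cdot(\text{chain rule factor})$ and observing that the extra terms are multiples of $\mathbf{t}_\gamma$, hence drop out of the determinant, which then equals $||\gamma'||^3 \kappa_\gamma$ up to the positive factor $||\gamma'||^3$; either way the sign of the determinant equals the sign of $\kappa_\gamma$, completing the proof.
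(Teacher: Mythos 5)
Your proof is correct and follows essentially the same route as the paper: reduce to arc-length, expand $\gamma''=-\gamma+\kappa_\gamma\mathbf{n}_\gamma$, and observe that $\det(\gamma,\gamma',\gamma'')=\kappa_\gamma\det\mathcal{F}_\gamma=\kappa_\gamma$. The paper phrases the final step via the upper-triangular matrix $R_\gamma(t)$ in the Gram--Schmidt decomposition $(\gamma,\gamma',\gamma'')=\mathcal{F}_\gamma R_\gamma$ rather than by column operations, but the computation is the same.
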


\begin{proof}
Without loss of generality, we may assume that $\gamma$ is parametrized by arc-length. Then $\mathbf{t}_\gamma(t)=\gamma'(t)$, and since $||\gamma(t)||=1$, $\gamma'(t)$ is orthogonal to $\gamma(t)$. Moreover, $\mathbf{n}_\gamma(t)= \gamma(t) \times \gamma'(t)$ and $\kappa_\gamma(t)=\gamma''(t)\cdot\mathbf{n}_\gamma(t)$. A simple computation gives
\[ \gamma''(t)=-\gamma(t)+\kappa_\gamma(t)\mathbf{n}_\gamma(t) \]
and hence we have the equality
\[ (\gamma(t),\gamma'(t),\gamma''(t))=\mathcal{F}_\gamma(t)R_\gamma(t) \]
with
\[ 
R_\gamma(t)=
\begin{pmatrix}
1 & 0 & -1 \\
0 & 1 & 0 \\
0 & 0 & \kappa_\gamma(t)
\end{pmatrix}. \]
Since $\mathcal{F}_\gamma(t)$ has determinant $1$, we have
\[ \mathrm{det}(\gamma(t),\gamma'(t),\gamma''(t))=\mathrm{det}R_\gamma(t)=\kappa_\gamma(t) \]
and this proves the statement.
\end{proof}

\medskip

Next let us consider the case $M=\SS^3$. Assume that $\gamma$ is generic, that is $\gamma(t),\gamma'(t),\gamma''(t)$ are linearly independent, so that its Frenet frame $\mathcal{F}_\gamma(t)$ can be defined, and let $e_1,e_2,e_3,e_4$ the canonical basis of $\R^4$. It is clear that
\[ \mathcal{F}_\gamma(t)e_1=\gamma(t), \quad \mathcal{F}_\gamma(t)e_2=\mathbf{t}_\gamma(t)=\frac{\gamma'(t)}{||\gamma'(t)||}.  \]
Let us now define the unit normal $\mathbf{n}_\gamma(t)$ and binormal $\mathbf{b}_\gamma(t)$ by the formulas
\[ \mathbf{n}_\gamma(t)=\mathcal{F}_\gamma(t)e_3, \quad \mathbf{b}_\gamma(t)=\mathcal{F}_\gamma(t)e_4  \]
so that  
\[ \mathcal{F_{\gamma}}(t)=(\gamma(t),\mathbf{t}_\gamma(t),\mathbf{n}_\gamma(t),\mathbf{b}_\gamma(t)) \in SO_4. \]
The geodesic curvature $\kappa_\gamma(t)$ is still defined by
\[ \kappa_\gamma(t):=\mathbf{t}_\gamma'(t)\cdot\mathbf{n}_\gamma(t) \]
but we further define the geodesic torsion $\tau_{\gamma}(t)$ by
\[ \tau_\gamma(t):=-\mathbf{b}_\gamma'(t)\cdot\mathbf{n}_\gamma(t). \]
It is clear that the geodesic curvature is never zero. We can then characterize locally convex curves in $\SS^3$.

\begin{proposition}
A generic curve $\gamma:[0,1] \rightarrow \SS^3$ is locally convex if and only if $\tau_\gamma(t)> 0$ for all $t \in (0,1)$. 
\end{proposition}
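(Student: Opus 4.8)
The plan is to mimic the proof of the corresponding statement on $\SS^2$: I will compute $\det(\gamma(t),\gamma'(t),\gamma''(t),\gamma'''(t))$ in terms of the Frenet frame and show that it equals $\kappa_\gamma(t)^2\,\tau_\gamma(t)$; since the geodesic curvature $\kappa_\gamma$ never vanishes, this at once yields the equivalence (recall our convention that a positive locally convex curve is one for which this determinant is positive).

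First I would reduce to the arc-length parametrization. By Proposition~\ref{proplocconv}$(i)$ (which is valid also for generic curves), being locally convex is invariant under positive reparametrization; moreover, since $\mathcal{F}_{\gamma\circ\phi}(t)=\mathcal{F}_\gamma(\phi(t))$ for a positive reparametrization $\phi$, one gets $\mathbf{n}_{\gamma\circ\phi}(t)=\mathbf{n}_\gamma(\phi(t))$, $\mathbf{b}_{\gamma\circ\phi}(t)=\mathbf{b}_\gamma(\phi(t))$ and hence $\tau_{\gamma\circ\phi}(t)=\phi'(t)\,\tau_\gamma(\phi(t))$, so the sign of $\tau_\gamma$ is preserved as well. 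Thus I may assume $\|\gamma'(t)\|=1$, so that $\mathbf{t}_\gamma(t)=\gamma'(t)$; differentiating $\gamma(t)\cdot\gamma(t)=1$ and $\gamma'(t)\cdot\gamma'(t)=1$ gives $\gamma\cdot\gamma'=0$ and $\gamma'\cdot\gamma''=0$.

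Next I would write down the Frenet--Serret-type equations. From the Gram--Schmidt construction of $\mathcal{F}_\gamma$ applied to the linearly independent triple $\gamma,\gamma',\gamma''$ (here $n-1=2$), the column $u_0$ is $\gamma$, the column $u_1$ is $\gamma'=\mathbf{t}_\gamma$, the column $u_2$ is $\mathbf{n}_\gamma$, and the last column $\mathbf{b}_\gamma$ is orthogonal to $\mathrm{span}(\gamma,\gamma',\gamma'')$; in particular $\gamma''\cdot\mathbf{b}_\gamma=0$. Decomposing $\gamma''$ in the orthonormal basis $(\gamma,\mathbf{t}_\gamma,\mathbf{n}_\gamma,\mathbf{b}_\gamma)$ and using $\gamma''\cdot\gamma=-1$, $\gamma''\cdot\gamma'=0$, $\gamma''\cdot\mathbf{n}_\gamma=\kappa_\gamma$, $\gamma''\cdot\mathbf{b}_\gamma=0$ gives $\gamma''=-\gamma+\kappa_\gamma\mathbf{n}_\gamma$, with $\kappa_\gamma>0$ since it is the norm of the component of $\gamma''$ orthogonal to $\mathrm{span}(\gamma,\gamma')$, which is nonzero by genericity. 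Differentiating the orthonormality relations among $\gamma,\mathbf{t}_\gamma,\mathbf{n}_\gamma,\mathbf{b}_\gamma$ and using the definitions of $\kappa_\gamma$ and $\tau_\gamma$ yields $\mathbf{n}_\gamma'=-\kappa_\gamma\mathbf{t}_\gamma+\tau_\gamma\mathbf{b}_\gamma$; differentiating $\gamma''=-\gamma+\kappa_\gamma\mathbf{n}_\gamma$ and substituting then gives $\gamma'''=-(1+\kappa_\gamma^2)\mathbf{t}_\gamma+\kappa_\gamma'\,\mathbf{n}_\gamma+\kappa_\gamma\tau_\gamma\,\mathbf{b}_\gamma$.

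Finally, reading off the coefficients in the basis $(\gamma,\mathbf{t}_\gamma,\mathbf{n}_\gamma,\mathbf{b}_\gamma)$ gives $(\gamma(t),\gamma'(t),\gamma''(t),\gamma'''(t))=\mathcal{F}_\gamma(t)R_\gamma(t)$ with
\[ R_\gamma(t)=\begin{pmatrix} 1 & 0 & -1 & 0 \\ 0 & 1 & 0 & -(1+\kappa_\gamma(t)^2) \\ 0 & 0 & \kappa_\gamma(t) & \kappa_\gamma'(t) \\ 0 & 0 & 0 & \kappa_\gamma(t)\tau_\gamma(t) \end{pmatrix}, \]
which is upper triangular, so $\det(\gamma,\gamma',\gamma'',\gamma''')=\det\mathcal{F}_\gamma(t)\cdot\det R_\gamma(t)=\kappa_\gamma(t)^2\,\tau_\gamma(t)$ because $\mathcal{F}_\gamma(t)\in SO_4$. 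Since $\kappa_\gamma(t)>0$ for all $t$, this determinant is everywhere nonzero (equivalently, everywhere positive) if and only if $\tau_\gamma(t)>0$ for all $t\in(0,1)$, which is exactly the assertion. The only mildly delicate points are the justification that $\gamma''\perp\mathbf{b}_\gamma$ from the Gram--Schmidt definition of the Frenet frame, and the bookkeeping in the Frenet--Serret equations; I do not expect any genuine obstacle beyond this routine computation.
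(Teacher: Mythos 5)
Your proposal is correct and follows essentially the same route as the paper: reduce to arc-length parametrization, expand $\gamma''$ and $\gamma'''$ in the orthonormal Frenet basis, read off the upper-triangular matrix $R_\gamma$ with $\det R_\gamma=\kappa_\gamma^2\tau_\gamma$, and conclude since $\mathcal{F}_\gamma(t)\in SO_4$ and $\kappa_\gamma$ never vanishes. The only (harmless) cosmetic difference is that you derive the Frenet--Serret relation $\mathbf{n}_\gamma'=-\kappa_\gamma\mathbf{t}_\gamma+\tau_\gamma\mathbf{b}_\gamma$ first, whereas the paper computes the dot products $\gamma'''\cdot\gamma$, $\gamma'''\cdot\gamma'$, $\gamma'''\cdot\mathbf{n}_\gamma$, $\gamma'''\cdot\mathbf{b}_\gamma$ directly; you also make explicit the (correct) observation that the sign of $\tau_\gamma$ is invariant under positive reparametrization, which the paper leaves implicit in the ``without loss of generality'' step.
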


\begin{proof}
Without loss of generality, we may assume that $\gamma$ is parametrized by arc-length. Then, as before, $\mathbf{t}_\gamma(t)=\gamma'(t)$ and $\kappa_\gamma(t)=\gamma''(t)\cdot\mathbf{n}_\gamma(t)$. Also
\[ \gamma''(t)=-\gamma(t)+\kappa_\gamma(t)\mathbf{n}_\gamma(t) \]
and hence
\[ \gamma'''(t)=-\gamma'(t)+\kappa_\gamma(t)'\mathbf{n}_\gamma(t)+\kappa_\gamma(t)\mathbf{n}_\gamma'(t). \]
Since $\mathbf{b}_\gamma(t)\cdot \mathbf{n}_\gamma(t)=0$, we have
\[ \tau_\gamma(t)=-\mathbf{b}_\gamma'(t)\cdot \mathbf{n}_\gamma(t)=\mathbf{b}_\gamma(t)\cdot \mathbf{n}'_\gamma(t). \]
One then easily computes
\[ \gamma'''(t)\cdot\gamma(t)=0, \]
\[ \gamma'''(t)\cdot\gamma'(t)=-1-\kappa_\gamma(t)^2, \]
\[ \gamma'''(t)\cdot\mathbf{n}_\gamma(t)=\kappa_\gamma'(t), \]
\[ \gamma'''(t)\cdot\mathbf{b}_\gamma(t)=\kappa_\gamma(t)\tau_\gamma(t). \]
So we have the equality
\[ (\gamma(t),\gamma'(t),\gamma''(t),\gamma'''(t))=\mathcal{F}_\gamma(t)R_\gamma(t) \]
with
\[ 
R_\gamma(t)=
\begin{pmatrix}
1 & 0 & -1 & 0 \\
0 & 1 & 0 & -1-\kappa_\gamma(t)^2\\
0 & 0 & \kappa_\gamma(t) & \kappa_\gamma'(t) \\
0 & 0 & 0 & \kappa_\gamma(t)\tau_\gamma(t).
\end{pmatrix}. \]
Since $\mathcal{F}_\gamma(t)$ has determinant $1$ and $\kappa_\gamma(t)$ is never zero, we have
\[ \mathrm{det}(\gamma(t),\gamma'(t),\gamma''(t),\gamma'''(t))=\mathrm{det}R_\gamma(t)=\kappa_\gamma(t)^2\tau_\gamma(t) \]
and this proves the statement.
\end{proof}

\section{Spaces $\mathcal{L}\SS^n$, $\mathcal{L}\SS^n(Q)$, $\mathcal{L}\SS^n(z)$ and $\mathcal{G}\SS^n$, $\mathcal{G}\SS^n(Q)$, $\mathcal{G}\SS^n(z)$}\label{s34}

We can finally define the space of curves we will be interested in. Let us first start with locally convex curves (the case of generic curves will be analogous).

\begin{definition}\label{LSn}
We define ${\mathcal{L}\SS^{n}}$ to be the set of all locally convex curves $\gamma : [0,1] \rightarrow \SS^{n}$ such that $\mathcal{F}_\gamma(0)=I$, where $I \in SO_{n+1}$ is the identity matrix.
\end{definition}

Hence ${\mathcal{L}\SS^{n}}$ is the set of all locally convex curves with standard initial (Frenet) frame and an arbitrary final frame. Given $Q_0 \in SO_{n+1}$, one may consider the space ${\mathcal{L}\SS^{n}}_{Q_0}$ to be the set of all locally convex curves in $\gamma : [0,1] \rightarrow \SS^{n}$ such that $\mathcal{F}_\gamma(0)=Q_0$. As the proposition below, this is not more general.

\begin{proposition}
The space ${\mathcal{L}\SS^{n}}_{Q_0}$ is homeomorphic to the space $SO_{n+1} \times {\mathcal{L}\SS^{n}}$.
\end{proposition}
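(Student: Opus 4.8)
The plan is to build an explicit homeomorphism by using the group $SO_{n+1}$ to ``rotate'' the initial frame of a curve in $\mathcal{L}\SS^n_{Q_0}$ back to the identity, thereby landing in $\mathcal{L}\SS^n$. Concretely, I would define a map $\Phi : SO_{n+1} \times \mathcal{L}\SS^n \to \mathcal{L}\SS^n_{Q_0}$ by $\Phi(A,\gamma) = A\gamma$, meaning the curve $t \mapsto A\gamma(t)$. By Proposition~\ref{proplocconv}(ii), since $A \in SO_{n+1}$, the curve $A\gamma$ is again locally convex and takes values in $\SS^n$. The key computation is how the Frenet frame transforms: from the equality of matrices $(A\gamma(t),(A\gamma)'(t),\dots,(A\gamma)^{(n)}(t)) = A\,(\gamma(t),\gamma'(t),\dots,\gamma^{(n)}(t)) = A\,\mathcal{F}_\gamma(t)R_\gamma(t)$, and since $A\mathcal{F}_\gamma(t) \in SO_{n+1}$ while $R_\gamma(t) \in Up^+_{n+1}$, uniqueness of the Gram--Schmidt factorization gives $\mathcal{F}_{A\gamma}(t) = A\,\mathcal{F}_\gamma(t)$. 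In particular $\mathcal{F}_{A\gamma}(0) = A\,\mathcal{F}_\gamma(0) = A\cdot I = A$. So the correct map into $\mathcal{L}\SS^n_{Q_0}$ is $\Phi(A,\gamma) = (Q_0 A)\gamma$, which has initial frame $Q_0 A$; restricting the first coordinate to vary over all of $SO_{n+1}$ I get every value $Q_0 A \in SO_{n+1}$, i.e.\ all of $\mathcal{L}\SS^n_{Q_0}$.

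Next I would construct the inverse explicitly. Given $\delta \in \mathcal{L}\SS^n_{Q_0}$, set $A := Q_0^{-1}\mathcal{F}_\delta(0) \in SO_{n+1}$ and $\gamma := \mathcal{F}_\delta(0)^{-1}\delta$ (again the curve $t \mapsto \mathcal{F}_\delta(0)^{-1}\delta(t)$). By the transformation rule above, $\mathcal{F}_\gamma(0) = \mathcal{F}_\delta(0)^{-1}\mathcal{F}_\delta(0) = I$, so $\gamma \in \mathcal{L}\SS^n$; and $\Psi(\delta) := (A,\gamma)$ is readily checked to be a two-sided inverse to $\Phi$: indeed $\Phi(\Psi(\delta)) = (Q_0 A)\gamma = \mathcal{F}_\delta(0)\,\mathcal{F}_\delta(0)^{-1}\delta = \delta$, and conversely $\Psi(\Phi(A,\gamma)) = \Psi((Q_0 A)\gamma)$; since $\mathcal{F}_{(Q_0A)\gamma}(0) = Q_0 A \cdot I = Q_0 A$, the first coordinate recovered is $Q_0^{-1}(Q_0 A) = A$ and the second is $(Q_0 A)^{-1}(Q_0 A)\gamma = \gamma$.

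Finally I would verify continuity of $\Phi$ and $\Psi$ with respect to the chosen topologies on these spaces of curves (the topology introduced in \S\ref{s36} via the Jacobian-curve point of view). Continuity of $\Phi$ amounts to the fact that the action $(A,\gamma) \mapsto A\gamma$ of the (compact Lie) group $SO_{n+1}$ on curves is continuous, which is routine since $A$ acts linearly on $\R^{n+1}$ and hence continuously in whatever $C^k$-type topology is placed on the curve spaces. Continuity of $\Psi$ additionally requires that the evaluation map $\delta \mapsto \mathcal{F}_\delta(0)$ is continuous from $\mathcal{L}\SS^n_{Q_0}$ to $SO_{n+1}$, which follows because $\mathcal{F}_\delta(0)$ is obtained by Gram--Schmidt from the derivatives $\delta(0),\delta'(0),\dots,\delta^{(n)}(0)$, and Gram--Schmidt is a continuous (indeed smooth) operation on frames in $GL^+_{n+1}$.

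The main obstacle is purely bookkeeping: one must be careful that the topologies on $\mathcal{L}\SS^n$, $\mathcal{L}\SS^n_{Q_0}$, and on the product with $SO_{n+1}$ are the ones making these maps continuous; once the curve topology from \S\ref{s36} is pinned down this is straightforward, since all the maps involved are built from matrix multiplication and Gram--Schmidt, both continuous. The algebraic identities themselves are immediate from the transformation rule $\mathcal{F}_{A\gamma} = A\mathcal{F}_\gamma$, which is the only genuinely new ingredient and follows directly from uniqueness in the Gram--Schmidt factorization~\eqref{frenet_gram}.
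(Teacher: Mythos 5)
Your proof is correct and takes essentially the same approach as the paper: decompose a locally convex curve by its initial frame, using the transformation rule $\mathcal{F}_{A\gamma}=A\mathcal{F}_\gamma$ (which you correctly derive from uniqueness in the Gram--Schmidt factorization). The paper uses the marginally simpler map $(A,\gamma)\mapsto A\gamma$ with inverse $\delta\mapsto\bigl(\mathcal{F}_\delta(0),\,\mathcal{F}_\delta(0)^{-1}\delta\bigr)$; the extra left factor of $Q_0$ in your $\Phi$ is a harmless artifact of the somewhat misleading notation $\mathcal{L}\SS^n_{Q_0}$, which --- as you evidently realized in working out the inverse --- the paper actually uses to denote the space of locally convex curves with unconstrained initial frame.
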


\begin{proof}
Given $Q_0 \in SO_{n+1}$ and $\gamma \in {\mathcal{L}\SS^{n}}$, the curve $Q_0\gamma$ is locally convex (Proposition~\ref{proplocconv}, $(ii)$) and clearly $\mathcal{F}_{Q_0\gamma}(0)=Q_0$. This defines a map
\[ (Q_0,\gamma) \in SO_{n+1} \times {\mathcal{L}\SS^{n}} \mapsto Q_0\gamma \in {\mathcal{L}\SS^{n}}_{Q_0} \]
which is clearly continuous, whose inverse
\[ \gamma \in {\mathcal{L}\SS^{n}}_{Q_0} \mapsto (Q_0,Q_0^{-1}\gamma) \in SO_{n+1} \times {\mathcal{L}\SS^{n}} \]
is also continuous. Therefore ${\mathcal{L}\SS^{n}}_{Q_0}$ and $SO_{n+1} \times {\mathcal{L}\SS^{n}}$ are homeomorphic. 
\end{proof}

\medskip

This proves there is no loss of generality in studying the space ${\mathcal{L}\SS^{n}}$. We can also define a subset of ${\mathcal{L}\SS^{n}}$, in which the final frame is also fixed.

\begin{definition}\label{LSnQ}
For $Q \in SO_{n+1}$, we define ${\mathcal{L}\SS^{n}}(Q)$ as the subset of ${\mathcal{L}\SS^{n}}$ for which $\mathcal{F}_\gamma(1)=Q$.
\end{definition}

Finally, let us recall that $\Pi_{n+1} : \mathrm{Spin}_{n+1} \rightarrow SO_{n+1}$ is the double universal cover, $n \geq 2$. We denote by $\mathbf{1}$ the identity element in $\mathrm{Spin}_{n+1}$, and by $-\mathbf{1}$ the unique non-trivial element in $\mathrm{Spin}_{n+1}$ such that $\Pi_{n+1}(-\mathbf{1})=I$. The Frenet frame curve $\mathcal{F_{\gamma}}:[0,1] \rightarrow SO_{n+1}$ can then be uniquely lifted to a curve $\tilde{\mathcal{F}_{\gamma}}:[0,1] \rightarrow \mathrm{Spin}_{n+1}$ such that $\mathcal{F_{\gamma}}=\Pi_{n+1} \circ \tilde{\mathcal{F}_{\gamma}}$ and $\tilde{\mathcal{F}_{\gamma}}(0)=\mathbf{1}$. 

\begin{definition}\label{LSnz}
For $z \in \mathrm{Spin}_{n+1}$, we define ${\mathcal{L}\SS^{n}}(z)$ as the subset of ${\mathcal{L}\SS^{n}}(\Pi_{n+1}(z))$ for which $\tilde{\mathcal{F}_{\gamma}}(1)=z$.
\end{definition}

It is clear to see from the definitions that ${\mathcal{L}\SS^{n}}(\Pi_{n+1}(z))$ is the disjoint union of ${\mathcal{L}\SS^{n}}(z)$ and ${\mathcal{L}\SS^{n}}(-z)$. 

\medskip

Replacing ``locally convex" by ``generic", we can define in the same way ${\mathcal{G}\SS^{n}}$, ${\mathcal{G}\SS^{n}}(Q)$ and ${\mathcal{G}\SS^{n}}(z)$. 

\begin{definition}\label{IS2}
We define ${\mathcal{G}\SS^{n}}$ to be the set of all generic curves $\gamma : [0,1] \rightarrow \SS^{n}$ such that $\mathcal{F}_\gamma(0)=I$, where $I \in SO_{n+1}$ is the identity matrix. For $Q \in SO_{n+1}$ (respectively for $z \in\mathrm{Spin}_{n+1}$) we define ${\mathcal{G}\SS^{n}}(Q)$ (respectively ${\mathcal{G}\SS^{n}}(z)$) as the subset of ${\mathcal{G}\SS^{n}}$ for which $\mathcal{F}_\gamma(1)=Q$ (respectively $\tilde{\mathcal{F}}_\gamma(1)=z$).
\end{definition}

\medskip

The study of the spaces $\mathcal{L}\SS^{n}(Q)$, when $Q$ varies in $SO_{n+1}$, or more generally $\mathcal{L}\SS^{n}(z)$, when $z$ varies in $\mathrm{Spin}_{n+1}$, is the main topic in this thesis.

Exactly as in~\cite{SS12}, we can already reduce the study of these spaces to a finite number of spaces. Recall that we defined a Bruhat action $B : Up_{n+1}^1 \times SO_{n+1} \rightarrow SO_{n+1}$ in Chapter~\ref{chapter2}, \S\ref{s23}, where $B(U,Q)$ is the Gram-Schmidt orthonormalization of the matrix $UQ$, for $U \in Up_{n+1}^1$ and $Q \in  SO_{n+1}$. This action induces an action of $Up_{n+1}^1$ on $\mathcal{L}\SS^{n}$. Indeed, given $U \in Up_{n+1}^1$ and $\gamma \in \mathcal{L}\SS^{n}$, let us define the curve $B(U,\gamma)$ by
\[ B(U,\gamma)(t)=B(U,\mathcal{F}_\gamma(t))e_1. \]  
This curve can also be written as
\[ B(U,\gamma)(t)=\frac{U\gamma(t)}{||U\gamma(t)||} \]
and so by Proposition~\ref{proplocconv}, $(ii)$, and Corollary~\ref{corlocconv}, $(i)$, $B(U,\gamma)$ is locally convex. Moreover, by construction
\[ \mathcal{F}_{B(U,\gamma)}(t)=B(U,\mathcal{F}_\gamma(t)) \] 
and hence $B(U,\gamma) \in \mathcal{L}\SS^{n}$ (since $B(U,I)=I$) and for $Q \in SO_{n+1}$ (respectively $z \in \mathrm{Spin}_{n+1}$), if $\gamma \in \mathcal{L}\SS^{n}(Q)$ (respectively $\gamma \in \mathcal{L}\SS^{n}(z)$), then $\gamma \in \mathcal{L}\SS^{n}(B(U,Q))$ (respectively $\gamma \in \mathcal{L}\SS^{n}(B(U,z))$). If $Q \in SO_{n+1}$ and $Q' \in SO_{n+1}$ are Bruhat equivalent, there exists $U \in Up_{n+1}^1$ such that $B(U,Q)=Q'$ and hence $B(U^{-1},Q')=Q$, we have a continuous map
\[ \gamma \in \mathcal{L}\SS^{n}(Q) \mapsto B(U,\gamma) \in \mathcal{L}\SS^{n}(Q') \] 
which has a continuous inverse
\[ \gamma \in \mathcal{L}\SS^{n}(Q') \mapsto B(U^{-1},\gamma) \in \mathcal{L}\SS^{n}(Q). \]
The same construction can be done if $z \in \mathrm{Spin}_{n+1}$ and $z' \in \mathrm{Spin}_{n+1}$ are Bruhat equivalent, hence we have the following proposition.

\begin{proposition}\label{bruhathomeo}
If $Q \in SO_{n+1}$ and $Q' \in SO_{n+1}$ (respectively $z \in \mathrm{Spin}_{n+1}$ and $z' \in \mathrm{Spin}_{n+1}$) are Bruhat equivalent, then the spaces $\mathcal{L}\SS^{n}(Q)$ and $\mathcal{L}\SS^{n}(Q')$ (respectively $\mathcal{L}\SS^{n}(z)$ and $\mathcal{L}\SS^{n}(z')$) are homeomorphic. 
\end{proposition}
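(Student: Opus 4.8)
The plan is to construct explicit mutually inverse continuous maps between the two spaces, using the Bruhat action $B$ on $\mathrm{Spin}_{n+1}$ (equivalently on $SO_{n+1}$) that was introduced in \S\ref{s23}, together with the induced action on curves described immediately above. I would treat the $SO_{n+1}$ case and the $\mathrm{Spin}_{n+1}$ case in parallel, since the argument is formally identical once one notes that the Bruhat action lifts to $\mathrm{Spin}_{n+1}$ (the group $Up^1_{n+1}$ being contractible, so the action factors through the covering) and that, by definition, $\mathrm{Bru}_z$ is the component of $\Pi_{n+1}^{-1}(\mathrm{Bru}_{\Pi_{n+1}(z)})$ containing $z$; thus Bruhat equivalence of $z$ and $z'$ is again equivalent to the existence of $U \in Up^1_{n+1}$ with $B(U,z)=z'$, and the lifted Frenet frame transforms correctly because $\widetilde{\mathcal F}_{B(U,\gamma)}$ is the lift of $\mathcal F_{B(U,\gamma)} = B(U,\mathcal F_\gamma)$ starting at $\1$.

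Concretely, suppose $Q$ and $Q'$ are Bruhat equivalent. Since Bruhat cells are the orbits of the Bruhat action, there is $U \in Up^1_{n+1}$ with $B(U,Q)=Q'$, and then $B(U^{-1},Q')=Q$ because $B$ is a group action of $Up^1_{n+1}$. I would then define
\[ \Phi : \gamma \in \mathcal L\SS^n(Q) \mapsto B(U,\gamma) \in \mathcal L\SS^n(Q'), \qquad \Psi : \gamma \in \mathcal L\SS^n(Q') \mapsto B(U^{-1},\gamma) \in \mathcal L\SS^n(Q). \]
The verification that $\Phi$ actually lands in $\mathcal L\SS^n(Q')$ has three ingredients, all already established in the excerpt: (1) $B(U,\gamma)(t) = U\gamma(t)/\lVert U\gamma(t)\rVert$ is locally convex, by Proposition~\ref{proplocconv}(ii) and Corollary~\ref{corlocconv}(i); (2) $\mathcal F_{B(U,\gamma)}(t) = B(U,\mathcal F_\gamma(t))$, so that $\mathcal F_{B(U,\gamma)}(0) = B(U,I) = I$ (hence $B(U,\gamma) \in \mathcal L\SS^n$) and $\mathcal F_{B(U,\gamma)}(1) = B(U,Q) = Q'$; and (3) $B(U,\cdot)$ and $B(U^{-1},\cdot)$ are mutually inverse on the level of curves because $B$ is an action. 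For the spin case one additionally checks $\widetilde{\mathcal F}_{B(U,\gamma)}(1) = B(U,z) = z'$ by uniqueness of lifts starting at $\1$.

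The remaining point is continuity of $\Phi$ and $\Psi$ with respect to whatever topology on these curve spaces is fixed in \S\ref{s36}; this is where I expect the only real (if still routine) work to lie. Granting that the map $\gamma \mapsto \gamma/\lVert\gamma\rVert$ on curves with nowhere-vanishing values, left multiplication by the fixed matrix $U$, and Gram--Schmidt orthonormalization are all continuous in that topology — which is exactly the kind of statement the topology in \S\ref{s36} is designed to make true — the composite $\gamma \mapsto B(U,\gamma)$ is continuous, and likewise for $U^{-1}$. Since $\Phi$ and $\Psi$ are continuous mutual inverses, they are homeomorphisms, giving $\mathcal L\SS^n(Q) \cong \mathcal L\SS^n(Q')$ and, verbatim, $\mathcal L\SS^n(z) \cong \mathcal L\SS^n(z')$. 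The main obstacle, then, is not the algebra but pinning down that the curve-level Bruhat action is continuous; everything else is a direct application of the propositions and corollaries already proved.
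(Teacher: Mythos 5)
Your proposal is correct and follows essentially the same route as the paper: the paragraph preceding Proposition~\ref{bruhathomeo} defines $B(U,\gamma)$, rewrites it as $U\gamma/\lVert U\gamma\rVert$, invokes Proposition~\ref{proplocconv}(ii) and Corollary~\ref{corlocconv}(i) for local convexity, uses $\mathcal F_{B(U,\gamma)}=B(U,\mathcal F_\gamma)$ to control the endpoints, and produces exactly the pair of continuous mutually inverse maps $\gamma\mapsto B(U,\gamma)$ and $\gamma\mapsto B(U^{-1},\gamma)$. Your added remarks on why the spin case works verbatim (uniqueness of lifts starting at $\1$) and on where the continuity burden lies are accurate but do not change the argument.
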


\section{Jacobian and quasi-Jacobian curves}\label{s35}

In this section, we introduce another class of curves, namely Jacobian curves and the related holonomic curves, that are deeply connected to locally convex curves. We will also introduce quasi-Jacobian curves and quasi-holonomic curves which will play the same role for generic curves. 

On the one hand, these curves appear in the study of linear homogeneous ordinary differential equations, and constitute a motivation for the study of locally convex curves. On the other hand, they will be used to provide a convenient topology on the space of locally convex curves (see \S\ref{s36}).

To start with, consider a linear homogeneous ordinary differential equation of order $n+1$, that is
\begin{equation}\label{lode}
y^{(n+1)}(t)+a_n(t)y^{(n)}(t)+\cdots+a_0(t)y(t)=0
\end{equation} 
where the functions $a_i$, for $0 \leq i \leq n$, are smooth functions on the interval $[0,1]$. Letting
\[ Y(t)=
\begin{pmatrix}
y(t) \\
y'(t) \\
\vdots \\
y^{(n)}(t)
\end{pmatrix}
 \]
the equation~\eqref{lode} is easily seen to be equivalent to 
\begin{equation}\label{lode2}
Y'(t)=A(t)Y(t)
\end{equation}
where $A(t)$ is the companion matrix
\[ A(t)=
\begin{pmatrix}
0 & 1 & 0 & 0 & \ldots & 0  \\
0 & 0 & 1 & 0 & \ldots & 0 \\
0 & 0 & 0 & 1 &  & 0 \\
\vdots & \vdots  &  \vdots & & \ddots &  \\
0 & 0 & 0 & 0 & \vdots & 1  \\
-a_0(t) & -a_1(t) & -a_2(t) & \ldots &  & -a_n(t)  \\
\end{pmatrix}.
 \]
The space of solutions of~\eqref{lode2} is a vector space of dimension $n+1$: a fundamental set of solutions is a particular choice of basis of this space. Choosing such a set of solutions
\[ Y_1(t)=
\begin{pmatrix}
y_1(t) \\
y_1'(t) \\
\vdots \\
y_1^{(n)}(t)
\end{pmatrix},
\quad
Y_2(t)=
\begin{pmatrix}
y_2(t) \\
y_2'(t) \\
\vdots \\
y_2^{(n)}(t)
\end{pmatrix},
\quad
\dots,
\quad
Y_{n+1}(t)=
\begin{pmatrix}
y_{n+1}(t) \\
y_{n+1}'(t) \\
\vdots \\
y_{n+1}^{(n)}(t)
\end{pmatrix}
 \] 
we have the matrix equality
\[(Y_1'(t), Y_2'(t), \dots , Y_{n+1}'(t))=A(t)(Y_1(t), Y_2(t), \dots , Y_{n+1}(t))\]
and the vectors $Y_1(t), Y_2(t), \dots , Y_{n+1}(t)$ are linearly independent, that is
\[ \mathrm{det}(Y_1(t), Y_2(t), \dots , Y_{n+1}(t)) \neq 0. \]
Hence if we define
\[ \gamma(t)=(y_1(t), y_2(t), \dots, y_{n+1}(t)) \]
the curve $\gamma$ is locally convex (positive or negative depending on the sign of the determinant). So for any set of fundamental solutions of~\eqref{lode} one can associate a locally convex curve: conversely one can show that any locally convex curve is of this form.

\medskip

Now we will be interested in characterizing the Frenet frame curve associated to a locally convex curve. Consider a curve
\[ \Gamma : [0,1] \rightarrow SO_{n+1} \]
and define its logarithmic derivative $\Lambda(t)$ by
\[ \Lambda(t)=(\Gamma(t))^{-1}\Gamma'(t) \]
that is 
\[ \Gamma'(t)=\Gamma(t)\Lambda(t). \]
Since $\Gamma$ takes values in $SO_{n+1}$, $\Lambda$ takes values in its Lie algebra, that is $\Lambda(t)$ is a skew-symmetric matrix for all $t \in [0,1]$. 

When $\Gamma=\mathcal{F}_\gamma$ is the Frenet frame curve of a locally convex curve, its logarithmic derivative $\Lambda(t)$ is not an arbitrary skew-symmetric matrix. For instance, if $\gamma : [0,1] \rightarrow \SS^2$ is locally convex, then 
\[ \mathcal{F_{\gamma}}(t)=(\gamma(t),\mathbf{t}_\gamma(t),\mathbf{n}_\gamma(t)) \in SO_3 \]
and by simple computations one obtains
\begin{equation}\label{logderives2}
\Lambda_\gamma(t)=(\mathcal{F}_\gamma(t))^{-1}\mathcal{F}_\gamma'(t)=
\begin{pmatrix}
0 & -||\gamma'(t)|| & 0 \\
||\gamma'(t)|| & 0 & -||\gamma'(t)||\kappa_\gamma(t) \\
0 & ||\gamma'(t)||\kappa_\gamma(t) & 0
\end{pmatrix}.
\end{equation}
In the same way, if $\gamma : [0,1] \rightarrow \SS^3$ is locally convex, then 
\[ \mathcal{F_{\gamma}}(t)=(\gamma(t),\mathbf{t}_\gamma(t),\mathbf{n}_\gamma(t),\mathbf{b}_\gamma(t)) \in SO_4 \]
and one gets
\begin{equation}\label{logderives3}
\Lambda_\gamma(t)=
\begin{pmatrix}
0 & -||\gamma'(t)|| & 0  & 0 \\
||\gamma'(t)|| & 0 & -||\gamma'(t)||\kappa_\gamma(t) & 0  \\
0 & ||\gamma'(t)||\kappa_\gamma(t) & 0 & -||\gamma'(t)||\tau_\gamma(t) \\
0 & 0 & ||\gamma'(t)||\tau_\gamma(t) & 0
\end{pmatrix}.
\end{equation}
This is in fact a general phenomenon. Let us define the set $\mathfrak{J}$ of tridiagonal skew-symmetric matrices with positive subdiagonal entries, that is matrices of the form
\[ \begin{pmatrix}
0 & -c_1 & 0 & \ldots & 0 \\
c_1 & 0 & -c_2 &  & 0 \\
 & \ddots & \ddots & \ddots & 0 \\
0 & 0 & c_{n-1} & 0 & -c_n  \\
0 & 0 &  0 & c_n & 0
\end{pmatrix}, \quad c_1>0, \dots, c_n>0. \]
Then we make the following definition.

\begin{definition}\label{jacobian}
A curve $\Gamma : [0,1] \rightarrow SO_{n+1}$ is \emph{Jacobian} if its logarithmic derivative $\Lambda(t)=(\Gamma(t))^{-1}\Gamma'(t)$ belongs to $\mathfrak{J}$ for all $t \in [0,1]$. 
\end{definition}

The interest of this definition is that Jacobian curves characterize Frenet frame curves of locally convex curves. Indeed, we have the following proposition.

\begin{proposition}\label{propjacobian}
Let $\Gamma : [0,1] \rightarrow SO_{n+1}$ be a smooth curve with $\Gamma(0)=I$. Then $\Gamma$ is Jacobian if and only if there exists $\gamma \in \mathcal{L}\SS^n$ such that $\mathcal{F}_\gamma=\Gamma$.
\end{proposition}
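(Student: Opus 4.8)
The plan is to establish both implications by working directly with the logarithmic derivative and exploiting the Gram--Schmidt relation~\eqref{frenet_gram}.

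First I would prove the forward implication: if $\gamma \in \mathcal{L}\SS^n$ then $\Gamma = \mathcal{F}_\gamma$ is Jacobian. Differentiate the defining relation $(\gamma(t),\gamma'(t),\dots,\gamma^{(n)}(t)) = \mathcal{F}_\gamma(t) R_\gamma(t)$. The left-hand side, after differentiation, equals $(\gamma'(t),\gamma''(t),\dots,\gamma^{(n)}(t),\gamma^{(n+1)}(t))$, which is obtained from $(\gamma(t),\dots,\gamma^{(n)}(t))$ by right-multiplication by the ``shift plus last column'' matrix $S(t)$ whose columns are $e_2,e_3,\dots,e_{n+1}$ and whose last column records the coefficients expressing $\gamma^{(n+1)}(t)$ in the basis $\gamma(t),\dots,\gamma^{(n)}(t)$. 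On the other hand, differentiating the right-hand side gives $\mathcal{F}_\gamma' R_\gamma + \mathcal{F}_\gamma R_\gamma'$. Combining, $\mathcal{F}_\gamma R_\gamma S = \mathcal{F}_\gamma' R_\gamma + \mathcal{F}_\gamma R_\gamma'$, hence
\[
\Lambda(t) = \mathcal{F}_\gamma(t)^{-1}\mathcal{F}_\gamma'(t) = R_\gamma(t) S(t) R_\gamma(t)^{-1} - R_\gamma'(t) R_\gamma(t)^{-1}.
\]
Since $R_\gamma$ and $R_\gamma'$ are upper triangular and $S$ has the form above, the product $R_\gamma S R_\gamma^{-1} - R_\gamma' R_\gamma^{-1}$ is upper Hessenberg (zero below the subdiagonal), with subdiagonal entry in position $(i+1,i)$ equal to $(R_\gamma)_{i+1,i+1}/(R_\gamma)_{i,i} > 0$ because $R_\gamma \in Up_{n+1}^+$. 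But $\Lambda(t)$ is also skew-symmetric (as $\mathcal{F}_\gamma$ takes values in $SO_{n+1}$), and the only matrices that are simultaneously upper Hessenberg and skew-symmetric are tridiagonal; skew-symmetry then forces the subdiagonal and superdiagonal to be negatives of each other. Hence $\Lambda(t) \in \mathfrak{J}$, so $\Gamma$ is Jacobian. (The computations~\eqref{logderives2} and~\eqref{logderives3} are the cases $n=2,3$ of this.)

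For the converse, suppose $\Gamma:[0,1]\to SO_{n+1}$ is Jacobian with $\Gamma(0)=I$, and set $\gamma(t) = \Gamma(t)e_1$, the first column of $\Gamma$. Write $\Lambda(t) \in \mathfrak{J}$ with positive subdiagonal entries $c_1(t),\dots,c_n(t)$. The key claim is that $\gamma^{(j)}(t) \in \mathrm{span}(\Gamma(t)e_1,\dots,\Gamma(t)e_{j+1})$ with the coefficient of $\Gamma(t)e_{j+1}$ being (a positive multiple of) $c_1(t)c_2(t)\cdots c_j(t) > 0$. I would prove this by induction on $j$ using $\Gamma'(t) = \Gamma(t)\Lambda(t)$, so that differentiating $\Gamma(t)e_k$ gives $\Gamma(t)\Lambda(t)e_k = \Gamma(t)(c_{k-1}e_{k-1} + \cdots)$ — because $\Lambda$ is tridiagonal, differentiating a column produces only its two neighbours (and itself, via the product rule once we track the scalar coefficients carefully). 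The inductive step is then bookkeeping: differentiating the expression for $\gamma^{(j)}$ introduces $\Gamma(t)e_{j+2}$ with a nonzero coefficient coming from the $c_{j+1}$ entry of $\Lambda$. This shows $(\gamma(t),\dots,\gamma^{(n)}(t))$ equals $\Gamma(t)$ times an upper triangular matrix with positive diagonal, so $\gamma$ is locally convex, $\gamma \in \mathcal{L}\SS^n$ (since $\Gamma(0)=I$ forces $\mathcal{F}_\gamma(0)=I$), and by uniqueness in the Gram--Schmidt decomposition $\mathcal{F}_\gamma = \Gamma$.

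The main obstacle is the converse direction's inductive bookkeeping: one must verify precisely that, in the relation $\gamma^{(j)} = (\text{triangular combination of }\Gamma e_1,\dots,\Gamma e_{j+1})$, the extreme coefficient stays strictly positive after each differentiation, and that no $\Gamma e_k$ with $k > j+2$ ever appears — both of which hinge on $\Lambda$ being exactly tridiagonal with strictly positive subdiagonal. I would organize this as a single induction producing the triangular change-of-basis matrix $R(t)$ explicitly (its diagonal entries being $1, c_1, c_1c_2, \dots, c_1\cdots c_n$ up to sign), which simultaneously gives local convexity and, by uniqueness of Gram--Schmidt, the identification $\mathcal{F}_\gamma = \Gamma$.
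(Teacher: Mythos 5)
Your proof is correct. The paper itself gives no argument here — it simply cites Lemma~2.1 of~\cite{SS12} — and what you have written is essentially the standard proof from that reference: the forward direction via the identity $\mathcal{F}_\gamma^{-1}\mathcal{F}_\gamma' = R_\gamma S R_\gamma^{-1} - R_\gamma' R_\gamma^{-1}$ together with the remark that skew-symmetric plus upper Hessenberg forces tridiagonality with positive subdiagonal, and the converse via the induction producing the upper-triangular change-of-basis matrix with diagonal $1, c_1, c_1 c_2, \dots, c_1\cdots c_n$ followed by uniqueness of Gram--Schmidt. (One small point: the extreme coefficient in the converse is exactly $c_1\cdots c_j$, not merely a positive multiple of it, though your hedge is harmless since only strict positivity is used.)
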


This is exactly the content of Lemma 2.1 in~\cite{SS12}, to which we refer for a proof. Hence there is a one-to-one correspondence between locally convex curves in $\mathcal{L}\SS^n$ and Jacobian curves starting at the identity: if $\gamma \in \mathcal{L}\SS^n$, its Frenet frame curve is such a Jacobian curve, and conversely, if $\Gamma$ is a Jacobian curve with $\Gamma(0)=I$, then if we define $\gamma_\Gamma$ by setting $\gamma_\Gamma(t)=\Gamma(t)e_1$ where $e_1$ denotes the first vector of the canonical basis of $\R^{n+1}$, $\gamma_\Gamma \in \mathcal{L}\SS^n$.

Now consider a smooth curve $\Lambda : [0,1] \rightarrow \mathfrak{J}$. Then $\Lambda$ is the logarithmic derivative of a Jacobian curve $\Gamma : [0,1] \rightarrow SO_{n+1}$ if and only if $\Gamma$ solves 
\[ \Gamma'(t)=\Gamma(t)\Lambda(t). \]
If $\Gamma$ solves the above equation, then so does $Q\Gamma$, for $Q \in SO_{n+1}$, since the logarithmic derivative of $\Gamma$ and $Q\Gamma$ are equal. But the initial value problem 
\[ \Gamma'(t)=\Gamma(t)\Lambda(t), \quad \Gamma(0)=I \]
has a unique solution. We can state the following proposition, which follows from Proposition~\ref{propjacobian}.

\begin{proposition}\label{unique}
Given a curve $\Lambda : [0,1] \rightarrow \mathfrak{J}$, there is a unique curve $\gamma \in \mathcal{L}\SS^n$ such that $\Lambda_\gamma(t)=\mathcal{F}_\gamma(t)^{-1}\mathcal{F}'_\gamma(t)=\Lambda(t)$.
\end{proposition}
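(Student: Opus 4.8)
The plan is to deduce this proposition directly from Proposition~\ref{propjacobian} together with the basic existence-and-uniqueness theory for linear ODEs. Given $\Lambda : [0,1] \to \mathfrak{J}$ smooth, I would first consider the linear initial value problem
\[ \Gamma'(t) = \Gamma(t)\Lambda(t), \quad \Gamma(0) = I. \]
Since this is a linear ODE with smooth coefficients on a compact interval, it has a unique solution $\Gamma : [0,1] \to M_{n+1}$. The first step is to check that $\Gamma(t)$ actually takes values in $SO_{n+1}$: because $\Lambda(t)$ is skew-symmetric for all $t$, the function $t \mapsto \Gamma(t)^\top\Gamma(t)$ satisfies $(\Gamma^\top\Gamma)' = \Gamma^\top(\Lambda^\top + \Lambda)\Gamma = 0$, so $\Gamma(t)^\top\Gamma(t) = \Gamma(0)^\top\Gamma(0) = I$; hence $\Gamma(t) \in O_{n+1}$, and by continuity with $\det\Gamma(0) = 1$ we get $\Gamma(t) \in SO_{n+1}$.

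Next, by construction the logarithmic derivative of $\Gamma$ is $\Gamma(t)^{-1}\Gamma'(t) = \Lambda(t) \in \mathfrak{J}$, so $\Gamma$ is a Jacobian curve with $\Gamma(0) = I$. Applying Proposition~\ref{propjacobian}, there exists $\gamma \in \mathcal{L}\SS^n$ with $\mathcal{F}_\gamma = \Gamma$, and therefore $\Lambda_\gamma(t) = \mathcal{F}_\gamma(t)^{-1}\mathcal{F}_\gamma'(t) = \Gamma(t)^{-1}\Gamma'(t) = \Lambda(t)$, giving existence.

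For uniqueness, suppose $\gamma, \tilde\gamma \in \mathcal{L}\SS^n$ both satisfy $\Lambda_\gamma = \Lambda_{\tilde\gamma} = \Lambda$. Then $\mathcal{F}_\gamma$ and $\mathcal{F}_{\tilde\gamma}$ are both solutions of $\Xi'(t) = \Xi(t)\Lambda(t)$ with the same initial condition $\Xi(0) = I$ (recall $\mathcal{F}_\gamma(0) = \mathcal{F}_{\tilde\gamma}(0) = I$ by Definition~\ref{LSn}), so by uniqueness for linear ODEs $\mathcal{F}_\gamma = \mathcal{F}_{\tilde\gamma}$; since the Frenet frame curve determines the curve (as remarked after Definition~\ref{LSn}, $\mathcal{F}_\gamma$ uniquely determines $\gamma$ — indeed $\gamma(t) = \mathcal{F}_\gamma(t)e_1$), we conclude $\gamma = \tilde\gamma$. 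I do not anticipate a genuine obstacle here; the only mild subtlety is making sure the smoothness of $\Lambda$ is enough to guarantee a smooth (not merely $C^1$) solution $\Gamma$, which follows by bootstrapping the ODE, and that Proposition~\ref{propjacobian} is being applied to a curve that is smooth on the closed interval, which it is.
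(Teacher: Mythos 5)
Your proof follows exactly the same route as the paper: solve the linear initial value problem $\Gamma'=\Gamma\Lambda$, $\Gamma(0)=I$, note uniqueness from linear ODE theory, and invoke Proposition~\ref{propjacobian} to pass between Jacobian curves and locally convex curves. The only difference is that you explicitly verify $\Gamma(t)\in SO_{n+1}$ via the skew-symmetry of $\Lambda$, a detail the paper takes for granted, which is a welcome bit of care but not a different argument.
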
 

Consider the locally convex curve $\xi : [0,1] \rightarrow \SS^n$ defined in Example~\ref{ex1}. It is easy to see that the logarithmic derivative $\Lambda_\xi(t)$ is constant. From what we explained, any other curve which has constant logarithmic derivative has to be of the form $Q\xi$, for some $Q \in SO_{n+1}$. More precisely, given any matrix $\Lambda \in \mathfrak{J}$, the map
\[ \Gamma_\Lambda(t)=\exp(t\Lambda) \in SO_{n+1} \]
is a Jacobian curve whose logarithmic derivative is constant equal to $\Lambda$. The curve $\gamma_\Lambda$ defined by $\gamma_\Lambda(t)=\Gamma_\Lambda(t)e_1$ is then locally convex, and there exists $Q \in SO_{n+1}$ such that $\gamma_\Lambda=Q\xi$.

\medskip

Now the Frenet frame curve $\mathcal{F}_\gamma : [0,1] \rightarrow SO_{n+1}$ of $\gamma \in \mathcal{L}\SS^n$ can be lifted to a curve
\[ \tilde{\mathcal{F}}_\gamma : [0,1] \rightarrow \mathrm{Spin}_{n+1}, \]
that is $\mathcal{F}_\gamma = \tilde{\mathcal{F}}_\gamma \circ \Pi_{n+1}$ where $\Pi_{n+1} : \mathrm{Spin}_{n+1}\rightarrow SO_{n+1}$ is the universal cover projection. Such a lifted Frenet frame curve $\tilde{\mathcal{F}}_\gamma$ is thus characterized by the following definition.

\begin{definition}\label{holonomic}
A curve $\tilde{\Gamma} : [0,1] \rightarrow \mathrm{Spin}_{n+1}$ is \emph{holonomic} if the projected curve $\Gamma=\tilde{\Gamma} \circ \Pi_{n+1}$ is a Jacobian curve.
\end{definition}    

Let us further add the following two definitions.

\begin{definition}\label{globjacobian}
A curve $\Gamma : [0,1] \rightarrow SO_{n+1}$ is \emph{globally Jacobian} if the curve 
\[ \gamma(t)=\Gamma(t)e_1 \]
is (globally) convex.
\end{definition}   

\begin{definition}\label{globholonomic}
A curve $\tilde{\Gamma} : [0,1] \rightarrow \mathrm{Spin}_{n+1}$ is \emph{globally holonomic} if the projected curve $\Gamma=\tilde{\Gamma} \circ \Pi_{n+1}$ is a globally Jacobian curve.
\end{definition}   

\medskip

To conclude, we can also characterize the Frenet frame curve associated to a generic curve. Let us define the set $\mathfrak{Q}$ of tridiagonal skew-symmetric matrices of the form
\[ \begin{pmatrix}
0 & -c_1 & 0 & \ldots & 0 \\
c_1 & 0 & -c_2 &  & 0 \\
 & \ddots & \ddots & \ddots & 0 \\
0 & 0 & c_{n-1} & 0 & -c_n  \\
0 & 0 &  0 & c_n & 0
\end{pmatrix}, \quad c_1>0, \dots,c_{n-1}>0, c_n \in \R. \]
Clearly, $\mathfrak{J}$ is contained in $\mathfrak{Q}$ and we have the following definition and proposition which are analogous to Definition~\ref{jacobian} and Proposition~\ref{propjacobian}.

\begin{definition}\label{jacobian2}
A curve $\Gamma : [0,1] \rightarrow SO_{n+1}$ is \emph{quasi-Jacobian} if its logarithmic derivative $\Lambda(t)=(\Gamma(t))^{-1}\Gamma'(t)$ belongs to $\mathfrak{Q}$ for all $t \in [0,1]$. 
\end{definition}

\begin{proposition}\label{propjacobian2}
Let $\Gamma : [0,1] \rightarrow SO_{n+1}$ be a smooth curve with $\Gamma(0)=I$. Then $\Gamma$ is quasi-Jacobian if and only if there exists $\gamma \in \mathcal{G}\SS^n$ such that $\mathcal{F}_\gamma=\Gamma$.
\end{proposition}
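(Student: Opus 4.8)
The plan is to mirror the proof of Proposition~\ref{propjacobian}, which the excerpt attributes to Lemma~2.1 of~\cite{SS12}, adapting it to the slightly weaker tridiagonal form $\mathfrak{Q}$ instead of $\mathfrak{J}$. The statement has two directions. For the forward direction, assume $\Gamma$ is quasi-Jacobian with $\Gamma(0)=I$. Set $\gamma(t):=\Gamma(t)e_1$. The task is to show that $\gamma$ is generic, i.e.\ that $\gamma(t),\gamma'(t),\dots,\gamma^{(n-1)}(t)$ are linearly independent for all $t$, and that $\mathcal{F}_\gamma=\Gamma$. First I would compute, by induction on $j$, the columns of the matrix $(\gamma(t),\gamma'(t),\dots,\gamma^{(j)}(t))$ in terms of $\Gamma(t)$ and the subdiagonal entries $c_1,\dots,c_{n-1}$ of $\Lambda(t)=\Gamma(t)^{-1}\Gamma'(t)$. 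Using $\Gamma'=\Gamma\Lambda$ and the tridiagonal shape of $\Lambda$, one shows that $\gamma^{(j)}(t)=\Gamma(t)v_j(t)$ where $v_j(t)\in\R^{n+1}$ has zero entries in positions $j+2,\dots,n+1$ and $(j+1)$-st entry equal to $c_1(t)c_2(t)\cdots c_j(t)>0$ (this product is strictly positive precisely for $j\le n-1$, since only $c_1,\dots,c_{n-1}$ are required positive in $\mathfrak{Q}$). Hence $(\gamma(t),\dots,\gamma^{(n-1)}(t))=\Gamma(t)R(t)$ with $R(t)$ upper triangular with strictly positive diagonal $(1,c_1,c_1c_2,\dots,c_1\cdots c_{n-1})$; in particular these vectors are linearly independent, so $\gamma\in\mathcal{G}\SS^n$ up to checking $\gamma(0)$ has the right Frenet frame. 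Since $R(t)\in Up^+_{n+1}$ and $\Gamma(t)\in SO_{n+1}$, uniqueness of Gram--Schmidt gives $\mathcal{F}_\gamma(t)=\Gamma(t)$ (recall the Frenet frame of a generic curve is defined via Gram--Schmidt on the first $n$ derivative vectors, and the $(n+1)$-st column is then forced by orientation); in particular $\mathcal{F}_\gamma(0)=\Gamma(0)=I$, so $\gamma\in\mathcal{G}\SS^n$.

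For the converse, suppose $\gamma\in\mathcal{G}\SS^n$ and $\Gamma:=\mathcal{F}_\gamma$; I must show $\Lambda:=\Gamma^{-1}\Gamma'$ lands in $\mathfrak{Q}$. Write $\Gamma=(u_0,\dots,u_n)$ for the orthonormal Frenet frame. By definition of the Frenet frame via Gram--Schmidt, $u_k(t)$ is a combination of $\gamma(t),\dots,\gamma^{(k)}(t)$, so $\mathrm{span}\{u_0,\dots,u_k\}=\mathrm{span}\{\gamma,\gamma',\dots,\gamma^{(k)}\}$ for $0\le k\le n-1$, and this span is differentiation-invariant in the sense that $u_k'\in\mathrm{span}\{u_0,\dots,u_{k+1}\}$ for $k\le n-1$. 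The entry $\Lambda_{jk}=u_{j-1}\cdot u_k'$ therefore vanishes whenever $j-1>k+1$, i.e.\ below the first subdiagonal; skew-symmetry of $\Lambda$ (from $\Gamma\in SO_{n+1}$) then kills everything above the first superdiagonal too, so $\Lambda$ is tridiagonal. It remains to see the subdiagonal entries $c_k=\Lambda_{k+1,k}=u_k\cdot u_{k-1}'=-u_{k-1}\cdot u_k'$ satisfy $c_k>0$ for $1\le k\le n-1$; this is where genericity (not just smoothness) is used, and it follows by differentiating the Gram--Schmidt relations: the $(k)$-th derivative $\gamma^{(k)}$ has positive component along $u_k$ precisely because $\gamma(t),\dots,\gamma^{(k)}(t)$ stay linearly independent, forcing the relevant inner product to keep a fixed sign, which the positive orientation normalizes to $+$. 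The last entry $c_n=\Lambda_{n+1,n}$ is just some real number since no independence condition constrains $\gamma^{(n)}$, so $\Lambda(t)\in\mathfrak{Q}$.

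The main obstacle I anticipate is the sign bookkeeping in the converse: establishing rigorously that $c_1(t),\dots,c_{n-1}(t)$ are strictly positive for all $t$, rather than merely nonzero with constant sign. The cleanest route is probably not to argue signs directly but to reduce to the locally convex case already handled in Proposition~\ref{propjacobian}: one can perturb a generic $\gamma$ within $\mathcal{G}\SS^n$ to a nearby locally convex curve (the set $\mathfrak{J}$ is open and dense in $\mathfrak{Q}$ fiberwise, and the correspondence $\gamma\leftrightarrow\Lambda_\gamma$ is a homeomorphism by Proposition~\ref{unique}-type reasoning), or simply observe that the identity $(\gamma,\dots,\gamma^{(n-1)})=\Gamma R$ with $R$ having diagonal $(1,c_1,\dots,c_1\cdots c_{n-1})$ forces, by taking determinants of the leading $n\times n$ minor and using that the curve is positively oriented, $c_1\cdots c_{k}>0$ for each $k\le n-1$ and hence each $c_k>0$ inductively. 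Since $\mathfrak{Q}\supseteq\mathfrak{J}$ and $\mathcal{G}\SS^n\supseteq\mathcal{L}\SS^n$, the whole argument is a routine relaxation of the one in~\cite{SS12}, and I would keep the writeup brief, citing that lemma for the parts that transfer verbatim and spelling out only the modification at the $c_n\in\R$ slot.
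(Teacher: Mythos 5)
The paper does not in fact supply its own proof of this proposition: it states the result and simply declares it ``analogous'' to Proposition~\ref{propjacobian}, which in turn is proved by citation to Lemma 2.1 of~\cite{SS12}. Your writeup therefore fills a genuine gap rather than replicating an existing argument, and the argument you give is correct in both directions.

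A few remarks on the converse direction, which is where the positivity of $c_1,\dots,c_{n-1}$ has to be established. Of the two routes you sketch, the perturbation argument (``perturb a generic $\gamma$ to a nearby locally convex curve and invoke Proposition~\ref{propjacobian}'') would only deliver $c_k\ge 0$ in the limit, not the strict inequality $c_k>0$ needed for $\Lambda\in\mathfrak{Q}$, so it is better to discard it. Your second suggestion is the right one, though the phrasing ``using that the curve is positively oriented'' and ``taking determinants of the leading $n\times n$ minor'' is slightly misplaced: the positivity of the first $n-1$ subdiagonal entries has nothing to do with the orientation convention for $u_n$, and no determinants are needed. The clean version is exactly the recursion you already set up in the forward direction: once $\Lambda$ is known to be tridiagonal (which you get from $u_k'\in\mathrm{span}\{u_0,\dots,u_{k+1}\}$ for $k\le n-2$ together with skew-symmetry), the identity $\gamma^{(j)}=\Gamma v_j$ with $v_j=\Lambda v_{j-1}+v_{j-1}'$ shows the $(j+1)$-st coordinate of $v_j$ equals $c_1\cdots c_j$, i.e.\ the $(j+1,j+1)$ diagonal entry of the Gram--Schmidt matrix $R$ is $c_1\cdots c_j$. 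Gram--Schmidt applied to the linearly independent vectors $\gamma,\dots,\gamma^{(n-1)}$ gives $R_{j+1,j+1}>0$ for $0\le j\le n-1$, and then $c_1>0$, $c_1c_2>0$, $\dots$, $c_1\cdots c_{n-1}>0$ force $c_k>0$ for $1\le k\le n-1$ by induction. The orientation only enters in fixing $u_n$, which is why $c_n=\Lambda_{n+1,n}$ remains an unconstrained real number, exactly as required by $\mathfrak{Q}$.

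With that small repair the proposal is complete and correct.
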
 

Let us also make the following definition, which is analogous to Definition~\ref{holonomic}. 

\begin{definition}\label{holonomic2}
A curve $\tilde{\Gamma} : [0,1] \rightarrow \mathrm{Spin}_{n+1}$ is \emph{quasi-holonomic} if the projected curve $\Gamma=\tilde{\Gamma} \circ \Pi_{n+1}$ is a quasi-Jacobian curve.
\end{definition}    

\section{Topology on the space of locally convex curves and generic curves}\label{s36}

Let us equip the spaces $\mathcal{L}\SS^n$ and $\mathcal{G}\SS^n$ with a topology which will be convenient for our purposes. Let us start with the case of $\mathcal{L}\SS^n$ (the case of $\mathcal{G}\SS^n$ will be analogous). 

Perhaps the most natural choice would be to consider the $C^k$-topology, for an integer $k \geq n$ (the choice of the particular $k \geq n$ is not important). When choosing this topology, the corresponding topological space will be simply denoted by $\mathcal{L}\SS^n$.  However, we will have to make concatenations of curves, and this will result in curves which are non-smooth at some points. So it will be useful to assume that our curves have just some weak regularity (that they are smooth almost everywhere, but not necessarily everywhere). 

Following~\cite{Sal13} and~\cite{SS12}, we will equip the space $\mathcal{L}\SS^n$ (or more precisely a larger space $\hat{\mathcal{L}}\SS^n$) with a Hilbert manifold structure (and therefore a topology), using the correspondence between Frenet frame curves of locally convex curves and Jacobian curves.

For $\gamma \in \mathcal{L}\SS^n$, its Frenet frame curve is Jacobian and hence its logarithmic derivative is of the form
\[ \Lambda_\gamma(t)=\begin{pmatrix}
0 & -c_1(t) & 0 & \ldots & 0 \\
c_1(t) & 0 & -c_2(t) &  & 0 \\
 & \ddots &   & \ddots & 0 \\
0 & 0 & c_{n-1}(t) &  & -c_n(t)  \\
0 & 0 &  0 & c_n(t) & 0
\end{pmatrix} \]
for some functions $c_i : [0,1] \rightarrow (0,+\infty)$, $1 \leq i \leq n$. We thus have a correspondence:
\begin{equation}\label{corr1}
\gamma \in \mathcal{L}\SS^n \longleftrightarrow (c_1, \dots, c_n) \in C([0,1],(0,+\infty))^n=C([0,1],(0,+\infty)^n).
\end{equation}
By definition, the Frenet frame curve $\mathcal{F}_\gamma$ solves the initial value problem
\begin{equation}\label{ivp}
\mathcal{F}_\gamma'(t)=\mathcal{F}_\gamma(t)\Lambda_\gamma(t), \quad \mathcal{F}_\gamma(0)=I.
\end{equation}
If the functions $c_i$ are continuous, it is well-known that~\eqref{ivp} can be solved, but~\eqref{ivp} can be solved even in low-regularity: assuming that $c_i$ are in $L^2$, that is
\[ \int_{0}^1c_i^2(t)dt < +\infty, \]
then~\eqref{ivp} has a unique solution (see for instance~\cite{CL84}, page 63). So let us consider $L^2([0,1],(0,+\infty))$ the set of square-integrable functions $f: [0,1] \rightarrow (0,+\infty)$, so that under the correspondence~\eqref{corr1}, $\mathcal{L}\SS^n$ can be identified to a subset $(L^2([0,1],(0,+\infty))^n$. However $L^2([0,1],(0,+\infty))$ is not a vector space, so this does not define a Hilbert manifold structure. To solve this issue, consider the diffeomorphism
\[ \varphi : (0,+\infty) \rightarrow \R, \quad \varphi(x)=x-1/x. \] 
For any function $f : [0,1] \rightarrow (0,+\infty)$, we have a function $\hat{f} : [0,1] \rightarrow \R$ defined by $\hat{f}=\varphi \circ f$. The specific choice of $\varphi$ implies that
\[ f \in L^2([0,1],(0,+\infty)) \Leftrightarrow \hat{f} \in L^2([0,1]) \]
where $L^2([0,1])=L^2([0,1],\R)$ is the Hilbert space of square-integrable functions. The correspondence~\eqref{corr1} enables us to define the space $\hat{\mathcal{L}}\SS^n$ by the following correspondence 
\begin{equation}\label{corr2}
\gamma \in \hat{\mathcal{L}}\SS^n \longleftrightarrow (\hat{c}_1, \dots, \hat{c}_n) \in (L^2([0,1]))^n=L^2([0,1],\R^n).
\end{equation}
This defines a (trivial) Hilbert manifold structure on $\hat{\mathcal{L}}\SS^n$, and hence a topology. The spaces $\hat{\mathcal{L}}\SS^n(Q)$, for $Q \in SO_{n+1}$ and $\hat{\mathcal{L}}\SS^n(z)$, for $z \in \mathrm{Spin}_{n+1}$ are defined by further requiring that the final Frenet frame (lifted Frenet frame) is equal to $Q$ (equal to $z$). These are closed subsets of $\hat{\mathcal{L}}\SS^n$, and they are in fact closed submanifolds of finite codimension (see~\cite{Sal13} for the case $n=2$; the general case will be similar).  

\medskip

Observe that $\gamma \in \hat{\mathcal{L}}\SS^n$ is not a smooth curve everywhere (though it is smooth almost everywhere) and hence the drawback of this definition is that the concept of local convexity does not make sense at any point. On the other hand, this space $\hat{\mathcal{L}}\SS^n$ will allow us to make concatenations without being bothered by the loss of smoothness at some points. Considering a locally convex curve as being an element in $\mathcal{L}\SS^n$ or in $\hat{\mathcal{L}}\SS^n$ has both advantages and drawback. Fortunately, we have the following result.

\begin{proposition}\label{2topology}
The set $\mathcal{L}\SS^n$ (with the $C^k$ topology, $k \geq n$) is a dense subset of $\hat{\mathcal{L}}\SS^n$. Moreover, the inclusion $\mathcal{L}\SS^n \hookrightarrow \hat{\mathcal{L}}\SS^n$ induces a map $\mathcal{L}\SS^n \rightarrow \hat{\mathcal{L}}\SS^n$ of topological spaces which is a homotopy equivalence. In particular, these spaces are contractible.
\end{proposition}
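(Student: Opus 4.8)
The plan is to prove the three assertions in order: density of $\mathcal{L}\SS^n$ in $\hat{\mathcal{L}}\SS^n$, the homotopy equivalence, and then contractibility as a consequence of contractibility of $\hat{\mathcal{L}}\SS^n$ via its explicit parametrization.

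\medskip

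\emph{Density.} Under the correspondence~\eqref{corr2}, an element of $\hat{\mathcal{L}}\SS^n$ is an arbitrary tuple $(\hat c_1,\dots,\hat c_n)\in L^2([0,1],\R^n)$, while an element of $\mathcal{L}\SS^n$ corresponds to a tuple $(\hat c_1,\dots,\hat c_n)$ that is smooth on $[0,1]$ (equivalently, the $c_i=\varphi^{-1}\circ\hat c_i$ are smooth and positive). Smooth functions are dense in $L^2([0,1])$, hence smooth $\R^n$-valued functions are dense in $L^2([0,1],\R^n)$; this immediately gives density of $\mathcal{L}\SS^n$ in $\hat{\mathcal{L}}\SS^n$. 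I would also remark that the map from tuples to curves is continuous (this is just continuous dependence of solutions of the linear ODE~\eqref{ivp} on the coefficients in the $L^2$ norm, as cited from~\cite{CL84}), so no subtlety arises in transporting the density statement from the space of coefficients to the space of curves.

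\medskip

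\emph{Homotopy equivalence.} Both $\mathcal{L}\SS^n$ and $\hat{\mathcal{L}}\SS^n$ are modeled (via their logarithmic derivatives) on convex subsets of a topological vector space: $\hat{\mathcal{L}}\SS^n$ is all of $L^2([0,1],\R^n)$, and $\mathcal{L}\SS^n$ corresponds, via $\hat c_i=\varphi\circ c_i$, to the set of smooth $\R^n$-valued functions on $[0,1]$ with the $C^k$ topology, which is a convex subset of the Fr\'echet space $C^k([0,1],\R^n)$. A convex subset of a topological vector space is contractible (straight-line homotopy to any chosen base point, e.g.\ the constant tuple corresponding to Example~\ref{ex1}), so both spaces are contractible; the inclusion between two contractible spaces is automatically a homotopy equivalence (a map between weakly contractible spaces inducing isomorphisms on all homotopy groups, hence a homotopy equivalence since these are manifolds modeled on metrizable, hence CW-homotopy-type, spaces). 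I would phrase the argument directly through the coefficient spaces: the inclusion $\mathcal{L}\SS^n\hookrightarrow\hat{\mathcal{L}}\SS^n$ corresponds to the inclusion of smooth tuples (with $C^k$ topology) into $L^2$ tuples, and both of these coefficient spaces are convex, hence contractible, so the inclusion is a homotopy equivalence.

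\medskip

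\emph{Contractibility.} This now follows: $\hat{\mathcal{L}}\SS^n\cong L^2([0,1],\R^n)$ is a Hilbert space, hence contractible, and $\mathcal{L}\SS^n$ is homotopy equivalent to it.

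\medskip

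\textbf{Main obstacle.} The genuinely delicate point is the passage between the two \emph{topologies}: showing that the homeomorphism of $\mathcal{L}\SS^n$ with smooth-coefficient tuples is compatible with the $C^k$ topology on curves and with the subspace topology induced from $\hat{\mathcal{L}}\SS^n$, and in particular that the inclusion $\mathcal{L}\SS^n\hookrightarrow\hat{\mathcal{L}}\SS^n$ is continuous. This requires that the solution operator of~\eqref{ivp} sends $C^k$-convergence of coefficients to $C^k$-convergence of Frenet frames and hence of curves, and $L^2$-convergence of coefficients to the relevant convergence in $\hat{\mathcal{L}}\SS^n$ — facts about continuous dependence of ODE solutions on parameters, which are standard but must be invoked carefully. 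I expect this is treated in~\cite{Sal13} for $n=2$ and extends verbatim, as the excerpt indicates; everything else (density of smooth functions in $L^2$, convexity implies contractibility, and a map between contractible manifolds is a homotopy equivalence) is routine.
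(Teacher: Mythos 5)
Your proof is correct, but it takes a genuinely different route from the paper's. The paper handles the homotopy equivalence by invoking Proposition 2.1 of \cite{Sal13} (a general criterion under which a dense inclusion is a homotopy equivalence), and only afterwards deduces contractibility of $\mathcal{L}\SS^n$ from the known contractibility of $\hat{\mathcal{L}}\SS^n\cong L^2([0,1],\R^n)$. You instead argue directly that \emph{both} spaces are contractible — each being, under the coefficient correspondence, a convex subset of a linear topological space — and then observe that any continuous map between contractible spaces is automatically a homotopy equivalence; density is then needed only for the first assertion. This is more self-contained and sidesteps the external lemma. One inaccuracy worth flagging: the topology that $C^k$-convergence of curves induces on the coefficient tuples $(\hat c_1,\dots,\hat c_n)$ is not the $C^k$-topology on $C^\infty([0,1],\R^n)$ as you write, but rather a mixed topology on the order of $\prod_{i=1}^n C^{k-i}$: each $c_i$ depends on $\gamma,\dots,\gamma^{(i)}$, so $C^k$-convergence of $\gamma$ controls only $k-i$ derivatives of $c_i$, and conversely $\gamma^{(k)}$ is built from $c_i^{(k-i)}$ and the Frenet frame. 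This does not damage your argument — the mixed topology is still a linear topology, so the set of smooth tuples is still a convex subset of a topological vector space and hence contractible by the straight-line homotopy — but it should be stated correctly, and it is exactly the content you flag as the ``delicate point'' (this is presumably what Proposition 2.1 of \cite{Sal13} packages). Finally, the appeal to CW-homotopy-type is unnecessary: between spaces that are honestly contractible (not merely weakly contractible) any continuous map is already a homotopy equivalence, with no Whitehead-type argument required.
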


The first part of this proposition is obvious (it follows from the density of smooth functions in the space of square-integrable functions). Using this, and Proposition 2.1 of~\cite{Sal13}, the second part of the statement follows.

Hence in the sequel, we will identify the topological spaces $\mathcal{L}\SS^n$ and $\hat{\mathcal{L}}\SS^n$, and both will be simply denoted by $\mathcal{L}\SS^n$. In practice, this means that we will consider that our locally convex curves are smooth, but in the construction, we will not be bothered by the loss of smoothness due to juxtaposition of curves. In the same way, we identify $\hat{\mathcal{L}}\SS^n(Q)$ and $\hat{\mathcal{L}}\SS^n(z)$ with $\mathcal{L}\SS^n(Q)$ and $\mathcal{L}\SS^n(z)$, for $Q \in SO_{n+1}$ and $z \in \mathrm{Spin}_{n+1}$. 

\medskip 

To conclude, let us consider the space of generic curves $\mathcal{G}\SS^n$. The only thing we used to define $\hat{\mathcal{L}}\SS^n$ was Proposition~\ref{propjacobian} that allows us to identify Frenet frame curves of locally convex curves with Jacobian curves, that is, curves whose logarithmic derivatives belongs to $\mathfrak{J}$. But recall that Proposition~\ref{propjacobian2} allows us to identify Frenet frame curves of generic curves with quasi-Jacobian curves, that is, curves whose logarithmic derivatives belong to $\mathfrak{Q}$; we can thus define in a similar manner the topological space $\hat{\mathcal{G}}\SS^n$. Proposition~\ref{2topology} also holds in this case, and hence we will also identify $\mathcal{G}\SS^n$, equipped with the $C^k$ topology for $k \geq n$, with $\hat{\mathcal{G}}\SS^n$.


%
%

\chapter{Statement of the results}
\label{chapter4}
Given $z \in \mathrm{Spin}_{n+1}$, the topology of the spaces $\mathcal{G}\SS^n(z)$ is well understood. The difficult question is the topology of the spaces $\mathcal{L}\SS^n(z)$. This chapter is devoted to recall some previous results on this direction and the statement of our main results.

\section{Known results for any $n \geq 2$}\label{s41}

Recall that $\mathcal{L}\SS^n(I)$ is the space of locally convex curves in $\SS^n$ whose initial and final Frenet frame are equal to the identity matrix in $SO_{n+1}$.

\begin{theorem}[M. Z. Shapiro, \cite{Sha93}]\label{thmsha}
The space $\mathcal{L}\SS^n(I)$ has $3$ connected components if $n$ is even, and $2$ if $n$ is odd. 
\end{theorem}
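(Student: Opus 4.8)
The plan is to study the space $\mathcal{L}\SS^n(I)$ through its correspondence with Jacobian curves $\Gamma : [0,1] \to SO_{n+1}$ starting and ending at the identity (Proposition~\ref{propjacobian}), and to relate the connected components to a homotopy-lifting statement in $SO_{n+1}$. The key observation is that $\mathcal{L}\SS^n \simeq \hat{\mathcal{L}}\SS^n$ is contractible (Proposition~\ref{2topology}), so the natural map $\mathcal{L}\SS^n(I) \to \Omega SO_{n+1}$ (sending a locally convex curve to its Frenet frame path, which is a loop based at $I$ since $\mathcal{F}_\gamma(0) = \mathcal{F}_\gamma(1) = I$) fits into a fibration-like picture: $\mathcal{L}\SS^n(I)$ is, up to the relevant homotopy, the fiber of the evaluation-type map from the contractible space $\mathcal{L}\SS^n$ to $SO_{n+1}$, $\gamma \mapsto \mathcal{F}_\gamma(1)$. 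Hence $\pi_0(\mathcal{L}\SS^n(I))$ should be controlled by $\pi_1(SO_{n+1})$ together with a correction coming from the fact that the Frenet frame loops that arise are constrained (their logarithmic derivatives lie in $\mathfrak{J}$, which is an open convex cone condition on the subdiagonal entries).

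First I would make precise the comparison between $\mathcal{L}\SS^n(I)$ and the loop space $\Omega_I SO_{n+1}$. The inclusion of Jacobian loops into all loops should be shown to be surjective on $\pi_0$ — any loop in $SO_{n+1}$ can be $C^0$-approximated, after a suitable reparametrization/perturbation, by one whose logarithmic derivative is tridiagonal with positive subdiagonal, because the positivity and tridiagonality can be arranged on a short initial arc (a genuinely locally convex "spring" arc as in Example~\ref{ex2}) followed by a generic path and a closing arc; this uses the standard fact (essentially the $h$-principle / convex integration flavor argument already implicit in~\cite{SS12}, \cite{Sha93}) that the constraint $\Lambda(t) \in \mathfrak{J}$ is "ample" enough not to obstruct the homotopy class. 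So $\pi_0(\mathcal{L}\SS^n(I)) \twoheadrightarrow \pi_0(\Omega_I SO_{n+1}) = \pi_1(SO_{n+1}) = \Z_2$. The real content is to count the fibers of this surjection, i.e. how many components of $\mathcal{L}\SS^n(I)$ map to each class in $\Z_2$.

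The counting of fibers is where the parity of $n$ enters, and this is the main obstacle. The point is that inside the component of contractible loops (the trivial class in $\pi_1 SO_{n+1}$), there is an extra distinction: the "convex" locally convex curves — those realizing a simple closed curve, like $\xi$ in Example~\ref{exclosed} — form a separate component, and this component exists precisely when a \emph{closed} globally convex curve exists on $\SS^n$, which the text flags happens only for $n$ even. Concretely, for $n$ even the closed convex curve $\xi$ of Example~\ref{exclosed} lies in $\mathcal{L}\SS^n(I)$ after an appropriate choice of parametrization and rotation to normalize the endpoint frame, and one shows via a convexity/intersection-number argument (a hyperplane meets a convex curve in at most $n$ points, a rigidity that cannot be destroyed by a path of locally convex curves without passing through a non-convex barrier) that it cannot be connected to a "non-simple" locally convex curve in the same $\pi_1$-class; for $n$ odd no such closed convex curve exists, so that component is absent. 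Thus: trivial $\pi_1$-class splits into $2$ components when $n$ is even and remains $1$ component when $n$ is odd, while the non-trivial $\pi_1$-class contributes exactly $1$ component in all cases. This gives $2+1 = 3$ for $n$ even and $1+1 = 2$ for $n$ odd.

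The hard part will be rigorously establishing that the convex component is genuinely isolated — i.e. that any path in $\mathcal{L}\SS^n(I)$ starting at a convex curve and ending at a non-convex one must leave $\mathcal{L}\SS^n(I)$, and dually that there are no \emph{further} components beyond these. For the "no further components" direction I would invoke the deformation arguments of M. Z. Shapiro~\cite{Sha93}: using the Bruhat stratification (Theorem~\ref{Bruhat2}) and the operation of cutting/adding small convex arcs, every locally convex curve can be normalized so that its Frenet frame spends most of its time in the top-dimensional Bruhat cell, and then one reduces to finitely many model curves, each identifiable with $\xi$ (for the convex class) or with a standard "once-around with an extra loop" curve; two models in the same $\pi_1$-class and the same convexity type are then connected by an explicit homotopy. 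Since this is precisely the theorem attributed to Shapiro in the excerpt, I would at this point simply cite~\cite{Sha93} rather than reproduce the full stratification argument, and present the proof above as the conceptual skeleton it rests on.
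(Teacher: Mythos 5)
The paper does not actually prove this statement: it is cited as a known result from~\cite{Sha93} (and the related Theorem~\ref{thmani}) and used as a black box, so there is no ``paper proof'' to compare against. Your sketch is a plausible outline of the strategy one would attribute to Shapiro, but as you yourself note it defers the two substantive points (that the convex curves form an isolated component, and that there are no components beyond the ones you list) to~\cite{Sha93}; those two points are exactly the content of the theorem, so what you have is a roadmap, not a proof.

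There is also a concrete error in the roadmap. You place the convex/non-convex dichotomy inside the \emph{trivial} class of $\pi_1(SO_{n+1})$, i.e.\ inside $\mathcal{L}\SS^n(\1)$. For $n=2$ this is backwards, and it contradicts a statement the paper makes explicitly. The circle $\sigma_c$ ($0<c<2\pi$) traveled once is convex, and its lifted Frenet frame ends at $-\1$ (the paper records in \S\ref{s81} that $\sigma_c^2\in\mathcal{L}\SS^2(\1)$, hence $\sigma_c\in\mathcal{L}\SS^2(-\1)$). Theorem~\ref{thmlit} states it outright: $\mathcal{L}\SS^2(\1)$ is a single non-convex component, while $\mathcal{L}\SS^2(-\1)=\mathcal{L}\SS^2(-\1)_c\sqcup\mathcal{L}\SS^2(-\1)_n$, the side carrying the convex component, corresponds to the \emph{non}-trivial loop class. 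Your totals ($2+1=3$ and $1+1=2$) happen to survive this swap by symmetry, but the mislabeling is a sign the argument was not tested against the one case where the answer is spelled out in the text.

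A cleaner route through the paper's stated framework avoids the $h$-principle hand-waving entirely: write $\mathcal{L}\SS^n(I)=\mathcal{L}\SS^n(\1)\sqcup\mathcal{L}\SS^n(-\1)$ (a genuine disjoint union of clopen sets, since the lifted Frenet endpoint is a continuous map to the discrete fiber $\{\1,-\1\}$), and apply Theorem~\ref{thmani}: each piece has exactly two components if it contains a convex curve and one otherwise. A convex curve in $\mathcal{L}\SS^n(I)$ is a closed convex curve on $\SS^n$; by Example~\ref{exclosed} and the $A^\top=A$ argument following Theorem~\ref{ani2} such curves exist iff $n$ is even, and when they do exist they all land in the same one of the two lifts (the convex piece is contractible). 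This gives $1+2=3$ for $n$ even and $1+1=2$ for $n$ odd in one line. Of course Theorem~\ref{thmani} is itself cited to~\cite{Sha93} and~\cite{Ani98}, so this also does not eliminate the external reference---but it shows where the real content sits and replaces the surjectivity-on-$\pi_0$ discussion by a purely formal decomposition.
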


The fact that $\mathcal{L}\SS^n(I)$ has $3$ connected components when $n$ is even is related to the existence of closed convex curves on any even-dimensional sphere (such examples were given in Example~\ref{exclosed}, Chapter~\ref{chapter3}, \S\ref{s32}). Let us recall also the following result.

\begin{theorem}[M. Z. Shapiro, \cite{Sha93}, Anisov, \cite{Ani98}]
\label{thmani}
The space $\mathcal{L}\SS^n(z)$ has exactly two connected components if there exist convex curves in $\mathcal{L}\SS^n(z)$, and one otherwise. If $\mathcal{L}\SS^n(z)$ has two connected components, one is made of convex curves, and this component is contractible.   
\end{theorem}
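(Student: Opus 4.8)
The plan is to establish this in two parts: first the structural statement (existence of convex curves forces exactly two components, one of which consists of convex curves), and then the contractibility of that distinguished component.

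\medskip

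\textbf{Step 1: The convex component.} Suppose $\mathcal{L}\SS^n(z)$ contains at least one convex curve $\gamma_0$. I would first argue that the set $\mathcal{C} \subseteq \mathcal{L}\SS^n(z)$ of convex curves is both open and closed in $\mathcal{L}\SS^n(z)$, hence a union of connected components. Openness should follow because being convex is, via Corollary~\ref{corconv} and the hyperplane-intersection characterization, a stable property under small perturbations of the control data $(c_1,\dots,c_n)$ in the $L^2$ topology: a hyperplane meeting a convex curve in exactly $n$ points (with the endpoints excluded) continues to do so after a small $C^k$-change, and one uses a compactness argument over the sphere of coefficient vectors $(a_1,\dots,a_{n+1})$ to get uniformity. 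Closedness requires more care, since a limit of convex curves could \emph{a priori} acquire an extra intersection ``at infinity'' or at an endpoint; here one uses that the limit is still locally convex (the $L^2$-limit of the $c_i$ stays positive away from a null set, or one invokes the submanifold structure) together with an argument that a non-convex locally convex curve has a hyperplane meeting it in $\geq n+1$ points in a \emph{stable} way, so convexity is also closed. Then I would show $\mathcal{C}$ is connected — in fact contractible — which is Step 3 and simultaneously shows $\mathcal{C}$ is a single component.

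\medskip

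\textbf{Step 2: Exactly two components.} Granting that the convex curves form one component, it remains to show the complement $\mathcal{L}\SS^n(z) \setminus \mathcal{C}$ of non-convex locally convex curves is connected. The standard approach here (going back to M. Z. Shapiro and Anisov) is a ``grafting'' or loop-insertion argument: given any non-convex locally convex curve, one shows it can be deformed, within the non-convex locus, to a fixed normal form. The key mechanism is that inserting a small locally convex loop into a curve does not change its Frenet frame endpoints (after a suitable homotopy) but does destroy convexity, and conversely any two non-convex curves differ, up to homotopy, by such insertions; one also uses that the relevant ``additive'' structure on loops (to be developed later in the thesis, cf.\ \S\ref{s81}) collapses in the non-convex regime. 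I expect this to be the technical heart, and I would lean on the fact that the analogous statement for immersions/generic curves (where the ambient space $\mathcal{G}\SS^n(z)$ has a known, simple topology) provides the target: the inclusion $\mathcal{L}\SS^n(z) \setminus \mathcal{C} \hookrightarrow \mathcal{G}\SS^n(z)$ should be shown to be a $\pi_0$-isomorphism, and $\mathcal{G}\SS^n(z)$ is connected.

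\medskip

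\textbf{Step 3: Contractibility of the convex component.} To contract $\mathcal{C}$, I would use the description of convex curves via their logarithmic derivatives, i.e.\ globally Jacobian curves (Definition~\ref{globjacobian}). The model convex curve is the one with constant logarithmic derivative, $\Gamma_\Lambda(t) = \exp(t\Lambda)$ with $\Lambda \in \mathfrak{J}$, as in Example~\ref{ex2} and the discussion following Proposition~\ref{unique}. The strategy is: given a convex $\gamma \in \mathcal{C}$ with control data $(c_1(t),\dots,c_n(t))$, scale the $c_i$ by a parameter $s \in [0,1]$ toward constants (say, linearly interpolate the $\hat c_i \in L^2$ toward constant functions matching the required endpoint constraint $\tilde{\mathcal F}_\gamma(1) = z$), and check that convexity is preserved throughout — this uses that convexity is a convex-ish condition on coefficient space, or more robustly that one can first deform to the standard convex curve $\xi$ of Example~\ref{ex2} keeping the endpoint fixed by adjusting the $a_i$. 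The main obstacle in this whole proof is \textbf{the closedness of $\mathcal{C}$ and the preservation of convexity along deformations}: local convexity is an open condition and behaves well, but global convexity is a more delicate closed/semi-algebraic condition, and controlling it uniformly (both to show $\mathcal{C}$ is a component and to contract it) is where the real work lies. The connectivity of the non-convex locus (Step 2) is the second-hardest point, and is essentially the content of the Shapiro–Anisov component count that the theorem attributes to \cite{Sha93} and \cite{Ani98}.
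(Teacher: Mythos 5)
The paper cites this theorem from \cite{Sha93} and \cite{Ani98} without reproving it, so there is no in-paper proof to compare against; I will assess your plan on its own terms. Your high-level structure (convex curves form a clopen subset, the non-convex locus is connected, the convex piece is contractible) is the standard one. But Steps~1 and~2 mostly name the correct ideas rather than carry them out: the stability-of-convexity argument in Step~1 needs a uniformity statement near the endpoints $t=0,1$ (where every locally convex curve approaches the boundary of the open Bruhat cell of $A^\top$, so the window $\tilde\varepsilon$ from Propositions~\ref{propchop}--\ref{propchop2} must be controlled uniformly as the curve varies), and Step~2 --- that any two non-convex curves are joinable by a path of non-convex curves, equivalently that the inclusion into $\mathcal{G}\SS^n(z)$ is a $\pi_0$-isomorphism on the non-convex part --- is essentially a restatement of what must be proved. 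The paper's own substitute for that argument is the looseness/tightness machinery culminating in Proposition~\ref{tightconvex}, which it develops much later; describing the grafting strategy is not the same as establishing it.

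The concrete gap is Step~3. Linearly interpolating the control data $\hat c_i \in L^2$ toward constants does \emph{not} preserve the endpoint constraint $\tilde{\mathcal{F}}_\gamma(1)=z$: the final frame depends nonlinearly on $(c_1,\dots,c_n)$ through the solution of $\mathcal{F}'=\mathcal{F}\Lambda$, so the intermediate curves in your proposed homotopy fall outside $\mathcal{L}\SS^n(z)$ altogether. Even if you repaired that (e.g.\ by a compensating time reparametrization), the assertion that ``convexity is a convex-ish condition on coefficient space'' is not defensible: the set of control data producing a convex curve with a prescribed endpoint is not a convex subset of $L^2([0,1],\R^n)$, and you would still have to prove that convexity persists along the straight-line path, which is no easier than the theorem itself. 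The known arguments for contractibility of the convex component exploit the rigid structure of convex arcs --- the Frenet frame remains in a single top-dimensional Bruhat cell, equivalently the associated linear ODE is disconjugate --- and contract by reparametrization and projective/upper-triangular transformations, which is closer in spirit to the chopping and Bruhat-action operations of Chapter~\ref{chapter5} than to a linear deformation of the logarithmic derivative.
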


To formulate precisely the results of~\cite{SS12}, we need some notations. For a positive integer $m$ and an integer $s \in \Z$ such that $|s|\leq m$ and $s \equiv m$ (mod $2$), let
\[ M^m_s=\mathrm{diag}(-1, \dots, -1, 1, \dots, 1) \in O_{n+1} \]
be the diagonal matrix of size $m$, whose first $(m-s)/2$ entries are equal to $-1$, and last $(m+s)/2$ entries are equal to $1$. The trace and the signature of $M_s^m$ are both equal to $s$, and $M^m_s \in SO_{n+1}$ if and only if $s\equiv m$ (mod $4$). When $M^m_s \in SO_{n+1}$, let us denote by $\pm w_s^m \in \mathrm{Spin}_{n+1}$ its two preimages. The two preimages of the identity matrix $I \in SO_{n+1}$ will be denoted by $\mathbf{1}$ and $-\mathbf{1}$. The first result of~\cite{SS12} can be stated as follows. 

\begin{theorem}[Saldanha-B. Shapiro, \cite{SS12}]\label{thmss1}
For any $n \geq 2$, any $Q \in SO_{n+1}$ and any $z \in \mathrm{Spin}_{n+1}$ one has
\begin{itemize}
\item[(i)] Each space $\mathcal{L}\SS^n(Q)$ is homeomorphic to one of the spaces $\mathcal{L}\SS^n(M_s^{n+1})$, where $|s|\leq n+1$ and $s \equiv n+1$ (mod $4$). 
\item[(ii)] For $n$ even, each space $\mathcal{L}\SS^n(z)$ is homeomorphic to one of the spaces $\mathcal{L}\SS^n(\mathbf{1})$, $\mathcal{L}\SS^n(-\mathbf{1})$, $\mathcal{L}\SS^n(w_{n-3}^{n+1})$, $\mathcal{L}\SS^n(w_{n-7}^{n+1})$, $\dots$, $\mathcal{L}\SS^n(w_{-n+5}^{n+1})$, $\mathcal{L}\SS^n(w_{-n+1}^{n+1})$.  
\item[(iii)] For $n$ odd, each space $\mathcal{L}\SS^n(z)$ is homeomorphic to one of the spaces $\mathcal{L}\SS^n(\mathbf{1})$, $\mathcal{L}\SS^n(-\mathbf{1})$, $\mathcal{L}\SS^n(w_{n-3}^{n+1})$, $\mathcal{L}\SS^n(w_{n-7}^{n+1})$, $\dots$, $\mathcal{L}\SS^n(w_{-n+3}^{n+1})$, $\mathcal{L}\SS^n(w_{-n-1}^{n+1})$, $\mathcal{L}\SS^n(-w_{-n-1}^{n+1})$.  
\end{itemize}
\end{theorem}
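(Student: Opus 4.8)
The plan, following~\cite{SS12}, is to first use the Bruhat decomposition to reduce to finitely many spaces, and then to use the operation of attaching a short convex arc to collapse that finite list down to the diagonal representatives. For the first step, recall from Theorem~\ref{Bruhat1} that each $Q\in SO_{n+1}$ lies in a unique Bruhat cell $\mathrm{Bru}_P$ with $P\in B_{n+1}^+$ a signed permutation matrix, and similarly by Theorem~\ref{Bruhat2} each $z\in\mathrm{Spin}_{n+1}$ lies in a unique $\mathrm{Bru}_{\tilde P}$ with $\tilde P\in\tilde B_{n+1}^+$. Proposition~\ref{bruhathomeo} then gives $\mathcal{L}\SS^n(Q)\cong\mathcal{L}\SS^n(P)$ and $\mathcal{L}\SS^n(z)\cong\mathcal{L}\SS^n(\tilde P)$, so up to homeomorphism there are at most $|B_{n+1}^+|=2^n(n+1)!$ spaces of the first kind and at most $|\tilde B_{n+1}^+|=2^{n+1}(n+1)!$ of the second; finiteness is automatic, and the real content is to collapse these lists.

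The collapsing step rests on the fact that a sufficiently short subarc of a locally convex curve is globally convex, together with a description of the set of final Frenet frames of convex arcs issuing from the identity --- call these the \emph{convex elements} --- which form a union of Bruhat cells, one of them top-dimensional (this is the business of Chapter~\ref{chapter7}, \S\ref{s71}). Given $\gamma\in\mathcal{L}\SS^n(Q)$, chopping off a short convex tail (the surgery of Chapter~\ref{chapter5}, \S\ref{s53}) and gluing convex arcs back on can be organized, for each convex element $C$, into a homeomorphism $\mathcal{L}\SS^n(Q)\cong\mathcal{L}\SS^n(QC)$; the analogous surgery at the start of the curve, composed with translation by the new initial frame as in Proposition~\ref{proplocconv}(ii), gives likewise $\mathcal{L}\SS^n(Q)\cong\mathcal{L}\SS^n(CQ)$. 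Hence, combining with Proposition~\ref{bruhathomeo}, the homeomorphism type of $\mathcal{L}\SS^n(Q)$ depends only on the class of $P$ under Bruhat equivalence together with left and right multiplication by convex elements. A combinatorial analysis of signed permutation matrices then shows that each such class contains exactly one diagonal matrix $M^{n+1}_s$ with $|s|\le n+1$ and $s\equiv n+1\pmod 4$, which is~(i); the time-reversal and duality operations of Chapter~\ref{chapter5}, \S\ref{s51}--\S\ref{s52} are convenient to match up the remaining cases and to see the $s\leftrightarrow -s$ symmetry when $-I\in SO_{n+1}$.

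Parts~(ii) and~(iii) follow by lifting through $\Pi_{n+1}$: each $M^{n+1}_s$ has two preimages $\pm w^{n+1}_s$ (with $w^{n+1}_{n+1}=\mathbf 1$, and with $-I=M^{n+1}_{-n-1}\in SO_{n+1}$ only when $n$ is odd), so the one thing left to decide is, for each admissible $s$, whether $\mathcal{L}\SS^n(w^{n+1}_s)$ and $\mathcal{L}\SS^n(-w^{n+1}_s)$ are homeomorphic. They are exactly when the deck transformation $z\mapsto -z$ can be realized by a convex move keeping one endpoint fixed at $w^{n+1}_s$; running through the convex spins one finds this is possible for every admissible $s$ except $s=n+1$, and, when $n$ is odd, also except $s=-n-1$, which produces precisely the lists stated in~(ii) and~(iii). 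I expect this last point --- tracking the sign under the double cover as the convex surgeries are applied, and then verifying that the resulting count of non-homeomorphic spaces matches the explicit lists --- to be the main obstacle; the Bruhat reduction itself is formal once Proposition~\ref{bruhathomeo} is available.
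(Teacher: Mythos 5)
The paper does not prove this result: it is recalled verbatim from~\cite{SS12}, so there is no in-paper proof to compare against. Judging your sketch on its own terms, the opening and closing moves are fine — Proposition~\ref{bruhathomeo} does give the finite Bruhat reduction, and your description of the spin-lifting issue (deciding for which $s$ the two preimages $\pm w_s^{n+1}$ yield the same space, with $s=n+1$ and, for $n$ odd, $s=-n-1$ being the exceptions) correctly decodes the stated lists. But the central reduction step has a gap.

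You assert that chopping a convex tail and regluing can be packaged, for each convex element $C$, into a homeomorphism $\mathcal{L}\SS^n(Q)\cong\mathcal{L}\SS^n(QC)$. That is not what the chopping surgery of \S\ref{s53} produces. The precise statement the paper records from~\cite{SS12} is Proposition~\ref{chophomeo}: $\mathcal{L}\SS^n(Q)\approx\mathcal{L}\SS^n(\mathbf{chop}^-(Q))\approx\mathcal{L}\SS^n(\Delta^-(Q))$. After chopping, the final frame sits in the Bruhat cell of $\mathbf{chop}^-(Q)=\Delta^-(Q)A$, \emph{not} at something of the form $Q C^{-1}$, so regluing lands you in $\mathcal{L}\SS^n(\Delta^-(Q)\,A\,C)$ rather than in $\mathcal{L}\SS^n(QC)$. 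In fact if your claimed homeomorphism held for arbitrary convex $C$ it would essentially have to factor through $\Delta^-$ again, which it does not: $\Delta^-(QC)\neq\Delta^-(Q)$ in general, and whether $\mathcal{L}\SS^n(\Delta^-(QC))$ and $\mathcal{L}\SS^n(\Delta^-(Q))$ coincide is precisely what one is trying to prove, so the step is circular as written.

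Granting everything the paper actually states, one only gets: Bruhat reduction to signed permutations, then $\Delta^-$ reduces to the $2^n$ matrices of $\mathrm{Diag}_{n+1}^+$, and then time reversal is trivial on diagonals while Arnold duality identifies a diagonal with its reversal. For $n=3$ this yields six orbits among the eight diagonals — $\{I\}$, $\{-I\}$, $\{\mathrm{diag}(-,-,+,+),\mathrm{diag}(+,+,-,-)\}$, $\{\mathrm{diag}(-,+,-,+),\mathrm{diag}(+,-,+,-)\}$, and the two self-reversed $\mathrm{diag}(-,+,+,-)$ and $\mathrm{diag}(+,-,-,+)$ — not the three trace classes $I$, $M_0^4$, $-I$ that the theorem requires. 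The combinatorial analysis you wave at (``each such class contains exactly one diagonal matrix $M_s^{n+1}$'') is exactly where the missing identifications among same-trace diagonals would have to come from, and none of the operations developed in Chapters~\ref{chapter2} or~\ref{chapter5} supplies them. That nontrivial step lives in~\cite{SS12} itself, and your sketch does not reconstruct it.
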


The most general question one can ask at this point is the following one.

\begin{question}\label{mainquestion}
Determine the topology of the spaces appearing in Theorem~\ref{thmss1}.
\end{question}

This is certainly a very difficult problem in general, so we may also ask the following question.

\begin{question}\label{mainquestion0}
Are the topological spaces appearing in Theorem~\ref{thmss1} pairwise non-homeomorphic?
\end{question}

The next result of~\cite{SS12} gives some information on the topology of these spaces. Let us define $\Omega SO_{n+1}(Q)$ (respectively $\Omega \mathrm{Spin}_{n+1}(z)$) to be the space of all continuous curves $\alpha : [0,1] \rightarrow SO_{n+1}$ (respectively $\alpha : [0,1] \rightarrow \mathrm{Spin}_{n+1}$) with $\alpha(0)=I$ and $\alpha(1)=Q$ (respectively $\alpha(0)=\mathbf{1}$ and $\alpha(1)=z$). Using the Frenet frame, we define the following \emph{Frenet frame injections}
\[ \mathcal{F}_{[Q]} : \gamma \in \mathcal{L}\SS^n(Q) \mapsto  \mathcal{F}_\gamma \in \Omega SO_{n+1}(Q)   \]
and
\[ \tilde{\mathcal{F}}_{[z]} : \gamma \in \mathcal{L}\SS^n(z) \mapsto \tilde{\mathcal{F}}_\gamma \in \Omega \mathrm{Spin}_{n+1}(z). \]
It is well-known that different values of $Q \in SO_{n+1}$ (respectively $z \in \mathrm{Spin}_{n+1}$) give rises to homeomorphic spaces  $\Omega SO_{n+1}(Q)$ (respectively $\Omega \mathrm{Spin}_{n+1}(z)$), therefore we can drop $Q$ (respectively $z$) from the notations and write $\Omega SO_{n+1}$ (respectively $\Omega \mathrm{Spin}_{n+1}$).

\begin{theorem}[Saldanha-B. Shapiro, \cite{SS12}]\label{thmss2}
For any $n \geq 2$, any $Q \in SO_{n+1}$ and any $z \in \mathrm{Spin}_{n+1}$, the Frenet frame injections $\mathcal{F}_{[Q]}$ and $\tilde{\mathcal{F}}_{[z]}$ induce surjective homomorphisms between homotopy and homology groups with real coefficients: for any $k \in \N$, the induced homomorphisms
\[ \pi_k(\mathcal{L}\SS^n(Q)) \rightarrow \pi_k(\Omega SO_{n+1}), \quad H_k(\mathcal{L}\SS^n(Q),\R) \rightarrow H_k(\Omega SO_{n+1},\R)  \]
and 
\[ \pi_k(\mathcal{L}\SS^n(z)) \rightarrow \pi_k(\Omega \mathrm{Spin}_{n+1}), \quad H_k(\mathcal{L}\SS^n(z),\R) \rightarrow H_k(\Omega \mathrm{Spin}_{n+1},\R)  \]
are surjective. Moreover, for $|s|\leq 1$, the Frenet frame injections $\mathcal{F}_{[M_s^{n+1}]}$ and $\tilde{\mathcal{F}}_{[w_s^{n+1}]}$ are weak homotopy equivalence, and therefore we have homeomorphisms
\[ \mathcal{L}\SS^n(M_s^{n+1}) \approx \Omega SO_{n+1}, \quad \mathcal{L}\SS^n(w_s^{n+1}) \approx \Omega \mathrm{Spin}_{n+1}.  \] 
\end{theorem}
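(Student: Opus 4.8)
The plan is to prove surjectivity of the induced maps by a loop--insertion construction, and then to upgrade this to a weak equivalence in the cases $|s|\le1$ by arranging that the inserted loops can be removed again; throughout, $Q$ and $z$ play symmetric roles (the spin statement is the matrix statement read one level up in the double cover $\Pi_{n+1}$), so I describe the spin case. First I would replace the target by a more flexible model. Up to homotopy, $\Omega\mathrm{Spin}_{n+1}$ is the space of lifted Frenet frame paths of \emph{generic} curves with the prescribed endpoints: the final-frame evaluation $\mathcal{G}\SS^n\to\mathrm{Spin}_{n+1}$, $\gamma\mapsto\tilde{\mathcal{F}}_\gamma(1)$, is a Serre fibration — the relation defining $\mathfrak{Q}$ in \S\ref{s35} is open and imposes nothing on the last curvature $c_n$, so one can freely steer the terminal frame — and $\mathcal{G}\SS^n$ is contractible by the analogue of Proposition~\ref{2topology}; hence each fibre satisfies $\mathcal{G}\SS^n(z)\simeq\Omega\mathrm{Spin}_{n+1}$. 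This reduces the theorem to understanding the inclusion $\mathcal{L}\SS^n(z)\hookrightarrow\mathcal{G}\SS^n(z)$.

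For surjectivity on $\pi_k$, take $f:S^k\to\mathcal{G}\SS^n(z)$ and deform it into $\mathcal{L}\SS^n(z)$. The engine is the operation — sketched in \S\ref{s53} and developed in \S\ref{s81} and Chapters~\ref{chapter5}--\ref{chapter8} — of \emph{adding a pair of loops}: one inserts at a prescribed finite set of interior parameters a short locally convex spiral whose lifted Frenet frame makes one full excursion and returns, doing so in canceling pairs so that neither the endpoints nor the homotopy class of the frame path in $\mathrm{Spin}_{n+1}$ (hence the point of $\mathcal{G}\SS^n(z)$) changes, while the modified curve becomes locally convex on the arcs where $c_n$ had failed to be positive — the only obstruction to local convexity for a generic curve, by the definitions of $\mathfrak{J}$ and $\mathfrak{Q}$. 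Since $S^k$ is compact the family $\{f(p)\}$ has uniformly bounded ``non-convexity'', so inserting $N$ pairs — with $N$ fixed for the whole family and the spirals' size scaled continuously in $p$ — produces $\tilde f:S^k\to\mathcal{L}\SS^n(z)$ homotopic to $f$ inside $\mathcal{G}\SS^n(z)$. Surjectivity on $H_k(\,\cdot\,,\R)$ is obtained either by running the same parametrized insertion on the singular chains of a cycle, or from the homotopy statement together with the fact that $\Omega\mathrm{Spin}_{n+1}$, being the loop space of a compact Lie group, is rationally a product of Eilenberg--MacLane spaces, onto which a map surjective on $\pi_\ast\otimes\R$ is automatically surjective on real homology.

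For the cases $|s|\le1$ one must also show that $\tilde{\mathcal{F}}_{[w_s^{n+1}]}$ (and, in the matrix version, $\mathcal{F}_{[M_s^{n+1}]}$) is injective on $\pi_k$: a family $g:S^k\to\mathcal{L}\SS^n(w_s^{n+1})$ null-homotopic in $\mathcal{G}\SS^n(w_s^{n+1})\simeq\Omega\mathrm{Spin}_{n+1}$ should already be null-homotopic in $\mathcal{L}\SS^n(w_s^{n+1})$. One fills in the null-homotopy in $\mathcal{G}\SS^n$ by the spiral insertion above and then must \emph{remove} the auxiliary spirals without leaving $\mathcal{L}\SS^n$, using the chopping operation of \S\ref{s53} (producing a locally convex curve by cutting a little terminal arc). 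It is here that the arithmetic condition $|s|\le1$ is essential: it is precisely the condition under which a loop added at the terminal end can be absorbed back into the curve, whereas for the other admissible endpoints this absorption is obstructed — which is exactly why Theorem~\ref{thmss1} produces more than one homeomorphism type, these being genuinely distinct, as \cite{Sal13} shows for $n=2$. A weak equivalence between these Hilbert manifolds is then a homotopy equivalence by Whitehead's theorem (they are ANRs, hence of CW type) and, by the classification of separable Hilbert manifolds up to homeomorphism by homotopy type, a homeomorphism.

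The main obstacle is the parametrized, uniformly controlled version of the loop surgery: constructing the inserted spirals so that the whole insertion (and, for $|s|\le1$, the subsequent removal) is continuous over the compact parameter space $S^k$, with a single bound $N$ valid throughout; and tracking precisely how adding a loop translates the terminal element of $\mathrm{Spin}_{n+1}$, so as to know when the removal is possible. This last bookkeeping — matching the monodromy of an added loop against the Bruhat cell of the endpoint — is where the Bruhat decomposition of Chapter~\ref{chapter2} is genuinely needed.
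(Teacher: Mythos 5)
You should note first that the paper does not prove Theorem~\ref{thmss2} itself: it is stated as a result of Saldanha and B.~Shapiro and cited to \cite{SS12}, so there is no internal proof to compare with; the question is whether your outline recovers the argument of \cite{SS12}.

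Your identification of $\mathcal{G}\SS^n(z)$ with $\Omega\mathrm{Spin}_{n+1}$ and your plan of proving surjectivity by parametrized loop insertion are in the right spirit. But merely inserting spirals at finitely many parameter values does not by itself make the family locally convex on the subintervals where $c_n$ was negative; the spirals must then be \emph{spread} along the curve (the h-principle argument the paper alludes to after Proposition~\ref{tight12}), and it is this spreading homotopy --- which stays inside $\mathcal{G}\SS^n(z)$ throughout and only at the end lands in $\mathcal{L}\SS^n(z)$ --- that carries the surjectivity. Your account conflates the insertion with the spreading.

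The genuine gap is in the weak-equivalence part. To deduce injectivity on $\pi_k$ you need a disk in $\mathcal{L}\SS^n(z)$ bounding the original sphere $g$; the spreading construction produces a disk bounding $g^{\ast}$ (the family with loops attached), so the missing step is precisely that $g$ be \emph{loose}, i.e.\ homotopic to $g^{\ast}$ inside $\mathcal{L}\SS^n(z)$, and you propose to supply this by removing the added spirals with the chopping operation of \S\ref{s53}. That cannot work: chopping, by Definition~\ref{chop} and Propositions~\ref{propchop} and~\ref{chophomeo}, replaces the final frame $Q$ by the signed permutation matrix $\mathbf{chop}^-(Q)$ and hence is a homeomorphism onto a \emph{different} space $\mathcal{L}\SS^n(\mathbf{chop}^-(Q))$, not a self-homotopy within $\mathcal{L}\SS^n(z)$. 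The added loops can only be removed by a homotopy with endpoints held fixed, which is exactly what looseness asserts, so your argument is circular at this point. Moreover, the pointwise condition under which a single added loop can be homotoped away is non-convexity of $z$ (Proposition~\ref{tightconvex}), and this is \emph{not} equivalent to $|s|\le 1$: for $n=2$ the spin $\1\in\mathrm{Spin}_3$ is not a convex spin (the convex component of $\mathcal{L}\SS^2(I)$ lies in $\mathcal{L}\SS^2(-\1)$, by Theorems~\ref{thmlit} and~\ref{thmani}), yet $\mathcal{L}\SS^2(\1)$ is not weakly equivalent to $\Omega\SS^3$ by Theorem~\ref{thmsal}. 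So the family version of looseness needed for injectivity carries an obstruction strictly finer than pointwise non-convexity, and it is this finer condition --- not anything about terminal chopping --- that the hypothesis $|s|\le 1$ captures. Your proposal does not identify it.
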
  

Let us recall that a map is a weak homotopy equivalence if it induces isomorphisms between homotopy groups, and that two Hilbert manifolds are weakly homotopically equivalent if and only if they are homeomorphic (see \cite{BH70}).

Observe that Theorem~\ref{thmss2} gives an answer to Question~\ref{mainquestion} for only one space; in particular it does not give any answer to Question~\ref{mainquestion0}. However, the above result will allow us to ask another interesting question. 

First let us remark that the Frenet frame injections are still defined for generic curves, and it follows from the work of Hirsch, Smale and Gromov (\cite{Sma59b}, \cite{Hir59} and \cite{Gro86}) that in this case, they always define homeomorphisms. Let us state this result properly.

\begin{theorem}\label{HSG}
For any $n \geq 2$, any $Q \in SO_{n+1}$ and any $z \in \mathrm{Spin}_{n+1}$, the Frenet frame injections defined by
 \[ \mathcal{F}_{[Q]} : \gamma \in \mathcal{G}\SS^n(Q) \mapsto \mathcal{F}_\gamma \in \Omega SO_{n+1}(Q) \simeq \Omega SO_{n+1} \]
and
\[ \tilde{\mathcal{F}}_{[z]} : \gamma \in \mathcal{G}\SS^n(z) \mapsto \tilde{\mathcal{F}}_\gamma \in \Omega \mathrm{Spin}_{n+1}(z) \simeq \Omega \mathrm{Spin}_{n+1}
 \]
are homotopy equivalences.
\end{theorem}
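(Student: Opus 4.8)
The plan is to deduce Theorem~\ref{HSG} from the Hirsch--Smale immersion theory (together with the parametric h-principle, in the form due to Gromov) rather than proving anything from scratch. The key point is that a generic curve $\gamma:[0,1]\to\SS^n$ is, up to the data recorded by its Frenet frame, the same thing as a non-degenerate map of a specific type; and the space of such maps, with fixed boundary conditions, satisfies an h-principle saying that the inclusion into the corresponding space of \emph{formal} solutions is a weak homotopy equivalence. The space of formal solutions here is precisely a path space in a frame bundle, which for $\SS^n$ (being parallelizable-adjacent: the relevant bundle is the orthonormal frame bundle $SO_{n+1}\to\SS^n$) is homotopy equivalent to $\Omega SO_{n+1}$. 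Since we are dealing with Hilbert manifolds, a weak homotopy equivalence is an actual homotopy equivalence by \cite{BH70}, so the statement upgrades automatically.

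First I would recall the jet-theoretic reformulation. A curve $\gamma$ is generic (i.e.\ $(n-1)$-order free) exactly when the map $t\mapsto(\gamma(t),\gamma'(t),\dots,\gamma^{(n-1)}(t))$ avoids the ``degenerate'' subset where these vectors are linearly dependent; equivalently, the Frenet frame curve $\mathcal F_\gamma:[0,1]\to SO_{n+1}$ exists and is smooth, and one checks (using the Gram--Schmidt relation $(\gamma,\gamma',\dots,\gamma^{(n)})=\mathcal F_\gamma R_\gamma$ of \S\ref{s33}, with $R_\gamma$ upper triangular but with possibly-zero bottom entry) that generic curves correspond bijectively and continuously to quasi-Jacobian curves starting at $I$ (this is Proposition~\ref{propjacobian2}), hence the Frenet frame injection $\mathcal F_{[Q]}$ and its lifted version $\tilde{\mathcal F}_{[z]}$ really are injections onto the subspace of $\Omega SO_{n+1}(Q)$ (resp.\ $\Omega \mathrm{Spin}_{n+1}(z)$) consisting of quasi-holonomic curves. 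So the theorem is equivalent to the assertion that the inclusion of quasi-holonomic curves into all curves in $\Omega SO_{n+1}$ (resp.\ $\Omega\mathrm{Spin}_{n+1}$) is a weak homotopy equivalence.

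Next I would invoke the h-principle. A quasi-holonomic curve is (a lift of) a solution of the open differential relation ``the logarithmic derivative lies in $\mathfrak Q$,'' i.e.\ is tridiagonal and skew-symmetric with positive subdiagonal entries except possibly the last. This relation is open and, crucially, it is \emph{ample} in the relevant coordinate directions — the Gromov convex-integration / directed-immersion machinery applies, exactly as in the classical case of immersions of curves (which corresponds to the weaker condition $\gamma'\neq 0$); indeed for $n=1$ this \emph{is} the Hirsch--Smale / Whitney--Graustein setup, and the directed-immersion viewpoint of Gromov handles general $n$. One concludes that the scanning map from quasi-holonomic curves with fixed endpoints to the space of formal solutions with the same endpoints is a weak homotopy equivalence; and the space of formal solutions deformation retracts onto $\Omega SO_{n+1}$ because the fiberwise relation is (fiberwise) connected and, after the retraction collapsing the jet directions, one is left with free paths in $SO_{n+1}$ with prescribed endpoints. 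The lifted statement for $\mathrm{Spin}_{n+1}$ follows by applying the same argument upstairs, or simply by observing that everything in sight is compatible with the double cover $\Pi_{n+1}$.

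The main obstacle — and the thing a careful write-up must not sweep under the rug — is verifying that the differential relation defining generic curves satisfies the hypotheses of the h-principle (ampleness/openness and the correct identification of the formal-solution space), and doing so with the \emph{fixed boundary conditions} $\mathcal F_\gamma(0)=I$, $\mathcal F_\gamma(1)=Q$ in place; the relative/parametric version of Gromov's theorem is what is needed here, and one must check that the boundary data are generic enough not to obstruct it. Once that is granted, identifying the formal-solution space with $\Omega SO_{n+1}(Q)$ is a routine homotopy-fibration argument (the jet space fibers over $SO_{n+1}$ with contractible, or at least connected and simple, fibers), and the passage from weak to genuine homotopy equivalence is immediate from the Hilbert-manifold structure we put on $\mathcal G\SS^n$ in \S\ref{s36} via~\cite{BH70}. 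I would present the argument by citing \cite{Sma59b}, \cite{Hir59} and \cite{Gro86} for the h-principle input and \cite{BH70} for the upgrade, keeping the explicit verification of ampleness brief since it parallels the classical curve case.
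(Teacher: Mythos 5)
The paper itself does not supply a proof of Theorem~\ref{HSG}; it states the result as a consequence of the Hirsch--Smale--Gromov immersion/h-principle theory and cites exactly the references (\cite{Sma59b}, \cite{Hir59}, \cite{Gro86}, plus \cite{BH70} for the weak-to-genuine upgrade) that you cite. Your proposal reconstructs, in the same spirit, how those results would be deployed — via the identification of generic curves with quasi-Jacobian paths, the openness/ampleness of the relation, and the retraction of the formal-solution space onto $\Omega SO_{n+1}$ — so it takes essentially the same approach as the paper, simply made more explicit.
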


This theorem, combined with Theorem~\ref{thmss2}, immediately implies the following result.

\begin{theorem}\label{topo}
For any $n \geq 2$, any $Q \in SO_{n+1}$ and any $z \in \mathrm{Spin}_{n+1}$, the inclusions 
\[ \mathcal{L}\SS^n(Q) \subset \mathcal{G}\SS^n(Q), \quad \mathcal{L}\SS^n(z) \subset \mathcal{G}\SS^n(z) \]
induce surjective homomorphisms between homotopy and homology groups: for any $k \in \N$, the induced homomorphisms
\[ \pi_k(\mathcal{L}\SS^n(Q)) \rightarrow \pi_k(\mathcal{G}\SS^n(Q)), \quad H_k(\mathcal{L}\SS^n(Q),\R) \rightarrow H_k(\mathcal{G}\SS^n(Q), \R)  \]
and 
\[ \pi_k(\mathcal{L}\SS^n(z)) \rightarrow \pi_k(\mathcal{G}\SS^n(z)), \quad H_k(\mathcal{L}\SS^n(z),\R) \rightarrow H_k(\mathcal{G}\SS^n(z),\R)  \]
are surjective. Moreover, for $|s|\leq 1$, the inclusions
\[ \mathcal{L}\SS^n(M_{s}^{n+1}) \subset \mathcal{G}\SS^n(M_{s}^{n+1}), \quad \mathcal{L}\SS^n(w_{s}^{n+1}) \subset \mathcal{G}\SS^n(w_{s}^{n+1}) \]
are homotopy equivalences.
\end{theorem}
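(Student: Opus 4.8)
The plan is to deduce the statement formally from Theorem~\ref{thmss2} and Theorem~\ref{HSG}, using the fact that the Frenet frame injection on $\mathcal{L}\SS^n(Q)$ is nothing but the restriction of the Frenet frame injection on $\mathcal{G}\SS^n(Q)$. Indeed, as noted in \S\ref{s33}, when $\gamma$ is locally convex the two definitions of its Frenet frame curve coincide, so the triangle formed by the inclusion $\mathcal{L}\SS^n(Q) \subset \mathcal{G}\SS^n(Q)$, the injection $\mathcal{F}_{[Q]} : \mathcal{G}\SS^n(Q) \to \Omega SO_{n+1}(Q)$ and the injection $\mathcal{F}_{[Q]} : \mathcal{L}\SS^n(Q) \to \Omega SO_{n+1}(Q)$ commutes; the analogous triangle with $\mathrm{Spin}_{n+1}$ and $z$ commutes as well.

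First I would pass to the induced maps on $\pi_k$ and on homology with real coefficients. By Theorem~\ref{HSG} the map $\mathcal{F}_{[Q]} : \mathcal{G}\SS^n(Q) \to \Omega SO_{n+1}$ is a homotopy equivalence, hence induces isomorphisms on all $\pi_k$ and all $H_k(\,\cdot\,,\R)$. By Theorem~\ref{thmss2} the composite $\mathcal{F}_{[Q]} : \mathcal{L}\SS^n(Q) \to \Omega SO_{n+1}$ induces surjections on all $\pi_k$ and on $H_k(\,\cdot\,,\R)$. Writing $i$ for the inclusion, the commuting triangle gives $(\mathcal{F}_{[Q]})_* \circ i_* = (\mathcal{F}_{[Q]})_*$ (with the first factor an isomorphism and the composite a surjection), so $i_*$ is surjective on each $\pi_k$ and each $H_k(\,\cdot\,,\R)$. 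The same argument with the lifted Frenet frame triangle yields surjectivity for $\mathcal{L}\SS^n(z) \subset \mathcal{G}\SS^n(z)$.

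For the final assertion I would specialize to $|s| \leq 1$. Theorem~\ref{thmss2} states that $\mathcal{F}_{[M_s^{n+1}]} : \mathcal{L}\SS^n(M_s^{n+1}) \to \Omega SO_{n+1}$ is a weak homotopy equivalence; composing with a homotopy inverse of the homotopy equivalence $\mathcal{F}_{[M_s^{n+1}]} : \mathcal{G}\SS^n(M_s^{n+1}) \to \Omega SO_{n+1}$ and using the commuting triangle, the inclusion $\mathcal{L}\SS^n(M_s^{n+1}) \subset \mathcal{G}\SS^n(M_s^{n+1})$ is itself a weak homotopy equivalence. Since all the spaces involved carry Hilbert manifold structures (see \S\ref{s36}), they have the homotopy type of CW complexes, so Whitehead's theorem upgrades this to a genuine homotopy equivalence; the identical reasoning applies to $\mathcal{L}\SS^n(w_s^{n+1}) \subset \mathcal{G}\SS^n(w_s^{n+1})$.

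The argument is essentially formal, so there is no serious obstacle. The two points that deserve a little care are: (a) verifying that the inclusion really does factor through the two Frenet frame injections as claimed, which is immediate from the construction of $\mathcal{F}_\gamma$ recalled in \S\ref{s33}; and (b) the passage from weak homotopy equivalence to homotopy equivalence, which depends on the Hilbert manifold structure on $\mathcal{L}\SS^n(\cdot)$ and $\mathcal{G}\SS^n(\cdot)$ and the fact (invoked through \cite{BH70}) that such manifolds have the homotopy type of CW complexes.
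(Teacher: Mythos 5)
Your proof is correct and follows essentially the same route the paper takes: the paper explicitly states that Theorem~\ref{topo} is obtained by combining Theorem~\ref{HSG} with Theorem~\ref{thmss2} via the commuting triangle of Frenet frame injections. The only tiny cosmetic point is that the paper appeals to the cited fact from \cite{BH70} that weakly homotopy equivalent Hilbert manifolds are actually homeomorphic, which is slightly stronger than the Whitehead-theorem upgrade you invoke, but your version is equally sufficient for the homotopy-equivalence conclusion asserted here.
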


At this point, the following question, related to Question~\ref{mainquestion0}, is very natural.

\begin{question}\label{mainquestion1}
For any $n \geq 2$, any $Q \in SO_{n+1}$ and any $z \in \mathrm{Spin}_{n+1}$, are the inclusions 
\[ \mathcal{L}\SS^n(Q) \subset \mathcal{G}\SS^n(Q), \quad \mathcal{L}\SS^n(z) \subset \mathcal{G}\SS^n(z) \]
homotopy equivalence?
\end{question}

\section{The case $n=2$}\label{s42}

In this section and the next one, we will see what these results mean in the special case $n=2$ and $n=3$, recalling that $\mathrm{Spin}_{3}$ and $\mathrm{Spin}_{4}$ can be identified respectively to $\SS^3$ and $\SS^3 \times \SS^3$, where $\SS^3$ is considered as the unit sphere of $\H$. In what follows, we write $\mathbf{1},\mathbf{i},\mathbf{j},\mathbf{k}$ for the canonical basis of $\H$ viewed as vector space over $\R$.

Let's recall that the Theorem ~\ref{thmsha}, in the case $n=2$, has been known since Little's paper.
\begin{theorem}[Little,  \cite{Lit70}, with our notation]\label{thmlit}
The space $\mathcal{L}\SS^2(I)$ has $3$ connected components:
one connected component is $\mathcal{L}\SS^2(\mathbf{1})$ and the other two connected components are
$\mathcal{L}\SS^2(-\mathbf{1})_c$ and $\mathcal{L}\SS^2(-\mathbf{1})_n$, whose union is $\mathcal{L}\SS^2(-\mathbf{1}),$
where $\mathcal{L}\SS^2(-\mathbf{1})_c$ is the component associated to convex curves and $\mathcal{L}\SS^2(-\mathbf{1})_n$ the component associated to non-convex curves.
\end{theorem}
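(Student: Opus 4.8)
The plan is to verify that Theorem~\ref{thmsha} (with $n=2$) and Theorem~\ref{thmani}, both already available in the excerpt, together with the identification $\mathrm{Spin}_3 \simeq \SS^3$, pin down exactly which three components occur. First I would observe that by Theorem~\ref{thmsha} the space $\mathcal{L}\SS^2(I)$ has $3$ connected components. By Definition~\ref{LSnz} and the remark following it, $\mathcal{L}\SS^2(I) = \mathcal{L}\SS^2(\Pi_3(\1))$ decomposes as the disjoint union $\mathcal{L}\SS^2(\1) \sqcup \mathcal{L}\SS^2(-\1)$, since $\1$ and $-\1$ are the two preimages of $I \in SO_3$ under $\Pi_3$. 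Each of these two pieces is open and closed in $\mathcal{L}\SS^2(I)$, so the $3$ components are distributed among them.

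Next I would apply Theorem~\ref{thmani} to $z = \1$ and $z = -\1$ separately. The key geometric input, which should be recalled from \S\ref{s32} (see Example~\ref{exclosed}), is that the standard closed convex curve on $\SS^2$ has lifted final Frenet frame equal to $-\1$, not $\1$: tracing the Frenet frame of a closed convex curve on $\SS^2$ produces a noncontractible loop in $SO_3$, hence lifts to a path in $\mathrm{Spin}_3$ from $\1$ to $-\1$. Consequently there exist convex curves in $\mathcal{L}\SS^2(-\1)$ but none in $\mathcal{L}\SS^2(\1)$. By Theorem~\ref{thmani}, $\mathcal{L}\SS^2(\1)$ therefore has exactly one connected component, while $\mathcal{L}\SS^2(-\1)$ has exactly two, one of which consists of convex curves (and is contractible) and the other of non-convex curves. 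Writing $\mathcal{L}\SS^2(-\1)_c$ and $\mathcal{L}\SS^2(-\1)_n$ for these two pieces gives the claimed description, and $1 + 2 = 3$ matches Theorem~\ref{thmsha}.

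The main obstacle is the assertion that the closed convex curve contributes to $\mathcal{L}\SS^2(-\1)$ rather than $\mathcal{L}\SS^2(\1)$: one must actually compute (or quote) the homotopy class in $\pi_1(SO_3) \simeq \Z_2$ of the Frenet frame loop of such a curve, equivalently verify that its lift to $\mathrm{Spin}_3 \simeq \SS^3$ is a path joining $\1$ to $-\1$. This can be done explicitly on the curve $\xi(t) = (c_0, c_1\cos t, c_1\sin t)$ of Example~\ref{exclosed} (with $k=1$, $n=2$): parametrize by arc length, write down $\mathcal{F}_\xi(t) = (\xi(t), \mathbf{t}_\xi(t), \mathbf{n}_\xi(t))$, and check that $\mathcal{F}_\xi$ traverses a generator of $\pi_1(SO_3)$ as $t$ runs over a full period. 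Once this single computation is in place, everything else is a formal consequence of the two cited theorems and the double-cover decomposition, so the proof is short.
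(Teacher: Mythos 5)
The paper does not actually prove this statement: Theorem~\ref{thmlit} is quoted directly from Little~\cite{Lit70}, and the discussion in \S\ref{s42} uses it (together with Theorem~\ref{thmani}) to \emph{describe} the three components rather than to derive them. Your reconstruction from Theorems~\ref{thmsha} and~\ref{thmani} plus the decomposition $\mathcal{L}\SS^2(I)=\mathcal{L}\SS^2(\1)\sqcup\mathcal{L}\SS^2(-\1)$ is therefore a genuinely different route, and a reasonable one: it reduces Little's description to the two general theorems plus the single geometric fact that the Frenet-frame loop of a closed convex curve on $\SS^2$ generates $\pi_1(SO_3)$, which you correctly isolate as the only computation that needs doing.

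One step is asserted more strongly than it is verified. From the computation on the standard circle $\sigma_c$ you conclude ``there exist convex curves in $\mathcal{L}\SS^2(-\1)$ but none in $\mathcal{L}\SS^2(\1)$,'' but the computation gives only the first half. The second half needs an extra line: either observe that the set of closed convex curves in $\mathcal{L}\SS^2(I)$ is connected, so the lifted final frame is locally constant on it and hence always $-\1$; or use the counting argument directly, namely that if $\mathcal{L}\SS^2(\1)$ also contained a convex curve, Theorem~\ref{thmani} would give it two components too, yielding four in total and contradicting the count of three from Theorem~\ref{thmsha}. The second option is the most economical since it needs no further geometry, and in fact once you take that route the explicit lift computation is needed only to decide \emph{which} of $\mathcal{L}\SS^2(\pm\1)$ is the one with two components. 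With that sentence added, the proof is complete.
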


\begin{figure}[H]
\centering
\includegraphics[scale=0.5]{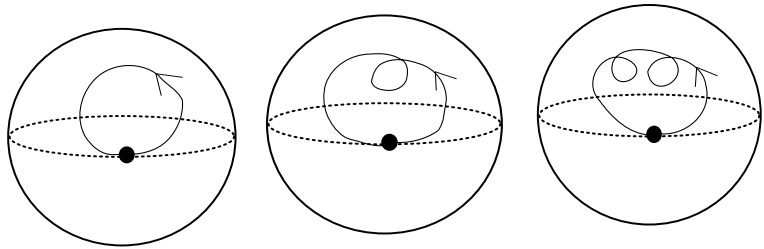}
\caption{Curves in $\mathcal{L}\SS^2(-\mathbf{1})_c, \; \mathcal{L}\SS^2(\mathbf{1})$ and $\mathcal{L}\SS^2(\mathbf{-1})_n$.}
\label{Little}
\end{figure}

As in the Theorem ~\ref{thmss1}, in the case $n=2$, let $s \in \Z$ such that $|s|\leq 3$ and $s\equiv 3$ (mod $4$). We have $s=-1$ or $s=3$ and
\[ M_{-1}^3=
\begin{pmatrix}
-1 & 0 & 0 \\
0 & -1 & 0 \\
0 & 0 & 1
\end{pmatrix}, \quad
 M_{3}^3=I=
\begin{pmatrix}
1 & 0 & 0 \\
0 & 1 & 0 \\
0 & 0 & 1
\end{pmatrix}.
 \]
Then $\pm w_{3}^3$ can be identified with $\pm \mathbf{1}$ and $\pm w_{-1}^3$ can be identified with $\pm \mathbf{i}$. Therefore, theorem~\ref{thmss1} implies that $\mathcal{L}\SS^2(Q)$ is homeomorphic to $\mathcal{L}\SS^2(M_{-1}^3)$ or $\mathcal{L}\SS^2(I)$, and $\mathcal{L}\SS^2(z)$ is homeomorphic to $\mathcal{L}\SS^2(\mathbf{1})$, $\mathcal{L}\SS^2(-\mathbf{1}),$ or $\mathcal{L}\SS^2(\mathbf{i})$. Theorem~\ref{thmss2} gives the topology of two of these spaces, namely
\[ \mathcal{L}\SS^2(M_{-1}^3) \simeq \Omega SO_3, \quad \mathcal{L}\SS^2(\mathbf{i}) \simeq \Omega \SS^3.  \]
The cohomology groups of theses spaces are well-known; for instance (see \cite{BT82})
\begin{equation}\label{homloop}
H^k(\Omega \SS^3,\R)=
\begin{cases}
0, & k \; \mathrm{odd}\\
\R, & k \; \mathrm{even}.
\end{cases}
\end{equation}
Clearly, we have
\[ \mathcal{L}\SS^2(I)=\mathcal{L}\SS^2(\mathbf{1}) \sqcup \mathcal{L}\SS^2(-\mathbf{1}) \]
and from Theorem~\ref{thmlit} the space $\mathcal{L}\SS^2(I)$ has $3$ connected components: $\mathcal{L}\SS^2(\mathbf{1})$, $\mathcal{L}\SS^2(-\mathbf{1})_c$,  and $\mathcal{L}\SS^2(-\mathbf{1})_n$. From Theorem~\ref{thmani}, the component $\mathcal{L}\SS^2(-\mathbf{1})_c$ is contractible. 

Let us go back to the Question~\ref{mainquestion0} in this special case $n=2$. The spaces $\mathcal{L}\SS^2(\mathbf{1})$ and $\mathcal{L}\SS^2(-\mathbf{1})$ are clearly not homeomorphic since they don't have the same number of connected components. Since the Frenet frame injection is always homotopically surjective, the cohomology of both spaces contains at least the cohomology of $\Omega \SS^3$. In~\cite{Sal09I} and \cite{Sal09II}, Saldanha proved the following result.

\begin{theorem}[Saldanha, \cite{Sal09I},\cite{Sal09II}]\label{thmsal0}
The spaces $\mathcal{L}\SS^2(\mathbf{1})$ and $\mathcal{L}\SS^2(-\mathbf{1})_n$ are simply connected, and we have, for any integer $k \geq 1$, the inequalities
\[ \mathrm{dim}\;H^{2k}(\mathcal{L}\SS^2((-1)^k\mathbf{1}), \R) \geq 1, \quad \mathrm{dim}\;H^{2k}(\mathcal{L}\SS^2((-1)^{k+1}\mathbf{1}), \R) \geq 2 \]
with equality for $k=1$.
\end{theorem}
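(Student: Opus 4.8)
The lower bounds ``$\dim\geq 1$'' are essentially formal, so the real content of the statement is (i) simple connectedness, (ii) the existence of a second cohomology class forcing ``$\dim\geq 2$'' in the stated degrees, and (iii) the sharp equalities $\dim H^2(\mathcal{L}\SS^2(\mathbf{1}),\R)=2$ and $\dim H^2(\mathcal{L}\SS^2(-\mathbf{1})_n,\R)=1$. For the formal part: by Theorem~\ref{topo} (equivalently Theorem~\ref{thmss2}) the Frenet frame injection $\mathcal{F}_{[z]}\colon\mathcal{L}\SS^2(z)\to\Omega\SS^3$ induces a surjection $H_k(\mathcal{L}\SS^2(z),\R)\to H_k(\Omega\SS^3,\R)$, which dualizes over the field $\R$ to an injection $\mathcal{F}_{[z]}^{*}\colon H^k(\Omega\SS^3,\R)\hookrightarrow H^k(\mathcal{L}\SS^2(z),\R)$; by \eqref{homloop} this already gives $\dim H^{2k}(\mathcal{L}\SS^2(z),\R)\geq 1$ for every $z$ and every $k\geq 1$. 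When $z=-\mathbf{1}$ the space $\mathcal{L}\SS^2(-\mathbf{1})$ is disconnected, but its component $\mathcal{L}\SS^2(-\mathbf{1})_c$ is contractible (Theorem~\ref{thmani}), so in positive degrees all of this cohomology lives on $\mathcal{L}\SS^2(-\mathbf{1})_n$; this disposes of both ``$\geq 1$'' assertions. Nothing here addresses $\pi_1$ or the existence of a second class.

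For everything else the plan is to build a combinatorial model of $\mathcal{L}\SS^2(z)$ out of the Bruhat decomposition of $\mathrm{Spin}_3$ (Theorem~\ref{Bruhat2}): stratify $\mathcal{L}\SS^2(z)$ by the itinerary of the lifted Frenet frame $\tilde{\mathcal{F}}_\gamma$ through the Bruhat cells, the portions landing in the open cell being exactly the maximal convex arcs of $\gamma$. A convenient repackaging is via the operation of adding a small positive/negative pair of loops near $t=0$ (studied later in this thesis): one filters $\mathcal{L}\SS^2(z)$ by the number of such pairs that can be removed, the bottom piece of the filtration being homotopy equivalent to a space of generic curves and hence to $\Omega\SS^3$ by Theorem~\ref{HSG}. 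The plan is then to show that passing from the $m$-th to the $(m{+}1)$-st stratum has the homotopical effect of attaching a single cell of dimension $2m{+}2$, whose dimension modulo $4$ is governed by whether the relevant endpoint spin is $\mathbf{1}$ or $-\mathbf{1}$. This would present $\mathcal{L}\SS^2(z)$ as $\Omega\SS^3$ with one extra cell in each dimension $2m{+}2$ of a fixed residue class modulo $4$: the class of $2$ when $z=\mathbf{1}$, and the class of $0$ when $z=-\mathbf{1}$ (on the component $\mathcal{L}\SS^2(-\mathbf{1})_n$).

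Granting this model, the three conclusions follow quickly. For \emph{simple connectedness}: $\mathcal{L}\SS^2(z)$ is obtained from the simply connected space $\Omega\SS^3$ by attaching cells of dimension $\geq 2$, hence is itself simply connected. For the bound \emph{``$\dim\geq 2$''}: in each degree $2k$ carrying an extra cell, the pullback $\mathcal{F}_{[z]}^{*}\omega$ of a generator $\omega$ of $H^{2k}(\Omega\SS^3,\R)$ and the class carried by that cell are linearly independent over $\R$; concretely, one realizes the extra cell by an explicit spherical family $f\colon\SS^{2k}\to\mathcal{L}\SS^2(z)$ whose Frenet image in $\Omega\SS^3$ is null-homotopic, so that $f^{*}\mathcal{F}_{[z]}^{*}\omega=0$ while $f_{*}[\SS^{2k}]$ is non-torsion, giving $\dim H^{2k}\geq 2$ in exactly the asserted degrees. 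For \emph{equality at $k=1$}: below dimension $3$ the model consists of the cells of $\Omega\SS^3$ together with one extra $2$-cell when $z=\mathbf{1}$ and none when $z=-\mathbf{1}$, and there is no $3$-cell whose attaching map could cancel a class of $H^2$; hence $H^2(\mathcal{L}\SS^2(\mathbf{1}),\R)=\R^2$ and $H^2(\mathcal{L}\SS^2(-\mathbf{1})_n,\R)=\R$.

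The principal obstacle is the construction in the second paragraph: proving that the Bruhat itinerary, equivalently the loop-addition filtration, really assembles $\mathcal{L}\SS^2(z)$ one handle at a time with the handle index \emph{and} its parity under control. This requires that loop-addition be a cofibration onto its image and homotopically a shrinking operation on the relevant subspaces (so that no spurious topology is created); a transversality/Morse-type argument identifying the homotopical effect of crossing each stratum with the attachment of a single cell; and careful bookkeeping of the $\mathrm{Spin}_3$-lift ($z$ versus $-z$) along the itinerary — this last point being exactly what places the extra cells of $\mathcal{L}\SS^2(\mathbf{1})$ and of $\mathcal{L}\SS^2(-\mathbf{1})_n$ in complementary residue classes modulo $4$. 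The \emph{upper} bound in the $k=1$ statement is the most delicate point, as it demands that the low-dimensional part of the model be tight (no higher cell cancelling against $H^2$); establishing such tightness in all degrees is precisely what this earlier result stops short of and what is carried out in \cite{Sal13}.
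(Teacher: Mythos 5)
This theorem is recalled from \cite{Sal09I,Sal09II}; the thesis states it without proof, so there is no in-paper argument to compare your proposal against. On its own terms your first paragraph is correct: surjectivity of $H_k(\mathcal{L}\SS^2(z),\R)\to H_k(\Omega\SS^3,\R)$ from Theorem~\ref{thmss2}, dualized over the field $\R$, gives the $\geq 1$ bound for every $z$ and every even degree, and Theorem~\ref{thmani} (contractibility of the convex component) moves it onto $\mathcal{L}\SS^2(-\1)_n$.

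Everything after that is a plan rather than a proof, and the plan rests on a step you never establish: a CW or Morse-type filtration of $\mathcal{L}\SS^2(z)$ by Bruhat itinerary, equivalently loop-addition depth, that adds exactly one cell in each prescribed dimension. You correctly flag that proving this model ``in all degrees'' is the content of \cite{Sal13}, which is a strictly stronger result (the full homotopy type, Theorem~\ref{thmsal}). Deriving $\pi_1=0$, $\dim H^{2k}\geq 2$, and the sharp $k=1$ equality from an unestablished model is not a proof of Theorem~\ref{thmsal0}; it is a reduction of the 2009 estimates to the 2013 homotopy equivalence. There is a direct route to the $\geq 2$ bound that requires no filtration, and it is the one the thesis itself recalls in \S\ref{s82}: the multiconvex loci $\mathcal{M}_k(z)$, their transverse intersection classes $m_{2k-2}$, and the spherical families $h_{2k-2}\colon\SS^{2k-2}\to\mathcal{L}\SS^2((-\1)^k)$ with $m_{2k-2}(h_{2k-2})=\pm 1$ and null-homotopic Frenet image (Definition~\ref{multiconvex}, Propositions~\ref{manifold} and~\ref{nico}). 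Pairing against $m_{2k-2}$ shows $h_{2k-2}$ carries a class not in the image of $H^{2k-2}(\Omega\SS^3,\R)$, giving $\dim\geq 2$ directly. Your third paragraph gestures at exactly this, but only as a downstream consequence of the unproved cell model. Simple connectedness and the $k=1$ upper bound are left with no argument beyond the model itself, so these parts of the statement are not addressed.
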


Comparing this result with the cohomology groups of $\Omega \SS^3$, one obtains an answer to Question~\ref{mainquestion1}: the inclusions
\[ \mathcal{L}\SS^2(I) \subset \mathcal{G}\SS^2(I) \]
and
\[ \mathcal{L}\SS^2(\mathbf{1}) \subset \mathcal{G}\SS^2(\mathbf{1}), \quad \mathcal{L}\SS^2(-\mathbf{1}) \subset \mathcal{G}\SS^2(-\mathbf{1}) \]
are not homotopy equivalences.

This obviously gives also a partial answer to Question~\ref{mainquestion0}: $\mathcal{L}\SS^2(\mathbf{1})$ is not homeomorphic to $\mathcal{L}\SS^2(\mathbf{i}) \simeq \Omega \SS^3$, and $\mathcal{L}\SS^2(-\mathbf{1})$ is not homeomorphic to $\mathcal{L}\SS^2(\mathbf{i}) \simeq \Omega \SS^3$. From this, it follows also that the spaces $\mathcal{L}\SS^2(I)$ and $\mathcal{L}\SS^2(M_{-1}^3) \simeq \Omega SO_3$ are non-homeomorphic. 

However, this does not answer Question~\ref{mainquestion0}, since it does not prove that $\mathcal{L}\SS^2(\mathbf{1})$ is not homeomorphic to $\mathcal{L}\SS^2(-\mathbf{1})$. The following much stronger result was later proved by Saldanha. Recall that given two pointed topological space $X$ and $Y$, we denote by $X \vee Y$ the wedge product of $X$ and $Y$. The following theorem was proved in~\cite{Sal13}.

\begin{theorem}[Saldanha, \cite{Sal13}]\label{thmsal}
We have the following homotopy equivalences
\[ \mathcal{L}\SS^2(\mathbf{1}) \approx (\Omega \SS^3) \vee \SS^2 \vee \SS^6 \vee \SS^{10} \vee \cdots, \quad \mathcal{L}\SS^2(-\mathbf{1}) \approx (\Omega \SS^3) \vee \SS^0 \vee \SS^4 \vee \SS^8 \vee \cdots.  \]
\end{theorem}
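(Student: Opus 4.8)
The statement is one of the main theorems of~\cite{Sal13}, and the natural plan is to follow Saldanha's strategy: build $\mathcal{L}\SS^2(z)$ up from a contractible ``minimal'' piece by repeatedly attaching a single cell, each attachment corresponding to inserting a pair of small loops into a curve, and then show that all of these extra cells split off as a wedge from the copy of $\Omega\SS^3$ that Theorems~\ref{thmss2} and~\ref{HSG} force to sit inside. Concretely, to a generic curve $\gamma\in\mathcal{L}\SS^2(z)$ one attaches the itinerary of its lifted Frenet frame $\tilde{\mathcal F}_\gamma:[0,1]\to\SS^3\cong\mathrm{Spin}_3$ through the $24$ Bruhat cells of $\mathrm{Spin}_3$; using the Bruhat action (Proposition~\ref{bruhathomeo}) and the characterization of convex arcs via Bruhat cells (Chapter~\ref{chapter7}), one checks that the shortest itineraries compatible with $z$ are realized precisely by convex curves (when these exist) and that every other curve comes from a minimal one by inserting some number $m\ge 1$ of loops. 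Writing $\mathcal{L}\SS^2(z)_{\le m}$ for the curves obtained with at most $m$ loops, one has $\mathcal{L}\SS^2(z)=\bigcup_m\mathcal{L}\SS^2(z)_{\le m}$ and its homotopy type is the colimit of this filtration.

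\textbf{Bottom stratum and cell attachment.} For $z=-\mathbf 1$ convex curves exist and, by Theorem~\ref{thmani}, form a contractible component — this is the $\SS^0$ summand (a disjoint contractible point); for $z=\mathbf 1$ there are no convex curves, and one shows the ``minimally non-convex'' bottom stratum is still contractible, using time-reversal and the duality of Chapter~\ref{chapter5} together with the contraction argument behind Theorem~\ref{thmani}. The crux is then to prove that each pair $\bigl(\mathcal{L}\SS^2(z)_{\le m+1},\ \mathcal{L}\SS^2(z)_{\le m}\bigr)$ is homotopy equivalent to $(D^{d_m},\partial D^{d_m})$ for one cell of dimension $d_m$, by analysing how a family of locally convex curves degenerates as a pair of loops is created or absorbed, i.e.\ the transverse structure of $\mathcal{L}\SS^2(z)$ along the boundary strata; the value of $d_m$ is read off from the codimension of the relevant Bruhat stratum in $\mathrm{Spin}_3$ and the parity imposed by $z=\pm\mathbf 1$, giving $d_m\in\{2,6,10,\dots\}$ for $z=\mathbf 1$ and $d_m\in\{4,8,12,\dots\}$ for $z=-\mathbf 1$ (the extra $\SS^0$ for $z=-\mathbf 1$ coming from the convex component).

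\textbf{Splitting off $\Omega\SS^3$.} By Theorem~\ref{HSG}, $\mathcal{G}\SS^2(z)\simeq\Omega\SS^3$, and by Theorem~\ref{thmss2} the Frenet injection $\tilde{\mathcal F}_{[z]}:\mathcal{L}\SS^2(z)\to\Omega\SS^3$ is homotopically surjective; an explicit family of locally convex curves realizing an arbitrary loop in $\SS^3$ — for instance a rescaled version of the curve $\xi$ of Example~\ref{ex2} with loops inserted — yields a homotopy section $\sigma:\Omega\SS^3\to\mathcal{L}\SS^2(z)$, so $\Omega\SS^3$ is a homotopy retract of $\mathcal{L}\SS^2(z)$. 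One then verifies that the cells of the filtration attach to $\sigma(\Omega\SS^3)$ by null-homotopic maps — each such attaching map factors through a contractible lower stratum — so the filtration assembles into $\mathcal{L}\SS^2(z)\approx(\Omega\SS^3)\vee\bigvee_m\SS^{d_m}$, which is exactly the stated wedge once the $d_m$ are listed. Finally, since $\mathcal{L}\SS^2(z)$ is a Hilbert manifold and both sides have the homotopy type of a CW complex, this weak equivalence is an honest homotopy equivalence by~\cite{BH70}.

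\textbf{Main obstacle.} The hard part is the cell-attachment step: showing that inserting a pair of loops attaches \emph{exactly one} cell, of the claimed dimension, with null-homotopic attaching map. This requires delicate control of the space of locally convex curves near its boundary strata and of the interaction between loop insertion and the Bruhat stratification, and it is what occupies the bulk of~\cite{Sal13}; keeping track of the resulting infinite wedge (one new summand per stage, in the exact dimensions stated, with the telescope having the correct homotopy type) is the combinatorial core that remains once the geometric input is available.
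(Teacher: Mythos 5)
This theorem is stated in the thesis as background and attributed to~\cite{Sal13}; the thesis does not prove it, so there is no ``paper's own proof'' to compare against. What the thesis \emph{does} reproduce from~\cite{Sal13} (in \S\ref{s82}) is the specific machinery Saldanha actually uses: the loose/tight dichotomy via adding loops, the submanifolds $\mathcal{M}_k(z)$ of multiconvex curves of codimension $2k-2$ with trivial normal bundle (Proposition~\ref{manifold}), the intersection classes $m_{2k-2}$, and the explicit tight maps $h_{2k-2}:\SS^{2k-2}\to\mathcal{L}\SS^2((-\1)^k)$ with $m_{2k-2}(h_{2k-2})=\pm 1$ (Proposition~\ref{nico}). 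Your sketch gestures at these (the dimensions $2k-2$ do indeed come from the codimension of $\mathcal{M}_k$, and the parity split between $z=\mathbf 1$ and $z=-\mathbf 1$ matches $k$ even versus $k$ odd), but you never name them, and instead propose an a priori filtration $\mathcal{L}\SS^2(z)_{\le m}$ by ``number of inserted loops'' together with the claim that each stage is obtained from the previous by attaching a single cell.

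That cell-attachment claim is the genuine gap, and it is more than a technicality. The difficulty is that ``number of loops'' is not an invariant of a curve; a generic curve has no canonical loop count, so the purported strata $\mathcal{L}\SS^2(z)_{\le m}$ are not obviously well-defined, let alone closed subspaces giving a CW filtration. What Saldanha actually controls is the transverse intersection with the \emph{multiconvex} loci $\mathcal{M}_k(z)$ (well-defined closed submanifolds), and he establishes the wedge splitting not by attaching one cell at a time but by (i) producing the homotopy section $\Omega\SS^3\hookrightarrow\mathcal{L}\SS^2(z)$ via the $h$-principle for the generic space, (ii) exhibiting the $h_{2k-2}$ as tight spheres dual to $m_{2k-2}$ hence nontrivial in $\pi_{2k-2}$ and $H^{2k-2}$, and (iii) a careful collapse argument to show these spheres together with $\Omega\SS^3$ account for everything. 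Your ``Main obstacle'' paragraph correctly identifies that the attaching-map/null-homotopy step is where the work lives; but presenting the result as a one-cell-per-stage CW filtration is a reformulation that is not justified by anything you give and does not match how~\cite{Sal13} proceeds. Also, your assertion that the ``minimally non-convex'' bottom stratum for $z=\mathbf 1$ is contractible ``using time-reversal and the duality of Chapter~\ref{chapter5}'' is not an argument: time reversal and Arnold duality are homeomorphisms of $\mathcal{L}\SS^2(z)$ onto $\mathcal{L}\SS^2(\mathbf{TR}(z))$ and $\mathcal{L}\SS^2(\mathbf{AD}(z))$, not deformation retractions of a stratum, so they cannot by themselves produce a contraction.
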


In particular, this gives a complete answer to Question~\ref{mainquestion}, and a fortiori to Question~\ref{mainquestion0}. Since the cohomology groups of spheres are well-known, here's a straightforward corollary.

\begin{corollary}[Saldanha, \cite{Sal13}]\label{corsal}
For any even integer $k \geq 1$, we have
\begin{equation*}
H^k(\mathcal{L}\SS^2(\mathbf{1}),\R)=
\begin{cases}

\R^2, & 4|(k+2) \\
\R, & 4|k
\end{cases}
\end{equation*}
and
\begin{equation*}
H^k(\mathcal{L}\SS^2(-\mathbf{1}),\R)=H^k(\mathcal{L}\SS^2(-\mathbf{1})_n,\R)=
\begin{cases}
\R, & 4|(k+2) \\
\R^2, & 4|k.
\end{cases}
\end{equation*}
\end{corollary}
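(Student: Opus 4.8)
The plan is to deduce Corollary~\ref{corsal} directly from the homotopy equivalences of Theorem~\ref{thmsal}, using three standard ingredients: (a) for a wedge of well-pointed spaces, reduced cohomology splits, $\tilde H^k(X \vee Y, \R) \cong \tilde H^k(X,\R) \oplus \tilde H^k(Y,\R)$, and hence $\tilde H^k$ of a countable wedge is the direct sum of the reduced cohomologies of the factors; (b) the cohomology of $\Omega\SS^3$ recalled in~\eqref{homloop}, namely $H^k(\Omega\SS^3,\R) = \R$ for $k$ even and $0$ for $k$ odd; (c) $\tilde H^k(\SS^m,\R) = \R$ if $k=m$ and $0$ otherwise. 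Since everything below is done in degree $k \geq 1$, reduced and unreduced cohomology agree, so once these facts are recorded the computation is purely combinatorial.

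First I would treat $\mathcal{L}\SS^2(\mathbf{1}) \approx (\Omega\SS^3) \vee \SS^2 \vee \SS^6 \vee \SS^{10} \vee \cdots$. The sphere factors have dimensions $2, 6, 10, \dots$, that is, exactly the positive integers $m$ with $m \equiv 2$ (mod $4$). Fix an even integer $k \geq 1$. The factor $\Omega\SS^3$ contributes a copy of $\R$ in degree $k$; among the sphere factors, exactly one has dimension $k$ precisely when $k \equiv 2$ (mod $4$), i.e.\ when $4 \mid (k+2)$, and none otherwise. Summing the contributions gives $H^k(\mathcal{L}\SS^2(\mathbf{1}),\R) = \R^2$ when $4 \mid (k+2)$ and $= \R$ when $4 \mid k$, which is the claimed formula.

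Next I would treat $\mathcal{L}\SS^2(-\mathbf{1}) \approx (\Omega\SS^3) \vee \SS^0 \vee \SS^4 \vee \SS^8 \vee \cdots$. For even $k \geq 1$ the factor $\SS^0$ contributes nothing (its only nonzero reduced cohomology sits in degree $0$), the factor $\Omega\SS^3$ contributes $\R$, and among the remaining sphere factors $\SS^4, \SS^8, \dots$, i.e.\ dimensions $m$ with $m \equiv 0$ (mod $4$) and $m \geq 4$, exactly one has dimension $k$ precisely when $4 \mid k$ (and $k \geq 1$ even with $4 \mid k$ forces $k \geq 4$). Hence $H^k(\mathcal{L}\SS^2(-\mathbf{1}),\R) = \R^2$ when $4 \mid k$ and $= \R$ when $4 \mid (k+2)$, as claimed. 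For the second displayed equality, note that $\mathcal{L}\SS^2(-\mathbf{1}) = \mathcal{L}\SS^2(-\mathbf{1})_c \sqcup \mathcal{L}\SS^2(-\mathbf{1})_n$ with $\mathcal{L}\SS^2(-\mathbf{1})_c$ contractible (Theorems~\ref{thmlit} and~\ref{thmani}); consistently with the wedge description above, this contractible component corresponds precisely to the isolated point coming from the $\SS^0$ factor, while $\mathcal{L}\SS^2(-\mathbf{1})_n$ corresponds to the connected component of the basepoint. Thus for $k \geq 1$ the contractible component contributes nothing, and $H^k(\mathcal{L}\SS^2(-\mathbf{1}),\R) = H^k(\mathcal{L}\SS^2(-\mathbf{1})_n,\R)$.

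There is no genuine obstacle: the corollary is a formal consequence of Theorem~\ref{thmsal}. The only points that require a little care are to work with \emph{reduced} cohomology when splitting the wedge (harmless in positive degrees), to discard the degree-$0$ contribution of the $\SS^0$ summand, and to keep track of the disconnectedness of $\mathcal{L}\SS^2(-\mathbf{1})$ by isolating its contractible component; all of these are routine.
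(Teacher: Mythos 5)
Your proof is correct and fills in exactly the details the paper leaves implicit when it calls Corollary~\ref{corsal} a ``straightforward corollary'' of Theorem~\ref{thmsal}: the wedge splitting of reduced cohomology, the known cohomology of $\Omega\SS^3$ from~\eqref{homloop}, the cohomology of spheres, and the bookkeeping that identifies the $\SS^0$ summand with the contractible component $\mathcal{L}\SS^2(-\mathbf{1})_c$. This is the same route as the paper intends, just spelled out carefully.
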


\section{The case $n=3$}\label{s43}

Let's recall that the $\mathrm{Spin}_{4}$ can be identified with $\SS^3 \times \SS^3$. From the definition \ref{LSnz}, given $z=(z_l,z_r) \in \SS^3 \times \SS^3$ we will denote $\mathcal{L}\SS^3(z_l,z_r)$ the space of locally convex curves in $\SS^3$ with the initial and final lifted Frenet frame $(\mathbf{1},\mathbf{1})$ and $(z_l,z_r),$ i.e.,
\[ \mathcal{L}\SS^3(z_l,z_r)=\{\gamma: [0,1] \rightarrow \SS^3 \; | \, \tilde{\mathcal{F}}_\gamma(0)=(\mathbf{1},\mathbf{1}) \; \text{and} \; \tilde{\mathcal{F}}_\gamma(1)=(z_l,z_r) \}. \]  

Let $s \in \Z$ such that $|s|\leq 4$ and $s\equiv 4$ (mod $4$). We have $s=-4$, $s=0$ or $s=4$ and
\[ M_{-4}^4=-I=
\begin{pmatrix}
-1 & 0 & 0 & 0\\
0 & -1 & 0 & 0 \\
0 & 0 & -1 & 0 \\
0 & 0 & 0 & -1
\end{pmatrix}, \quad
M_{0}^4=
\begin{pmatrix}
-1 & 0 & 0 & 0\\
0 & -1 & 0 & 0 \\
0 & 0 & 1 & 0 \\
0 & 0 & 0 & 1
\end{pmatrix}, 
\]
and
\[
 M_{4}^4=I=
\begin{pmatrix}
1 & 0 & 0 & 0\\
0 & 1 & 0 & 0 \\
0 & 0 & 1 & 0 \\
0 & 0 & 0 & 1
\end{pmatrix}.
 \]
Then $\pm w_{-4}^4$ can be identified with $\pm(\mathbf{1},-\mathbf{1})$, $\pm w_{0}^4$ can be identified with $\pm (\mathbf{i},-\mathbf{i})$ and $\pm w_{4}^4$ can be identified with $ \pm(\mathbf{1},\mathbf{1})$. Theorem~\ref{thmss1} implies that $\mathcal{L}\SS^3(Q)$ is homeomorphic to one of these three spaces 
\begin{equation}\label{3space}
\mathcal{L}\SS^3(-I), \quad \mathcal{L}\SS^3(M_{0}^4),  \quad  \mathcal{L}\SS^3(I)
\end{equation}
and $\mathcal{L}\SS^3(z)$ is homeomorphic to one of these five spaces:
\begin{equation}\label{5space}
\quad \mathcal{L}\SS^3(\mathbf{1},-\mathbf{1}), \quad \mathcal{L}\SS^3(-\mathbf{1},\mathbf{1}), \quad \mathcal{L}\SS^3(\mathbf{i},-\mathbf{i}), \quad  
\mathcal{L}\SS^3(\mathbf{1},\mathbf{1}), \quad \mathcal{L}\SS^3(-\mathbf{1},-\mathbf{1}).
\end{equation}
Again, Theorem~\ref{thmss2} gives the topology of two of these spaces, namely
\[ \mathcal{L}\SS^3(M_{0}^4) \simeq \Omega SO_4, \quad \mathcal{L}\SS^3(\mathbf{i},-\mathbf{i}) \simeq \Omega (\SS^3 \times \SS^3) \simeq \Omega \SS^3 \times \Omega\SS^3.  \]
As before, using~\eqref{homloop} and the K\"{u}nneth formula, one can determine the cohomology groups of these spaces
\begin{equation}\label{loopspace}
H^j(\Omega (\SS^3 \times \SS^3),\R)=H^j(\Omega \SS^3 \times \Omega\SS^3,\R)=
\begin{cases}
0, & j \; \mathrm{odd} \\
\R^{l+1}, & j=2l, \; l\in\N.
\end{cases}
\end{equation}

Questions~\ref{mainquestion}, ~\ref{mainquestion0} and~\ref{mainquestion1} in this case $n=3$ are open. In the next section we will state our main results, which will give in particular a partial answer to Questions~\ref{mainquestion0} and~\ref{mainquestion1}.

\section{Results of the thesis}\label{s44}

Let us now describe our main results. 

Our first result, which will be a crucial tool in this thesis, allows us to identify the space $\mathcal{G}\SS^3(z_l,z_r)$ with a certain subspace of the cartesian product $\mathcal{G}\SS^2(z_l)\times \mathcal{G}\SS^2(z_r)$. Here's a precise statement.  

\begin{Main}\label{th0}
There is a homeomorphism between the space of $\gamma \in \mathcal{G}\SS^3(z_l,z_r)$ and the space of pair of curves 
\[ (\gamma_l,\gamma_r) \in \mathcal{G}\SS^2(z_l)\times \mathcal{G}\SS^2(z_r) \]
satisfying the condition
\begin{equation}\label{cond0}\tag{$*$}
||\gamma_l'(t)||=||\gamma_r'(t)||, \quad \kappa_{\gamma_l}(t)>\kappa_{\gamma_r}(t), \quad t \in [0,1].
\end{equation}
Moreover, given $\gamma \in \mathcal{G}\SS^3(z_l,z_r)$, the curves $\gamma_l$ and $\gamma_r$ satisfy
\[ ||\gamma_l'(t)||=||\gamma_r'(t)||=||\gamma'(t)||\kappa_{\gamma}(t), \quad \kappa_{\gamma_l}(t)=\frac{\tau_\gamma(t)+1}{\kappa_\gamma(t)}, \quad \kappa_{\gamma_r}(t)=\frac{\tau_\gamma(t)-1}{\kappa_\gamma(t)} \]
and given a pair of curves $(\gamma_l,\gamma_r) \in \mathcal{G}\SS^2(z_l) \times \mathcal{G}\SS^2(z_r)$ satisfying~\eqref{cond0}, the curve $\gamma$ satisfies
\[ ||\gamma'(t)||=\frac{||\gamma_l'(t)||(\kappa_{\gamma_l}(t)-\kappa_{\gamma_r}(t))}{2}, \quad \kappa_\gamma(t)=\frac{2}{\kappa_{\gamma_l}(t)-\kappa_{\gamma_r}(t)},\] 
\[\tau_\gamma(t)=\frac{\kappa_{\gamma_l}(t)+\kappa_{\gamma_r}(t)}{\kappa_{\gamma_l}(t)-\kappa_{\gamma_r}(t)}. \] 
\end{Main}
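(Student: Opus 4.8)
The plan is to translate the statement into Frenet frame curves and split them by means of the isomorphism $\mathrm{Spin}_4\simeq\SS^3\times\SS^3$ at the Lie-algebra level. First I would use Proposition~\ref{propjacobian2} together with the lifting $\mathcal{F}_\gamma=\Pi_4\circ\tilde{\mathcal{F}}_\gamma$ to identify $\mathcal{G}\SS^3(z_l,z_r)$ with the set of quasi-holonomic curves $\tilde\Gamma:[0,1]\to\mathrm{Spin}_4$ with $\tilde\Gamma(0)=\1$ and $\tilde\Gamma(1)=(z_l,z_r)$; concretely, such a $\gamma$ is encoded by the coefficient functions $(c_1,c_2,c_3)$ of its logarithmic derivative $\Lambda_\gamma\in\mathfrak{Q}$, where by~\eqref{logderives3} (which holds for any generic curve) $c_1=\|\gamma'\|>0$, $c_2=\|\gamma'\|\kappa_\gamma>0$, $c_3=\|\gamma'\|\tau_\gamma\in\R$, so that $\Lambda_\gamma=-c_1F_{12}-c_2F_{23}-c_3F_{34}$, writing $F_{ij}$ for the skew-symmetric matrix with a $1$ in position $(i,j)$ and a $-1$ in position $(j,i)$. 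Under $\SS^3\simeq\mathrm{Spin}_3$ the same dictionary (via~\eqref{logderives2}) encodes an element of $\mathcal{G}\SS^2(z_\bullet)$ by two functions $(d_1,d_2)$ with $d_1=\|\gamma_\bullet'\|>0$ and $d_2=\|\gamma_\bullet'\|\kappa_{\gamma_\bullet}\in\R$.

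Next I would write $\tilde{\mathcal{F}}_\gamma(t)=(z_l(t),z_r(t))$ and consider the logarithmic derivatives $\lambda_l,\lambda_r:[0,1]\to\mathrm{Im}\,\H$ of $z_l,z_r$ in $\SS^3$. Since $\Pi_4$ is a covering homomorphism, $\Lambda_\gamma(t)=d\Pi_4(\lambda_l(t),\lambda_r(t))$, where $d\Pi_4:\mathrm{Im}\,\H\oplus\mathrm{Im}\,\H\to\mathfrak{so}_4$, $(u,v)\mapsto(q\mapsto uq-qv)$, is a linear isomorphism. A direct computation in the quaternion conventions of~\S\ref{s21} gives
\[
d\Pi_4\!\left(-\tfrac12\i,\ \tfrac12\i\right)=F_{12},\qquad
d\Pi_4\!\left(-\tfrac12\k,\ -\tfrac12\k\right)=F_{23},\qquad
d\Pi_4\!\left(-\tfrac12\i,\ -\tfrac12\i\right)=F_{34},
\]
hence $\lambda_l=\tfrac{c_1+c_3}{2}\i+\tfrac{c_2}{2}\k$ and $\lambda_r=\tfrac{c_3-c_1}{2}\i+\tfrac{c_2}{2}\k$. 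The key point I would emphasize is that $\lambda_l,\lambda_r$ have vanishing $\j$-component and the \emph{same, strictly positive} $\k$-component $c_2/2$; combined with the parallel computation $d\Pi_3(-\tfrac12\k)=G_{12}$, $d\Pi_3(-\tfrac12\i)=G_{23}$ (with $G_{ij}$ the size-$3$ version of $F_{ij}$), which shows that the logarithmic derivative in $\SS^3$ of a lifted Frenet frame of a curve in $\mathcal{G}\SS^2$ is exactly a quaternion $\tfrac{d_2}{2}\i+\tfrac{d_1}{2}\k$ with $d_1>0$, this means that $z_l$ (resp.\ $z_r$) is the lifted Frenet frame of a unique $\gamma_l\in\mathcal{G}\SS^2(z_l)$ (resp.\ $\gamma_r\in\mathcal{G}\SS^2(z_r)$) — by Proposition~\ref{propjacobian2} applied to $\Pi_3\circ z_\bullet$ and uniqueness of solutions of $z'=z\lambda_\bullet$, $z(0)=\1$ — satisfying $\|\gamma_l'\|=\|\gamma_r'\|=c_2$, $\|\gamma_l'\|\kappa_{\gamma_l}=c_1+c_3$, $\|\gamma_r'\|\kappa_{\gamma_r}=c_3-c_1$. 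In particular $\kappa_{\gamma_l}-\kappa_{\gamma_r}=2c_1/c_2=2/\kappa_\gamma>0$, so $(\gamma_l,\gamma_r)$ satisfies~\eqref{cond0}, and these relations are exactly the first batch of formulas in the statement.

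For the converse, given $(\gamma_l,\gamma_r)$ satisfying~\eqref{cond0} with coefficients $(d_1,d_2^l)$ and $(d_1,d_2^r)$ — the function $d_1$ being common thanks to $\|\gamma_l'\|=\|\gamma_r'\|$ — I would invert the formulas above to obtain $c_2=d_1>0$, $c_1=\tfrac12(d_2^l-d_2^r)=\tfrac12\|\gamma_l'\|(\kappa_{\gamma_l}-\kappa_{\gamma_r})>0$ (here the inequality in~\eqref{cond0} is used) and $c_3=\tfrac12(d_2^l+d_2^r)$, defining an element of $\mathfrak{Q}$ and hence, via Proposition~\ref{propjacobian2}, a unique $\gamma\in\mathcal{G}\SS^3$; uniqueness of solutions of $\Gamma'=\Gamma\Lambda_\gamma$, $\Gamma(0)=I$, then forces $\tilde{\mathcal{F}}_\gamma=(z_l,z_r)$, so $\gamma\in\mathcal{G}\SS^3(z_l,z_r)$ and this is inverse to the previous construction. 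The relations $\|\gamma'\|=c_1$, $\kappa_\gamma=c_2/c_1$, $\tau_\gamma=c_3/c_1$ then give the remaining three formulas. Finally, in the Hilbert-manifold coordinates of~\S\ref{s36} both directions are the explicit substitutions above (in particular $\hat c_2=\hat d_1$), smooth on the relevant domains, so the bijection is a homeomorphism. I expect the only real work to be the explicit evaluation of $d\Pi_4$ on $F_{12},F_{23},F_{34}$ in the fixed quaternion basis, and spelling out why the tridiagonal shape of $\mathfrak{Q}\subset\mathfrak{so}_4$ corresponds precisely to pairs $(\lambda_l,\lambda_r)$ that are $\j$-free with equal positive $\k$-component — this is the mechanism that makes the theorem work.
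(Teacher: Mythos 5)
Your proposal is correct and follows essentially the same route as the paper: passing to the logarithmic derivative of the lifted Frenet frame, computing the preimage of the tridiagonal set $\mathfrak{Q}\subset\mathfrak{so}_4$ under $d\Pi_4$ as the set of pairs in $\mathrm{Im}\,\H\oplus\mathrm{Im}\,\H$ that are $\j$-free with equal positive $\k$-component, and matching each factor against the $d\Pi_3$-dictionary for curves in $\mathcal{G}\SS^2$. The paper packages the first computation as Proposition~\ref{propth1} and writes the differential in matrix form rather than on the generators $F_{12},F_{23},F_{34}$, but the mechanism, the role of Propositions~\ref{propjacobian2} and~\ref{unique}, and the resulting formulas are identical.
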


In other words, every generic curve $\gamma \in \mathcal{G}\SS^3(z_l,z_r)$ can be decomposed as a pair of generic curves (immersions) $\gamma_l \in \mathcal{G}\SS^2(z_l)$ and $\gamma_r \in \mathcal{G}\SS^2(z_r)$.

In the special case where $\gamma \in \mathcal{L}\SS^3(z_l,z_r)$, more can be said. Indeed, we not only have $\kappa_{\gamma_l}(t)>\kappa_{\gamma_r}(t)$ but also $\kappa_{\gamma_l}(t)>-\kappa_{\gamma_r}(t)$: since $\gamma$ has positive torsion, 
\[ \kappa_{\gamma_l}(t)+\kappa_{\gamma_r}(t)=\frac{2\tau_\gamma(t)}{\kappa_\gamma(t)}>0. \]
Therefore $\kappa_{\gamma_l}(t)>|\kappa_{\gamma_r}(t)|\geq 0$, which means that $\gamma_l$ is locally convex, and so we obtain the following corollary.  

\begin{Main}\label{th1}
There is a homeomorphism between the space of $\gamma \in \mathcal{L}\SS^3(z_l,z_r)$ and the space of pairs of curves 
\[ (\gamma_l,\gamma_r) \in \mathcal{L}\SS^2(z_l)\times \mathcal{G}\SS^2(z_r) \]
satisfying the condition
\begin{equation}\label{cond}\tag{$**$}
||\gamma_l'(t)||=||\gamma_r'(t)||, \quad \kappa_{\gamma_l}(t)>|\kappa_{\gamma_r}(t)|, \quad t \in [0,1].
\end{equation}
Moreover, given $\gamma \in \mathcal{L}\SS^3(z_l,z_r)$, the curves $\gamma_l$ and $\gamma_r$ satisfy
\[ ||\gamma_l'(t)||=||\gamma_r'(t)||=||\gamma'(t)||\kappa_{\gamma}(t), \quad \kappa_{\gamma_l}(t)=\frac{\tau_\gamma(t)+1}{\kappa_\gamma(t)}, \quad \kappa_{\gamma_r}(t)=\frac{\tau_\gamma(t)-1}{\kappa_\gamma(t)} \]
and given a pair of curves $(\gamma_l,\gamma_r) \in \mathcal{L}\SS^2(z_l) \times \mathcal{G}\SS^2(z_r)$ satisfying~\eqref{cond}, the curve $\gamma$ satisfy
\[ ||\gamma'(t)||=\frac{||\gamma_l'(t)||(\kappa_{\gamma_l}(t)-\kappa_{\gamma_r}(t))}{2}, \quad \kappa_\gamma(t)=\frac{2}{\kappa_{\gamma_l}(t)-\kappa_{\gamma_r}(t)},\] 
\[\tau_\gamma(t)=\frac{\kappa_{\gamma_l}(t)+\kappa_{\gamma_r}(t)}{\kappa_{\gamma_l}(t)-\kappa_{\gamma_r}(t)}. \] 
\end{Main}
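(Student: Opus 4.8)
The plan is to deduce Theorem~\ref{th1} from Theorem~\ref{th0} by restricting the homeomorphism constructed there to the subspace $\mathcal{L}\SS^3(z_l,z_r)\subset\mathcal{G}\SS^3(z_l,z_r)$ and identifying its image. Write $\Phi$ for the homeomorphism of Theorem~\ref{th0} from $\mathcal{G}\SS^3(z_l,z_r)$ onto the space of pairs $(\gamma_l,\gamma_r)\in\mathcal{G}\SS^2(z_l)\times\mathcal{G}\SS^2(z_r)$ satisfying~\eqref{cond0}, so that $\Phi(\gamma)=(\gamma_l,\gamma_r)$ with $||\gamma_l'||=||\gamma_r'||=||\gamma'||\kappa_\gamma$, $\kappa_{\gamma_l}=(\tau_\gamma+1)/\kappa_\gamma$, $\kappa_{\gamma_r}=(\tau_\gamma-1)/\kappa_\gamma$, and conversely $\kappa_\gamma=2/(\kappa_{\gamma_l}-\kappa_{\gamma_r})$ and $\tau_\gamma=(\kappa_{\gamma_l}+\kappa_{\gamma_r})/(\kappa_{\gamma_l}-\kappa_{\gamma_r})$. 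I will use two standing facts from \S\ref{s33}: every generic curve in $\SS^3$ has $\kappa_\gamma(t)>0$ for all $t$ (this is how the Frenet frame is constructed), and a curve $\gamma\in\mathcal{G}\SS^3(z_l,z_r)$ belongs to $\mathcal{L}\SS^3(z_l,z_r)$ if and only if $\tau_\gamma(t)>0$ for all $t$.

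First I would prove the forward inclusion. If $\gamma\in\mathcal{L}\SS^3(z_l,z_r)$ then $\tau_\gamma(t)>0$; setting $(\gamma_l,\gamma_r)=\Phi(\gamma)$ and using $\kappa_\gamma>0$, the formulas above give
\[ \kappa_{\gamma_l}(t)-\kappa_{\gamma_r}(t)=\frac{2}{\kappa_\gamma(t)}>0,\qquad \kappa_{\gamma_l}(t)+\kappa_{\gamma_r}(t)=\frac{2\tau_\gamma(t)}{\kappa_\gamma(t)}>0. \]
Adding and subtracting these, $\kappa_{\gamma_l}(t)>\max\{\kappa_{\gamma_r}(t),-\kappa_{\gamma_r}(t)\}=|\kappa_{\gamma_r}(t)|$ for all $t$, which is precisely~\eqref{cond}; in particular $\kappa_{\gamma_l}(t)>0$, so $\gamma_l\in\mathcal{L}\SS^2(z_l)$. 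Hence $\Phi$ sends $\mathcal{L}\SS^3(z_l,z_r)$ into the set of pairs $(\gamma_l,\gamma_r)\in\mathcal{L}\SS^2(z_l)\times\mathcal{G}\SS^2(z_r)$ satisfying~\eqref{cond}.

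Next I would establish the reverse inclusion. Let $(\gamma_l,\gamma_r)\in\mathcal{G}\SS^2(z_l)\times\mathcal{G}\SS^2(z_r)$ satisfy~\eqref{cond}. Since $|\kappa_{\gamma_r}(t)|\geq\kappa_{\gamma_r}(t)$, condition~\eqref{cond} implies~\eqref{cond0}, so $\gamma:=\Phi^{-1}(\gamma_l,\gamma_r)$ is a well-defined element of $\mathcal{G}\SS^3(z_l,z_r)$. From~\eqref{cond} one has $\kappa_{\gamma_l}-\kappa_{\gamma_r}>0$ and also $\kappa_{\gamma_l}+\kappa_{\gamma_r}>0$ (because $\kappa_{\gamma_l}>|\kappa_{\gamma_r}|\geq-\kappa_{\gamma_r}$), whence
\[ \tau_\gamma(t)=\frac{\kappa_{\gamma_l}(t)+\kappa_{\gamma_r}(t)}{\kappa_{\gamma_l}(t)-\kappa_{\gamma_r}(t)}>0 \]
for all $t$, so $\gamma\in\mathcal{L}\SS^3(z_l,z_r)$. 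Also note~\eqref{cond} already forces $\kappa_{\gamma_l}(t)>|\kappa_{\gamma_r}(t)|\geq0$, i.e. $\gamma_l\in\mathcal{L}\SS^2(z_l)$ automatically, so the pairs in $\mathcal{G}\SS^2(z_l)\times\mathcal{G}\SS^2(z_r)$ satisfying~\eqref{cond} are exactly the pairs in $\mathcal{L}\SS^2(z_l)\times\mathcal{G}\SS^2(z_r)$ satisfying~\eqref{cond}. Thus $\Phi$ restricts to a bijection between $\mathcal{L}\SS^3(z_l,z_r)$ and this set, and since $\Phi$ is a homeomorphism and both sides carry their subspace topologies, the restriction is a homeomorphism. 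The quantitative formulas for $||\gamma'||$, $\kappa_\gamma$, $\tau_\gamma$ in terms of $||\gamma_l'||=||\gamma_r'||$, $\kappa_{\gamma_l}$, $\kappa_{\gamma_r}$ (and vice versa) are inherited word for word from Theorem~\ref{th0}.

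I expect no genuine obstacle here: all the analytic and topological content lives in Theorem~\ref{th0}, and the only thing to check with care is the elementary equivalence $\tau_\gamma>0\Leftrightarrow\kappa_{\gamma_l}>|\kappa_{\gamma_r}|$ under the standing assumption~\eqref{cond0}, together with the observation that this sharpened inequality makes the first component $\gamma_l$ automatically locally convex. In other words, the step that requires the most attention is simply tracking signs through the inverse formulas of Theorem~\ref{th0}, which is routine.
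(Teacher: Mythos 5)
Your proposal is correct and is precisely the route the paper takes: the text preceding Theorem~\ref{th1} carries out the forward-direction sign check ($\tau_\gamma>0$ and $\kappa_\gamma>0$ give $\kappa_{\gamma_l}>|\kappa_{\gamma_r}|$, so $\gamma_l$ is locally convex), and \S\ref{s61} then states that Theorem~\ref{th1} ``follows directly from the statement of Theorem~\ref{th0}.'' You have simply supplied the (routine) converse sign computation and the observation that the restricted map is still a homeomorphism, which the paper leaves implicit.
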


In other words, every locally convex curve $\gamma \in \mathcal{L}\SS^3(z_l,z_r)$ can be decomposed as a pair of curves $\gamma_l$ and $\gamma_r$, where $\gamma_l \in \mathcal{L}\SS^2(z_l)$ is locally convex and $\gamma_r \in \mathcal{G}\SS^2(z_r)$ is an immersion. A locally convex curve in $\SS^3$ is rather hard to understand from a geometrical point of view; Theorem~\ref{th1} allows us to see such a curve as a pair of curves in $\SS^2$, a situation where one can use geometrical intuition.

Theorem~\ref{th1}, which follows directly from Theorem~\ref{th0}, will allow us to give many examples of locally convex curves in $\SS^3$. We will see later (Proposition~\ref{leftconvex} in Chapter~\ref{chapter8}, \S\ref{s82}) that if $\gamma \in \mathcal{L}\SS^3(z_l,z_r)$ is such that its left part $\gamma_l \in \mathcal{L}\SS^2(z_l)$ is convex, then $\gamma$ is convex, but in general the converse is false. 

Yet there are still some spaces in which one can characterize completely the convexity of $\gamma$ by looking at its left part.    

The first space in which we have such a characterization is $\mathcal{L}\SS^3(-\1,\k)$.

\begin{Main}\label{th2}
A curve $\gamma \in \mathcal{L}\SS^3(-\1,\k)$ is convex if and only if its left part $\gamma_l \in \mathcal{L}\SS^2(-\1)$ is convex.
\end{Main}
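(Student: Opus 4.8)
The plan is to exploit Theorem~\ref{th1} to translate the convexity question for $\gamma \in \mathcal{L}\SS^3(-\1,\k)$ into a statement about the pair $(\gamma_l,\gamma_r)$, where $\gamma_l \in \mathcal{L}\SS^2(-\1)$ and $\gamma_r \in \mathcal{G}\SS^2(\k)$. One direction is already asserted in the excerpt (Proposition~\ref{leftconvex}, to be proved in \S\ref{s82}): if $\gamma_l$ is convex then $\gamma$ is convex. So the real content is the converse: assuming $\gamma \in \mathcal{L}\SS^3(-\1,\k)$ is convex, I must show $\gamma_l$ is convex in $\SS^2$. The natural strategy is contrapositive: suppose $\gamma_l \in \mathcal{L}\SS^2(-\1)$ is \emph{not} convex, and produce a hyperplane in $\R^4$ meeting the image of $\gamma$ in at least $4$ points (counted with multiplicity), contradicting convexity of $\gamma$.

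First I would pin down the geometry of the decomposition well enough to relate hyperplane sections of $\gamma$ to hyperplane sections (great-circle sections) of $\gamma_l$ and $\gamma_r$. The key is to recall, from Chapter~\ref{chapter6}, the explicit formula expressing $\gamma(t)\in\SS^3\subset\H$ in terms of the lifted Frenet data of $\gamma_l$ and $\gamma_r$ on $\SS^2\simeq\mathrm{Im}\,\H$; under the identification $\mathrm{Spin}_4\simeq\SS^3\times\SS^3$ the first column of $\Pi_4(\tilde{\mathcal F}_\gamma(t))$, which is $\gamma(t)$, is a bilinear-in-quaternions expression in $\tilde{\mathcal F}_{\gamma_l}(t)$ and $\tilde{\mathcal F}_{\gamma_r}(t)$. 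A hyperplane $H\subset\R^4$ is $\{q : \langle q, p\rangle = 0\}$ for a fixed unit quaternion $p$, and the function $t\mapsto\langle\gamma(t),p\rangle$ should factor (or be controlled by) the corresponding linear functionals evaluated against $\gamma_l$ and $\gamma_r$. The point is that the endpoint condition $z=(-\1,\k)$ forces a very rigid structure: $\gamma_l$ returns to the lifted frame $-\1$ (so in $\SS^2$, $\gamma_l$ is a closed curve whose Frenet frame makes a ``full negative turn''), while $\gamma_r$ ends at $\k$, which (via the identification $\pm w^4_0 \leftrightarrow \pm(\i,-\i)$ etc.) corresponds to a specific $\SO_3$ monodromy for $\gamma_r$ — these constraints are what make this particular space amenable to a clean characterization, unlike the general case where the converse of Proposition~\ref{leftconvex} fails.

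The core step is then the following: if $\gamma_l$ is not convex, there is a great circle $C_l\subset\SS^2$, i.e.\ a linear functional $\ell_l$ on $\R^3$, such that $t\mapsto\ell_l(\gamma_l(t))$ has at least $3$ zeros counted with multiplicity (the maximal number of intersections of a non-convex curve with a hyperplane in $\R^3$ being $\geq 3$). I would use the list of convex matrices/spins for $n=3$ from \S\ref{s72}, together with the explicit examples built in \S\ref{s62}, to check that the extra multiplicity coming from the $\gamma_r$ factor — which, because $\gamma_r$ ends at $\k$ and $\kappa_{\gamma_l}>|\kappa_{\gamma_r}|$, is forced to remain within a hemisphere or to have controlled sign behaviour — always contributes at least one more zero, or else that the product structure manufactures a genuine hyperplane $H\subset\R^4$ with $\langle\gamma(\cdot),p\rangle$ having $\geq 4$ zeros. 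Concretely, I would build $p$ by combining the normal of $C_l$ (lifted appropriately into $\H$) with data adapted to $\gamma_r$, and count zeros using the factorization of $t\mapsto\langle\gamma(t),p\rangle$.

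The main obstacle I anticipate is precisely this zero-counting with multiplicity: showing that a non-convexity of $\gamma_l$ genuinely \emph{propagates} to a non-convexity of $\gamma$ rather than being ``absorbed'' by the $\gamma_r$ part. This is delicate because convexity in $\R^4$ is a strictly stronger condition and the bilinear quaternionic formula mixes the two $\SS^2$ curves in a way that could, a priori, cancel zeros. The reason it should work for $\mathcal{L}\SS^3(-\1,\k)$ specifically is the rigidity of the endpoint $(-\1,\k)$: I expect that after using Theorem~\ref{th1}'s inequality $\kappa_{\gamma_l}>|\kappa_{\gamma_r}|$ (which says $\gamma_l$ ``dominates'' $\gamma_r$ pointwise) together with the global monodromy constraints, the $\gamma_r$ contribution cannot undo the sign changes coming from $\gamma_l$. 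I would make this precise by reducing, via the explicit examples and the Bruhat-cell description of the relevant spins from \S\ref{s73}, to a finite-dimensional comparison that can be checked directly; carrying out that reduction carefully is where the bulk of the work lies.
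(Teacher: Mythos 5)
Your proposal correctly identifies that the easy direction is Proposition~\ref{leftconvex}, and correctly isolates the real content as the converse. But the strategy you propose for the converse — given a great circle $C_l$ meeting $\gamma_l$ in $\geq 3$ points, manufacture a hyperplane $H\subset\R^4$ meeting $\gamma$ in $\geq 4$ points — is left as a plan, not carried out, and you yourself flag the zero-propagation step as the ``main obstacle.'' That step is a genuine gap: the quaternionic formula $\gamma(t)=\tilde\Gamma_l(t)\overline{\tilde\Gamma_r(t)}$ does not give a clean factorization of the linear functional $\langle\gamma(\cdot),p\rangle$ in terms of $\ell_l(\gamma_l(\cdot))$ and anything involving $\gamma_r$, and there is no a priori reason that a great circle through $\gamma_l$ lifts or extends to a hyperplane through $\gamma$ with at least as many intersections. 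The pointwise domination $\kappa_{\gamma_l}>|\kappa_{\gamma_r}|$ controls local convexity, not global hyperplane intersection counts, so it is not clear it can do the work you need. In short, your proposal is a sketch of a hard analytic route with the hard part missing.

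The paper takes a fundamentally different and far more economical route, and it is worth understanding why it works. The key observation is topological, not analytic. The final spin $(-\1,\k)=\bar{\mathbf a}$ is convex (in fact stably convex), so by the Shapiro--Anisov theorem (Theorem~\ref{thmani}), $\mathcal{L}\SS^3(-\1,\k)$ has exactly two connected components: one consisting of convex curves, the other of non-convex ones. Likewise $\mathcal{L}\SS^2(-\1)$ has exactly two such components. The left-projection $L\colon\mathcal{L}\SS^3(-\1,\k)\to\mathcal{L}\SS^2(-\1)$, $\gamma\mapsto\gamma_l$, is continuous, so it must send each component of the domain into a single component of the target. To finish, one only needs to check, on one example per component, where $L$ lands: the explicit curve $\gamma_1^1$ of Example~\ref{family1} is convex in $\SS^3$ (verified by central projection, reducing it to the polynomial curve $(1,\sqrt3 x,\sqrt3 x^2,x^3)$) and its left part $\sigma_\pi$ is a convex circle; while $\gamma_1^5$ is non-convex in $\SS^3$ (it meets the plane $\{x_2=x_3=0\}$ in four interior points) and its left part $\sigma_\pi^5$ is non-convex. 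This pins down $L(\mathcal{L}\SS^3(-\1,\k)_c)\subset\mathcal{L}\SS^2(-\1)_c$ and $L(\mathcal{L}\SS^3(-\1,\k)_n)\subset\mathcal{L}\SS^2(-\1)_n$, which is both directions of the theorem at once. Your intuition that the endpoint $(-\1,\k)$ is ``what makes this space amenable'' is correct, but the reason is not some algebraic rigidity in the decomposition formula — it is precisely that $(-\1,\k)$ is a convex spin, which forces the two-component structure on which the whole argument rests. If you want to salvage your proposal, the place to invest effort is not in the hyperplane construction but in Theorem~\ref{thmani} and the two explicit examples.
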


The second space is $\mathcal{L}\SS^3(\1,-\1)$; in fact we will see later (Proposition~\ref{spaces} in Chapter~\ref{chapter7}, \S\ref{s73}) that $\mathcal{L}\SS^3(\1,-\1)$ is homeomorphic to $\mathcal{L}\SS^3(-\1,\k)$. However, for this space, we can only give a necessary condition for a curve to be convex, even though we believe that this condition is also sufficient.

\begin{Main}\label{th3}
Assume that $\gamma \in \mathcal{L}\SS^3(\1,-\1)$ is convex. Then its left part $\gamma_l \in \mathcal{L}\SS^2(\1)$ is contained in an open hemisphere and its rotation number is equal to $2$.
\end{Main}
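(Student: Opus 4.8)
The plan is to move the whole problem to $\SS^2$ via Theorem~\ref{th1} and then exploit the convexity of $\gamma$. Recall that, writing $\tilde{\mathcal F}_\gamma(t) = (z_l(t),z_r(t)) \in \SS^3 \times \SS^3 \simeq \mathrm{Spin}_4$ with $z_l = \tilde{\mathcal F}_{\gamma_l}$ and $z_r = \tilde{\mathcal F}_{\gamma_r}$, the decomposition of Chapter~\ref{chapter6} yields $\gamma(t) = z_l(t)\overline{z_r(t)}$ and $\gamma_l(t) = \gamma(t)\,\gamma_r(t)\,\overline{\gamma(t)}$. For $\gamma \in \mathcal L\SS^3(\1,-\1)$ one has $\gamma(0) = 1$, $\gamma(1) = -1$ in $\SS^3$, while $\gamma_l(0) = \gamma_l(1)$, so that $\gamma_l$ is a genuine closed locally convex curve on $\SS^2$ with trivial lifted Frenet frame; convexity of $\gamma$ says that $t \mapsto \langle a,\gamma(t)\rangle$ has at most three zeros on $(0,1)$, with multiplicity, for every $a \in \R^4\setminus\{0\}$.

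First I would establish that $\gamma_l$ lies in an open hemisphere. The natural route is through Proposition~\ref{spaces}: transporting $\gamma$ by the Bruhat homeomorphism to a convex curve $\gamma' \in \mathcal L\SS^3(-\1,\k)$ and applying Theorem~\ref{th2}, its left part is a convex spherical curve, hence contained in an open hemisphere, and one then has to understand what this homeomorphism does to the left component. A more direct attack is to argue by contradiction: if $\gamma_l$ were contained in no open hemisphere, then $0$ would lie in the convex hull of its image in $\mathrm{Im}\,\H \simeq \R^3$, forcing $\gamma_l$ to recross some great circle of $\SS^2$; using the identity $\langle v,\gamma_l(t)\rangle = \langle\overline{\gamma(t)}\,v\,\gamma(t),\gamma_r(t)\rangle$ one would translate such recrossings into zeros of $t \mapsto \langle a,\gamma(t)\rangle$ and reach at least four of them, contradicting convexity. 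Here the decomposition formulas and the worked examples of \S\ref{s62} are essential, since $\gamma_l$ depends quadratically on $\gamma$ and the count must be done carefully.

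Next, assuming $\gamma_l$ lies in the open hemisphere with pole $v_0$, I would compute its rotation number by gnomonic projection onto the affine tangent plane at $v_0$. This projection multiplies $\gamma_l$ by the positive function $t \mapsto 1/\langle v_0,\gamma_l(t)\rangle$, so by Proposition~\ref{proplocconv}(iii) it sends $\gamma_l$ to a closed planar curve with nowhere-vanishing, constant-sign curvature; its Whitney rotation number $\rho$ is then a well-defined integer. The sign convention together with $\kappa_{\gamma_l} > 0$ gives $\rho \ge 1$, and $\tilde{\mathcal F}_{\gamma_l}(1) = \1$ (the loop of Frenet frames being trivial in $\pi_1(SO_3)$) forces $\rho$ to be even, so $\rho \ge 2$. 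To get $\rho \le 2$ I would use convexity of $\gamma$ once more: through the same dictionary, a left part of rotation number $\rho$ forces $t\mapsto\langle a,\gamma(t)\rangle$ to have at least $\rho$ zeros for a suitably chosen hyperplane, whence $\rho \le 3$ and, being even, $\rho = 2$.

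The main obstacle, common to both steps, is the precise dictionary between great-circle crossings of $\gamma_l$ on $\SS^2$ and hyperplane crossings of $\gamma$ on $\SS^3$: one must control these with multiplicities and signs, so that ``$\gamma_l$ misses every open hemisphere'' and ``$\rho \ge 4$'' each genuinely cost an additional zero of some $t\mapsto\langle a,\gamma(t)\rangle$. Setting up this correspondence is exactly what the decomposition of Chapter~\ref{chapter6} and the examples of \S\ref{s62} are for; once it is in place, both assertions of the theorem reduce to the single bound ``at most three zeros''.
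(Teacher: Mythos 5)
Your approach is genuinely different from the paper's, but it contains a gap that I do not believe can be filled along the lines you sketch. The crux of your strategy is a ``dictionary'' between great-circle crossings of $\gamma_l$ on $\SS^2$ (needed both to rule out being nowhere hemispherical and to bound the rotation number above) and hyperplane crossings of $\gamma$ on $\SS^3$. Your own identity
\[
\langle v,\gamma_l(t)\rangle \;=\; \langle\,\overline{\gamma(t)}\,v\,\gamma(t),\,\gamma_r(t)\rangle
\]
shows exactly why there is no such dictionary: for a fixed $v$, the vector $\overline{\gamma(t)}\,v\,\gamma(t)$ moves with $t$, so a great circle for $\gamma_l$ does not correspond to any fixed hyperplane for $\gamma$. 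More fundamentally, $\gamma_l(t)=z_l(t)\,\i\,\overline{z_l(t)}$ is \emph{quadratic} in the lifted Frenet frame of $\gamma$, not a linear image of $\gamma(t)$, so zeros of $\langle v,\gamma_l(t)\rangle$ simply are not zeros of any $\langle a,\gamma(t)\rangle$. You flag this as ``the main obstacle,'' but the argument never overcomes it, and without it both the hemisphere claim and the bound $\rho\le 3$ collapse. Your alternative route through Proposition~\ref{spaces} and Theorem~\ref{th2} has the same flavor of difficulty: the Bruhat homeomorphism $\mathcal{L}\SS^3(\1,-\1)\simeq\mathcal{L}\SS^3(-\1,\k)$ does not act in any transparent way on the left part of the decomposition (it is Gram--Schmidt of $U\mathcal{F}_\gamma(t)$, not a multiplication on either $\SS^3$ factor), and moreover the left part of the transported curve lands in $\mathcal{L}\SS^2(-\1)$, not $\mathcal{L}\SS^2(\1)$, so convexity there does not transfer back to a statement about $\gamma_l$.

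The paper takes a route that avoids any such dictionary. It is topological and algebraic rather than a direct zero-count. First, by Theorem~\ref{thmani} the convex curves form a connected component $\mathcal{L}\SS^3(\1,-\1)_c$, and $L(\gamma_1^2)=\sigma_\pi^2$ is a witness in the image of $L|_{\mathcal{L}\SS^3(\1,-\1)_c}$ that is hemispherical with rotation number $2$. If some other convex curve had a left part without this property, connectedness plus the intermediate-value Lemma~\ref{lem1} would place a \emph{borderline} hemispherical curve of rotation number $2$ in the image. Lemma~\ref{lem2} then rules this out by a purely algebraic argument: convexity of $\beta$ forces $\mathcal{F}_\beta(t)\in\mathrm{Bru}_{A^\top}$ for all $t$, giving a unique decomposition $\mathcal{F}_\beta(t)=A^\top L(t)U(t)$ with $L(t)$ unit lower triangular; the Jacobian condition forces $L(t)^{-1}L'(t)$ to have positive subdiagonal and zeros elsewhere (a monotonicity constraint), while the borderline-hemispherical hypothesis forces $\mathcal{F}_\beta$ to lie at two distinct times in the centralizer of $\k_l$, which imposes the relations $l_{21}(t_i)=-l_{43}(t_i)$. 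These two constraints are incompatible. That combination of a connectedness argument with an $LU$-type obstruction inside a single Bruhat cell is the idea your proposal is missing, and it is precisely what lets the paper avoid any pointwise comparison of zero counts between $\gamma$ and $\gamma_l$.

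One smaller point: your rotation-number step correctly observes that $\gamma_l\in\mathcal{L}\SS^2(\1)$ in an open hemisphere has even rotation number $\ge 2$. This lower bound is fine. It is only the upper bound $\rho\le 3$ (and the hemisphere claim itself) that rests on the nonexistent dictionary.
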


We refer to Chapter~\ref{chapter6}, \S\ref{s64} for a precise definition of a curve contained in an open hemisphere and its rotation number.

\medskip

To state our next result, we need more definitions. Let us consider $\gamma \in \mathcal{L}\SS^n$, $\mathcal{F}_\gamma : [0,1] \rightarrow SO_{n+1}$ its Frenet frame curve and $\tilde{\mathcal{F}}_\gamma : [0,1] \rightarrow SO_{n+1}$ the lifted Frenet frame curve. Let us introduce the definition of convex and stably convex matrices $Q \in SO_{n+1}$ and spin $z \in \mathrm{Spin}_{n+1}$.

\begin{definition}
A matrix $Q \in SO_{n+1}$ (respectively a spin $z \in \mathrm{Spin}_{n+1}$) is called \emph{convex} if there exists a convex arc $\gamma : [t_0,t_1] \rightarrow \SS^n$, $0 \leq t_0 < t_1 \leq 1$ such that $(\mathcal{F}_\gamma(t_0))^{-1}\mathcal{F}_\gamma(t_1)=Q$ (respectively $(\tilde{\mathcal{F}}_\gamma(t_0))^{-1}\tilde{\mathcal{F}}_\gamma(t_1)=z$). 

If the convex arc $\gamma : [t_0,t_1] \rightarrow \SS^n$ can be extended to a convex arc $\gamma : [t_0,t_1+\varepsilon] \rightarrow \SS^n$ for some $\varepsilon>0$, then $Q \in SO_{n+1}$ (respectively $z \in \mathrm{Spin}_{n+1}$) is called \emph{stably convex}, and the convex arc is also called \emph{stably convex}.  
\end{definition}

Stably convex matrices clearly form an open set; in fact we will see that they correspond to one open Bruhat cell. Convex matrices which are not stably convex are the disjoint union by Bruhat cells of lower dimension, and they characterize Frenet frames at which convexity is lost. 

In general, convex matrices belong to the disjoint union of $(n+1)!$ Bruhat cells, and convex but not stably convex matrices belong to the disjoint union of $(n+1)!-1$ Bruhat cells of lower dimension. The same thing holds true for convex spins. For $n=2$, these convex Bruhat cells are easily determined (there are $6$ of them) and their knowledge plays an important role in understanding the topology of the space of locally convex curves in $\SS^2$. For $n=3$, using the geometric understanding we obtained from Theorem~\ref{th1}, we will have the following result. 

\begin{Main}\label{th4}
The explicit list of the $24$ convex matrices in $B_4^+ \subset SO_4$ and the $24$ convex spins in $\tilde{B}_4^+ \subset  \mathrm{Spin}_{4} \simeq \SS^3 \times \SS^3$ is given in~\S\ref{s72}.
\end{Main}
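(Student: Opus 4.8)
The plan is to combine the structural decomposition of Theorem~\ref{th1} with the classification of convex arcs in $\SS^2$ that is available from~\cite{Sal13} (and recalled in the special case $n=2$). The starting point is the following observation: a matrix $Q \in SO_4$ (or a spin $z \in \mathrm{Spin}_4 \simeq \SS^3 \times \SS^3$) is convex if and only if there is a convex arc $\gamma : [t_0,t_1] \to \SS^3$ whose Frenet frame transition is $Q$ (resp. $z$). By Theorem~\ref{th1}, such a locally convex arc in $\SS^3$ decomposes into a pair $(\gamma_l,\gamma_r)$ with $\gamma_l$ locally convex in $\SS^2$, $\gamma_r$ an immersion in $\SS^2$, and the compatibility condition $||\gamma_l'||=||\gamma_r'||$, $\kappa_{\gamma_l}>|\kappa_{\gamma_r}|$. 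Moreover, the Frenet frame of $\gamma$ is carried, under the isomorphism $\mathrm{Spin}_4 \simeq \SS^3\times\SS^3$, to the pair $(\tilde{\mathcal F}_{\gamma_l},\tilde{\mathcal F}_{\gamma_r})$ (up to the identification of $\mathrm{Spin}_3$ with $\SS^3$). So the transition spin of $\gamma$ is $(z_l,z_r)$ where $z_l$, $z_r$ are the transition spins of $\gamma_l,\gamma_r$ respectively. The first step is therefore to make this correspondence between Frenet frames precise, i.e.\ to verify carefully that the decomposition map of Theorem~\ref{th0}/\ref{th1} intertwines the Frenet frame of $\gamma$ with the pair of Frenet frames $(\mathcal F_{\gamma_l},\mathcal F_{\gamma_r})$ under $\mathrm{Spin}_4 \simeq \mathrm{Spin}_3\times\mathrm{Spin}_3$.

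The second step is to recall (from Chapter~\ref{chapter7}, \S\ref{s71}, and the $n=2$ analysis) the precise list of convex matrices in $B_3^+ \subset SO_3$ and convex spins in $\tilde B_3^+ \subset \mathrm{Spin}_3 \simeq \SS^3$: there are $3! = 6$ convex matrices in $B_3^+$ and $6$ convex spins in $\tilde B_3^+$, and these are exactly the transition Frenet frames of convex arcs in $\SS^2$ (equivalently, these correspond to $6$ of the Bruhat cells of $\mathrm{Spin}_3$). Given the decomposition, the transition spin $(z_l,z_r) \in \tilde B_4^+$ of a convex arc in $\SS^3$ is constrained: $z_l$ must be the transition of a convex arc $\gamma_l$ in $\SS^2$, so $z_l$ lies in one of the $6$ convex Bruhat cells of $\mathrm{Spin}_3$; while $z_r$ is only the transition of an immersion $\gamma_r$ satisfying $\kappa_{\gamma_l}>|\kappa_{\gamma_r}|$, which is a strong pointwise constraint linking the two arcs. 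The point is to translate the single global convexity condition on $\gamma$ (any hyperplane in $\R^4$ meets the image in at most $3$ points, counting multiplicity) into joint conditions on $(\gamma_l,\gamma_r)$. Here I expect the examples of \S\ref{s62} and the hemisphere/rotation-number considerations invoked in Theorems~\ref{th2} and~\ref{th3} to do the bookkeeping: a hyperplane in $\R^4$ corresponds (via the quaternion identification and the explicit formula for $\Pi_4$) to a pair consisting of a great circle constraint on $\gamma_l$ and one on $\gamma_r$, and counting intersections of a great circle with each of the two spherical curves gives the total count.

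The third step is the enumeration itself. One restricts attention to $B_4^+$, which has $2^3 \cdot 4! = 192$ elements; among these we must isolate the $24$ convex ones. The natural way is: a convex matrix necessarily lies in a Bruhat cell $\mathrm{Bru}_P$ with $P \in B_4^+$ a signed permutation matrix, and convexity of the arc forces $\#\mathrm{inv}(P)$ to take only certain values together with a sign pattern dictated by the requirement that $\gamma$ be a genuine convex (not merely locally convex) arc through that frame. In fact, as the text already announces, convex matrices belong to the disjoint union of $(n+1)! = 24$ Bruhat cells, and convex-but-not-stably-convex ones to $(n+1)!-1 = 23$ cells of lower dimension; the remaining task is purely to name the $24$ representatives $P$ (and their $24$ lifts $\tilde P$) explicitly. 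I would build these by hand from the $6$ convex spins in $\mathrm{Spin}_3$ on each factor, keeping only the pairs $(z_l,z_r)$ compatible with $\kappa_{\gamma_l}>|\kappa_{\gamma_r}|$, and then checking which of the $36$ formal pairs actually arise from an honest convex arc in $\SS^3$ using a concrete family of examples (e.g.\ perturbations of the curve $\xi$ of Example~\ref{ex2}); this selection should cut the count down to $24$. The main obstacle will be precisely this last selection: showing that a given signed permutation $(z_l,z_r)$ in $\tilde B_4^+$ really is realized as the transition of a convex arc in $\SS^3$ — which requires exhibiting an explicit convex arc — and, conversely, that the ones we discard genuinely fail convexity. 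The first direction is a construction problem (produce the arc, verify at most $3$ intersections with every hyperplane, using the decomposition to reduce to great-circle intersection counts on $\SS^2$), and the second is a non-existence argument, for which the cleanest tool is again the decomposition: if $\gamma_l$ were forced into a Bruhat cell of $\mathrm{Spin}_3$ that is not convex, or if the sign/rotation-number data of $(\gamma_l,\gamma_r)$ were incompatible with a hyperplane meeting $\gamma$ at most $3$ times, then the arc cannot be convex. Once the list is assembled, Theorem~\ref{th4} is just the assertion that \S\ref{s72} records it correctly, so the proof reduces to the verification sketched above.
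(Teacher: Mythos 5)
Your proposal hinges on the claim that if $\gamma$ is a convex arc in $\SS^3$ then its left part $\gamma_l$ is a convex arc in $\SS^2$, so that $z_l$ must land in one of the $6$ convex Bruhat cells of $\mathrm{Spin}_3$. That is the converse of Proposition~\ref{leftconvex}, and the paper explicitly states it is false: the remark after Proposition~\ref{leftconvex} points out that $\gamma_1^2$ (from Example~\ref{family1}) is a convex curve in $\SS^3$ whose left part $\sigma_c^2$ is clearly not convex in $\SS^2$. You can also see it directly in the list of \S\ref{s72}: the dimension-$2$ convex spin $(-\i,-\1)$ has left coordinate $-\i$, which projects under $\Pi_3$ to $M_{-1}^3 = \mathrm{diag}(1,-1,-1)$, and that diagonal matrix is \emph{not} a convex matrix in $SO_3$ (the only convex matrix in the dimension-$0$ cell of $SO_3$ is the identity, since $\Delta^-(Q) = I$ forces $Q=I$ there). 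So the ``$6 \times 6 = 36$ formal pairs'' from which you propose to select the $24$ convex spins simply do not contain all $24$; the constraint $\kappa_{\gamma_l}(t) > |\kappa_{\gamma_r}(t)|$ is a pointwise condition on the arcs, not a restriction on the pair of endpoints $(z_l,z_r)$, and it does not force $z_l$ into a convex cell. This makes the enumeration strategy in your third step break at the start.

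The paper's argument is quite different and avoids this issue. For matrices, no decomposition is used at all: one observes (Proposition~\ref{numberconvexm}) that a convex representative $Q \in B_4^+$ must satisfy $\mathbf{chop}^-(Q) = \Delta^-(Q)A = A^\top$, which uniquely determines the sign pattern $\Delta^-(Q)$ for each of the $24$ permutations $\pi \in S_4$. This yields the list $D_\pi P_\pi$ purely algebraically, and $\mathbf{chop}^+(D_\pi P_\pi)$ then gives the ``next open cell.'' For spins, the two preimages $\pm \tilde P$ of each convex matrix are computed from the explicit formula for $\Pi_4$, and the correct sign is selected by multiplying a fixed, explicitly known globally holonomic reference curve (the lifted Frenet frame of $\gamma_1^1$, which is stably convex) by each candidate $\tilde P$ and checking, via a first-order Taylor approximation, whether the resulting curve lies in the open cell of $\bar{\a}$ for small negative time. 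So Theorem~\ref{th1} enters the paper's proof only indirectly, through the construction of that one concrete reference curve and of $\a,\bar\a$ (Proposition~\ref{aa}), not as a reduction of the $\SS^3$ enumeration to the $\SS^2$ one. If you want to keep your route, you would need a correct characterization of which pairs $(z_l,z_r)$ arise from convex arcs in $\SS^3$ — and as the counterexample shows, this is not ``convex on each factor,'' nor is it even ``convex on the left factor.''
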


We will also determine explicitly the list of open Bruhat cells in which the curve immediately enters after loosing convexity, both in the case of matrices and spins. 

Even though we will not use this explicit list of convex matrices and convex spins, we believe that this is an important step towards an answer to Question~\ref{mainquestion} in the case $n=3$. 

\medskip

Finally, let us come back to our main question, that is Question~\ref{mainquestion}. Using Theorem~\ref{th1} and the work of Saldanha in the case $n=2$, we will prove the following result.  

\begin{Main}\label{th5}
For any even integer $j \geq 1$, we have
\[ \mathrm{dim} \; H^j(\mathcal{L}\SS^3(-\mathbf{1},\1),\R) \geq 1 + \mathrm{dim} \; H^j(\mathcal{G}\SS^3 (-\mathbf{1},\1),\R), \quad 4|(j+2),  \]
\[ \mathrm{dim} \; H^j(\mathcal{L}\SS^3(\mathbf{1},-\1),\R) \geq 1 + \mathrm{dim} \; H^j(\mathcal{G}\SS^3 (\mathbf{1},-\1),\R), \quad 4|j .\]

\end{Main}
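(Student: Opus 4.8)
The plan is to deduce Theorem~\ref{th5} from Saldanha's computation of $H^*(\mathcal{L}\SS^2(\pm\1),\R)$ (Theorem~\ref{thmsal}, Corollary~\ref{corsal}) by transporting his ``adding a pair of loops'' mechanism across the decomposition of Theorem~\ref{th1}. First I would use Theorem~\ref{th1} to replace $\mathcal{L}\SS^3(z_l,z_r)$ by the space of compatible pairs $(\gamma_l,\gamma_r)$, which furnishes continuous maps $L\colon \mathcal{L}\SS^3(z_l,z_r)\to\mathcal{L}\SS^2(z_l)$ and $R\colon \mathcal{L}\SS^3(z_l,z_r)\to\mathcal{G}\SS^2(z_r)$ (the ``left part'' and ``right part''), both of which extend, via Theorem~\ref{th0}, to the corresponding maps out of $\mathcal{G}\SS^3(z_l,z_r)$. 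Since the inclusion $\mathcal{L}\SS^3(z_l,z_r)\hookrightarrow\mathcal{G}\SS^3(z_l,z_r)$ is surjective on real homology (Theorem~\ref{topo}), for each admissible even $j$ it is enough to exhibit a single nonzero class in $H_j(\mathcal{L}\SS^3(z_l,z_r),\R)$ lying in the kernel of the map to $H_j(\mathcal{G}\SS^3(z_l,z_r),\R)$; this forces $\dim H_j(\mathcal{L}\SS^3(z_l,z_r),\R)\geq 1+\dim H_j(\mathcal{G}\SS^3(z_l,z_r),\R)$, and the cohomological statement follows by duality over $\R$.

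\textbf{Construction of the class.} For $z_l=\1$ (the space $\mathcal{L}\SS^3(\1,-\1)$, $j\equiv 0\pmod 4$) the decomposition $\mathcal{L}\SS^2(\1)\approx(\Omega\SS^3)\vee\SS^2\vee\SS^6\vee\cdots$ provides, for $j-2\equiv 2\pmod 4$, a nonzero class in $H_{j-2}(\mathcal{L}\SS^2(\1),\R)$ that dies in $H_{j-2}(\mathcal{G}\SS^2(\1),\R)$; for $z_l=-\1$ (the space $\mathcal{L}\SS^3(-\1,\1)$, $j\equiv 2\pmod 4$) the analogous ``extra'' class sits in $H_{j-2}(\mathcal{L}\SS^2(-\1),\R)$ with $j-2\equiv 0\pmod 4$, including, for $j=2$, the degree-$0$ class distinguishing the convex component $\mathcal{L}\SS^2(-\1)_c$ from $\mathcal{L}\SS^2(-\1)_n$. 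I would then (i) lift a cycle $c\colon X^{j-2}\to\mathcal{L}\SS^2(z_l)$ representing this extra class to a family in $\mathcal{L}\SS^3(z_l,z_r)$, choosing for each $\gamma_l$ a compatible right part $\gamma_r\in\mathcal{G}\SS^2(z_r)$ with $\|\gamma_r'\|=\|\gamma_l'\|$, $|\kappa_{\gamma_r}|<\kappa_{\gamma_l}$ and final lifted frame $z_r$ (possible after a preliminary normalization that inflates $\kappa_{\gamma_l}$, say by precomposition with standard convex arcs as in \S\ref{s53}, so that there is enough ``curvature budget'' and $z_r$ is attainable); and (ii) apply the $n=3$ ``adding a pair of loops'' operation of \S\ref{s81}, \S\ref{s83}: inserting into $\gamma$ a suitable pair of small locally convex loops whose Frenet monodromies in $\mathrm{Spin}_4=\SS^3\times\SS^3$ cancel (so that the endpoints stay at $(\1,\1)$ and $(z_l,z_r)$) realizes, inside $\mathcal{L}\SS^3(z_l,z_r)$, a $2$-parameter family whose image under $R$ represents the generator of $H_2(\mathcal{G}\SS^2(z_r),\R)=H_2(\Omega\SS^3,\R)$. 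The resulting cycle $C$ has dimension $(j-2)+2=j$, and under $L$ it is compatible with Saldanha's $n=2$ loop operation on $\gamma_l$; this is the source of the degree-$2$ shift between the congruence classes of $j$ appearing in Theorems~\ref{thmsal} and~\ref{th5}.

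\textbf{Detection and vanishing.} To see $[C]=0$ in $H_j(\mathcal{G}\SS^3(z_l,z_r),\R)\cong H_j(\Omega\SS^3\times\Omega\SS^3,\R)$ I would push $C$ forward: there the inserted pair of loops is null-homotopic, since a small loop is contractible in the simply connected group $\mathrm{Spin}_4$ and concatenation with a null-homotopic loop is homotopic to the identity on the loop space; hence in $\mathcal{G}\SS^3(z_l,z_r)$ the family $C$ is homotopic to the image of $c$ trivially thickened by the $2$-parameter family, which factors up to homotopy through a $(j-2)$-dimensional complex and is therefore a boundary in degree $j$. To see $[C]\neq 0$ in $H_j(\mathcal{L}\SS^3(z_l,z_r),\R)$ I would pair it against the class $R^*\omega\smile L^*\beta\in H^j(\mathcal{L}\SS^3(z_l,z_r),\R)$, where $\omega$ generates $H^2(\mathcal{G}\SS^2(z_r),\R)=H^2(\Omega\SS^3,\R)$ (so $\langle R^*\omega,\cdot\rangle$ detects the $2$-parameter loop family) and $\beta\in H^{j-2}(\mathcal{L}\SS^2(z_l),\R)$ evaluates nontrivially on $c$; the construction of $C$ is arranged precisely so that $\langle R^*\omega\smile L^*\beta,[C]\rangle\neq 0$. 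Together these two facts produce a nonzero class in the kernel of $H_j(\mathcal{L}\SS^3(z_l,z_r),\R)\to H_j(\mathcal{G}\SS^3(z_l,z_r),\R)$, which is exactly what is needed.

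\textbf{Main obstacle.} The real work, occupying \S\S\ref{s81}--\ref{s85}, is the $n=3$ loop-addition itself: one must choose the cancelling pair of small locally convex loops so that their net effect on the lifted Frenet frame $\tilde{\mathcal F}_\gamma(1)\in\mathrm{Spin}_4$ is \emph{exactly} trivial, which requires a precise understanding, through the quaternionic model $\mathrm{Spin}_4\simeq\SS^3\times\SS^3$, of the monodromy of a small locally convex loop and of how the $2$ parameters act; and then one must track the nontriviality of Saldanha's $n=2$ class through the lift in step (i), through the loop-addition, and through the decomposition of Theorem~\ref{th1}, i.e.\ verify $\langle R^*\omega\smile L^*\beta,[C]\rangle\neq 0$. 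A secondary technical point is the normalization in step (i) guaranteeing that every $\gamma_l$ in the Saldanha cycle admits a compatible right part with the prescribed final spin; this should follow from the fact that inflating $\kappa_{\gamma_l}$ enlarges the set of endpoints attainable by $\gamma_r$, which is controlled by the cutting/concatenation operations of \S\ref{s53}.
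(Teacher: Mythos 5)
Your strategy diverges from the paper's in a structural way, and the divergence introduces a gap that the paper is specifically engineered to avoid.

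You observe correctly that the natural left-part map $L\colon\mathcal{L}\SS^3(z_l,z_r)\to\mathcal{L}\SS^2(z_l)$ hits an extra class of $\mathcal{L}\SS^2(z_l)$ in a degree that is off by $2$ modulo $4$ from where Theorem~\ref{th5} asserts the extra class, and you try to close this gap by inserting a $2$-parameter family of loops and detecting with $R^*\omega\smile L^*\beta$. The paper never pays this price. It first applies the chopping homeomorphisms of Proposition~\ref{spaces}, $\mathcal{L}\SS^3(-\1,\1)\simeq\mathcal{L}\SS^3(\1,-\k)$ and $\mathcal{L}\SS^3(\1,-\1)\simeq\mathcal{L}\SS^3(-\1,\k)$, which swap the sign of the left endpoint $z_l$. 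After this, the left-part space is $\mathcal{L}\SS^2(\1)$ in the first case and $\mathcal{L}\SS^2(-\1)$ in the second, and by Corollary~\ref{corsal} the extra cohomology of that space already sits in exactly the congruence class of $j$ claimed in Theorem~\ref{th5}. The paper then builds the relaxation-reflection $\hat{\cdot}$ (\S\ref{s84}), which is a continuous lift of a family in $\mathcal{L}\SS^2(\pm\1)$ to a family of the \emph{same} dimension in $\mathcal{L}\SS^3(\pm\1,\mp\k)$, and detects the resulting $\hat{h}_{2k-2}$ by $\hat{m}_{2k-2}$, the pullback of the multiconvex class $m_{2k-2}$ in degree $2k-2$. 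There is no degree shift and no cup product.

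The gap in your route lives precisely where you added the shift. First, your step~(ii) is not the loop-adding operation $\#$ defined in the paper: $\gamma^\#$ inserts a \emph{single fixed} element of $\mathcal{L}\SS^3(\1,\1)$ at a single time $t_0$, so it does not by itself produce a $2$-parameter family, and you give no construction of one that stays inside $\mathcal{L}\SS^3(z_l,z_r)$ and whose pushforward under $R$ actually generates $H_2(\Omega\SS^3,\R)$ — the left and right parts are coupled by the length constraint $\|\gamma_l'\|=\|\gamma_r'\|$, so you cannot wiggle the right part independently. Second, the detection $\langle R^*\omega\smile L^*\beta,[C]\rangle\neq 0$ requires control of the cup-product structure of $H^*(\mathcal{L}\SS^3(z_l,z_r),\R)$, which is not available; the paper deliberately works with the Thom class of a closed submanifold with trivial normal bundle ($\hat{\mathcal M}_k$, Proposition~\ref{manifold2}) precisely to bypass this. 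Third, your step~(i) asserts a continuous choice of compatible right part with prescribed final lifted frame $z_r=\mp\1$, but that spin lies in a \emph{closed} Bruhat cell, so the Bruhat-cell normalization the paper uses (Proposition~\ref{relaxcurveprop}, which requires the target to land in an open cell of $A^\top$) does not apply directly to $\mathcal{L}\SS^3(\pm\1,\mp\1)$; this is exactly the other reason the paper passes to $\mathcal{L}\SS^3(\pm\1,\mp\k)$ first. Replacing the degree shift by the chopping homeomorphism eliminates all three problems at once, and the remaining work — Propositions~\ref{propp}, \ref{homot}, \ref{nico2} establishing that $\hat{\cdot}$ intertwines $\#$ with Saldanha's $\ast$ and that $\hat{h}_{2k-2}$ is null-homotopic in $\mathcal{G}\SS^3$ while $\hat{m}_{2k-2}(\hat{h}_{2k-2})=\pm 1$ — is what the proof actually has to carry out.
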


Comparing this Theorem with~\eqref{loopspace} and recalling that
\[ \mathcal{L}\SS^3(\mathbf{i},-\mathbf{i}) \simeq \Omega (\SS^3 \times \SS^3) \simeq \Omega \SS^3 \times \Omega\SS^3, \]
we immediately obtain the following theorem, which gives a partial answer to Questions~\ref{mainquestion0} and~\ref{mainquestion1}.

\begin{Main}\label{th6}
The inclusions
\[ \mathcal{L}\SS^3(-I) \subset \mathcal{G}\SS^3(-I) \]
and
\[ \mathcal{L}\SS^3(-\mathbf{1},\1) \subset \mathcal{G}\SS^3(-\mathbf{1},\1), \quad \mathcal{L}\SS^3(\mathbf{1},-\1) \subset \mathcal{G}\SS^3(\mathbf{1},-\1) \]
are not homotopy equivalences. Therefore $\mathcal{L}\SS^3(-I)$ is not homeomorphic to $\mathcal{L}\SS^3(M_0^4)$, $\mathcal{L}\SS^3(-\mathbf{1},\1)$ is not homeomorphic to $\mathcal{L}\SS^3(\mathbf{i},-\mathbf{i})$ and $\mathcal{L}\SS^3(\mathbf{1},-\1)$ is not homeomorphic to $\mathcal{L}\SS^3(\mathbf{i},-\mathbf{i})$. 
\end{Main}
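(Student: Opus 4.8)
The plan is to deduce Theorem~\ref{th6} almost formally from Theorem~\ref{th5}, combined with the already understood topology of the generic spaces and of the two ``model'' spaces. First I would record the relevant homotopy equivalences. By Theorem~\ref{HSG}, for every $Q \in SO_4$ one has $\mathcal{G}\SS^3(Q) \simeq \Omega SO_4$, and for every $z \in \mathrm{Spin}_4$ one has $\mathcal{G}\SS^3(z) \simeq \Omega \mathrm{Spin}_4 \simeq \Omega(\SS^3 \times \SS^3)$, whose real cohomology is finite-dimensional in each degree and given by~\eqref{loopspace}. Since $|0| \le 1$, Theorem~\ref{topo} gives homotopy equivalences $\mathcal{L}\SS^3(M_0^4) \simeq \mathcal{G}\SS^3(M_0^4) \simeq \Omega SO_4$ and $\mathcal{L}\SS^3(\mathbf{i},-\mathbf{i}) \simeq \mathcal{G}\SS^3(\mathbf{i},-\mathbf{i}) \simeq \Omega(\SS^3 \times \SS^3)$. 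Also, by Theorem~\ref{topo} the inclusion $\mathcal{L}\SS^3(z) \subset \mathcal{G}\SS^3(z)$ induces a surjection on homology with real coefficients, hence (dualizing over the field $\R$) an injection on cohomology with real coefficients, so $\dim H^j(\mathcal{L}\SS^3(z),\R) \ge \dim H^j(\mathcal{G}\SS^3(z),\R)$ for all $j$, and likewise for matrices $Q$.

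Next I would handle the two spins. Putting the previous inequality together with Theorem~\ref{th5} gives
\[ \dim H^j(\mathcal{L}\SS^3(-\mathbf{1},\1),\R) \ge 1 + \dim H^j(\mathcal{G}\SS^3(-\mathbf{1},\1),\R) \quad (4 \mid j+2), \]
\[ \dim H^j(\mathcal{L}\SS^3(\mathbf{1},-\1),\R) \ge 1 + \dim H^j(\mathcal{G}\SS^3(\mathbf{1},-\1),\R) \quad (4 \mid j), \]
so in each case the inclusion into the corresponding generic space fails to be an isomorphism on $H^j(\cdot,\R)$ and is therefore not a homotopy equivalence. For the non-homeomorphism statements, if $\mathcal{L}\SS^3(-\mathbf{1},\1)$ were homeomorphic to $\mathcal{L}\SS^3(\mathbf{i},-\mathbf{i}) \simeq \mathcal{G}\SS^3(-\mathbf{1},\1)$, then $\mathcal{L}\SS^3(-\mathbf{1},\1)$ and $\mathcal{G}\SS^3(-\mathbf{1},\1)$ would have isomorphic real cohomology, contradicting the strict inequality at $j \equiv 2 \pmod 4$; the case $\mathcal{L}\SS^3(\mathbf{1},-\1)$ versus $\mathcal{L}\SS^3(\mathbf{i},-\mathbf{i})$ is identical using $j \equiv 0 \pmod 4$.

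For the $SO_4$ statement I would use the decomposition of $\mathcal{L}\SS^3(-I)$ into its two spin components. Under $\mathrm{Spin}_4 \simeq \SS^3 \times \SS^3$ the two preimages of $-I$ are $(\mathbf{1},-\mathbf{1})$ and $(-\mathbf{1},\mathbf{1})$, so
\[ \mathcal{L}\SS^3(-I) = \mathcal{L}\SS^3(\mathbf{1},-\1) \sqcup \mathcal{L}\SS^3(-\mathbf{1},\1), \qquad \mathcal{G}\SS^3(-I) = \mathcal{G}\SS^3(\mathbf{1},-\1) \sqcup \mathcal{G}\SS^3(-\mathbf{1},\1), \]
whence $H^j(\mathcal{L}\SS^3(-I),\R)$ is the direct sum of the cohomologies of the two components, and similarly for $\mathcal{G}$. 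For $4 \mid j$, bounding the $\mathcal{L}\SS^3(\mathbf{1},-\1)$-summand by Theorem~\ref{th5} and the $\mathcal{L}\SS^3(-\mathbf{1},\1)$-summand by the surjectivity in Theorem~\ref{topo} yields $\dim H^j(\mathcal{L}\SS^3(-I),\R) \ge 1 + \dim H^j(\mathcal{G}\SS^3(-I),\R)$, so the inclusion $\mathcal{L}\SS^3(-I) \subset \mathcal{G}\SS^3(-I)$ is not a homotopy equivalence; and since $\mathcal{L}\SS^3(M_0^4) \simeq \mathcal{G}\SS^3(-I)$, the same cohomology comparison shows $\mathcal{L}\SS^3(-I)$ is not homeomorphic to $\mathcal{L}\SS^3(M_0^4)$.

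The only real input here is Theorem~\ref{th5}; there is no genuine obstacle in this final deduction. The one point that needs a little care is that ``not homotopy equivalent to the generic space'' translates into ``not homeomorphic to the model space'' precisely because Theorem~\ref{topo} (in the cases $|s| \le 1$), together with Theorem~\ref{HSG}, identifies the model spaces $\mathcal{L}\SS^3(M_0^4)$ and $\mathcal{L}\SS^3(\mathbf{i},-\mathbf{i})$ with the corresponding generic spaces, and that disjoint unions split real cohomology so that the two-component space $\mathcal{L}\SS^3(-I)$ inherits the excess cohomology from its $(\mathbf{1},-\mathbf{1})$-summand.
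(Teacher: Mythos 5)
Your proposal is correct and follows essentially the same route as the paper, which in fact gives no separate proof of Theorem~\ref{th6}: the text preceding it merely observes that the statement follows ``immediately'' from Theorem~\ref{th5}, equation~\eqref{loopspace}, and the identification $\mathcal{L}\SS^3(\mathbf{i},-\mathbf{i}) \simeq \Omega(\SS^3\times\SS^3)$. The details you supply --- the injectivity on $H^*(\cdot,\R)$ coming from the surjectivity in Theorem~\ref{topo}, the identification of $\mathcal{L}\SS^3(M_0^4)$ and $\mathcal{L}\SS^3(\mathbf{i},-\mathbf{i})$ with the corresponding generic spaces via the $|s|\le 1$ clause, and the two-component decomposition $\mathcal{L}\SS^3(-I)=\mathcal{L}\SS^3(\1,-\1)\sqcup\mathcal{L}\SS^3(-\1,\1)$ together with the additivity of cohomology over disjoint unions --- are exactly what the paper leaves implicit, and they are all correct.
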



%
%

\chapter{Some operations on locally convex curves}
\label{chapter5}

In this Chapter, following~\cite{SS12} we describe several operations on the space of locally convex curves.

\section{Time reversal}\label{s51}

Given a locally convex curve $\gamma \in \mathcal{L}\SS^n$, we would like to be able to reverse time. A naive idea would be to consider the curve $t \mapsto \gamma(1-t)$: clearly such a curve could be negative locally convex and its initial Frenet frame is not necessarily the identity. So we will have to do something more elaborate.

Let $J_+=\mathrm{diag}(1,-1,1,-1, \dots) \in O_{n+1}$; its determinant is equal to $(-1)^{n(n+1)/2}$. We define a map
\[ \mathbf{TR} : SO_{n+1} \rightarrow SO_{n+1}, \quad Q \mapsto \mathbf{TR}(Q)=J_+{} Q ^\top J_+ \]
This is a well-defined map which is an anti-automorphism: that is $\mathbf{TR}(QQ')=\mathbf{TR}(Q')\mathbf{TR}(Q)$ for any $Q \in SO_{n+1}$ and $Q' \in SO_{n+1}$. One can easily check that this anti-automorphism lifts to an anti-automorphism 
\[ \mathbf{TR} : \mathrm{Spin}_{n+1} \rightarrow \mathrm{Spin}_{n+1}. \]
These two maps preserves the subgroups $\mathrm{Diag}_{n+1}^+ \subset B_{n+1}^+ \subset SO_{n+1}$ and $\widetilde{\mathrm{Diag}}_{n+1}^+ \subset \tilde{B}_{n+1}^+ \subset \mathrm{Spin}_{n+1}$. The action of $\mathbf{TR}$ on $B_{n+1}^+$ is simple: $\mathbf{TR}(Q)$ is obtained from $Q \in B_{n+1}^+$ by transposition and change of sign of all entries for which $i+j$ is odd.

We can finally define the operation of time reversal.

\begin{definition}\label{deftime} 
Given $\gamma \in \mathcal{L}\SS^n(Q)$, we define its \emph{time reversal} $\gamma^{\mathbf{TR}}$ by
\[ \gamma^{\mathbf{TR}}(t)=J_+ {} Q^\top \gamma(1-t). \]
\end{definition}

With this definition, the following proposition was proved in~\cite{SS12}.

\begin{proposition}\label{proptime} 
For any $\gamma \in \mathcal{L}\SS^n(Q)$, we have $\gamma^{\mathbf{TR}} \in \mathcal{L}\SS^n(\mathbf{TR}(Q))$. Moreover,
\[ \mathcal{F}_{\gamma^{\mathbf{TR}}}(t)=J_+ {} Q ^\top \mathcal{F}_\gamma(1-t)J_+, \quad \Lambda_{\gamma^{\mathbf{TR}}}(t)=\Lambda_\gamma(1-t) \]
and time reversal yields homeomorphisms
\[ \mathcal{L}\SS^n(Q) \approx \mathcal{L}\SS^n(\mathbf{TR}(Q)), \quad \mathcal{L}\SS^n(z) \approx \mathcal{L}\SS^n(\mathbf{TR}(z)) \]
for any $Q \in SO_{n+1}$ and any $z \in \mathrm{Spin}_{n+1}$.
\end{proposition}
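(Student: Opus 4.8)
The plan is to deduce all four assertions directly from the explicit formula $\gamma^{\mathbf{TR}}(t) = J_+ Q^\top \gamma(1-t)$, in the order in which they are listed, since each feeds into the next. First I would check that $\gamma^{\mathbf{TR}}$ is a positive locally convex curve in $\SS^n$. That its image lies in $\SS^n$ is immediate since $J_+ Q^\top \in O_{n+1}$. For local convexity I would compute $\det\big(\gamma^{\mathbf{TR}}(t),(\gamma^{\mathbf{TR}})'(t),\dots,(\gamma^{\mathbf{TR}})^{(n)}(t)\big)$: the orientation-reversing reparametrization $t\mapsto 1-t$ contributes, through the chain rule exactly as in the proof of Proposition~\ref{proplocconv}(i), a factor $(-1)^{0+1+\cdots+n}=(-1)^{n(n+1)/2}$, while left multiplication by $J_+ Q^\top$ contributes $\det(J_+ Q^\top)=(-1)^{n(n+1)/2}$; the product of these signs is $(-1)^{n(n+1)}=+1$ because $n(n+1)$ is even, so $\gamma^{\mathbf{TR}}$ is positively locally convex.

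Next I would pin down the Frenet frame. Differentiation gives $(\gamma^{\mathbf{TR}})^{(k)}(t)=(-1)^k J_+ Q^\top \gamma^{(k)}(1-t)$, so the matrix $\big(\gamma^{\mathbf{TR}}(t),\dots,(\gamma^{\mathbf{TR}})^{(n)}(t)\big)$ equals $J_+ Q^\top \big(\gamma(1-t),\dots,\gamma^{(n)}(1-t)\big)J_+$, where the right factor $J_+$ absorbs the alternating signs $(-1)^k$. Substituting the Gram--Schmidt factorization $\big(\gamma,\dots,\gamma^{(n)}\big)(1-t)=\mathcal{F}_\gamma(1-t)R_\gamma(1-t)$ and inserting $J_+^2=I$, this becomes $\big(J_+ Q^\top \mathcal{F}_\gamma(1-t)J_+\big)\big(J_+ R_\gamma(1-t)J_+\big)$. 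Since conjugation by the diagonal matrix $J_+$ leaves diagonal entries untouched, it preserves both $SO_{n+1}$ and $Up^+_{n+1}$ (using $QQ^\top=I$ for the determinant of the first factor); hence this is a Gram--Schmidt decomposition, and by its uniqueness $\mathcal{F}_{\gamma^{\mathbf{TR}}}(t)=J_+ Q^\top \mathcal{F}_\gamma(1-t)J_+$. Evaluating at $t=0$ yields $J_+ Q^\top \mathcal{F}_\gamma(1)J_+=J_+ Q^\top Q J_+=I$, so $\gamma^{\mathbf{TR}}\in\mathcal{L}\SS^n$, and at $t=1$ it yields $J_+ Q^\top J_+=\mathbf{TR}(Q)$, so $\gamma^{\mathbf{TR}}\in\mathcal{L}\SS^n(\mathbf{TR}(Q))$. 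Differentiating the frame formula then gives $\mathcal{F}'_{\gamma^{\mathbf{TR}}}(t)=-J_+ Q^\top \mathcal{F}'_\gamma(1-t)J_+$, hence $\Lambda_{\gamma^{\mathbf{TR}}}(t)=\mathcal{F}_{\gamma^{\mathbf{TR}}}(t)^{-1}\mathcal{F}'_{\gamma^{\mathbf{TR}}}(t)=-J_+\Lambda_\gamma(1-t)J_+$; and since conjugation by $J_+$ multiplies the $(i,j)$-entry by $(-1)^{i+j}$, which is $-1$ on the sub- and superdiagonals and irrelevant on the vanishing diagonal, any tridiagonal skew-symmetric matrix $\Lambda$ satisfies $J_+\Lambda J_+=-\Lambda$, so $\Lambda_{\gamma^{\mathbf{TR}}}(t)=\Lambda_\gamma(1-t)$.

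For the homeomorphisms I would first observe that $\mathbf{TR}$ is an involution on curves, $(\gamma^{\mathbf{TR}})^{\mathbf{TR}}=\gamma$, which is a one-line check using $\mathbf{TR}(Q)^\top=J_+ Q J_+$ and $J_+^2=I$. Under the identification of $\mathcal{L}\SS^n$ with the Hilbert manifold of \S\ref{s36}, the map $\gamma\mapsto\gamma^{\mathbf{TR}}$ corresponds, by the formula $\Lambda_{\gamma^{\mathbf{TR}}}(t)=\Lambda_\gamma(1-t)$, to precomposition of each coordinate function with the $L^2$-isometry $t\mapsto 1-t$; it is therefore continuous, and being a self-inverse bijection it is a homeomorphism $\mathcal{L}\SS^n(Q)\approx\mathcal{L}\SS^n(\mathbf{TR}(Q))$. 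For the lifted statement, $\gamma^{\mathbf{TR}}\in\mathcal{L}\SS^n(\mathbf{TR}(Q))$ already forces $\tilde{\mathcal{F}}_{\gamma^{\mathbf{TR}}}(1)\in\{\mathbf{TR}(z),-\mathbf{TR}(z)\}$, the two preimages of $\mathbf{TR}(Q)$ under $\Pi_{n+1}$; to fix the sign I would note that $\gamma\mapsto\tilde{\mathcal{F}}_{\gamma^{\mathbf{TR}}}(1)\,\mathbf{TR}\big(\tilde{\mathcal{F}}_\gamma(1)\big)^{-1}$ is a continuous map from the connected (indeed contractible, Proposition~\ref{2topology}) space $\mathcal{L}\SS^n$ into the discrete kernel $\{\mathbf{1},-\mathbf{1}\}$ of $\Pi_{n+1}$, hence constant, and it equals $\mathbf{1}$ when evaluated on the curve with constant logarithmic derivative; so $\tilde{\mathcal{F}}_{\gamma^{\mathbf{TR}}}(1)=\mathbf{TR}(z)$, and (using that $\mathbf{TR}$ is an involution on $\mathrm{Spin}_{n+1}$ as well) we obtain the homeomorphism $\mathcal{L}\SS^n(z)\approx\mathcal{L}\SS^n(\mathbf{TR}(z))$.

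\textbf{Main obstacle.} There is no deep step here: the proof is essentially bookkeeping. The one place where care is genuinely required is the sign accounting — the reparametrization factor $(-1)^{n(n+1)/2}$, the factor $\det J_+=(-1)^{n(n+1)/2}$, and the identity $J_+\Lambda J_+=-\Lambda$ must all be tracked and must conspire to the correct signs, which they do only because $n(n+1)$ is even. The only step that is not a pure computation is fixing the sign of the lifted Frenet frame at $t=1$, where one relies on connectedness of $\mathcal{L}\SS^n$ rather than on an explicit evaluation; that is the part I would expect to need the most additional words.
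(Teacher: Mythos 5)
The paper itself does not prove Proposition~\ref{proptime}; it simply cites it to Saldanha--Shapiro. So there is no in-paper proof to compare against, and the question is purely one of correctness, which your argument satisfies. The sign accounting is right: the reparametrization $t\mapsto 1-t$ contributes $(-1)^{n(n+1)/2}$ to the Wronskian, $\det(J_+Q^\top)=(-1)^{n(n+1)/2}$ contributes the same, and their product is $+1$. The Gram--Schmidt step is handled correctly because conjugation by the diagonal orthogonal matrix $J_+$ preserves both $SO_{n+1}$ and $Up^+_{n+1}$ (leaving diagonal entries unchanged), so uniqueness of the decomposition pins down $\mathcal{F}_{\gamma^{\mathbf{TR}}}(t)=J_+Q^\top\mathcal{F}_\gamma(1-t)J_+$; and the observation that $J_+\Lambda J_+=-\Lambda$ for any tridiagonal skew-symmetric $\Lambda$ (since only entries with $|i-j|=1$ survive) gives $\Lambda_{\gamma^{\mathbf{TR}}}(t)=\Lambda_\gamma(1-t)$.

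The one place that deserves a few more words — as you yourself anticipate — is the evaluation of the constant in the spin-lift argument. You set $\phi(\gamma)=\tilde{\mathcal{F}}_{\gamma^{\mathbf{TR}}}(1)\,\mathbf{TR}(\tilde{\mathcal{F}}_\gamma(1))^{-1}\in\{\mathbf{1},-\mathbf{1}\}$ and assert it equals $\mathbf{1}$ on a curve $\xi$ with constant logarithmic derivative $\Lambda$. To see this cleanly: for such $\xi$ one has $\Lambda_{\xi^{\mathbf{TR}}}\equiv\Lambda$, hence $\tilde{\mathcal{F}}_{\xi^{\mathbf{TR}}}=\tilde{\mathcal{F}}_\xi$ as paths; and since $\mathbf{TR}(\mathcal{F}_\xi(t))=J_+\exp(t\Lambda)^\top J_+=\exp(-tJ_+\Lambda J_+)=\exp(t\Lambda)=\mathcal{F}_\xi(t)$, the path $t\mapsto\mathbf{TR}(\tilde{\mathcal{F}}_\xi(t))$ is a lift of $\mathcal{F}_\xi$ starting at $\mathbf{1}$, so by uniqueness of lifts it coincides with $\tilde{\mathcal{F}}_\xi$; in particular $\mathbf{TR}(\tilde{\mathcal{F}}_\xi(1))=\tilde{\mathcal{F}}_\xi(1)$ and $\phi(\xi)=\mathbf{1}$. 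With that spelled out, the argument is complete and correct.
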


\section{Arnold duality}\label{s52}

Let us start by defining the \emph{Arnold matrix} $A \in B_{n+1}^+$: it is the anti-diagonal matrix with entries $(A)_{i,n+2-i}=(-1)^{i+1}$. In terms of the permutation matrix $P_\rho$ we defined in Chapter~\ref{chapter2}, section~\S\ref{s23} and the matrix $J_+ \in O_{n+1}$ defined in the previous section~\S\ref{s51}, we have
\[ A=J_+P_\rho. \] 
For $n=2$ and $n=3$, we have, respectively,
\[
A=
\begin{pmatrix}
0 & 0 & 1 \\
0 & -1 & 0 \\
1 & 0 & 0 
\end{pmatrix},
\quad
A=
\begin{pmatrix}
0 & 0 & 0 & 1 \\
0 & 0 & -1 & 0 \\
0 & 1 & 0 & 0 \\
-1 & 0 & 0 & 0 
\end{pmatrix}.
\]
Observe that $A ^\top =A$ if and only if $n+1$ is odd. We can define a map
\[ \mathbf{AD} : SO_{n+1}  \rightarrow SO_{n+1}, \quad Q  \mapsto \mathbf{AD}(Q)={} A ^\top Q A  \]
which is an automorphism that preserves $B_{n+1}^+$. The action of $B_{n+1}^+$ is also clear: $\mathbf{AD}(Q)$ is obtained from $Q$ by a "half-turn" (the $(i,j)$-entry goes to the $(n+2-i,n+2-j)$ entry) and then change the signs of all entries with $i+j$ odd.

This automorphism lifts to an automorphism 
\[ \mathbf{AD} : z \in \mathrm{Spin}_{n+1} \mapsto \mathbf{AD}(z) \in \mathrm{Spin}_{n+1} \]
that preserves $\tilde{B}_{n+1}^+$.

We can now define the operation of Arnold duality.

\begin{definition}\label{defarnold} 
Given $\gamma \in \mathcal{L}\SS^n(Q)$, we define its \emph{Arnold dual} $\gamma^{\mathbf{AD}}$ by
\[ \gamma^{\mathbf{AD}}(t)=\mathbf{AD}(\mathcal{F}_\gamma(t))e_1. \]
\end{definition}

As before, the following proposition was proved in~\cite{SS12}.

\begin{proposition}\label{proparnold} 
For any $\gamma \in \mathcal{L}\SS^n(Q)$, we have $\gamma^{\mathbf{AD}} \in \mathcal{L}\SS^n(\mathbf{AD}(Q))$. Moreover,
\[ \mathcal{F}_{\gamma^{\mathbf{AD}}}(t)=\mathbf{AD}(\mathcal{F}_\gamma(t)), \quad \Lambda_{\gamma^{\mathbf{AD}}}(t)=\mathbf{AD}(\Lambda_\gamma(t)) \]
and Arnold duality yields homeomorphisms
\[ \mathcal{L}\SS^n(Q) \approx \mathcal{L}\SS^n(\mathbf{AD}(Q)), \quad \mathcal{L}\SS^n(z) \approx \mathcal{L}\SS^n(\mathbf{AD}(z)) \]
for any $Q \in SO_{n+1}$ and any $z \in \mathrm{Spin}_{n+1}$.
\end{proposition}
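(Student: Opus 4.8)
The plan is to mimic the proof of time reversal (Proposition~\ref{proptime}): translate everything into a statement about logarithmic derivatives and then invoke the dictionary between locally convex curves and Jacobian curves (Proposition~\ref{propjacobian}). Set $\Gamma=\mathcal{F}_\gamma$. Since $A$ is a signed permutation matrix we have $A\in O_{n+1}$ and $A^{-1}=A^\top$, so the matrix curve $\mathbf{AD}(\Gamma)(t)=A^\top\Gamma(t)A$ has logarithmic derivative
\[ \big(\mathbf{AD}(\Gamma)\big)^{-1}\big(\mathbf{AD}(\Gamma)\big)' = A^\top\Gamma^{-1}A\cdot A^\top\Gamma'A = A^\top\big(\Gamma^{-1}\Gamma'\big)A = \mathbf{AD}(\Lambda_\gamma). \]
Thus the whole proposition reduces to the claim that the automorphism $\mathbf{AD}$ maps the set $\mathfrak{J}$ of tridiagonal skew-symmetric matrices with positive subdiagonal into itself.

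The hard part — really the only non-formal point — is verifying $\mathbf{AD}(\mathfrak{J})=\mathfrak{J}$. From $(A)_{i,n+2-i}=(-1)^{i+1}$ one gets $(\mathbf{AD}(M))_{i,j}=(-1)^{i+j}M_{n+2-i,n+2-j}$ for any $M\in M_{n+1}$, which is precisely the ``half-turn plus sign change on odd $i+j$'' description already recorded after Definition~\ref{defarnold}. From this formula one checks directly that $\mathbf{AD}$ preserves skew-symmetry, preserves the tridiagonal shape (since $|(n+2-i)-(n+2-j)|=|i-j|$), and sends the subdiagonal $(c_1,\dots,c_n)$ of a matrix in $\mathfrak{J}$ to $(c_n,\dots,c_1)$; in particular positivity of the subdiagonal is preserved. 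Hence $\mathbf{AD}(\Lambda_\gamma)(t)\in\mathfrak{J}$ for all $t$, i.e.\ $\mathbf{AD}(\mathcal{F}_\gamma)$ is a Jacobian curve, and moreover $\mathbf{AD}(\mathcal{F}_\gamma)(0)=A^\top\mathcal{F}_\gamma(0)A=A^\top A=I$.

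By Proposition~\ref{propjacobian} there is therefore a unique $\delta\in\mathcal{L}\SS^n$ with $\mathcal{F}_\delta=\mathbf{AD}(\mathcal{F}_\gamma)$, and by the remark following that proposition $\delta(t)=\mathcal{F}_\delta(t)e_1=\mathbf{AD}(\mathcal{F}_\gamma(t))e_1=\gamma^{\mathbf{AD}}(t)$. This already gives $\gamma^{\mathbf{AD}}\in\mathcal{L}\SS^n$ together with both displayed identities $\mathcal{F}_{\gamma^{\mathbf{AD}}}=\mathbf{AD}(\mathcal{F}_\gamma)$ and $\Lambda_{\gamma^{\mathbf{AD}}}=\Lambda_\delta=\mathbf{AD}(\Lambda_\gamma)$. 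Evaluating the frame identity at $t=1$ yields $\mathcal{F}_{\gamma^{\mathbf{AD}}}(1)=\mathbf{AD}(Q)$, so $\gamma^{\mathbf{AD}}\in\mathcal{L}\SS^n(\mathbf{AD}(Q))$. For the spin version, $\mathbf{AD}$ lifts to an automorphism of $\mathrm{Spin}_{n+1}$ that commutes with $\Pi_{n+1}$ and fixes $\mathbf 1$; hence $\mathbf{AD}(\tilde{\mathcal{F}}_\gamma)$ is the lift of $\mathbf{AD}(\mathcal{F}_\gamma)=\mathcal{F}_{\gamma^{\mathbf{AD}}}$ starting at $\mathbf 1$, and by uniqueness of lifts $\tilde{\mathcal{F}}_{\gamma^{\mathbf{AD}}}=\mathbf{AD}(\tilde{\mathcal{F}}_\gamma)$, so $\gamma\in\mathcal{L}\SS^n(z)$ implies $\gamma^{\mathbf{AD}}\in\mathcal{L}\SS^n(\mathbf{AD}(z))$.

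Finally, for the homeomorphism statements I would observe that $A^2=(-1)^nI$, so $\mathbf{AD}$ is an involution on $SO_{n+1}$ (and on $\mathrm{Spin}_{n+1}$), whence $(\gamma^{\mathbf{AD}})^{\mathbf{AD}}=\gamma$; thus $\gamma\mapsto\gamma^{\mathbf{AD}}$ is a bijection $\mathcal{L}\SS^n(Q)\to\mathcal{L}\SS^n(\mathbf{AD}(Q))$ that is its own inverse, and likewise for spins. Continuity in both directions is immediate from the coordinates used to topologize $\mathcal{L}\SS^n$: writing a curve via the subdiagonal entries $(c_1,\dots,c_n)$ of its logarithmic derivative, the computation above shows that $\gamma^{\mathbf{AD}}$ corresponds to $(c_n,\dots,c_1)$, i.e.\ $\mathbf{AD}$ acts by the coordinate-reversing permutation of $L^2([0,1],\R^n)$, which is plainly a homeomorphism.
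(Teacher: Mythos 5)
Your proof is correct. The paper does not reproduce a proof of this proposition (it simply cites \cite{SS12}), but your argument is exactly the natural one given the setup: it parallels the paper's treatment of time reversal (Proposition~\ref{proptime}), reduces everything to the logarithmic derivative via Proposition~\ref{propjacobian}, and the one non-formal verification — that $\mathbf{AD}$ preserves $\mathfrak{J}$, reversing the subdiagonal $(c_1,\dots,c_n)\mapsto(c_n,\dots,c_1)$ — is carried out correctly using the explicit formula $(\mathbf{AD}(M))_{i,j}=(-1)^{i+j}M_{n+2-i,n+2-j}$. The involution property $A^2=(-1)^nI$ and the $L^2$-coordinate description then give the homeomorphism as you say, so there are no gaps.
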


\section{Chopping operations}\label{s53}

The last operation we want to define is very different from the two previous one. Roughly speaking, given a locally convex curve $\gamma$, we would like to chop off a small tip at the end (respectively at the beginning) of the curve; these two related operations will be called respectively \emph{negative chopping} and \emph{positive chopping}. The chopping operation described in~\cite{SS12} corresponds to what we call here negative chopping; we found it convenient to also introduce the positive chopping operation. 

Let us start with the naive geometric description of the negative chopping. For $\gamma \in \mathcal{L}\SS^n$ and a small $\varepsilon>0$, we define 
\[ \mathbf{chop_\varepsilon^-}(\gamma)(t)=\gamma((1-\varepsilon)t). \]
The image of $\mathbf{chop_\varepsilon^-}(\gamma)$ is nothing but the image of $\gamma$, from which we have removed the small piece $\gamma((1-\varepsilon,1])$. A simple computation gives
\[ \mathcal{F}_{\mathbf{chop_\varepsilon^-}(\gamma)}(t)=\mathcal{F}_\gamma((1-\varepsilon)t), \quad \Lambda_{\mathbf{chop_\varepsilon^-}(\gamma)}(t)=(1-\varepsilon)\Lambda_\gamma((1-\varepsilon)t)  \]
so, in particular, we have 
\[ \mathcal{F}_{\mathbf{chop_\varepsilon^-}(\gamma)}(1)=\mathcal{F}_\gamma(1-\varepsilon). \]
The obvious problem here is that if $\varepsilon$ is fixed and $\gamma$ varies arbitrarily, we have no control on the final Frenet frame $\mathcal{F}_{\mathbf{chop_\varepsilon^-}(\gamma)}(1)$ of the curve $\mathbf{chop_\varepsilon^-}(\gamma)$.

To fix this, we will not consider individual final Frenet frame $\mathcal{F}_{\mathbf{chop_\varepsilon^-}(\gamma)}(1)$ but look at their Bruhat cells instead.

To explain this, let us start with some algebraic and combinatorics notions. Given a signed permutation matrix $Q \in B_{n+1}^+$, let us denote $\mathbf{NE}(Q,i,j)$ the number of non-zero entries in the northeast quadrant of $Q$. Formally, given $(i,j)$ such that $(Q)_{(i,j)} \neq 0$, $\mathbf{NE}(Q,i,j)$ is the number of pairs $(i',j')$ with $i'<i$ and $j'>j$ such that $(Q)_{(i',j')} \neq 0$. The number of non-zero entries in the southwest quadrant of $Q$ can be defined by
\[ \mathbf{SW}(Q,i,j)=\mathbf{NE}(Q ^\top,j,i) \]
and it is easy to check that 
\[ \mathbf{NE}(Q,i,j)-\mathbf{SW}(Q,i,j)=i-j. \]
Let us further define
\[ \delta_i^-(Q)=(Q)_{(i,j)}(-1)^{\mathbf{NE}(Q,i,j)} \]
where $j$ is the only index such that $(Q)_{(i,j)} \neq 0$, and the diagonal matrix
\[ \Delta^-(Q)=\mathrm{diag}(\delta_1^-(Q),\delta_2^-(Q), \dots, \delta_n^-(Q)). \]
An elementary computation shows that
\[ \mathrm{det}(\Delta^-(Q))=\mathrm{det}(Q). \]
We have just defined a map
\[ \Delta^- : B_{n+1}^+ \rightarrow \mathrm{Diag}_{n+1}^+ \]     
that we can extend to a map
\[ \Delta^- : SO_{n+1} \rightarrow \mathrm{Diag}_{n+1}^+ \]
by setting $\Delta^-(Q)=\Delta^-(Q')$ whenever $Q$ and $Q'$ are Bruhat equivalent. Since $\Delta^-(\Delta^-(Q))=Q$, we obtain a partition of $SO_{n+1}$ into $2^n$ disjoint classes
\[ SO_{n+1}=\bigsqcup_{Q \in \mathrm{Diag}_{n+1}^+}(\Delta^-)^{-1}(Q). \]
For $Q \in \mathrm{Diag}_{n+1}^+$, one can check that $\Delta^-(QQ')=Q\Delta^-(Q')$ so that $(\Delta^-)^{-1}(Q)$ is a fundamental domain for the action of $\mathrm{Diag}_{n+1}^+$ on $SO_{n+1}$ by multiplication on the left.   

Let $A \in B_{n+1}^+$ be the Arnold matrix which was defined in~\S\ref{s52}. Since $\Delta^-(A)=I$ is the identity matrix, we have $\Delta^-(QA)=Q$ for any $Q \in \mathrm{Diag}_{n+1}^+$.

\begin{definition}\label{chop}
Given $Q \in SO_{n+1}$, we define its \emph{negative chopping} $\mathbf{chop}^-(Q) \in B_{n+1}^+$ by
\[ \mathbf{chop^-}(Q)=\Delta^-(Q)A. \]
\end{definition}

So for any $Q \in SO_{n+1}$, the Bruhat cell of $\mathbf{chop^-}(Q)$ is an open set (that is, it has maximal dimension), which is dense in $(\Delta^-)^{-1}(\Delta^-(Q))=(\mathbf{chop}^-)^{-1}(\mathbf{chop}^-(Q))$.

The link between the (naive) geometric chopping operation and this algebraic operation is contained in the following proposition, proved in~\cite{SS12}.

\begin{proposition}\label{propchop}
For any $Q \in SO_{n+1}$ and any $\gamma \in \mathcal{L}\SS^n(Q)$, there exists $\tilde{\varepsilon}>0$ such that for all $t \in (1-\tilde{\varepsilon},1)$, $\mathcal{F}_\gamma(t)\in \mathrm{Bru}_{\mathbf{chop}^-(Q)}$.
In other words, for any $\gamma \in \mathcal{L}\SS^n$ and all $\varepsilon \in (0,\tilde{\varepsilon})$, $\mathcal{F}_{\mathbf{chop_\varepsilon}^-(\gamma)}(1)$ is Bruhat equivalent to $\mathbf{chop}^-(\mathcal{F}_\gamma(1))$.  
\end{proposition}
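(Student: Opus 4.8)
The plan is to compute, for $t$ slightly smaller than $1$, the signed permutation matrix $P_t\in B_{n+1}^+$ with $\mathcal{F}_\gamma(t)\in\mathrm{Bru}_{P_t}$ and show $P_t=\mathbf{chop^-}(Q)=\Delta^-(Q)A$. The second assertion of the Proposition is then immediate: since $\mathcal{F}_{\mathbf{chop_\varepsilon^-}(\gamma)}(1)=\mathcal{F}_\gamma(1-\varepsilon)$ and $\mathcal{F}_\gamma(1)=Q$, the statement that $\mathcal{F}_{\mathbf{chop_\varepsilon^-}(\gamma)}(1)$ is Bruhat equivalent to $\mathbf{chop^-}(\mathcal{F}_\gamma(1))$ is exactly $\mathcal{F}_\gamma(1-\varepsilon)\in\mathrm{Bru}_{\mathbf{chop^-}(Q)}$, the first assertion at $t=1-\varepsilon$. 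To set things up I write $s=1-t$ and $\mathcal{F}_\gamma(1-s)=QN(s)$, where $N$ is the unique solution of $N'(s)=-N(s)\Lambda_\gamma(1-s)$ with $N(0)=I$; since $\gamma$ is locally convex near $t=1$, the matrix $\Lambda_\gamma(1-s)$ lies in $\mathfrak{J}$, i.e.\ it is tridiagonal and skew-symmetric with strictly positive subdiagonal entries $c_1(1-s),\dots,c_n(1-s)$. Recall from \S\ref{s23} that the Bruhat cell of a matrix $M\in SO_{n+1}$ is recovered by an algorithm that reads off, column by column, pivot positions and pivot signs; these are encoded in which of the bottom-left minors $\det M[\{i,\dots,n+1\}\mid\{1,\dots,j\}]$ vanish and in the signs of the nonvanishing ones. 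So the whole question reduces to estimating these minors for $M=QN(s)$ when $s>0$ is small.

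The point is that the tridiagonal structure of $\Lambda_\gamma$ forces the entry $N_{ab}(s)$ to vanish at $s=0$ to order exactly $|a-b|$, with leading coefficient of sign $(-1)^{\max(a-b,0)}$ (times a positive quantity built from products $\prod c_i(1)>0$ along the subdiagonal path joining rows $b$ and $a$); this follows inductively from $N'_{ab}=c_{b-1}N_{a,b-1}-c_bN_{a,b+1}$, and shows that to leading order $N(s)$ behaves like $\exp\!\bigl(-s\Lambda_\gamma(1)\bigr)$. Consequently, for any $k$ and any $k$-element column set $S=\{s_1<\dots<s_k\}$, the minor $\det N(s)[\,S\mid\{1,\dots,k\}\,]$ has, for small $s>0$, the sign $(-1)^{\sum_\alpha(s_\alpha-\alpha)}$ and in particular is nonzero. (When several permutations of the Leibniz expansion tie for the minimal order in $s$ — which happens precisely when $S$ has gaps — the leading coefficient turns out to be a minor of $\exp(N)$ for a suitable positive lower-bidiagonal $N$, hence is positive by total positivity, so no cancellation occurs; this is the only place positivity of the $c_i$, rather than mere non-vanishing, is used.) Applying Cauchy--Binet with $k=n+2-i$,
\[
\det\bigl(QN(s)\bigr)\!\left[\{i,\dots,n+1\}\mid\{1,\dots,n+2-i\}\right]=\sum_{|S|=n+2-i}\det Q\!\left[\{i,\dots,n+1\}\mid S\right]\det N(s)\!\left[\,S\mid\{1,\dots,n+2-i\}\,\right],
\]
and among the column sets $S$ with $\det Q[\{i,\dots,n+1\}\mid S]\neq0$ there is a unique componentwise-smallest one $S^{*}=S^{*}(i)$, the pivot columns of the bottom $n+2-i$ rows of $Q$; this $S^{*}$ also uniquely minimises $\sum_\alpha(s_\alpha-\alpha)$, so the sum has no cancellation at leading order and equals, up to a positive power of $s$, the nonzero constant of sign $\operatorname{sign}\bigl(\det Q[\{i,\dots,n+1\}\mid S^{*}]\bigr)\cdot(-1)^{\sum_\alpha(s^{*}_\alpha-\alpha)}$.

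In particular all these square bottom-left minors are nonzero for $s>0$ small, so $\mathcal{F}_\gamma(1-s)$ lies in one of the $2^n$ open Bruhat cells, say $\mathrm{Bru}_{D_sA}$ with $D_s\in\mathrm{Diag}_{n+1}^+$. Feeding the signs just computed into the algorithm of \S\ref{s23}, one identifies $D_s=\Delta^-(Q)$: this is the combinatorial identity relating, for each $i$, the sign $\operatorname{sign}\bigl(\det Q[\{i,\dots,n+1\}\mid S^{*}]\bigr)\cdot(-1)^{\sum_\alpha(s^{*}_\alpha-\alpha)}$ to the sign $\delta_i^-(Q)$ (defined through the northeast-quadrant counts $\mathbf{NE}(P,i,j)$ of the signed permutation $P\in\mathrm{Bru}_Q$), combined with $\mathbf{chop^-}(Q)=\Delta^-(Q)A$ and $\Delta^-(\Delta^-(Q)A)=\Delta^-(Q)$. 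Since $D_s$ is then independent of $s$ for $s$ small, there is $\tilde\varepsilon>0$ with $\mathcal{F}_\gamma(1-s)\in\mathrm{Bru}_{\mathbf{chop^-}(Q)}$ for all $s\in(0,\tilde\varepsilon)$, which is what is wanted.

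The two steps that require genuine work are the following. First, the sign bookkeeping in the last paragraph: matching the leading-order Cauchy--Binet signs with the $\mathbf{NE}$-definition of $\delta_i^-$, which is elementary but must be carried out with care (one may first check the case where $Q$ is itself a signed permutation matrix, using $\gamma\mapsto B(U,\gamma)$ to reduce to it, and then track how the general $Q$ differs). Second, the non-cancellation claim for $\det N(s)[\,S\mid\{1,\dots,k\}\,]$ when $S$ has gaps — i.e.\ the total-positivity input — which is the essential place where local convexity (positivity of the subdiagonal entries of $\Lambda_\gamma$, not merely the genericity of $\gamma$) is used; it also explains why a non-locally-convex generic curve would in general leave $Q$ along a different open cell.
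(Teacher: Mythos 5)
The paper does not give a proof of this proposition: it quotes~\cite{SS12} and only illustrates the claim with one worked numerical example, Taylor-expanding $\mathcal{F}_\gamma$ near $t=1$ and comparing signs of south-west minors. Your proposal is a rigorous implementation of exactly that idea: set up the transition matrix $N(s)$ via the ODE $N'=-N\Lambda_\gamma(1-s)$, show each $N_{ab}(s)$ vanishes to order $|a-b|$ with a definite sign (the conjugation $D(s)^{-1}N(s)D(s)\to L=\exp(\text{lower-bidiagonal nilpotent})$ with $D(s)=\mathrm{diag}(1,s,\dots,s^n)$ is the cleanest way to package this and to see the total-positivity input), isolate the dominant Cauchy--Binet term via componentwise minimality of the pivot-column set $S^{*}$, and read off the open cell. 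I believe the outline is correct, and it is essentially the argument the paper's example is alluding to.

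Two things to fix. The parenthetical ``which happens precisely when $S$ has gaps'' is wrong as stated: permutations tie for the minimal $s$-order in the Leibniz expansion of $\det N(s)[S\mid\{1,\dots,k\}]$ exactly when $\{1,\dots,k-1\}\not\subseteq S$, which is a different condition --- e.g.\ $S=\{2,\dots,k+1\}$ is gapless but already produces ties, while $S=\{1,\dots,k-1,m\}$ with $m>k$ has a gap but no tie. This does not damage the proof, since your total-positivity/conjugation argument handles the ties correctly regardless, but the remark should be corrected.

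The more substantive incompleteness is the last step: matching the leading Cauchy--Binet sign $\mathrm{sign}\bigl(\det Q[\{i,\dots,n+1\}\mid S^{*}]\bigr)\cdot(-1)^{\sum_\alpha(s^{*}_\alpha-\alpha)}$ to $\delta_i^-(Q)$ via the $\mathbf{NE}$-count is only announced, not carried out, and it is precisely the content of the identification $D_s=\Delta^-(Q)$. Your reduction to $Q\in B_{n+1}^+$ is legitimate --- given $\gamma\in\mathcal{L}\SS^n(Q)$ and the unique $P\in B_{n+1}^+\cap\mathrm{Bru}_Q$, there is $V\in Up_{n+1}^1$ with $B(V,Q)=P$, and $B(V,\gamma)\in\mathcal{L}\SS^n(P)$ has $\mathcal{F}_{B(V,\gamma)}(1-s)$ in the same Bruhat cell as $\mathcal{F}_\gamma(1-s)$ for every $s$ --- but the $\mathbf{NE}$-bookkeeping for signed permutation matrices is the crux of the statement and needs to appear for the argument to be complete.
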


This proposition allows us to compute quite easily the signed permutation matrix $\mathbf{chop}^-(Q)$. For instance, consider the matrix $Q \in SO_4$ defined by 
\[  
Q=
\begin{pmatrix}
0 & -1 & 0 & 0 \\
0 & 0 & 0 & -1 \\
-1 & 0 & 0 & 0 \\
0 & 0 & 1 & 0
\end{pmatrix}
\]
and take an arbitrary $\gamma \in \mathcal{L}\SS^n(Q)$. By a Taylor expansion, for a small negative $h=t-1$ (that is for $t<1$ close to $1$), we have that $\mathcal{F}_\gamma(h)$ is close to
\[  
\begin{pmatrix}
-h & -1 & 0 & 0 \\
-h^3/6 & -h^2/2 & -h & -1 \\
-1 & 0 & 0 & 0 \\
h^2/2 & h & 1 & 0
\end{pmatrix}.
\]
Using the algorithm described in Chapter~\ref{chapter2}, \S\ref{s23}, the signed permutation matrix $\mathbf{chop}^-(Q)$ can be found from the approximation of $\mathcal{F}_\gamma(h)$ as follows: all the signs of the determinants of the $k\times k$ south-west blocks of the approximation of $\mathcal{F}_\gamma(h)$, for $1 \leq k \leq 4$, should coincide with the signs of the determinants of the south-west blocks of $\mathbf{chop}^-(Q)$. The first south-west block is nothing but $h^2/2$, and it has positive sign, so the $(4,1)$-entry of $\mathbf{chop}^-(Q)$ is equal to one. The second south-west block is  
 \[  
\begin{pmatrix}
-1 & 0  \\
h^2/2 & h 
\end{pmatrix}
\]
and its determinant is $-h$, hence the sign is positive (since $h$ is negative), therefore the $(3,2)$-entry of $\mathbf{chop}^-(Q)$ should be equal to minus one (so that the determinant of the second south-west block of $\mathbf{chop}^-(Q)$ is equal to one). The third south-west block is   
\[  
\begin{pmatrix}

-h^3/6 & -h^2/2 & -h  \\
-1 & 0 & 0  \\
h^2/2 & h & 1 
\end{pmatrix}
\]
and its determinant is equal to $h^2/2$, hence it is positive and so the $(2,3)$-entry of $\mathbf{chop}^-(Q)$ should be equal to one. Finally, since $\mathbf{chop}^-(Q)$ should have determinant one, the $(4,1)$-entry of $\mathbf{chop}^-(Q)$ should be equal to minus one, and we have found that   
\[  
\mathbf{chop}^-(Q)=
\begin{pmatrix}
0 & 0 & 0 & -1 \\
0 & 0 & 1 & 0 \\
0 & -1 & 0 & 0 \\
1 & 0 & 0 & 0
\end{pmatrix}.
\]

\medskip

The role of the negative chopping operation is now clear. Consider a smooth Jacobian curve
\[ \Gamma : (-\varepsilon,\varepsilon) \rightarrow SO_{n+1}. \]
In view of Proposition~\ref{propjacobian} (Chapter~\ref{chapter3}, \S\ref{s35}) this is the same thing as looking at the Frenet frame curve of a locally convex curve, except that the interval of the definition is not necessarily $[0,1]$ and we do not require the initial Frenet frame to be the identity.

If $\Gamma(0)$ belongs to a open (that is, top-dimensional) Bruhat cell, then for $\varepsilon$ sufficiently small, $\Gamma(t)$ belongs to the same open cell. But $\Gamma(0)$ belongs to a lower-dimensional cell, then $\mathbf{chop}^-(\Gamma(0))$ will tell us in which open Bruhat cell $\Gamma(t)$, for small $t<0$, belongs to. One would like to know also in which open Bruhat cell $\Gamma(t)$, for small $t>0$, will belong to, and this is why we introduce the definition of positive chopping.  

As before, let us start with the naive geometric description of the positive chopping. This time, we consider $\gamma : [0,1] \rightarrow SO_{n+1}$ and we assume that $\mathcal{F}_\gamma(0)=Q \in SO_{n+1}$ and $\mathcal{F}_\gamma(1)=I \in SO_{n+1}$; let us denote by $\mathcal{L}\SS^n(Q;I)$ this space of curves. Given a small $\varepsilon>0$, we define 
\[ \mathbf{chop_\varepsilon^+}(\gamma)(t)=\gamma((1-\varepsilon)t+\varepsilon). \]
The image of $\mathbf{chop_\varepsilon^+}(\gamma)$ is nothing but the image of $\gamma$, from which we have removed the small piece $\gamma([0,\varepsilon))$. A simple computation gives
\[ \mathcal{F}_{\mathbf{chop_\varepsilon^+}(\gamma)}(t)=\mathcal{F}_\gamma((1-\varepsilon)t+\varepsilon), \quad \Lambda_{\mathbf{chop_\varepsilon^+}(\gamma)}(t)=(1-\varepsilon)\Lambda_\gamma((1-\varepsilon)t+\varepsilon)  \]
so, in particular, we have 
\[ \mathcal{F}_{\mathbf{chop_\varepsilon^+}(\gamma)}(0)=\mathcal{F}_\gamma(\varepsilon). \]
From the algebraic and combinatorial point of view, the positive chopping operation is similar to the negative one. Given a signed permutation matrix $Q \in B_{n+1}^+$, we define
\[ \delta_i^+(Q)=(Q)_{(i,j)}(-1)^{\mathbf{SW}(Q,i,j)} \]
where $j$ is the only index such that $(Q)_{(i,j)} \neq 0$, and the diagonal matrix
\[ \Delta^+(Q)=\mathrm{diag}(\delta_1^+(Q),\delta_2^+(Q), \dots, \delta_n^+(Q)). \]
As before, we have
\[ \mathrm{det}(\Delta^+(Q))=\mathrm{det}(Q). \]
This defines a map
\[ \Delta^+ : B_{n+1}^+ \rightarrow \mathrm{Diag}_{n+1}^+ \]     
that we can extend to a map
\[ \Delta^+ : SO_{n+1} \rightarrow \mathrm{Diag}_{n+1}^+ \]
by setting $\Delta^+(Q)=\Delta^+(Q')$ whenever $Q$ and $Q'$ are Bruhat equivalent. Since $\Delta^+(\Delta^+(Q))=Q$, we obtain a partition of $SO_{n+1}$ into $2^n$ disjoint classes
\[ SO_{n+1}=\bigsqcup_{Q \in \mathrm{Diag}_{n+1}^+}(\Delta^+)^{-1}(Q). \]
For $Q \in \mathrm{Diag}_{n+1}^+$, one still have $\Delta^+(QQ')=Q\Delta^+(Q')$.   

Let $A \in B_{n+1}^+$ be the Arnold matrix and $A ^\top \in B_{n+1}^+$ its transpose. Since $\Delta^+(A ^\top)=I$ is the identity matrix, we have $\Delta^+(A ^\top Q)=Q$ for any $Q \in \mathrm{Diag}_{n+1}^+$.

\begin{definition}\label{chopplus}
Given $Q \in SO_{n+1}$, we define its \emph{positive chopping} $\mathbf{chop}^+(Q) \in B_{n+1}^+$ by
\[ \mathbf{chop^+}(Q)=\Delta^+(Q) A ^\top. \]
\end{definition}

As before, for any $Q \in SO_{n+1}$, the Bruhat cell of $\mathbf{chop^+}(Q)$ is an open set which is dense in $(\Delta^+)^{-1}(\Delta^+(Q))=(\mathbf{chop}^+)^{-1}(\mathbf{chop}^+(Q))$.

Then we have a statement similar to Proposition~\ref{propchop}.

\begin{proposition}\label{propchop2}
For any $Q \in SO_{n+1}$ and any $\gamma \in \mathcal{L}\SS^n(Q;I)$, there exists $\tilde{\varepsilon}>0$ such that for all $t \in (0,\tilde{\varepsilon})$, $\mathcal{F}_\gamma(t)\in \mathrm{Bru}_{\mathbf{chop}^+(Q)}$.
In other words, for any locally convex curve $\gamma : [0,1] \rightarrow \SS^n$ and all $\varepsilon \in (0,\tilde{\varepsilon})$, $\mathcal{F}_{\mathbf{chop_\varepsilon}^+(\gamma)}(0)$ is Bruhat equivalent to $\mathbf{chop}^+(\mathcal{F}_\gamma(0))$.  
\end{proposition}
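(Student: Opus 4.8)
The plan is to reduce Proposition~\ref{propchop2} to the negative chopping statement already in hand (Proposition~\ref{propchop}) by means of the time reversal operation of \S\ref{s51}: chopping a small piece off the \emph{beginning} of a locally convex curve is the same as chopping a small piece off the \emph{end} of its time reversal.

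First I would take $\gamma \in \mathcal{L}\SS^n(Q;I)$ and set $\beta = Q^{-1}\gamma$ (multiplication by the fixed matrix $Q^{-1}\in SO_{n+1}$). By Proposition~\ref{proplocconv}~$(ii)$ this is locally convex, $\mathcal{F}_\beta(t) = Q^{-1}\mathcal{F}_\gamma(t)$, so $\mathcal{F}_\beta(0)=I$ and $\beta\in\mathcal{L}\SS^n(Q^{-1})$. Applying Proposition~\ref{proptime}, the time reversal $\beta^{\mathbf{TR}}$ lies in $\mathcal{L}\SS^n(\mathbf{TR}(Q^{-1}))$ and, using $(Q^{-1})^{\top}=Q$,
\[ \mathcal{F}_{\beta^{\mathbf{TR}}}(t)=J_+\,(Q^{-1})^{\top}\,\mathcal{F}_\beta(1-t)\,J_+=J_+\,\mathcal{F}_\gamma(1-t)\,J_+ ; \]
in particular $\mathbf{TR}(Q^{-1})=J_+QJ_+$ and $\mathcal{F}_{\beta^{\mathbf{TR}}}(0)=J_+\mathcal{F}_\gamma(1)J_+=J_+IJ_+=I$, so $\beta^{\mathbf{TR}}\in\mathcal{L}\SS^n(J_+QJ_+)$. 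Now Proposition~\ref{propchop} applied to $\beta^{\mathbf{TR}}$ gives $\tilde\varepsilon>0$ with $\mathcal{F}_{\beta^{\mathbf{TR}}}(s)\in\mathrm{Bru}_{\mathbf{chop}^-(J_+QJ_+)}$ for $s\in(1-\tilde\varepsilon,1)$; substituting $t=1-s\in(0,\tilde\varepsilon)$ and conjugating yields $\mathcal{F}_\gamma(t)\in J_+\,\mathrm{Bru}_{\mathbf{chop}^-(J_+QJ_+)}\,J_+$.

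It then remains to identify this conjugated cell with $\mathrm{Bru}_{\mathbf{chop}^+(Q)}$. Since $J_+$ is diagonal with entries $\pm1$, conjugation by $J_+$ preserves $Up^+_{n+1}$, hence carries each $\mathrm{Bru}_P$ onto $\mathrm{Bru}_{J_+PJ_+}$; as each Bruhat cell contains a unique element of $B^+_{n+1}$, this gives $J_+\,\mathrm{Bru}_{\mathbf{chop}^-(J_+QJ_+)}\,J_+=\mathrm{Bru}_{J_+\mathbf{chop}^-(J_+QJ_+)J_+}$, and I am reduced to the purely algebraic identity $J_+\,\mathbf{chop}^-(J_+QJ_+)\,J_+=\mathbf{chop}^+(Q)$ for all $Q\in SO_{n+1}$. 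Writing $\mathbf{chop}^-(Q')=\Delta^-(Q')A$ and $\mathbf{chop}^+(Q)=\Delta^+(Q)A^{\top}$, and using that $J_+$ commutes with diagonal matrices, this splits into two elementary checks: first $J_+AJ_+=A^{\top}$, a one-line computation on the anti-diagonal entries $(A)_{i,n+2-i}=(-1)^{i+1}$; second $\Delta^-(J_+QJ_+)=\Delta^+(Q)$, which holds because conjugation by $J_+$ rescales the $(i,j)$ entry by $(-1)^{i+j}$ without moving it, so $\delta_i^-(J_+QJ_+)=(-1)^{i+j}(-1)^{\mathbf{NE}(Q,i,j)}(Q)_{ij}$, and this rewrites as $(-1)^{\mathbf{SW}(Q,i,j)}(Q)_{ij}=\delta_i^+(Q)$ via the relation $\mathbf{NE}(Q,i,j)-\mathbf{SW}(Q,i,j)=i-j$ recalled in \S\ref{s53}. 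This gives $\mathcal{F}_\gamma(t)\in\mathrm{Bru}_{\mathbf{chop}^+(Q)}$ for $t\in(0,\tilde\varepsilon)$; the equivalent reformulation follows from $\mathcal{F}_{\mathbf{chop_\varepsilon^+}(\gamma)}(0)=\mathcal{F}_\gamma(\varepsilon)$, after (if needed) extending $\gamma$ near $t=0$ to a curve with final Frenet frame equal to the identity so as to put oneself in $\mathcal{L}\SS^n(\,\cdot\,;I)$.

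I expect the only genuinely delicate point to be the bookkeeping in the last step, namely matching the opposite sign conventions built into $\Delta^-$ and $\Delta^+$ under $J_+$-conjugation; everything else is a direct appeal to Propositions~\ref{proptime} and~\ref{propchop}. An alternative that avoids time reversal would be to imitate the Taylor-expansion argument behind Proposition~\ref{propchop}: expand the Jacobian curve $\mathcal{F}_\gamma$ near $t=0$, where $\mathcal{F}_\gamma(0)=Q$ may lie in a low-dimensional cell, and read off the signs of the south-west block determinants for small $t>0$ using that $\Lambda_\gamma(0)\in\mathfrak{J}$ has positive subdiagonal — but the reduction above is shorter and recycles work already done.
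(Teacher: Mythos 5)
Your reduction is exactly the route the paper itself takes at the end of \S\ref{s53}: your curve $\beta^{\mathbf{TR}}=(Q^{-1}\gamma)^{\mathbf{TR}}$ is precisely the paper's $\gamma^{\mathbf{TR^*}}(t)=J_+\gamma(1-t)$, and your closing algebraic check $J_+\,\mathbf{chop}^-(J_+QJ_+)\,J_+=\mathbf{chop}^+(Q)$, via $J_+AJ_+=A^\top$ and $\Delta^-(J_+QJ_+)=\Delta^+(Q)$ (deduced from $\mathbf{NE}-\mathbf{SW}=i-j$), is the same chain of equalities the paper records. The argument is correct as written.
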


One can compute the signed matrix $\mathbf{chop}^+(Q)$ in the same way as we did for $\mathbf{chop}^-(Q)$. Indeed, consider again $Q \in SO_4$ defined by 
\[  
Q=
\begin{pmatrix}
0 & -1 & 0 & 0 \\
0 & 0 & 0 & -1 \\
-1 & 0 & 0 & 0 \\
0 & 0 & 1 & 0
\end{pmatrix}
\]
and take an arbitrary $\gamma \in \mathcal{L}\SS^n(Q;I)$. By a Taylor expansion, for a small positive $h$, we have that $\mathcal{F}_\gamma(h)$ is close to
\[  
\begin{pmatrix}
-h & -1 & 0 & 0 \\
-h^3/6 & -h^2/2 & -h & -1 \\
-1 & 0 & 0 & 0 \\
h^2/2 & h & 1 & 0
\end{pmatrix}.
\]
Proceeding exactly as we did to compute $\mathbf{chop}^-(Q)$ but recalling that now $h$ is positive, one finds 
\[  
\mathbf{chop}^+(Q)=
\begin{pmatrix}
0 & 0 & 0 & -1 \\
0 & 0 & -1 & 0 \\
0 & 1 & 0 & 0 \\
1 & 0 & 0 & 0
\end{pmatrix}.
\]

\medskip

In fact, this positive chopping operation can be recovered from the negative chopping operation and a modified time reversal operation. The time reversal operation introduced in~\ref{s51} is not convenient here since it fixes the initial Frenet frame. For $Q \in SO_{n+1}$ and $\gamma \in \mathcal{L}\SS^n(Q;I)$, let us define
\[ \mathbf{TR^*}(Q)=J_+QJ_+; \quad \gamma^{\mathbf{TR^*}}(t)=J_+\gamma(1-t). \] 
We have
\[ \mathcal{F}_{\gamma^{\mathbf{TR^*}}}(t)=J_+\mathcal{F}_\gamma(t)J_+=\mathbf{TR^*}(\mathcal{F}_\gamma(t)) \]
so $\gamma^{\mathbf{TR^*}} \in \mathcal{L}\SS^n(\mathbf{TR^*}(Q))$. Let us apply the naive negative chopping operation on $\gamma^{\mathbf{TR^*}}$ to obtain the curve $\mathbf{chop_\varepsilon}^-(\gamma^{\mathbf{TR^*}})$: by Proposition~\ref{propchop}, the final Frenet frame of this curve is Bruhat equivalent to $\mathbf{chop}^-(\mathbf{TR^*}(Q))$. Let us now apply again $\mathbf{TR^*}$ again to the curve $\mathbf{chop_\varepsilon}^-(\gamma^{\mathbf{TR^*}})$: we obtain a curve $(\mathbf{chop_\varepsilon}^-(\gamma^{\mathbf{TR^*}}))^{\mathbf{TR^*}}$ whose initial Frenet frame is Bruhat equivalent to $\mathbf{TR^*}(\mathbf{chop}^-(\mathbf{TR^*}(Q)))$. Geometrically, it is clear that 
\[ \mathcal{F}_{\mathbf{chop_\varepsilon^+}(\gamma)}(0)=\mathbf{TR^*}(\mathcal{F}_{\mathbf{chop_\varepsilon^-}(\gamma^{\mathbf{TR^*}})}(1)).\]
Algebraically, this amounts to the following equality:
\begin{eqnarray*}
\mathbf{TR^*}(\mathbf{chop}^-(\mathbf{TR^*}(Q))) & = & \mathbf{TR^*}(\mathbf{chop}^-(J_+QJ_+)) \\
& = & \mathbf{TR}(\Delta^-(J_+QJ_+)A) \\
& = & J_+\Delta^-(J_+QJ_+)AJ_+ \\
& = & \Delta^-(J_+QJ_+)J_+AJ_+ \\
& = & \Delta^-(J_+QJ_+) A ^\top \\
& = & \Delta^+(Q) A ^\top \\
& = & \mathbf{chop}^+(Q).
\end{eqnarray*}
Indeed, in the fourth equality we simply used the fact $J_+$ and $\Delta^-(J_+QJ_+)$ commutes since they are both diagonal, in the fifth equality we used the fact $J_+AJ_+={} A ^\top$, and in the sixth equality we used the relation
\[ \Delta^+(Q)=\Delta^-(J_+QJ_+) \]
that can be easily checked.

\medskip

We would like now to lift these constructions to the spin group. However, unlike the time reversal (which defined an anti-automorphism) and the Arnold duality (which defined an automorphism), the maps $\Delta^+$ and $\Delta^-$ are not group homomorphisms and hence can not be lifted directly to the spin group. Yet we can still use the geometric characterizations contained in Proposition~\ref{propchop} and Proposition~\ref{propchop2} to define negative and positive chopping for spins.  

\begin{definition}\label{defchop}
For any $z \in \mathrm{Spin}_{n+1}$ and any $\gamma \in \mathcal{L}\SS^n(z)$, we define $\mathbf{chop}^-(z)$ as the unique element in $\tilde{B}_{n+1}^+$ for which there exists $\tilde{\varepsilon}>0$ such that for all $t \in (1-\tilde{\varepsilon},1)$, $\tilde{\mathcal{F}}_\gamma(t)\in \mathrm{Bru}_{\mathbf{chop}^-(z)}$.  
\end{definition}

The fact that this is well-defined (that is, it is independent of the choice of the curve in $\mathcal{L}\SS^n(z)$) follows directly from Proposition~\ref{propchop}. 

Let us define $\mathcal{L}\SS^n(z;\1)$ as the set of locally convex curves with initial lifted Frenet frame $\tilde{\mathcal{F}}_\gamma(0)=z$ and final lifted Frenet frame $\tilde{\mathcal{F}}_\gamma(1)=\1$. 

\begin{definition}\label{defchop2}
For any $z \in \mathrm{Spin}_{n+1}$ and any $\gamma \in \mathcal{L}\SS^n(z;\1)$, we define $\mathbf{chop}^+(z)$ as the unique element in $\tilde{B}_{n+1}^+$ for which there exists $\tilde{\varepsilon}>0$ such that for all $t \in (0,\tilde{\varepsilon})$, $\tilde{\mathcal{F}}_\gamma(t)\in \mathrm{Bru}_{\mathbf{chop}^+(z)}$.  
\end{definition} 

As before, this is well-defined in view of Proposition~\ref{propchop2}. Hence we have well-defined maps
\[ \mathbf{chop}^{\pm} : \mathrm{Spin}_{n+1} \rightarrow \tilde{B}_{n+1}^+ \]
and hence we can define
\[ \a=\mathbf{chop}^{-}(\1) \in \mathrm{Spin}_{n+1}, \quad \bar{\a}=\mathbf{chop}^{+}(\1) \in \mathrm{Spin}_{n+1}. \]
In particular, $\a$ projects down to $A$, and $\bar{\a}$ projects down to $A ^\top$; the other two elements that projects down to $A$ and $A ^\top$ will be denoted respectively by $-\a$ and $-\bar{\a}$. We may then define the maps
\[ \Delta^\pm : \mathrm{Spin}_{n+1} \rightarrow \widetilde{\mathrm{Diag}}^+_{n+1} \]
for $z \in \mathrm{Spin}_{n+1}$ by the relations
\[ \mathbf{chop}^-(z)=\Delta^-(z)\a, \quad \mathbf{chop}^+(z)={} \bar{\a} \Delta^+(z).  \]

To conclude, let us mention the following important result proved in~\cite{SS12}.

\begin{proposition}\label{chophomeo}
For any $Q \in SO_{n+1}$ and any $z \in \mathrm{Spin}_{n+1},$ we have homeomorphisms
\[ \mathcal{L}\SS^n(Q) \approx \mathcal{L}\SS^n(\mathbf{chop}^-(Q)) \approx \mathcal{L}\SS^n(\Delta^-(Q))  \]
and 
\[ \mathcal{L}\SS^n(z) \approx \mathcal{L}\SS^n(\mathbf{chop}^-(z)) \approx \mathcal{L}\SS^n(\Delta^-(z)).  \]
\end{proposition}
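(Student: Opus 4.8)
The plan is to deduce all four homeomorphisms from the single relation $\mathcal{L}\SS^n(Q)\approx\mathcal{L}\SS^n(\mathbf{chop}^-(Q))$, valid for every $Q\in SO_{n+1}$ (and its spin analogue). Granting this, the remaining homeomorphism $\mathcal{L}\SS^n(\mathbf{chop}^-(Q))\approx\mathcal{L}\SS^n(\Delta^-(Q))$ is pure bookkeeping: $\Delta^-$ restricts to the identity on $\mathrm{Diag}_{n+1}^+$, so $\mathbf{chop}^-(\Delta^-(Q))=\Delta^-(\Delta^-(Q))A=\Delta^-(Q)A=\mathbf{chop}^-(Q)$, and applying the relation with $Q$ replaced by $\Delta^-(Q)$ gives $\mathcal{L}\SS^n(\Delta^-(Q))\approx\mathcal{L}\SS^n(\mathbf{chop}^-(\Delta^-(Q)))=\mathcal{L}\SS^n(\mathbf{chop}^-(Q))$; the argument on $\mathrm{Spin}_{n+1}$ is word for word the same with the lifted maps of \S\ref{s53}.

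The relation itself rests on the following lemma: \emph{if $\sigma:[0,1]\to\SS^n$ is a convex arc with $\mathcal{F}_\sigma(0)=I$ and $\mathcal{F}_\sigma(1)=Z$, then for every $R\in SO_{n+1}$ the concatenation $\gamma\mapsto\gamma\ast\sigma$ is a homeomorphism $\mathcal{L}\SS^n(R)\to\mathcal{L}\SS^n(RZ)$} (here $\gamma\ast\sigma$ runs $\gamma$ on $[0,1/2]$ and $R\sigma$ on $[1/2,1]$, rescaled). One checks readily that $\gamma\ast\sigma$ lies in $\hat{\mathcal{L}}\SS^n$ — the juxtaposition only inserts a jump in the logarithmic derivative of the Frenet frame, which stays tridiagonal with positive subdiagonal and square-integrable — that its Frenet frames agree at $t=1/2$, and that its final frame is $R\,\mathcal{F}_\sigma(1)=RZ$, so this is a continuous map $\mathcal{L}\SS^n(R)\to\mathcal{L}\SS^n(RZ)$ in the sense of \S\ref{s36}. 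That it is a \emph{homeomorphism} is where the work lies: its homotopy inverse detaches a short terminal convex arc, and the two operations are mutually homotopy-inverse because the space of convex arcs joining two prescribed Frenet frames is contractible (Theorem~\ref{thmani}), so the particular $\sigma$ one reattaches is immaterial up to homotopy; since both sides are Hilbert manifolds, a weak homotopy equivalence between them is a homeomorphism (\cite{BH70}).

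Next I would plug the chopping data of \S\ref{s53} into this lemma. If $\mathcal{L}\SS^n(Q)=\emptyset$ there is nothing to prove; otherwise pick $\gamma\in\mathcal{L}\SS^n(Q)$. By Proposition~\ref{propchop} there is $\varepsilon>0$ with $\mathcal{F}_\gamma(1-\varepsilon)\in\mathrm{Bru}_{\mathbf{chop}^-(Q)}$, and shrinking $\varepsilon$ we may assume the terminal arc $\gamma|_{[1-\varepsilon,1]}$ is \emph{globally} convex — a short locally convex arc is convex, the local fact that also underlies the convex-arc/Bruhat-cell dictionary of Chapter~\ref{chapter7}. Multiplying this arc on the left by $\mathcal{F}_\gamma(1-\varepsilon)^{-1}\in SO_{n+1}$ and reparametrising turns it into a convex arc $\sigma$ with $\mathcal{F}_\sigma(0)=I$ and $\mathcal{F}_\sigma(1)=Z:=\mathcal{F}_\gamma(1-\varepsilon)^{-1}Q$. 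The lemma, applied with $R=\mathcal{F}_\gamma(1-\varepsilon)$, yields $\mathcal{L}\SS^n(\mathcal{F}_\gamma(1-\varepsilon))\approx\mathcal{L}\SS^n(\mathcal{F}_\gamma(1-\varepsilon)Z)=\mathcal{L}\SS^n(Q)$, and since $\mathcal{F}_\gamma(1-\varepsilon)$ is Bruhat equivalent to $\mathbf{chop}^-(Q)$, Proposition~\ref{bruhathomeo} rewrites the left-hand side as $\mathcal{L}\SS^n(\mathbf{chop}^-(Q))$, which is the relation. The spin version is identical: every curve and arc above starts at the identity frame, hence lifts uniquely to a path from $\1$, concatenation and chopping commute with the lift, and Proposition~\ref{bruhathomeo} has its spin form.

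The main obstacle is the lemma — turning ``attach/remove a convex arc'' into genuine continuous maps and verifying the homotopy-inverse relation. The detaching side cannot be normalised by a uniform parameter, so one must work inside the $L^2$/logarithmic-derivative model of $\hat{\mathcal{L}}\SS^n$ with a curve-dependent rescaling, and the homotopy-inverse property has to be extracted from the contractibility of the space of convex arcs with fixed endpoints while performing all homotopies in $\hat{\mathcal{L}}\SS^n$, so that the corners created by juxtaposition never obstruct. This is carried out in~\cite{SS12}; everything else above is formal manipulation of the $\mathbf{chop}^-$, $\Delta^-$ and Bruhat-action machinery already set up.
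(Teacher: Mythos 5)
The paper does not actually prove Proposition~\ref{chophomeo}; it cites it to~\cite{SS12} with the sentence ``To conclude, let us mention the following important result proved in~\cite{SS12}.'' So there is no in-paper proof to compare against, and your proposal has to stand on its own. It mostly reconstructs the right intuition, and the final reduction from $\mathbf{chop}^-$ to $\Delta^-$ via $\Delta^-(\Delta^-(Q))=\Delta^-(Q)$ is correct (the paper's ``$\Delta^-(\Delta^-(Q))=Q$'' is a typo, and you silently fix it). But the key lemma you isolate is false as stated, and that is a real gap.

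You claim: \emph{if $\sigma:[0,1]\to\SS^n$ is a convex arc with $\mathcal{F}_\sigma(0)=I$ and $\mathcal{F}_\sigma(1)=Z$, then for every $R\in SO_{n+1}$ the concatenation $\gamma\mapsto\gamma\ast\sigma$ is a homeomorphism $\mathcal{L}\SS^n(R)\to\mathcal{L}\SS^n(RZ)$.} Take $n=2$, $\sigma=\sigma_c$ a convex circle with $\mathcal{F}_\sigma(0)=\mathcal{F}_\sigma(1)=I$ (so $Z=I$ and $\tilde{\mathcal{F}}_\sigma(1)=-\1$), and $R=I$. The concatenation map would then be a homeomorphism of $\mathcal{L}\SS^2(I)$ onto itself swapping the components $\mathcal{L}\SS^2(\1)$ and $\mathcal{L}\SS^2(-\1)$. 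But Little's Theorem~\ref{thmlit} shows $\mathcal{L}\SS^2(\1)$ is connected while $\mathcal{L}\SS^2(-\1)$ has two components, so these are not even homeomorphic, and no such map exists. The reason your heuristic homotopy inverse breaks down is precisely the one you gesture at without resolving: detaching must be done at the (curve-dependent) time where the frame is in the \emph{open} Bruhat cell $\mathrm{Bru}_{\mathbf{chop}^-(RZ)}$, and this can only recover something in $\mathcal{L}\SS^n(R)$ when $R$ itself is Bruhat equivalent to $\mathbf{chop}^-(RZ)$ --- a hypothesis not present in your lemma, and violated in the counterexample above ($I\notin\mathrm{Bru}_A$). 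In your intended application, the terminal arc of $\gamma$ does produce $R=\mathcal{F}_\gamma(1-\varepsilon)\in\mathrm{Bru}_{\mathbf{chop}^-(Q)}$, so the instance you actually need may well be true; but then the ``lemma'' collapses to essentially the proposition itself, and the clean general/specific split that was supposed to make the rest ``formal manipulation'' evaporates. To repair the write-up you would need to state the chopping homeomorphism directly (as a continuous chop map $\mathcal{L}\SS^n(Q)\to\mathcal{L}\SS^n(\mathbf{chop}^-(Q))$ with an extend map as homotopy inverse, both built inside $\hat{\mathcal{L}}\SS^n$ with curve-dependent cutting times normalised by the Bruhat action), rather than as a concatenation homeomorphism for arbitrary $R$ and $Z$.
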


We can also state an analogous statement for the positive chopping operation, using the spaces $\mathcal{L}\SS^n(Q;I)$ and $\mathcal{L}\SS^n(z;\mathbf{1})$.

\begin{proposition}\label{chophomeo2}
For any $Q \in SO_{n+1}$ and any $z \in \mathrm{Spin}_{n+1}$, we have homeomorphisms
\[ \mathcal{L}\SS^n(Q;I) \approx \mathcal{L}\SS^n(\mathbf{chop}^+(Q);I) \approx \mathcal{L}\SS^n(\Delta^-(Q);I)  \]
and 
\[ \mathcal{L}\SS^n(z;\mathbf{1}) \approx \mathcal{L}\SS^n(\mathbf{chop}^-(z);\mathbf{1}) \approx \mathcal{L}\SS^n(\Delta^-(z);\mathbf{1}).  \]
\end{proposition}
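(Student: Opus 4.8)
The plan is to deduce the statement from its negative‑chopping counterpart, Proposition~\ref{chophomeo}, by conjugating every ingredient with the modified time reversal $\mathbf{TR}^{*}$ of \S\ref{s53}. Recall from \S\ref{s53} that $\mathbf{TR}^{*}$ is induced by conjugation by $J_{+}$ and acts on curves by $\gamma^{\mathbf{TR}^{*}}(t)=J_{+}\gamma(1-t)$, so that $\mathcal{F}_{\gamma^{\mathbf{TR}^{*}}}(t)=J_{+}\mathcal{F}_{\gamma}(1-t)J_{+}$. In particular, if $\gamma\in\mathcal{L}\SS^{n}(Q;I)$ then $\mathcal{F}_{\gamma^{\mathbf{TR}^{*}}}(0)=J_{+}IJ_{+}=I$ and $\mathcal{F}_{\gamma^{\mathbf{TR}^{*}}}(1)=\mathbf{TR}^{*}(Q)$, while $\gamma^{\mathbf{TR}^{*}}$ remains locally convex. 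Since $\mathbf{TR}^{*}$ is an involution, $\gamma\mapsto\gamma^{\mathbf{TR}^{*}}$ is a homeomorphism $\mathcal{L}\SS^{n}(Q;I)\approx\mathcal{L}\SS^{n}(\mathbf{TR}^{*}(Q))$ (and conversely $\mathcal{L}\SS^{n}(R)\approx\mathcal{L}\SS^{n}(\mathbf{TR}^{*}(R);I)$ for $R\in SO_{n+1}$). As $\mathbf{TR}^{*}$ is conjugation by $J_{+}\in O_{n+1}$, it lifts to an automorphism of $\mathrm{Spin}_{n+1}$ exactly as $\mathbf{TR}$ does, and the same discussion applies verbatim to the spaces $\mathcal{L}\SS^{n}(z;\1)$.

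The next step is to apply Proposition~\ref{chophomeo} to the fixed‑initial‑frame space $\mathcal{L}\SS^{n}(\mathbf{TR}^{*}(Q))$, which gives
\[ \mathcal{L}\SS^{n}(\mathbf{TR}^{*}(Q))\;\approx\;\mathcal{L}\SS^{n}\bigl(\mathbf{chop}^{-}(\mathbf{TR}^{*}(Q))\bigr)\;\approx\;\mathcal{L}\SS^{n}\bigl(\Delta^{-}(\mathbf{TR}^{*}(Q))\bigr), \]
and then to transport all three spaces back across $\mathbf{TR}^{*}$, obtaining
\[ \mathcal{L}\SS^{n}(Q;I)\;\approx\;\mathcal{L}\SS^{n}\bigl(\mathbf{TR}^{*}(\mathbf{chop}^{-}(\mathbf{TR}^{*}(Q)));I\bigr)\;\approx\;\mathcal{L}\SS^{n}\bigl(\mathbf{TR}^{*}(\Delta^{-}(\mathbf{TR}^{*}(Q)));I\bigr). \]
It then remains only to identify the two matrices that occur. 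The first is the object of the chain of equalities already carried out at the end of \S\ref{s53}, namely $\mathbf{TR}^{*}(\mathbf{chop}^{-}(\mathbf{TR}^{*}(Q)))=\mathbf{chop}^{+}(Q)$. For the second, since $J_{+}$ commutes with every diagonal matrix and $J_{+}^{2}=I$, one has $\mathbf{TR}^{*}(\Delta^{-}(\mathbf{TR}^{*}(Q)))=J_{+}\Delta^{-}(J_{+}QJ_{+})J_{+}=\Delta^{-}(J_{+}QJ_{+})$, which is precisely the diagonal class attached to $Q$ by the positive chopping partition, via the relation $\Delta^{+}(Q)=\Delta^{-}(J_{+}QJ_{+})$ recorded in \S\ref{s53}. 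This establishes the first line of the proposition.

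For $\mathrm{Spin}_{n+1}$ the argument is formally the same, using the spin part of Proposition~\ref{chophomeo} together with the lifted involution $\mathbf{TR}^{*}$, and this is where I expect the only real care to be needed. The maps $\mathbf{chop}^{\pm}$ and $\Delta^{\pm}$ on $\mathrm{Spin}_{n+1}$ are not group homomorphisms: they are defined solely through the Bruhat‑cell characterizations of Propositions~\ref{propchop} and~\ref{propchop2} (Definitions~\ref{defchop} and~\ref{defchop2}). Hence the key identity $\mathbf{TR}^{*}(\mathbf{chop}^{-}(\mathbf{TR}^{*}(z)))=\mathbf{chop}^{+}(z)$ in $\tilde{B}_{n+1}^{+}$ cannot be read off from an algebraic manipulation; instead it must be obtained by following a single curve $\gamma\in\mathcal{L}\SS^{n}(z;\1)$ through the composite $\gamma\mapsto\gamma^{\mathbf{TR}^{*}}\mapsto\mathbf{chop}_{\varepsilon}^{-}(\gamma^{\mathbf{TR}^{*}})\mapsto(\mathbf{chop}_{\varepsilon}^{-}(\gamma^{\mathbf{TR}^{*}}))^{\mathbf{TR}^{*}}$ and checking, at the level of lifted Frenet frames, that near $t=0$ this curve coincides with $\mathbf{chop}_{\varepsilon}^{+}(\gamma)$; consequently the two spins lie in the same connected component of the preimage under $\Pi_{n+1}$, so they are equal. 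This equality is in fact forced by its counterpart in $SO_{n+1}$, since $\mathbf{TR}^{*}$, the geometric choppings $\mathbf{chop}_{\varepsilon}^{\pm}$ and $\Pi_{n+1}$ are mutually compatible and the choice of the base curve removes any sign ambiguity. Granting this, Proposition~\ref{chophomeo} for spins yields the chain of homeomorphisms asserted in the proposition.

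Finally, I note the more pedestrian alternative: one can prove the proposition directly by first showing $\mathcal{L}\SS^{n}(Q_{1};I)\approx\mathcal{L}\SS^{n}(Q_{2};I)$ whenever $Q_{1}$ and $Q_{2}$ are Bruhat equivalent — this follows from the left Bruhat action of $Up_{n+1}^{1}$, since $B(U,I)=I$ for $U\in Up_{n+1}^{1}$ so the action fixes the final frame — and then combining this with Proposition~\ref{propchop2} and a ``once the Frenet curve enters an open Bruhat cell it stays there'' argument. But the continuity of the resulting map and the control uniform in the curve that the last step requires are exactly what is already packaged inside Proposition~\ref{chophomeo}; so the reduction via $\mathbf{TR}^{*}$ is the economical path, and the only genuinely new work is the spin bookkeeping described above.
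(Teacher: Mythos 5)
Your reduction via $\mathbf{TR}^{*}$ is correct and is exactly the argument the paper sets up at the end of \S\ref{s53} (the paper itself gives no proof of Proposition~\ref{chophomeo2}, stating it only as ``an analogous statement''). The construction $\gamma\mapsto\gamma^{\mathbf{TR}^{*}}$ preserves positive local convexity because $\det J_{+}=(-1)^{n(n+1)/2}$ exactly compensates the sign $(-1)^{n(n+1)/2}$ coming from differentiating $t\mapsto\gamma(1-t)$, and you correctly identify the resulting homeomorphism $\mathcal{L}\SS^{n}(Q;I)\approx\mathcal{L}\SS^{n}(\mathbf{TR}^{*}(Q))$ and its inverse. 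Applying Proposition~\ref{chophomeo} to $\mathbf{TR}^{*}(Q)$ and transporting back, together with the computation $\mathbf{TR}^{*}(\mathbf{chop}^{-}(\mathbf{TR}^{*}(Q)))=\mathbf{chop}^{+}(Q)$ already carried out in the paper, closes the matrix case; and your remark that the corresponding identity for spins must be verified via the geometric characterizations in Definitions~\ref{defchop}--\ref{defchop2} (rather than by algebra, since $\Delta^{\pm}$ do not lift as homomorphisms) is precisely the right caveat.

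One point worth flagging explicitly: your derivation lands on $\Delta^{+}(Q)=\Delta^{-}(J_{+}QJ_{+})$, not $\Delta^{-}(Q)$, and similarly in the spin line you get $\mathbf{chop}^{+}(z)$ and $\Delta^{+}(z)$. The proposition as printed in the paper writes $\Delta^{-}(Q)$, and in the spin chain $\mathbf{chop}^{-}(z)$ and $\Delta^{-}(z)$; these appear to be typos (note that $\Delta^{-}(\mathbf{chop}^{+}(Q))=\Delta^{+}(Q)\Delta^{-}(A^{\top})=-\Delta^{+}(Q)\neq\Delta^{-}(Q)$ in general, so there is no hidden coincidence making the printed version correct). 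Your proof therefore establishes the statement the paper evidently intends, with $\mathbf{chop}^{+}$ and $\Delta^{+}$ throughout.
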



%
%

\chapter{Decomposition of generic curves in $\SS^3$}

\label{chapter6}

The goal of this chapter is to prove Theorem~\ref{th0}, which states that a generic curve in $\SS^3$ can be decomposed as a pair of immersions in $\SS^2$. When restricted to locally convex curves, this gives Theorem~\ref{th1} which states that a locally convex curve in $\SS^3$ can be decomposed as a pair of curves in $\SS^2$, one of which is locally convex and the other which is an immersion. This theorem will be proved in~\S\ref{s61}, and in~\S\ref{s62} we will give many examples illustrating this general phenomenon for locally convex curves. In~\S\ref{s63} and~\S\ref{s64}, we will use this theorem, as well as the examples illustrating this theorem, to prove Theorem~\ref{th2} and Theorem~\ref{th3} that respectively characterizes convexity in the space $\mathcal{L}\SS^3(-\1,\k)$ and gives a necessary condition for convexity in the space $\mathcal{L}\SS^3(\1,-\1)$.

\section{Proof of Theorem~\ref{th0} and Theorem~\ref{th1}}\label{s61}

Consider $\gamma \in \mathcal{G}\SS^3$ and its associated Frenet and lifted Frenet frame curve
\[ \mathcal{F}_\gamma : [0,1] \rightarrow SO_4, \quad \tilde{\mathcal{F}}_\gamma : [0,1] \rightarrow \SS^3 \times \SS^3. \]
These are respectively quasi-Jacobian and quasi-holonomic curves, and we recall that any quasi-Jacobian and quasi-holonomic curves are of this form. Hence characterizing generic curves in $\SS^3$ is the same as characterizing quasi-holonomic curves
\[ \tilde{\Gamma} : [0,1] \rightarrow \SS^3 \times \SS^3. \]
Recall that the Lie algebra of $\SS^3$, viewed as the group of unit quaternions, is the vector space of imaginary quaternions
\[ \mathrm{Im}\mathbb{H}:=\{b\i+c\j+d\k \; | \; (b,c,d) \in \R^3\} \]
and hence the Lie algebra of $\SS^3 \times \SS^3$ is the product $\mathrm{Im}\mathbb{H} \times \mathrm{Im}\mathbb{H}$. The logarithmic derivative of $\tilde{\Gamma}$ belongs to the Lie algebra of $\SS^3 \times \SS^3$, that is
\[ \Lambda_{\tilde{\Gamma}}(t)=\tilde{\Gamma}(t)^{-1}\tilde{\Gamma}'(t) \in \mathrm{Im}\mathbb{H} \times \mathrm{Im}\mathbb{H}, \quad t \in [0,1].  \]   
In the proposition below, we characterize the subset of $\mathrm{Im}\mathbb{H} \times \mathrm{Im}\mathbb{H}$ to which the logarithmic derivative of a quasi-holonomic curve belongs. Let \[ \tilde{\mathfrak{Q}}:=\{(b_l\i+d\k,b_r\i+d\k) \in \mathrm{Im}\mathbb{H} \times \mathrm{Im}\mathbb{H} \; | \; (b_l,b_r,d) \in \R^3, \; b_l>b_r, \; d>0 \}. \]

\begin{proposition}\label{propth1}
Let $\tilde{\Gamma} : [0,1] \rightarrow \SS^3 \times \SS^3$ be a smooth curve with $\tilde{\Gamma}(0)=(\1,\1)$. Then $\tilde{\Gamma}$ is quasi-holonomic if and only its logarithmic derivative 
\[ \Lambda_{\tilde{\Gamma}}(t) \in \tilde{\mathfrak{Q}}, \quad t \in [0,1]. \]
Moreover, if 
\[ \Lambda_{\tilde{\Gamma}}(t)=(b_l(t)\i+d(t)\k,b_r(t)\i+d(t)\k) \in \tilde{\mathfrak{Q}}, \quad t \in [0,1], \]
then 
\[ b_l(t)-b_r(t)=||\gamma'(t)||, \quad 2d(t)=||\gamma'(t)||\kappa_\gamma(t), \quad b_l(t)+b_r(t)=||\gamma'(t)||\tau_\gamma(t)\]
where the curve $\gamma : [0,1] \rightarrow \SS^3$ is defined by
\[ \gamma(t)=(\Pi_4 \circ \tilde{\Gamma}(t))e_1. \]
\end{proposition}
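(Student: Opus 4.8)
The proposition reduces to a computation in the Lie algebras, once one uses that $\Pi_4 : \SS^3\times\SS^3 \to SO_4$ is a Lie group homomorphism (indeed $R_{z_lz_l',\,z_rz_r'}=R_{z_l,z_r}\circ R_{z_l',z_r'}$). Write $\phi=(d\Pi_4)_{(\1,\1)} : \mathrm{Im}\,\H\times\mathrm{Im}\,\H \to \mathfrak{so}_4$ for the differential at the identity, where $\mathfrak{so}_4$ denotes the space of skew-symmetric $4\times4$ matrices. The plan is: (1) transport the logarithmic derivative through $\Pi_4$; (2) compute $\phi$ explicitly via quaternions; (3) identify $\phi^{-1}(\mathfrak{Q})$ with $\tilde{\mathfrak{Q}}$ and read off the parameter correspondence.

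\textbf{Step 1 (transport of the logarithmic derivative).} For any smooth $\tilde\Gamma : [0,1]\to\SS^3\times\SS^3$ with projection $\Gamma=\Pi_4\circ\tilde\Gamma$ one has $\Lambda_\Gamma(t)=\Gamma(t)^{-1}\Gamma'(t)=\phi\bigl(\Lambda_{\tilde\Gamma}(t)\bigr)$ for all $t$. This is the naturality of the (left) logarithmic derivative under a homomorphism: $\Gamma'(t)=(d\Pi_4)_{\tilde\Gamma(t)}(\tilde\Gamma'(t))$, and since $\Pi_4$ intertwines left translations, $(d\Pi_4)_{\tilde\Gamma(t)}$ sends $\tilde\Gamma'(t)=\tilde\Gamma(t)\cdot\Lambda_{\tilde\Gamma}(t)$ to $\Gamma(t)\cdot\phi(\Lambda_{\tilde\Gamma}(t))$. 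Hence, by Definition~\ref{jacobian2} and Definition~\ref{holonomic2}, $\tilde\Gamma$ is quasi-holonomic if and only if $\phi(\Lambda_{\tilde\Gamma}(t))\in\mathfrak{Q}$ for all $t$, and the whole proposition is reduced to computing $\phi^{-1}(\mathfrak{Q})$.

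\textbf{Step 2 (computing $\phi$).} Since $\Pi_4(z_l,z_r)$ is the $\R$-linear map $q\mapsto z_lq\overline{z_r}$ of $\H\simeq\R^4$, differentiating along a path through $(\1,\1)$ with velocity $(u_l,u_r)\in\mathrm{Im}\,\H\times\mathrm{Im}\,\H$ gives
\[ \phi(u_l,u_r) : q\in\H \longmapsto u_lq+q\,\overline{u_r}=u_lq-qu_r \]
using $\overline{u_r}=-u_r$. Writing $u_l=b_l\i+c_l\j+d_l\k$ and $u_r=b_r\i+c_r\j+d_r\k$ and evaluating on $e_1=1,\ e_2=\i,\ e_3=\j,\ e_4=\k$ with the quaternion rules, one gets the skew-symmetric matrix of $\phi(u_l,u_r)$; in particular its first column is $(0,\ b_l-b_r,\ c_l-c_r,\ d_l-d_r)^\top$. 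Requiring this matrix to be tridiagonal forces $c_l=c_r=0$ and $d_l=d_r$; setting $d:=d_l=d_r$, the matrix becomes
\[ \phi(u_l,u_r)=\begin{pmatrix} 0 & -(b_l-b_r) & 0 & 0 \\ b_l-b_r & 0 & -2d & 0 \\ 0 & 2d & 0 & -(b_l+b_r) \\ 0 & 0 & b_l+b_r & 0 \end{pmatrix}. \]
This lies in $\mathfrak{Q}$ precisely when $b_l-b_r>0$ and $2d>0$ (with $b_l+b_r$ unconstrained), i.e.\ precisely when $(u_l,u_r)\in\tilde{\mathfrak{Q}}$; with Step 1 this proves the equivalence. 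For the \emph{moreover} part, if $\tilde\Gamma$ is quasi-holonomic with $\tilde\Gamma(0)=(\1,\1)$ then $\Gamma=\Pi_4\circ\tilde\Gamma$ is quasi-Jacobian with $\Gamma(0)=I$, so by Proposition~\ref{propjacobian2} $\Gamma=\mathcal{F}_\gamma$ for the generic curve $\gamma(t)=\Gamma(t)e_1=(\Pi_4\circ\tilde\Gamma(t))e_1$. The Frenet-frame computation behind~\eqref{logderives3} uses only the Frenet relations and so applies to generic curves: the subdiagonal of $\Lambda_\Gamma=\Lambda_\gamma$ is $\bigl(\|\gamma'(t)\|,\ \|\gamma'(t)\|\kappa_\gamma(t),\ \|\gamma'(t)\|\tau_\gamma(t)\bigr)$. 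Comparing with the subdiagonal $\bigl(b_l(t)-b_r(t),\ 2d(t),\ b_l(t)+b_r(t)\bigr)$ of the matrix above yields the three displayed identities $b_l-b_r=\|\gamma'\|$, $2d=\|\gamma'\|\kappa_\gamma$, $b_l+b_r=\|\gamma'\|\tau_\gamma$.

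\textbf{Main obstacle.} The only non-formal ingredient is the quaternion bookkeeping in Step 2 — keeping the signs straight through $\i\j=\k$, $\j\i=-\k$, and so on when computing the four columns of $\phi(u_l,u_r)$; everything else is essentially automatic. (One could instead differentiate the explicit $4\times4$ matrix formula for $\Pi_4$ recorded in \S\ref{s21}, but the computation $q\mapsto u_lq-qu_r$ is much shorter and less error-prone.)
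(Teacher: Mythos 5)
Your proof is correct and follows essentially the same route as the paper: transport the logarithmic derivative through $\Pi_4$ by the chain rule, compute $(d\Pi_4)_{(\1,\1)}(u_l,u_r): q\mapsto u_lq-qu_r$ explicitly in quaternion coordinates, and match the resulting skew-symmetric matrix against $\mathfrak{Q}$ and against~\eqref{logderives3}. Your framing of Step~1 via naturality of the logarithmic derivative under a Lie group homomorphism is a slightly cleaner way of stating the same chain-rule computation the paper carries out, and your remark that~\eqref{logderives3} applies to generic (not just locally convex) curves makes explicit a point the paper uses implicitly.
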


\begin{proof}
By definition, $\tilde{\Gamma}$ is quasi-holonomic if and only if the projected curve
\[ \Gamma=\Pi_4 \circ \tilde{\Gamma} : [0,1] \rightarrow SO_4 \]
is quasi-Jacobian, and by definition, $\Gamma$ is quasi-Jacobian if only if its logarithmic derivative belongs to the subset $\mathfrak{Q}$ of matrices of the form
\[ \begin{pmatrix}
0 & -c_1 & 0 & 0  \\
c_1 & 0 & -c_2 & 0 \\
0 & c_2 & 0 & -c_3  \\
0 & 0 & c_3 & 0 
\end{pmatrix}, \quad c_1>0, c_2>0, c_3 \in \R. \]
By the chain rule we have
\[ \Gamma'(t)=(D_{\tilde{\Gamma}(t)}\Pi_4) \tilde{\Gamma}'(t) \]
hence
\[ \Lambda_{\Gamma}(t)=\Gamma(t)^{-1}\Gamma'(t)=\Gamma(t)^{-1}(D_{\tilde{\Gamma}(t)}\Pi_4) \tilde{\Gamma}'(t)=\Gamma(t)^{-1}(D_{\tilde{\Gamma}(t)}\Pi_4)\tilde{\Gamma}(t)\Lambda_{\tilde{\Gamma}}(t). \]
But since $\Gamma(t)^{-1}(D_{\tilde{\Gamma}(t)}\Pi_4)\tilde{\Gamma}(t)$ is the differential of $\Pi_4$ at the identity $(\1,\1)$, we obtain 
\[ \Lambda_{\Gamma}(t)=(D_{(\1,\1)}\Pi_4) \Lambda_{\tilde{\Gamma}}(t) \]
hence to prove the first part of the proposition, one needs to prove that $\mathfrak{Q}=D_{(\1,\1)}\Pi_4(\tilde{\mathfrak{Q}})$. The differential 
\[ D_{(\1,\1)}\Pi_4 : \mathrm{Im}\mathbb{H} \times \mathrm{Im}\mathbb{H} \rightarrow so_4 \]
is given by
\[ D_{(\1,\1)}\Pi_4(h_l,h_r) : z \in \H \mapsto h_lz-zh_r \in \H \]
for $(h_1,h_2) \in \mathrm{Im}\mathbb{H} \times \mathrm{Im}\mathbb{H}$. If we let 
\[ h_l=b_l\i+c_l\j+d_l\k, \quad h_r=b_r\i+c_r\j+d_r\k  \]
then 
\[ D_{(\1,\1)}\Pi_4(h_l,h_r)z=b_l\i z+c_l\j z+d_l\k z-(b_rz\i+c_rz\j+d_rz\k). \]
Let us denote by $\i_l$, $\j_l$ and $\k_l$ the matrices in $so_4$ that corresponds to left multiplication by respectively $\i$, $\j$ and $\k$; similarly we define $\i_r$, $\j_r$ and $\k_r$ the matrices in $so_4$ that corresponds to right multiplication by respectively $\bar{\i}$, $\bar{\j}$ and $\bar{\k}$. These matrices are given by 
\[      
\i_l=
\begin{pmatrix}
0 & -1 & 0 & 0 \\
+1 & 0 & 0 & 0 \\
0& 0 & 0  & -1 \\
0 & 0 & +1 & 0
\end{pmatrix},
\quad
\i_r=
\begin{pmatrix}
0 & +1 & 0 & 0 \\
-1 & 0 & 0 & 0 \\
0 & 0 & 0  & -1 \\
0 & 0 & +1 & 0
\end{pmatrix},
\]

\[
\j_l=
\begin{pmatrix}
0 & 0 & -1 & 0 \\
0 & 0 & 0 & +1 \\
+1 & 0 & 0  & 0 \\
0 & -1 & 0 & 0
\end{pmatrix},
\quad 
\j_r=
\begin{pmatrix}
0 & 0 & +1 & 0 \\
0 & 0 & 0 & +1 \\
-1 & 0 & 0  & 0 \\
0 & -1 & 0 & 0
\end{pmatrix},
\]
\[
\k_l=
\begin{pmatrix}
0 & 0 & 0 & -1 \\
0 & 0 & -1 & 0 \\
0 & +1 & 0  & 0 \\
+1 & 0 & 0 & 0
\end{pmatrix},
\quad
\k_r=
\begin{pmatrix}
0 & 0 & 0 & +1 \\
0 & 0 & -1 & 0 \\
0 & +1 & 0  & 0 \\
-1 & 0 & 0 & 0
\end{pmatrix}.
\]
We can then express $D_{(\1,\1)}\Pi_4(h_l,h_r)$ in matrix notation:
\begin{equation*} 
D_{(\1,\1)}\Pi_4(h_l,h_r)=
\begin{pmatrix}
0 & -(b_l-b_r)& -(c_l-c_r) & -(d_l-d_r) \\
b_l-b_r & 0 & -(d_l+d_r) &  -(-c_l-c_r) \\
c_l-c_r& d_l+d_r  & 0 & -(b_l+b_r)\\
d_l-d_r & -c_l-c_r & b_l+b_r  & 0
\end{pmatrix}.
\end{equation*}
From this expression, it is clear that $(h_l,h_r) \in \tilde{\mathfrak{Q}}$ if and only if $D_{(\1,\1)}\Pi_4(h_l,h_r) \in \mathfrak{Q}$. This proves the equality $\mathfrak{Q}=D_{(\1,\1)}\Pi_4(\tilde{\mathfrak{Q}})$, and hence the first part of the proposition.

Concerning the second part of the proposition, if
\[ \Lambda_{\tilde{\Gamma}}(t)=(b_l(t)\i+d(t)\k,b_r(t)\i+d(t)\k) \in \tilde{\mathfrak{Q}}, \quad t \in [0,1], \] 
then $\Lambda_{\Gamma}(t)=D_{(\1,\1)}\Pi_4(\Lambda_{\tilde{\Gamma}}(t))$ is equal to
\begin{equation*} 
\begin{pmatrix}
0 & -(b_l(t)-b_r(t))& 0 & 0 \\
b_l(t)-b_r(t) & 0 & -2d(t) &  0 \\
0 & 2d(t)  & 0 & -(b_l(t)+b_r(t))\\
0 & 0 & b_l(t)+b_r(t)  & 0
\end{pmatrix}.
\end{equation*}  
But recall (see~\eqref{logderives3}, Chapter~\ref{chapter3}, \S\ref{s35}) that we also have
\begin{equation*}
\Lambda_\Gamma(t)=\Lambda_\gamma(t)=
\begin{pmatrix}
0 & -||\gamma'(t)|| & 0  & 0 \\
||\gamma'(t)|| & 0 & -||\gamma'(t)||\kappa_\gamma(t) & 0  \\
0 & ||\gamma'(t)||\kappa_\gamma(t) & 0 & -||\gamma'(t)||\tau_\gamma(t) \\
0 & 0 & ||\gamma'(t)||\tau_\gamma(t) & 0
\end{pmatrix}
\end{equation*}
where
\[ \gamma(t)=\Gamma(t)e_1=(\Pi_4 \circ \tilde{\Gamma}(t))e_1. \]
So a simple comparison between the two expressions of $\Lambda_\Gamma(t)$ proves the second part of the proposition.
\end{proof}

\medskip

This proposition will allow us to prove Theorem~\ref{th0}.

\begin{proof}[{} of Theorem~\ref{th0}]
Let $\gamma \in \mathcal{G}\SS^3(z_l,z_r)$. Are defined its Frenet frame curve $\mathcal{F}_\gamma(t)$, its lifted Frenet frame curve $\tilde{\Gamma}(t)=\tilde{\mathcal{F}}_\gamma(t)$ and the logarithmic derivative
\[ \Lambda_{\tilde{\Gamma}}(t)=\tilde{\Gamma}(t)^{-1}\tilde{\Gamma}'(t). \]
Using Proposition~\ref{propjacobian2} from Chapter~\ref{chapter3}, \S\ref{s35}, we know that $\mathcal{F}_\gamma$ is quasi-Jacobian, hence $\tilde{\Gamma}=\tilde{\mathcal{F}}_\gamma$ is quasi-holonomic. Thus we can apply Proposition~\ref{propth1} and we can uniquely write
\[ \Lambda_{\tilde{\Gamma}}(t)=(b_l(t)\i+d(t)\k,b_r(t)\i+d(t)\k) \]
with 
\[ b_l(t)-b_r(t)=||\gamma'(t)||, \quad 2d(t)=||\gamma'(t)||\kappa_\gamma(t), \quad b_l(t)+b_r(t)=||\gamma'(t)||\tau_\gamma(t). \]
Equivalently,
\begin{equation}\label{bbd}
\begin{cases}
d(t)=||\gamma'(t)||\kappa_\gamma(t)/2, \\
b_l(t)=||\gamma'(t)||(\tau_\gamma(t)+1)/2, \\ 
b_r(t)=||\gamma'(t)||(\tau_\gamma(t)-1)/2.
\end{cases}  
\end{equation}
Let us then define the curves
\[ \tilde{\Gamma}_l : [0,1] \rightarrow \SS^3, \quad \tilde{\Gamma}_r : [0,1] \rightarrow \SS^3 \]
by
\[ \tilde{\Gamma}_l(0)=\mathbf{1}, \quad \tilde{\Gamma}_l(1)=z_l,\quad \Lambda_{\tilde{\Gamma}_l}(t)=b_l(t)\i+d(t)\k \in \mathrm{Im}\H, \]
and 
\[ \tilde{\Gamma}_r(0)=\mathbf{1}, \quad \tilde{\Gamma}_r(1)=z_r,\quad \Lambda_{\tilde{\Gamma}_r}(t)=b_r(t)\i+d(t)\k \in \mathrm{Im}\H.  \]
The curves $\tilde{\Gamma}_l$ and $\tilde{\Gamma}_r$ are uniquely defined. Let us further define
\[ \Gamma_l:=\Pi_3 \circ \tilde{\Gamma}_l : [0,1] \rightarrow SO_3, \quad \Gamma_r:=\Pi_3 \circ \tilde{\Gamma}_r : [0,1] \rightarrow SO_3 \]
where we recall that $\Pi_3 : \SS^3 \rightarrow SO_3$ is the universal cover projection. Next we want to compute the logarithmic derivative of $\Gamma_l$ and $\Gamma_r$. The differential of $\Pi_3$ at $\mathbf{1}$ can be computed exactly as we computed the differential of $\Pi_4$ at $(\mathbf{1},\mathbf{1})$ (in the proof of Proposition~\ref{propth1});
we have
\[ D_{\1}\Pi_3 : \mathrm{Im}\H \rightarrow so_3 \]
and for $h=(b\i+c\j+d\k) \in \mathrm{Im}\H$, we can write in matrix notation
\begin{equation*} 
D_{\1}\Pi_3(h)=
\begin{pmatrix}
0 & -2d & -2c  \\
2d & 0 & -2b  \\
2c & 2b  & 0 \\
\end{pmatrix}.
\end{equation*}
From this expression we obtain
\begin{equation}\label{exp1}
\Lambda_{\Gamma_l}(t)=D_{\1}\Pi_3(\Lambda_{\tilde{\Gamma}_l}(t))=
\begin{pmatrix}
0 & -2d(t) & 0  \\
2d(t) & 0 & -2b_l(t)  \\
0 & 2b_l(t)  & 0 \\
\end{pmatrix}
\end{equation}
and
\begin{equation}\label{exp2}
\Lambda_{\Gamma_r}(t)=D_{\1}\Pi_3(\Lambda_{\tilde{\Gamma}_r}(t))=
\begin{pmatrix}
0 & -2d(t) & 0  \\
2d(t) & 0 & -2b_r(t)  \\
0 & 2b_r(t)  & 0 \\
\end{pmatrix}. 
\end{equation}
From~\eqref{bbd}, we see that $d(t)>0$ and $b_l(t) \in \R$, hence $\Gamma_l$ is a quasi-Jacobian curve, and therefore if we define
\[ \gamma_l(t):=\Gamma_l(t)e_1 \]
then $\gamma_l \in \mathcal{G}\SS^2(z_l)$. Moreover, recall from~\eqref{logderives2}, Chapter~\ref{chapter3}, \S\ref{s34}, that
\begin{equation*}
\Lambda_{\Gamma_l}(t)=\Lambda_{\gamma_l}(t)=
\begin{pmatrix}
0 & -||\gamma_l'(t)|| & 0  \\
||\gamma_l'(t)|| & 0 & -||\gamma_l'(t)||\kappa_{\gamma_l}(t)  \\
0 & ||\gamma_l'(t)||\kappa_{\gamma_l}(t)  & 0 \\
\end{pmatrix}
\end{equation*}
so that comparing this with~\eqref{exp1} and recalling~\eqref{bbd}, we find
\[ ||\gamma_l'(t)||=2d(t)=||\gamma'(t)||\kappa_{\gamma}(t)\] 
and
\[ \kappa_{\gamma_l}(t)=\frac{2b_l(t)}{ ||\gamma_l'(t)||}=\frac{||\gamma_l'(t)||(\tau_\gamma(t)+1)}{ ||\gamma_l'(t)||\kappa_\gamma(t)}=\frac{\tau_\gamma(t)+1}{\kappa_\gamma(t)}. \]
Now $\Gamma_r$ is also a quasi-Jacobian curve, hence if we define
\[ \gamma_r(t):=\Gamma_r(t)e_1, \]
then $\gamma_r \in \mathcal{G}\SS^2(z_r)$, and as before, we have
\begin{equation*}
\Lambda_{\Gamma_r}(t)=\Lambda_{\gamma_r}(t)=
\begin{pmatrix}
0 & -||\gamma_r'(t)|| & 0  \\
||\gamma_r'(t)|| & 0 & -||\gamma_r'(t)||\kappa_{\gamma_r}(t)  \\
0 & ||\gamma_r'(t)||\kappa_{\gamma_r}(t)  & 0 \\
\end{pmatrix}
\end{equation*}
and
\[ ||\gamma_r'(t)||=||\gamma'(t)||\kappa_{\gamma}(t), \quad \kappa_{\gamma_r}(t)=\frac{\tau_\gamma(t)-1}{\kappa_\gamma(t)}. \]
This shows that given $\gamma \in \mathcal{L}\SS^3(z_l,z_r)$, there exists a unique pair of curves $(\gamma_l,\gamma_r)$, with $\gamma_l \in \mathcal{G}\SS^2(z_l)$ and $\mathcal{G}\SS^2(z_r)$ such that
\[ ||\gamma_l'(t)||=||\gamma_r'(t)||, \quad \kappa_{\gamma_l}(t)>\kappa_{\gamma_r}(t) \]
and moreover
\[ ||\gamma_l'(t)||=||\gamma_r'(t)||=||\gamma'(t)||\kappa_{\gamma}(t), \quad \kappa_{\gamma_l}(t)=\frac{\tau_\gamma(t)+1}{\kappa_\gamma(t)}, \quad \kappa_{\gamma_r}(t)=\frac{\tau_\gamma(t)-1}{\kappa_\gamma(t)}.  \]
This defines a map $\gamma \mapsto (\gamma_l,\gamma_r)$, which, by construction is continuous. Conversely, given a pair of curves $(\gamma_l,\gamma_r)$, with $\gamma_l \in \mathcal{G}\SS^2(z_l)$ and $\mathcal{G}\SS^2(z_r)$ such that
\[ ||\gamma_l'(t)||=||\gamma_r'(t)||, \quad \kappa_{\gamma_l}(t)>\kappa_{\gamma_r}(t), \]
by simply reversing the construction above, we can find a unique curve $\gamma \in \mathcal{G}\SS^3(z_l,z_r)$ such that
\[ \kappa_\gamma(t)=\frac{2}{\kappa_{\gamma_l}(t)-\kappa_{\gamma_r}(t)}, \] 
\[ \tau_\gamma(t)=\frac{\kappa_\gamma(t)(\kappa_{\gamma_l}(t)+\kappa_{\gamma_r}(t))}{2}=\frac{\kappa_{\gamma_l}(t)+\kappa_{\gamma_r}(t)}{\kappa_{\gamma_l}(t)-\kappa_{\gamma_r}(t)}, \] 
\[ ||\gamma'(t)||=\frac{||\gamma_l'(t)||}{\kappa_\gamma(t)}=\frac{||\gamma_l'(t)||(\kappa_{\gamma_l}(t)-\kappa_{\gamma_r}(t))}{2}.  \]
This also defines a map $(\gamma_l,\gamma_r) \mapsto \gamma$, which is also clearly continuous, and this completes the proof of the theorem.
\end{proof}

The proof of Theorem~\ref{th1} follows directly from the statement of Theorem~\ref{th0}. Alternatively, one can proceed exactly as in the proof of Theorem~\ref{th0}, replacing quasi-holonomic curves (respectively quasi-Jacobian curves) by holonomic curves (respectively Jacobian curves), using Proposition~\ref{propjacobian} instead of Proposition~\ref{propjacobian2}, and replacing $\mathfrak{Q}$ and $\tilde{\mathfrak{Q}}$ by respectively $\mathfrak{J}$ and  
\[ \tilde{\mathfrak{J}}:=\{(b_l\i+d\k,b_r\i+d\k) \in \mathrm{Im}\mathbb{H} \times \mathrm{Im}\mathbb{H} \; | \; (b_l,b_r,d) \in \R^3, \; b_l>|b_r|, \; d>0 \}. \]

\section{Examples}\label{s62}

In this section, we want to use Theorem~\ref{th1} to produce examples in the spaces we are interested in: namely
\[\mathcal{L}\SS^3(\1,\1), \quad \mathcal{L}\SS^3(-\1,-\1), \quad \mathcal{L}\SS^3(\1,-\1), \quad \mathcal{L}\SS^3(-\1,\1). \]
We will also give examples in the spaces
\[ \mathcal{L}\SS^3(-\1,-\k), \quad \mathcal{L}\SS^3(\1,\k), \quad  \mathcal{L}\SS^3(-\1,\k), \quad \mathcal{L}\SS^3(\1,-\k). \]
As we will see later in Chapter~\ref{chapter7}, \S\ref{s73}, these spaces are respectively homeomorphic to the spaces we are interested in, and they are sometimes more convenient to work with. 

These examples not only serve as an illustration of Theorem~\ref{th1}, but they will be also used throughout the thesis.

\medskip

Recall that theorem~\ref{th1} gives us a homemorphism between the space of $\gamma \in \mathcal{L}\SS^3(z_l,z_r)$ and the space of pairs $(\gamma_l,\gamma_r) \in \mathcal{L}\SS^2(z_l) \times \mathcal{G}\SS^2(z_r)$ for which
\[ ||\gamma_l'(t)||=||\gamma_r'(t)||, \quad \kappa_{\gamma_l}(t)>|\kappa_{\gamma_r}(t)|, \quad t \in [0,1]. \]
This allows us to decompose a locally convex curve in $\SS^3$ as a pair of a locally convex curve in $\SS^2$ and an immersion in $\SS^2$, with some compatibility conditions. Hence to produce examples of locally convex curve in $\SS^3$, it is enough to produce examples of such pairs. All our examples will be constructed as follows. 

For a real number $0<c \leq 2\pi$, let $\sigma_c : [0,1] \rightarrow \SS^2$ be the unique circle of length $c$, that is $||\sigma_c'(t)||=c$, with fixed initial and final Frenet frame equals to the identity. Setting $c=2\pi\sin\rho$ ($\rho \in (0,\pi/2]$ is the radius of curvature), this curve can be given by the following formula
\[ \sigma_c(t)=\cos\rho(\cos\rho,0,\sin\rho)+\sin\rho(\sin\rho\cos(2\pi t), \sin(2\pi t), -\cos\rho\cos(2\pi t)). \] 
The geodesic curvature of this curve is given by $\cot(\rho) \in [0,+\infty)$. For $c<2\pi$, $\sigma_c$ is locally convex but also convex, but for $c=2\pi$, this is a meridian curve
\[ \sigma_{2\pi}(t)=(\cos(2\pi t), \sin(2\pi t), 0) \]
which has zero geodesic curvature, so this is just an immersion. For the left part of our curves, we will use $\sigma_c$ with $c<2\pi$ and iterate it a certain number of times, and for the right part of curves, we will use $\sigma_{2\pi}$ and iterate it a certain number of times. Since the right part will always have zero geodesic curvature, the only restriction so that this pair of curves defines a locally convex curve in $\SS^3$ is the condition that their length should be equal. However, in order to realize different final lifted Frenet frame, we will have to iterate the curve $\sigma_c$ (on the left) and the curve $\sigma_{2\pi}$ (on the right) a different number of times: the equality of length will be achieved by properly choosing $c$ in each case. Given $k>0$, let us define the curve $\sigma_c^k$ as the curve $\sigma_c$ iterated $k$ times, that is 
\[ \sigma_c^k(t)=\sigma_c(kt), \quad t \in [0,1]. \]   
In the sequel, $k$ will either be an integer or half an integer. For instance, given any $m \in \N$, then
\[ \sigma_c^m \in \mathcal{L}\SS^2((-\1)^m), \; 0<c<2\pi, \quad \sigma_{2\pi}^{m} \in \mathcal{G}\SS^2((-\1)^m)  \]
and
\[ \sigma_{2\pi}^{m/2} \in \mathcal{G}\SS^2(\k^m). \]
See the Figure~\ref{fig:b} below for an illustration.

\begin{figure}[H]
\centering
\includegraphics[scale=0.5]{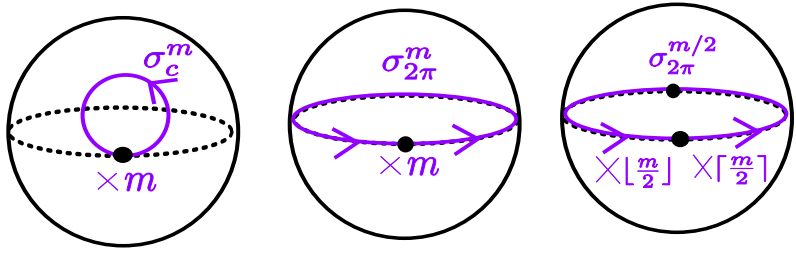}
\caption{The curves $\sigma_c^m$, $\sigma_{2\pi}^m$ and $\sigma_{2\pi}^{m/2}$.}
\label{fig:b}
\end{figure}

\begin{example}[Spaces $\mathcal{L}\SS^3((-\1)^m,\k^m)$]\label{family1}
Let us give explicit examples in the spaces $\mathcal{L}\SS^3((-\1)^m,\k^m)$, $m \geq 1$. For $m\equiv 1$, $2$, $3$ or $4$ modulo $4$, this will give examples in the spaces $\mathcal{L}\SS^3(-\1,\k)$, $\mathcal{L}\SS^3(\1,-\1)$, $\mathcal{L}\SS^3(-\1,-\k)$ and $\mathcal{L}\SS^3(\1,\1)$.

\end{example}
We want to define a curve $\gamma_1^m \in \mathcal{L}\SS^3((-\1)^m,\k^m)$ such that its left and right part are given by
\[ \gamma_{1,l}^m=\sigma_c^m \in \mathcal{L}\SS^2((-\1)^m), \quad \gamma_{1,r}^m =\sigma_{2\pi}^{m/2} \in \mathcal{G}\SS^2(\k^m). \]
To define a pair of curves, we need to choose $0<c<2\pi$ such that
\[ ||(\gamma_{1,l}^{m})'(t)||=||(\sigma_c^{m})'(t)||=cm \]
is equal to 
\[ ||(\gamma_{1,r}^{m})'(t)||=||(\sigma_{2\pi}^{m/2})'(t)||=\pi m. \]
It suffices to choose $c=\pi$ so that both curves have length equal to $\pi m$, then the geodesic curvature of $\gamma_{1,l}^m=\sigma_c^m$ is constantly equal to $\sqrt{3}$ while clearly, the geodesic curvature of $\gamma_{1,r}^m =\sigma_{2\pi}^{m/2}$ is zero.  

Let us now find explicitly the curve $\gamma_1^m$. From Theorem~\ref{th1}, we can compute
\[ ||(\gamma_1^{m})'(t)||=\frac{||(\gamma_{1,l}^m)'(t)||(\kappa_{\gamma_{1,l}}(t)-\kappa_{\gamma_{1,r}}(t))}{2}=\frac{m\pi\sqrt{3}}{2} \]
\[ \kappa_{\gamma_1^m}(t)=\frac{2}{\kappa_{\gamma_{1,l}}(t)-\kappa_{\gamma_{1,r}}(t)}=\frac{2}{\sqrt{3}},\] 
\[\tau_{\gamma_1^m}(t)=\frac{\kappa_{\gamma_{1,l}}(t)+\kappa_{\gamma_{1,r}}(t)}{\kappa_{\gamma_{1,l}}(t)-\kappa_{\gamma_{1,r}}(t)}=1. \]
Therefore the lifted logarithmic derivative and logarithmic derivative of $\gamma_1^m$ are constant and given by
\[ \tilde{\Lambda}_{\gamma_1^m}=\frac{1}{2}\left(m\sqrt{3}\pi\i+m\pi\k,m\pi\k\right) \]
and
\[  
\Lambda_{\gamma_1^m}=\frac{\pi}{2}
\begin{pmatrix}
0 & -m\sqrt{3} & 0  & 0 \\
m\sqrt{3} & 0 & -2m & 0  \\
0 & 2m & 0 & -m\sqrt{3} \\
0 & 0 & m\sqrt{3} & 0
\end{pmatrix}.
\]
The holonomic curve can be computed explicitly:
\[ \tilde{\Gamma}_{\gamma_1^m}(t)=\left(\exp\left(m\pi t\frac{\sqrt{3}\i+\k}{2}\right),\exp\left(m\pi t\frac{\k}{2}\right)\right). \]
The Jacobian curve $\Gamma_{\gamma_1^m}$ satisfies
\[ \Gamma_{\gamma_1^m}'(t)=\Gamma_{\gamma_1^m}(t)\Lambda_{\gamma_1^m}, \quad \Gamma_{\gamma_1^m}(0)=I \]
and can also be computed explicitly since it is the exponential of $\Gamma_{\gamma_1^m}$, that is
\[ \Gamma_{\gamma_1^m}(t)=\exp(t\Lambda_{\gamma_1^m}). \] 
The curve $\gamma_1^m$ is then equal to $\Gamma_{\gamma_1^m}e_1$, and we find that

\begin{eqnarray*}
\gamma_1^m(t) & = & \left(\frac{1}{4}\cos\left(\frac{3}{2}t\pi m\right)+\frac{3}{4}\cos\left(\frac{1}{2}t\pi m\right) \right.,\\
&  & \frac{\sqrt{3}}{4}\sin\left(\frac{3}{2}t\pi m\right)+\frac{\sqrt{3}}{4}\sin\left(\frac{1}{2}t\pi m\right), \\ 
&  & \frac{\sqrt{3}}{4}\cos\left(\frac{1}{2}t\pi m\right)-\frac{\sqrt{3}}{4}\cos\left(\frac{3}{2}t\pi m\right), \\
&  & \left.\frac{3}{4}\sin\left(\frac{1}{2}t\pi m\right)-\frac{1}{4}\sin\left(\frac{3}{2}t\pi m\right)\right).
\end{eqnarray*}

Below we give some illustrations in the case $m=1$ (Figure~\ref{fig:c}), $m=2$ (Figure~\ref{fig:d}), $m=3$ (Figure~\ref{fig:e}), $m=4$ (Figure~\ref{fig:f}) and $m=5$ (Figure~\ref{fig:g}).  

\begin{figure}[H]
\centering
\includegraphics[scale=0.5]{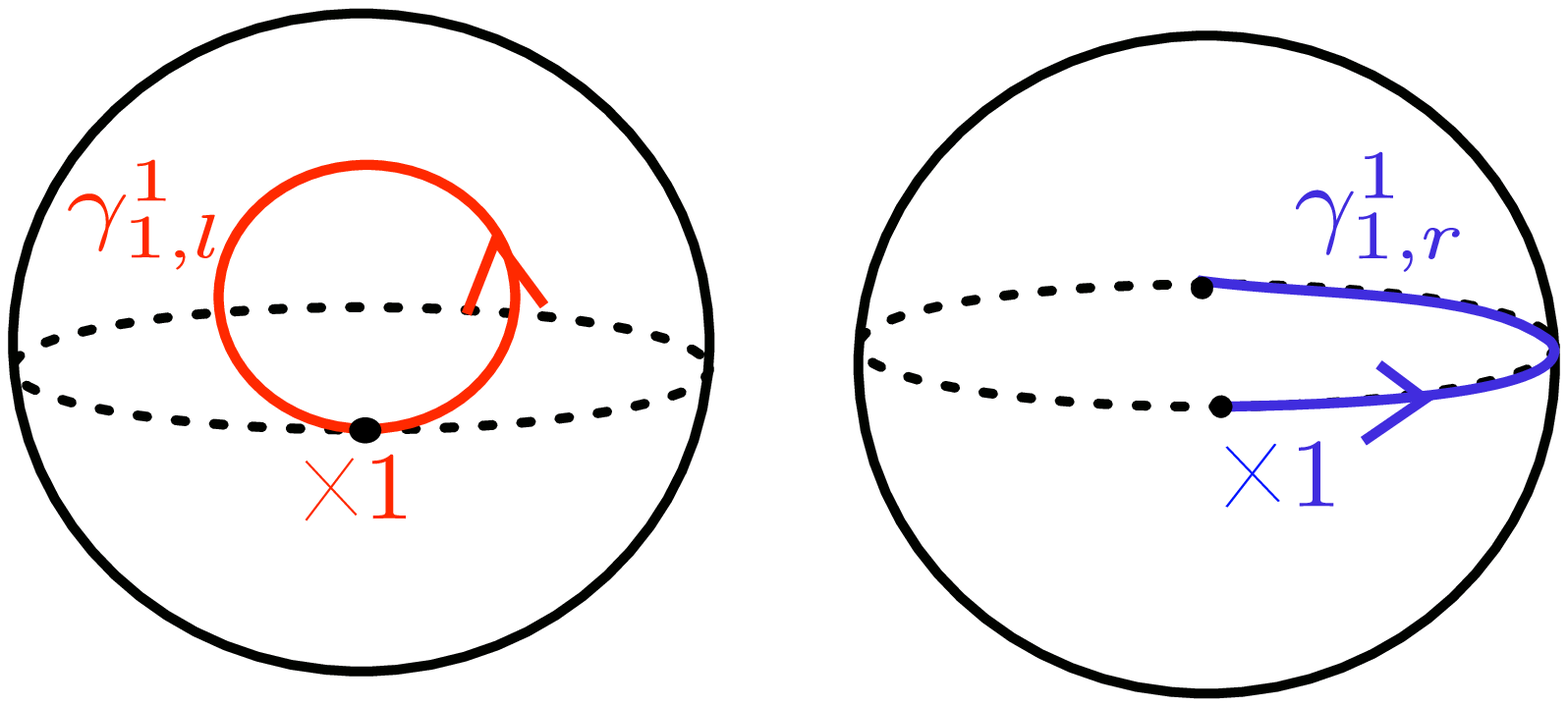}
\caption{The curve $\gamma_1^1$.}
\label{fig:c}
\end{figure}

\begin{figure}[H]
\centering
\includegraphics[scale=0.5]{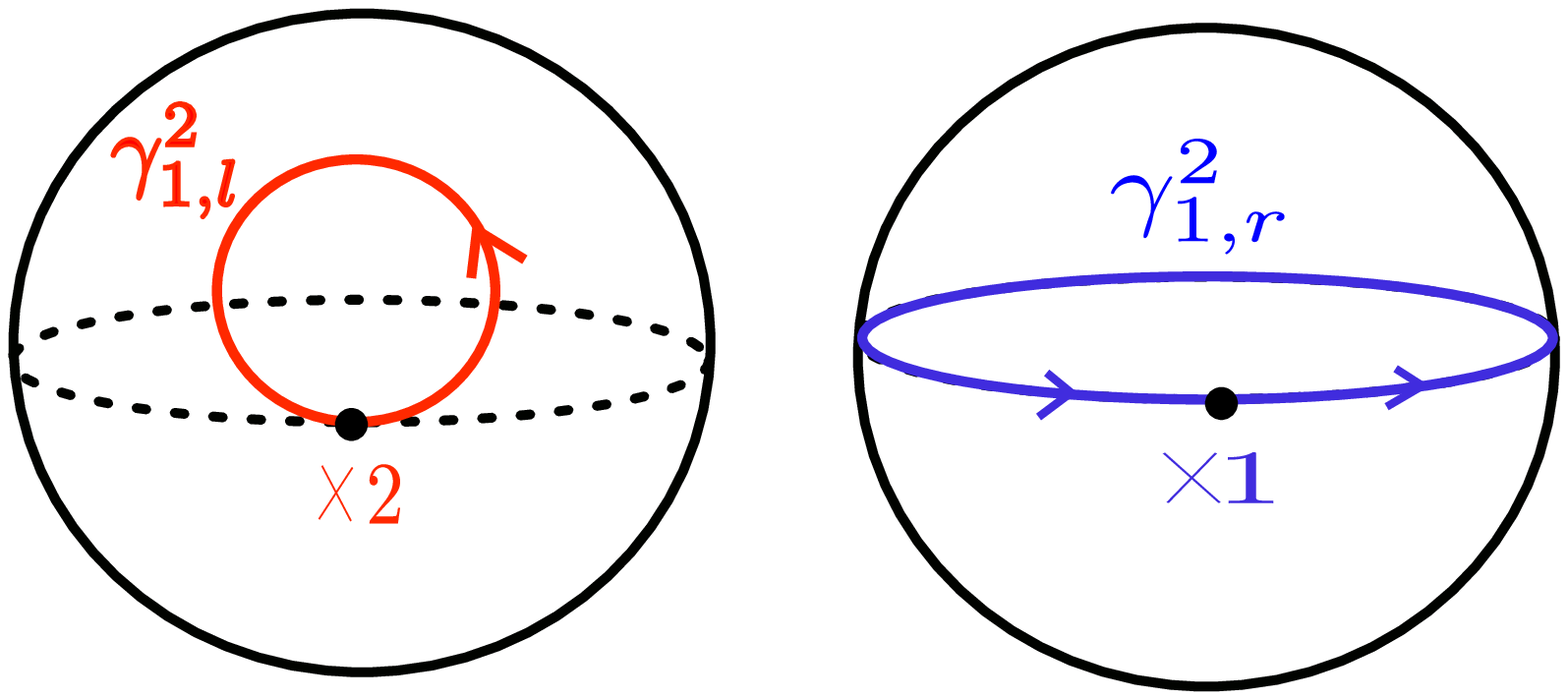}
\caption{The curve $\gamma_1^2$.}
\label{fig:d}
\end{figure}

\begin{figure}[H]
\centering
\includegraphics[scale=0.5]{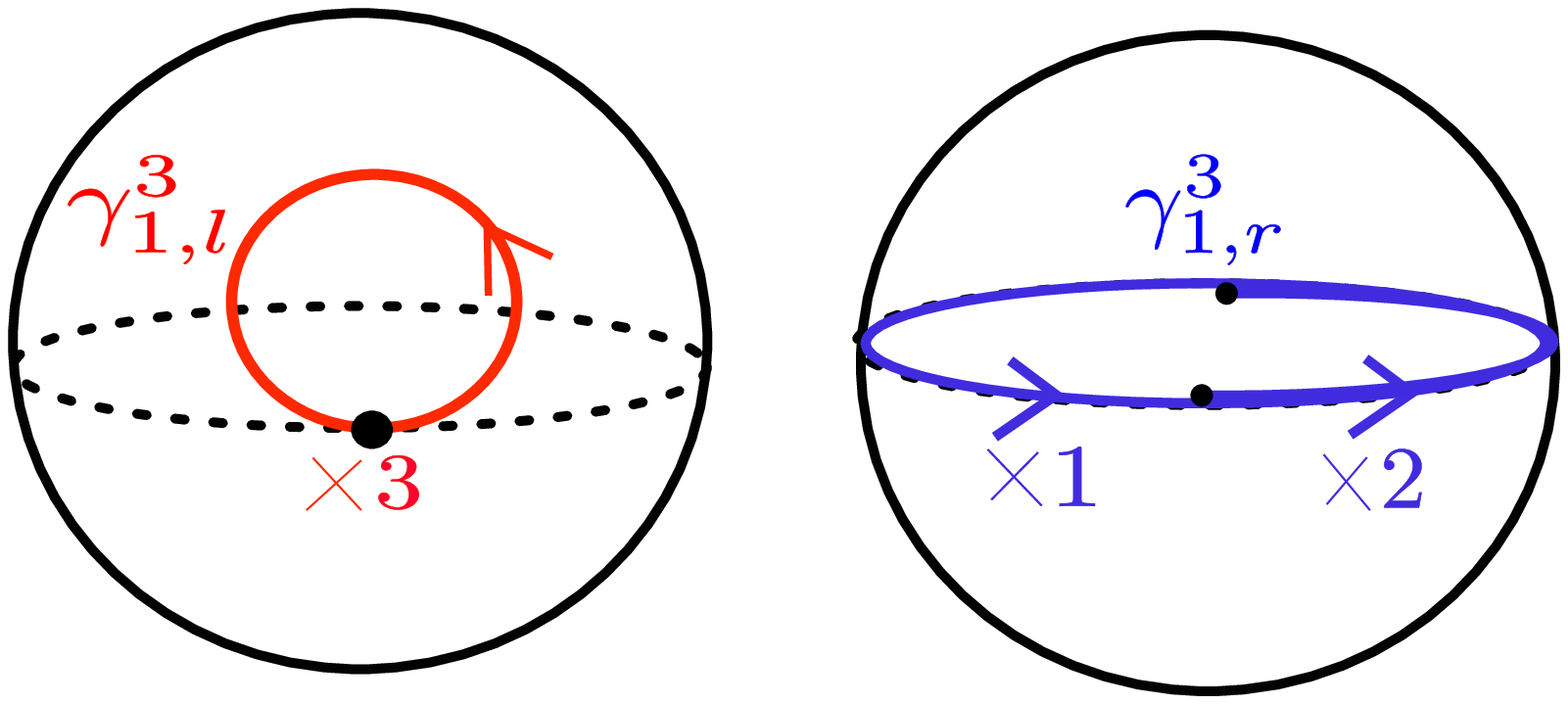}
\caption{The curve $\gamma_1^3$.}
\label{fig:e}
\end{figure}

\begin{figure}[H]
\centering
\includegraphics[scale=0.5]{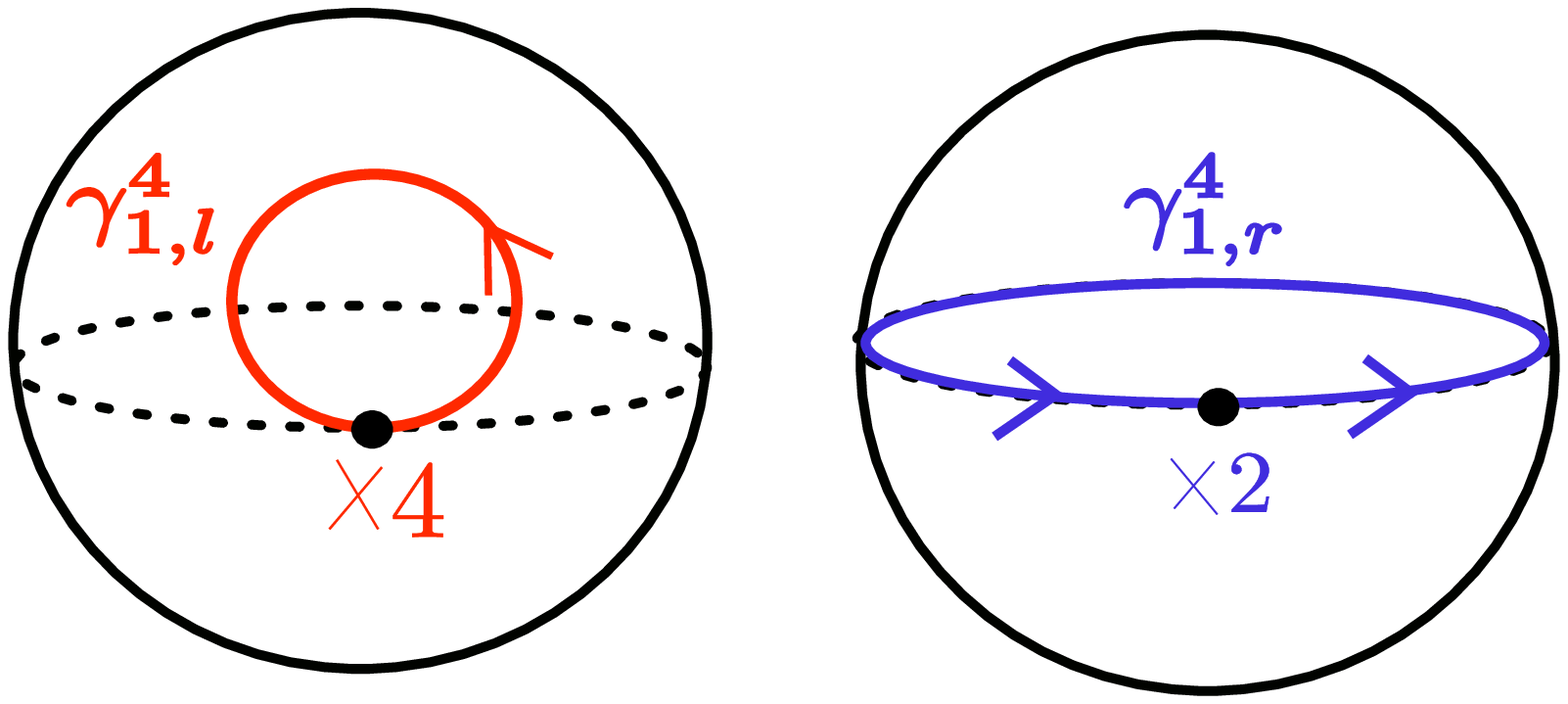}
\caption{The curve $\gamma_1^4$.}
\label{fig:f}
\end{figure}

\begin{figure}[H]
\centering
\includegraphics[scale=0.5]{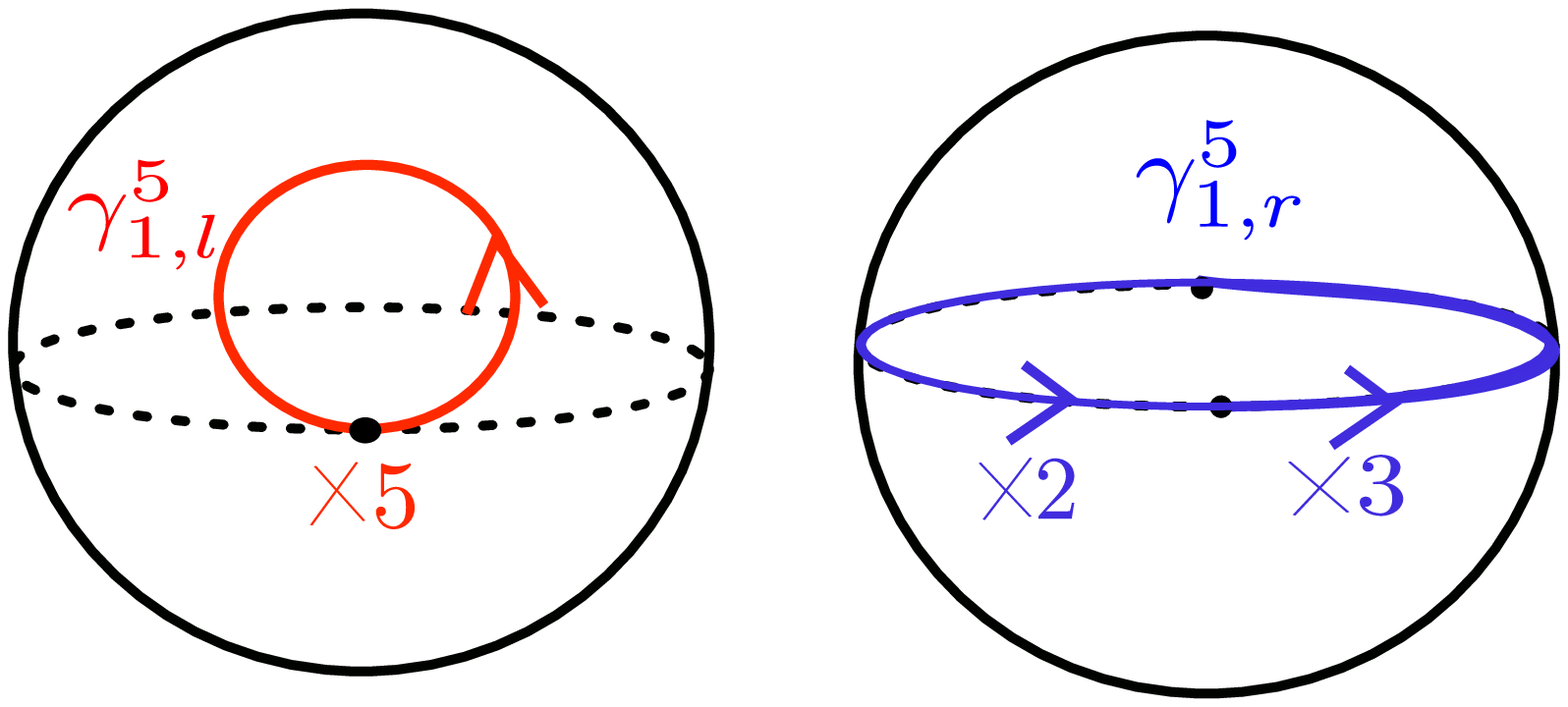}
\caption{The curve $\gamma_1^5$.}
\label{fig:g}
\end{figure}

\begin{example}[Spaces $\mathcal{L}\SS^3(\1,\k^m)$]\label{family2}
Let us give explicit examples in the spaces $\mathcal{L}\SS^3(\1,\k^m)$, $m \geq 1$. For $m\equiv 1$, $2$, $3$ or $4$ modulo $4$, this will give examples in the spaces $\mathcal{L}\SS^3(\1,\k)$, $\mathcal{L}\SS^3(\1,-\1)$, $\mathcal{L}\SS^3(\1,-\k)$ and $\mathcal{L}\SS^3(\1,\1)$.
\end{example}
We want to define a curve $\gamma_2^m \in \mathcal{L}\SS^3(\1,\k^m)$ such that its left and right part are given by
\[ \gamma_{2,l}^m=\sigma_c^{2m} \in \mathcal{L}\SS^2(\1), \quad \gamma_{2,r}^m =\sigma_{2\pi}^{m/2} \in \mathcal{G}\SS^2(\k^m). \]
To define a pair of curves, we need to choose $0<c<2\pi$ such that
\[ ||(\gamma_{2,l}^{2m})'(t)||=||(\sigma_c^{2m})'(t)||=2cm \]
is equal to 
\[ ||(\gamma_{2,r}^{m})'(t)||=||(\sigma_{2\pi}^{m/2})'(t)||=\pi m. \]
It suffices to choose $c=\pi/2$ so that both curves have length equal to $\pi m$, then the geodesic curvature of $\gamma_{2,l}^m=\sigma_c^{2m}$ is constantly equal to $\sqrt{15}$ while clearly, the geodesic curvature of $\gamma_{2,r}^m =\sigma_{2\pi}^{m/2}$ is zero.  

As before, we can find explicitly the curve $\gamma_2^m$. From Theorem~\ref{th1}, we can compute
\[ ||(\gamma_2^{m})'(t)||=\frac{||(\gamma_{2,l}^m)'(t)||(\kappa_{\gamma_{2,l}}(t)-\kappa_{\gamma_{2,r}}(t))}{2}=\frac{m\pi\sqrt{15}}{2} \]
\[ \kappa_{\gamma_2^m}(t)=\frac{2}{\kappa_{\gamma_{2,l}}(t)-\kappa_{\gamma_{2,r}}(t)}=\frac{2}{\sqrt{15}},\] 
\[\tau_{\gamma_2^m}(t)=\frac{\kappa_{\gamma_{2,l}}(t)+\kappa_{\gamma_{2,r}}(t)}{\kappa_{\gamma_{2,l}}(t)-\kappa_{\gamma_{2,r}}(t)}=1. \]
Therefore the lifted logarithmic derivative and logarithmic derivative of $\gamma_1^m$ are constant and given by
\[ \tilde{\Lambda}_{\gamma_2^m}=\frac{1}{2}\left(m\sqrt{15}\pi\i+m\pi\k,m\pi\k\right) \]
and
\[  
\Lambda_{\gamma_2^m}=\frac{\pi}{2}
\begin{pmatrix}
0 & -m\sqrt{15} & 0  & 0 \\
m\sqrt{15} & 0 & -2m & 0  \\
0 & 2m & 0 & -m\sqrt{15} \\
0 & 0 & m\sqrt{15} & 0
\end{pmatrix}.
\]
The holonomic curve can be computed explicitly:
\[ \tilde{\Gamma}_{\gamma_2^m}(t)=\left(\exp\left(2m\pi t\frac{\sqrt{15}\i+\k}{4}\right),\exp\left(m\pi t\frac{\k}{2}\right)\right). \]
The Jacobian curve $\Gamma_{\gamma_2^m}$ satisfies
\[ \Gamma_{\gamma_2^m}'(t)=\Gamma_{\gamma_2^m}(t)\Lambda_{\gamma_2^m}, \quad \Gamma_{\gamma_2^m}(0)=I \]
and can also be computed explicitly since it is the exponential of $\Gamma_{\gamma_2^m}$, that is
\[ \Gamma_{\gamma_2^m}(t)=\exp(t\Lambda_{\gamma_2^m}). \] 
The curve $\gamma_2^m$ is then equal to $\Gamma_{\gamma_2^m}e_1$, and we find that
\begin{eqnarray*}
\gamma_2^m(t) & = & \left(\frac{3}{8}\cos\left(\frac{5}{2}t\pi m\right)+\frac{5}{8}\cos\left(\frac{3}{2}t\pi m\right)\right., \\
&  & \frac{\sqrt{15}}{8}\sin\left(\frac{5}{2}t\pi m\right)+\frac{\sqrt{15}}{8}\sin\left(\frac{3}{2}t\pi m\right), \\
&  & \frac{\sqrt{15}}{8}\cos\left(\frac{3}{2}t\pi m\right)-\frac{\sqrt{15}}{8}\cos\left(\frac{5}{2}t\pi m\right), \\
&  & \left.\frac{5}{8}\sin\left(\frac{3}{2}t\pi m\right)-\frac{3}{8}\sin\left(\frac{5}{2}t\pi m\right)\right).
\end{eqnarray*}
Below we give some illustrations in the case $m=1$ (Figure~\ref{fig:h}) and $m=3$ (Figure~\ref{fig:i}).

\begin{figure}[H]
\centering
\includegraphics[scale=0.5]{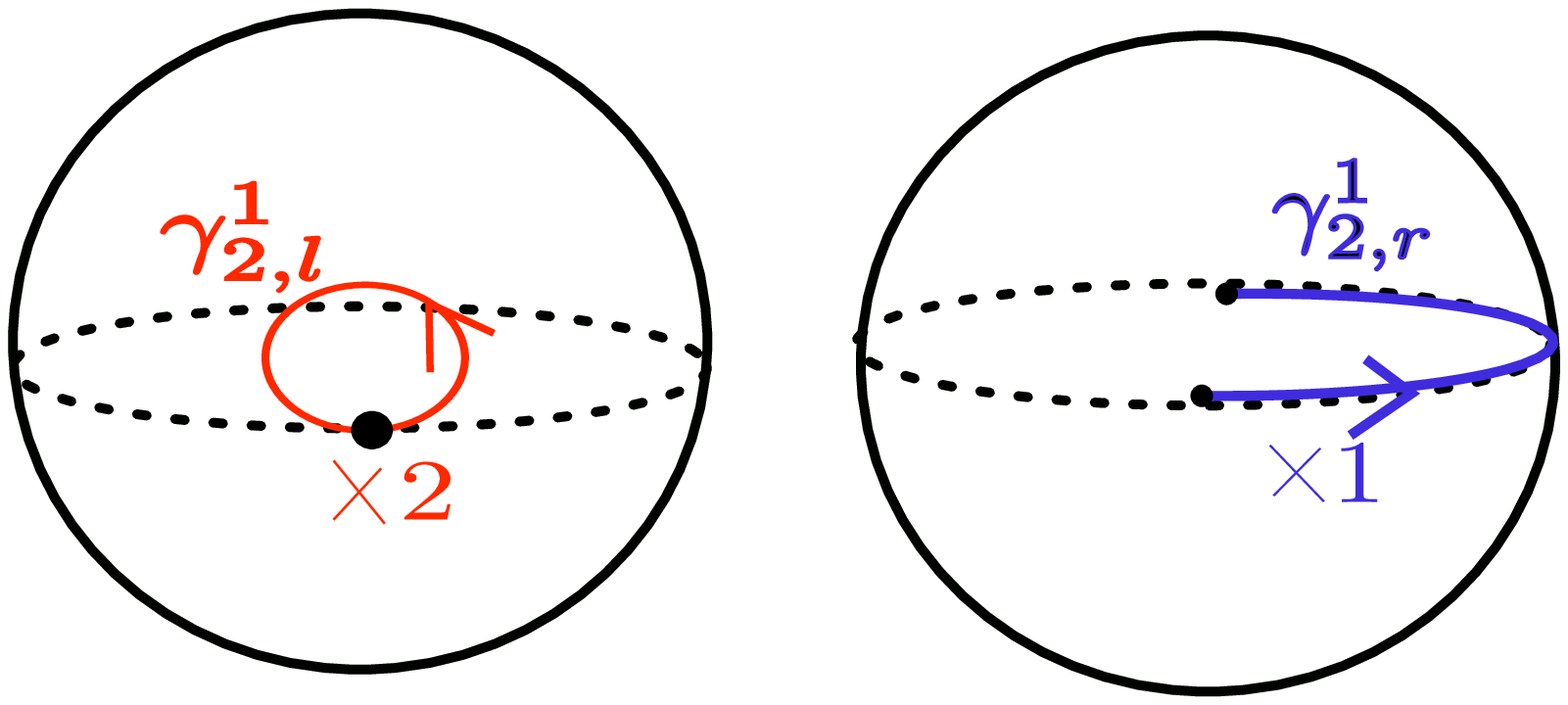}
\caption{The curve $\gamma_2^1$.}
\label{fig:h}
\end{figure}

\begin{figure}[H]
\centering
\includegraphics[scale=0.5]{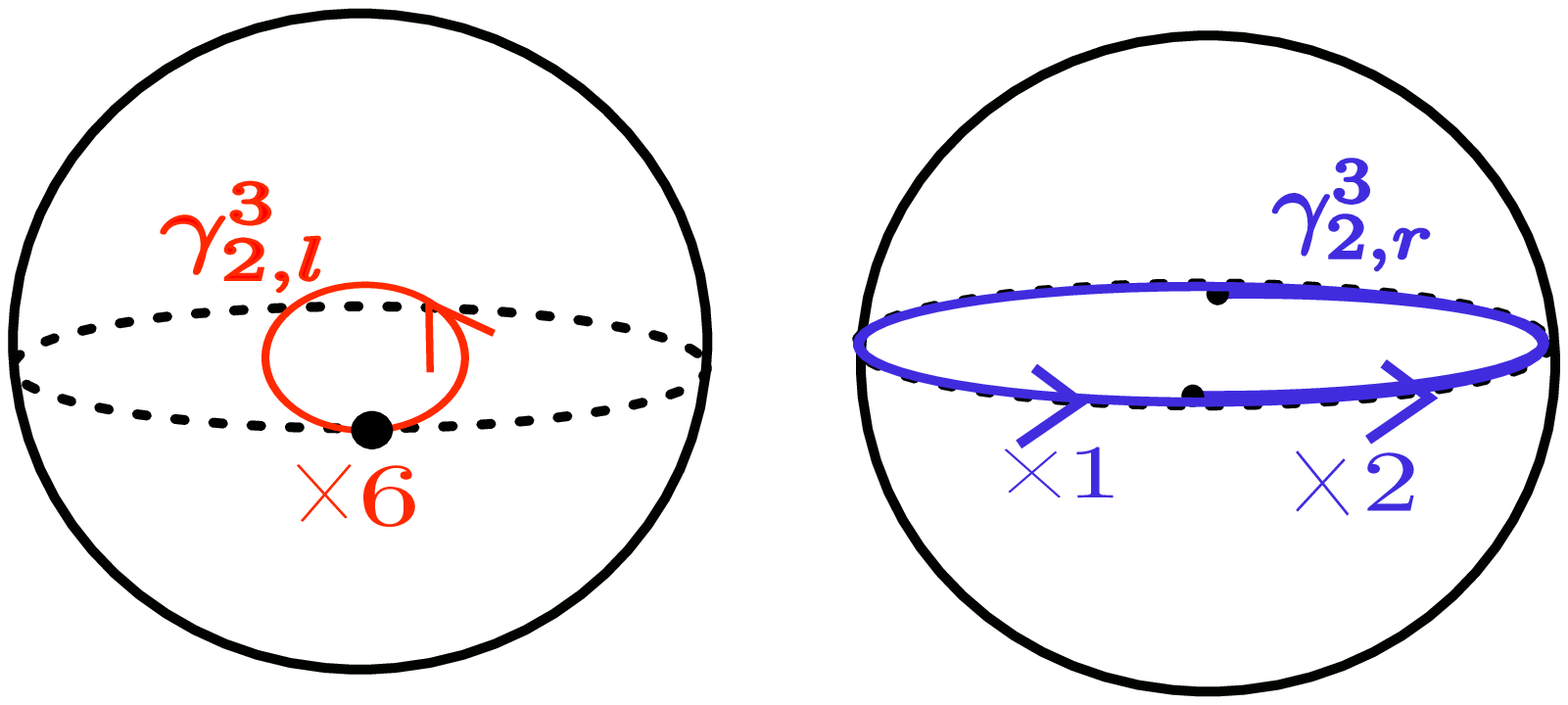}
\caption{The curve $\gamma_2^3$.}
\label{fig:i}
\end{figure}

We already gave examples of locally convex curves in the spaces $\mathcal{L}\SS^3(-\1,\k)$ and $\mathcal{L}\SS^3(-\1,-\k)$ (Example~\eqref{family1}), $\mathcal{L}\SS^3(1,\k)$ and  $\mathcal{L}\SS^3(\1,-\k)$ (Example~\eqref{family2}) and in $\mathcal{L}\SS^3(\1,-\1)$ and $\mathcal{L}\SS^3(\1,\1)$ (Example~\eqref{family1} and~\eqref{family2}). It remains to give examples in $\mathcal{L}\SS^3(-\1,-\1)$ and $\mathcal{L}\SS^3(-\1,\1)$. 

\begin{example}[Spaces $\mathcal{L}\SS^3((-\1)^{3m},(-\1)^m)$]\label{family3}
Let us give explicit examples in the spaces $\mathcal{L}\SS^3((-\1)^{3m},(-\1)^m)$, $m \geq 1$. For $m\equiv 1$, $2$ modulo $2$, this will give examples in the spaces $\mathcal{L}\SS^3(-\1,-\1)$ and $\mathcal{L}\SS^3(\1,\1)$.
\end{example}
We want to define a curve $\gamma_3^m \in \mathcal{L}\SS^3((-\1)^{3m},(-\1)^m)$ such that its left and right part are given by
\[ \gamma_{3,l}^m=\sigma_c^{3m} \in \mathcal{L}\SS^2((-\1^{3m}), \quad \gamma_{3,r}^m=\sigma_{2\pi}^{m} \in \mathcal{G}\SS^2((-\1)^m). \]
To define a pair of curves, we need to choose $0<c<2\pi$ such that
\[ ||(\gamma_{3,l}^{m})'(t)||=||(\sigma_c^{3m})'(t)||=3cm \]
is equal to 
\[ ||(\gamma_{2,r}^{m})'(t)||=||(\sigma_{2\pi}^{m})'(t)||=2\pi m. \]
It suffices to choose $c=2\pi/3$ so that both curves have length equal to $2 \pi m$, then the geodesic curvature of $\gamma_{3,l}^m=\sigma_c^{3m}$ is constantly equal to $2\sqrt{2}$ while clearly, the geodesic curvature of $\gamma_{3,r}^m =\sigma_{2\pi}^{m}$ is zero.  

As before, we can find explicitly the curve $\gamma_3^m$. From Theorem~\ref{th1}, we can compute
\[ ||(\gamma_3^{m})'(t)||=\frac{||(\gamma_{3,l}^m)'(t)||(\kappa_{\gamma_{3,l}}(t)-\kappa_{\gamma_{3,r}}(t))}{2}=2\sqrt{2}\pi m \]
\[ \kappa_{\gamma_3^m}(t)=\frac{2}{\kappa_{\gamma_{3,l}}(t)-\kappa_{\gamma_{3,r}}(t)}=\frac{1}{\sqrt{2}},\] 
\[\tau_{\gamma_3^m}(t)=\frac{\kappa_{\gamma_{3,l}}(t)+\kappa_{\gamma_{3,r}}(t)}{\kappa_{\gamma_{3,l}}(t)-\kappa_{\gamma_{3,r}}(t)}=1. \]
Therefore the lifted logarithmic derivative and logarithmic derivative of $\gamma_1^m$ are constant and given by
\[ \tilde{\Lambda}_{\gamma_3^m}=\left(2\sqrt{2}m\pi\i+m\pi\k,m\pi\k\right) \]
and
\[  
\Lambda_{\gamma_3^m}=\pi
\begin{pmatrix}
0 & -m2\sqrt{2} & 0  & 0 \\
m2\sqrt{2} & 0 & -2m & 0  \\
0 & 2m & 0 & -m2\sqrt{2} \\
0 & 0 & m2\sqrt{2} & 0
\end{pmatrix}.
\]
The holonomic curve can be computed explicitly:
\[ \tilde{\Gamma}_{\gamma_3^m}(t)=\left(\exp\left(3m\pi t\frac{2\sqrt{2}\i+\k}{3}\right),\exp\left(m\pi t \k\right)\right). \]
The Jacobian curve $\Gamma_{\gamma_3^m}$ satisfies
\[ \Gamma_{\gamma_3^m}'(t)=\Gamma_{\gamma_3^m}(t)\Lambda_{\gamma_3^m}, \quad \Gamma_{\gamma_3^m}(0)=I \]
and can also be computed explicitly since it is the exponential of $\Gamma_{\gamma_3^m}$, that is
\[ \Gamma_{\gamma_3^m}(t)=\exp(t\Lambda_{\gamma_3^m}). \] 
The curve $\gamma_3^m$ is then equal to $\Gamma_{\gamma_3^m}e_1$, and we find that
\begin{eqnarray*}
\gamma_3^m(t) & = & \left(\frac{1}{3}\cos\left(4t\pi m\right)+\frac{2}{3}\cos\left(2t\pi m\right)\right., \\
&  & \frac{\sqrt{2}}{3}\sin\left(4t\pi m\right)+\frac{\sqrt{2}}{3}\sin\left(2t\pi m\right), \\
&  & \frac{\sqrt{2}}{3}\cos\left(2t\pi m\right)-\frac{\sqrt{2}}{3}\cos\left(4t\pi m\right), \\
&  & \left.\frac{2}{3}\sin\left(2t\pi m\right)-\frac{1}{3}\sin\left(4t\pi m\right)\right).
\end{eqnarray*}
Below we give an illustration in the case $m=1$ (Figure~\ref{fig:j}).

\begin{figure}[H]
\centering
\includegraphics[scale=0.5]{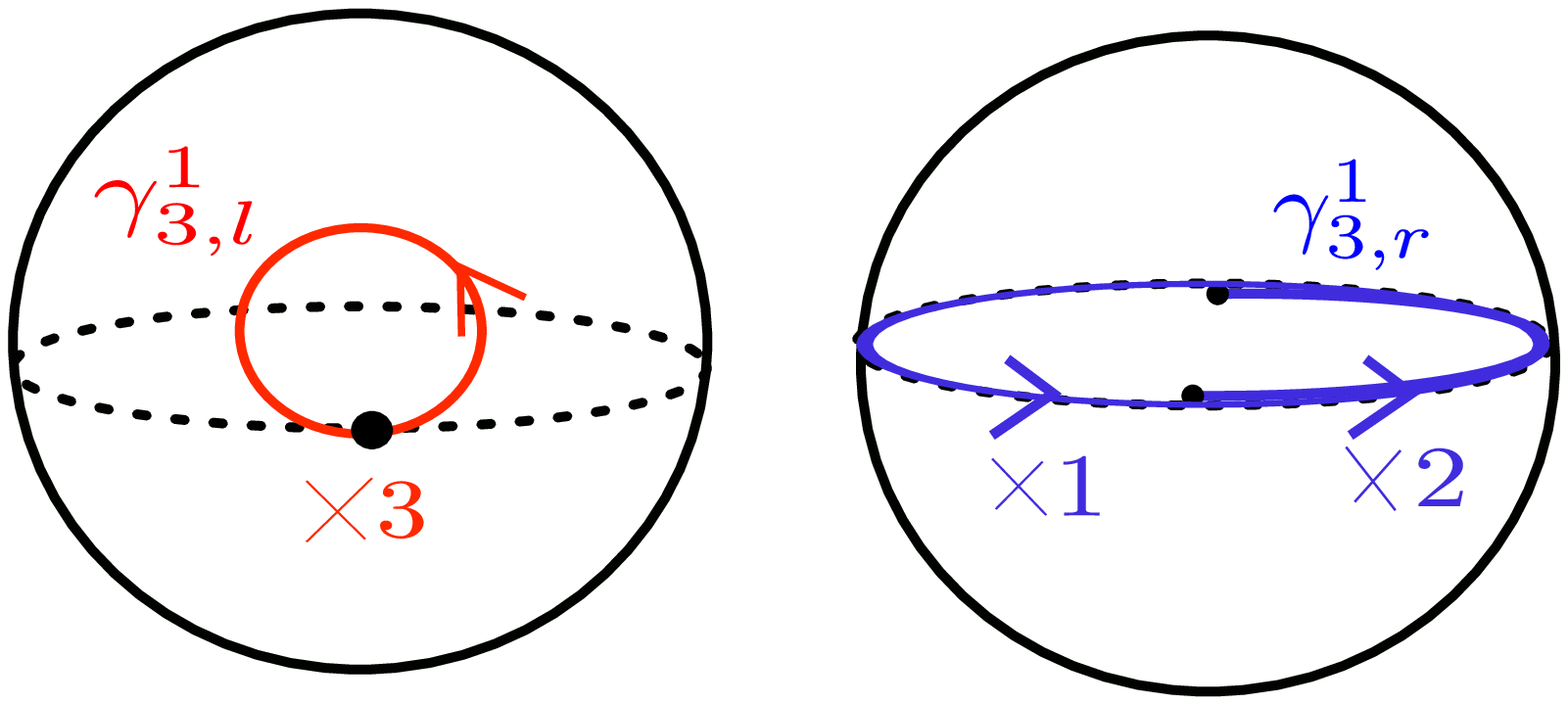}
\caption{The curve $\gamma_3^1$.}
\label{fig:j}
\end{figure}

\begin{example}[Spaces $\mathcal{L}\SS^3((-\1)^{3m},\1)$]\label{family4}
Let us give explicit examples in the spaces $\mathcal{L}\SS^3((-\1)^{3m},\1)$, $m \geq 1$. For $m\equiv 1$, $2$ modulo $2$, this will give examples in the spaces $\mathcal{L}\SS^3(-\1,\1)$ and $\mathcal{L}\SS^3(\1,\1)$.
\end{example}
We want to define a curve $\gamma_4^m \in \mathcal{L}\SS^3((-\1)^{3m},\1)$ such that its left and right part are given by
\[ \gamma_{4,l}^m=\sigma_c^{3m} \in \mathcal{L}\SS^2((-\1^{3m}), \quad \gamma_{4,r}^m =\sigma_{2\pi}^{2m} \in \mathcal{G}\SS^2(\1). \]
To define a pair of curves, we need to choose $0<c<2\pi$ such that
\[ ||(\gamma_{4,l}^{m})'(t)||=||(\sigma_c^{3m})'(t)||=3cm \]
is equal to 
\[ ||(\gamma_{4,r}^{m})'(t)||=||(\sigma_{2\pi}^{2m})'(t)||=4\pi m. \]
It suffices to choose $c=4\pi/3$ so that both curves have length equal to $4\pi m$, then the geodesic curvature of $\gamma_{4,l}^m=\sigma_c^{3m}$ is constantly equal to $\sqrt{5}/2$ while clearly, the geodesic curvature of $\gamma_{4,r}^m =\sigma_{2\pi}^{2m}$ is zero.  

As before, we can find explicitly the curve $\gamma_4^m$. From Theorem~\ref{th1}, we can compute
\[ ||(\gamma_4^{m})'(t)||=\frac{||(\gamma_{4,l}^m)'(t)||(\kappa_{\gamma_{4,l}}(t)-\kappa_{\gamma_{4,r}}(t))}{2}=\sqrt{5}\pi m \]
\[ \kappa_{\gamma_4^m}(t)=\frac{2}{\kappa_{\gamma_{4,l}}(t)-\kappa_{\gamma_{4,r}}(t)}=\frac{4}{\sqrt{5}},\] 
\[\tau_{\gamma_4^m}(t)=\frac{\kappa_{\gamma_{4,l}}(t)+\kappa_{\gamma_{4,r}}(t)}{\kappa_{\gamma_{4,l}}(t)-\kappa_{\gamma_{4,r}}(t)}=1. \]
Therefore the lifted logarithmic derivative and logarithmic derivative of $\gamma_4^m$ are constant and given by
\[ \tilde{\Lambda}_{\gamma_4^m}=\left(\sqrt{5}m\pi\i+2m\pi\k,2m\pi\k\right) \]
and
\[  
\Lambda_{\gamma_4^m}=\pi
\begin{pmatrix}
0 & m\sqrt{5} & 0  & 0 \\
m\sqrt{5} & 0 & -4m & 0  \\
0 & 4m & 0 & -m\sqrt{5} \\
0 & 0 & m\sqrt{5} & 0
\end{pmatrix}.
\]
The holonomic curve can be computed explicitly:
\[ \tilde{\Gamma}_{\gamma_4^m}(t)=\left(\exp\left(3m\pi t\frac{\sqrt{5}\i+2\k}{3}\right),\exp\left(2m\pi t \k\right)\right). \]
The Jacobian curve $\Gamma_{\gamma_4^m}$ satisfies
\[ \Gamma_{\gamma_4^m}'(t)=\Gamma_{\gamma_4^m}(t)\Lambda_{\gamma_4^m}, \quad \Gamma_{\gamma_4^m}(0)=I \]
and can also be computed explicitly since it is the exponential of $\Gamma_{\gamma_4^m}$, that is
\[ \Gamma_{\gamma_4^m}(t)=\exp(t\Lambda_{\gamma_4^m}). \] 
The curve $\gamma_4^m$ is then equal to $\Gamma_{\gamma_4^m}e_1$, and we find that
\begin{eqnarray*}
\gamma_4^m(t) & = & \left(\frac{1}{6}\cos\left(5t\pi m\right)+\frac{5}{6}\cos\left(t\pi m\right)\right., \\
&  & \frac{\sqrt{5}}{6}\sin\left(5t\pi m\right)+\frac{\sqrt{5}}{6}\sin\left(t\pi m\right), \\
&  & \frac{\sqrt{5}}{6}\cos\left(t\pi m\right)-\frac{\sqrt{5}}{6}\cos\left(5t\pi m\right), \\
&  & \left.\frac{5}{6}\sin\left(t\pi m\right)-\frac{1}{6}\sin\left(5t\pi m\right)\right).
\end{eqnarray*}
Below we give an illustration in the case $m=1$ (Figure~\ref{fig:k}).

\begin{figure}[H]
\centering
\includegraphics[scale=0.5]{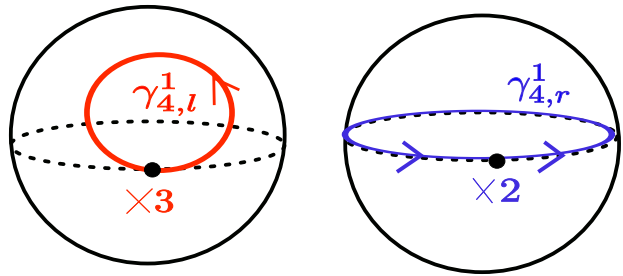}
\caption{The curve $\gamma_4^1$.}
\label{fig:k}
\end{figure}

\section{Proof of Theorem~\ref{th2} }\label{s63}

In this section, we give the proof of Theorem~\ref{th2} which characterizes convexity in the spaces $\mathcal{L}\SS^3(-\1,\k)$ by just looking at the left part of the curve.

\begin{proof}[{} of Theorem~\ref{th2}]
Recall that we want to prove that a curve $\gamma \in \mathcal{L}\SS^3(-\1,\k)$ is convex if and only its left part $\gamma_l \in \mathcal{L}\SS^2(-\1)$ is convex.

It is clear that $\mathcal{L}\SS^2(-\1)$ contains convex curves; the curve $\sigma_c$, for $0 < c <2\pi$ defined in~\S\ref{s62} is convex, since it intersects any hyperplane of $\R^3$ (or equivalently any great circle) in exactly two points. Using Theorem~\ref{thmani}, the space $\mathcal{L}\SS^2(-\mathbf{1})$ has therefore $2$ connected components,
\[ \mathcal{L}\SS^2(-\mathbf{1})=\mathcal{L}\SS^2(-\mathbf{1})_c \sqcup \mathcal{L}\SS^2(-\mathbf{1})_n \]
where $\mathcal{L}\SS^2(\mathbf{1})_c$ is the component associated to convex curves and $\mathcal{L}\SS^2(\mathbf{1})_n$ the component associated to non-convex curves.

The space $\mathcal{L}\SS^3(-\1,\k)$ also contains convex curves. Indeed, recall the family of curves $\gamma_{1}^m \in \mathcal{L}\SS^3((-\1)^m,\k^m)$, $m \geq 1$, defined in Example~\ref{family1}; we will prove that the curve $\gamma_{1}^1 \in \mathcal{L}\SS^3(-\1,\k)$ is convex. Up to a reparametrization with constant speed, this curve is the same as the curve $\tilde{\gamma} : [0,\pi/2] \rightarrow \SS^3$ defined by
\begin{eqnarray*}
\tilde{\gamma}(t) & = & \left(\frac{1}{4}\cos\left(3t\right)+\frac{3}{4}\cos\left(t\right)\right., \\
&  & \frac{\sqrt{3}}{4}\sin\left(3t \right)+\frac{\sqrt{3}}{4}\sin\left(t\right) \\
&  & \frac{\sqrt{3}}{4}\cos\left(t\right)-\frac{\sqrt{3}}{4}\cos\left(3t\right) \\
&  & \left.\frac{3}{4}\sin\left(t\right)-\frac{1}{4}\sin\left(3t \right)\right).
\end{eqnarray*} 
Since being convex is independent of the choice of a parametrization (Chapter~\ref{chapter3}, \S\ref{s32}, $(i)$ of Proposition~\ref{propconv}), it is sufficient to prove that $\tilde{\gamma}$ is convex. Observe that for $t \in [0,\pi/2)$, the first component of $\tilde{\gamma}$ never vanishes, so if we define the central projection
\[ p : (x_1,x_2,x_3,x_4) \in \R^4 \mapsto \left(1,\frac{x_2}{x_1},\frac{x_3}{x_1},\frac{x_4}{x_1}\right), \]
then it is sufficient to prove that the curve $p(\tilde{\gamma})$, defined for $t \in [0,\pi/2)$: this follows from $(ii)$ of Corollary~\ref{corconv}, Chapter~\ref{chapter3}, \S\ref{s32}. We compute
\[ p(\tilde{\gamma}(t))=\left(1,\sqrt{3}\tan t,\sqrt{3}(\tan t)^2,(\tan t)^3\right) \]
and hence, if we reparametrize by setting $x=\tan t$, we obtain the curve
\[ x \in [0,+\infty) \mapsto (1,\sqrt{3}x,\sqrt{3}x^2,x^3) \in \R^4. \]
It is now obvious that this curve is convex, and therefore our initial curve $\gamma_1^1$ is convex. As before, using Theorem~\ref{thmani}, the space $\mathcal{L}\SS^3(-\mathbf{1},\k)$ has therefore $2$ connected components,
\[ \mathcal{L}\SS^3(-\mathbf{1},\k)=\mathcal{L}\SS^3(-\mathbf{1},\k)_c \sqcup \mathcal{L}\SS^3(-\mathbf{1},\k)_n \]
where $\mathcal{L}\SS^3(\mathbf{1},\k)_c$ is the component associated to convex curves and $\mathcal{L}\SS^3(\mathbf{1},\k)_n$ the component associated to non-convex curves.

Then we can use Theorem~\ref{th1} to define a continuous map
\[ L : \mathcal{L}\SS^3(-\mathbf{1},\k) \rightarrow \mathcal{L}\SS^2(-\mathbf{1})  \]
by setting $L(\gamma)=\gamma_l$, where $(\gamma_l,\gamma_r)$ is the pair of curves associated to $\gamma$. Since $L$ is continuous and $\mathcal{L}\SS^3(-\mathbf{1},\k)_c$ is connected, its image by $L$ is also connected. Moreover, we know $\gamma_1^1 \in \mathcal{L}\SS^3(-\mathbf{1},\k)_c$, and that $L(\gamma_1^1)=\sigma_1 \in \mathcal{L}\SS^2(-\mathbf{1})_c$, therefore the image of $\mathcal{L}\SS^3(-\mathbf{1},\k)_c$ by $L$ intersects $\mathcal{L}\SS^2(-\mathbf{1})_c$; since the latter is connected we must have the inclusion
\[ L\left(\mathcal{L}\SS^3(-\mathbf{1},\k)_c\right) \subset \mathcal{L}\SS^2(-\mathbf{1})_c. \]
This proves one part of the statement, namely that if $\gamma \in \mathcal{L}\SS^3(-\mathbf{1},\k)_c$, then its left part $\gamma_l=L(\gamma) \in \mathcal{L}\SS^2(-\mathbf{1})_c$. To prove the other part, it is enough to prove that    
\[ L\left(\mathcal{L}\SS^3(-\mathbf{1},\k)_n\right) \subset \mathcal{L}\SS^2(-\mathbf{1})_n. \]
To prove this inclusion, using continuity and connectedness arguments as before, it is enough to find one element in $\mathcal{L}\SS^3(-\mathbf{1},\k)_n$ whose image by $L$ belongs to $\mathcal{L}\SS^2(-\mathbf{1})_n$. We claim that the curve $\gamma_1^5$ from Example~\ref{family1} does the job. To see that $\gamma_1^5 \in \mathcal{L}\SS^3(-\mathbf{1},\k)_n$, one can easily check that if we define the plane 
\[ H=\{(x_1,0,0,x_4) \in \R^4 \; | \; x_1 \in \R, \; x_4 \in \R\} \]  
then
\[ \gamma_1^5(t_i) \in H, \quad t_i=\frac{i}{5}, \quad 1 \leq i \leq 4. \]
Hence $\gamma_1^5$ has at least $4$ points of intersection with $H$; this shows that $\gamma_5$ is not convex. To conclude, it is clear that $L(\gamma_1^5)=\sigma_5 \in \mathcal{L}\SS^2(-\mathbf{1})_n$. Hence this proves the inclusion we wanted, and this concludes the proof.

\end{proof}  

\medskip

Let us observe that we used in the proof that the curve $\gamma_{1}^1 \in \mathcal{L}\SS^3(-\1,\k)$, defined in Example~\ref{family1}, is convex. It is easy to see that its final lifted Frenet frame, which is $(-\1,\k) \in \SS^3 \times \SS^3$ projects down to the transpose of Arnold matrix $A ^\top \in B_4^+$, and we will see later that this implies that this curve is in fact stably convex ($(-\1,\k) \in \tilde{B}_4^+$ is the only stably convex spin).

\section{Proof of Theorem ~\ref{th3}}\label{s64}

Let us now give the proof of Theorem~\ref{th3}, which gives a necessary condition for a curve in $\mathcal{L}\SS^3(\1,-\1)$ to be locally convex by looking at its left part. 

First we need to recall some basic definition and properties. An \emph{open hemisphere} $H$ in $\SS^2$ is a subset of $\SS^2$ of the form
\[ H_h=\{x \in \SS^2 \; | \; h\cdot x >0\} \]
for some $h \in \SS^2$, and a \emph{closed hemisphere} is the closure $\bar{H}$ of an open hemisphere, that is it has the form 
\[ \bar{H}_h=\{x \in \SS^2 \; | \; h\cdot x \geq 0\}. \]
We can make the following definitions.

\begin{definition}
A closed curve $\gamma : [0,1] \rightarrow \SS^2$ is \emph{hemispherical} if it its image is contained in an open hemisphere of $\SS^2$. It is \emph{borderline hemispherical} if it is contained in a closed hemisphere but not contained in any open hemisphere.
\end{definition}

Now following~\cite{Zul12}, we will define a rotation number for any closed curve $\gamma$ in $\SS^2$ contained in a closed hemisphere (such a curve is either hemispherical or borderline hemispherical). To such a closed curve $\gamma$ contained in a closed hemisphere, there is a distinguished choice of hemisphere $h_\gamma$ containing the image of $\gamma$ (this hemisphere $h_\gamma$ is the barycenter of the set of all closed hemisphere containing the image of $\gamma$, the latter being geodesically convex, see~\cite{Zul12} for further details). Let $\Pi_{h_\gamma} : \SS^2 \rightarrow \R^2$ be the stereographic projection from $-h_\gamma$, and $\eta_\gamma= \Pi_{h_\gamma} \circ \gamma$. The curve $\eta_\gamma$ is now a closed curve in the plane $\R^2$, and it is an immersion. The definition of its rotation number $\mathrm{rot}(\eta_\gamma) \in \Z$ is now classical: for instance, it can be defined to be the degree of the map
\[ t \in \SS^1 \mapsto \frac{\eta_\gamma'(t)}{||\eta_\gamma'(t)||} \in \SS^1. \]    

\begin{definition}\label{rot}
Given a closed curve contained in a closed hemisphere in $\SS^2$, its rotation number $\mathrm{rot}(\gamma)$ is defined by
\[ \mathrm{rot}(\gamma):=-\mathrm{rot}(\eta_\gamma) \in \Z. \]
\end{definition}

The proof of Theorem~\ref{th3} will be based on two lemmas. The first lemma is a well-known property, so we just state it without proof.

\begin{lemma}\label{lem1}
Consider a continuous map $H : [0,1] \rightarrow \mathcal{L}\SS^2(\1)$ such that $\gamma_0=H(0)$ has the property of being hemispherical with rotation number equal to $2$ and $\gamma_1=H(1)$ which does not have this property. Then there exists a time $t>0$ such that $\gamma_t=H(t)$ is borderline hemispherical with rotation number equal to $2$. 
\end{lemma}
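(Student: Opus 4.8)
The plan is to treat this as a continuity/intermediate-value argument on the family $H$. The key observation is that being hemispherical (contained in an \emph{open} hemisphere) is an open condition on $\mathcal{L}\SS^2(\1)$, and that the rotation number $\mathrm{rot}(\gamma)$ is locally constant on the (open) set of curves contained in a closed hemisphere, provided we are careful at the boundary. Since $\gamma_0 = H(0)$ is hemispherical with $\mathrm{rot}(\gamma_0)=2$ and $\gamma_1=H(1)$ is not, I would let
\[ T = \sup\{\, t \in [0,1] \;|\; \gamma_s \text{ is hemispherical with } \mathrm{rot}(\gamma_s)=2 \text{ for all } s \in [0,t]\,\}. \]
First I would argue $T>0$: because $\gamma_0$ lies in an open hemisphere and $H$ is continuous (in the $C^1$-topology induced on $\mathcal{L}\SS^2(\1)$), for small $t$ the curve $\gamma_t$ stays inside a slightly larger open hemisphere, so it is still hemispherical; and since the rotation number is an integer depending continuously on $\gamma_t$ once a supporting hemisphere and stereographic projection are chosen continuously, it stays equal to $2$. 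Then $T \le 1$ because the defining property fails at $t=1$.

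Next I would examine the curve $\gamma_T=H(T)$. By continuity of $H$ and the fact that "contained in a closed hemisphere" is a closed condition (a uniform limit of curves in closed hemispheres $\bar H_{h_s}$ is contained in $\bar H_h$ for any accumulation point $h$ of the $h_s$), the curve $\gamma_T$ is contained in a closed hemisphere. I claim $\gamma_T$ must be borderline hemispherical rather than hemispherical. Indeed, if $\gamma_T$ were hemispherical, then by openness of that condition there would be a neighbourhood of $T$ in which all curves are hemispherical, and by local constancy of the rotation number on that neighbourhood (using that $\mathrm{rot}(\gamma_s)=2$ for $s<T$), all nearby curves would have rotation number $2$; this would contradict maximality of $T$ (if $T<1$) or contradict the failure at $t=1$ (if $T=1$). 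Hence $\gamma_T$ is borderline hemispherical. Finally, $\mathrm{rot}(\gamma_T)=2$: for $s<T$ close to $T$, $\gamma_s$ is hemispherical with rotation number $2$, and along this sequence the distinguished supporting hemispheres $h_{\gamma_s}$ (barycenter construction of~\cite{Zul12}) converge to $h_{\gamma_T}$, hence the stereographic projections $\eta_{\gamma_s}$ converge in the $C^1$-topology to $\eta_{\gamma_T}$, so the degrees $\mathrm{rot}(\eta_{\gamma_s})$ converge to $\mathrm{rot}(\eta_{\gamma_T})$, forcing $\mathrm{rot}(\gamma_T)=2$. Taking $t=T$ gives the desired time.

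The main obstacle I anticipate is the continuity of the distinguished supporting hemisphere $h_\gamma$ and hence of $\eta_\gamma$ and $\mathrm{rot}(\gamma)$ as $\gamma$ ranges over curves contained in (possibly different) closed hemispheres — in particular across the transition from hemispherical to borderline hemispherical curves. This requires knowing that the barycenter of the set of supporting closed hemispheres varies continuously with $\gamma$, which is exactly the kind of fact established in~\cite{Zul12}; since the lemma is stated as "well-known" and the excerpt explicitly cites~\cite{Zul12} for the properties of this construction, I would invoke that reference for the continuity statement rather than reproving it. A minor technical point is to make sure the degree of $t\mapsto \eta_\gamma'(t)/\|\eta_\gamma'(t)\|$ behaves continuously, which follows once $\eta_{\gamma_s}\to\eta_{\gamma_T}$ in $C^1$ and $\eta_{\gamma_T}$ remains an immersion — and it does, since $\gamma_T$ is still an immersion and the stereographic projection is a local diffeomorphism.
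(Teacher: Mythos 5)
The paper deliberately omits a proof of Lemma~\ref{lem1}, calling it ``a well-known property,'' so there is no in-text argument to compare your proposal against. Your continuity/intermediate-value argument is correct and is the natural way to fill the gap. You define $T$ as the supremum of times up to which the hemispherical-with-rotation-$2$ property persists, use openness of the hemispherical condition to get $T>0$, use compactness of $\SS^2$ and a subsequence of supporting hemispheres to show $\gamma_T$ lies in some closed hemisphere, rule out $\gamma_T$ being hemispherical by maximality of $T$ (or by the failure at $t=1$), and conclude $\mathrm{rot}(\gamma_T)=2$ by continuity of the rotation number. You also correctly identify the one nontrivial ingredient, namely that the distinguished supporting hemisphere $h_\gamma$ of Z\"uhlke's barycenter construction varies continuously in $\gamma$ including across the hemispherical/borderline transition, and appropriately defer that to~\cite{Zul12}. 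One small point worth tightening: the topology on $\mathcal{L}\SS^2(\1)$ is the Hilbert manifold topology coming from the $L^2$ norm of the logarithmic derivative (not literally the $C^1$-topology), but this is harmless for your argument since $L^2$ convergence of the logarithmic derivative implies uniform convergence of the Frenet frame, hence uniform convergence of both $\gamma$ and its unit tangent, which is exactly what your openness/closedness and degree-continuity steps require.
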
 

The second lemma will be proven below.

\begin{lemma}\label{lem2}
Consider the map $L : \mathcal{L}\SS^3(\1,-\1) \rightarrow \mathcal{L}\SS^2(\1)$ given by $L(\gamma)=\gamma_l$, and let $\mathcal{L}\SS^3(\1,-\1)_c$ be the set of convex curves. Then the image of $\mathcal{L}\SS^3(\1,-\1)_c$ by $L$ does not contain a borderline hemispherical curve with rotation number equal to $2$. 
\end{lemma}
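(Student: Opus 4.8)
The plan is to argue by contradiction using the characterization of convexity in terms of intersections with hyperplanes. Suppose $\gamma \in \mathcal{L}\SS^3(\1,-\1)$ is convex and that its left part $\gamma_l = L(\gamma) \in \mathcal{L}\SS^2(\1)$ is borderline hemispherical with rotation number $2$. By definition there is a distinguished closed hemisphere $\bar H_{h}$ (with $h = h_{\gamma_l} \in \SS^2$) containing the image of $\gamma_l$, with $\gamma_l$ not contained in any open hemisphere; this means the great circle $\partial \bar H_h = \{x \in \SS^2 : h\cdot x = 0\}$ must be touched by $\gamma_l$. The first step is to show that the set $Z = \{t \in [0,1] : h\cdot\gamma_l(t) = 0\}$ is nonempty and to extract structural information: since $\gamma_l$ stays in $\bar H_h$, every $t \in Z$ is a point where the smooth function $t \mapsto h\cdot \gamma_l(t)$ attains its minimum value $0$, hence also $h\cdot \gamma_l'(t) = 0$ at each such $t$. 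Thus each intersection of $\gamma_l$ with the great circle $\partial\bar H_h$ has multiplicity at least $2$ in the sense of Definition~\ref{defc} (applied to curves in $\SS^2$, where a ``hyperplane'' is a plane through the origin).

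The second step is to transfer this to $\gamma$ in $\SS^3$ via the explicit formulas of Theorem~\ref{th1}. Recall $\gamma_l(t) = \Gamma_l(t)e_1 = (\Pi_3\circ\tilde\Gamma_l(t))e_1$, and $\tilde\Gamma_l$ is the ``left factor'' of $\tilde{\mathcal{F}}_\gamma = (\tilde\Gamma_l,\tilde\Gamma_r)$. The key point is that a linear condition $h\cdot\gamma_l(t) = 0$ on the $\SS^2$-curve pulls back, through the quadratic map $\Pi_3$ and the decomposition $\mathrm{Spin}_4 \simeq \SS^3\times\SS^3$ acting on $\SS^3 \subset \H$, to a linear condition on $\gamma(t) = (\Pi_4\circ\tilde{\mathcal{F}}_\gamma(t))e_1$: namely, writing $\gamma(t) = \tilde\Gamma_l(t)\,e_1\,\overline{\tilde\Gamma_r(t)}$ as a quaternion and using that $R_z(\mathrm{Im}\H) = \mathrm{Im}\H$, one sees that $h\cdot\gamma_l(t)$ equals a real-linear functional applied to $\gamma(t)$ — in effect $\langle \tilde\Gamma_l(t) \mathbf{i} \overline{\tilde\Gamma_l(t)}, \cdot\rangle$ on $\SS^2$ corresponds to an appropriate functional on $\SS^3$. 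I would identify this functional concretely: a hyperplane $H^{(3)} \subset \R^4$ (through the origin) such that $\gamma(t) \in H^{(3)}$ precisely when $\gamma_l(t) \in \partial\bar H_h$, and moreover the multiplicity is preserved or increased. Then $\gamma$ meets $H^{(3)}$ with total multiplicity at least $2|Z|$, and — this is the crux — being borderline (touching from one side) forces the multiplicity at each point of $Z$ to be \emph{even}, so $\geq 2$; combined with the rotation number being $2$, the curve $\gamma_l$ wraps around enough that $|Z| \geq 2$ (a rotation-number-$2$ immersion on $\partial\bar H_h$ that is tangent to the boundary circle and stays inside must be tangent at two distinct times — here I would invoke Lemma~\ref{lem1}-type reasoning or a direct winding argument). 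This yields total multiplicity $\geq 4$ for the intersection of $\gamma$ with the hyperplane $H^{(3)} \subset \R^4$, contradicting convexity of $\gamma$ (Definition~\ref{defc} with $n = 3$ allows at most $3$).

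The main obstacle I anticipate is the precise bookkeeping in the second step: making rigorous the claim that the linear functional $h$ on $\SS^2$ pulls back to a genuine linear functional on $\R^4$ whose vanishing locus on the image of $\gamma$ has multiplicity controlled from below by the multiplicity on $\gamma_l$. The quaternionic computation $\gamma(t) = z_l(t)\, \mathbf{1}\, \overline{z_r(t)}$ versus $\gamma_l(t) = z_l(t)\,\mathbf{i}\,\overline{z_l(t)}$ shows these involve \emph{different} base quaternions ($\mathbf{1}$ versus $\mathbf{i}$) and \emph{different} conjugation patterns, so the pullback is not literally ``$h$ composed with a linear map''; one must instead express $h\cdot\gamma_l(t)$ using the frame columns $\mathbf{t}_{\gamma_l}, \mathbf{n}_{\gamma_l}$, relate those to $\mathbf{t}_\gamma, \mathbf{n}_\gamma, \mathbf{b}_\gamma$ via the explicit relations $\kappa_{\gamma_l} = (\tau_\gamma+1)/\kappa_\gamma$ etc. from Theorem~\ref{th1}, and thereby realize $h$ as acting on the span of $\gamma, \gamma', \gamma''$ appropriately. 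A cleaner route, which I would try first, is to avoid quaternions at the end: use that $\tilde\Gamma_l$ and $\tilde\mathcal{F}_\gamma$ have the \emph{same} $\Pi_3$-image data encoded in $\kappa$'s and $\tau$, run the degree/winding argument entirely on $\gamma_l$ to locate $t_1 < t_2 \in Z$ with tangency, then lift each tangency \emph{locally} (Taylor expansion of $\tilde{\mathcal{F}}_\gamma$ near $t_i$, as in the chopping computations of \S\ref{s53}) to produce a plane in $\R^4$ hitting $\gamma$ with multiplicity $2$ at $\gamma(t_i)$, and finally glue to get a single hyperplane with multiplicity $4$ — though showing one hyperplane works for both points simultaneously will itself require the global hemisphere structure. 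I expect roughly a page of careful linear algebra here, but no deep new idea beyond Theorem~\ref{th1} and the definition of convexity.
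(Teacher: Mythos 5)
There is a genuine gap, and you have actually put your finger on it yourself without resolving it. The proposal rests on pulling back the linear functional $h\cdot(\,\cdot\,)$ on $\SS^2$ (which vanishes on the boundary circle touched by $\gamma_l$) to a linear functional on $\R^4$ whose zero locus is a hyperplane meeting $\gamma$ with high multiplicity. But the map $\gamma\mapsto\gamma_l$ is not pointwise: $\gamma_l(t)=\Gamma_l(t)e_1$ where $\tilde\Gamma_l$ is obtained by \emph{integrating} the ODE $\tilde\Gamma_l'=\tilde\Gamma_l\Lambda_{\tilde\Gamma_l}$ from the curvature data extracted out of $\tilde{\mathcal F}_\gamma$. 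The value $\gamma_l(t)$ therefore depends on all of $\gamma|_{[0,t]}$, not on $\gamma(t)$ and finitely many derivatives, so there is no hyperplane $H^{(3)}\subset\R^4$ whose zero set on the image of $\gamma$ coincides with the set $\{t:h\cdot\gamma_l(t)=0\}$. Your quaternionic computation ($\gamma=z_l\mathbf1\bar z_r$ vs.\ $\gamma_l=z_l\mathbf i\bar z_l$) already shows the pullback is not a linear map composed with $h$; acknowledging an obstacle is not the same as removing it.

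The ``cleaner route'' does not repair this. Suppose you locate two tangency times $t_1<t_2$ of $\gamma_l$ with the boundary great circle, each of multiplicity $\geq 2$. The desired contradiction requires a \emph{single} hyperplane $H\subset\R^4$ with $\gamma(t_i),\gamma'(t_i)\in H$ for $i=1,2$. But ``$H$ contains four prescribed vectors'' is four linear conditions on the normal direction, which lives in $\mathbb{RP}^3$; generically the four conditions are inconsistent. What you would actually have to prove is that the four vectors $\gamma(t_1),\gamma'(t_1),\gamma(t_2),\gamma'(t_2)$ are linearly \emph{dependent} — only then does a common hyperplane exist and convexity (``$\leq 3$ intersections'') give a contradiction. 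Nothing in the proposal establishes this dependence, and establishing it \emph{is} essentially the content of the lemma; the Taylor-expansion / chopping computations you cite are purely local and cannot tie together what happens at two distant parameter values.

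For comparison, the paper's argument does not work with hyperplanes at all. It uses that convexity forces $\mathcal{F}_\beta(t)\in\mathrm{Bru}_{A^\top}$ for \emph{every} $t$, writes $\mathcal{F}_\beta(t)=A^\top L(t)U(t)$ with $L(t)\in Lo_4^1$, and observes that local convexity forces $L(t)^{-1}L'(t)$ to have positive subdiagonal and nothing else, so in particular $l_{21}+l_{43}$ is strictly monotone. The borderline-hemispherical hypothesis on $\gamma_l$ with rotation number $2$ places $\mathcal{F}_\beta(t_i)$ in the centralizer of $\k_l$ at two distinct times $t_1,t_2$, and a direct computation in that centralizer yields $l_{21}(t_i)=-l_{43}(t_i)$ for $i=1,2$, contradicting strict monotonicity. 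This is where the ``global'' constraint lives — in the triangular factor of the Bruhat decomposition of the frame, not in a hyperplane of $\R^4$ — and that is the piece your proposal is missing.
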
 

In fact, we believe that a stronger statement is true: the image of the whole space $\mathcal{L}\SS^3(\1,-\1)$ by $L$ does not contain a borderline hemispherical curve with rotation number equal to $2$. With this stronger statement it would be easy to see from the proof below that our necessary condition for a curve in $\mathcal{L}\SS^3(\1,-\1)$ to be convex is also sufficient. Yet for the moment we are not able to prove this stronger statement.   

Let us now see how these lemmas are used to prove Theorem~\ref{th3}. 

\begin{proof}[{} of Theorem~\ref{th3}]
Recall that the map $L : \mathcal{L}\SS^3(\1,-\1) \rightarrow \mathcal{L}\SS^2(\1)$ given by $L(\gamma)=\gamma_l$ is continuous, and that $\mathcal{L}\SS^3(\1,-\1)$ contains exactly two connected components, one of which is made of convex curves $\mathcal{L}\SS^3(\1,-\1)_c$ and the other of non-convex curves $\mathcal{L}\SS^3(\1,-\1)_n$.  We need to prove that the image of $\mathcal{L}\SS^3(\1,-\1)_c$ by $L$ contains only curves which are hemispherical with rotation number equal to $2$.

First let us prove that this image contains at least one such element. Recall the family of curves $\gamma_{1}^m \in \mathcal{L}\SS^3((-\1)^m,\k^m)$, $m \geq 1$, defined in Example~\ref{family1}. For $m=2$, $\gamma_1^2=(\sigma_\pi^2,\sigma^1_{2\pi}) \in \mathcal{L}\SS^3(\1,-\1)$ is convex; the proof of this assertion is entirely similar to the proof of the fact that $\gamma_1^1 \in \mathcal{L}\SS^3(-\1,\k)$ is convex (see the proof of Theorem~\ref{th2} in \S\ref{s63}). Moreover, it is clear that $L(\gamma_1^2)=\sigma_\pi^2$ is hemispherical and has rotation number equal to $2$, and therefore the image of  $\mathcal{L}\SS^3(\1,-\1)_c$ by $L$ contains at least the curve $L(\gamma_1^2)=\sigma_\pi^2$. 

To prove that the image of $\mathcal{L}\SS^3(\1,-\1)_c$ by $L$ contains only curves which are hemispherical with rotation number equal to $2$, we argue by contradiction, and assume that the image of $\mathcal{L}\SS^3(\1,-\1)_c$ by $L$ contains a curve which is not hemispherical with rotation number equal to $2$. Since $L$ is continuous and $\mathcal{L}\SS^3(\1,-\1)_c$ is connected, its image by $L$ is connected and thus we can find a homotopy $H : [0,1] \rightarrow L\left(\mathcal{L}\SS^3(\1,-\1)_c\right) \subset \mathcal{L}\SS^2(\1)$ between $H(0)=\sigma_\pi^2$, which is hemispherical with rotation number equal to $2$, and a curve $H(1)$ which does not have this property. Using Lemma~\ref{lem1}, one can find a time $t>0$ such that $H(t) \in L\left(\mathcal{L}\SS^3(\1,-\1)_c\right)$ is borderline hemispherical with rotation number equal to $2$. But by Lemma~\ref{lem2}, such a curve $H(t)$ cannot belong to $ L\left(\mathcal{L}\SS^3(\1,-\1)_c\right)$, and so we arrive at a contradiction.   
\end{proof}

To conclude, it remains to prove Lemma~\ref{lem2}.

\begin{proof}[{} of Lemma~\ref{lem2}]
We argue by contradiction, and assume that there exists a curve $\beta \in \mathcal{L}\SS^3(\1,-\1)_c$ (that is a convex curve $\beta \in \mathcal{L}\SS^3(\1,-\1)$) such that its left part $\beta_l$ is  borderline hemispherical curve with rotation number equal to $2$. 

First we use our assumption that $\beta$ is convex, which implies that $\mathcal{F}_{\beta}(t)$ belongs to the Bruhat cell of $A ^\top$ for all time $t \in [0,1]$. Therefore, by definition, there exist matrices $U_1(t) \in Up_4^+$, $U_2(t) \in Up_4^+$ (recall that $Up_4^+$ is the group of upper triangular $4$ by $4$ matrices with positive diagonal entries) such that
\[ \mathcal{F}_{\beta}(t)=U_1(t){} A ^\top U_2(t)\]
that can be also written as
\[ \mathcal{F}_{\beta}(t)={}  A ^\top L_1(t)U_2(t), \quad L_1(t):=AU_1(t){}A ^\top\]
and $L_1(t) \in Lo_4^+$, where $Lo_4^+$ is the group of lower triangular $4$ by $4$ matrices with positive diagonal entries. Such a decomposition is not unique, but there exists a unique decomposition
\begin{equation}\label{dec}
\mathcal{F}_{\beta}(t)={}  A ^\top L(t)U(t)
\end{equation}
where $U(t) \in Up_4^+$, but this time $L(t)\in Lo_4^1$, where $Lo_4^1$ is the group of lower triangular $4$ by $4$ matrices with diagonal entries equal to one. Using the fact that $\mathcal{F}_{\beta}(t)^{-1}\mathcal{F}_{\beta}'(t)$ belongs to $\mathfrak{J}$ (because $\beta$ is in particular locally convex), it is easy to see, by a simple computation, that the matrix $L(t)$ in~\eqref{dec} is such that $L(t)^{-1}L'(t)$ has positive subdiagonal entries and all other entries are zero, that is we can write
\begin{equation}\label{L}
L(t)^{-1}L'(t)=
\begin{pmatrix}
    0 & 0 & 0 &  0 \\
    + & 0 & 0 &  0 \\
    0 & + & 0 &  0 \\
    0 & 0 & + &  0
\end{pmatrix}
, \quad t \in [0,1].
\end{equation}

Then we use our assumption that the left part $\beta_l$ is  borderline hemispherical curve with rotation number equal to $2$. 
\begin{figure}[H]
\centering
\includegraphics[scale=0.3]{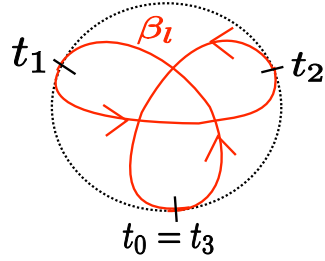}
\caption{The curve $\beta_l$.}
\label{fig:l}
\end{figure}
This implies (see Figure~\ref{fig:l}, where the dotted circle represent the equator of the sphere) that there exist times $t_1$ and $t_2$ and reals $\theta_1$ and $\theta_2$ such that
\[ \tilde{\mathcal{F}}_{\beta_l}(t_1)=\exp(\theta_1\k) \in \SS^3, \quad \tilde{\mathcal{F}}_{\beta_l}(t_2)=\exp(\theta_2\k) \in \SS^3\]
and consequently, for $\beta$, we have
\begin{equation}\label{assum}
\begin{cases}
\tilde{\mathcal{F}}_{\beta}(t_1)=(\exp(\theta_1\k),z_r(t_1)) \in \SS^3 \times \SS^3 \\
\tilde{\mathcal{F}}_{\beta}(t_2)=(\exp(\theta_2\k),z_r(t_2)) \in \SS^3 \times \SS^3.
\end{cases}
\end{equation}
Recalling that $\Pi_4 : \SS^3 \times \SS^3 \rightarrow SO_4$ is the canonical projection, it follows from~\eqref{assum} that $\mathcal{F}_{\beta}(t_1)=\Pi_4(\tilde{\mathcal{F}}_{\beta}(t_1))$ and $\mathcal{F}_{\beta}(t_2)=\Pi_4(\tilde{\mathcal{F}}_{\beta}(t_2))$ belong to the subgroup $H$ of matrices in $SO_4$ that commutes with the matrix $\k_l$ defined in~\S\ref{s61}, which is   
\[ \k_l=
\begin{pmatrix}
0 & 0 & 0 & -1 \\
0 & 0 & -1 & 0 \\
0 & +1 & 0  & 0 \\
+1 & 0 & 0 & 0
\end{pmatrix}. \]
Clearly, this subgroup $H$ consists of matrices of the form
\[
\begin{pmatrix}
q_{11} & q_{12} & -q_{42} & -q_{41} \\
q_{21} & q_{22} & -q_{32} & -q_{31} \\
q_{31} & q_{32} & q_{22}  & q_{21} \\
q_{41} & q_{42} & q_{12} & q_{11}
\end{pmatrix} \in SO_4. \]
Using this explicit form of $H$ and the fact that $\mathcal{F}_{\beta}(t_1) \in H$ and $\mathcal{F}_{\beta}(t_1) \in H$, one finds, after a direct computation, that the matrix 
\begin{equation}
L(t)=
\begin{pmatrix}
1 & 0 & 0 & 0 \\
l_{21}(t) & 1 & 0 & 0 \\
l_{31}(t) & l_{32}(t) & 1  & 0 \\
l_{41}(t) & l_{42}(t) & l_{43}(t) & 1
\end{pmatrix}
\end{equation}
defined in~\eqref{dec} satisfies, at $t=t_1$ and $t=t_2$, the conditions
\begin{equation}\label{contra}
l_{21}(t_1)=-l_{43}(t_1), \quad l_{21}(t_2)=-l_{43}(t_2). 
\end{equation}
But clearly, \eqref{contra} is not compatible with~\eqref{L}, and this gives the desired contradiction.  

\end{proof}


%
%

\chapter{Convex arcs and Bruhat cells}
\label{chapter7}

The aim of this Chapter is to explain the link between the convexity of arcs and the Bruhat decomposition on $SO_{n+1}$ and $\mathrm{Spin}_{n+1}$. In~\S\ref{s71}, we recall some well-known general facts about convex matrices and convex spins. In~\S\ref{s72} we prove Theorem~\ref{th4}: that is we determine, for $n=3$, the complete list of all convex matrices and spins, and the next open cell in which they immediately enter (this list is much shorter and well-known in the case $n=2$). The main tool for this classification of matrices is the positive and negative chopping operations that were introduced in Chapter~\ref{chapter5}. For spins this is more complicated, and we will strongly use the fact that locally convex curves in $\SS^3$ can be decomposed as a pair of curves on $\SS^2$, that is Theorem~\ref{th1}, and the examples we gave in Chapter~\ref{chapter6}, \S\ref{s62}. Finally, in~\S\ref{s73}, we prove that the spaces $\mathcal{L}\SS^3(\1,\1), \mathcal{L}\SS^3(-\1,-\1), \mathcal{L}\SS^3(\1,-\1)$ and $\mathcal{L}\SS^3(-\1,\1)$ are respectively homeomorphic to $\mathcal{L}\SS^3(-\1,-\k),  \mathcal{L}\SS^3(\1,\k), \mathcal{L}\SS^3(-\1,\k)$ and $\mathcal{L}\SS^3(\1,-\k)$.

\section{Characterization of convex arcs}\label{s71}

We start with some generalities, that are valid for any $n\geq 2$. Consider a smooth Jacobian curve
\[ \Gamma : [-1,1] \rightarrow SO_{n+1} \]
and a smooth holonomic curve
\[ \tilde{\Gamma} : [-1,1] \rightarrow \mathrm{Spin}_{n+1}. \]
Recall that this is the same thing as looking at the Frenet frame curve (or lifted Frenet frame curve) of a locally convex curve, but for the moment, we do not require the curve to be defined on $[0,1]$ (we want a symmetric interval since we will look at the past $t<0$ and at the future $t>0$ of $t=0$) and we do not require $\Gamma(0)=I$. We will assume these two conditions later on. 

If $\Gamma(0)$ (respectively $\tilde{\Gamma}(0)$) belongs to an open Bruhat cell, then there exists $\varepsilon>0$ such that $\Gamma(t)$ (respectively $\tilde{\Gamma}(t)$) stays in this Bruhat cell for all $t \in (-\varepsilon,\varepsilon)$. If $\Gamma(0)$ (respectively $\tilde{\Gamma}(0)$) belongs to a lower dimensional Bruhat cell, we can also determine the immediate future and past of the curve. 

\begin{proposition}
Assume that $\Gamma(t)$ (respectively $\tilde{\Gamma}(t)$) belongs to a lower dimensional Bruhat cell. Then there exists $\varepsilon>0$ such that $\Gamma(t)$ (respectively $\tilde{\Gamma}(t)$) belongs to $\Gamma(0)\mathrm{Bru}_{ {} A ^\top}$ (respectively $\tilde{\Gamma}(0)\mathrm{Bru}_{ {} \bar{\a}}$) for all $t \in (0,\varepsilon)$, and to $\Gamma(0)\mathrm{Bru}_{A}$ (respectively $\tilde{\Gamma}(0)\mathrm{Bru}_{\mathbf{a}}$) for all $t \in (-\varepsilon,0)$.  
\end{proposition}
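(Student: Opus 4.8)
The plan is to reduce to the case $\Gamma(0)=I$ (resp. $\tilde\Gamma(0)=\1$) and then to read off the two open cells from the positive and negative chopping operations of Chapter~\ref{chapter5}. First I would set $\beta(s)=\Gamma(0)^{-1}\Gamma(s)$ and observe that $\beta(s)^{-1}\beta'(s)=\Gamma(s)^{-1}\Gamma'(s)\in\mathfrak{J}$, so $\beta$ is again a Jacobian curve with $\beta(0)=I$; moreover $\Gamma(t)\in\Gamma(0)\mathrm{Bru}_{A^\top}$ iff $\beta(t)\in\mathrm{Bru}_{A^\top}$, and likewise with $A$. Since $\Pi_{n+1}$ is a homomorphism, the same reduction works in the spin group with $\tilde\beta(s)=\tilde\Gamma(0)^{-1}\tilde\Gamma(s)$. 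So it suffices to prove that a Jacobian curve $\beta$ with $\beta(0)=I$ lies in $\mathrm{Bru}_{A^\top}$ for all small $t>0$ and in $\mathrm{Bru}_A$ for all small $t<0$, together with the holonomic analogue involving $\mathrm{Bru}_{\bar{\a}}$ and $\mathrm{Bru}_{\a}$. (The hypothesis that $\Gamma(0)$ lies in a lower-dimensional cell is what makes this the useful statement; after the reduction it holds automatically, as $\mathrm{Bru}_I=\{I\}$.)

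For the future ($t>0$): by Proposition~\ref{propjacobian} the restriction $\beta|_{[0,\delta]}$ is the germ at $0$ of the Frenet frame curve of a locally convex curve starting at $I$. I would extend it past $\delta$ so as to bring the final frame back to $I$ — this is possible because $\mathcal{L}\SS^n(Q)\neq\emptyset$ for every $Q$ (Theorem~\ref{thmani}) and the topology of \S\ref{s36} tolerates the resulting concatenation — obtaining $\gamma\in\mathcal{L}\SS^n(I;I)$ whose Frenet frame agrees with $\beta$ near $0$. Proposition~\ref{propchop2} then places $\mathcal{F}_\gamma(t)=\beta(t)$ in $\mathrm{Bru}_{\mathbf{chop}^+(I)}$ for small $t>0$, and $\mathbf{chop}^+(I)=\Delta^+(I)A^\top=A^\top$ since $\Delta^+(I)=I$. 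For the past ($t<0$): I would reparametrize $\beta|_{[-\delta,0]}$ onto the tail of $[0,1]$ and prepend a Jacobian arc from $I$ to $\beta(-\delta)$ (again using nonemptiness of $\mathcal{L}\SS^n(\beta(-\delta))$), obtaining $\gamma\in\mathcal{L}\SS^n(I)$ whose Frenet frame near $t=1$ is a reparametrization of $\beta$ near $0^-$. Proposition~\ref{propchop} puts it in $\mathrm{Bru}_{\mathbf{chop}^-(I)}=\mathrm{Bru}_A$ (again $\Delta^-(I)=I$), and undoing the reparametrization gives $\beta(t)\in\mathrm{Bru}_A$ for all small $t<0$.

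The spin statement I would prove the same way, using Definitions~\ref{defchop} and~\ref{defchop2} in place of Propositions~\ref{propchop} and~\ref{propchop2}: these say precisely that the lifted Frenet frame of a curve in $\mathcal{L}\SS^n(z)$ (resp. $\mathcal{L}\SS^n(z;\1)$) enters $\mathrm{Bru}_{\mathbf{chop}^-(z)}$ near $t=1$ (resp. $\mathrm{Bru}_{\mathbf{chop}^+(z)}$ near $t=0$). Since $\mathbf{chop}^-(\1)=\a$ and $\mathbf{chop}^+(\1)=\bar{\a}$ by the definitions of $\a$ and $\bar{\a}$ in \S\ref{s53}, the two constructions above give $\tilde\beta(t)\in\mathrm{Bru}_{\bar{\a}}$ for small $t>0$ and $\tilde\beta(t)\in\mathrm{Bru}_{\a}$ for small $t<0$, and the reduction step carries this back to $\tilde\Gamma$.

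The step I expect to be the main obstacle is the honest application of Propositions~\ref{propchop} and~\ref{propchop2}: they are stated for curves living in the fixed spaces $\mathcal{L}\SS^n(Q)$ and $\mathcal{L}\SS^n(Q;I)$, whereas I only have the germ at an endpoint of a Jacobian curve to start with. The extension-and-concatenation device above, combined with the fact that the conclusions of those propositions depend only on that germ, is meant to bridge this gap; everything else — in particular the computation of $\Delta^{\pm}$, which is needed only at the identity, where it is trivial — is routine.
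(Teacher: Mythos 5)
Your proof is correct and follows essentially the same strategy as the paper's: conjugate by $\Gamma(0)^{-1}$ to reduce to a Jacobian curve through the identity, then read off the two open cells from $\mathbf{chop}^+(I)=A^\top$ and $\mathbf{chop}^-(I)=A$ via Propositions~\ref{propchop} and~\ref{propchop2} (and their spin analogues). The paper's own proof is a three-line version of exactly this. The one place where you do more work is the extension-and-concatenation step used to place the Jacobian germ inside the fixed spaces $\mathcal{L}\SS^n(I;I)$ and $\mathcal{L}\SS^n(I)$ before invoking the chopping propositions; the paper silently appeals to the local character of those propositions (their conclusions depend only on the germ of $\mathcal{F}_\gamma$ at the relevant endpoint), whereas you make the bridge explicit by padding out the curve to a full element of the ambient space. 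That is a legitimate and arguably cleaner way to discharge the hypothesis; it costs nothing thanks to the nonemptiness of $\mathcal{L}\SS^n(z)$ and the $L^2$ topology's tolerance of concatenation. Your parenthetical remark that the lower-dimensional hypothesis is automatic after the reduction is also right, and correctly identifies it as a matter of when the statement is \emph{useful} rather than when it is \emph{true}.
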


\begin{proof}
Indeed, the curve $\Gamma(0)^{-1}\Gamma(t)$ is still Jacobian and equals the identity for $t=0$. Using the positive and negative chopping operations, Proposition~\ref{propchop} and Proposition~\ref{propchop2}, and the fact that $\mathbf{chop}^+(I)={} A ^\top$ and $\mathbf{chop}^-(I)=A$, we obtain
\[ \Gamma(0)^{-1}\Gamma(t) \in \mathrm{Bru}_{ {} A^\top}, \quad t \in (0,\varepsilon)  \]
and  
\[ \Gamma(0)^{-1}\Gamma(t) \in \mathrm{Bru}_{A}, \quad t \in (-\varepsilon,0). \]
In the spin case, the argument is similar. 
\end{proof}

\medskip

From now on, assume that $\Gamma(0)=I$ (respectively $\tilde{\Gamma}(0)=\mathbf{1}$). For small positive $t$ (respectively negative $t$), $\Gamma(t)$ belongs to the cell of $A ^\top$ (respectively to the cell of $A$). For small positive $t$ (respectively negative $t$), one can also show that $\Gamma$ is globally Jacobian. Indeed, we can write $\Gamma=\mathcal{F}_\gamma$ for some locally convex curve $\gamma$ defined on some interval around $0$. Since $\Gamma(0)=\mathcal{F}_\gamma(0)=I$, by a Taylor expansion we have that for $|t|$ arbitrarily small, $\gamma(t)$, considered as a curve in $\R^{n+1}$, is arbitrarily close to the curve $t \mapsto (1,t,\dots,t^n)$, and this curve is convex. Since being convex is an open property, this shows that for $|t|$ sufficiently small, $\gamma$ is convex, that is $\Gamma$ is globally Jacobian.    

This in fact a general phenomenon: a Jacobian curve is globally Jacobian if and only if its future (respectively its past) stays in the cell of $A ^\top$ (respectively in the cell of $A$). Let us state this as a theorem (see~\cite{Ani98}).

\begin{theorem}\label{ani}
Consider a smooth Jacobian curve $\Gamma : [-1,1] \rightarrow SO_{n+1}$ with $\Gamma(0)=I$. Then $\Gamma$ is globally Jacobian if and only $\Gamma(t)$ belongs to the cell of $A ^\top$ for all $t \in (0,1)$ and to the cell of $A$ for all $t \in (-1,0)$. 
\end{theorem}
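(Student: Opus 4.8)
The plan is to prove the two implications separately. One direction is essentially already contained in the discussion preceding the statement: if $\Gamma$ is globally Jacobian with $\Gamma(0)=I$, then writing $\Gamma = \mathcal F_\gamma$ for a convex curve $\gamma$, the Taylor expansion argument shows $\gamma$ is convex near $t=0$ and, combined with Proposition~\ref{propchop} and Proposition~\ref{propchop2} together with the identities $\mathbf{chop}^+(I) = A^\top$ and $\mathbf{chop}^-(I) = A$, forces $\Gamma(t) \in \mathrm{Bru}_{A^\top}$ for small $t>0$ and $\Gamma(t) \in \mathrm{Bru}_A$ for small $t<0$. The point that needs care is upgrading \emph{small} $t$ to \emph{all} $t \in (0,1)$ (resp.\ $(-1,0)$): I would argue that being in the open cell of $A^\top$ is a necessary condition that propagates, using the characterization of convexity in terms of hyperplane intersections (Definition~\ref{defc}) — if $\gamma$ is globally convex, then every sub-arc is globally convex, hence for any $t_0 \in (0,1)$ the restriction of $\gamma$ to a neighborhood of $t_0$ is convex, so $\Gamma(t_0)^{-1}\Gamma(t)$ lands in $\mathrm{Bru}_{A^\top}$ for $t$ slightly larger than $t_0$; relating $\Gamma(t_0)^{-1}\Gamma(t_0^+)$ to the cell of $\Gamma(t_0)$ itself via the Bruhat action then pins down which cell $\Gamma(t_0)$ lies in. More directly: convexity of $\gamma|_{[0,t_0+\epsilon]}$ forces, by the "chopping from the left" homeomorphism philosophy and Theorem~\ref{ani}'s easy half applied on sub-intervals, that $\Gamma(t_0)$ cannot be in a lower-dimensional cell and must be in the cell of $A^\top$.

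For the converse — the substantive direction — suppose $\Gamma(t)$ lies in the cell of $A^\top$ for all $t \in (0,1)$ and in the cell of $A$ for all $t \in (-1,0)$; I must show $\gamma(t) = \Gamma(t)e_1$ is globally convex, i.e.\ no hyperplane meets the image in more than $n$ points counted with multiplicity. The approach I would take is by contradiction: suppose some hyperplane $H = \{x : \langle a, x\rangle = 0\}$ meets $\gamma$ in at least $n+1$ points with multiplicity, say at parameters $s_1 \le \dots \le s_{n+1}$ in $(-1,1)$ (repetitions encoding multiplicity). The function $f(t) = \langle a, \gamma(t)\rangle$ then has $n+1$ zeros counted with multiplicity. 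Since $\Gamma$ is Jacobian, $f$ satisfies a disconjugacy-type property locally — between consecutive zeros of $f$ there are forced zeros of suitable "derived" functions built from the Frenet frame — and I would exploit the tridiagonal positive-subdiagonal structure of $\Lambda \in \mathfrak J$ to run a Rolle/Markov–Pólya-system argument showing that $n+1$ zeros of $f$ force $\Gamma(t_\ast)$, at some intermediate parameter $t_\ast$, to leave the open top-dimensional cell — i.e.\ to have $\langle a, \gamma\rangle$ and enough of its Frenet-derivatives vanish simultaneously, placing $\Gamma(t_\ast)$ in a lower-dimensional cell — contradicting the hypothesis. Equivalently, and perhaps more cleanly, I would phrase the cell condition as: $\Gamma(t) \in \mathrm{Bru}_{A^\top}$ for $t>0$ says precisely that all the relevant lower-left minors (the ones whose vanishing detects a drop in Bruhat dimension) keep a fixed strict sign, and these minors are exactly the Wronskian-type determinants $\det(\gamma(t), \gamma'(t), \dots, \gamma^{(k)}(t))$-style quantities whose sign-constancy is equivalent to convexity of the arc.

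The main obstacle I anticipate is the converse direction, specifically translating "stays in the cell of $A^\top$ forever" into a \emph{global} statement about hyperplane intersections rather than a local one. Locally the equivalence is routine; globally one must rule out a "return" — a hyperplane that the curve touches, leaves, and comes back to — and the cell condition only records infinitesimal data at each time. I would handle this by a continuity/monotonicity argument: track, for a fixed covector $a$, the number of sign changes (or weighted zeros) of the vector $(\langle a, u_0(t)\rangle, \dots, \langle a, u_n(t)\rangle)$ where $u_i$ is the Frenet frame, show this count is non-increasing in $t$ (this is where the positivity of the subdiagonal of $\Lambda$ enters decisively, via a variation-diminishing property analogous to that for totally positive systems), and observe that at $t \to 0^+$ it is bounded by $n$ because $\gamma(t) \approx (1, t, \dots, t^n)$. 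Since the count of zeros of $\langle a, \gamma(\cdot)\rangle$ over any sub-interval is dominated by this frame-zero count, $\gamma$ meets $H$ at most $n$ times. Running the same argument on $(-1,0]$ using the cell of $A$ (time-reversed), and gluing at $t=0$, completes the proof; citing~\cite{Ani98} for the precise form of the variation-diminishing lemma is the natural shortcut here.
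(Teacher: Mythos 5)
The paper itself does not prove Theorem~\ref{ani}: it is stated as a known result and attributed to Anisov~\cite{Ani98}. So there is no paper argument to compare against, and what you wrote has to be judged on its own merits as a reconstruction of Anisov's theorem.

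Your broad framing — interpret the cell condition as sign constraints on Wronskian-type (south-west) minors of $(\gamma,\gamma',\dots,\gamma^{(n)})$, and treat the hard direction as a disconjugacy statement for the associated linear ODE of order $n+1$, to be handled by a variation-diminishing argument in the spirit of Gantmacher--Krein and then pinned down by citing~\cite{Ani98} — is the right framework and is consistent with how this result is proved in the literature. Deferring the precise variation-diminishing lemma to Anisov is no worse than what the paper does (the paper cites without proof).

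There is, however, a genuine gap in your ``easy'' direction. Your first formulation looks at $\Gamma(t_0)^{-1}\Gamma(t)$ for $t$ slightly larger than $t_0$ and asserts that this ``pins down which cell $\Gamma(t_0)$ lies in.'' It cannot: $\Gamma(t_0)^{-1}\Gamma(t_0)=I$ always, so the Taylor-expansion argument places $\Gamma(t_0)^{-1}\Gamma(t)$ in $\mathrm{Bru}_{A^\top}$ for small $t-t_0>0$ for \emph{every} Jacobian $\Gamma$, convex or not, and for every cell containing $\Gamma(t_0)$; this observation carries no information about $\Gamma(t_0)$ itself. Your second formulation (``convexity of $\gamma|_{[0,t_0+\epsilon]}$ forces $\Gamma(t_0)$ into the top cell'') is the correct claim but is asserted by invoking ``the chopping philosophy and Theorem~\ref{ani}'s easy half on sub-intervals,'' which is circular. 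The non-circular argument should use $\mathcal{F}_\gamma(0)=I$ directly: if the $k\times k$ south-west minor of $(\gamma(t_0),\dots,\gamma^{(n)}(t_0))$ vanishes, there is a covector $a$ supported on the last $k$ coordinates with $\langle a,\gamma^{(j)}(t_0)\rangle=0$ for $0\le j\le k-1$; but since $\mathcal{F}_\gamma(0)=I$, this same $a$ also annihilates $\gamma^{(j)}(0)$ for $0\le j\le n-k$, giving total intersection multiplicity at least $(n+1-k)+k=n+1$ with the hyperplane $a^\perp$, contradicting convexity. Hence all south-west minors are nonzero on $(0,1)$, and their signs are fixed by continuity from the Taylor expansion at $t\to 0^+$, giving $\Gamma(t_0)\in\mathrm{Bru}_{A^\top}$. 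This is the step your writeup is missing.
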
  

In fact, it is easy to prove that $\Gamma(t)$ belongs to the cell of $A ^\top$ for all $t \in (0,1)$ if and only $\Gamma(t)$ belongs to the cell of $A$ for all $t \in (-1,0)$: to show this, one can use the time reversal $\mathbf{TR}^*$ defined in Chapter~\ref{chapter5}, \S\ref{s53}. 

The same result holds in the spin case.

\begin{theorem}\label{ani2}
Consider a smooth holonomic curve $\tilde{\Gamma} : [-1,1] \rightarrow \mathrm{Spin}_{n+1}$ with $\tilde{\Gamma}(0)=\mathbf{1}$. Then $\tilde{\Gamma}$ is globally holonomic if and only $\tilde{\Gamma}(t)$ belongs to the cell of $\bar{\a}$ for all $t \in (0,1)$ and to the cell of $\mathbf{a}$ for all $t \in (-1,0)$. 
\end{theorem}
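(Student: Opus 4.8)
The plan is to deduce Theorem~\ref{ani2} directly from Theorem~\ref{ani} by projecting through $\Pi_{n+1} : \mathrm{Spin}_{n+1} \to SO_{n+1}$. Set $\Gamma = \Pi_{n+1} \circ \tilde{\Gamma}$. By Definition~\ref{globholonomic}, $\tilde{\Gamma}$ is globally holonomic if and only if $\Gamma$ is globally Jacobian, and by Definition~\ref{holonomic} together with the hypothesis $\tilde{\Gamma}(0) = \mathbf{1}$, the curve $\Gamma$ is a Jacobian curve with $\Gamma(0) = \Pi_{n+1}(\mathbf{1}) = I$. So Theorem~\ref{ani} applies: $\Gamma$ is globally Jacobian if and only if $\Gamma(t) \in \mathrm{Bru}_{A^\top}$ for all $t \in (0,1)$ and $\Gamma(t) \in \mathrm{Bru}_A$ for all $t \in (-1,0)$.

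The remaining point is to translate the cell membership of $\Gamma(t)$ into cell membership of $\tilde{\Gamma}(t)$. First I would use the local statement proved just above (the unnamed Proposition on the immediate future and past, applied with $\tilde{\Gamma}(0) = \mathbf{1}$, so $\bar{\a} = \mathbf{chop}^+(\mathbf 1)$ and $\mathbf a = \mathbf{chop}^-(\mathbf 1)$): there is $\varepsilon > 0$ such that $\tilde{\Gamma}(t) \in \mathrm{Bru}_{\bar{\a}}$ for $t \in (0,\varepsilon)$ and $\tilde{\Gamma}(t) \in \mathrm{Bru}_{\mathbf a}$ for $t \in (-\varepsilon,0)$. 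Since $\Pi_{n+1}$ maps $\mathrm{Bru}_{\bar\a}$ into $\mathrm{Bru}_{A^\top}$ and $\mathrm{Bru}_{\mathbf a}$ into $\mathrm{Bru}_A$ (recall $\bar\a$ projects to $A^\top$ and $\a$ projects to $A$, and Bruhat cells upstairs are connected components of the preimages of Bruhat cells downstairs), the two descriptions are consistent on $(-\varepsilon,\varepsilon)\setminus\{0\}$.

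Now I would argue the global equivalence. Recall from Chapter~\ref{chapter2} that $\Pi_{n+1}^{-1}(\mathrm{Bru}_{A^\top})$ is the disjoint union of the two contractible cells $\mathrm{Bru}_{\bar\a}$ and $\mathrm{Bru}_{-\bar\a}$, and similarly $\Pi_{n+1}^{-1}(\mathrm{Bru}_A) = \mathrm{Bru}_{\mathbf a} \sqcup \mathrm{Bru}_{-\mathbf a}$. Suppose $\tilde\Gamma$ is globally holonomic. Then $\Gamma$ is globally Jacobian, so by Theorem~\ref{ani}, $\Gamma(t) \in \mathrm{Bru}_{A^\top}$ for all $t\in(0,1)$; hence $\tilde\Gamma(t)$ lies in $\mathrm{Bru}_{\bar\a} \sqcup \mathrm{Bru}_{-\bar\a}$ for all $t\in(0,1)$. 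Since $\tilde\Gamma$ restricted to $(0,1)$ is continuous and these two cells are disjoint open-in-the-union pieces, $\tilde\Gamma((0,1))$ lies entirely in one of them; the local statement pins it to $\mathrm{Bru}_{\bar\a}$ near $0$, so $\tilde\Gamma(t) \in \mathrm{Bru}_{\bar\a}$ for all $t\in(0,1)$. The same argument with $\mathrm{Bru}_{\mathbf a}, \mathrm{Bru}_{-\mathbf a}$ handles $t \in (-1,0)$. Conversely, if $\tilde\Gamma(t) \in \mathrm{Bru}_{\bar\a}$ for all $t\in(0,1)$ and $\tilde\Gamma(t)\in\mathrm{Bru}_{\mathbf a}$ for all $t\in(-1,0)$, then applying $\Pi_{n+1}$ gives $\Gamma(t)\in\mathrm{Bru}_{A^\top}$ for $t\in(0,1)$ and $\Gamma(t)\in\mathrm{Bru}_A$ for $t\in(-1,0)$, so $\Gamma$ is globally Jacobian by Theorem~\ref{ani}, i.e.\ $\tilde\Gamma$ is globally holonomic. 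The only mild subtlety — and the one place needing care — is the connectedness/covering argument that upgrades the local "near $t=0$" information to the whole interval $(0,1)$; everything else is a formal transfer through the covering projection, exactly as indicated by the remark "In the spin case, the argument is similar" following the local proposition.
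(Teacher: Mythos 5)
Your proof is correct, and it supplies exactly the argument the paper leaves implicit: the text gives no proof of Theorem~\ref{ani2}, merely stating it after Theorem~\ref{ani} (which is cited to Anisov) and leaning on the remark ``In the spin case, the argument is similar'' attached to the local proposition about immediate future and past. Your transfer through $\Pi_{n+1}$ is the natural way to make that remark precise. You have also correctly flagged the one non-formal step: because $\mathrm{Bru}_{A^\top}$ is an open top-dimensional cell and $\Pi_{n+1}$ is a covering, $\Pi_{n+1}^{-1}(\mathrm{Bru}_{A^\top})=\mathrm{Bru}_{\bar\a}\sqcup\mathrm{Bru}_{-\bar\a}$ is a disjoint union of two open sets, so the preimages of these under the continuous map $\tilde\Gamma\vert_{(0,1)}$ form an open partition of the connected interval $(0,1)$; the local proposition (with $\tilde\Gamma(0)=\1$, so $\tilde\Gamma(0)\,\mathrm{Bru}_{\bar\a}=\mathrm{Bru}_{\bar\a}$) forces the $\mathrm{Bru}_{\bar\a}$ piece to be nonempty, hence all of $(0,1)$. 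The converse direction is the trivial projection, as you say. The argument for $t\in(-1,0)$ with $\a$ is identical. This is as clean and complete as one could want here.
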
 

This result explains in particular why closed convex curves can exist only on an even dimensional sphere, that is when $n+1$ is odd. When $n+1$ is odd, such closed convex curves exist as we already gave an example in Chapter~\ref{chapter3}, Example~\ref{exclosed}. Now assume we have a closed convex curve: its future should coincide with its past, hence in view of Theorem~\ref{ani}, we should have $A ^\top=A$, but this is true if and only if $n+1$ is odd.  

From now on, we assume that $\Gamma : [0,1] \rightarrow SO_{n+1}$, so that $\Gamma=\mathcal{F}_\gamma$ for an element $\gamma \in \mathcal{L}\SS^n$. From what we explained above, $\Gamma(t)=\mathcal{F}_\gamma(t)$ stays in the cell of $A ^\top$ for small positive $t$, and eventually loose convexity at some time $t_1>0$. Recalling the definition of stably convex and convex elements in $SO_{n+1}$, we easily identify stably convex matrices with the Bruhat cell of $A ^\top$, and convex matrices with the matrices of the form $\Gamma(t_1)=\mathcal{F}_\gamma(t_1)$. Convex matrices are a disjoint union of one open cell (the cell of stably convex matrices) and lower dimensional cells. There are $2^n((n+1)!-1)$ lower dimensional Bruhat cells in total, but the number of convex (but not stably convex) is much smaller.

\begin{proposition}\label{numberconvexm}
The set of convex matrices is a disjoint union of one open cell, and $(n+1)!-1$ closed cells.
\end{proposition}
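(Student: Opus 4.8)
The plan is to count the convex Bruhat cells by using the decomposition of locally convex curves in $\SS^3$ into pairs of curves in $\SS^2$ (Theorem~\ref{th1}) and the known classification in the case $n=2$ — but since the statement in Proposition~\ref{numberconvexm} is uniform in $n$, I would in fact argue directly via convex curves and the characterization of globally Jacobian curves. The key observation, already sketched before the statement, is that a matrix $Q\in SO_{n+1}$ is convex if and only if there is a convex arc $\gamma:[t_0,t_1]\to\SS^n$ with $(\mathcal{F}_\gamma(t_0))^{-1}\mathcal{F}_\gamma(t_1)=Q$; normalizing so that $t_0=0$ and $\mathcal{F}_\gamma(0)=I$, this means $\Gamma=\mathcal{F}_\gamma$ is a globally Jacobian curve on $[0,t_1]$ and $Q=\Gamma(t_1)$. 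So the set of convex matrices is exactly
\[ \mathcal{C}=\{\Gamma(t)\;|\;\Gamma:[0,1]\to SO_{n+1}\text{ globally Jacobian},\ \Gamma(0)=I,\ t\in[0,1]\}. \]

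First I would show $\mathcal{C}$ is a union of Bruhat cells: if $Q\in\mathcal{C}$ and $Q'$ is Bruhat equivalent to $Q$, then by Proposition~\ref{bruhathomeo} (or rather its proof, applying the Bruhat action $B(U,\cdot)$ which preserves convexity via Proposition~\ref{propconv}$(ii)$ and Corollary~\ref{corconv}$(i)$) one gets a globally Jacobian curve realizing $Q'$, so $Q'\in\mathcal{C}$. Next, by Theorem~\ref{ani}, every $\Gamma(t)$ for $t\in(0,1)$ with $\Gamma$ globally Jacobian lies in the open cell $\mathrm{Bru}_{A^\top}$; conversely that whole open cell is contained in $\mathcal{C}$ (take $\gamma$ close to $t\mapsto(1,t,\dots,t^n)$ as explained before the statement). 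So $\mathcal{C}$ contains $\mathrm{Bru}_{A^\top}$ and whatever cells arise as the \emph{first loss of convexity}, i.e.\ as $\Gamma(t_1)$ where $t_1$ is the first time $\Gamma$ leaves $\mathrm{Bru}_{A^\top}$. I would then invoke the positive/negative chopping machinery of Chapter~\ref{chapter5}: immediately after such a $t_1$, $\Gamma$ re-enters an open cell, namely $\Gamma(t_1)\,\mathrm{Bru}_{\mathbf{chop}^+(\Gamma(t_1))}=\Gamma(t_1)\,\mathrm{Bru}_{A^\top}$ locally; this already pins down which lower cells can occur.

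The combinatorial heart of the count is then: the convex-but-not-stably-convex matrices form exactly the cells $\mathrm{Bru}_P$ with $P$ running over a set of size $(n+1)!-1$, one for each non-identity \emph{permutation} (signs being killed). I would establish this by the following dimension-and-degeneration bookkeeping. A globally Jacobian $\Gamma$ with $\Gamma(0)=I$ corresponds to a convex curve $\gamma=(\gamma_1,\dots,\gamma_{n+1})$, and the first loss of convexity at $t_1$ is exactly the first time some Wronskian-type minor $\det(\gamma,\gamma',\dots,\gamma^{(k-1)},\gamma^{(k+1)},\dots)$ or, more precisely, the first time the Frenet frame fails to stay in the top cell; the cell it lands in is determined by which of the successive south-west minors of $\mathcal{F}_\gamma(t_1)$ vanish, which in turn corresponds to a permutation obtained from $\rho$ by moving a single entry (the pattern one sees concretely for $n=3$ in \S\ref{s72} and in the $\gamma_1^m$ examples). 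The claim is that as $\gamma$ ranges over all convex arcs, $\mathrm{abs}(\mathbf{chop\text{-}type}(\mathcal{F}_\gamma(t_1)))$ realizes every $\pi\in S_{n+1}\setminus\{\mathrm{id}\}$ and that the sign pattern is then forced (all signs determined by the convexity/positivity of the chopping, exactly as $\Delta^\pm$ forces them). This gives $|S_{n+1}|-1=(n+1)!-1$ cells; that they are lower-dimensional is clear since $\rho$ is the unique permutation of maximal inversions.

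The main obstacle I expect is precisely that last combinatorial step: proving that \emph{exactly} the $(n+1)!-1$ non-identity permutations occur (no more — convexity is genuinely restrictive, so most of the $2^n((n+1)!-1)$ lower cells are forbidden — and no fewer — every non-identity permutation is actually hit). For $n=3$ this is carried out explicitly in \S\ref{s72} via Theorem~\ref{th1} and the families $\gamma_1^m,\gamma_2^m,\gamma_3^m,\gamma_4^m$ of Chapter~\ref{chapter6}; for general $n$ one needs either an inductive argument (peeling off the last coordinate, relating convex arcs in $\SS^n$ to convex arcs in $\SS^{n-1}$ in the spirit of Theorem~\ref{th0}) or a direct argument using the correspondence between convex curves and disconjugate linear ODEs of order $n+1$ together with the known stratification of the "Wronski map." I would handle the sign-forcing as the easier sub-point (it follows mechanically from the definitions of $\delta_i^\pm$ and the positivity constraints in $\mathfrak{J}$), and spend the bulk of the effort on surjectivity onto $S_{n+1}\setminus\{\mathrm{id}\}$, likely by explicitly building, for each target permutation, a convex arc whose Frenet frame degenerates in the prescribed way — generalizing the circle-iteration constructions of \S\ref{s62}.
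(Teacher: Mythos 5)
Your proposal is incomplete, and — more importantly — it misidentifies where the work in the paper's argument actually lies. The paper's proof is a two-sentence counting argument built entirely around the \emph{negative} chopping identity: any convex matrix $Q$ has its immediate past inside $\mathrm{Bru}_{A^\top}$ (by Theorem~\ref{ani} / Proposition~\ref{propchop}), so $\mathbf{chop}^-(Q)=\Delta^-(Q)A=A^\top$, which fixes the diagonal $\Delta^-(Q)$ and hence, for each of the $(n+1)!$ permutations in $S_{n+1}$, exactly one admissible sign pattern. That's the entire content. You actually state this observation (``the sign pattern is then forced\dots exactly as $\Delta^\pm$ forces them''), but you relegate it to an ``easier sub-point,'' and you use $\mathbf{chop}^+$ (the future, where the curve re-enters an open cell) instead of $\mathbf{chop}^-$ (the past, which is what the constraint ``the arc was convex before $t_1$'' actually controls); the paper's count hinges on $\mathbf{chop}^-$.

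Where you then ``spend the bulk of the effort'' — surjectivity onto $S_{n+1}\setminus\{\mathrm{id}\}$, via Wronskian-minor degenerations and the stratification of the Wronski map — is precisely what you admit you cannot finish, and it is not the route the paper takes. (It is a legitimate worry that the paper's proof, as written, establishes only an upper bound of $(n+1)!$ cells and leaves the realizability of each one implicit; but if one wants to close that gap, the standard argument is much shorter than what you sketch: given $Q$ with $\mathbf{chop}^-(Q)=A^\top$, take any Jacobian curve ending at $Q$, note it sits in $\mathrm{Bru}_{A^\top}$ just before $Q$, prepend a stably convex arc from $I$ landing at that earlier point, and apply the converse direction of Theorem~\ref{ani} to the concatenation — no Wronski-map stratification needed.) As it stands, your proposal would not compile into a proof: it names the right ingredients but inverts their relative difficulty and leaves the part you declare hard unresolved.
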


\begin{proof}
Let $Q \in B_{n+1}^+$ be a representative of a convex Bruhat cell. From Proposition~\ref{propchop}, it has to satisfy
\[ \mathbf{chop_\varepsilon^-}(Q)=\Delta^-(Q)A={} A ^\top \]
since its past has to lie in the open cell of $A ^\top$. This fixes the diagonal matrix $\Delta^-(Q)$, and hence the sign of the signed permutation matrix. Therefore there are $(n+1)!$ possibilities for $Q$, and hence $(n+1)!$ cells, among which $(n+1)!-1$ lower dimensional cells.
\end{proof}

\medskip

For spins, the number of Bruhat cells is twice the number of Bruhat cells for matrices, so there could two times more possibilities for convex Bruhat cells. However, we have distinguished the pre-images $\mathbf{a}$ and $\bar{\a}$ of respectively $A$ and $A ^\top$ by the condition
\[ \mathbf{a}=\mathbf{chop^-}(\mathbf{1}), \quad \bar{\a}=\mathbf{chop^+}(\mathbf{1}) 
\] 
where $\mathbf{1} \in \mathrm{Spin}_{n+1}$ is the unit of the group. Hence the situation for spins is in fact similar.

\begin{proposition}\label{numberconvexs}
The set of convex spins is a disjoint union of one open cell, and $(n+1)!-1$ closed cells.
\end{proposition}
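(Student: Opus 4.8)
The plan is to mirror the proof of Proposition~\ref{numberconvexm}, transferring the argument from $SO_{n+1}$ to $\mathrm{Spin}_{n+1}$ using the analogues of the chopping operations that have been set up in Chapter~\ref{chapter5}. The key point is that the covering projection $\Pi_{n+1} : \mathrm{Spin}_{n+1} \rightarrow SO_{n+1}$ restricts to a $2$-to-$1$ surjection $\tilde{B}_{n+1}^+ \rightarrow B_{n+1}^+$, so a priori each convex Bruhat cell in $SO_{n+1}$ could have either one or two convex preimages in $\mathrm{Spin}_{n+1}$; I need to show exactly one preimage is convex for each of the $(n+1)!$ convex cells downstairs.

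First I would recall that, by Theorem~\ref{ani2}, a holonomic curve $\tilde{\Gamma}$ with $\tilde{\Gamma}(0) = \mathbf{1}$ is globally holonomic on a neighbourhood of $0$, and in particular for small positive $t$ we have $\tilde{\Gamma}(t) \in \mathrm{Bru}_{\bar{\a}}$ and for small negative $t$ we have $\tilde{\Gamma}(t) \in \mathrm{Bru}_{\mathbf{a}}$. Then, exactly as in the matrix case, a convex spin $z \in \tilde{B}_{n+1}^+$ arises as $z = (\tilde{\mathcal{F}}_\gamma(t_0))^{-1}\tilde{\mathcal{F}}_\gamma(t_1)$ for a convex arc $\gamma$; the arc $\tilde{\Gamma}(t) = (\tilde{\mathcal{F}}_\gamma(t_0))^{-1}\tilde{\mathcal{F}}_\gamma(t_0 + t)$ is holonomic with $\tilde{\Gamma}(0) = \mathbf{1}$, so its past lies in $\mathrm{Bru}_{\mathbf{a}}$. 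Hence $z \in \tilde{B}_{n+1}^+$ must satisfy $\mathbf{chop}^-(z) = \Delta^-(z)\,\mathbf{a} = \bar{\a}$, using Definition~\ref{defchop} and the relation $\mathbf{chop}^-(z) = \Delta^-(z)\mathbf{a}$ from \S\ref{s53}. Here I should be careful about which chopping condition pins things down; the cleanest route is to apply $\mathbf{chop}^+$, noting that $\mathbf{chop}^+(\mathbf{1}) = \bar{\a}$ and that the future of the convex arc lies in $\mathrm{Bru}_{\bar{\a}}$, forcing $\mathbf{chop}^+(z) = \bar{\a}\,\Delta^+(z) = \bar{\a}$, hence $\Delta^+(z) = \mathbf{1} \in \widetilde{\mathrm{Diag}}_{n+1}^+$. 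Whichever chopping map one uses, the condition fixes the element $\Delta^\pm(z)$ of $\widetilde{\mathrm{Diag}}_{n+1}^+$ to be the identity, which is equivalent to fixing both the underlying signed permutation matrix $P = \Pi_{n+1}(z) \in B_{n+1}^+$ (it must be one of the $(n+1)!$ convex representatives from Proposition~\ref{numberconvexm}) and, crucially, the choice of lift: among the two elements $\pm z$ projecting to $P$, only one can have $\Delta^+(\cdot) = \mathbf{1}$, since $\Delta^+(-z) = -\Delta^+(z) \neq \Delta^+(z)$ in $\widetilde{\mathrm{Diag}}_{n+1}^+$.

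Therefore there are at most $(n+1)!$ convex spin Bruhat cells. Conversely, each of the $(n+1)!$ convex matrices is realized by a convex arc (by Proposition~\ref{numberconvexm} and the definition of convex matrix), and lifting that arc's Frenet frame curve to $\mathrm{Spin}_{n+1}$ produces a convex spin projecting to it; so all $(n+1)!$ possibilities occur. Exactly one of these is the open cell of $\bar{\a}$, corresponding to stably convex spins, and the remaining $(n+1)! - 1$ are lower-dimensional (closed) cells, since each such cell $\mathrm{Bru}_{\tilde{P}}$ is diffeomorphic to $\R^{\#\mathrm{inv}(P)}$ with $P = \Pi_{n+1}(\tilde{P})$ non-maximal. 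This gives the statement.

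The main obstacle I anticipate is the bookkeeping around the lift: one must verify that the chopping condition genuinely selects a unique preimage, i.e. that $\Delta^+$ (or $\Delta^-$), although only defined on spins via the geometric characterization of Definitions~\ref{defchop} and~\ref{defchop2} rather than as a homomorphism, still satisfies $\Delta^+(-z) = -\Delta^+(z)$. This follows because $-z$ lies in the Bruhat cell $\mathrm{Bru}_{-z}$ which is the ``other'' component of $\Pi_{n+1}^{-1}(\mathrm{Bru}_{\Pi_{n+1}(z)})$, and the positive chopping of a curve ending near $-z$ differs from that of a curve ending near $z$ precisely by the deck transformation, i.e. by multiplication by $-\mathbf{1} \in \widetilde{\mathrm{Diag}}_{n+1}^+$. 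Making this last point precise — that the chopping operation is equivariant with respect to the deck transformation — is where the real content lies; everything else is a direct transcription of the matrix argument.
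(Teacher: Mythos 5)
Your proposal is the right unpacking of the paper's argument, which for this proposition is extremely terse (a remark of two sentences asserting ``the situation for spins is in fact similar''). The key insight you isolate — that the distinguished lifts $\a = \mathbf{chop}^-(\1)$ and $\bar{\a} = \mathbf{chop}^+(\1)$ select exactly one of the two lifts of each convex matrix, via the deck-equivariance $\mathbf{chop}^\pm(-z) = -\mathbf{chop}^\pm(z)$ — is correct and is exactly the content the paper leaves implicit. Your sketch of why that equivariance holds (the deck transformation preserves holonomicity and interchanges $\mathrm{Bru}_c$ with $\mathrm{Bru}_{-c}$) is also on the right track.

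However, the passage where you ``apply $\mathbf{chop}^+$'' contains a genuine error. You claim ``the future of the convex arc lies in $\mathrm{Bru}_{\bar{\a}}$, forcing $\mathbf{chop}^+(z) = \bar{\a}$, hence $\Delta^+(z) = \mathbf{1}$.'' But a convex spin $z$ is characterized by its \emph{past} lying in $\mathrm{Bru}_{\bar{\a}}$: Theorem~\ref{ani2}, applied to the globally holonomic arc $\tilde{\Gamma}$ from $\1$ to $z$, gives $\tilde{\Gamma}(t) \in \mathrm{Bru}_{\bar{\a}}$ for all $t$ strictly between the endpoints, hence the past of $z$ is in $\mathrm{Bru}_{\bar{\a}}$. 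The \emph{future} of $z$ lies in $\mathrm{Bru}_{\bar{\a}}$ precisely when $z$ is \emph{stably} convex, which accounts for only the single open cell; the futures of the other $(n+1)!-1$ convex spins are the various distinct open cells tabulated in \S\ref{s72}. So $\mathbf{chop}^+(z) = \bar{\a}$, and hence $\Delta^+(z) = \mathbf{1}$, fails for every non-stably-convex spin and characterizes too little. The correct constraint is the one you in fact wrote first: $\mathbf{chop}^-(z) = \bar{\a}$, i.e.\ $\Delta^-(z) = \bar{\a}\,\a^{-1}$, which is a fixed element of $\widetilde{\mathrm{Diag}}_{n+1}^+$ but generally not the identity (it projects to $(A^\top)^2$, which for $n=3$ is $-I$). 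A related small slip: the sentence ``so its past lies in $\mathrm{Bru}_{\a}$'' describes the past of $\tilde{\Gamma}$ at $t=0$, which is not the relevant datum; what matters is the past of $z$ at the right endpoint, which is $\mathrm{Bru}_{\bar{\a}}$. With $\mathbf{chop}^-$ in place of $\mathbf{chop}^+$ throughout, and the target value corrected to $\bar{\a}\,\a^{-1}$, your argument closes as intended and delivers the $(n+1)!$ convex spin cells.
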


The set of convex matrices and spins is known for $n=2$ (see~\cite{KS99},~\cite{Sal09I},~\cite{Sal09II} and~\cite{Sal13}). In the next section, we will give an explicit list of convex matrices and spins in the case $n=3$. We expect to be useful in order to full understanding the homotopy type of the spaces $\mathcal{L}\SS^3$.

\section{Proof of Theorem~\ref{th4}}\label{s72}

Let us recall that $\mathrm{Spin}_{4} \simeq \SS^3 \times \SS^3$, so that spins are identified to a pair of unit quaternions.  

The Arnold matrix and its transpose are given by
\[
A=
\begin{pmatrix}
0 & 0 & 0 & 1 \\
0 & 0 & -1 & 0 \\
0 & 1 & 0 & 0 \\
-1 & 0 & 0 & 0 
\end{pmatrix}
\quad
A ^\top=
\begin{pmatrix}
0 & 0 & 0 & -1 \\
0 & 0 & 1 & 0 \\
0 & -1 & 0 & 0 \\
1 & 0 & 0 & 0 
\end{pmatrix}.
\]  
Recall that $\mathbf{1} \in \SS^3$ is the quaternion equal to $1$; this is the unit element of $\mathrm{Spin}_3$, and the unit element in $\SS^3 \times \SS^3$ is $(\mathbf{1},\mathbf{1})$. Let us first determine $\a$ and $\bar{\a}$.

\begin{proposition}\label{aa}
We have $\mathbf{a}=(-\1,-\k)$ and $\bar{\a}=(-\1,\k)$.
\end{proposition}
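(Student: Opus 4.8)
The plan is to verify directly, using the geometric characterization of $\a$ and $\bar\a$ from Definitions~\ref{defchop} and~\ref{defchop2}, that the spins $(-\1,-\k)$ and $(-\1,\k)$ are respectively the correct choices among the two preimages of $A$ and $A^\top$. First I would recall that $\a = \mathbf{chop}^-(\1)$ is the unique element of $\tilde B_4^+$ such that, for any $\gamma \in \mathcal L\SS^3(\1)$, the lifted Frenet frame $\tilde{\mathcal F}_\gamma(t)$ lies in $\mathrm{Bru}_{\a}$ for all small $t<1$; and similarly $\bar\a = \mathbf{chop}^+(\1)$ governs the behavior for small $t>0$ for curves in $\mathcal L\SS^3(\1;\1)$. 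Since $\a$ projects to $A$ and $\bar\a$ to $A^\top$, the only ambiguity is a sign, so it suffices to exhibit one convenient locally convex curve, compute its lifted Frenet frame near the appropriate endpoint, and read off which preimage occurs.

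The natural curve to use is the explicit example $\gamma_1^1 \in \mathcal L\SS^3(-\1,\k)$ from Example~\ref{family1}, together with the remark at the end of \S\ref{s63}: we already observed there that $\gamma_1^1$ is (stably) convex and that its final lifted Frenet frame $(-\1,\k)$ projects to $A^\top$. By Theorem~\ref{ani2} (or directly from Proposition~\ref{propchop2}), a globally holonomic curve starting at $\1$ stays in the cell of $\bar\a$ for all $t\in(0,1)$; in particular, taking $\gamma_1^1$, chopping a small initial piece and reparametrizing brings us to a curve in $\mathcal L\SS^3(\cdot;\1)$-type situation, but more simply: the holonomic curve $\tilde\Gamma_{\gamma_1^1}(t) = \bigl(\exp(\pi t \tfrac{\sqrt3\,\i+\k}{2}), \exp(\pi t\tfrac{\k}{2})\bigr)$ runs from $(\1,\1)$ at $t=0$ to $(-\1,\k)$ at $t=1$ and stays in a single open Bruhat cell on $(0,1)$ (since $\gamma_1^1$ is stably convex with constant logarithmic derivative). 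Hence that open cell is $\mathrm{Bru}_{\bar\a}$, and since $(-\1,\k) \in \tilde B_4^+$ lies in that same cell (its own Bruhat cell, being a signed permutation spin contained in the open cell, forces $\mathrm{Bru}_{(-\1,\k)} = \mathrm{Bru}_{\bar\a}$), we conclude $\bar\a = (-\1,\k)$. Here I should double-check the claim that a signed permutation spin lying in an open Bruhat cell must be the distinguished representative of that cell — this follows from Theorem~\ref{Bruhat2}, since each Bruhat cell contains a unique element of $\tilde B_4^+$.

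For $\a = \mathbf{chop}^-(\1)$, I would use the time-reversal relation. Applying $\mathbf{TR}$ (Proposition~\ref{proptime}) to $\gamma_1^1$, or more directly using that $\mathbf{chop}^-$ and $\mathbf{chop}^+$ are interchanged by the modified time reversal $\mathbf{TR}^*$ as in \S\ref{s53}, one gets $\a$ from $\bar\a$. Concretely, at the level of matrices $A = J_+ A^\top J_+$, and the lift must be computed: $J_+ = \mathrm{diag}(1,-1,1,-1)$ corresponds under $\Pi_4$ to conjugation-type action by a fixed pair of quaternions, and tracking the sign shows $\mathbf{TR}^*$ sends $(-\1,\k)$ to $(-\1,-\k)$. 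Alternatively, and perhaps more cleanly, I would directly examine the past of a globally holonomic curve through $\1$: take any locally convex curve whose lifted Frenet frame passes through $\1$ at some interior time $t_0$ (for instance reparametrize $\gamma_1^m$ appropriately), and compute via a Taylor expansion that for $t$ slightly less than $t_0$ the frame enters the cell of $\a$; comparing with the matrix computation $\mathbf{chop}^-(I) = A$ and the explicit exponential formula for $\tilde\Gamma$ near $t_0$ pins down the sign as $(-\1,-\k)$.

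The main obstacle I anticipate is the sign bookkeeping in lifting the matrix identities to $\mathrm{Spin}_4 \simeq \SS^3\times\SS^3$: one must be careful that $\mathbf{chop}^\pm$ are not group homomorphisms (as emphasized in \S\ref{s53}), so the preimages of $A$ and $A^\top$ cannot simply be "multiplied through" — the correct lift is forced only by the continuity/connectedness argument that the holonomic curve from $\1$ lands in the cell containing the claimed spin. I would therefore lean on the explicit examples of \S\ref{s62}, where the holonomic curves are given by honest exponentials with constant logarithmic derivative, making it unambiguous which preimage is hit; the only genuine check is that the relevant signed-permutation spins $(-\1,\pm\k)$ actually lie in the two top-dimensional cells $\mathrm{Bru}_{\bar\a}$ and $\mathrm{Bru}_{\a}$, which follows from the explicit forms of $A$, $A^\top$ and the uniqueness part of the Bruhat decomposition.
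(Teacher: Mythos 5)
Your identification of the key facts is right: $\a$ and $\bar\a$ are determined up to sign once one knows they project to $A$ and $A^\top$, and the sign must be pinned down by an explicit holonomic curve. For $\bar\a$ your argument is complete and is essentially the paper's: $\gamma_1^1$ is stably convex, so by Theorem~\ref{ani2} its lifted Frenet frame lies in $\mathrm{Bru}_{\bar\a}$ for $t\in(0,1)$, and stability extends this to $t=1$ where the frame equals $(-\1,\k)$; uniqueness of the signed-permutation representative in each cell (Theorem~\ref{Bruhat2}) then forces $\bar\a=(-\1,\k)$. The paper phrases the same content via the closed curve $\gamma_1^4\in\mathcal{L}\SS^3(\1,\1)$, whose holonomic curve passes through $(-\1,\k)$ at $t=1/4$ and is Bruhat equivalent to it on $(0,1/2)$; since $\gamma_1^4|_{[0,1/4]}$ is a reparametrization of $\gamma_1^1$, this is the same observation.

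For $\a$, however, your primary route via $\mathbf{TR}^*$ has unfilled gaps. The identity $\mathbf{chop}^+=\mathbf{TR}^*\circ\mathbf{chop}^-\circ\mathbf{TR}^*$ is established in \S\ref{s53} only at the level of $SO_4$; you would first need to lift $\mathbf{TR}^*$ to $\mathrm{Spin}_4$ (doable, since for $n=3$ one has $J_+\in SO_4$, so $\mathbf{TR}^*$ is an inner automorphism, and $\Pi_4(\pm(\j,\j))=J_+$ makes the lift conjugation by $(\j,\j)$), then check that the $\mathbf{chop}$ identity survives the lift via the geometric definitions on spins, and finally compute $(\j,\j)(-\1,\k)(\j,\j)^{-1}=(-\1,-\k)$ rather than merely assert the sign. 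None of this is hard, but none of it is done. The paper avoids it entirely by reading off $\a$ from the same curve $\gamma_1^4$: $\tilde\Gamma_{\gamma_1^4}(3/4)=(-\1,-\k)$ and Bruhat equivalence on $(1/2,1)$ give $\a=(-\1,-\k)$ directly. Your proposed alternative --- examine the past of a closed holonomic curve through $\1$ --- is exactly this, and if you promote it from a fallback to the main argument, you have a complete proof matching the paper's.
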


\begin{proof}
Let us consider the closed curve $\gamma_1^4 \in \mathcal{L}\SS^3(\1,\1)$ defined in Example~\ref{family1}, Chapter~\ref{chapter6}, \S\ref{s62}. It is easy to see that the lifted Frenet frame (or lifted Jacobian curve) $\tilde{\mathcal{F}}_{\gamma_1^4}(t)=\tilde{\Gamma}_{\gamma_1^4}(t)$ satisfies $\tilde{\mathcal{F}}_{\gamma_1^4}(1/4)=\tilde{\Gamma}_{\gamma_1^4}(1/4)=(-\1,\k)$ and that for all $t \in (0,1/2)$, $\tilde{\mathcal{F}}_{\gamma_1^4}(t)=\tilde{\Gamma}_{\gamma_1^4}(t)$ is Bruhat equivalent to $(-\1,\k)$. Hence
\[ \mathbf{chop}^+(\1,\1)={} \bar{\a}=(-\1,\k). \]
Similarly, $\tilde{\mathcal{F}}_{\gamma_1^4}(3/4)=\tilde{\Gamma}_{\gamma_1^4}(3/4)=(-\1,-\k)$ and for all $t \in (1/2,1)$, $\tilde{\mathcal{F}}_{\gamma_1^4}(t)=\tilde{\Gamma}_{\gamma_1^4}(t)$ is Bruhat equivalent to $(-\1,-\k)$, and therefore
\[ \mathbf{chop}^-(\1,\1)=\mathbf{a}=(-\1,-\k). \]
\end{proof}

\medskip

Next let us look at convex matrices. Bruhat cells are parametrized by signed permutation matrices. Permutations $\pi \in S_4$ will be given as a product of cycles. For example $\pi=(234)$ is the permutation that sends $2$ to $3$, $3$ to $4$, $4$ to $2$ and fixes $1$. The associated permutation matrix $P_{(234)}$ is
\begin{equation*}
P_{(234)}=
\begin{pmatrix}
1 & 0 & 0 & 0 \\
0 & 0 & 0 & 1 \\
0 & 1 & 0 & 0 \\
0 & 0 & 1 & 0 \\
\end{pmatrix}
\end{equation*} 
The identity permutation will be simply denoted by $e$. Then we need to introduce signs in our permutation matrices. To do this, we make a list of all the possible signs (there are 16 of them):
\[ 0=++++, \quad 1=+++-, \quad 2=++-+, \quad 3=++--, \]
\[ 4=+-++, \quad 5=+-+-, \quad 6=+--+, \quad 7=+---, \]
\[ 8=-+++, \quad 9=-++-, \quad 10=-+-+, \quad 11=-+--, \]
\[ 12=--++, \quad 13=--+-, \quad 14=---+, \quad 15=----. \]
For instance, the signed permutation matrix $P_{(234);9}$ is then
\begin{equation*}
P_{(234);9}=
\begin{pmatrix}
-1 & 0 & 0 & 0 \\
0 & 0 & 0 & -1 \\
0 & 1 & 0 & 0 \\
0 & 0 & 1 & 0 \\
\end{pmatrix}
\end{equation*} 
We can now make the list of convex matrices and spins. We will order them by the dimension of the Bruhat cells. There will be $1$ cell of dimension $0$, $3$ cells of dimension $1$, $5$ cells of dimension $2$, $6$ cell of dimension $3$, $5$ cells of dimension $4$, $3$ cells of dimension $5$ and $1$ cell of dimension $6$. 

The cell of dimension $0$ is the following one:
\begin{equation*}
\begin{tabular}{ | c  c  c | } 
\hline
$(\1,-\1)$ & $\longrightarrow$  & $(-\1,-\k)$ \\  
\hline
$ P_{e;15}=
\begin{pmatrix}
-1 & 0 & 0 & 0 \\
0 & -1 & 0 & 0 \\
0 & 0 & -1 & 0 \\
0 & 0 & 0 & -1 \\
\end{pmatrix}$ 
& $\longrightarrow$ &    
$A=\begin{pmatrix}
0 & 0 & 0 & 1 \\
0 & 0 & -1 & 0 \\
0 & 1 & 0 & 0 \\
-1 & 0 & 0 & 0 \\
\end{pmatrix}$ \\
\hline
\end{tabular}  
\end{equation*}
On the left hand side, we have indicated the convex spin on the top, and below the associated convex matrix. On the right, we have indicated the next open cell the curve enters, both for the spin case and for the matrix case.

The $3$ cells of dimension $1$ are as follows:
\begin{equation*}
\begin{tabular}{ | c  c  c | } 
\hline
$(\frac{\1-\i}{\sqrt{2}},\frac{-\1-\i}{\sqrt{2}})$ & $\longrightarrow$  & $(\i,\j)$ \\  
\hline
$ P_{(12);7}=
\begin{pmatrix}
0 & -1 & 0 & 0 \\
1 & 0 & 0 & 0 \\
0 & 0 & -1 & 0 \\
0 & 0 & 0 & -1 \\
\end{pmatrix}$ 
& $\longrightarrow$ &    
$\begin{pmatrix}
0 & 0 & 0 & -1 \\
0 & 0 & 1 & 0 \\
0 & 1 & 0 & 0 \\
-1 & 0 & 0 & 0 \\
\end{pmatrix}$ \\
\hline
\end{tabular}  
\end{equation*}
\begin{equation*}
\begin{tabular}{ | c  c  c | } 
\hline
$(\frac{\1-\k}{\sqrt{2}},\frac{-\1+\k}{\sqrt{2}})$ & $\longrightarrow$  & $(\k,-\1)$ \\  
\hline
$ P_{(23);11}=
\begin{pmatrix}
-1 & 0 & 0 & 0 \\
0 & 0 & -1 & 0 \\
0 & 1 & 0 & 0 \\
0 & 0 & 0 & -1 \\
\end{pmatrix}$ 
& $\longrightarrow$ &    
$\begin{pmatrix}
0 & 0 & 0 & 1 \\
0 & 0 & 1 & 0 \\
0 & -1 & 0 & 0 \\
-1 & 0 & 0 & 0 \\
\end{pmatrix}$ \\
\hline
\end{tabular}  
\end{equation*}
\begin{equation*}
\begin{tabular}{ | c  c  c | } 
\hline
$(\frac{\1-\i}{\sqrt{2}},\frac{-\1+\i}{\sqrt{2}})$ & $\longrightarrow$  & $(\i,-\j)$ \\  
\hline
$ P_{(34);13}=
\begin{pmatrix}
-1 & 0 & 0 & 0 \\
0 & -1 & 0 & 0 \\
0 & 0 & 0 & -1 \\
0 & 0 & 1 & 0 \\
\end{pmatrix}$ 
& $\longrightarrow$ &    
$\begin{pmatrix}
0 & 0 & 0 & 1 \\
0 & 0 & -1 & 0 \\
0 & -1 & 0 & 0 \\
1 & 0 & 0 & 0 \\
\end{pmatrix}$ \\
\hline
\end{tabular}  
\end{equation*}
Here are the $5$ cells of dimension $2$:
\begin{equation*}
\begin{tabular}{ | c  c  c | } 
\hline
$(\frac{\1-\i+\j-\k}{2},\frac{-\1+\i-\j+\k}{2})$ & $\longrightarrow$  & $(\i,-\j)$ \\  
\hline
$ P_{(234);9}=
\begin{pmatrix}
-1 & 0 & 0 & 0 \\
0 & 0 & 0 & -1 \\
0 & 1 & 0 & 0 \\
0 & 0 & 1 & 0 \\
\end{pmatrix}$ 
& $\longrightarrow$ &    
$\begin{pmatrix}
0 & 0 & 0 & 1 \\
0 & 0 & -1 & 0 \\
0 & -1 & 0 & 0 \\
1 & 0 & 0 & 0 \\
\end{pmatrix}$ \\
\hline
\end{tabular}  
\end{equation*}
\begin{equation*}
\begin{tabular}{ | c  c  c | } 
\hline
$(\frac{\1-\i-\j-\k}{2},\frac{-\1+\i+\j+\k}{2})$ & $\longrightarrow$  & $(\k,-\1)$ \\  
\hline
$ P_{(243);15}=
\begin{pmatrix}
-1 & 0 & 0 & 0 \\
0 & 0 & -1 & 0 \\
0 & 0 & 0 & -1 \\
0 & -1 & 0 & 0 \\
\end{pmatrix}$ 
& $\longrightarrow$ &    
$\begin{pmatrix}
0 & 0 & 0 & 1 \\
0 & 0 & 1 & 0 \\
0 & -1 & 0 & 0 \\
-1 & 0 & 0 & 0 \\
\end{pmatrix}$ \\
\hline
\end{tabular}  
\end{equation*}
\begin{equation*}
\begin{tabular}{ | c  c  c | } 
\hline
$(\frac{\1-\i-\j-\k}{2},\frac{-\1-\i-\j+\k}{2})$ & $\longrightarrow$  & $(\k,-\1)$ \\  
\hline
$ P_{(123);3}=
\begin{pmatrix}
0 & 0 & -1 & 0 \\
1 & 0 & 0 & 0 \\
0 & 1 & 0 & 0 \\
0 & 0 & 0 & -1 \\
\end{pmatrix}$ 
& $\longrightarrow$ &    
$\begin{pmatrix}
0 & 0 & 0 & 1 \\
0 & 0 & 1 & 0 \\
0 & -1 & 0 & 0 \\
-1 & 0 & 0 & 0 \\
\end{pmatrix}$ \\
\hline
\end{tabular}  
\end{equation*}
\begin{equation*}
\begin{tabular}{ | c  c  c | } 
\hline
$(\frac{\1-\i+\j-\k}{2},\frac{-\1-\i+\j+\k}{2})$ & $\longrightarrow$  & $(\i,\j)$ \\  
\hline
$ P_{(132);15}=
\begin{pmatrix}
0 & -1 & 0 & 0 \\
0 & 0 & -1 & 0 \\
-1 & 0 & 0 & 0 \\
0 & 0 & 0 & -1 \\
\end{pmatrix}$ 
& $\longrightarrow$ &    
$\begin{pmatrix}
0 & 0 & 0 & -1 \\
0 & 0 & 1 & 0 \\
0 & 1 & 0 & 0 \\
-1 & 0 & 0 & 0 \\
\end{pmatrix}$ \\
\hline
\end{tabular}  
\end{equation*}
\begin{equation*}
\begin{tabular}{ | c  c  c | } 
\hline
$(-\i,-\1)$ & $\longrightarrow$  & $(\1,-\k)$ \\  
\hline
$ P_{(12)(34);5}=
\begin{pmatrix}
0 & -1 & 0 & 0 \\
1 & 0 & 0 & 0 \\
0 & 0 & 0 & -1 \\
0 & 0 & 1 & 0 \\
\end{pmatrix}$ 
& $\longrightarrow$ &    
$A ^\top = \begin{pmatrix}
0 & 0 & 0 & -1 \\
0 & 0 & 1 & 0 \\
0 & -1 & 0 & 0 \\
1 & 0 & 0 & 0 \\
\end{pmatrix}$ \\
\hline
\end{tabular}  
\end{equation*}
Then, we have the $6$ cells of dimension $3$:
\begin{equation*}
\begin{tabular}{ | c  c  c | } 
\hline
$(\frac{-\i-\k}{\sqrt{2}},\frac{-\i+\k}{\sqrt{2}})$ & $\longrightarrow$  & $(\1,\k)$ \\  
\hline
$ P_{(13);11}=
\begin{pmatrix}
0 & 0 & -1 & 0 \\
0 & 1 & 0 & 0 \\
-1 & 0 & 0 & 0 \\
0 & 0 & 0 & -1 \\
\end{pmatrix}$ 
& $\longrightarrow$ &    
$A=\begin{pmatrix}
0 & 0 & 0 & 1 \\
0 & 0 & -1 & 0 \\
0 & 1 & 0 & 0 \\
-1 & 0 & 0 & 0 \\
\end{pmatrix}$ \\
\hline
\end{tabular}  
\end{equation*}
\begin{equation*}
\begin{tabular}{ | c  c  c | } 
\hline
$(\frac{-\i-\k}{\sqrt{2}},\frac{\i+\k}{\sqrt{2}})$ & $\longrightarrow$  & $(\1,\k)$ \\  
\hline
$ P_{(24);13}=
\begin{pmatrix}
-1 & 0 & 0 & 0 \\
0 & 0 & 0 & -1 \\
0 & 0 & 1 & 0 \\
0 & -1 & 0 & 0 \\
\end{pmatrix}$ 
& $\longrightarrow$ &    
$A=\begin{pmatrix}
0 & 0 & 0 & 1 \\
0 & 0 & -1 & 0 \\
0 & 1 & 0 & 0 \\
-1 & 0 & 0 & 0 \\
\end{pmatrix}$ \\
\hline
\end{tabular}  
\end{equation*}
\begin{equation*}
\begin{tabular}{ | c  c  c | } 
\hline
$(\frac{-\i-\k}{\sqrt{2}},\frac{-\1-\j}{\sqrt{2}})$ & $\longrightarrow$  & $(\1,-\k)$ \\  
\hline
$ P_{(1234);1}=
\begin{pmatrix}
0 & 0 & 0 & -1 \\
1 & 0 & 0 & 0 \\
0 & 1 & 0 & 0 \\
0 & 0 & 1 & 0 \\
\end{pmatrix}$ 
& $\longrightarrow$ &    
$A ^\top =\begin{pmatrix}
0 & 0 & 0 & -1 \\
0 & 0 & 1 & 0 \\
0 & -1 & 0 & 0 \\
1 & 0 & 0 & 0 \\
\end{pmatrix}$ \\
\hline
\end{tabular}  
\end{equation*}
\begin{equation*}
\begin{tabular}{ | c  c  c | } 
\hline
$(\frac{-\i-\k}{\sqrt{2}},\frac{-\1+\j}{\sqrt{2}})$ & $\longrightarrow$  & $(\1,-\k)$ \\  
\hline
$ P_{(1432);7}=
\begin{pmatrix}
0 & -1 & 0 & 0 \\
0 & 0 & -1 & 0 \\
0 & 0 & 0 & -1 \\
1 & 0 & 0 & 0 \\
\end{pmatrix}$ 
& $\longrightarrow$ &    
$A ^\top = \begin{pmatrix}
0 & 0 & 0 & -1 \\
0 & 0 & 1 & 0 \\
0 & -1 & 0 & 0 \\
1 & 0 & 0 & 0 \\
\end{pmatrix}$ \\
\hline
\end{tabular}  
\end{equation*}
\begin{equation*}
\begin{tabular}{ | c  c  c | } 
\hline
$(\frac{-\i-\j}{\sqrt{2}},\frac{-\1+\k}{\sqrt{2}})$ & $\longrightarrow$  & $(\k,-\1)$ \\  
\hline
$ P_{(1243);7}=
\begin{pmatrix}
0 & 0 & -1 & 0 \\
1 & 0 & 0 & 0 \\
0 & 0 & 0 & -1 \\
0 & -1 & 0 & 0 \\
\end{pmatrix}$ 
& $\longrightarrow$ &    
$\begin{pmatrix}
0 & 0 & 0 & 1 \\
0 & 0 & 1 & 0 \\
0 & -1 & 0 & 0 \\
-1 & 0 & 0 & 0 \\
\end{pmatrix}$ \\
\hline
\end{tabular}  
\end{equation*}
\begin{equation*}
\begin{tabular}{ | c  c  c | } 
\hline
$(\frac{-\i+\j}{\sqrt{2}},\frac{-\1+\k}{\sqrt{2}})$ & $\longrightarrow$  & $(-\k,-\1)$ \\  
\hline
$ P_{(1342);13}=
\begin{pmatrix}
0 & -1 & 0 & 0 \\
0 & 0 & 0 & -1 \\
-1 & 0 & 0 & 0 \\
0 & 0 & 1 & 0 \\
\end{pmatrix}$ 
& $\longrightarrow$ &    
$\begin{pmatrix}
0 & 0 & 0 & -1 \\
0 & 0 & -1 & 0 \\
0 & 1 & 0 & 0 \\
1 & 0 & 0 & 0 \\
\end{pmatrix}$ \\
\hline
\end{tabular}  
\end{equation*}
Here are the $5$ cells of dimension $4$:
\begin{equation*}
\begin{tabular}{ | c  c  c | } 
\hline
$(\frac{-\1-\i+\j-\k}{2},\frac{-\1-\i-\j+\k}{2})$ & $\longrightarrow$  & $(-\k,-\1)$ \\  
\hline
$ P_{(134);9}=
\begin{pmatrix}
0 & 0 & 0 & -1 \\
0 & 1 & 0 & 0 \\
-1 & 0 & 0 & 0 \\
0 & 0 & 1 & 0 \\
\end{pmatrix}$ 
& $\longrightarrow$ &    
$\begin{pmatrix}
0 & 0 & 0 & -1 \\
0 & 0 & -1 & 0 \\
0 & 1 & 0 & 0 \\
1 & 0 & 0 & 0 \\
\end{pmatrix}$ \\
\hline
\end{tabular}  
\end{equation*}
\begin{equation*}
\begin{tabular}{ | c  c  c | } 
\hline
$(\frac{-\1-\i-\j-\k}{2},\frac{-\1+\i-\j+\k}{2})$ & $\longrightarrow$  & $(-\i,-\j)$ \\  
\hline
$ P_{(124);5}=
\begin{pmatrix}
0 & 0 & 0 & -1 \\
1 & 0 & 0 & 0 \\
0 & 0 & 1 & 0 \\
0 & -1 & 0 & 0 \\
\end{pmatrix}$ 
& $\longrightarrow$ &    
$\begin{pmatrix}
0 & 0 & 0 & -1 \\
0 & 0 & 1 & 0 \\
0 & 1 & 0 & 0 \\
-1 & 0 & 0 & 0 \\
\end{pmatrix}$ \\
\hline
\end{tabular}  
\end{equation*}
\begin{equation*}
\begin{tabular}{ | c  c  c | } 
\hline
$(-\i,\k)$ & $\longrightarrow$  & $(\1,\k)$ \\  
\hline
$ P_{(13)(24);15}=
\begin{pmatrix}
0 & 0 & -1 & 0 \\
0 & 0 & 0 & -1 \\
-1 & 0 & 0 & 0 \\
0 & -1 & 0 & 0 \\
\end{pmatrix}$ 
& $\longrightarrow$ &    
$A=\begin{pmatrix}
0 & 0 & 0 & 1 \\
0 & 0 & -1 & 0 \\
0 & 1 & 0 & 0 \\
-1 & 0 & 0 & 0 \\
\end{pmatrix}$ \\
\hline
\end{tabular}  
\end{equation*}
\begin{equation*}
\begin{tabular}{ | c  c  c | } 
\hline
$(\frac{-\1-\i-\j-\k}{2},\frac{-\1-\i+\j+\k}{2})$ & $\longrightarrow$  & $(-\i,\j)$ \\  
\hline
$ P_{(143);3}=
\begin{pmatrix}
0 & 0 & -1 & 0 \\
0 & 1 & 0 & 0 \\
0 & 0 & 0 & -1 \\
1 & 0 & 0 & 0 \\
\end{pmatrix}$ 
& $\longrightarrow$ &    
$\begin{pmatrix}
0 & 0 & 0 & 1 \\
0 & 0 & -1 & 0 \\
0 & -1 & 0 & 0 \\
1 & 0 & 0 & 0 \\
\end{pmatrix}$ \\
\hline
\end{tabular}  
\end{equation*}
\begin{equation*}
\begin{tabular}{ | c  c  c | } 
\hline
$(\frac{-\1-\i+\j-\k}{2},\frac{-\1+\i+\j+\k}{2})$ & $\longrightarrow$  & $(-\k,-\1)$ \\  
\hline
$ P_{(142);6}=
\begin{pmatrix}
0 & -1 & 0 & 0 \\
0 & 0 & 0 & -1 \\
0 & 0 & 1 & 0 \\
1 & 0 & 0 & 0 \\
\end{pmatrix}$ 
& $\longrightarrow$ &    
$\begin{pmatrix}
0 & 0 & 0 & -1 \\
0 & 0 & -1 & 0 \\
0 & 1 & 0 & 0 \\
1 & 0 & 0 & 0 \\
\end{pmatrix}$ \\
\hline
\end{tabular}  
\end{equation*}
The $3$ cells of dimension $5$ are the following:
\begin{equation*}
\begin{tabular}{ | c  c  c | } 
\hline
$(\frac{-\1-\k}{\sqrt{2}},\frac{-\1+\k}{\sqrt{2}})$ & $\longrightarrow$  & $(-\k,-\1)$ \\  
\hline
$ P_{(14);1}=
\begin{pmatrix}
0 & 0 & 0 & -1 \\
0 & 1 & 0 & 0 \\
0 & 0 & 1 & 0 \\
1 & 0 & 0 & 0 \\
\end{pmatrix}$ 
& $\longrightarrow$ &    
$\begin{pmatrix}
0 & 0 & 0 & -1 \\
0 & 0 & -1 & 0 \\
0 & 1 & 0 & 0 \\
1 & 0 & 0 & 0 \\
\end{pmatrix}$ \\
\hline
\end{tabular}  
\end{equation*}
\begin{equation*}
\begin{tabular}{ | c  c  c | } 
\hline
$(\frac{-\1-\i}{\sqrt{2}},\frac{-\j+\k}{\sqrt{2}})$ & $\longrightarrow$  & $(-\i,-\j)$ \\  
\hline
$ P_{(1324);13}=
\begin{pmatrix}
0 & 0 & 0 & -1 \\
0 & 0 & 1 & 0 \\
-1 & 0 & 0 & 0 \\
0 & -1 & 0 & 0 \\
\end{pmatrix}$ 
& $\longrightarrow$ &    
$\begin{pmatrix}
0 & 0 & 0 & -1 \\
0 & 0 & 1 & 0 \\
0 & 1 & 0 & 0 \\
-1 & 0 & 0 & 0 \\
\end{pmatrix}$ \\
\hline
\end{tabular}  
\end{equation*}
\begin{equation*}
\begin{tabular}{ | c  c  c | } 
\hline
$(\frac{-\1-\i}{\sqrt{2}},\frac{\i+\k}{\sqrt{2}})$ & $\longrightarrow$  & $(-\i,\j)$ \\  
\hline
$ P_{(1423);7}=
\begin{pmatrix}
0 & 0 & -1 & 0 \\
0 & 0 & 0 & -1 \\
0 & -1 & 0 & 0 \\
1 & 0 & 0 & 0 \\
\end{pmatrix}$ 
& $\longrightarrow$ &    
$\begin{pmatrix}
0 & 0 & 0 & 1 \\
0 & 0 & -1 & 0 \\
0 & -1 & 0 & 0 \\
1 & 0 & 0 & 0 \\
\end{pmatrix}$ \\
\hline
\end{tabular}  
\end{equation*}
Finally, here's the only cell of dimension $6$:
\begin{equation*}
\begin{tabular}{ | c  c  c | } 
\hline
$(-\1,\k)$ & $\longrightarrow$  & $(-\1,\k)$ \\  
\hline
$ A ^\top =P_{(14)(23);5}=
\begin{pmatrix}
0 & 0 & 0 & -1 \\
0 & 0 & 1 & 0 \\
0 & -1 & 0 & 0 \\
-1 & 0 & 0 & 0 \\
\end{pmatrix}$ 
& $\longrightarrow$ &    
$A ^\top =\begin{pmatrix}
0 & 0 & 0 & -1 \\
0 & 0 & 1 & 0 \\
0 & -1 & 0 & 0 \\
1 & 0 & 0 & 0 \\
\end{pmatrix}$ \\
\hline
\end{tabular}  
\end{equation*}

Let us now explain how we found this list, and these explanations will actually serve as a proof of Theorem~\ref{th4}. 

The case of matrices is easy. The $24$ convex matrices in $B_4^+$ will be of the form $DP_\pi$, where $P_\pi$ is the matrix associated to a permutation $\pi \in S_4$ (there are $24$ such permutations) and $D \in \mathrm{Diag}_4^+$. We already explained how to compute the negative chopping of a matrix in Chapter~\ref{chapter5}, \S\ref{s53}: if we start with an arbitrary permutation $\pi \in S_4$, it it easy to find the unique $D=D_\pi \in \mathrm{Diag}_4^+$ such that
\[ \mathbf{chop}^-(D_\pi P_\pi)={} A ^\top. \]
In this way, we find the $24$ convex matrices of the form    $D_\pi P_\pi \in B_4^+$. Once we have found them, we use positive chopping to find their future, that is we simply compute $\mathbf{chop}^+(D_\pi P_\pi)$. 

The case of spins is more complicated. First, since we know all convex matrices in $B_4^+$, using the explicit expression of the projection $\Pi_4 : \SS^3 \times \SS^3 \rightarrow SO_4$ that we gave in Chapter~\ref{chapter2}, \S\ref{s21}, one can already find the two elements in $\tilde{B}_4^+$ that projects down to a convex matrices. For instance, we have
\[ \Pi_4\left(P_{(1423);7}\right)^{-1}=\pm \left(\frac{\1+\i}{\sqrt{2}},\frac{-\j-\k}{\sqrt{2}}\right) \in \tilde{B}_4^+. \]
Among these two elements in $\tilde{B}_4^+$, only one is convex: this convex spin will be characterized by the fact that there is a globally holonomic curve which is equal to this spin at time $t=0$ (for instance) and such that for $t<0$ small, the curve lies in the open cell of $\bar{\a}$. Let $\tilde{\Gamma}=\tilde{\Gamma}_{\gamma_1^1}$ be the curve defined by
\[ \tilde{\Gamma}(t)=\left(\exp\left(\pi t\frac{\sqrt{3}\i+\k}{2}\right),\exp\left(\pi t\frac{\k}{2}\right)\right), \quad t \in [0,1]. \]
This is the lifted Frenet frame curve of the curve $\gamma_1^1 \in \mathcal{L}\SS^3(-\1,\k)$, defined in Example~\ref{family1}, Chapter~\ref{chapter6}, \S\ref{s62}, and we know from Chapter~\ref{chapter6}, \S\ref{s63} that this curve is convex, and in fact, stably convex (since its final lifted Frenet frame is $\bar{\a} =(-\1,\k)$). Hence $\tilde{\Gamma}$ is globally holonomic. To test if one of these two elements is the convex one, for instance to test $\left(\frac{\1+\i}{\sqrt{2}},\frac{-\j-\k}{\sqrt{2}}\right)$, we look at the curve
\[ G(t)=\tilde{\Gamma}(t)\left(\frac{\1+\i}{\sqrt{2}},\frac{-\j-\k}{\sqrt{2}}\right) \] 
which is equal to $\left(\frac{\1+\i}{\sqrt{2}},\frac{-\j-\k}{\sqrt{2}}\right)$ for $t=0$. If this is the convex spin, for small $t<0$, $G(t)$ should be Bruhat equivalent to $\bar{\a} $. Using a first order approximation
\[ \tilde{\Gamma}(t) \approx \left(\1+\sqrt{3}\i\frac{\pi}{2}t+\k\frac{\pi}{2}t,\1+\k\frac{\pi}{2}t\right) \]
one computes the following approximation
\[ G(t) \approx \frac{1}{2\sqrt{2}}\left(\1(2-\sqrt{3}\pi t)+\i(2+\sqrt{3}\pi t)+\j\pi t+\k\pi t,\1\pi t+\i\pi t-2\j-2\k\right) \]
and hence for $t<0$ small, $G(t)$ is Bruhat equivalent to $(\1,-\k)=- \bar{\a}$. This spin is therefore not convex, and the convex one is thus $-\left(\frac{\1+\i}{\sqrt{2}},\frac{-\j-\k}{\sqrt{2}}\right)$. Now that we know the convex spin, let us explain how we can find his future. As before since we know the future of the matrix, in our case
\[ \mathbf{chop}^+(P_{(1423);7})
=\begin{pmatrix}
0 & 0 & 0 & 1 \\
0 & 0 & -1 & 0 \\
0 & -1 & 0 & 0 \\
1 & 0 & 0 & 0 
\end{pmatrix}
\]
we find two possibilities:
\[ \Pi_4\left(
\begin{pmatrix}
0 & 0 & 0 & 1 \\
0 & 0 & -1 & 0 \\
0 & -1 & 0 & 0 \\
1 & 0 & 0 & 0 
\end{pmatrix}
\right)^{-1}=\pm (\i,-\j). \]
To find the correct sign, look now at the curve
\[ -G(t)=\tilde{\Gamma}(t)\left(\frac{\1+\i}{\sqrt{2}},\frac{-\j-\k}{\sqrt{2}}\right) \]   
which is equal to the convex spin $-\left(\frac{\1+\i}{\sqrt{2}},\frac{-\j-\k}{\sqrt{2}}\right)$ for $t=0$. Using a first order approximation as before, one finds that for $t>0$ small, $-G(t)$ is Bruhat equivalent to $(-\i,\j)$. This procedure allow us to determine all convex spins and their future.

\section{Spaces $\mathcal{L}\SS^3(-\1,-\k)$, $\mathcal{L}\SS^3(\1,\k)$, $\mathcal{L}\SS^3(\1,-\k)$ and $\mathcal{L}\SS^3(-\1,\k)$}\label{s73}

Recall that the spaces we are interested in are the following ones:
\[\mathcal{L}\SS^3(\1,\1), \quad \mathcal{L}\SS^3(-\1,-\1), \quad \mathcal{L}\SS^3(\1,-\1), \quad \mathcal{L}\SS^3(-\1,\1). \]
In each case, the final lifted Frenet frame does not belong to an open Bruhat cell; moreover, from the list of convex spins we made in \S\ref{s73}, all of them are not convex except $(\1,-\1)$.

Using the chopping operation, we can replace these spaces by other equivalent spaces where the final lifted Frenet frame does belong to an open Bruhat cell (and in the case where it is convex, it becomes stably convex).

\begin{proposition}\label{spaces}
We have homeomorphisms
\[ \mathcal{L}\SS^3(\1,\1) \simeq \mathcal{L}\SS^3(-\1,-\k),  \]
\[ \mathcal{L}\SS^3(-\1,-\1) \simeq \mathcal{L}\SS^3(\1,\k),  \]
\[ \mathcal{L}\SS^3(\1,-\1) \simeq \mathcal{L}\SS^3(-\1,\k),  \]
\[\mathcal{L}\SS^3(-\1,\1) \simeq \mathcal{L}\SS^3(\1,-\k) . \]
\end{proposition}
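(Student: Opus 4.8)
The proof rests entirely on the negative chopping operation from Chapter~\ref{chapter5}. Recall Proposition~\ref{chophomeo}: for any $z\in\mathrm{Spin}_{n+1}$ there is a homeomorphism $\mathcal{L}\SS^n(z)\approx\mathcal{L}\SS^n(\mathbf{chop}^-(z))$. So the plan is simply to compute $\mathbf{chop}^-(z)$ for each of the four spins $z\in\{(\1,\1),(-\1,-\1),(\1,-\1),(-\1,\1)\}$ and check that the four values are, respectively, $(-\1,-\k)$, $(\1,\k)$, $(-\1,\k)$ and $(\1,-\k)$. Each of these target spins lies in an open Bruhat cell — indeed they are exactly $\a$, $-\a$, $\bar{\a}$, $-\bar{\a}$ — which is why the resulting spaces have the advertised ``simplified'' final frame.

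For $z=(\1,\1)$ this is precisely Proposition~\ref{aa}, which gives $\mathbf{chop}^-(\1,\1)=\a=(-\1,-\k)$. For the other three spins I would argue as in the proof of Theorem~\ref{th4} in~\S\ref{s72}. The key preliminary remark is that each of $(-\1,-\1)$, $(\1,-\1)$, $(-\1,\1)$ lies in $\widetilde{\mathrm{Diag}}_4^+$: one has $\Pi_4(-\1,-\1)=I$ and $\Pi_4(\1,-\1)=\Pi_4(-\1,\1)=-I$, and both $I$ and $-I$ belong to $\mathrm{Diag}_4^+$. Since $\Delta^-$ is the identity on diagonal matrices, $\mathbf{chop}^-(\pm I)=(\pm I)A=\pm A$, and here $A^\top=A^{-1}=-A$ (because the $4\times4$ Arnold matrix satisfies $A^2=-I$). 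Hence $\Pi_4(\mathbf{chop}^-(z))$ equals $A$ or $A^\top$ according as $\Pi_4(z)=I$ or $-I$, so $\mathbf{chop}^-(z)$ is one of the two lifts $\{z\a,\,-z\a\}$; a direct quaternion multiplication gives $z\a=(\1,\k)$, $(-\1,\k)$, $(\1,-\k)$ for the three spins respectively, matching the claimed targets. It remains only to exclude the wrong sign.

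To pin down the sign I would use the explicit curves of~\S\ref{s62}, for example $\gamma_1^2\in\mathcal{L}\SS^3(\1,-\1)$, $\gamma_3^1\in\mathcal{L}\SS^3(-\1,-\1)$ and $\gamma_4^1\in\mathcal{L}\SS^3(-\1,\1)$ (their final lifted Frenet frames being $(\1,-\1)$, $(-\1,-\1)$, $(-\1,\1)$). For each one writes a first-order approximation of the lifted Frenet frame curve near $t=1$, projects to $SO_4$, and reads off from the signs of the south-west minors which open Bruhat cell it lies in for $t$ slightly less than $1$ — exactly the algorithm used repeatedly in~\S\ref{s72}. (Alternatively one can first lift the matrix identity $\Delta^-(DD')=D\Delta^-(D')$ to show $\Delta^-(\tilde D\tilde z)=\tilde D\,\Delta^-(\tilde z)$ for $\tilde D\in\widetilde{\mathrm{Diag}}_4^+$; this forces $\Delta^-=\mathrm{id}$ on $\widetilde{\mathrm{Diag}}_4^+$, hence $\mathbf{chop}^-(z)=z\a$, and the sign is automatic.) In every case the $+$ sign occurs, so $\mathbf{chop}^-(-\1,-\1)=(\1,\k)$, $\mathbf{chop}^-(\1,-\1)=(-\1,\k)$ and $\mathbf{chop}^-(-\1,\1)=(\1,-\k)$, and Proposition~\ref{chophomeo} yields the four homeomorphisms.

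The main (and essentially only) technical point is the sign determination of the previous paragraph: the $SO_4$-level computation pins down $\mathbf{chop}^-(z)$ only up to the deck transformation, and separating $z\a$ from $-z\a$ requires either a genuine first-order computation in $\mathrm{Spin}_4$ or the multiplicativity of $\Delta^-$ on $\widetilde{\mathrm{Diag}}_4^+$. Everything else — recognizing the four spins as elements of $\widetilde{\mathrm{Diag}}_4^+$, the products $z\a$, and the invocation of Proposition~\ref{chophomeo} — is routine, and the hard analytic input has already been absorbed into the chopping machinery and the examples of~\S\ref{s62}.
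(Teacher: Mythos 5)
Your proposal matches the paper's proof in both structure and substance: reduce via Proposition~\ref{chophomeo} to computing $\mathbf{chop}^-$ on the four spins, handle $(\1,\1)$ via Proposition~\ref{aa}, and pin down the other three using the explicit curves of \S\ref{s62} (the paper uses $\gamma_1^4$, $\gamma_3^1$, $\gamma_4^1$; your choice of $\gamma_1^2$ in place of $\gamma_1^4$ works just as well). Your preliminary observation that all four spins lie in $\widetilde{\mathrm{Diag}}_4^+$, so the answer at the matrix level is $\pm\Pi_4(z)A$ and hence $\mathbf{chop}^-(z)\in\{z\a,-z\a\}$, is a clean way to organize the sign bookkeeping, and the parenthetical multiplicativity shortcut ($\Delta^-=\mathrm{id}$ on $\widetilde{\mathrm{Diag}}_4^+$, whence $\mathbf{chop}^-(z)=z\a$) would genuinely streamline the argument; but note that this is not an automatic lift of the matrix identity $\Delta^-(DD')=D\Delta^-(D')$ --- one must show that left multiplication by $\tilde D$ carries $\mathrm{Bru}_\a$ into $\mathrm{Bru}_{\tilde D\a}$ rather than $\mathrm{Bru}_{-\tilde D\a}$, for instance by observing that $L_{\tilde D}(\mathrm{Bru}_\a)$ is connected, sits inside $\Pi_4^{-1}(\mathrm{Bru}_{DA})$, and contains $L_{\tilde D}(\a)=\tilde D\a$. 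One small slip in the wording of the direct route: you say one ``projects to $SO_4$'' before reading off the Bruhat cell, but that projection discards exactly the sign you are trying to determine; the first-order expansion and cell identification must be carried out directly in $\mathrm{Spin}_4\simeq\SS^3\times\SS^3$, as is done in \S\ref{s72}.
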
 

\begin{proof}
Using Proposition~\ref{chophomeo}, it is enough to prove that
\begin{equation}\label{relation}
\mathbf{chop^-}(\1,\1)=(-\1,-\k), \quad \mathbf{chop^-}(-\1,-\1)=(\1,\k)
\end{equation}
and
\begin{equation}\label{relation2}
 \mathbf{chop^-}(\1,-\1)=(-\1,\k), \quad \mathbf{chop^-}(-\1,\1)=(\1,-\k).
\end{equation}
We already proved the first equality of~\eqref{relation} in Proposition~\ref{aa}, since $\mathbf{chop^-}(\1,\1)=\a=(-\1,-\k)$. Using the same example we used in Proposition~\ref{aa}, one can prove the first equality of~\eqref{relation2}. Indeed, the curve $\gamma_1^4 \in \mathcal{L}\SS^3(\1,\1)$ (defined in Example~\ref{family1}, Chapter~\ref{chapter6}, \S\ref{s62}) is such that for all $t \in (0,1/2)$, $\tilde{\mathcal{F}}_{\gamma_1^4}(t)=\tilde{\Gamma}_{\gamma_1^4}(t)$ is Bruhat equivalent to $(-\1,\k)$. But then $\tilde{\mathcal{F}}_{\gamma_1^4}(1/2)=\tilde{\Gamma}_{\gamma_1^4}(1/2)=(\1,-\1)$, so this proves that $\mathbf{chop^-}(\1,-\1)=(-\1,\k)$.

Let us now prove the second equality of~\eqref{relation}. This time we consider the curve $\gamma_3^1 \in \mathcal{L}\SS^3(-\1,-\1)$ (defined in Example~\ref{family3}, Chapter~\ref{chapter6}, \S\ref{s62}) and we check that for all $t \in (0,1/2)$, $\tilde{\mathcal{F}}_{\gamma_3^1}(t)=\tilde{\Gamma}_{\gamma_3^1}(t)$ is Bruhat equivalent to $(-\1,\k)$ and for all $t \in (1/2,1)$, $\tilde{\mathcal{F}}_{\gamma_3^1}(t)=\tilde{\Gamma}_{\gamma_3^1}(t)$ is Bruhat equivalent to $(\1,\k)$. This shows that $\mathbf{chop^-}(-\1,-\1)=(\1,\k)$.

Finally, to prove the second equality of~\eqref{relation2}, we consider the curve $\gamma_4^1 \in \mathcal{L}\SS^3(-\1,\1)$ (defined in Example~\ref{family4}, Chapter~\ref{chapter6}, \S\ref{s62}). We check that for all small $t>0$ (for instance $t \in (0,0.41)$, $\tilde{\mathcal{F}}_{\gamma_4^1}(t)=\tilde{\Gamma}_{\gamma_4^1}(t)$ is Bruhat equivalent to $(-\1,\k)$ and for all $t<1$ close to $1$ (for instance $t \in (0.59,1)$), $\tilde{\mathcal{F}}_{\gamma_4^1}(t)=\tilde{\Gamma}_{\gamma_4^1}(t)$ is Bruhat equivalent to $(\1,-\k)$. This shows that $\mathbf{chop^-}(-\1,\1)=(\1,-\k)$.  
\end{proof}

\medskip

Therefore in the sequel, when convenient, we will rather look at the spaces
\[ \mathcal{L}\SS^3(-\1,-\k), \quad \mathcal{L}\SS^3(\1,\k), \quad \mathcal{L}\SS^3(\1,-\k), \quad \mathcal{L}\SS^3(-\1,\k). \]
All the spins (or pair of quaternions) $(-\1,-\k)$, $(\1,\k)$, $(\1,-\k)$ and $(-\1,\k)$ belongs to an open Bruhat cell, and the last one is stably convex. 


%
%

\chapter{Topology of $\mathcal{L}\SS^3(-\1,\k)$ and $\mathcal{L}\SS^3(\1,-\k)$}

\label{chapter8}

\section{Adding loops and spirals}\label{s81}

In this section, we describe an operation which geometrically consists in adding a pair of loops to a generic curve in $\SS^2$, and adding a closed spiral to a generic curve in $\SS^3$. In order to avoid repeating definitions, we will actually describe a similar operation for generic curves in $\SS^n$, for $n \geq 2$. We will study in more details the cases $n=2$ and $n=3$ in~\S\ref{s82} and~\S\ref{s83}

Let us fix an element $\omega_n \in \mathcal{L}\SS^n(\1)$. The operation we will perform actually depends on this choice, but since $\mathcal{L}\SS^n(\1)$ is connected, any other element in $\mathcal{L}\SS^n(\1)$ is homotopic to $\omega_n$, hence the choice of two different elements in $\mathcal{L}\SS^n(\1)$ will result in two different operation which will however yield homotopic curves. We will see later that this will be sufficient for our purpose. 

For $n=2$, one can choose for instance
\[ \omega_2=\sigma_c^2 \in \mathcal{L}\SS^2(\1) \]
where, for $t \in [0,1]$ and $0<c<2\pi$, 
\[ \sigma_c(t)=\cos\rho(\cos\rho,0,\sin\rho)+\sin\rho(\sin\rho\cos(2\pi t), \sin(2\pi t), -\cos\rho\cos(2\pi t))\]
and
\[ \sigma_c^2(t)=\sigma_c(2t). \]
This curve already appeared in Chapter~\ref{chapter6}, \S\ref{s62}; $\sigma_c : [0,1] \rightarrow \SS^2$ is the unique circle of length $c$, that is $||\sigma_c'(t)||=c$, with fixed initial and final Frenet frame equals to the identity, and so $\sigma_c^2$ consists in traveling along this circle twice.

For $n=3$, one can choose for example
\[ \omega_3=\gamma_1^4 \in \mathcal{L}\SS^3(\1,\1), \]
where $\gamma_1^4$ is extracted from the family of curves $\gamma_1^m$, $m\geq 1$, that we defined in Example~\ref{family1}, in Chapter~\ref{chapter6}, \S\ref{s62}. Explicitly we have
\begin{eqnarray*}
\gamma_1^4(t) & = & \left(\frac{1}{4}\cos\left(6t\pi \right)+\frac{3}{4}\cos\left(2t\pi \right)\right., \\
&  & \frac{\sqrt{3}}{4}\sin\left(6t\pi \right)+\frac{\sqrt{3}}{4}\sin\left(2t\pi \right) \\
&  & \frac{\sqrt{3}}{4}\cos\left(2t\pi \right)-\frac{\sqrt{3}}{4}\cos\left(6t\pi \right) \\
&  & \left.\frac{3}{4}\sin\left(2t\pi\right)-\frac{1}{4}\sin\left(6t\pi\right)\right).
\end{eqnarray*}   
Recall also that the left and right part of this curve are given by
\[  \gamma_{1,l}^4=\sigma_{\pi}^4 \in \mathcal{L}\SS^2(\1), \quad \gamma_{1,r}^4 =\sigma_{2\pi}^{2} \in \mathcal{G}\SS^2(\1). \]

Coming back to the general case $\omega_n \in \mathcal{L}\SS^n(\1)$, let us now define the operation of adding the closed curve $\omega_n$ to some curve $\gamma \in \mathcal{G}\SS^n(z)$ at some time $t_0 \in [0,1]$.

\begin{definition}\label{adding}
Let $\gamma \in \mathcal{G}\SS^n(z)$, and choose some point $t_0 \in [0,1]$. We define the curve $\gamma \ast_{t_0} \omega_n \in \mathcal{G}\SS^n(z)$ as follows. Given $\varepsilon>0$ sufficiently small, for $t_0 \in (0,1)$ we let
\begin{equation*}
\gamma \ast_{t_0} \omega_n(t)=
\begin{cases}
\gamma(t), & 0 \leq t \leq t_0-2\varepsilon \\
\gamma(2t-t_0+2\varepsilon), & t_0-2\varepsilon \leq t \leq t_0-\varepsilon \\
\mathcal{F}_\gamma(t_0)\omega_n\left(\frac{t-t_0+\varepsilon}{2\varepsilon}\right), & t_0-\varepsilon \leq t \leq t_0+\varepsilon \\
\gamma(2t-t_0-2\varepsilon), & t_0+\varepsilon \leq t \leq t_0+2\varepsilon \\
\gamma(t), & t_0+2\varepsilon \leq t \leq 1.
\end{cases}
\end{equation*}
For $t_0=0$, we let
\begin{equation*}
\gamma \ast_{0} \omega_n(t)=
\begin{cases}
\omega_n\left(\frac{t}{\varepsilon}\right), & 0 \leq t \leq \varepsilon\\
\gamma(2t-2\varepsilon), & \varepsilon \leq t \leq 2\varepsilon \\
\gamma(t), & 2\varepsilon \leq t \leq 1,
\end{cases}
\end{equation*}
and for $t_0=1$, we let
\begin{equation*}
\gamma \ast_{1} \omega_n(t)=
\begin{cases}
\gamma(t), & 0 \leq t \leq 1-2\varepsilon \\
\gamma(2t-1+2\varepsilon), & 1-2\varepsilon \leq t \leq 1-\varepsilon \\
\omega_n\left(\frac{t-1+\varepsilon}{\varepsilon}\right), & 1-\varepsilon \leq t \leq 1.
\end{cases}
\end{equation*}
\end{definition}

\begin{figure}[h]
\centering
\includegraphics[scale=0.5]{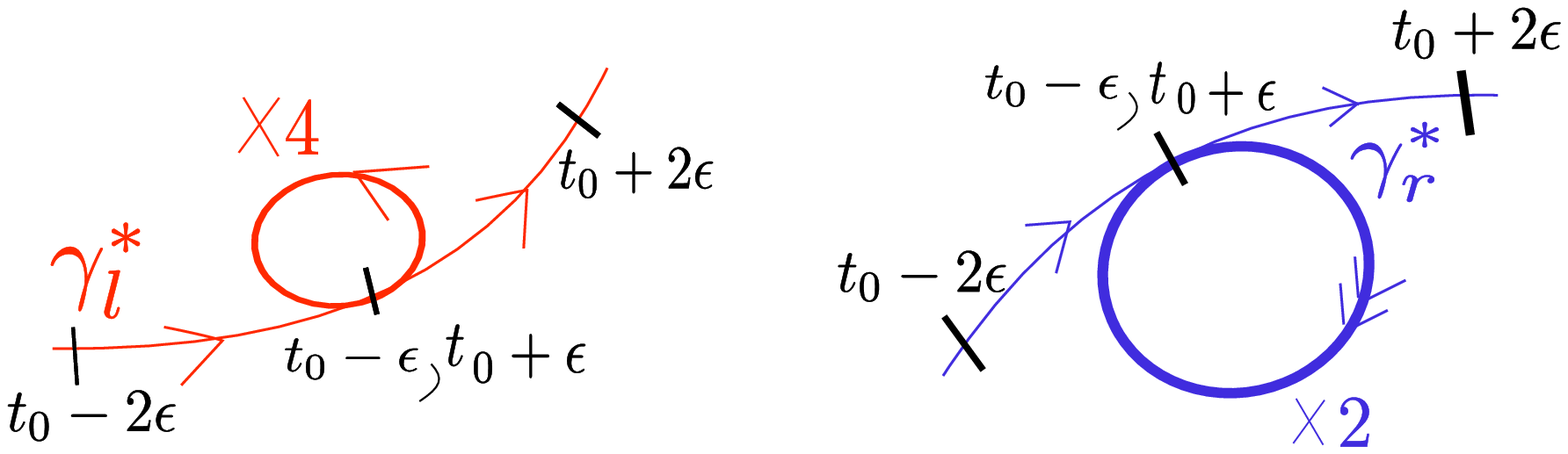}
\caption{Definition of the curve $\gamma^\ast=(\gamma_l^\ast,\gamma_r^\ast) \in \mathcal{L}\SS^3(z_l,z_r)$.}
\label{fig:m}
\end{figure}

This operation can be understood as follows (see Figure~\ref{fig:m} for an illustration in the case $n=3$). For $t_0 \in (0,1)$, we start by following the curve $\gamma$ normally, then we speed a little slightly before $t_0$ in order to have time to insert $\omega_n$ at time $t_0$ (that we moved to the correct position by a multiplication with $\mathcal{F}_\gamma(t_0)$), we speed again a little and finally at the end we follow $\gamma$ normally. For $t_0=0$ or $t_0=1$, we have a similar interpretation. 

The precise value of $\varepsilon$ is not important; a different value will yield a different parametrization but the same curve. 

The precise choice of $\omega_n$ will not be important either. Indeed, the space $\mathcal{L}\SS^n(\1)$ is path-connected (recall Theorem~\ref{thmani} from Chapter~\ref{chapter4}, \S\ref{s41}), hence if we choose any other element $\omega_n' \in \mathcal{L}\SS^n(\1)$, a homotopy between $\omega_n$ and $\omega_n'$ in $\mathcal{L}\SS^n(\1)$ will give a homotopy between the curves $\gamma \ast_{t_0} \omega_n $ and $\gamma \ast_{t_0} \omega_n'$ in $\mathcal{L}\SS^n(z)$. We will see later that the homotopy class of $\gamma \ast_{t_0} \omega_n $ is the only information we will be interested in. Therefore, to simplify notations, in the sequel we will write $\gamma_{t_0}^\ast$ instead of $\gamma \ast_{t_0} \omega_n$. 

It is clear from definition~\ref{adding} that if $\gamma \in \mathcal{L}\SS^n(z)$, then $\gamma_{t_0}^\ast \in \mathcal{L}\SS^n(z)$. Given an arbitrary compact set $K$, let us introduce the following definition. 

\begin{definition}\label{tight}
A continuous map $\alpha : K \rightarrow \mathcal{L}\SS^n(z)$ is \emph{loose} if there exist continuous maps
\[ A : K \times [0,1] \rightarrow \mathcal{L}\SS^n(z), \quad t_0 : K \rightarrow [0,1]  \]
such that for all $s \in K$:
\[ A(s,0)=\alpha(s), \quad A(s,1)=\alpha(s)_{t_0(s)}^\ast. \]
If the map $\alpha : K \rightarrow \mathcal{L}\SS^n(z)$ is not loose, then we say it is  \emph{tight}.
\end{definition}

If we identify $\alpha$ with a continuous (and hence uniform) family of curves $\alpha(s) \in \mathcal{L}\SS^n(z)$, $s \in K$, then $\alpha$ is loose if each curve $\alpha(s)$ is homotopic (with a homotopy depending continuously and hence uniformly in $s \in K$) to the curve $\alpha(s)_{t_0(s)}^\ast$, where the time $t_0(s)$ also depends continuously on $s$. Since the definition of being loose or tight just depend on the homotopy class of $\alpha(s)_{t_0(s)}^*$, it is independent of the choice of $\omega_n \in \mathcal{L}\SS^n(\1)$. To further simplify notations, we will often write $\gamma^\ast$ instead of $\gamma^\ast_{t_0}$ for a curve, and $\alpha^\ast$ for the family of curves $\alpha(s)_{t_0(s)}^*$ where $s$ varies in a compact set $K$.  

For the moment, it is not clear why this notion will be important in the sequel; this will be explained in~\S\ref{s82} and~\S\ref{s83}.

We have the following proposition.

\begin{proposition}\label{loosehom}
Consider two continuous maps $\alpha, \beta : K \rightarrow \mathcal{L}\SS^n(z)$, and assume that they are homotopic. Then $\alpha$ is loose if and only if $\beta$ is loose.  
\end{proposition}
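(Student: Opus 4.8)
The statement to prove is Proposition~\ref{loosehom}: if $\alpha,\beta : K \to \mathcal{L}\SS^n(z)$ are homotopic continuous maps, then $\alpha$ is loose if and only if $\beta$ is loose. By symmetry it suffices to prove one implication, say that if $\alpha$ is loose then $\beta$ is loose. The plan is to transport the data witnessing that $\alpha$ is loose along a homotopy $\Phi : K \times [0,1] \to \mathcal{L}\SS^n(z)$ with $\Phi(\cdot,0)=\alpha$, $\Phi(\cdot,1)=\beta$. The key point is that the operation $\gamma \mapsto \gamma^\ast_{t_0}$ and the homotopy class of $\gamma^\ast_{t_0}$ behave continuously in all parameters, so the whole situation can be ``pushed'' from $\alpha$ to $\beta$.

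\textbf{Key steps.} First I would record the auxiliary fact that the map $(\gamma,t_0) \mapsto \gamma^\ast_{t_0}$ extends to a continuous map $\mathcal{L}\SS^n(z) \times [0,1] \to \mathcal{L}\SS^n(z)$; this is immediate from Definition~\ref{adding} (one fixes a single $\varepsilon$, or lets $\varepsilon$ vary continuously, and checks the formula depends continuously on $\gamma$ and $t_0$ in the Hilbert-manifold topology from Chapter~\ref{chapter3}, \S\ref{s36}, since inserting a fixed reparametrized copy of $\omega_n$ only modifies the logarithmic derivative on a small interval). Consequently, given the homotopy $\Phi$, the composite $(s,u) \mapsto \Phi(s,u)^\ast_{t_0}$ is continuous in $(s,u,t_0)$. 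Second, suppose $\alpha$ is loose: there are continuous $A : K \times [0,1] \to \mathcal{L}\SS^n(z)$ and $t_0 : K \to [0,1]$ with $A(s,0)=\alpha(s)$ and $A(s,1)=\alpha(s)^\ast_{t_0(s)}$. I would then build the homotopy for $\beta$ by concatenating three pieces, each depending continuously on $s \in K$: (i) run the homotopy $\Phi$ backwards from $\beta(s)$ to $\alpha(s)$, i.e. $u \mapsto \Phi(s,1-u)$; (ii) run $A(s,\cdot)$ from $\alpha(s)$ to $\alpha(s)^\ast_{t_0(s)}$; (iii) run the homotopy $u \mapsto \Phi(s,u)^\ast_{t_0(s)}$ from $\alpha(s)^\ast_{t_0(s)}$ to $\beta(s)^\ast_{t_0(s)}$, which is continuous by the auxiliary fact. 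The concatenation $B : K \times [0,1] \to \mathcal{L}\SS^n(z)$ of (i),(ii),(iii) (with a standard rescaling of the $[0,1]$ parameter) together with the same map $t_0 : K \to [0,1]$ then satisfies $B(s,0)=\beta(s)$ and $B(s,1)=\beta(s)^\ast_{t_0(s)}$, so $\beta$ is loose. Swapping the roles of $\alpha$ and $\beta$ gives the converse, and the proposition follows.

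\textbf{Main obstacle.} The only genuine subtlety is the continuity claim in the auxiliary step: one must make sure that ``adding $\omega_n$ at time $t_0$'' is continuous jointly in the curve and in $t_0$, including at the endpoints $t_0=0$ and $t_0=1$ where Definition~\ref{adding} uses a slightly different formula. This is a matter of choosing the parametrization (for instance, letting the insertion window shrink as $t_0 \to 0$ or $t_0 \to 1$, or simply noting that the three cases glue to a continuous family because the logarithmic derivatives agree in the limit), and of using that the topology on $\mathcal{L}\SS^n(z)$ is the $L^2$-topology on logarithmic derivatives, under which localized modifications of bounded size vary continuously. Once this is in hand, the rest of the argument is the purely formal concatenation of homotopies described above, with no further input needed.
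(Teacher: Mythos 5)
Your proof is correct and uses essentially the same argument as the paper: apply the operation $\gamma\mapsto\gamma^\ast_{t_0}$ pointwise to the homotopy $\Phi$ between $\alpha$ and $\beta$ to get a homotopy between $\alpha^\ast$ and $\beta^\ast$, then concatenate $\beta\rightsquigarrow\alpha\rightsquigarrow\alpha^\ast\rightsquigarrow\beta^\ast$. The paper states the continuity of $H^\ast(s,t)=(H(s,t))^\ast$ as ``clear''; you spell out the joint continuity of $(\gamma,t_0)\mapsto\gamma^\ast_{t_0}$, which is the only point that actually needs checking, but the route is the same.
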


\begin{proof}
Since $\alpha$ and $\beta$ are homotopic, there exist a continuous map
\[ H : K \times [0,1] \rightarrow \mathcal{L}\SS^n(z) \]
such that for all $s \in K$:
\[ H(s,0)=\alpha(s), \quad H(s,1)=\beta(s). \]
Let us define  
\[ H^\ast : K \times [0,1] \rightarrow \mathcal{L}\SS^n(z) \]
by setting, for all $(s,t) \in K \times [0,1]$:
\[ H^*(s,t)=(H(s,t))^*. \]
This is clearly a homotopy between $\alpha^\ast$ and $\beta^\ast$. Assume $\alpha$ is loose; we have a homotopy between $\alpha$ and $\alpha^\ast$, but since we also have a homotopy between $\beta$ and $\alpha$ and a homotopy between $\alpha^\ast$ and $\beta^\ast$, we have a homotopy between $\beta$ and $\beta^\ast$, hence $\beta$ is loose. Assuming $\beta$ loose, the exact same argument shows that $\alpha$ is loose.   
\end{proof}

The following corollary is obvious.

\begin{corollary}
If $\alpha: K \rightarrow \mathcal{L}\SS^n(z)$ is loose, then $\alpha^\ast: K \rightarrow \mathcal{L}\SS^n(z)$ is loose. 
\end{corollary}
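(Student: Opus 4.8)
The plan is to deduce this immediately from Proposition~\ref{loosehom}, using that being loose is a homotopy-invariant property of a family and that, by its very definition, a loose family is homotopic to its starred version.

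First I would unpack the hypothesis. If $\alpha : K \to \mathcal{L}\SS^n(z)$ is loose, then by Definition~\ref{tight} there are continuous maps $A : K \times [0,1] \to \mathcal{L}\SS^n(z)$ and $t_0 : K \to [0,1]$ with $A(s,0) = \alpha(s)$ and $A(s,1) = \alpha(s)^\ast_{t_0(s)}$ for all $s \in K$. Thus $A$ is precisely a homotopy between $\alpha$ and the family $\alpha^\ast = \big(s \mapsto \alpha(s)^\ast_{t_0(s)}\big)$, so $\alpha$ and $\alpha^\ast$ are homotopic as maps $K \to \mathcal{L}\SS^n(z)$. Now applying Proposition~\ref{loosehom} to the pair $(\alpha,\alpha^\ast)$: they are homotopic and $\alpha$ is loose, hence $\alpha^\ast$ is loose. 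That is the entire argument.

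The only point deserving a word of care — and the reason the statement is ``obvious'' rather than vacuous — is the implicit dependence of the notation $\alpha^\ast$ on the choice of insertion time $t_0 : K \to [0,1]$ and of the curve $\omega_n \in \mathcal{L}\SS^n(\1)$. Here I would note that $[0,1]$ is contractible, so any two continuous choices $t_0,t_0' : K \to [0,1]$ are homotopic; sliding the insertion time along such a homotopy, together with a path between the two choices of $\omega_n$ in the path-connected space $\mathcal{L}\SS^n(\1)$ (as already observed just before Proposition~\ref{loosehom}), produces a homotopy between the corresponding starred families. Consequently the property ``$\alpha^\ast$ is loose'' is independent of these choices, and one further application of Proposition~\ref{loosehom} reconciles any discrepancy. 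There is no genuine obstacle: the corollary is a purely formal consequence of the homotopy-invariance established in Proposition~\ref{loosehom}.
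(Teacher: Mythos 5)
Your proof is correct and is exactly the argument the paper has in mind (the paper calls the corollary ``obvious'' and gives no proof, precisely because the definition of loose already exhibits the homotopy $A$ between $\alpha$ and $\alpha^\ast$, after which Proposition~\ref{loosehom} applies immediately). Your extra remark about the independence of the choice of $t_0$ and $\omega_n$ is a reasonable point of care and is consistent with the discussion the paper gives just before Proposition~\ref{loosehom}.
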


Note that, in fact, $\alpha^\ast$ is always loose. 

A curve $\gamma \in \mathcal{L}\SS^n(z)$ can be identified with the image of a continuous map $\alpha : K \rightarrow \mathcal{L}\SS^n(z)$, where $K$ is a set with one element. In this way, a curve $\gamma \in \mathcal{L}\SS^n(z)$ can be either loose or tight. The following proposition is well-known (from the works of Shapiro~\cite{Sha93} and Anisov~\cite{Ani98}).

\begin{proposition}\label{tightconvex}
A curve $\gamma \in \mathcal{L}\SS^n(z)$ is tight if and only if it is convex.
\end{proposition}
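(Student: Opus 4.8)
\textbf{Proof plan for Proposition~\ref{tightconvex}.} The statement asserts that a single curve $\gamma \in \mathcal{L}\SS^n(z)$ (viewed as a map from a one-point set $K$) is tight if and only if it is convex. The plan is to prove the two implications separately, relying on Theorem~\ref{thmani} (the two-component structure of $\mathcal{L}\SS^n(z)$ when convex curves exist) and Proposition~\ref{propchop} together with the combinatorics of the chopping operations in Chapter~\ref{chapter5}.

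First I would prove: \emph{if $\gamma$ is convex, then $\gamma$ is tight.} Suppose for contradiction that $\gamma$ is loose, so there is a homotopy $A : [0,1] \to \mathcal{L}\SS^n(z)$ with $A(0) = \gamma$ and $A(1) = \gamma_{t_0}^\ast$ for some $t_0$. The key observation is that adding the closed loop $\omega_n \in \mathcal{L}\SS^n(\1)$ to a convex curve destroys convexity: the curve $\gamma_{t_0}^\ast$ contains, near the inserted piece $\mathcal{F}_\gamma(t_0)\omega_n$, a sub-arc whose Frenet frame leaves the open Bruhat cell of $A^\top$ — indeed $\omega_n$ is a closed curve, so by Theorem~\ref{ani} it cannot be globally Jacobian, hence $\gamma_{t_0}^\ast$ is not convex. (One must be slightly careful that the speeding-up reparametrizations of $\gamma$ in Definition~\ref{adding} do not affect convexity, which follows from Proposition~\ref{propconv}(i).) By Theorem~\ref{thmani}, $\mathcal{L}\SS^n(z)$ has a connected component consisting exactly of convex curves, and this component is open and closed in $\mathcal{L}\SS^n(z)$; a path $A$ from the convex curve $\gamma$ to the non-convex curve $\gamma_{t_0}^\ast$ would have to leave this component, contradicting connectedness of $[0,1]$. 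Hence $\gamma$ is tight.

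Next I would prove the converse: \emph{if $\gamma$ is not convex, then $\gamma$ is loose.} Here the idea is that within the (single, by Theorem~\ref{thmani}) non-convex component of $\mathcal{L}\SS^n(z)$, the operation $\gamma \mapsto \gamma_{t_0}^\ast$ is homotopically trivial. Concretely: since $\gamma$ is non-convex and lies in a path-connected component, and since $\gamma_{t_0}^\ast$ is also non-convex and lies in that same component, there is a path between them; the content is to produce such a path \emph{canonically} (depending continuously on $\gamma$, though for a single curve continuity is automatic) so that the pair $(A, t_0)$ required by Definition~\ref{tight} exists. This is essentially the statement that inserting a loop $\omega_n$ can be "undone" by a homotopy once one is already outside the convex stratum — geometrically, the inserted loop can be shrunk and absorbed because there is enough room (a non-convex curve already has its Frenet frame visiting non-$A^\top$ cells, so no obstruction from the Bruhat stratification arises). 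I expect this direction to be essentially a citation to the Shapiro--Anisov analysis of the non-convex component, so the proof would invoke \cite{Sha93} and \cite{Ani98} for the fact that the loop-addition map is null-homotopic relative to the component.

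\textbf{Main obstacle.} The delicate point is the converse direction: showing that a non-convex curve is \emph{genuinely} loose, i.e. exhibiting the homotopy $A$ and the time $t_0$. While Theorem~\ref{thmani} gives path-connectedness of the non-convex component, one needs that $\gamma$ and $\gamma_{t_0}^\ast$ are connected \emph{in a way compatible with} the structure of Definition~\ref{tight} — but since $K$ is a point, any path in the non-convex component works, so the real subtlety is simply verifying that $\gamma_{t_0}^\ast$ is never convex (so it indeed lies in the non-convex component) and that $\gamma$ being non-convex places it there too; both reduce to the Bruhat-cell criterion of Theorem~\ref{ani}. The forward direction's main subtlety is the clean argument that $\gamma_{t_0}^\ast$ loses convexity, which I would extract from Theorem~\ref{ani} applied to the closed loop $\omega_n$. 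I would present both as short arguments citing \cite{Sha93}, \cite{Ani98}, Theorem~\ref{thmani} and Theorem~\ref{ani}, rather than reproving the component structure.
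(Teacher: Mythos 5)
The paper does not actually prove Proposition~\ref{tightconvex}; it simply records it as well-known, citing \cite{Sha93} and \cite{Ani98}. Your plan is the natural reconstruction of that citation: combine the component structure from Theorem~\ref{thmani} with the fact that the result $\gamma^\ast_{t_0}$ of the loop-insertion is never convex. The converse direction in particular is handled correctly: both $\gamma$ and $\gamma^\ast_{t_0}$ are non-convex, Theorem~\ref{thmani} says the non-convex curves form a single path-component, and for $K$ a point any path realises the homotopy in Definition~\ref{tight}, so $\gamma$ is loose.

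The one place you are imprecise is the claim that ``$\omega_n$ is a closed curve, so by Theorem~\ref{ani} it cannot be globally Jacobian.'' Read literally this is false: closed convex curves do exist on even-dimensional spheres (Example~\ref{exclosed}), precisely because $A={}A^\top$ when $n+1$ is odd, so Theorem~\ref{ani} alone does not rule out a closed curve from being convex. What makes $\omega_n$ non-convex is the lift: $\omega_n\in\mathcal{L}\SS^n(\1)$ and $\1$ is not a convex spin, equivalently $\mathbf{chop}^-(\1)=\a\neq\bar{\a}=\mathbf{chop}^+(\1)$ --- a fact the paper uses implicitly when asserting that $\mathcal{L}\SS^n(\1)$ is connected, and which one reads off Theorem~\ref{ani2} rather than~\ref{ani}. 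A cleaner route, which avoids any case distinction on the parity of $n$ and does not even need $\omega_n$ itself to be non-convex, is the following: for $t_0\in(0,1)$ one has $\mathcal{F}_{\gamma^\ast}(t_0-\varepsilon)=\mathcal{F}_{\gamma^\ast}(t_0+\varepsilon)$, both times interior to $[0,1]$; applying Theorem~\ref{ani} to the Jacobian curve $u\mapsto\mathcal{F}_{\gamma^\ast}(t_0-\varepsilon)^{-1}\mathcal{F}_{\gamma^\ast}(t_0-\varepsilon+u)$ forces this relative frame at $u=2\varepsilon$ to lie in the open cell of $A^\top$, but it equals $I$, which is the $0$-dimensional cell. (For $t_0\in\{0,1\}$ one then falls back on the non-convexity of the sub-arc $\omega_n$, so the spin-level fact $\a\neq\bar\a$ is still ultimately needed there.) With that repair, your argument is complete and matches the intended one.
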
 

Now let us look at the case where $n=3$. Given a continuous map $\alpha : K \rightarrow \mathcal{L}\SS^3(z_l,z_r)$, one can define its left part, $\alpha_l : K \rightarrow \mathcal{L}\SS^2(z_l)$ simply by setting $\alpha_l(s)=(\alpha(s))_l$, for $s \in K$. The following proposition gives us the relation between the tightness of $\alpha$ and the tightness of its left part $\alpha_l$. 

\begin{proposition}\label{leftight}
If $\alpha : K \rightarrow \mathcal{L}\SS^3(z_l,z_r)$ is loose, then $\alpha_l : K \rightarrow \mathcal{L}\SS^2(z_l)$ is loose. As a consequence, if $\alpha_l : K \rightarrow \mathcal{L}\SS^2(z_l)$ is tight, then $\alpha : K \rightarrow \mathcal{L}\SS^3(z_l,z_r)$ is tight.   
\end{proposition}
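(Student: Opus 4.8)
The statement asserts that the ``left part'' functor sends loose families to loose families. The natural strategy is to transport a witnessing homotopy for $\alpha$ through the decomposition homeomorphism of Theorem~\ref{th1}, and to check that the operation $\gamma \mapsto \gamma^\ast_{t_0}$ of adding a closed spiral $\omega_3$ in $\SS^3$ corresponds, on the left part, to adding a pair of loops in $\SS^2$ — that is, to the operation $\gamma_l \mapsto (\gamma_l)^\ast_{t_0}$ built from a suitable $\omega_2 \in \mathcal{L}\SS^2(\1)$. Since looseness is insensitive to the choice of $\omega_2$ (by Proposition~\ref{loosehom} together with path-connectedness of $\mathcal{L}\SS^2(\1)$), it suffices to verify this correspondence for one convenient choice of $\omega_3$ and the induced $\omega_2$.

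\textbf{Step 1: identify the left part of $\omega_3$.} Use the specific choice $\omega_3 = \gamma_1^4 \in \mathcal{L}\SS^3(\1,\1)$ recalled in \S\ref{s81}. By the examples of Chapter~\ref{chapter6}, its left part is $\gamma_{1,l}^4 = \sigma_\pi^4 \in \mathcal{L}\SS^2(\1)$. Set $\omega_2 := \sigma_\pi^4$; this is a legitimate choice for the ``add a pair of loops'' operation in $\SS^2$ (it lies in $\mathcal{L}\SS^2(\1)$, which is all that Definition~\ref{adding} requires for $n=2$).

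\textbf{Step 2: the left part of $\gamma^\ast_{t_0}$ is $(\gamma_l)^\ast_{t_0}$.} Here one must unwind Definition~\ref{adding}. The curve $\gamma \ast_{t_0} \omega_3$ is obtained from $\gamma$ by reparametrizing on two small intervals, and by inserting $\mathcal{F}_\gamma(t_0)\,\omega_3(\cdot)$ on the interval $[t_0-\varepsilon,t_0+\varepsilon]$. Passing to the left part is governed by the construction in the proof of Theorem~\ref{th0}: the left part is read off from the left factor of the lifted Frenet frame $\tilde{\mathcal F}_\gamma$, and the decomposition of $\mathcal F_{\gamma}(t_0)$ into its ``left spin'' is compatible with the decomposition of $\mathcal F_{\omega_3}$. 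Concretely, $\mathcal F_{\gamma^\ast_{t_0}}$ is, on the inserted interval, the product $\mathcal F_\gamma(t_0)\,\mathcal F_{\omega_3}((t-t_0+\varepsilon)/2\varepsilon)$, and since $\Pi_4$ respects the $\SS^3\times\SS^3$ splitting, the left factor of this product is $(\text{left factor of } \tilde{\mathcal F}_\gamma(t_0)) \cdot (\text{left factor of } \tilde{\mathcal F}_{\omega_3}) = \tilde{\mathcal F}_{\gamma_l}(t_0)\cdot\tilde{\mathcal F}_{\omega_2}$. The reparametrization pieces clearly pull back to the corresponding reparametrization pieces of $\gamma_l$. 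Hence $(\gamma^\ast_{t_0})_l = (\gamma_l)^\ast_{t_0}$ on the nose (for this choice of $\omega_2$), and the same identity holds with $s$-dependence, giving $(\alpha^\ast)_l = (\alpha_l)^\ast$ as families.

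\textbf{Step 3: conclude.} Suppose $\alpha : K \to \mathcal{L}\SS^3(z_l,z_r)$ is loose, with witnessing data $A : K\times[0,1]\to\mathcal{L}\SS^3(z_l,z_r)$ and $t_0 : K\to[0,1]$ as in Definition~\ref{tight}. Apply the left-part functor (continuous, by Theorem~\ref{th1}): $A_l : K\times[0,1]\to\mathcal{L}\SS^2(z_l)$ satisfies $A_l(s,0)=\alpha_l(s)$ and, by Step 2, $A_l(s,1) = (A(s,1))_l = (\alpha(s)^\ast_{t_0(s)})_l = (\alpha_l(s))^\ast_{t_0(s)}$. Thus $A_l$ together with the same $t_0$ witnesses that $\alpha_l$ is loose — for the choice $\omega_2 = \sigma_\pi^4$, and hence for every choice of $\omega_2 \in \mathcal{L}\SS^2(\1)$ by the choice-independence noted above (which itself follows from Proposition~\ref{loosehom}). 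The ``as a consequence'' clause is the contrapositive. The only delicate point — and the one where care is needed rather than cleverness — is Step 2: verifying that the Frenet-frame bookkeeping in Definition~\ref{adding} is genuinely compatible with the $\SS^3 = \SS^3\times\SS^3$ left-factor projection, including the treatment of the boundary cases $t_0 = 0$ and $t_0 = 1$, and checking that the (possibly non-smooth, $\hat{\mathcal L}$-style) concatenations do not disturb the identity. There is no real obstacle beyond this careful unwinding.
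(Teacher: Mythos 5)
Your argument is correct and follows the same route as the paper: apply the continuous left-part map to a witnessing homotopy for $\alpha$, and observe that $(\gamma^\ast_{t_0})_l = (\gamma_l)^\ast_{t_0}$ so that the resulting homotopy $A_l$ indeed witnesses looseness of $\alpha_l$. The only difference is that the paper records this last identity as an easy observation, while you carry out the Frenet-frame and logarithmic-derivative bookkeeping (insertion piece and reparametrization pieces) explicitly, which is a legitimate filling-in of a step the paper leaves implicit.
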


\begin{proof}
We assume that $\alpha$ is loose. Then there exist a continuous map
\[ A : K \times [0,1] \rightarrow \mathcal{L}\SS^3(z_l,z_r) \]
such that for all $s \in K$:
\[ A(s,0)=\alpha(s), \quad A(s,1)=\alpha(s)^\ast. \]
Let us define the map
\[ A_l : K \times [0,1] \rightarrow \mathcal{L}\SS^2(z_l) \]
simply by setting $A_l(s,t)=(A(s,t))_l$. Since the map giving the left part of a curve is a continuous map, $A_l$ is continuous. But now it is easy to observe that
\[ A_l(s,1)=\alpha_l(s)^\ast \]
which proves that $\alpha_l$ is loose.
\end{proof}

\medskip

Using Proposition~\ref{tightconvex} and~\ref{leftight}, one immediately obtain the following proposition.

\begin{proposition}\label{leftconvex}
Let $\gamma \in \mathcal{L}\SS^3(z_l,z_r)$. If $\gamma_l $ is convex, then $\gamma$ is convex.
\end{proposition}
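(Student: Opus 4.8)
\textbf{Proof proposal for Proposition~\ref{leftconvex}.}

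The statement is a clean logical consequence of the two propositions immediately preceding it, so the plan is simply to chain them together, taking a little care about what ``tight'' and ``loose'' mean for a single curve. First I would regard the curve $\gamma \in \mathcal{L}\SS^3(z_l,z_r)$ as a continuous map $\alpha : K \to \mathcal{L}\SS^3(z_l,z_r)$ where $K = \{\ast\}$ is a one-point space, exactly as discussed in the paragraph preceding Proposition~\ref{tightconvex}. Its left part is then the map $\alpha_l : K \to \mathcal{L}\SS^2(z_l)$, which under this identification is just the curve $\gamma_l \in \mathcal{L}\SS^2(z_l)$.

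The argument is by contraposition. Suppose $\gamma_l$ is convex. By Proposition~\ref{tightconvex} applied to $\SS^2$ (i.e.\ with $n=2$), $\gamma_l$ is tight as a map $K \to \mathcal{L}\SS^2(z_l)$. By the contrapositive form of Proposition~\ref{leftight} (``if $\alpha_l$ is tight then $\alpha$ is tight''), this forces $\alpha = \gamma$ to be tight as a map $K \to \mathcal{L}\SS^3(z_l,z_r)$. Finally, by Proposition~\ref{tightconvex} applied in $\SS^3$ (with $n=3$), a tight curve in $\mathcal{L}\SS^3(z_l,z_r)$ is convex; hence $\gamma$ is convex. This closes the chain.

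I do not anticipate a genuine obstacle here: the proposition is stated in the excerpt as an immediate corollary (``Using Proposition~\ref{tightconvex} and~\ref{leftight}, one immediately obtains''), and indeed all the substantive content — the equivalence of tightness and convexity, and the fact that looseness passes from a family in $\SS^3$ to its left part in $\SS^2$ — has already been established. The only point worth a sentence of care is the translation between the ``family over a compact set $K$'' language of Definitions~\ref{adding}–\ref{tight} and the single-curve language: one must note that a one-point $K$ makes a map $K \to \mathcal{L}\SS^n(z)$ literally a point of $\mathcal{L}\SS^n(z)$, so ``loose/tight for a map'' and ``loose/tight for a curve'' coincide, and that taking left parts commutes with this identification. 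After that remark the proof is a two-line syllogism, so I would write it out directly rather than belabour it.
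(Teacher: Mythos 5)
Your argument is exactly the paper's: treat $\gamma$ as a map from a one-point $K$, apply Proposition~\ref{tightconvex} in $\SS^2$ to get tightness of $\gamma_l$, pass this through the contrapositive of Proposition~\ref{leftight}, and apply Proposition~\ref{tightconvex} in $\SS^3$ to conclude convexity of $\gamma$. This matches the paper's one-line deduction, and your remark about the one-point $K$ identification is the only care needed.
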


The converse is not true in general. The curve $\gamma_1^2$ defined in Example~\ref{family1} is convex, but its left part, which is of the form $\sigma_c^2$ for some $0<c<2\pi$, is clearly not convex.

\section{The case $n=2$}\label{s82}

In this section, we recall some results of~\cite{Sal13} in the case $n=2$. Our main task to prove Theorem~\ref{th5} will be to extend these results in the case $n=3$.

Let us first recall that by definition, $\gamma$ is loose if it is homotopic to $\gamma^\ast$ inside the space of locally convex curves. In the case $n=2$, if we allow the homotopy to be inside the space of generic curves, then this turns out to be very different as the following proposition shows.

\begin{proposition}\label{genericas}
Let $\alpha: K \rightarrow \mathcal{G}\SS^2(z)$ be a continuous map. Then $\alpha$ is homotopic to $\alpha^\ast$ inside the space $\mathcal{G}\SS^2(z)$. 
\end{proposition}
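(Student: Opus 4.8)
The key point is that in the space of generic curves in $\SS^2$, the operation of adding a pair of loops is \emph{topologically invisible}: generic curves are exactly immersions, and adding the loop $\omega_2$ changes the rotation number of the immersed curve by an even integer, which can be undone by a regular homotopy. So the plan is to exhibit, continuously in $s \in K$, a homotopy inside $\mathcal{G}\SS^2(z)$ from $\alpha(s)$ to $\alpha(s)^\ast = \alpha(s) \ast_{t_0(s)} \omega_2$. First I would reduce to studying the Frenet frame curves: by Theorem~\ref{HSG} (the Hirsch--Smale--Gromov theorem), the Frenet frame injection $\mathcal{F}_{[z]} : \mathcal{G}\SS^2(z) \to \Omega SO_3(z)$ is a homotopy equivalence, so it suffices to prove the analogous statement for the induced maps $\mathcal{F} \circ \alpha$ and $\mathcal{F}\circ\alpha^\ast$ into the (based) loop space $\Omega SO_3$ — that is, to show $\mathcal{F}\circ\alpha$ and $\mathcal{F}\circ\alpha^\ast$ are homotopic as maps $K \to \Omega SO_3$.

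\textbf{Main steps.} The Frenet frame curve of $\gamma\ast_{t_0}\omega_2$ is, up to reparametrization, the concatenation of $\mathcal{F}_\gamma|_{[0,t_0]}$, then the loop $t \mapsto \mathcal{F}_\gamma(t_0)\,\mathcal{F}_{\omega_2}(t)$ (a based loop at $\mathcal{F}_\gamma(t_0)$, since $\omega_2 \in \mathcal{L}\SS^2(\1)$ means $\mathcal{F}_{\omega_2}(0) = \mathcal{F}_{\omega_2}(1) = I$), then $\mathcal{F}_\gamma|_{[t_0,1]}$. So as an element of $\Omega SO_3$, $\mathcal{F}_{\gamma^\ast}$ equals $\mathcal{F}_\gamma$ with a based loop (conjugated by $\mathcal{F}_\gamma(t_0)$) inserted at time $t_0$. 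The claim then amounts to: the homotopy class in $\Omega SO_3$ is unchanged by such an insertion, continuously in $s$. Now $\pi_1(\Omega SO_3) = \pi_2(SO_3) = 0$, and more precisely the inserted loop $\mathcal{F}_{\omega_2}$ represents a class in $\pi_1(SO_3) = \Z_2$; since $\omega_2 = \sigma_c^2$ goes twice around a circle, $\mathcal{F}_{\omega_2}$ is null-homotopic \emph{as a based loop in $SO_3$}, i.e. it represents the trivial element of $\pi_1(SO_3) \cong \Z_2$ (the doubled loop is even). Using a null-homotopy $h : \mathcal{F}_{\omega_2} \simeq \mathrm{const}_I$ through based loops in $SO_3$, I can contract the inserted loop: concretely, define $A(s,u)$ to be $\mathcal{F}_{\alpha(s)}$ with the loop $t \mapsto \mathcal{F}_{\alpha(s)}(t_0(s))\, h(u)(t)$ inserted at $t_0(s)$; at $u=0$ this is $\mathcal{F}_{\alpha(s)^\ast}$ and at $u=1$ the inserted loop is constant, giving (after removing the trivial insertion by an obvious reparametrization homotopy) $\mathcal{F}_{\alpha(s)}$. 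All of this depends continuously on $s$ because $t_0(s)$ and $\alpha(s)$ do, and because $h$ is fixed. Then transport back through the homotopy equivalence $\mathcal{F}_{[z]}$.

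\textbf{Alternative, more self-contained route.} Rather than invoking Theorem~\ref{HSG}, one can work directly: a generic curve in $\SS^2$ is an immersion, and $\alpha(s)$ and $\alpha(s)^\ast$ have Frenet frames with the same endpoints, hence (being immersions of an interval with prescribed boundary $1$-jets lying in the same component) are regularly homotopic iff the corresponding loops in $SO_3$ are homotopic rel endpoints, which is governed by $\pi_1(SO_3) = \Z_2$; the insertion of $\sigma_c^2$ contributes the trivial class. Smale's theorem on immersions then provides the regular homotopy, and one checks the construction can be made to depend continuously on $s \in K$ (e.g. by using the explicit contraction of the double circle, which can be written down: rotate one of the two loops around until it cancels the other). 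I would likely present this version since it keeps everything inside $\mathcal{G}\SS^2(z)$ explicitly.

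\textbf{Expected main obstacle.} The genuine content is not the algebraic topology — $\pi_1(SO_3)=\Z_2$ makes the doubled loop trivial almost for free — but the \emph{uniformity} in the parameter $s \in K$: one must produce the contracting homotopy $A : K \times [0,1] \to \mathcal{G}\SS^2(z)$ continuously, and in particular handle the reparametrizations (the ``speeding up'' built into Definition~\ref{adding}) in a way that varies continuously with $t_0(s)$, including at the degenerate endpoints $t_0(s) \in \{0,1\}$. I expect the cleanest fix is to fix once and for all a concrete null-homotopy of $\mathcal{F}_{\sigma_c^2}$ through based loops in $SO_3$ (using that $\sigma_c$ for small $c$ bounds a small disk, so $\sigma_c^2$ bounds and $\mathcal{F}_{\sigma_c^2}$ is explicitly contractible), and to absorb all reparametrization issues into a single continuous family of diffeomorphisms of $[0,1]$ depending on $(s,u)$; then continuity is automatic from continuity of $\alpha$ and $t_0$.
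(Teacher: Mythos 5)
Your proposal is correct. The paper's own ``proof'' is only a reference to Figure~\ref{fig:o}; the figure shows the Whitney-type cancellation of the two loops of $\omega_2=\sigma_c^2$ inside the space of immersions, which is precisely your \emph{alternative} route, the one you say you would present. Your \emph{main} route is genuinely different: you push the problem into $\Omega SO_3$ via the Frenet-frame homotopy equivalence of Theorem~\ref{HSG}, then observe that the insertion contributes the class $[\mathcal{F}_{\omega_2}]\in\pi_1(SO_3)\cong\Z_2$, which is trivial because $\sigma_c^2$ doubles a generator (equivalently, the lifted Frenet frame of $\sigma_c^2$ is a \emph{closed} loop in $\mathrm{Spin}_3\simeq\SS^3$), and you contract it with a fixed null-homotopy $h$; the intermediate paths $A(s,u)$ leave the quasi-Jacobian locus, but that is harmless since one only needs a bijection on $[K,-]$ supplied by the homotopy equivalence. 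This route buys you a proof that isolates the vanishing as a $\pi_1$ fact and needs no picture, at the cost of invoking the full Hirsch--Smale--Gromov theorem to avoid exhibiting the homotopy inside $\mathcal{G}\SS^2(z)$ directly. The one real technicality you flag --- uniformity in $s$, and in particular the behaviour of Definition~\ref{adding} at $t_0(s)\in\{0,1\}$, where the three-case formula is not visibly continuous in $t_0$ and the interior formula requires $[t_0-2\varepsilon,t_0+2\varepsilon]\subset[0,1]$ --- is a genuine point the paper suppresses, and your proposed fix (a single family of reparametrizations depending continuously on $(s,u)$, letting $\varepsilon$ shrink with the distance from $t_0(s)$ to $\{0,1\}$) is the right one.
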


We refer to the Figure~\ref{fig:o} to see how such a homotopy can be constructed.
\begin{figure}[H]
\centering
\includegraphics[scale=0.5]{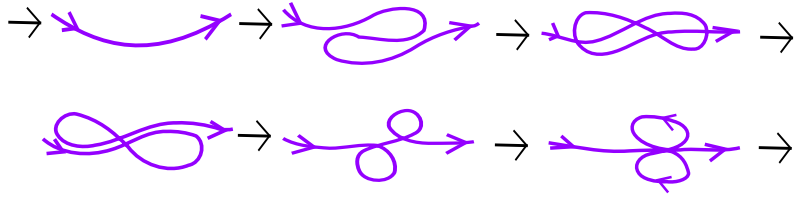}
\caption{Homotopy between $\alpha$ and $\alpha^\ast$ in $\mathcal{G}\SS^2(z)$.}
\label{fig:o}
\end{figure}

In particular, if $\alpha: K \rightarrow \mathcal{L}\SS^2(z)$ is a continuous map, then $\alpha^\ast: K \rightarrow \mathcal{L}\SS^2(z)$ but $\alpha$ is homotopic to $\alpha^\ast$ inside the space $\mathcal{G}\SS^2(z)$. The image of the homotopy does not necessarily lie in the space $\mathcal{L}\SS^2(z)$; if it does then $\alpha$ is by definition loose. 

Now assume that $\alpha^\ast$ is homotopic to a constant map in $\mathcal{L}\SS^2(z)$. Since from Proposition~\ref{genericas} we know that $\alpha$ is always homotopic to $\alpha^\ast$ in $\mathcal{G}\SS^2(z)$, we obtain in particular that $\alpha$ is homotopic to a constant map in $\mathcal{G}\SS^2(z)$. We do not prove it here, but the converse is also true.

\begin{proposition}\label{genericas2}
Let $\alpha: K \rightarrow \mathcal{G}\SS^2(z)$ be a continuous map. Then $\alpha$ is homotopic to a constant map in $\mathcal{G}\SS^2(z)$ if and only if $\alpha^\ast$ is homotopic to a constant map in $\mathcal{L}\SS^2(z)$. 
\end{proposition}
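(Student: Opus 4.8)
The statement to prove is Proposition~\ref{genericas2}: for a continuous map $\alpha : K \to \mathcal{G}\SS^2(z)$, the map $\alpha$ is homotopic to a constant in $\mathcal{G}\SS^2(z)$ if and only if $\alpha^\ast$ is homotopic to a constant in $\mathcal{L}\SS^2(z)$. One implication is essentially free. If $\alpha^\ast$ is null-homotopic in $\mathcal{L}\SS^2(z)$, then, composing with the inclusion $\mathcal{L}\SS^2(z) \hookrightarrow \mathcal{G}\SS^2(z)$, it is null-homotopic in $\mathcal{G}\SS^2(z)$; but by Proposition~\ref{genericas}, $\alpha$ is homotopic to $\alpha^\ast$ inside $\mathcal{G}\SS^2(z)$, so $\alpha$ is null-homotopic in $\mathcal{G}\SS^2(z)$ as well. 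This is the direction already observed in the paragraph preceding the statement.

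The substantive direction is the converse: assume $\alpha$ is null-homotopic in $\mathcal{G}\SS^2(z)$ and deduce that $\alpha^\ast$ is null-homotopic in $\mathcal{L}\SS^2(z)$. The plan is to use the adding-loops operation $\ast$ as a functorial device that transports a homotopy living in the \emph{generic} space into one living in the \emph{locally convex} space. Concretely, let $H : K \times [0,1] \to \mathcal{G}\SS^2(z)$ be a homotopy with $H(s,0) = \alpha(s)$ and $H(s,1) \equiv \gamma_0$ a constant curve (which we may take to be a fixed locally convex curve, e.g.\ a suitable $\sigma_c^k$, since $\mathcal{G}\SS^2(z)$ is connected and we are free to choose the endpoint). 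Apply the $\ast$-construction fibrewise, as in the proof of Proposition~\ref{loosehom}: define $H^\ast(s,t) = (H(s,t))^\ast_{t_0}$, where one must make a continuous choice of insertion time $t_0 = t_0(s,t)$ — here choosing $t_0$ constant (say $t_0 \equiv \tfrac12$) suffices. The point of inserting the locally convex loop $\omega_2 \in \mathcal{L}\SS^2(\mathbf 1)$ is that at the insertion site the curve is locally convex regardless of what $H(s,t)$ does elsewhere; so one expects $H^\ast(s,t)$ to actually lie in $\mathcal{L}\SS^2(z)$, not merely in $\mathcal{G}\SS^2(z)$ — this is precisely the mechanism encoded in Definition~\ref{adding} and the surrounding discussion. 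That gives a homotopy in $\mathcal{L}\SS^2(z)$ between $\alpha^\ast = H^\ast(\cdot,0)$ and $\gamma_0^\ast = H^\ast(\cdot,1)$, and $\gamma_0^\ast$ is a fixed curve, hence a constant map. So $\alpha^\ast$ is null-homotopic in $\mathcal{L}\SS^2(z)$.

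\textbf{The main obstacle.} The gap in the sketch above is the claim that inserting $\omega_2$ turns an arbitrary generic curve into a \emph{locally convex} one at (a neighbourhood of) the insertion point, and more importantly that this can be done continuously and globally: the curve $H(s,t)$ is only generic, so near the inserted loop it is locally convex, but away from the loop it need not be, and one must argue that the null-homotopy provided by $H$ can be ``pushed through'' the $\ast$ operation while keeping the whole curve locally convex — this is exactly the content one needs from \cite{Sal13} (this is why the paper says ``We do not prove it here''). The honest plan is therefore: (i) dispose of the easy implication via Proposition~\ref{genericas} and the inclusion $\mathcal{L}\SS^2(z)\subset\mathcal{G}\SS^2(z)$, exactly as in the paragraph before the statement; (ii) for the converse, cite the relevant result of \cite{Sal13} — the key input being that the adding-loops construction $\alpha \mapsto \alpha^\ast$ converts the weaker (generic) null-homotopy into a genuine null-homotopy through locally convex curves, which in turn rests on Saldanha's analysis of how loops can be slid along and absorbed within $\mathcal{L}\SS^2(z)$. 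If one wanted a self-contained argument, the hard technical step would be constructing, for each fixed $(s,t)$, a locally convex curve $(H(s,t))^\ast$ depending continuously on $(s,t)$: one inserts $\omega_2$ at a point where $H(s,t)$ happens to be locally convex (generic curves in $\SS^2$ are immersions, so one must instead reparametrize/perturb so that the insertion is legitimate), using the density and homotopy-equivalence statements of Proposition~\ref{2topology} and Proposition~\ref{genericas} to control the global behaviour. Since the paper explicitly defers the proof, I would present only steps (i) and (ii) in detail and reference \cite{Sal13} for the delicate part.
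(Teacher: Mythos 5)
Your proposal matches what the paper does: it proves the easy implication and defers the hard one. The easy direction (if $\alpha^\ast$ is null-homotopic in $\mathcal{L}\SS^2(z)$ then $\alpha$ is null-homotopic in $\mathcal{G}\SS^2(z)$, via the inclusion together with Proposition~\ref{genericas}) is exactly the paragraph preceding the statement. For the converse the paper explicitly says ``We do not prove it here,'' pointing to \cite{Sal13}, which is precisely where you land.

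You are right to flag your initial fibrewise-$\ast$ sketch as a gap. Applying $\ast$ to each curve of a generic homotopy $H(s,t)$ does \emph{not} produce curves in $\mathcal{L}\SS^2(z)$: Definition~\ref{adding} inserts a locally convex loop at one point, but leaves the rest of $H(s,t)$ untouched, and a generic curve in $\SS^2$ is merely an immersion and can have negative or vanishing geodesic curvature away from the insertion site. So $(H(s,t))^\ast$ lies only in $\mathcal{G}\SS^2(z)$, not in $\mathcal{L}\SS^2(z)$, and the naive construction proves nothing beyond what Proposition~\ref{genericas} already gives. The actual mechanism in \cite{Sal13} is the ``spreading loops'' technique (an h-principle-type argument, also alluded to in the paper around Proposition~\ref{tight12}): one uses the inserted loops as a reservoir of positive curvature that can be distributed along the curve so as to correct the non-convex portions continuously in $(s,t)$. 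Your honest plan (i)--(ii) is therefore the right one, and matches the paper's.
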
    

We can now explain why we are interested in finding tight maps. In order to understand the difference between $\mathcal{L}\SS^2(z)$ and $\mathcal{G}\SS^2(z)$, one would like to find maps, say defined on $K=\SS^p$ for some $p \geq 1$, which are homotopic to a constant in $\mathcal{G}\SS^2(z)$ but not homotopic a to constant in $\mathcal{L}\SS^2(z)$. Indeed, if one find such a map, this would gives a non-zero element in $\pi_p(\mathcal{L}\SS^2(z))$ which is taken to zero in $\pi_p(\mathcal{G}\SS^2(z))$. 

In~\cite{Sal13}, Saldanha constructed tight maps, for an integer $k \geq 2$, 
\[ h_{2k-2} : \SS^{2k-2} \rightarrow \mathcal{L}\SS^2((-\1)^k) \]
which are homotopic to constants maps in $\mathcal{G}\SS^2((-\1)^k)$. To prove that these maps are not homotopic to a constant in $\mathcal{L}\SS^2((-\1)^k)$, he introduced the following notion.

\begin{definition}\label{multiconvex}
A curve $\gamma \in \mathcal{L}\SS^2(z)$ is \emph{multiconvex} of multiplicity $k$ if there exist times $0 = t_0 < t_1 < \cdots <
t_k = 1$ such that $\mathcal{F}_\gamma(t_i)=I$ for $0 \leq i<k$, and the restrictions of $\gamma$ to the intervals $[t_{i-1},t_i]$ are convex arcs for $1 \leq i \leq k$. 
\end{definition}

Let us denote by $\mathcal{M}_k(z)$ the set of multiconvex curves of multiplicity $k$ in $ \mathcal{L}\SS^2(z)$. It is clear that a curve is multiconvex of multiplicity $1$ if and only if it is convex. Also, one can also see that for $k$ odd, $\mathcal{M}_k(z) \neq \emptyset$ if and only if $z$ is convex, and for $k$ even, $\mathcal{M}_k(z) \neq \emptyset$ if and only if $-z$ is convex.

\begin{proposition}\label{manifold}
The set $\mathcal{M}_k(z)$ is a closed contractible submanifold of $\mathcal{L}\SS^2(z)$ of codimension $2k-2$ with trivial normal bundle.
\end{proposition}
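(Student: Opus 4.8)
The plan is to realize $\mathcal{M}_k(z)$ as the fiber of a natural "concatenation" map and to use the known contractibility of the space of convex curves. First I would set up the parametrization: a multiconvex curve $\gamma \in \mathcal{M}_k(z)$ of multiplicity $k$ is, by Definition~\ref{multiconvex}, a concatenation of $k$ convex arcs $\gamma_1, \dots, \gamma_k$, each with initial Frenet frame $I$, and each $\gamma_i$ has a final lifted Frenet frame $z_i \in \mathrm{Spin}_3$ that must be convex (or, keeping track of signs along the concatenation, the partial products $z_1 \cdots z_i$ alternate between the two preimages of convex matrices in the appropriate way), with $z_1 \cdots z_k = z$. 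Using the identification of $\mathcal{L}\SS^2(\1)$-type spaces via the Bruhat action (Proposition~\ref{bruhathomeo}) together with Theorem~\ref{thmani}, each space of convex arcs with prescribed convex endpoint is a contractible submanifold of the corresponding space of locally convex curves; I would first pin down its codimension. A convex arc ending at the \emph{stably} convex spin lies in an open (top-dimensional) Bruhat cell, so that space is open; the constraint "the arc is convex with final frame exactly the distinguished non-open convex spin" — which is what one needs at the $k-1$ interior breakpoints $t_1, \dots, t_{k-1}$ where $\mathcal{F}_\gamma(t_i) = I$ — is a codimension-$2$ condition in $\mathcal{L}\SS^2$ (this is the $n=2$ instance of the fact, recalled after Definition on stably convex matrices, that convex but not stably convex matrices form Bruhat cells of codimension $\geq 2$, and here the relevant one, $-\mathbf{1}$, has codimension exactly $2$ since $\dim SO_3 = 3$ and the $0$-cell is a point... more precisely the cell of $-\mathbf 1$ inside the convex locus). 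Imposing $k-1$ such independent conditions gives codimension $2(k-1) = 2k-2$.

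Concretely, I would define the evaluation/restriction map
\[
\Phi : \mathcal{L}\SS^2(z) \longrightarrow \big(SO_3\big)^{k-1}, \qquad \gamma \longmapsto \big(\mathcal{F}_\gamma(t_1), \dots, \mathcal{F}_\gamma(t_{k-1})\big)
\]
on the (open, dense) subset of curves for which one can first canonically normalize the breakpoints $t_i$ — here one must be a little careful, since the $t_i$ are not fixed a priori in Definition~\ref{multiconvex}; the cleanest route is to reparametrize so that the $i$-th convex piece occupies $[\tfrac{i-1}{k}, \tfrac{i}{k}]$, which is harmless by reparametrization invariance (Proposition~\ref{proplocconv}(i)). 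Then $\mathcal{M}_k(z)$ is the set of $\gamma$ which are piecewise convex with $\Phi(\gamma) = (I, \dots, I)$, and I would show $\mathcal{M}_k(z) = \Phi^{-1}(I,\dots,I)$ intersected with the open set of "piecewise convex" curves, with $\Phi$ a submersion onto a neighborhood of $(I,\dots,I)$ (transversality coming from the fact that one can freely and independently perturb each Frenet frame at an interior breakpoint by inserting a short locally convex detour, using the chopping/loop constructions of Chapter~\ref{chapter5} and \S\ref{s81}). Submersivity gives that $\mathcal{M}_k(z)$ is a submanifold of codimension $\dim (SO_3)^{k-1} = 3(k-1)$ — which is \emph{not} $2k-2$, so this naive count is wrong and must be corrected: the point is that $\Phi$ does not surject onto a full neighborhood, because a convex arc followed by a convex arc, with the intermediate frame near $I$, forces the intermediate frame to lie in the \emph{convex} locus, which near $I$ is the union of the open cell of $A$ (the past) and lower cells; the relevant constraint is that the intermediate frame be convex-\emph{and}-such-that-the-arc-extends, which cuts down to the codimension-$2$ stratum. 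So the correct statement is that $\mathcal{M}_k(z)$ is cut out by requiring each of the $k-1$ interior frames to equal $I$ \emph{within} the submanifold where the concatenation stays locally convex, and a direct local computation in Frenet-frame coordinates (the logarithmic derivative lives in $\mathfrak{J}$, a $2$-dimensional space of tridiagonal matrices in the $n=2$ case) shows the effective codimension is $2$ per breakpoint. I would present this local model carefully: near a breakpoint, the relevant data is a $2$-parameter family (the two curvature-type functions $c_1, c_2$), and requiring $\mathcal{F}_\gamma(t_i) = I$ along a convex-to-convex transition is $2$ equations.

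For contractibility and triviality of the normal bundle I would argue as follows. Contractibility: use the "straightening" deformation — given $\gamma \in \mathcal{M}_k(z)$ decomposed into convex arcs $\gamma_1, \dots, \gamma_k$, each $\gamma_i$ lives in a contractible space of convex arcs (Theorem~\ref{thmani}, Proposition~\ref{tightconvex}), and these contractions can be performed simultaneously and compatibly because the breakpoint frames are held fixed at $I$; this exhibits $\mathcal{M}_k(z)$ as (homotopy equivalent to, in fact homeomorphic to, once one fixes a product structure) a product of $k$ contractible spaces, hence contractible. Triviality of the normal bundle: since $\mathcal{M}_k(z)$ is contractible, \emph{every} vector bundle over it is trivial, so this is automatic once the submanifold and contractibility claims are established — I would simply remark this. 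The main obstacle, and the step I would spend the most care on, is the codimension count together with the transversality/submanifold claim: one must set up the right local chart (Frenet frame / logarithmic-derivative coordinates as in \S\ref{s36}), correctly identify the convex locus near $I$ inside $SO_3$ (the cell of $A$ together with the point $I$ and the cell $-\mathbf 1$ — cf. the $n=2$ list of convex matrices referenced in \S\ref{s71}), and verify that "convex arc meeting $I$ then continuing convex" is precisely a codimension-$2$ condition, not codimension $3$. Everything else (reparametrization normalization, simultaneous contraction, normal bundle) is then routine given the results already available in the excerpt, in particular Theorem~\ref{thmani}, Proposition~\ref{tightconvex}, Proposition~\ref{bruhathomeo} and the chopping machinery of Chapter~\ref{chapter5}.
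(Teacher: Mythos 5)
The paper does not prove this proposition; it is stated as a known fact from Saldanha~\cite{Sal13} (all of~\S\ref{s82} is recalling results of that paper), so your sketch has to stand on its own. Your overall plan --- decompose into convex arcs, derive contractibility from Theorem~\ref{thmani}, get triviality of the normal bundle for free --- is sound in outline, and you correctly identify the codimension computation as the crux. But there the argument has a genuine gap. Your proposal to ``reparametrize so that the $i$-th convex piece occupies $[\frac{i-1}{k},\frac{i}{k}]$'' and declare it ``harmless by Proposition~\ref{proplocconv}(i)'' misses that, in the Hilbert manifold structure of~\S\ref{s36} where a curve \emph{is} its $L^2$ data $(\hat{c}_1,\hat{c}_2)$, reparametrizing genuinely moves the point of $\hat{\mathcal{L}}\SS^2(z)$: the normalization is a bijection from $\mathcal{M}_k(z)$ onto the set of multiconvex curves with breakpoints pinned at $i/k$, but that set has codimension $3(k-1)$, not $2k-2$, and the bijection is not the restriction of any ambient map so cannot transport codimension. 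Your repair --- that $\mathfrak{J}$ is $2$-dimensional, so requiring $\mathcal{F}_\gamma(t_i)=I$ ``along a convex-to-convex transition'' is only $2$ equations --- does not hold up either: for a fixed time $t_i$, the condition $\mathcal{F}_\gamma(t_i)=I$ is $\dim SO_3=3$ equations whatever the target of the logarithmic derivative, and piecewise convexity is an open condition contributing nothing.

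The missing idea is that Definition~\ref{multiconvex} existentially quantifies the breakpoint times, so each $t_i$ is a free parameter. Work instead with $F(\gamma,t_1,\dots,t_{k-1})=(\mathcal{F}_\gamma(t_1),\dots,\mathcal{F}_\gamma(t_{k-1}))$ on $\mathcal{L}\SS^2(z)\times\{0<t_1<\cdots<t_{k-1}<1\}$. Near a multiconvex $(\gamma_0,t^0)$ the map $F$ is a submersion, so $F^{-1}(I,\dots,I)$ has codimension $3(k-1)$ in the product; the projection to the first factor, restricted to $F^{-1}(I,\dots,I)$ and the open piecewise-convexity locus, is then a local diffeomorphism onto $\mathcal{M}_k(z)$, because a multiconvex curve determines its breakpoints uniquely and smoothly (between consecutive breakpoints $\mathcal{F}_\gamma$ stays in the open cell of $A^\top=A$, by Theorem~\ref{ani} together with the symmetry of $A$ for $n+1=3$ odd, and $\Lambda_\gamma(t_i)\neq 0$ by local convexity makes the projection an immersion as well as injective). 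This cancels the $k-1$ breakpoint parameters and yields codimension $3(k-1)-(k-1)=2k-2$. Finally, you do not address closedness, which is part of the statement: one must rule out breakpoints colliding or escaping to $0$ or $1$ along a convergent sequence in $\hat{\mathcal{L}}\SS^2(z)$, using the $L^2$ control on the curvature data. Your contractibility and normal-bundle remarks are otherwise fine once the fibration over the open simplex of breakpoint tuples is set up properly.
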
 

Therefore we can associate to $\mathcal{M}_k(z)$ a cohomology class $m_{2k-2} \in H^{2k-2}(\mathcal{L}\SS^2(z),\R)$ by counting intersection with multiplicity. Given any continuous map $\alpha : K \rightarrow \mathcal{L}\SS^2(z)$, by a perturbation we can make it smooth and transverse to $\mathcal{M}_k(z)$, and we denote by $m_{2k-2}(\alpha) \in \R$ the intersection number of $\alpha$ with $\mathcal{M}_k(z)$.

The following proposition was proved in~\cite{Sal13}.

\begin{proposition}\label{nico}
Given an integer $k \geq 2$, there exist (explicit) maps 
\[ h_{2k-2} : \SS^{2k-2} \rightarrow \mathcal{L}\SS^2((-\1)^k) \]
which are tight, homotopic to constant maps in $\mathcal{G}\SS^2((-\1)^k)$ and such that $m_{2k-2}(h_{2k-2})=\pm 1$. As a consequence, these maps $h_{2k-2}$ are not homotopic to constant maps in $\mathcal{L}\SS^2((-\1)^k)$. 
\end{proposition}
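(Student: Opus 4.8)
\textbf{Proof proposal for Proposition~\ref{nico}.}

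The plan is to construct the maps $h_{2k-2}$ explicitly as families of locally convex curves parametrized by $\SS^{2k-2}$, using the ``adding loops'' operation as the building block, and then to compute the intersection number $m_{2k-2}(h_{2k-2})$ directly. First I would recall from Example~\ref{exclosed} and Proposition~\ref{tightconvex} that a convex curve in $\mathcal{L}\SS^2((-\1)^k)$ is tight, and that multiconvex curves of multiplicity $k$ sit inside $\mathcal{L}\SS^2((-\1)^k)$ precisely when $(-\1)^k$ is convex for $k$ odd and $-(-\1)^k$ is convex for $k$ even. The key idea is to take a fixed convex (hence multiconvex of multiplicity $1$) curve $\gamma_0 \in \mathcal{L}\SS^2((-\1)^k)$ and, at $k-1$ well-separated times $t_1 < \dots < t_{k-1}$, insert small ``loops'' $\omega_2 \in \mathcal{L}\SS^2(\1)$, where each inserted loop is allowed to rotate through a parametrized family of frames. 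Each individual loop-insertion, when the frame is allowed to vary over a full rotation, contributes a sphere $\SS^2$ worth of parameters (this is where the even dimension $2k-2 = 2(k-1)$ comes from); the full map $h_{2k-2}$ is the ``join''-type combination of these $k-1$ two-dimensional parameter families, suitably normalized so that the domain is $\SS^{2k-2}$.

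The key steps, in order, are: (1) make precise the single-loop construction, i.e.\ produce a map $\SS^2 \to \mathcal{L}\SS^2(\1)$ that at the basepoint of $\SS^2$ is the trivial (constant-at-$\mathbf 1$) curve and elsewhere is a genuine loop, and verify it is continuous and lands in locally convex curves — this is essentially the content of Saldanha's explicit formulas and I would simply cite/adapt them; (2) assemble the $k-1$ copies into $h_{2k-2} : \SS^{2k-2} \to \mathcal{L}\SS^2((-\1)^k)$, checking that the final Frenet frame is correct (each inserted $\omega_2$ contributes trivially to the final frame since $\omega_2 \in \mathcal{L}\SS^2(\1)$) and that the construction degenerates consistently on the boundary strata of $\SS^{2k-2}$ so that it descends to a map from the sphere; (3) show $h_{2k-2}$ is homotopic to a constant in $\mathcal{G}\SS^2((-\1)^k)$ — here I would invoke Proposition~\ref{genericas} repeatedly (one application per inserted loop) to unwind each loop in the larger space of generic curves, together with the contractibility of $\mathcal{G}\SS^2((-\1)^k)$ from Theorem~\ref{HSG} and~\eqref{homloop}; (4) compute $m_{2k-2}(h_{2k-2})$ by checking that $h_{2k-2}$ is transverse to $\mathcal{M}_k((-\1)^k)$ and meets it in exactly one point, namely the point of the parameter sphere where all $k-1$ inserted loops are ``fully traversed circles'' so that the resulting curve breaks into exactly $k$ convex arcs between returns to the identity frame — by Proposition~\ref{manifold}, $\mathcal{M}_k((-\1)^k)$ has codimension $2k-2 = \dim \SS^{2k-2}$, so a single transverse intersection gives $m_{2k-2}(h_{2k-2}) = \pm 1$; (5) conclude: since $m_{2k-2}$ is a well-defined cohomology class on $\mathcal{L}\SS^2((-\1)^k)$ (Proposition~\ref{manifold}) and $m_{2k-2}(h_{2k-2}) \neq 0$, the map $h_{2k-2}$ represents a nonzero element of $H_{2k-2}(\mathcal{L}\SS^2((-\1)^k),\R)$ (equivalently is not null-homotopic), while step (3) shows it dies in $\mathcal{G}\SS^2((-\1)^k)$; tightness then follows since a loose map would be homotopic to $h_{2k-2}^\ast$, and I would check that $h_{2k-2}^\ast$ has intersection number $0$ with $\mathcal{M}_k$ (adding one more loop pushes the family off $\mathcal{M}_k$, as adding a loop strictly increases the relevant multiplicity count), contradicting $m_{2k-2}(h_{2k-2}) = \pm 1$ combined with Proposition~\ref{loosehom}.

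The main obstacle I expect is step (4): verifying transversality of $h_{2k-2}$ to the multiconvex stratum $\mathcal{M}_k((-\1)^k)$ and confirming that the intersection is a single point with multiplicity exactly $\pm 1$. This requires a careful local analysis near the ``all loops traversed'' configuration — one must identify the normal directions to $\mathcal{M}_k$ (the $2k-2$ parameters controlling the ``opening'' of each of the $k-1$ loop insertions) with the tangent directions of the parameter sphere, and check the resulting linear map is an isomorphism. A secondary difficulty is the bookkeeping in step (2) to ensure the $(k-1)$-fold assembly genuinely produces a map out of $\SS^{2k-2}$ (rather than, say, a product of spheres) with the correct collapsing behavior; this is a matter of choosing the gluing of the loop-insertion times and the degeneration of each $\SS^2$ factor carefully, and matching conventions with~\cite{Sal13}. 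Both of these are ultimately routine but delicate, and I would lean heavily on the explicit formulas and transversality computations already carried out by Saldanha rather than redoing them from scratch.
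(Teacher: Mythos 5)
The thesis does not give a proof of Proposition~\ref{nico}: it is stated and then cited directly from \cite{Sal13} (``The following proposition was proved in~\cite{Sal13}''), so there is no in-paper argument to compare against. Your proposal is therefore an attempt to reconstruct Saldanha's construction, and as a reconstruction it has the right overall shape --- loop insertion at $k-1$ times, intersection pairing with $\mathcal{M}_k$, transversality at the ``fully wound'' configuration --- but it leaves several load-bearing steps unresolved or incorrect.

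The most serious gap is the domain. You say the map $h_{2k-2}$ is a ``join-type combination'' of $k-1$ copies of $\SS^2$, but the join of $k-1$ copies of $\SS^2$ is $\SS^{3(k-1)-1}=\SS^{3k-4}$, which equals $\SS^{2k-2}$ only when $k=2$. If instead you intend a product $(\SS^2)^{k-1}$, the dimension $2(k-1)=2k-2$ is right, but $(\SS^2)^{k-1}$ is not a sphere, and you only wave at the needed collapsing of boundary strata without specifying it. Getting from a product of small parameter families to an actual map out of $\SS^{2k-2}$ is precisely where Saldanha's explicit formulas do real work, and your sketch defers it without a concrete plan. A secondary issue in the same step: you assert each single loop insertion contributes ``a sphere $\SS^2$ worth of parameters,'' but the most natural parameter for rotating a framed insertion is $\mathrm{Spin}_3\simeq\SS^3$; extracting an $\SS^2$ requires choosing a specific two-parameter sub-family, which you do not identify.

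Your tightness argument in step~(5) also has a gap. You propose to show $m_{2k-2}(h_{2k-2}^\ast)=0$ directly by arguing that ``adding a loop strictly increases the relevant multiplicity count.'' That shows the obvious intersection point of $h_{2k-2}$ with $\mathcal{M}_k$ gets pushed off, but it does not rule out $h_{2k-2}^\ast$ acquiring new transverse intersections with $\mathcal{M}_k$ elsewhere in the parameter sphere; an intersection number is a global count. The cleaner route, using only tools already stated in the thesis, avoids this entirely: since step~(3) shows $h_{2k-2}$ is null-homotopic in $\mathcal{G}\SS^2((-\1)^k)$, Proposition~\ref{genericas2} gives that $h_{2k-2}^\ast$ is null-homotopic in $\mathcal{L}\SS^2((-\1)^k)$; if $h_{2k-2}$ were loose it would be homotopic to $h_{2k-2}^\ast$ in $\mathcal{L}\SS^2((-\1)^k)$ and hence null-homotopic there, contradicting $m_{2k-2}(h_{2k-2})=\pm 1$. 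This also makes the ``as a consequence'' clause of the statement immediate and shows tightness in one stroke. You should replace your step~(5) with this argument.

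Finally, you flag the transversality computation in step~(4) as the main obstacle and propose to ``lean heavily on'' Saldanha's computations. That is fair, but note that the dimension count (Proposition~\ref{manifold} gives $\mathrm{codim}\,\mathcal{M}_k=2k-2=\dim\SS^{2k-2}$) only tells you that a transverse intersection is zero-dimensional; it does not by itself produce exactly one intersection point of multiplicity $\pm 1$. That requires the explicit construction from \cite{Sal13}, and your sketch cannot substitute for it.
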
 

Therefore, $h_{2k-2}$ defines extra generators in $\pi_{2k-2}(\mathcal{L}\SS^2((-\1)^k))$ (as compared to $\pi_{2k-2}(\mathcal{G}\SS^2((-\1)^k)$) and $m_{2k-2}$ defines extra generators in $H^{2k-2}(\mathcal{L}\SS^2((-\1)^k),\R)$ (as compared to $H^{2k-2}(\mathcal{G}\SS^2((-\1)^k,\R)$). 

Our objective will be to use Proposition~\ref{nico}, together with our decomposition results Theorem~\ref{th0} and Theorem~\ref{th1} to draw similar conclusions in the case $n=3$. We will be able to do this only in two cases, namely for $\mathcal{L}\SS^3(\1,-\1) \simeq \mathcal{L}\SS^3(-\1,\k)$ and $\mathcal{L}\SS^3(-\1,\1) \simeq \mathcal{L}\SS^3(\1,-\k)$. But first some extra work is needed.    

\section{The case $n=3$}\label{s83}

The goal of this section is to obtain proposition analogous to Proposition~\ref{genericas} and Proposition~\ref{genericas2} in our case $n=3$.

Recall that Proposition~\ref{genericas} states that a map $\alpha : K \rightarrow \mathcal{L}\SS^2(z)$ is always homotopic to $\alpha^\ast$ inside the space $\mathcal{G}\SS^2(z)$. In the case $n=3$, this is not so obvious. Yet using the result in the case $n=2$, we will prove below that a map $\alpha : K \rightarrow \mathcal{L}\SS^3(z_l,z_r)$ is always homotopic, in $\mathcal{G}\SS^3(z_l,z_r)$, to the curve $\alpha$ to which we attached a pair of loops with zero geodesic torsion, that is an element in $\mathcal{G}\SS^3(\1,\1)$ with zero geodesic torsion. One could then change the definition of $\alpha^*$ so that instead of attaching an element in $\mathcal{L}\SS^3(\1,\1)$, one attaches an element in $\mathcal{G}\SS^3(\1,\1)$ with zero geodesic torsion. The obvious problem is that if $\alpha$ takes values in $\mathcal{L}\SS^3(z_l,z_r)$, this would no longer be the case of $\alpha^*$. 

To solve this issue, recall that to an element in $g \in \mathcal{G}\SS^3(\1,\1)$ with zero geodesic torsion is associated a pair of curves $(g_l,g_r) \in \mathcal{L}\SS^2(\1) \times \mathcal{G}\SS^2(\1)$     such that $\kappa_{g_l}=-\kappa_{g_r}>0$ (this follows from Theorem~\ref{th0}, observe that according to our definition, $g_r$ is not locally convex but negative locally convex). Given a curve $\gamma \in \mathcal{G}\SS^3(z_l,z_r)$, let us decompose it into its left and right parts $\gamma=(\gamma_l,\gamma_r)$, and let $\gamma \ast g$ be the curve $\gamma$ to which we attached the curve $g$ at some point. Then it is easy to see that $\gamma \ast g=(\gamma_l \ast g_l,\gamma_r \ast g_r)$, that is the left (respectively right) part of $\gamma \ast g$ is obtained by attaching the left (respectively right) part of $g$ to the left (respectively right) part of $\gamma$. As we already explained, if $\gamma$ is locally convex, then $\gamma \ast g$ is not locally convex because it does not satisfy the condition on the geodesic curvature. A first attempt would be to slightly modify $g_l$ (or $g_r$) into $\tilde{g}_l$ so that the geodesic curvature condition is met; but then the condition on the norm of the speed would not be satisfied, that is $||(\gamma_l \ast \tilde{g}_l)'(t)|| \neq ||(\gamma_r \ast g_r)'(t)||$. Hence in order to satisfy both conditions at the same time, we will have to modify the whole curve in a rather subtle way. 

At the end we should obtain a curve, that we shall call $\gamma^\#$ (to distinguish from the curve $\gamma^\ast$ we previously defined), that has the property that if $\gamma$ is locally convex, then so is $\gamma^\#$. Then of course one has to know how this procedure is related to the procedure of adding loops we defined. The curve $\gamma^\#$ is of course different from the curve $\gamma^\ast$, but we will see later that $\gamma$ is loose (meaning that $\gamma$ is homotopic to $\gamma^\ast$) if and only if $\gamma$ is homotopic to $\gamma^\#$; hence defining loose and tight with respect to $\gamma^\ast$ or $\gamma^\#$ is just a matter of convenience.  

We will use the Lemma below, for construct the curve $\gamma^\sharp$.

\begin{lemma}\label{finallemma}
Consider a convex arc $\gamma: [t_{0}-2\varepsilon,t_{0}+2\varepsilon] \rightarrow \SS^2$ and positive numbers $K_0$, $K_1$, with $K_1 > \kappa_{\gamma}(t) > K_0$,
for all $t \in [t_0-2\varepsilon,t_0+2\varepsilon]$. Then given $t_{---} \in [t_0-2\varepsilon, t_{0})$ and $t_{+++} \in (t_0,t_0+2\varepsilon]$ there exist an unique arc $\nu:[t_0-2\varepsilon,t_0+2\varepsilon] \rightarrow \SS^2$ (up to reparametrization) and times $t_{--}, \; t_{++}$ with $t_{--} \in (t_{---},t_{0})$ and $t_{++} \in (t_{0},t_{+++})$ such that

\begin{equation}\label{cond0}
\nu(t)=\gamma(t), \quad t \notin [t_{---},t_{+++}], 
\end{equation}

\begin{equation}\label{cond1}
\kappa_{\nu}(t)= K_0, \quad t \in [t_{---},t_{--}] \cup  [t_{++},t_{+++}], 
\end{equation} 

\begin{equation}\label{cond2}
\kappa_{\nu}(t)= K_1, \quad t \in [t_{--},t_{++}] \quad \mathrm{and} 
\end{equation} 

\begin{equation}\label{cond3}
\int_{t_{---}}^{t_{+++}}||\gamma'(t)||dt < \int_{t_{---}}^{t_{+++}}||\nu'(t)||dt
\end{equation}

Futhermore, $t_{---}$ and $t_{+++}$ can be chosen so that there exist $t_{-}$, $t_{+}$, with $t_{-} \in (t_{--},t_{0})$ and $t_{+} \in (t_{0},t_{++})$
and

\begin{equation}\label{cond4}
\int_{t_{---}}^{t_{0}}||\gamma'(t)||dt = \int_{t_{---}}^{t_{-}}||\nu'(t)||dt
\end{equation}

\begin{equation}\label{cond5}
\int_{t_{0}}^{t_{+++}}||\gamma'(t)||dt = \int_{t_{+}}^{t_{+++}}||\nu'(t)||dt
\end{equation}

\end{lemma}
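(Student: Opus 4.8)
Since the statement is a purely one‑dimensional construction, the first move is to pass to arc length on $[t_{---},t_{+++}]$ and to reformulate everything through Frenet frames in $SO_3$. Recall (cf.~\eqref{logderives2} with $\|\gamma'\|=1$) that a unit‑speed arc in $\SS^2$ with geodesic curvature $K$ is the orbit $\mathcal{F}(t)e_1$ of a curve with $\mathcal{F}'=\mathcal{F}\Lambda_K$, where $\Lambda_K\in\mathfrak{so}_3$ is the tridiagonal skew‑symmetric matrix with $(1,2)$‑entry $-1$ and $(2,3)$‑entry $-K$. The arc $\nu$ we are looking for is forced, on $[t_{---},t_{+++}]$, to be a concatenation of three arcs of constant curvature $K_0$, $K_1$, $K_0$ of (geometric) lengths $\ell_1,\ell_2,\ell_3>0$, and to coincide with $\gamma$ outside; since in the $L^2$‑curvature model of $\hat{\mathcal L}\SS^2$ the curvature jumps at the two junctions are harmless, the requirement that this glues onto $\gamma$ on the complement is exactly the single matrix equation
\[
\Phi(\ell_1,\ell_2,\ell_3):=\exp(\ell_1\Lambda_{K_0})\exp(\ell_2\Lambda_{K_1})\exp(\ell_3\Lambda_{K_0})=Q,\qquad Q:=\mathcal{F}_\gamma(t_{---})^{-1}\mathcal{F}_\gamma(t_{+++}),
\]
with total length $L_\nu=\ell_1+\ell_2+\ell_3$; the whole lemma thus reduces to solving this equation for a suitable triple of positive reals, the junction times $t_{--},t_{++}$ being the arc‑length positions of the two corners.

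The core difficulty is that $\Phi$ maps $3$ real parameters into the $3$‑dimensional group $SO_3$, yet at the naive configuration the three velocity directions are $\Lambda_{K_0},\Lambda_{K_1},\Lambda_{K_0}$, which span only a $2$‑plane, so $d\Phi$ fails to be onto and one cannot apply the implicit function theorem there. I would circumvent this by looking for the middle arc as an \emph{almost full loop}: let $P_1=2\pi/\sqrt{1+K_1^{2}}$ be the length of the closed curvature‑$K_1$ circle, so $\exp(P_1\Lambda_{K_1})=I$, and write $\ell_2=P_1+\ell_2'$; then $\Phi=\exp(\ell_1\Lambda_{K_0})\exp(\ell_2'\Lambda_{K_1})\exp(\ell_3\Lambda_{K_0})$ with $\ell_1,\ell_2',\ell_3$ all small, and a second‑order (Baker--Campbell--Hausdorff) expansion gives
\[
\log\Phi=(\ell_1+\ell_3)\Lambda_{K_0}+\ell_2'\Lambda_{K_1}+\tfrac{1}{2}\,\ell_2'(\ell_1-\ell_3)\,[\Lambda_{K_0},\Lambda_{K_1}]+O(|\ell|^{3}).
\]
Because $K_0\neq K_1$, the vectors $\Lambda_{K_0},\Lambda_{K_1},[\Lambda_{K_0},\Lambda_{K_1}]$ form a basis of $\mathfrak{so}_3$. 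On the other hand, writing $\delta:=\int_{t_{---}}^{t_{+++}}\|\gamma'\|$ for the arc length of the piece and $\bar\kappa$ for the mean value of $\kappa_\gamma$ on it, $Q$ is the value at $t_{+++}$ of the solution of $R'=R\Lambda_{\kappa_\gamma}$, $R(t_{---})=I$, so a Magnus expansion yields $\log Q=\delta\big(\frac{K_1-\bar\kappa}{K_1-K_0}\Lambda_{K_0}+\frac{\bar\kappa-K_0}{K_1-K_0}\Lambda_{K_1}\big)+O(\delta^{2})$, the $[\Lambda_{K_0},\Lambda_{K_1}]$‑component being of order $\delta^{2}$ and in fact $o(\delta^{2})$ (it is governed by $\iint_{s<s'}(\kappa_\gamma(s')-\kappa_\gamma(s))$, which is $o(\delta^{2})$ since $\kappa_\gamma$ is continuous). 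Matching term by term and using $K_0<\bar\kappa<K_1$ gives, after rescaling $\ell_i=\delta u_i$, a leading system with solution $\ell_2'\sim\delta\frac{\bar\kappa-K_0}{K_1-K_0}$ and $\ell_1,\ell_3\sim\frac{\delta}{2}\cdot\frac{K_1-\bar\kappa}{K_1-K_0}>0$; since the linearization of this rescaled equation is nondegenerate, the implicit function theorem upgrades the approximate solution to an exact one, unique in a neighbourhood of this configuration — which is the asserted unique arc $\nu$. This step, pushing the three length parameters through the degeneracy of $\Phi$ while keeping $\ell_1,\ell_3$ positive, is the whole content of the lemma; it is also where the hypothesis $K_0<\kappa_\gamma<K_1$ and the smallness of $\varepsilon$ are genuinely used (if needed one shrinks $\varepsilon$, equivalently one only takes $t_{---},t_{+++}$ close enough to $t_0$, so that the above smallness and positivity hold).

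It remains to read off the qualitative conclusions. Conditions~\eqref{cond1} and~\eqref{cond2}, and the coincidence with $\gamma$ off $[t_{---},t_{+++}]$, hold by construction. For~\eqref{cond3}: $L_\nu=P_1+O(\delta)$ is bounded below by, say, $P_1/2$, while $\int_{t_{---}}^{t_{+++}}\|\gamma'\|=\delta\to0$ as $t_{---},t_{+++}\to t_0$, so $\int_{t_{---}}^{t_{+++}}\|\gamma'\|<\int_{t_{---}}^{t_{+++}}\|\nu'\|$ once $\varepsilon$ is small. For~\eqref{cond4}--\eqref{cond5} one exploits the freedom in $t_{---},t_{+++}$: by the intermediate value theorem choose them so that $t_0$ is the arc‑length midpoint of $\gamma|_{[t_{---},t_{+++}]}$ (so $\int_{t_{---}}^{t_0}\|\gamma'\|=\int_{t_0}^{t_{+++}}\|\gamma'\|=\delta/2$), and parametrize $\nu$ on $[t_0-2\varepsilon,t_0+2\varepsilon]$ so that $\nu(t_0)$ is the midpoint of the middle ($K_1$‑near‑loop) arc — this is exactly the parametrization freedom allowed by the ``up to reparametrization'' clause. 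Then $g(\tau):=\int_{t_{---}}^{\tau}\|\nu'\|$ is an increasing homeomorphism of $[t_{---},t_{+++}]$ onto $[0,L_\nu]$ with $g(t_{--})=\ell_1$, $g(t_0)=\ell_1+P_1/2+O(\delta)$ and $g(t_{++})=\ell_1+\ell_2$, while $\ell_1<\delta/2$ (because $\frac{K_1-\bar\kappa}{K_1-K_0}<1$ as $\bar\kappa>K_0$) and $\delta/2<g(t_0)$ for $\varepsilon$ small; hence $t_-:=g^{-1}\!\big(\int_{t_{---}}^{t_0}\|\gamma'\|\big)$ lies in $(t_{--},t_0)$ and realizes~\eqref{cond4}. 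The mirror‑image argument run from $t_{+++}$, using $\ell_3<\delta/2$, produces $t_+\in(t_0,t_{++})$ and~\eqref{cond5}, completing the proof.
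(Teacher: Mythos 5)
The paper does not actually supply a written argument for this lemma (it refers only to Figure~\ref{fig:lemma}), so your reduction to a Lie-group equation is a legitimate attempt to make the ``easy'' geometric picture rigorous, and the reformulation $\Phi(\ell_1,\ell_2,\ell_3)=\exp(\ell_1\Lambda_{K_0})\exp(\ell_2\Lambda_{K_1})\exp(\ell_3\Lambda_{K_0})=Q$ is indeed the right way to encode the $C^1$-gluing condition. The BCH computation
\[
\log\Phi=(\ell_1+\ell_3)\Lambda_{K_0}+\ell_2\Lambda_{K_1}+\tfrac12\ell_2(\ell_1-\ell_3)[\Lambda_{K_0},\Lambda_{K_1}]+O(|\ell|^3)
\]
and the Magnus estimate for $\log Q$ are also correct.

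The gap is in the step that is supposed to produce the exact solution. Writing $\ell_2=P_1+\ell_2'$ with $P_1=2\pi/\sqrt{1+K_1^2}$ does \emph{not} cure the degeneracy, because $\exp(P_1\Lambda_{K_1})=I$ makes this shift a no-op for the frame map: $\Phi(\ell_1,P_1+\ell_2',\ell_3)=\exp(\ell_1\Lambda_{K_0})\exp(\ell_2'\Lambda_{K_1})\exp(\ell_3\Lambda_{K_0})$, whose Jacobian at $\ell_1=\ell_2'=\ell_3=0$ is the same degenerate map $(v_1,v_2,v_3)\mapsto(v_1+v_3)\Lambda_{K_0}+v_2\Lambda_{K_1}$, of rank $2$. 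The subsequent rescaling $\ell_i=\delta u_i$ does not help either: the leading ($\delta=0$) system only determines $u_1+u_3$ and $u_2$, leaving $u_1-u_3$ free, and the $u$-Jacobian of the rescaled map at the point you exhibit still has $\partial_{u_1}-\partial_{u_3}=O(\delta)$, so its determinant is $O(\delta)$ and \emph{vanishes} in the limit. Your claim that ``the linearization of this rescaled equation is nondegenerate'' is therefore false, and the implicit-function step it is meant to justify does not follow as stated. (The $P_1$ shift only changes the total length, so it does make your condition~(3) trivial, but it buys nothing for the existence/uniqueness of $\ell$.) To repair the argument you must face the degenerate direction head on: for instance, change variables to $a=\ell_1+\ell_3$, $b=\ell_2'$, $c=\ell_1-\ell_3$, solve for $(a,b)$ at first order in $\delta$, and then solve the remaining scalar equation $\tfrac12 b\,c\,(K_1-K_0)=c_2(\delta)+O(\delta^3)$ for $c$, using the estimate $c_2(\delta)=o(\delta^2)$ (from continuity of $\kappa_\gamma$) to conclude $c=o(\delta)$, so $\ell_1,\ell_3>0$. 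Alternatively one can anchor the linearization at a configuration where $\ell_1$ (say) is bounded away from $0$ modulo the period of the $K_0$-circle, where the directions $\Lambda_{K_0}$, $\operatorname{Ad}_{\exp(\ell_1\Lambda_{K_0})}\Lambda_{K_1}$, $\operatorname{Ad}_\Phi\Lambda_{K_0}$ genuinely span $\mathfrak{so}_3$. Either way, the IFT step needs a quantitative version that tolerates the $O(\delta)$ Jacobian, or a direct block-triangular solution; as written the argument does not close.
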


\begin{proof}
The proof of the Lemma is easy. The process is illustrated in Figure ~\ref{fig:lemma} 
\begin{figure}[H]
\centering
\includegraphics[scale=0.5]{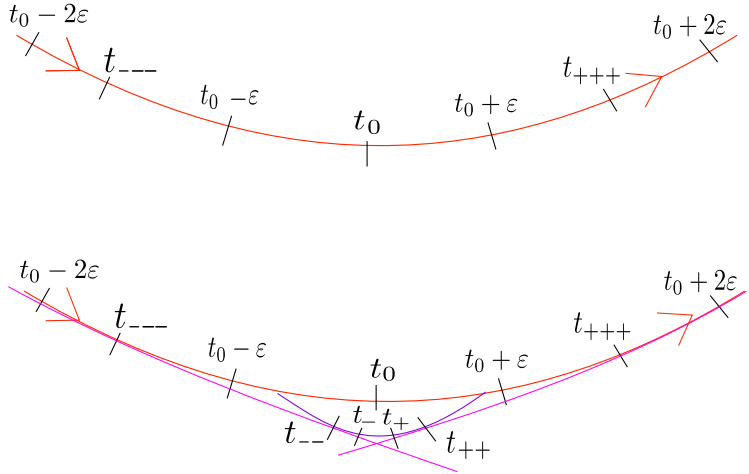}
\caption{How we modify a curve $\gamma \in \mathcal{L}\SS^2(z)$.}
\label{fig:lemma}
\end{figure}

\end{proof}

Given a curve $\gamma \in \mathcal{L}\SS^3(z_l,z_r)$, let $\gamma_l \in \mathcal{L}\SS^2(z_l)$ and $\gamma_r \in \mathcal{G}\SS^2(z_r)$. To define the curve $\gamma^\# \in \mathcal{L}\SS^3(z_l,z_r)$, we will define its pair of curves $\gamma_l^\# \in \mathcal{L}\SS^2(z_l)$ and $\gamma_r^\# \in \mathcal{G}\SS^2(z_r)$, using the Lemma \ref{finallemma}. We do not think that an explicit definition is helpful. We hope the picture below (\ref{fig:n}) is useful to understand the idea of the construction. Fix $t_0 \in (0,1)$ (the case $t_0=0$ and $t_0=1$ can be treated in the same way). The curve we are going to define depends of course on $t_0$, but as before, we will simply write $\gamma_{t_0}^{\#}=\gamma^\#$.   

The curvature of $\gamma_l$ and $\gamma_r$ at the point $t_0$ satisfy $\kappa_{\gamma_l}(t_0)>|\kappa_{\gamma_r}(t_0)|$. Since $\kappa_{\gamma_l}(t)$ and $|\kappa_{\gamma_r}(t)|$ can be assumed to be continuous, there exist $\varepsilon>0$ and $K_0>0$, $K_1>0$ such that for all $t_l \in [t_0-2\varepsilon,t_0+2\varepsilon]$ and $t_r \in [t_0-2\varepsilon,t_0+2\varepsilon]$, one has
\begin{equation}\label{curvature}
K_1> \kappa_{\gamma_l}(t_l) > K_0 > |\kappa_{\gamma_r}(t_r)|.
\end{equation} 

Now we are in the situation of the Lemma \ref{finallemma}, that we will use to construct $\gamma^{\#}=(\gamma_l^\#,\gamma_r^\#).$

Outside the interval $[t_{0}-2\varepsilon,t_{0}+2\varepsilon]$, we will not modify the curves $\gamma_l$ and $\gamma_r$, that is we set
\begin{equation}
\gamma_l^\#(t)=\gamma_l(t), \quad \gamma_r^\#(t)=\gamma_r(t), \quad t \notin [t_0-2\varepsilon,t_0+2\varepsilon]. 
\end{equation} 
Hence for $t \notin [t_0-2\varepsilon,t_0+2\varepsilon]$, the conditions to define a pair of curves are clearly satisfied.

In the set $[t_{0}-2\varepsilon,t_{0}-\varepsilon] \cup [t_{0}+\varepsilon,t_{0}+2\varepsilon]$, $\gamma_r^\#$ will simply correspond to a reparametrization of $\gamma_r$, such that the curve $\gamma_r^\#$ on these intervals has two times the velocity of $\gamma_r$ in the same interval.
For $\gamma_l^\#$, $t \in [t_{0}-2\varepsilon,t_{0}-\varepsilon] \cup [t_{0}+\varepsilon,t_{0}+2\varepsilon] $ we will follow  the curve $\nu$ reparametrized by $\varphi_{-}:[t_0-2\varepsilon,t_0-\varepsilon] \rightarrow [t_0-2\varepsilon,t_{-}]$ and $\varphi_{+}:[t_0+\varepsilon,t_0+2\varepsilon] \rightarrow [t_{+},t_{0}+2\varepsilon]$. 
Therefore, from this and (\ref{cond4}) and (\ref{cond5}) the condition on the length is satisfied. The condition of the geodesic curvature is also satisfied, since in this set
\[ \kappa_{\gamma_l^\#}(t)> K_{0} > |\kappa_{\gamma_r^\#}(t)|, \]
see (\ref{curvature}).

It remains to define the curve on the interval $[t_0-\varepsilon,t_0+\varepsilon]$. Observe here that $\gamma_r^\#(t_0-\varepsilon)=\gamma_r(t_0)=\gamma_r^\#(t_0+\varepsilon)$, while $\gamma_l^\#(t_0-\varepsilon)\neq \gamma_l^\#(t_0+\varepsilon)$. Note that, by construction, $\gamma_l^\#(t_{0}-\varepsilon)=\nu(t_{-})$ and $\gamma_l^\#(t_{0}+\varepsilon)=\nu(t_{+}).$
The curve $\gamma_l^\#$ for $t \in [t_{0}-\varepsilon,t_{0}+\varepsilon]$ follows a circle of length $c_{1}$ with geodesic curvature $K_1$, performing slightly more than 2 times. Therefore, for all $t \in [t_0-\varepsilon,t_0+\varepsilon],$ one has
\[ \kappa_{\gamma_l^\#}(t)=K_1 \quad \mathrm{and}\]
\[ \int_{t_{0}-\varepsilon}^{t_{0}+\varepsilon} ||(\gamma_l^\#)'(t)||dt =2c_1+ \int_{t_{-}}^{t_{+}} ||(\nu'(t)||dt \].

Recall that given any $0<c< 2\pi$, we defined $\sigma_c$ to be the unique circle with initial and final Frenet frame equals to the identity, and such that $||\sigma_c'(t)||=c$. Such a curve has constant geodesic curvature $\cot(\rho)$, where $c=2\pi\sin\rho$. Associated to $K_0>0$ let $0<c_0<2\pi$ such that the geodesic curvature of $\sigma_{c_0}$ is equal to $K_0$. 

Let us now choose $c_2=c_1+\frac{1}{2}\int_{t_{-}}^{t_{+}} ||\nu'(t)||dt$, and let $\bar{\sigma}_{c_2}$ be the curve obtained by reflecting the curve $\sigma_{c_2}$ with respect to the hyperplane $\{(x,y,z) \in \R^3 \; | \; z=0\}$ (that is, $\bar{\sigma}_{c_2}$ is the image of $\sigma_{c_2}$ by the map $(x,y,z) \mapsto (x,y,-z)$). Such a curve $\bar{\sigma}_{c_2}$ has constant negative geodesic curvature $-K_2$ (hence it is negative locally convex). Now define $\gamma_r^\#$ on $[t_0-\varepsilon,t_0+\varepsilon]$ by setting 
\begin{equation*}
\gamma_r^\#(t)=(\mathcal{F}_{\gamma_r}(t_0)) \bar{\sigma}_{c_2}^2\left(\frac{t-t_0+\varepsilon}{2\varepsilon}\right), \quad t \in [t_0-\varepsilon,t_0+\varepsilon].
\end{equation*}

So, since $c_2>c_1$ and the absolute value $K_2$ of the geodesic curvature of $\sigma_{c_2}$ satisfies $K_2<K_1$, hence
\[ \kappa_{\gamma_l^\#}(t)=K_1>K_2=|\kappa_{\gamma_r^\#}(t)|. \]
Therefore the conditions to define a pair of curves are also satisfied on $[t_0-\varepsilon,t_0+\varepsilon]$. Here's an illustration (Figure~\ref{fig:n}) summarizing the definition of the curve $\gamma^\#=(\gamma_l^\#,\gamma_r^\#)$.
\begin{figure}[H]
\centering
\includegraphics[scale=0.5]{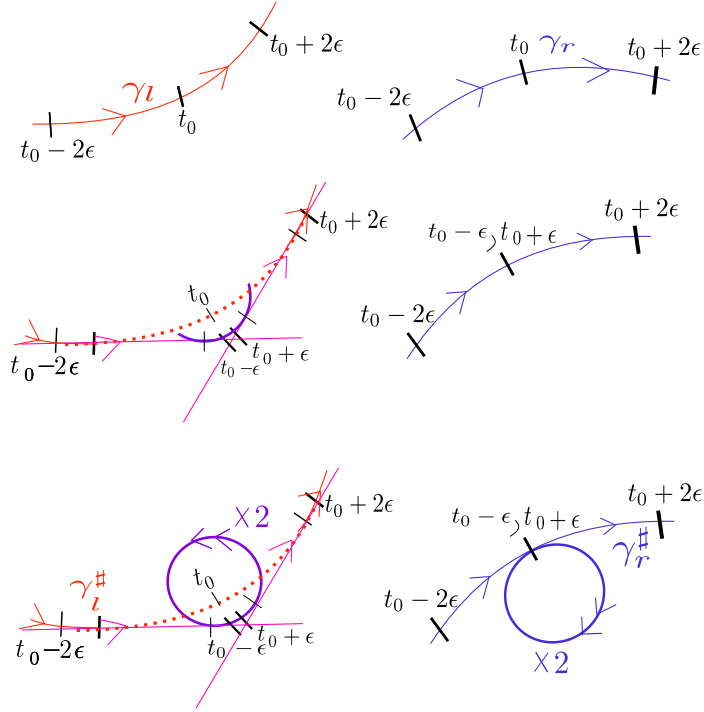}
\caption{Definition of the curve $\gamma^\#=(\gamma_l^\#,\gamma_r^\#) \in \mathcal{L}\SS^3(z_l,z_r)$.}
\label{fig:n}
\end{figure}

Let us now make the following definitions.

\begin{definition}\label{sustenido}
Given a curve $\gamma \in \mathcal{L}\SS^3(z_l,z_r)$ and a time $t_0 \in [0,1]$, we define $\gamma_{t_0}^\#=\gamma^\# \in \mathcal{L}\SS^3(z_l,z_r)$ by setting $\gamma^\#=(\gamma_l^\#,\gamma_r^\#)$, where $\gamma_l^\# \in \mathcal{L}\SS^2(z_l)$ and $\gamma_r^\# \in \mathcal{G}\SS^2(z_r)$ are defined like in the construction above. Given continuous maps $\alpha : K \rightarrow \mathcal{L}\SS^3(z_l,z_r)$ and $t_0 : K \rightarrow [0,1]$, we define $\alpha_{t_0}^\#=\alpha^\# : K \rightarrow \mathcal{L}\SS^3(z_l,z_r)$ by setting $\alpha_{t_0}^\#(s)=(\alpha(s))_{t_0(s)}^\#$ for all $s \in K$.
\end{definition}

\begin{definition}\label{tight2}
A continuous map $\alpha : K \rightarrow \mathcal{L}\SS^3(z)$ is \emph{$\#$-loose} if there exist continuous maps
\[ A : K \times [0,1] \rightarrow \mathcal{L}\SS^3(z), \quad t_0 : K \rightarrow [0,1]  \]
such that for all $s \in K$:
\[ A(s,0)=\alpha(s), \quad A(s,1)=\alpha(s)_{t_0(s)}^\#. \]
If the map $\alpha : K \rightarrow \mathcal{L}\SS^3(z)$ is not $\#$-loose, then we say it is   \emph{$\#$-tight}.
\end{definition}

Then we have the following proposition.

\begin{proposition}\label{tight12}
A continuous map $\alpha : K \rightarrow \mathcal{L}\SS^3(z)$ is $\#$-loose if and only if it is loose. Therefore a continuous map $\alpha : K \rightarrow \mathcal{L}\SS^3(z)$ is $\#$-tight if and only if it is tight.
\end{proposition}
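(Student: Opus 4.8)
The plan is to show that the two notions of ``adding loops'' — the geometric one via $\gamma^\ast$ (attaching an element of $\mathcal{L}\SS^3(\1,\1)$) and the decomposition-friendly one via $\gamma^\#$ (attaching, on the left part, a spiral with large constant curvature and, on the right part, a small negatively curved spiral, after the reparametrizations of Lemma~\ref{finallemma}) — differ only by a homotopy \emph{inside} $\mathcal{L}\SS^3(z)$ that depends continuously on the parameter. Concretely, I would prove: for any $\gamma\in\mathcal{L}\SS^3(z)$ and any $t_0\in[0,1]$, the curves $\gamma^\ast_{t_0}$ and $\gamma^\#_{t_0}$ are homotopic in $\mathcal{L}\SS^3(z)$, via a homotopy which is continuous in $(\gamma,t_0)$ and which reduces to the trivial homotopy away from a neighbourhood of $t_0$. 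Granting this, the proposition follows by the same bookkeeping as Proposition~\ref{loosehom}: if $\alpha$ is loose, compose the homotopy $A(s,0)=\alpha(s),\ A(s,1)=\alpha(s)^\ast_{t_0(s)}$ with the $(\gamma,t_0)$-family of homotopies taking $\alpha(s)^\ast_{t_0(s)}$ to $\alpha(s)^\#_{t_0(s)}$, obtaining $\#$-looseness; and conversely, running the homotopy backwards. The statement about tight versus $\#$-tight is then the contrapositive.

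The key steps, in order. First I would record that both $\gamma^\ast_{t_0}$ and $\gamma^\#_{t_0}$ agree with $\gamma$ (up to reparametrization) outside $[t_0-2\varepsilon,t_0+2\varepsilon]$, and that on the inserted block both are locally convex with initial and final Frenet frame $\mathcal{F}_\gamma(t_0)$; so after translating by $\mathcal{F}_\gamma(t_0)$ the problem localizes to comparing two elements of $\mathcal{L}\SS^3(\1,\1)$. Second — and this is the heart of the matter — I would use connectedness: by Theorem~\ref{thmani} (together with Proposition~\ref{spaces}, noting $\mathcal{L}\SS^3(\1,\1)\simeq\mathcal{L}\SS^3(-\1,-\k)$ has the non-convex spin $(-\1,-\k)$, hence exactly two components one of which is convex, but the inserted blocks are \emph{non-convex} loops) the two inserted blocks lie in the same connected component of $\mathcal{L}\SS^3(\1,\1)$, hence are homotopic there. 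The only subtlety is that this local homotopy must be glued back into $\gamma$ continuously in $\gamma$ and $t_0$, and must keep the endpoint Frenet frames fixed — but since the component of $\mathcal{L}\SS^3(\1,\1)$ is a manifold, a continuous choice of paths can be made (e.g. by the usual argument that a fibre bundle with connected fibre over a point admits local sections, or simply because $\mathcal{L}\SS^3(\1,\1)$ being path-connected and locally contractible one produces the homotopy by a partition-of-unity patching, exactly as in the proof of Proposition~\ref{loosehom}). Third, I would verify the boundary-matching: the reparametrizations in Lemma~\ref{finallemma} (conditions~\eqref{cond4} and~\eqref{cond5} on the left-part lengths, and the doubling of speed on the right part) are precisely what guarantees that the modified pair $(\gamma_l^\#,\gamma_r^\#)$ still satisfies the compatibility conditions $\|\gamma_l'\|=\|\gamma_r'\|$ and $\kappa_{\gamma_l}>|\kappa_{\gamma_r}|$ of Theorem~\ref{th1}, so that $\gamma^\#\in\mathcal{L}\SS^3(z)$ genuinely; this is what makes $\gamma^\#$ preferable to $\gamma^\ast$ for the later decomposition arguments, and it must be checked that the comparison homotopy can be run through curves all satisfying these conditions.

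The main obstacle I anticipate is exactly the continuity-in-parameter of the gluing together with the endpoint constraints: one is homotoping two non-convex loops in $\mathcal{L}\SS^3(\1,\1)$, and although they are homotopic as loops, one must ensure the homotopy can be performed \emph{rel endpoints} (i.e.\ with fixed initial/final Frenet frame $\1$) and extended to a homotopy of the full curve $\gamma$ that never leaves $\mathcal{L}\SS^3(z)$. Here the decomposition of Theorem~\ref{th1} is the right tool: work separately with the left parts in $\mathcal{L}\SS^2(z_l)$ and the right parts in $\mathcal{G}\SS^2(z_r)$, homotope each (the left part using the connectedness of $\mathcal{L}\SS^2(\1)$ from Theorem~\ref{thmani}, the right part using that $\mathcal{G}\SS^2(\1)$ is connected), and at every stage rescale speeds so that the compatibility condition~\eqref{cond} persists — a continuous operation. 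Once the localized homotopy is in hand, the global statement follows formally, and I would close by remarking, as the paper does, that consequently ``loose/tight'' and ``$\#$-loose/$\#$-tight'' are interchangeable, so Proposition~\ref{tightconvex} and Proposition~\ref{leftight} apply verbatim with $\#$ in place of $\ast$.
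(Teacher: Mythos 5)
The paper does \emph{not} prove Proposition~\ref{tight12}: it explicitly says ``We will actually not prove this proposition since we will not use it,'' and merely points to the technique of ``spreading loops along a curve'' (an h-principle argument from~\cite{Sal13}). Your proposal is therefore offering a proof where the paper offers none, which is worthwhile — but the route you sketch has a genuine gap at the crucial step. You propose to homotope the left parts in $\mathcal{L}\SS^2(z_l)$ and the right parts in $\mathcal{G}\SS^2(z_r)$ separately by connectedness, and then ``at every stage rescale speeds so that the compatibility condition~\eqref{cond} persists.'' But the compatibility condition of Theorem~\ref{th1} consists of \emph{two} pointwise constraints: $\|\gamma_l'\|=\|\gamma_r'\|$ (which a reparametrization can fix) and the curvature inequality $\kappa_{\gamma_l}(t)>|\kappa_{\gamma_r}(t)|$, which speed rescaling cannot touch at all, since geodesic curvature is a reparametrization invariant. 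Independent homotopies of $\gamma_l$ and $\gamma_r$ through their respective spaces can easily cross the locus $\kappa_{\gamma_l}=|\kappa_{\gamma_r}|$ at some $(s,t)$, at which point the pair no longer lifts to a locally convex curve in $\SS^3$. This is precisely the difficulty the entire $\#$-construction and Lemma~\ref{finallemma} were engineered to avoid, and it is why the paper does not attempt a direct comparison of $\gamma^\ast$ and $\gamma^\#$ but instead invokes a flexibility (h-principle) argument. A correct proof along your lines would need to coordinate the two homotopies — for instance, first inflating $\kappa_{\gamma_l}$ uniformly on the modified interval, then moving $\gamma_r$, then deflating — and verify that each stage stays in $\tilde{\mathfrak{J}}$.

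Two smaller inaccuracies are worth flagging. First, you write that $\mathcal{L}\SS^3(\1,\1)$ (equivalently $\mathcal{L}\SS^3(-\1,-\k)$) has ``exactly two components one of which is convex'' because the endpoint spin is non-convex. This reads Theorem~\ref{thmani} backwards: if the endpoint spin is \emph{not} convex, then there are no convex curves with that endpoint, and the space has exactly \emph{one} connected component. (Your conclusion that the two inserted blocks are homotopic happens to survive, but the justification is wrong.) Second, the $\#$-modification is not literally the insertion of a closed loop based at $\mathcal{F}_\gamma(t_0)$: on the left part, $\gamma_l^\#$ replaces the entire arc $\gamma_l|_{[t_{---},t_{+++}]}$ with the piecewise-constant-curvature curve $\nu$ of Lemma~\ref{finallemma} and inserts an \emph{open} arc running ``slightly more than 2 times'' around a circle between the distinct points $\nu(t_-)\ne\nu(t_+)$. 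So the object you would want to compare against $\omega_3$ is not an element of $\mathcal{L}\SS^3(\1,\1)$ but an arc with the endpoint frames $\mathcal{F}_\gamma(t_0\pm 2\varepsilon)$; this changes the comparison space and, more importantly, means the $\#$-modification depends on $\gamma$ itself (through $K_0$, $K_1$, and $\nu$), so the parameter-continuity of the gluing is not as automatic as your appeal to local contractibility suggests.
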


One can use this proposition using the techniques of ``spreading loops along a curve" (see for instance~\cite{Sal13}), which can be seen as an easy instance of the h-principle of Gromov. We will actually not prove this proposition since we will not use it; in the sequel it will be more convenient to deal with this operation $\#$ since it will enable us to transfer more easily known results in the case $n=2$.

We can now prove the following proposition, which is the equivalent of Proposition~\eqref{genericas}. 

\begin{proposition}\label{Genericas}
Let $\alpha: K \rightarrow \mathcal{G}\SS^3(z_l,z_r)$ be a continuous map. Then $\alpha$ is homotopic to $\alpha^\#$ inside the space $\mathcal{G}\SS^3(z_l,z_r)$. 
\end{proposition}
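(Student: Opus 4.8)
The strategy mirrors the argument behind Proposition~\ref{genericas}, but carried out ``in pairs'' using the decomposition of Theorem~\ref{th0}. First I would reduce the statement to a fixed value of the insertion time $t_0$: since $K$ is compact and $t_0 : K \to [0,1]$ is continuous, it suffices to produce, for each $s$, a homotopy in $\mathcal{G}\SS^3(z_l,z_r)$ between $\alpha(s)$ and $\alpha(s)^\#_{t_0(s)}$ depending continuously on $s$; and by the standard ``spreading'' trick one may as well treat the case where $t_0$ is locally constant, hence (by compactness and a partition-of-unity type argument) the case $t_0 \equiv$ const. So fix $t_0 \in (0,1)$ (the endpoint cases are analogous).

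\textbf{Key steps.} Via Theorem~\ref{th0}, write $\alpha(s) = (\alpha_l(s), \alpha_r(s))$ with $\alpha_l(s) \in \mathcal{G}\SS^2(z_l)$, $\alpha_r(s) \in \mathcal{G}\SS^2(z_r)$, subject to $\|\alpha_l(s)'\| = \|\alpha_r(s)'\|$ and $\kappa_{\alpha_l(s)} > \kappa_{\alpha_r(s)}$. By construction of the operation $\#$ (the paragraphs preceding Definition~\ref{sustenido}), $\alpha(s)^\# = (\alpha_l(s)^\#, \alpha_r(s)^\#)$, where on the left one has inserted (a slightly reparametrized) convex loop $\sigma_{c_2}$-type arc of curvature $K_1$ and on the right a negative loop $\bar\sigma_{c_2}^2$, all supported near $t_0$, while the curves are unchanged outside $[t_0-2\varepsilon, t_0+2\varepsilon]$ and merely reparametrized in the two collars. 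Now I would apply the $n=2$ homotopy of Proposition~\ref{genericas} separately to the left and to the right component: each of $\alpha_l(s)$, $\alpha_r(s)$ is homotopic, inside $\mathcal{G}\SS^2$ (with the appropriate final frame), to the version with a pair of loops attached at $t_0$, via the explicit homotopy of Figure~\ref{fig:o}. The crucial point is that one must run these two homotopies \emph{synchronously} so that at every intermediate stage the compatibility conditions $\|\gamma_l'\| = \|\gamma_r'\|$ and $\kappa_{\gamma_l} > \kappa_{\gamma_r}$ hold, i.e. so that the intermediate pair always lies in the image of the homeomorphism of Theorem~\ref{th0}. Here one uses exactly the freedom left in Lemma~\ref{finallemma} (the reparametrization $\varphi_\pm$ of the collars, and the choice of $c_2 = c_1 + \tfrac12\int_{t_-}^{t_+}\|\nu'\|$), which was designed to equalize arc-lengths; and the condition $K_1 > \kappa_{\alpha_l} > K_0 > |\kappa_{\alpha_r}|$ from~\eqref{curvature} guarantees the curvature inequality survives the interpolation (one keeps the left loop at curvature $\geq K_0$ and the right loop at curvature of absolute value $< K_0$ throughout). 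Continuity in $s$ is automatic since every ingredient --- the decomposition map, the collar reparametrizations, the loop insertions, and the $n=2$ homotopies --- depends continuously on the data.

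\textbf{Main obstacle.} The routine part is the $n=2$ homotopy and the bookkeeping of reparametrizations; the genuinely delicate point is maintaining that the homotopy stays inside $\mathcal{G}\SS^3(z_l,z_r)$, i.e. that the pair $(\gamma_l(\cdot,u), \gamma_r(\cdot,u))$ at every homotopy time $u$ still satisfies~\eqref{cond0} (the $(*)$ condition) --- both the speed equality and the curvature inequality. I expect to handle this by choosing the two $n=2$ homotopies so that at homotopy-time $u$ the left and right inserted arcs have a common length parameter $c(u)$ interpolating monotonically, and by keeping all inserted curvatures pinned in the fixed band $(K_0, K_1)$ resp. $(-K_0, K_0)$ prescribed by~\eqref{curvature}; the speed equality is then arranged exactly as in the construction of $\gamma^\#$ itself, by an $s$- and $u$-continuous choice of collar reparametrizations playing the role of $\varphi_\pm$ in Lemma~\ref{finallemma}. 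Once this is set up, concatenating the reduction to constant $t_0$, the synchronized pairwise homotopy, and continuity in $s$ yields the desired homotopy from $\alpha$ to $\alpha^\#$ in $\mathcal{G}\SS^3(z_l,z_r)$.
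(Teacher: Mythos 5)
Your approach is genuinely different from the paper's, and it contains a gap that I don't see how to close as stated.

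The paper does not homotope the left and right parts of the decomposition from Theorem~\ref{th0} at all. Instead it works locally near $t_0$, applies a central projection so that $\gamma$ becomes a curve in $\R^3$, and observes that genericity is then equivalent to the tangent indicatrix $\mathbf{t}_\gamma = \gamma'/\|\gamma'\|$ being an immersion of an interval into $\SS^2$. It then applies the $n=2$ result (Proposition~\ref{genericas}) \emph{to $\mathbf{t}_\gamma$ directly}, and re-integrates to recover a family of curves $\gamma_s$ in $\R^3$. The only constraint to be maintained is the closing-up condition $\int_{J_\varepsilon}\gamma_s' = \int_{J_\varepsilon}\gamma_0'$, which is handled by a convex-hull argument (choosing $\|\gamma_s'\|$ small on the insertion interval so the residual displacement stays in the interior of the convex hull of the unchanged tangent directions). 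Crucially, there is no compatibility condition of the form ``$\|\gamma_l'\|=\|\gamma_r'\|$ and $\kappa_{\gamma_l}>\kappa_{\gamma_r}$'' to keep track of, because the homotopy is never represented as a pair.

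The gap in your argument is precisely the point you flag as the ``main obstacle,'' and your proposed fix does not apply here. You invoke the band $K_1 > \kappa_{\gamma_l} > K_0 > |\kappa_{\gamma_r}|$ from~\eqref{curvature} and the freedom in Lemma~\ref{finallemma}, but both of those belong to the \emph{locally convex} construction of $\gamma^\#$ (where $\gamma_l \in \mathcal{L}\SS^2(z_l)$ is locally convex, so $\kappa_{\gamma_l}>0$). In the present statement $\gamma$ is only generic, so $\gamma_l$ and $\gamma_r$ are arbitrary immersions with $\kappa_{\gamma_l}(t)>\kappa_{\gamma_r}(t)$; no uniform band exists, and $\kappa_{\gamma_l}$ may itself be negative. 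More seriously, the intermediate curves in the $n=2$ homotopy of Proposition~\ref{genericas} pass through inflections (geodesic curvature crossing $0$); ``keeping the left loop at curvature $\geq K_0$'' is not possible along such a homotopy. If you run a loop-growing homotopy on $\gamma_l$ and a reflected one on $\gamma_r$, at intermediate homotopy times the left curvature at the insertion point can dip to $0$ while the right curvature is still near $\kappa_{\gamma_r}(t_0)$ (or, if you reflect the right homotopy, the two can meet), and the strict inequality $\kappa_{H_l(u)}(t)>\kappa_{H_r(u)}(t)$ fails, so the pair leaves the image of the homeomorphism of Theorem~\ref{th0}. You would need a much more careful choreography of the two homotopies than ``synchronize by a common length parameter $c(u)$,'' and I do not see an obvious way to do it. The paper's change of viewpoint to the tangent indicatrix avoids the issue entirely, which is exactly why it works.
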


\begin{proof}
It will be sufficient to consider the case where $K$ is a point, that is we will prove that any curve $\gamma \in \mathcal{G}\SS^3(z_l,z_r)$ is homotopic to the curve $\gamma^\# \in \mathcal{G}\SS^3(z_l,z_r)$ inside $\mathcal{G}\SS^3(z_l,z_r)$.

The point $t_0$ being fixed, it will be sufficient to construct the homotopy in an interval $(t_0-\varepsilon,t_0+\varepsilon)$ for a small $\varepsilon>0$. Moreover, the construction being local, after a central projection we can assume that $\gamma : (t_0-\varepsilon,t_0+\varepsilon) \rightarrow \R^3$. Moreover, without loss of generality, we may also assume that $\mathbf{t}_{\gamma}(t_0)=e_1$ and $\mathbf{n}_{\gamma}(t_0)=e_2$ where $e_1$ and $e_2$ are the first two vectors of the canonical basis of $\R^3$.

We will construct a homotopy $\gamma_s : (t_0-\varepsilon,t_0+\varepsilon) \rightarrow \R^3$, $0 \leq s \leq 1$, between $\gamma_0=\gamma$ and a curve $\gamma_1$ which is obtained from $\gamma$ by attaching, at time $t_0$, a pair of loops with zero torsion. In $\SS^3$, this will give a homotopy between $\gamma$ and a curve obtained from $\gamma$ by attaching an element in $\mathcal{G}\SS^3(\1,\1)$. The latter curve corresponds to attaching a pair of loops to $\gamma_l$, and a pair of reflected loops (that is with opposite geodesic curvature) to $\gamma_r$; but clearly such a curve is homotopic in $\mathcal{G}\SS^3(z_l,z_r)$ to the curve $\gamma^\#$ we defined.

So let us construct such a homotopy. Let $I_\varepsilon=(t_0-\varepsilon,t_0+\varepsilon)$, and consider a smaller closed interval $J_\varepsilon=[t_0-\varepsilon/10,t_0+\varepsilon/10]$. For any $0 \leq s \leq 1$, we set
\[ \gamma_s(t)=\gamma_0(t), \quad t \in I_\varepsilon \setminus J_\varepsilon. \]
Hence it remains to define $\gamma_s(t)$ for $t \in J_\varepsilon$. To do this, it is sufficient to define $\gamma_s'(t)$ for $t \in J_\varepsilon$ and check that the relation
\begin{equation}\label{Relation}
\int_{J_\varepsilon}\gamma_s'(t)dt=\int_{J_\varepsilon}\gamma_0'(t)dt
\end{equation}  
is satisfied. Indeed, for $0 \leq s \leq 1$ one can then define
\[ \gamma_s(t)=\gamma_s(t_0-\varepsilon/10)+\int_{t_0-\varepsilon/10}^t\gamma_s'(u)du, \quad t \in J_\varepsilon \]
and the continuity of $\gamma_s$ at $t=t_0+\varepsilon/10$ follows from~\eqref{Relation}.

Let us set 
\[ C_0=\int_{J_\varepsilon}\gamma_0'(t)dt. \]
From our assumption, the map
\[ \mathbf{t}_{\gamma} : t \in I_\varepsilon \mapsto \frac{\gamma'(t)}{||\gamma'(t)||} \in \SS^2 \]
is an immersion with $\mathbf{t}_{\gamma}(t_0)=e_1$ and $\mathbf{t}_{\gamma}'(t_0)$ is a positive multiple of $e_2$. Up to slightly perturbing $\gamma$ if necessary, we may assume that $C_0$ belongs to the interior of the convex hull of the image of $J_\varepsilon$ by $\mathbf{t}_{\gamma}$. Since the image of $J_\varepsilon$ by $\mathbf{t}_{\gamma}$, its convex hull is closed. Given $0<\delta<\varepsilon/100$, consider an even smaller open interval $L_\delta=(t_0-\delta,t_0+\delta)$. Choosing $\delta$ small enough, one may even assume that $C_0$ belongs to the interior of the convex hull of the image of $J_\varepsilon \setminus L_\delta$ by $\mathbf{t}_{\gamma}$. 

To define $\gamma_s'(t)$ for $t \in J_\varepsilon$, we will define $\mathbf{t}_{\gamma_s}(t)$ and $||\gamma_s'(t)||$ for $t \in J_\varepsilon$; this defines $\gamma_s'(t)$ by setting $\gamma_s'(t)=||\gamma_s'(t)||\mathbf{t}_{\gamma_s}(t)$.

We first define, for $0 \leq s \leq 1$,
\begin{equation}\label{tangente}
\mathbf{t}_{\gamma_s}(t)=\mathbf{t}_{\gamma_0}(t), \quad t \in J_\varepsilon \setminus L_\delta.
\end{equation} 
For $t \in L_\delta$, we define $\mathbf{t}_{\gamma_s}(t)$ to be a homotopy between $\mathbf{t}_{\gamma_0}(t)$ and $\mathbf{t}_{\gamma_1}(t)=\sigma^2_{2\pi}(t)$, where $\sigma^2_{2\pi}$ is the meridian curve on $\SS^2$ traveled $2$ times (such a curve was defined in Chapter~\ref{chapter6}, \S\ref{s62}). The existence of this homotopy follows from Proposition~\ref{genericas}. 
Then for $t \in L_\delta$ and any $0 \leq s \leq 1$, we can choose $||\gamma_s'(t)||$ sufficiently small to make the vector
\[ \int_{L_\delta}\gamma'_s(t)dt=\int_{L_\delta}||\gamma_s'(t)||\mathbf{t}_{\gamma_s}(t)dt \]
small enough in order that
\[ C_0 - \int_{L_\delta}\gamma'_s(t)dt \]
belongs to the convex hull of the image of $J_\varepsilon \setminus L_\delta$ by $\mathbf{t}_{\gamma}=\mathbf{t}_{\gamma_0}$. Therefore, for any $t \in J_\varepsilon \setminus L_\delta$ and any $0 \leq s \leq 1$, there exists $\rho_s(t)>0$ such that
\[ C_0-\int_{L_\delta}\gamma'_s(t)dt=\int_{J_\varepsilon \setminus L_\delta}\rho_s(t)\mathbf{t}_{\gamma_0}(t)dt \]
which, by~\eqref{tangente}, is also equal to
\begin{equation}\label{ped1}
C_0-\int_{L_\delta}\gamma'_s(t)dt=\int_{J_\varepsilon \setminus L_\delta}\rho_s(t)\mathbf{t}_{\gamma_s}(t)dt.
\end{equation}
We eventually define, for any $0 \leq s \leq 1$
\begin{equation}\label{ped2}
||\gamma_s'(t)||=\rho_s(t), \quad t \in J_\varepsilon-L_\delta.
\end{equation}
Observe that we have now completely determined $\gamma_s'(t)$ for $0 \leq s \leq 1$ and $t \in J_\varepsilon$, and~\eqref{ped1} and~\eqref{ped2} give
\[ C_0-\int_{L_\delta}\gamma'_s(t)dt=\int_{J_\varepsilon \setminus L_\delta}||\gamma_s'(t)||\mathbf{t}_{\gamma_s}(t)dt=\int_{J_\varepsilon \setminus L_\delta}\gamma_s'(t)dt \]
and hence~\eqref{Relation} is satisfied. 

To conclude, observe that we have constructed a homotopy $\gamma_s$ between $\gamma_0=\gamma$ and a curve $\gamma_1$ such that $\mathbf{t}_{\gamma_1}$ is obtained from $\mathbf{t}_{\gamma_0}$ by attaching at $t=t_0$ a pair of meridian curve in the direction $e_2=\mathbf{n}_{\gamma_0}(t_0)$ (see the Figure~\ref{fig:p}).
\begin{figure}[H]
\centering
\includegraphics[scale=0.5]{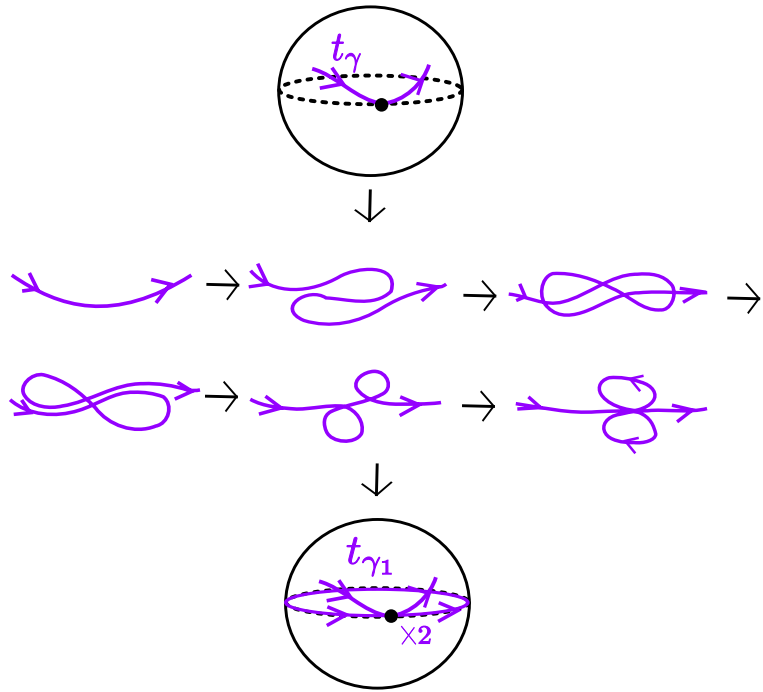}
\caption{Homotopy between $\alpha$ and $\alpha^\#$ in $\mathcal{G}\SS^3(z_l,z_r)$: first part.}
\label{fig:p}
\end{figure}
The curve $\gamma_1$ is therefore obtained from $\gamma=\gamma_0$ by attaching at $t=t_0$ a pair of loops with zero torsion, which is the same thing as saying that $\gamma_{1,l}$ is obtained from $\gamma_l$ by attaching at $t=t_0$ a pair of loops while $\gamma_{1,r}$ is obtained from $\gamma_r$ by attaching at $t=t_0$ a pair of ``reflected" loops (see the Figure~\ref{fig:q}). 
\begin{figure}[H]
\centering
\includegraphics[scale=0.5]{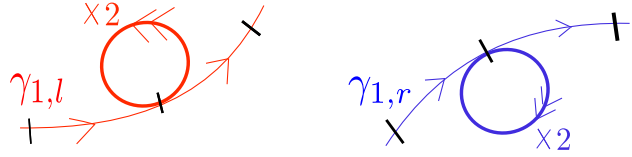}
\caption{Homotopy between $\alpha$ and $\alpha^\#$ in $\mathcal{G}\SS^3(z_l,z_r)$: second part.}
\label{fig:q}
\end{figure}
This is exactly what we wanted to prove, so this concludes the proof.   
\end{proof}

We also have the following proposition, which is the equivalent of Proposition~\eqref{genericas}.

\begin{proposition}\label{Genericas2}
Let $\alpha: K \rightarrow \mathcal{G}\SS^3(z_l,z_r)$ be a continuous map. Then $\alpha$ is homotopic to a constant map in $\mathcal{G}\SS^3(z_l,z_r)$ if and only if $\alpha^\#$ is homotopic to a constant map in $\mathcal{L}\SS^3(z_l,z_r)$. 
\end{proposition}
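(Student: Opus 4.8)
The plan is to prove the two implications separately; the backward one is immediate and the forward one carries all the weight. For the backward direction, suppose $\alpha^\#$ is homotopic to a constant map inside $\mathcal{L}\SS^3(z_l,z_r)$. Since $\mathcal{L}\SS^3(z_l,z_r)\subseteq\mathcal{G}\SS^3(z_l,z_r)$, that same homotopy exhibits $\alpha^\#$ as homotopic to a constant inside $\mathcal{G}\SS^3(z_l,z_r)$, and by Proposition~\ref{Genericas} the maps $\alpha$ and $\alpha^\#$ are homotopic inside $\mathcal{G}\SS^3(z_l,z_r)$; transitivity then gives that $\alpha$ is null-homotopic there. No further argument is needed.

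For the forward direction, assume $\alpha$ is homotopic to a constant in $\mathcal{G}\SS^3(z_l,z_r)$. By Proposition~\ref{Genericas} again, $\alpha^\#$ is also null-homotopic in $\mathcal{G}\SS^3(z_l,z_r)$, so the real task is to upgrade this null-homotopy to one taking place inside $\mathcal{L}\SS^3(z_l,z_r)$. Here I would pass to the decomposition of Theorem~\ref{th0} and Theorem~\ref{th1}: write $\alpha=(\alpha_l,\alpha_r)$ with $\alpha_l:K\to\mathcal{G}\SS^2(z_l)$, $\alpha_r:K\to\mathcal{G}\SS^2(z_r)$, and $\alpha^\#=((\alpha^\#)_l,(\alpha^\#)_r)$ with $(\alpha^\#)_l:K\to\mathcal{L}\SS^2(z_l)$, $(\alpha^\#)_r:K\to\mathcal{G}\SS^2(z_r)$. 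Because taking the left (resp. right) part of a curve is continuous, functoriality of the homotopy relation shows that $\alpha_l$ and $\alpha_r$ are each null-homotopic, in $\mathcal{G}\SS^2(z_l)$ and $\mathcal{G}\SS^2(z_r)$ respectively. By construction of the operation $\#$, the curve $(\alpha^\#)_l$ is $\alpha_l$ with a pair of short, high–geodesic–curvature loops inserted near the chosen time, hence is homotopic in $\mathcal{L}\SS^2(z_l)$ to $(\alpha_l)^\ast$; Proposition~\ref{genericas2} (the case $n=2$) then yields that $(\alpha^\#)_l$ is null-homotopic inside $\mathcal{L}\SS^2(z_l)$. On the right, $(\alpha^\#)_r$ is $\alpha_r$ with a pair of "reflected" loops attached, hence homotopic to $\alpha_r$ in $\mathcal{G}\SS^2(z_r)$, and so is null-homotopic there.

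At this stage I have a null-homotopy of $(\alpha^\#)_l$ inside $\mathcal{L}\SS^2(z_l)$ and a null-homotopy of $(\alpha^\#)_r$ inside $\mathcal{G}\SS^2(z_r)$, and the heart of the matter is to run them \emph{simultaneously} so that at every instant the resulting pair satisfies the compatibility condition $(\ast\ast)$ of Theorem~\ref{th1} — equal speeds and $\kappa_{\gamma_l}>|\kappa_{\gamma_r}|$ — which is exactly what keeps the combined curve inside $\mathcal{L}\SS^3(z_l,z_r)$. This synchronization is the main obstacle: the naive product of the two homotopies respects neither the equal-speed constraint nor the curvature domination. The way around it is to use the slack introduced by $\#$: before recombining, one spreads the loops produced by $\#$ along the whole parameter interval (the "spreading loops" technique, an instance of Gromov's $h$-principle used in~\cite{Sal13}), so that on the left the geodesic curvature stays uniformly large and on the right uniformly small throughout the homotopy, and then reparametrizes in $t$ so that the two speeds agree; since the space of reparametrizations is contractible these adjustments can be made continuously in both $s\in K$ and the homotopy parameter. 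Carrying this out produces the desired null-homotopy of $\alpha^\#$ inside $\mathcal{L}\SS^3(z_l,z_r)$. I expect everything except this last simultaneity/synchronization step to be formal, resting only on the decomposition theorems and the already-established $n=2$ statement, so that is where the care will be required.
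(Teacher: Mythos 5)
Your backward implication is correct and is exactly the one the paper proves: since $\mathcal{L}\SS^3(z_l,z_r)\subset\mathcal{G}\SS^3(z_l,z_r)$, a null-homotopy of $\alpha^\#$ in the smaller space is also one in the larger space, and Proposition~\ref{Genericas} then transfers it to $\alpha$. Note, however, that the paper does \emph{not} prove the forward implication; it only asserts that it follows from the spreading-loops technique ``exactly as in Proposition~\ref{genericas2}'' and observes that the statement is not used in the sequel.

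For the forward direction you propose to decompose $\alpha^\#$ into left and right parts, null-homotope $(\alpha^\#)_l$ inside $\mathcal{L}\SS^2(z_l)$ via the $n=2$ result and $(\alpha^\#)_r$ inside $\mathcal{G}\SS^2(z_r)$, and then recombine. The decomposition and the appeal to Proposition~\ref{genericas2} are fine, but the recombination step, which you rightly flag as the heart of the matter, has a genuine gap that is larger than ``synchronization'' suggests. Two independently constructed null-homotopies $H_l(s,u)$ and $H_r(s,u)$ can be paired into a map into $\mathcal{L}\SS^3(z_l,z_r)$ only if condition~\eqref{cond} of Theorem~\ref{th1} holds at every $(s,u)$: equal speeds $||(H_l(s,u))'(t)||=||(H_r(s,u))'(t)||$ and curvature domination $\kappa_{H_l(s,u)}(t)>|\kappa_{H_r(s,u)}(t)|$. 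Neither is supplied. Equal speed forces equal total length at every $(s,u)$, and length is a reparametrization invariant, so ``reparametrize in $t$'' cannot repair a length mismatch; the two lengths will in general drift independently along the two homotopies. Curvature domination is also not what Proposition~\ref{genericas2} delivers: the left homotopy only keeps $\kappa_{H_l(s,u)}$ positive, not bounded away from zero, while the right homotopy carries no curvature bound at all. The claim that spreading the loops keeps the left curvature ``uniformly large'' throughout is not a consequence of the quoted results; you would have to build the left homotopy afresh with that constraint, and then construct the right homotopy \emph{relative} to the left one rather than independently. As written, the plan does not close.
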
 

One direction follows directly from Proposition~\ref{Genericas}: if $\alpha^\#$ is homotopic to a constant map in $\mathcal{L}\SS^3(z_l,z_r)$, since $\alpha$ is always homotopic to $\alpha^\#$ in $\mathcal{G}\SS^3(z_l,z_r)$, we obtain that $\alpha$ is homotopic to a constant in $\mathcal{G}\SS^3(z_l,z_r)$. The other direction can be proved exactly as in Proposition~\ref{genericas2}, using again the techniques of spreading loops along a curve; however, we will not use this statement in the following so as before, this will not be proved.    

\section{Relaxation-reflexion of curves in $\mathcal{L}\SS^2(\1)$ and $\mathcal{L}\SS^2(-\1)$}\label{s84}

The goal of this section is to address the following problem: given a continuous map $\alpha : K \rightarrow \mathcal{L}\SS^2(z_l)$, how to find a way to construct a continuous map $\hat{\alpha}: K \rightarrow \mathcal{L}\SS^3(z_l,z_r)$ such that $\hat{\alpha}_l=\alpha$. If we are able to do this, then we will be in a good position to use Proposition~\ref{nico} to obtain information on our spaces of locally convex curves. 

It will be sufficient to consider first the case of curves, that is given $\gamma \in \mathcal{L}\SS^2(z_l)$, we will try to construct $\hat{\gamma} \in \mathcal{L}\SS^3(z_l,z_r)$ such that $\hat{\gamma}_l=\gamma$. The first idea is simply to define $\hat{\gamma}_r$ to have a length equals to the length of $\hat{\gamma}_l=\gamma$, and just slightly less geodesic curvature, say the geodesic curvature of $\gamma$ reduced by a small constant $\delta$. Let us denote this curve by $R_\delta \gamma$ for the moment.

A first difficulty is that if $\hat{\gamma}=(\gamma,R_\delta \gamma)$, then the final Frenet frame of $\hat{\gamma}$ will depend on $\delta$ and also possibly on the curve $\gamma$ itself. But this is not a serious problem: exactly like for the chopping operations we described in Chapter~\ref{chapter5}, \S\ref{s53}, instead of looking at the final Frenet frame we can look at its Bruhat cell which will be independent of $\delta$ small enough and of $\gamma$, hence after a projective transformation we may assume that the curve has a fixed final Frenet frame. 

Let us denote by $R(z_l)$ a representative of the final Frenet frame of $R_\delta \gamma$; the final Frenet frame of $\hat{\gamma}$ would then be $(z_l,R(z_l))$. From this procedure one could see that $R(z_l)$ has to have a sign opposite to $z_l$, therefore such a procedure cannot give curves in two of the spaces we are interested in, namely $\mathcal{L}\SS^3(\1,\1)$ and $\mathcal{L}\SS^3(-\1,-\1)$. But this can (and in fact will) work for the other two spaces $\mathcal{L}\SS^3(-\1,\1) \simeq \mathcal{L}\SS^3(\1,-\k)$ and $\mathcal{L}\SS^3(\1,-\1) \simeq \mathcal{L}\SS^3(-\1,\k)$.  

Yet this is not sufficient. We will also want this relaxation process to be compatible with the operation $\#$ of ``adding loops" we defined in \S\ref{s53}. More precisely, one would like to know that if $\gamma$ is such that $\gamma_r=R_\delta \gamma_l$, then $\gamma^\#$ still has this property, namely we want $\gamma^\#_r=R_\delta \gamma^\#_l$. To obtain this symmetry, we will have to relax the geodesic curvature in a symmetric way by introducing another small parameter $\varepsilon>0$, and to reflect the curve obtained: this is what we will call the relaxation-reflection of a curve $\gamma$, and it will be denoted by $RR_{\varepsilon,\delta} \gamma$. We will show that for $\gamma \in \mathcal{L}\SS^2(\1)$, this will produce a curve $\hat{\gamma}=(\gamma,RR_{\varepsilon,\delta} \gamma) \in \mathcal{L}\SS^3(\1,RR(\1)) \simeq \mathcal{L}\SS^3(\1,-\k)$ and for $\gamma \in \mathcal{L}\SS^2(-\1)$, $\hat{\gamma}=(\gamma,RR_{\varepsilon,\delta} \gamma) \in \mathcal{L}\SS^3(-\1,RR(-\1)) \simeq \mathcal{L}\SS^3(-\1,\k)$.

Let us now give proper definitions.

\begin{definition}\label{relaxcurve}
Given $\gamma \in \mathcal{L}\SS^2(\pm\1)$, $\varepsilon>0$ and $\delta>0$ sufficiently small, let us define $R_{\varepsilon,\delta} \gamma$ to be the unique curve in $\mathcal{L}\SS^2$ such that
\[ ||(R_{\varepsilon,\delta} \gamma)'(t)||=||\gamma'(t)||, \quad\kappa_{R_{\varepsilon,\delta} \gamma }(t)=
\begin{cases}
\kappa_\gamma(t)-\delta, \quad t \in (0,\varepsilon) \cup (1-\varepsilon,1), \\ 
\kappa_\gamma(t)-\delta^2\varepsilon^2, \quad t \in (\varepsilon,1-\varepsilon).
\end{cases}
\]
Then let us define the curve $RR_{\varepsilon,\delta} \gamma$ to be the unique curve in $\mathcal{G}\SS^2$ such that
\[ ||(RR_{\varepsilon,\delta} \gamma)'(t)||=||(R_{\varepsilon,\delta} \gamma)'(t)||=||\gamma'(t)||, \quad \kappa_{RR_{\varepsilon,\delta} \gamma }(t)=-\kappa_{R_{\varepsilon,\delta} \gamma }(t). 
\]
\end{definition}

This definition should be understood as follows. On the small union of interval $(0,\varepsilon) \cup (1-\varepsilon,1)$, which is a symmetric interval around the initial point since our curve is closed, we relax the curvature by a constant $\delta$. On the large interval $(\varepsilon,1-\varepsilon)$, the curvature is relax by the much smaller constant $\delta^2\varepsilon^2$, so that the product of the relaxation of the curvature with the length of $(\varepsilon,1-\varepsilon)$, which is $\delta^2\varepsilon^2(1-2\varepsilon) \sim \delta^2\varepsilon^2$ is much smaller than the product of the relaxation of the curvature with the length of $(0,\varepsilon) \cup (1-\varepsilon,1)$, which is $2\delta\varepsilon \sim \delta\varepsilon$.   

If follows from Proposition~\ref{unique}, Chapter~\ref{chapter3}, \S\ref{s35}, that this curve $RR_{\varepsilon,\delta} \gamma$ is well-defined; according to our terminology, it is not locally convex but rather negative locally convex. The final Frenet frame of $RR_{\varepsilon,\delta} \gamma$, which for the moment may depend upon $\varepsilon,\delta$ and $\gamma$, will be denoted by $RR_{\varepsilon,\delta,\gamma} (\pm \1)$.

\begin{definition}\label{relcurve}
Given $\gamma \in \mathcal{L}\SS^2(\pm\1)$ and $\varepsilon,\delta>0$ sufficiently small, let us define 
\[ \hat{\gamma}_{\varepsilon,\delta}=(\gamma,RR_{\varepsilon,\delta} \gamma) \in \mathcal{L}\SS^2(\pm\1) \times \mathcal{L}\SS^2(RR_{\varepsilon,\delta,\gamma} (\pm \1)).  \]
\end{definition}

The following proposition is an obvious consequence of the definition of $RR_{\varepsilon,\delta}\gamma$ and Theorem~\ref{th1}.

\begin{proposition}
Given $\gamma \in \mathcal{L}\SS^2(\pm\1)$ and $\varepsilon,\delta>0$ sufficiently small,
\[ \hat{\gamma}_{\varepsilon,\delta} \in \mathcal{L}\SS^3(\pm\1,RR_{\varepsilon,\delta,\gamma} (\pm \1)).\] 
\end{proposition}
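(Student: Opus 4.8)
The plan is to verify that the pair $\hat\gamma_{\varepsilon,\delta}=(\gamma,RR_{\varepsilon,\delta}\gamma)$ satisfies the hypothesis of Theorem~\ref{th1}, and then simply read off the final lifted Frenet frame of the associated curve in $\SS^3$. Recall that Theorem~\ref{th1} asserts that a pair $(\gamma_l,\gamma_r)\in\mathcal{L}\SS^2(z_l)\times\mathcal{G}\SS^2(z_r)$ corresponds to an element of $\mathcal{L}\SS^3(z_l,z_r)$ precisely when the compatibility condition~\eqref{cond} holds, namely $\|\gamma_l'(t)\|=\|\gamma_r'(t)\|$ and $\kappa_{\gamma_l}(t)>|\kappa_{\gamma_r}(t)|$ for all $t\in[0,1]$. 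So the whole proof reduces to checking these two conditions for the pair $(\gamma,RR_{\varepsilon,\delta}\gamma)$, together with the bookkeeping of the endpoints.

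First I would record the membership of each component in its correct space. By hypothesis $\gamma\in\mathcal{L}\SS^2(\pm\1)$, so the left component is fine. For the right component, Definition~\ref{relaxcurve} prescribes $\|(RR_{\varepsilon,\delta}\gamma)'(t)\|=\|\gamma'(t)\|$ and $\kappa_{RR_{\varepsilon,\delta}\gamma}(t)=-\kappa_{R_{\varepsilon,\delta}\gamma}(t)$, which is a prescribed geodesic curvature (negative, hence $RR_{\varepsilon,\delta}\gamma$ is negative locally convex, in particular an immersion) together with a prescribed speed; by Proposition~\ref{unique} applied with the reversed-sign tridiagonal logarithmic derivative, such a curve exists and is unique, and it lies in $\mathcal{G}\SS^2$ with some well-defined final Frenet frame, which we christen $RR_{\varepsilon,\delta,\gamma}(\pm\1)$. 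Thus $RR_{\varepsilon,\delta}\gamma\in\mathcal{G}\SS^2(RR_{\varepsilon,\delta,\gamma}(\pm\1))$ by construction. Next, the speed condition of~\eqref{cond} is immediate: $\|\gamma'(t)\|=\|(RR_{\varepsilon,\delta}\gamma)'(t)\|$ holds for all $t$, directly from Definition~\ref{relaxcurve}. For the curvature condition one computes, from Definition~\ref{relaxcurve}, that $|\kappa_{RR_{\varepsilon,\delta}\gamma}(t)|=|\kappa_{R_{\varepsilon,\delta}\gamma}(t)|=\kappa_\gamma(t)-\delta$ on $(0,\varepsilon)\cup(1-\varepsilon,1)$ and $=\kappa_\gamma(t)-\delta^2\varepsilon^2$ on $(\varepsilon,1-\varepsilon)$; since $\gamma$ is locally convex, $\kappa_\gamma(t)>0$ is bounded below on the compact interval $[0,1]$, so for $\delta>0$ sufficiently small (smaller than $\min_t\kappa_\gamma(t)$) we have $0\le|\kappa_{RR_{\varepsilon,\delta}\gamma}(t)|<\kappa_\gamma(t)=\kappa_\gamma(t)$, i.e. $\kappa_\gamma(t)>|\kappa_{RR_{\varepsilon,\delta}\gamma}(t)|$ for all $t$. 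This is exactly the second inequality in~\eqref{cond}.

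With both conditions of~\eqref{cond} verified, Theorem~\ref{th1} produces a unique $\hat\gamma_{\varepsilon,\delta}\in\mathcal{L}\SS^3(z_l,z_r)$ whose left and right parts are $\gamma$ and $RR_{\varepsilon,\delta}\gamma$; here $z_l$ is the final lifted Frenet frame of $\gamma$, which is $\pm\1$, and $z_r$ is the final lifted Frenet frame of $RR_{\varepsilon,\delta}\gamma$, which by definition is $RR_{\varepsilon,\delta,\gamma}(\pm\1)$. Hence $\hat\gamma_{\varepsilon,\delta}\in\mathcal{L}\SS^3(\pm\1,RR_{\varepsilon,\delta,\gamma}(\pm\1))$, as claimed. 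The only genuinely non-formal point is the existence/uniqueness of $RR_{\varepsilon,\delta}\gamma$ as a curve with the prescribed negative geodesic curvature — but this is precisely the content of Proposition~\ref{unique} once one observes that reflecting a $\SS^2$ curve through the plane $\{z=0\}$ negates its geodesic curvature while preserving its speed, so the prescribed data indeed corresponds to a curve in $\mathfrak{Q}$ (with $c_2\in\R$ allowed to be negative), and the initial value problem for its Frenet frame has a unique solution. I expect no real obstacle here; the statement is essentially a corollary of Theorem~\ref{th1} together with the definitions, and the main thing to be careful about is simply making the smallness of $\delta$ explicit so that the strict inequality $\kappa_\gamma>|\kappa_{RR_{\varepsilon,\delta}\gamma}|$ holds uniformly on $[0,1]$.
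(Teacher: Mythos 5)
Your proof is correct and takes the same approach the paper intends: the paper offers no proof beyond noting the result is an obvious consequence of Definition~\ref{relaxcurve} and Theorem~\ref{th1}, and your argument spells out precisely what the paper treats as immediate — the speed equality is built into the definition, and the strict curvature inequality $\kappa_\gamma(t)>|\kappa_{RR_{\varepsilon,\delta}\gamma}(t)|$ holds once $\delta<\min_t\kappa_\gamma(t)$, which is achievable by compactness. Your remark on existence and uniqueness via Proposition~\ref{unique} (extended to $\mathfrak{Q}$ to allow negative curvature) is also right, and correctly identifies $RR_{\varepsilon,\delta}\gamma$ as lying in $\mathcal{G}\SS^2$ rather than the $\mathcal{L}\SS^2$ inadvertently written in Definition~\ref{relcurve}.
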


Then, exactly like for the chopping operation, we will use Bruhat cells to remove the dependence on $\varepsilon$, $\delta$ and $\gamma$ from the final lifted Frenet frame $(\pm\1,RR_{\varepsilon,\delta,\gamma} (\pm \1))$.

\begin{proposition}\label{relaxcurveprop}
For $\varepsilon,\delta>0$ sufficiently small and any $\gamma \in \mathcal{L}\SS^2(\pm\1)$, there exist homeomorphisms
\[ T_{\varepsilon,\delta}^+ : \mathcal{L}\SS^3(\1,RR_{\varepsilon,\delta,\gamma} (\1)) \rightarrow \mathcal{L}\SS^3(\1,-\k)\]
and 
\[ T_{\varepsilon,\delta}^- : \mathcal{L}\SS^3(-\1,RR_{\varepsilon,\delta,\gamma} (-\1)) \rightarrow \mathcal{L}\SS^3(-\1,\k).  \]
\end{proposition}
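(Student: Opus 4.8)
The plan is to deduce everything from the Bruhat invariance of our spaces, that is from Proposition~\ref{bruhathomeo}: if two spins in $\mathrm{Spin}_4$ are Bruhat equivalent, then the corresponding spaces of locally convex curves in $\SS^3$ are homeomorphic. By construction the left part $\gamma_l=\gamma$ of $\hat\gamma_{\varepsilon,\delta}$ is a closed curve in $\mathcal{L}\SS^2(\pm\1)$, so its final lifted Frenet frame is exactly $\pm\1$; hence the final lifted Frenet frame of $\hat\gamma_{\varepsilon,\delta}$ is the pair $\big(\pm\1,\ \tilde{\mathcal{F}}_{RR_{\varepsilon,\delta}\gamma}(1)\big)\in\SS^3\times\SS^3$, and all the content is about the right coordinate. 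It therefore suffices to show that, for $\varepsilon,\delta$ small, this pair lies in the open Bruhat cell of $(\1,-\k)$ when $\gamma\in\mathcal{L}\SS^2(\1)$, and in the open Bruhat cell of $(-\1,\k)$ when $\gamma\in\mathcal{L}\SS^2(-\1)$. Granting this, $T^{+}_{\varepsilon,\delta}$ and $T^{-}_{\varepsilon,\delta}$ are the homeomorphisms supplied by Proposition~\ref{bruhathomeo}, and the identifications $\mathcal{L}\SS^3(\1,-\k)\simeq\mathcal{L}\SS^3(-\1,\1)$ and $\mathcal{L}\SS^3(-\1,\k)\simeq\mathcal{L}\SS^3(\1,-\1)$ are Proposition~\ref{spaces}.

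The first step is to prove that the pair lies in a \emph{fixed} open Bruhat cell $\mathcal{C}_\pm$ of $\mathrm{Spin}_4$, independent of $\varepsilon$, $\delta$ (small) and of $\gamma$. The key remark is that $RR_{\varepsilon,\delta}\gamma$ is literally the reflection $J\cdot R_{\varepsilon,\delta}\gamma$ of $R_{\varepsilon,\delta}\gamma$ across the plane $\{x_3=0\}$, where $J=\mathrm{diag}(1,1,-1)$: both curves have the same speed, opposite geodesic curvature and initial Frenet frame $I$, hence coincide by the uniqueness in Proposition~\ref{unique}. Consequently $\mathcal{F}_{RR_{\varepsilon,\delta}\gamma}(t)=J\,\mathcal{F}_{R_{\varepsilon,\delta}\gamma}(t)\,J$, and it is enough to control the Bruhat cell of $\mathcal{F}_{R_{\varepsilon,\delta}\gamma}(1)$. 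Now $R_{\varepsilon,\delta}\gamma$ is obtained from $\gamma$ by lowering the geodesic curvature by the fixed amount $\delta$ on the two short intervals $(0,\varepsilon)$ and $(1-\varepsilon,1)$ flanking the base point, and by the much smaller amount $\delta^2\varepsilon^2$ on the long middle interval. Solving the Frenet equation $\mathcal{F}'=\mathcal{F}\Lambda$ and using $\mathcal{F}_\gamma(1)=\pm I$, a Taylor expansion near $t=1$ — in the same spirit as the computations of $\mathbf{chop}^{\pm}$ in Chapter~\ref{chapter5}, \S\ref{s53} (see Proposition~\ref{propchop} and Proposition~\ref{propchop2}) — shows that $\mathcal{F}_{R_{\varepsilon,\delta}\gamma}(1)$ is $\pm I$ perturbed by a small matrix whose leading-order part comes from the relaxation near the endpoint (of order $\varepsilon\delta$) and dominates the accumulated contribution of the middle interval (of order $\varepsilon^2\delta^2$); the signs of the relevant south-west minors then depend only on $\mathrm{sgn}\,\|\gamma'(1)\|=+$ and are the same for every $\gamma\in\mathcal{L}\SS^2(\pm\1)$. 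Thus $\mathcal{F}_{R_{\varepsilon,\delta}\gamma}(1)$ sits in a fixed open cell of $SO_3$, conjugation by $J$ carries it to another fixed open cell, and lifting, the pair $\big(\pm\1,\tilde{\mathcal{F}}_{RR_{\varepsilon,\delta}\gamma}(1)\big)$ sits in a fixed open Bruhat cell $\mathcal{C}_\pm$ of $\mathrm{Spin}_4$ (the cell being computed through $\Pi_4$ and the algorithm of Chapter~\ref{chapter2}, \S\ref{s23}).

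It then remains to identify $\mathcal{C}_+$ and $\mathcal{C}_-$; since these cells do not depend on $\gamma$, it is enough to evaluate them on a single explicit curve. For $\mathcal{C}_+$ I would take $\gamma=\sigma_c^2\in\mathcal{L}\SS^2(\1)$ (the curve $\omega_2$ of \S\ref{s81}), and for $\mathcal{C}_-$ the circle $\gamma=\sigma_c\in\mathcal{L}\SS^2(-\1)$ of \S\ref{s62}. In both cases $R_{\varepsilon,\delta}\gamma$, and hence $RR_{\varepsilon,\delta}\gamma=J\cdot R_{\varepsilon,\delta}\gamma$, is a concatenation of finitely many arcs of constant speed and constant geodesic curvature, whose Frenet frames are matrix exponentials that can be multiplied out explicitly; feeding the resulting $\tilde{\mathcal{F}}_{RR_{\varepsilon,\delta}\gamma}(1)$ together with the first coordinate $\pm\1$ through $\Pi_4$ and running the Bruhat algorithm identifies the pair as Bruhat equivalent to $(\1,-\k)$ in the first case and to $(-\1,\k)$ in the second. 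Hence $\mathcal{C}_+=\mathrm{Bru}_{(\1,-\k)}$ and $\mathcal{C}_-=\mathrm{Bru}_{(-\1,\k)}$, and Proposition~\ref{bruhathomeo} yields the homeomorphisms $T^{+}_{\varepsilon,\delta}$ and $T^{-}_{\varepsilon,\delta}$, continuity of each and of its inverse being inherited from the Bruhat action exactly as there.

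The main obstacle I expect is the uniformity in $\gamma$ in the second step: a priori the threshold below which $\varepsilon$ and $\delta$ must lie for the sign analysis of the minors to be conclusive depends on bounds for $\|\gamma'\|$ and $\kappa_\gamma$, which are not uniform over $\mathcal{L}\SS^2(\pm\1)$. This is precisely the reason for the two-scale relaxation in Definition~\ref{relaxcurve}: relaxing by $\delta$ near the base point but only by $\delta^2\varepsilon^2$ in the middle makes the reflected ``tip'' the dominant feature of the endpoint frame, so that the open cell it falls into is governed by the purely local data at $t=0,1$ — final frame $\pm I$ and positive curvature — common to every $\gamma$. Turning this into a clean estimate, and, when the proposition is later applied to a map $\alpha:K\to\mathcal{L}\SS^2(\pm\1)$ with compact image, choosing $\varepsilon,\delta$ uniformly in $s\in K$ from the resulting bounds, is the technical heart of the argument; once it is in place, Proposition~\ref{relaxcurveprop} follows from Proposition~\ref{bruhathomeo} and Proposition~\ref{spaces} as above.
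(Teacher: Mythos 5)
Your proposal follows essentially the same strategy as the paper's proof: reduce via Proposition~\ref{bruhathomeo} to showing that the final lifted Frenet frame $(\pm\1,\tilde{\mathcal F}_{RR_{\varepsilon,\delta}\gamma}(1))$ sits in a fixed open Bruhat cell, identify that cell by an explicit computation on a concrete model curve, and then extend to arbitrary $\gamma$ by arguing that the two-scale relaxation localizes the dominant change of the final frame to an $\varepsilon$-neighborhood of the base point. The paper does this by choosing $\gamma_l$ with $\tilde\Gamma_l(t)=\exp\bigl(2\pi\frac{\i+\k}{\sqrt2}t\bigr)$ (which is in fact a doubled circle of the form $\sigma_c^2$, so the same family you pick), computing $\Pi_4(\1,\tilde\Gamma_r(1))$ explicitly, and running the Bruhat algorithm, then handling the general $\gamma$ by the same informal ``the relaxation is essentially localized near $t=0,1$'' argument you give.

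The one genuine structural addition in your proposal is the observation that $RR_{\varepsilon,\delta}\gamma=J\cdot R_{\varepsilon,\delta}\gamma$ with $J=\mathrm{diag}(1,1,-1)$, and consequently $\mathcal F_{RR_{\varepsilon,\delta}\gamma}(t)=J\,\mathcal F_{R_{\varepsilon,\delta}\gamma}(t)\,J$. This is correct (orientation-reversing isometry flips the geodesic curvature and the normal but not the tangent, so the conjugated frame is the Frenet frame of the reflected curve, and uniqueness of the curve with given speed, curvature, and initial frame does the rest) and it cleanly relates the reflected relaxation to the plain relaxation; the paper works directly with $RR$ and skips this intermediate step. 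Your framing of the uniformity-in-$\gamma$ step as a Taylor expansion in the spirit of $\mathbf{chop}^\pm$ is also a reasonable way to organize that argument, though you rightly flag it as the technical heart that you do not fully close; the paper's version is no more detailed, asserting that since the curvature is ``only slightly altered'' outside the $\varepsilon$-neighborhood of the base point the final frame stays in the same open cell. One caution worth noting: your Taylor comparison is between the endpoint frames of two \emph{different} curves ($\gamma$ versus $R_{\varepsilon,\delta}\gamma$), not along a single Jacobian curve as in Propositions~\ref{propchop}--\ref{propchop2}, so the sign analysis of the southwest minors needs a small adaptation, but the two-scale choice $\delta$ versus $\delta^2\varepsilon^2$ is designed precisely to make this work, as you correctly identify.
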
 

\begin{proof}
It will be sufficient to prove that $(\1,RR_{\varepsilon,\delta,\gamma} (\1))$ is Bruhat equivalent to $(\1,-\k)$ and $(-\1,RR_{\varepsilon,\delta,\gamma} (-\1))$ is Bruhat equivalent to $(-\1,\k)$: the existence of the homeomorphisms $T_{\varepsilon,\delta}^+$ and $T_{\varepsilon,\delta}^-$ will then follow from Proposition~\ref{bruhathomeo}, Chapter~\ref{chapter3}, \S\ref{s35}.  

Let us prove that $(\1,RR_{\varepsilon,\delta,\gamma} (\1))$ is Bruhat equivalent to $(\1,-\k)$; the proof that $(-\1,RR_{\varepsilon,\delta,\gamma} (-\1))$ is Bruhat equivalent to $(-\1,\k)$ will be analogous.

We will first prove this for a specific curve $\gamma=\gamma_l \in \mathcal{L}\SS^2(\1)$; at the end we will explain how this implies the result for an arbitrary curve in $\mathcal{L}\SS^2(\1)$. Let us choose
\[ \gamma_l(t)=\Pi_3(\tilde{\Gamma}_l(t))(e_1), \quad t \in [0,1] \]
where 
\[ \tilde{\Gamma}_l(t)=\exp\left(2\pi h_l t\right) \in \SS^3, \quad t \in [0,1] \]
with $h_l=\cos(\theta_l)\i+\sin(\theta_l)\k$ and $\theta_l=\pi/4$, that is
\[ \tilde{\Gamma}_l(t)=\exp\left(2\pi \left(\frac{\i+\k}{\sqrt{2}}\right) t\right) \]
and where we recall that $\Pi_3 : \SS^3 \simeq \mathrm{Spin}_3 \rightarrow SO_3$ is the universal cover projection.

Then for $t \in [0,1]$ close to one, and given $\varepsilon,\delta>0$ small, we let
\[ \gamma_{r,\delta,\varepsilon}(t)=\Pi_3(\tilde{\Gamma}_{r,\delta,\varepsilon}(t))(e_1) \]
where $\tilde{\Gamma}_{r,\delta,\varepsilon}$ is defined by
\[ \tilde{\Gamma}_{r,\delta,\varepsilon}(t)=\exp\left((2\pi-\varepsilon) h_{r,\delta} t\right) \]
with $\varepsilon>0$ small and
\[ h_{r,\delta}=-\cos \theta_{r,\delta} \i+\sin\theta_{r,\delta} \k \]
with $\theta_{r,\delta}=\pi/4+\delta$, with $\delta$ small. Observe that this curve $\gamma_{r,\delta,\varepsilon}$, is not exactly the curve $RR_{\varepsilon,\delta}\gamma_l$ that we defined; yet clearly the two are homotopic hence it is enough to prove the result by considering $\gamma_{r,\delta,\varepsilon}$ instead of $RR_{\varepsilon,\delta}\gamma_l$. To simplify notations, we will suppress the dependence on $\varepsilon$ and $\delta$ and write $\gamma_{r}$ instead of $\gamma_{r,\delta,\varepsilon}$ (and similarly for $\tilde{\Gamma}_{r,\delta,\varepsilon}$, $h_{r,\delta}$ and $\theta_{r,\delta}$).  

Hence we can write again
\[ h_r=\cos\delta\left(\frac{-\i+\k}{\sqrt{2}}\right)+\sin\delta\left(\frac{\i+\k}{\sqrt{2}}\right) \]
and the final lifted Frenet frame of $\gamma_r$ is
\[ \tilde{\Gamma}_r(1)=\exp\left((2\pi-\varepsilon) h_r\right)=\exp\left(-\varepsilon h_r\right). \]
Let us first compute in which cell the image of $(\tilde{\Gamma}_l(1),\tilde{\Gamma}_r(1))=(\1,\tilde{\Gamma}_r(1))$ under the universal cover projection $\Pi_4 : \SS^3 \times \SS^3  \simeq \mathrm{Spin}_4 \rightarrow SO_4$ belongs to. Using the explicit expression of the map $\Pi_4$ (see Chapter~\ref{chapter2}, \S\ref{s21}), we can compute $\Pi_4(\1,\tilde{\Gamma}_r(1))$ and we find that it is equal to the matrix

\begin{equation*}
\Pi_4(\1,\tilde{\Gamma}_r(1))=
\begin{pmatrix}
P_1 & P_2 & P_3 & P_4
\end{pmatrix},
\end{equation*}
where the columns $P_i$, for $1 \leq i \leq 4$, are given by

\begin{equation*}
P_1=
\begin{pmatrix} 
 \cos(\varepsilon)\\ 
  (-\cos\delta+\sin\delta)\frac{\sin\varepsilon}{\sqrt{2}} \\
 0\\
(\cos\delta+\sin\delta)\frac{\sin\varepsilon}{\sqrt{2}}
\end{pmatrix}
\quad P_2=
\begin{pmatrix} 
(\cos\delta-\sin\delta)\frac{\sin\varepsilon}{\sqrt{2}} \\ 
 \cos(\varepsilon)\\
  (-\cos\delta-\sin\delta)\frac{\sin\varepsilon}{\sqrt{2}}\\
  0
\end{pmatrix}
\end{equation*}

\begin{equation*}
P_3=
\begin{pmatrix} 
0 \\ 
  (\cos\delta+\sin\delta)\frac{\sin\varepsilon}{\sqrt{2}} \\
 \cos(\varepsilon)\\
(\cos\delta-\sin\delta)\frac{\sin\varepsilon}{\sqrt{2}}
\end{pmatrix}
\quad P_4=
\begin{pmatrix} 
(-\cos\delta-\sin\delta)\frac{\sin\varepsilon}{\sqrt{2}} \\ 
0 \\
 (-\cos\delta+\sin\delta)\frac{\sin\varepsilon}{\sqrt{2}} \\
  \cos(\varepsilon) 
\end{pmatrix}.
\end{equation*}

Since $\varepsilon>0$ and $\delta>0$ are small, in particular $0<\varepsilon<\pi$ and $0<\delta<\pi/4$, one can check (using the algorithm described in Chapter~\ref{chapter5}, \S\ref{s53}), that this matrix is Bruhat equivalent to the transpose of the Arnold matrix ${} A ^\top \in SO_4$. Therefore $(\1,RR_{\varepsilon,\delta,\gamma_l}(\1))$ is Bruhat equivalent to $(\1,-\k)$ for the specific curve $\gamma_l$ we choose.

To conclude, observe that for the curve $\gamma_l$ we choose, the final lifted Frenet frame of $(\gamma_l,RR_{\varepsilon,\delta}\gamma_l)$ belongs to an open cell. Using this observation, and the fact that for any curve $\gamma \in \mathcal{L}\SS^2(\1)$, the curve $RR_{\varepsilon,\delta}\gamma$ is obtained from $\gamma$ by relaxing its geodesic curvature essentially in a small $\varepsilon$-neighborhood of $\gamma(0)=\gamma(1)$ (outside this neighborhood the geodesic curvature is only slightly altered), we deduce that for any curve $\gamma \in \mathcal{L}\SS^2(\1)$, the final lifted Frenet frame of $(\gamma,RR_{\varepsilon,\delta}\gamma)$ belongs   to the same open cell than the final lifted Frenet frame of $(\gamma_l,RR_{\varepsilon,\delta}\gamma_l)$. This shows that $(\1,RR_{\varepsilon,\delta,\gamma}(\1))$ is Bruhat equivalent to $(\1,-\k)$ for any curve $\gamma \in \mathcal{L}\SS^2(\1)$. 
\end{proof}

Let us now make the following definition.

\begin{definition}\label{relaxcurve2}
For $\gamma \in \mathcal{L}\SS^2(\1)$ and $\varepsilon,\delta>0$ sufficiently small, we define
\[ \hat{\gamma}=T^+_{\varepsilon,\delta}(\hat{\gamma}_{\varepsilon,\delta}) \in \mathcal{L}\SS^3(\1,-\k)\]
and for $\gamma \in \mathcal{L}\SS^2(-\1)$ and $\varepsilon,\delta>0$ sufficiently small, we define
\[ \hat{\gamma}=T^-_{\varepsilon,\delta}(\hat{\gamma}_{\varepsilon,\delta}) \in \mathcal{L}\SS^3(-\1,\k).\]
\end{definition}

Let us also make analogous definitions in the case of a continuous family of curves in $\mathcal{L}\SS^2(\pm\1)$. 

\begin{definition}\label{relaxcurve3}
For a continuous map $\alpha : K \rightarrow \mathcal{L}\SS^2(\1)$ (respectively a continuous map $\alpha : K \rightarrow \mathcal{L}\SS^2(-\1)$), we define a continuous map  $\hat{\alpha} : K \rightarrow \mathcal{L}\SS^3(\1,-\k)$ (respectively a continuous map  $\hat{\alpha} : K \rightarrow \mathcal{L}\SS^3(-\1,\k)$) by setting $\hat{\alpha}(s)=\widehat{\alpha(s)}$. 
\end{definition} 

To conclude, let us state the following proposition, which is rather obvious in view of our definitions of $\alpha^\ast$ (in the case $n=2$), $\alpha^\#$ (in the case $n=3$) and $\hat{\alpha}$.

\begin{proposition}\label{propp}
Let $\alpha : K \rightarrow \mathcal{L}\SS^3(\1,-\k)$ (respectively $\alpha : K \rightarrow \mathcal{L}\SS^3(-\1,\k)$) a continuous map. Assume that $\alpha=\hat{\beta}$ for some continuous map $\beta : K \rightarrow \mathcal{L}\SS^2(\1)$ (respectively $\beta : K \rightarrow \mathcal{L}\SS^3(-\1)$). Then $\alpha^\#$ is homotopic in $\mathcal{L}\SS^3(\1,-\k)$ (respectively in $\mathcal{L}\SS^3(-\1,\k)$) to $\widehat{\beta^\ast}$.   
\end{proposition}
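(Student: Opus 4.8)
The plan is to trace through the definitions of all three operations — adding a pair of loops in the $n=2$ sense ($\beta \mapsto \beta^\ast$), adding a closed spiral in the $n=3$ sense ($\alpha \mapsto \alpha^\#$), and the relaxation-reflection lift ($\beta \mapsto \hat\beta$) — and verify that they are compatible at the level of the left/right decomposition, up to a homotopy that stays inside $\mathcal{L}\SS^3(\1,-\k)$ (resp. $\mathcal{L}\SS^3(-\1,\k)$). Concretely, suppose $\alpha = \hat\beta$ for some $\beta : K \to \mathcal{L}\SS^2(\1)$. Fix $s \in K$ and write $\gamma = \beta(s) \in \mathcal{L}\SS^2(\1)$, so that $\alpha(s) = \hat\gamma = T^+_{\varepsilon,\delta}(\gamma, RR_{\varepsilon,\delta}\gamma)$. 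First I would unwind the definition of $\alpha(s)^\# = (\alpha(s))^\#_{t_0(s)}$ using Definition~\ref{sustenido}: since the operation $\#$ is defined on the pair $(\alpha_l(s), \alpha_r(s))$, and since $T^+_{\varepsilon,\delta}$ is just the homeomorphism induced by a Bruhat equivalence (hence compatible with the decomposition via Proposition~\ref{bruhathomeo}), I can work with the pair $(\gamma, RR_{\varepsilon,\delta}\gamma)$ directly. The key point to check is that $\#$ applied to this pair modifies the left part $\gamma$ by attaching essentially a doubled small circle $\sigma^2_{c_1}$ near $t_0$ (this is exactly the $n=2$ operation $\gamma \mapsto \gamma^\ast$, up to the small modification $\nu$ of the curve which is homotopic to the identity modification), while the right part $RR_{\varepsilon,\delta}\gamma$ gets the ``reflected'' doubled circle $\bar\sigma^2_{c_2}$ attached; this is, up to homotopy, exactly the reflection $RR$ of what was attached on the left.

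The main steps, in order, are: (1) Reduce to the case $K$ a point, since all the operations and the claimed homotopy depend continuously (uniformly) on $s \in K$, so a family version follows verbatim once the pointwise statement is established with homotopies depending continuously on parameters — this is the same reduction used in the proof of Proposition~\ref{Genericas}. (2) Unwind $\hat\gamma = T^+_{\varepsilon,\delta}(\gamma, RR_{\varepsilon,\delta}\gamma)$ and note that, because $T^+_{\varepsilon,\delta}$ is induced by multiplying Frenet frames by a fixed element realizing the Bruhat equivalence, the left part of $\hat\gamma$ is (a reparametrization of) $\gamma$ and the right part is (a reparametrization of) $RR_{\varepsilon,\delta}\gamma$; in particular the operation $\#$, which acts through the pair of parts, commutes with $T^+_{\varepsilon,\delta}$ up to homotopy inside $\mathcal{L}\SS^3(\1,-\k)$. (3) Compare $(\hat\gamma)^\#$ with $\widehat{\gamma^\ast}$ part-by-part: on the left, both are $\gamma$ with a doubled small circle inserted near $t_0$, the only difference being the auxiliary curvature-adjusting modification $\nu$ from Lemma~\ref{finallemma} in the construction of $\#$ versus the plain insertion in $\gamma^\ast$ and then the relaxation of $RR_{\varepsilon,\delta}$; since $\nu$ is a convex-arc modification of $\gamma$ with controlled curvature, it is homotopic through $\mathcal{L}\SS^2(\1)$ to the unmodified arc, and this homotopy lifts (via $\hat{\;\cdot\;}$, which is continuous) to a homotopy in $\mathcal{L}\SS^3(\1,-\k)$. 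On the right, $\gamma^\#_r$ is $RR_{\varepsilon,\delta}\gamma$ with a doubled reflected circle $\bar\sigma^2_{c_2}$ inserted — which is precisely, up to a length/curvature reparametrization and a small homotopy, $RR_{\varepsilon,\delta}$ applied to ``$\gamma$ with a doubled circle inserted'', i.e. $RR_{\varepsilon,\delta}(\gamma^\ast_l)$; and $\widehat{\gamma^\ast}$ has exactly this right part by Definition~\ref{relaxcurve}. (4) Assemble: the part-wise homotopies combine, via the homeomorphism of Theorem~\ref{th1}, into a single homotopy in $\mathcal{L}\SS^3(\1,-\k)$ between $\alpha^\#$ and $\widehat{\beta^\ast}$; the $\mathcal{L}\SS^2(-\1)$ / $\mathcal{L}\SS^3(-\1,\k)$ case is word-for-word identical with $\1 \leftrightarrow -\1$, $-\k \leftrightarrow \k$.

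The hard part will be step (3): making precise that the two ways of ``inserting a doubled circle and then relaxing-reflecting'' — namely first relaxing ($RR_{\varepsilon,\delta}$) and then the $\#$-insertion prescribed by the construction in \S\ref{s83} (which chooses its own auxiliary radii $c_1, c_2, K_0, K_1$ and its own local modification $\nu$ via Lemma~\ref{finallemma}), versus first doing the $n=2$ insertion $\gamma \mapsto \gamma^\ast$ and then applying $RR_{\varepsilon,\delta}$ — yield curves in $\mathcal{L}\SS^3(\1,-\k)$ that are genuinely homotopic and not merely ``morally the same picture''. The cleanest way to handle this, which I would adopt, is to observe that any two curves in $\mathcal{L}\SS^3(\1,-\k)$ whose left parts agree up to homotopy in $\mathcal{L}\SS^2(\1)$ and whose right parts are obtained from those left parts by the (continuously varying) relaxation-reflection recipe are automatically homotopic: this is because $\mathcal{L}\SS^2(\1)$ is connected (Theorem~\ref{thmani}), so the homotopy of left parts exists, and the relaxation-reflection assignment $\gamma \mapsto (\gamma, RR_{\varepsilon,\delta}\gamma)$ is continuous in $\gamma$ with image in $\mathcal{L}\SS^3(\1,-\k)$ after applying $T^+_{\varepsilon,\delta}$; hence $\widehat{(\cdot)}$ carries the homotopy of left parts to a homotopy in $\mathcal{L}\SS^3(\1,-\k)$. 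It then suffices to check that the left part of $\alpha^\#$ is homotopic in $\mathcal{L}\SS^2(\1)$ to $(\beta^\ast)$ — which is immediate since the left part of $\alpha^\# = (\hat\beta)^\#$ is $\beta$ with a doubled small circle attached near $t_0$ (possibly after the convex modification $\nu$, itself homotopic to the trivial modification), i.e. $\beta^\ast$ up to homotopy — together with the fact (which one reads off the construction in \S\ref{s83}, cf. the paragraph preceding Definition~\ref{sustenido}) that the right part of $(\hat\beta)^\#$ is indeed of the relaxation-reflection form relative to its left part. This reduces the whole proposition to the already-understood $n=2$ bookkeeping plus the manifest structural compatibility of the three constructions.
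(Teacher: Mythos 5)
The paper itself does not prove this proposition --- it is declared ``rather obvious in view of our definitions'' --- so you are supplying an argument the paper leaves implicit, and your strategy (reduce to $K$ a point, decompose via Theorem~\ref{th1}, show the left part of $\alpha^\#$ is homotopic to $\beta^\ast$ in $\mathcal{L}\SS^2(\1)$, then pass back up via $\widehat{(\cdot)}$ and Proposition~\ref{homot}) is the natural route and almost certainly the one the author has in mind.

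There is, however, a genuine gap at exactly the point you flag as ``the hard part.'' To invoke Proposition~\ref{homot} you need $\alpha^\#$ to lie in, or to be homotopic inside $\mathcal{L}\SS^3(\1,-\k)$ to something lying in, the image of $\widehat{(\cdot)}$, and you justify this by asserting that ``the right part of $(\hat\beta)^\#$ is indeed of the relaxation-reflection form relative to its left part,'' reading it off the construction in \S\ref{s83}. But it is not: on the insertion interval $[t_0-\varepsilon,t_0+\varepsilon]$ the left part of $(\hat\beta)^\#$ has constant curvature $K_1$ while its right part has curvature $-K_2$ with $K_2<K_1$ strictly (the construction in \S\ref{s83} requires this strict inequality), whereas $RR_{\varepsilon,\delta}$ applied to the new left part would give curvature $-(K_1-\delta^2\varepsilon^2)$ there; and outside the insertion interval the two right parts also differ, since one is built from $RR_{\varepsilon,\delta}\beta$ and the other from $RR_{\varepsilon,\delta}(\beta^\ast)$, which are relaxed around different parameter values. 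So $(\hat\beta)^\#$ is not $\widehat{\gamma'}$ for any $\gamma'$, and bridging the gap requires deforming the right part while keeping both the left part and the final lifted Frenet frame $(\1,-\k)$ fixed --- i.e.\ connectedness of the fiber of $\gamma\mapsto\gamma_l$ over a fixed left curve, which is plausible (the admissible curvature functions form a convex set) but is complicated by the fixed-endpoint constraint and is established neither in your proposal nor in the paper. A secondary point: you take it for granted that the Bruhat homeomorphism $T^+_{\varepsilon,\delta}$ of Proposition~\ref{relaxcurveprop} is compatible with the left/right decomposition because it comes from a Bruhat equivalence; the Bruhat action of $Up_4^1$ on $\mathrm{Spin}_4\simeq\SS^3\times\SS^3$ does not obviously split as a product of actions on the two $\SS^3$ factors, so this step also deserves a verification before the reduction in your step (2) can be used.
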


\section{Proof of Theorem~\ref{th5}}\label{s85}

In this section, we finally give the proof of Theorem~\ref{th5}. First recall that for $k \geq 2$, we defined $\mathcal{M}_k(z)$ to be the set of multiconvex curves of multiplicity $k$ in $ \mathcal{L}\SS^2(z)$. We may now define $\hat{\mathcal{M}}_k(z_l,z_r)$ to be the set of curves $\gamma=(\gamma_l,\gamma_r) \in \mathcal{L}\SS^3(z_l,z_r)$ such that $\gamma_l \in \mathcal{M}_k(z_l)$. Exactly as in Proposition~\ref{manifold}, we have the following result.

\begin{proposition}\label{manifold2}
The set $\hat{\mathcal{M}}_k(z_l,z_r)$ is a closed contractible submanifold of $\mathcal{L}\SS^3(z_l,z_r)$ of codimension $2k-2$ with trivial normal bundle.
\end{proposition}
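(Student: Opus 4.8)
The plan is to transfer Proposition~\ref{manifold} across the homeomorphism of Theorem~\ref{th1}. Recall that Theorem~\ref{th1} gives a homeomorphism $\Phi$ between $\mathcal{L}\SS^3(z_l,z_r)$ and the space of pairs $(\gamma_l,\gamma_r) \in \mathcal{L}\SS^2(z_l) \times \mathcal{G}\SS^2(z_r)$ satisfying condition~\eqref{cond}, and in particular the ``left part'' map $L : \mathcal{L}\SS^3(z_l,z_r) \to \mathcal{L}\SS^2(z_l)$, $L(\gamma) = \gamma_l$, is continuous. By definition $\hat{\mathcal{M}}_k(z_l,z_r) = L^{-1}(\mathcal{M}_k(z_l))$, so as a first step I would observe that $\hat{\mathcal{M}}_k(z_l,z_r)$ is closed in $\mathcal{L}\SS^3(z_l,z_r)$ because $\mathcal{M}_k(z_l)$ is closed in $\mathcal{L}\SS^2(z_l)$ (Proposition~\ref{manifold}) and $L$ is continuous.

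\textbf{Submanifold structure and codimension.} The key step is to understand $\hat{\mathcal{M}}_k(z_l,z_r)$ through the product picture. Using $\Phi$, the space $\mathcal{L}\SS^3(z_l,z_r)$ is identified with an open subset $\mathcal{P}$ of $\mathcal{L}\SS^2(z_l) \times \mathcal{G}\SS^2(z_r)$ cut out by the condition~\eqref{cond} (which, in terms of the functions $(\hat c_i)$ parametrizing each factor as in \S\ref{s36}, is an open pointwise inequality on speeds and curvatures, hence defines an open submanifold of the Hilbert manifold $\mathcal{L}\SS^2(z_l) \times \mathcal{G}\SS^2(z_r)$ — note the speeds and curvatures are recovered continuously). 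Under this identification, $\hat{\mathcal{M}}_k(z_l,z_r)$ corresponds to $\big(\mathcal{M}_k(z_l) \times \mathcal{G}\SS^2(z_r)\big) \cap \mathcal{P}$. Since $\mathcal{M}_k(z_l)$ is a closed submanifold of $\mathcal{L}\SS^2(z_l)$ of codimension $2k-2$ with trivial normal bundle (Proposition~\ref{manifold}), the product $\mathcal{M}_k(z_l) \times \mathcal{G}\SS^2(z_r)$ is a closed submanifold of $\mathcal{L}\SS^2(z_l) \times \mathcal{G}\SS^2(z_r)$ of codimension $2k-2$ with trivial normal bundle (the normal bundle of a product submanifold $A \times Y \subset X \times Y$ is the pullback of the normal bundle of $A$, hence trivial if that one is). Intersecting with the open set $\mathcal{P}$ preserves all of these properties, so $\hat{\mathcal{M}}_k(z_l,z_r)$ is a closed submanifold of $\mathcal{L}\SS^3(z_l,z_r)$ of codimension $2k-2$ with trivial normal bundle.

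\textbf{Contractibility.} It remains to show $\hat{\mathcal{M}}_k(z_l,z_r)$ is contractible. The idea is that $\hat{\mathcal{M}}_k(z_l,z_r) \to \mathcal{M}_k(z_l)$, $\gamma \mapsto \gamma_l$, is a fibration-like map whose base is contractible (Proposition~\ref{manifold}) and whose fibers — the spaces of admissible right parts $\gamma_r \in \mathcal{G}\SS^2(z_r)$ satisfying $\|\gamma_r'(t)\| = \|\gamma_l'(t)\|$ and $|\kappa_{\gamma_r}(t)| < \kappa_{\gamma_l}(t)$ for a fixed $\gamma_l$ — are convex, hence contractible: indeed, for fixed $\gamma_l$, the curve $\gamma_r$ is determined by its curvature function $\kappa_{\gamma_r}$ (the speed being pinned to $\|\gamma_l'\|$), and the constraint $|\kappa_{\gamma_r}(t)| < \kappa_{\gamma_l}(t)$ defines a convex (indeed fiberwise-interval) subset of the relevant $L^2$ space; one must check that the correspondence $\kappa_{\gamma_r} \mapsto \gamma_r$ and the final-frame condition $\gamma_r \in \mathcal{G}\SS^2(z_r)$ do not spoil this, but since $\Phi$ identifies the whole picture with the open set $\mathcal{P}$ in a product, the fiber of $L$ over $\gamma_l$ is literally $\{\gamma_l\} \times \big(\{\gamma_r : (\gamma_l,\gamma_r) \in \mathcal{P}\}\big)$, a slice of an open convex-in-each-$\kappa$-coordinate set. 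Concretely I would write down a deformation retraction: contract $\mathcal{M}_k(z_l)$ to a point $\gamma_l^0$ via the retraction of Proposition~\ref{manifold}, simultaneously dragging along each right part by linearly interpolating its curvature toward a fixed admissible curvature (e.g.\ $\kappa_{\gamma_r} \equiv 0$, a meridian-type curve, which is admissible whenever $\kappa_{\gamma_l} > 0$ everywhere — true for multiconvex $\gamma_l$ at the non-degenerate times, and near the degenerate times one uses that $\kappa_{\gamma_l}$ stays positive on convex subarcs); rescaling speeds to keep $\|\gamma_r'\| = \|\gamma_l'\|$ throughout.

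\textbf{Main obstacle.} The delicate point is the contractibility argument, specifically verifying that the interpolation of the right part stays admissible (i.e.\ keeps $|\kappa_{\gamma_r}| < \kappa_{\gamma_l}$) uniformly along the retraction of the left part, and that the final frame $z_r$ is preserved — one cannot freely interpolate $\kappa_{\gamma_r}$ without controlling $\tilde{\mathcal{F}}_{\gamma_r}(1)$. The cleanest way around this is to phrase everything in the product model $\mathcal{P} \subset \mathcal{L}\SS^2(z_l) \times \mathcal{G}\SS^2(z_r)$ supplied by Theorem~\ref{th1}, where $\mathcal{G}\SS^2(z_r)$ is already a Hilbert manifold with known (trivial) homotopy type via Theorem~\ref{HSG}, and $\mathcal{M}_k(z_l)$ sits in the first factor; then $\hat{\mathcal{M}}_k = (\mathcal{M}_k \times \mathcal{G}\SS^2(z_r)) \cap \mathcal{P}$ and contractibility follows by combining the contractibility of $\mathcal{M}_k(z_l)$ with a fiberwise straight-line homotopy inside the open set $\mathcal{P}$, which is star-shaped in the right-part curvature direction once the left part is fixed. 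This reduces the whole proposition to Proposition~\ref{manifold} plus elementary manipulations, exactly as the remark ``Exactly as in Proposition~\ref{manifold}'' in the text suggests.
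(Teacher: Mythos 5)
The paper supplies no explicit proof of Proposition~\ref{manifold2} --- it simply says the statement holds ``exactly as in Proposition~\ref{manifold}'', itself quoted from~\cite{Sal13}. So the proposal must be judged on its own merits, and as written it has two genuine gaps.

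The central claim that $\mathcal{P}$ is open in $\mathcal{L}\SS^2(z_l)\times\mathcal{G}\SS^2(z_r)$ is false. Condition~\eqref{cond} includes the \emph{equality} $\|\gamma_l'(t)\|=\|\gamma_r'(t)\|$ for all $t$, not only the open inequality $\kappa_{\gamma_l}(t)>|\kappa_{\gamma_r}(t)|$. In the $(\hat c_i)$-coordinates of \S\ref{s36} this equality pins the first coordinate of the right factor pointwise to that of the left factor; it is an infinite-codimension constraint, so $\mathcal{P}$ is nowhere near open in the product. Consequently the step ``intersecting with the open set $\mathcal{P}$ preserves all of these properties'' does not go through: $\bigl(\mathcal{M}_k(z_l)\times\mathcal{G}\SS^2(z_r)\bigr)\cap\mathcal{P}$ is not a priori a submanifold of $\mathcal{P}$, let alone of the same codimension $2k-2$ with the same normal bundle. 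A repair along the same lines would be to show that the continuous map $L:\mathcal{L}\SS^3(z_l,z_r)\to\mathcal{L}\SS^2(z_l)$ (in the logarithmic-derivative coordinates $(c_1,c_2,c_3)\mapsto(c_2,c_1+c_3)$, up to the $\varphi$-reparametrization) is a \emph{submersion}; then $\hat{\mathcal{M}}_k=L^{-1}(\mathcal{M}_k)$ inherits the codimension $2k-2$ and the (trivial) normal bundle by pullback. Verifying the submersion property on the closed finite-codimension submanifold $\mathcal{L}\SS^3(z_l,z_r)\subset\hat{\mathcal{L}}\SS^3$ cut out by the final-frame condition is the actual content here, and you do not address it.

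The contractibility argument has an independent problem. The parenthetical ``$\mathcal{G}\SS^2(z_r)$ is already a Hilbert manifold with known (trivial) homotopy type via Theorem~\ref{HSG}'' is factually wrong: Theorem~\ref{HSG} gives $\mathcal{G}\SS^2(z_r)\simeq\Omega\mathrm{Spin}_3\simeq\Omega\SS^3$, which is not contractible (cf.~\eqref{homloop}). More importantly, the fiber of $L$ over a fixed $\gamma_l$ is not the convex set $\{\gamma_r: \|\gamma_r'\|=\|\gamma_l'\|,\ |\kappa_{\gamma_r}|<\kappa_{\gamma_l}\}$ --- it also carries the final-frame constraint $\tilde{\mathcal{F}}_{\gamma_r}(1)=z_r$, which in the $(\hat c_i)$-coordinates is a nonlinear finite-codimension condition, not a convex one. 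A straight-line homotopy of $\kappa_{\gamma_r}$ towards $0$ changes $\tilde{\mathcal{F}}_{\gamma_r}(1)$ and so leaves $\mathcal{L}\SS^3(z_l,z_r)$ immediately. You flag exactly this obstacle yourself in the last paragraph, but then dismiss it by reiterating the incorrect claim that $\mathcal{P}$ is open and star-shaped in the right-part curvature. To close the gap one either needs a genuine argument that the $L$-fibers over $\mathcal{M}_k(z_l)$ are contractible compatibly with the final-frame constraint (which is not elementary), or one must follow the hint in the text and reprove the statement at the $\SS^3$ level by the same method as Proposition~\ref{manifold}, describing $\hat{\mathcal{M}}_k$ directly in terms of subdivision times and convex subarcs.
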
 

As before, we can then associate to $\hat{\mathcal{M}}_k(z_l,z_r)$ a cohomology class $\hat{m}_{2k-2} \in H^{2k-2}(\mathcal{L}\SS^3(z_l,z_r),\R)$ by counting intersection with multiplicity.

In order to prove Theorem~\ref{th5}, we will need the following proposition.

\begin{proposition}\label{homot}
Let $\alpha_0,\alpha_1 : K \rightarrow \mathcal{L}\SS^3(\1,-\k)$ (respectively $\alpha_0,\alpha_1 : K \rightarrow \mathcal{L}\SS^3(-\1,\k)$) two continuous maps. Assume that $\alpha_0=\hat{\beta_0}$ and $\alpha_1=\hat{\beta_1}$ for some continuous map $\beta_0,\beta_1 : K \rightarrow \mathcal{L}\SS^2(\1)$ (respectively $\beta_0,\beta_1 : K \rightarrow \mathcal{L}\SS^2(-\1)$). Then $\alpha_0$ and $\alpha_1$ are homotopic in $\mathcal{L}\SS^3(\1,-\k)$ (respectively in $\mathcal{L}\SS^3(-\1,\k)$) if and only $\beta_0$ and $\beta_1$ are homotopic in $\mathcal{L}\SS^2(\1)$ (respectively in $\mathcal{L}\SS^2(-\1)$).    
\end{proposition}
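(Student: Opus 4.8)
The plan is to prove the two implications of Proposition~\ref{homot} separately; the ``only if'' direction is the substantial one. I will write the argument for $\mathcal{L}\SS^3(\1,-\k)$ and $\mathcal{L}\SS^2(\1)$, the case of $\mathcal{L}\SS^3(-\1,\k)$ and $\mathcal{L}\SS^2(-\1)$ being identical.

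First the easy direction. Suppose $\beta_0$ and $\beta_1$ are homotopic in $\mathcal{L}\SS^2(\1)$ via a continuous map $G : K \times [0,1] \to \mathcal{L}\SS^2(\1)$. Since $K \times [0,1]$ is compact, the geodesic curvature of the curves $G(s,t)$ is bounded below by a single positive constant, so one may pick $\varepsilon,\delta>0$ small enough that the relaxation-reflection construction $\gamma \mapsto \hat{\gamma}$ of Definitions~\ref{relcurve} and~\ref{relaxcurve2} applies with these fixed parameters to every curve $G(s,t)$. For fixed $\varepsilon,\delta$ this construction depends continuously on the curve (it is a continuous modification of $\|\gamma'\|$ and $\kappa_\gamma$, followed by the Bruhat-action homeomorphism $T^+_{\varepsilon,\delta}$ with a continuously chosen parameter), so $(s,t)\mapsto \widehat{G(s,t)}$ is a homotopy in $\mathcal{L}\SS^3(\1,-\k)$ from $\widehat{\beta_0}$ to $\widehat{\beta_1}$. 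Finally, by Proposition~\ref{relaxcurveprop} the target space $\mathcal{L}\SS^3(\1,-\k)$ does not change when $(\varepsilon,\delta)$ varies continuously in a small range, so the homotopy class of $\widehat{\beta_i}$ is independent of the choice of $(\varepsilon,\delta)$; hence $\alpha_0=\widehat{\beta_0}$ and $\alpha_1=\widehat{\beta_1}$ are homotopic in $\mathcal{L}\SS^3(\1,-\k)$.

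For the converse I would use the (continuous, by Theorem~\ref{th1}) left part map
\[ L : \mathcal{L}\SS^3(\1,-\k) \to \mathcal{L}\SS^2(\1), \qquad \gamma \mapsto \gamma_l. \]
Composing a homotopy from $\alpha_0$ to $\alpha_1$ with $L$ gives a homotopy in $\mathcal{L}\SS^2(\1)$ from $L(\widehat{\beta_0})$ to $L(\widehat{\beta_1})$. Thus everything reduces to the following lemma: \emph{for every continuous $\beta:K\to\mathcal{L}\SS^2(\1)$, the maps $L(\widehat{\beta})$ and $\beta$ are homotopic in $\mathcal{L}\SS^2(\1)$}, uniformly in $K$. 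Granting the lemma, $\beta_0 \simeq L(\widehat{\beta_0}) \simeq L(\widehat{\beta_1}) \simeq \beta_1$ in $\mathcal{L}\SS^2(\1)$, which is exactly the claim.

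To prove the lemma, unwind the construction: $\widehat{\gamma}=T^+_{\varepsilon,\delta}\big((\gamma,RR_{\varepsilon,\delta}\gamma)\big)$, where the pair $(\gamma,RR_{\varepsilon,\delta}\gamma)$, viewed via Theorem~\ref{th1} as an element of $\mathcal{L}\SS^3(\1,RR_{\varepsilon,\delta,\gamma}(\1))$, has left part \emph{equal} to $\gamma$, since $RR_{\varepsilon,\delta}$ only alters the geodesic curvature of the right factor. By Propositions~\ref{relaxcurveprop} and~\ref{bruhathomeo}, the remaining ingredient $T^+_{\varepsilon,\delta}$ is the Bruhat-action homeomorphism $\mu\mapsto B(U,\mu)$ for a continuously chosen $U=U_{\varepsilon,\delta,\gamma}\in Up^1_4$ with $B\big(U,(\1,RR_{\varepsilon,\delta,\gamma}(\1))\big)=(\1,-\k)$; in particular $U$ fixes the left-hand endpoint spin $\1\in\mathrm{Spin}_3$. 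The key point to establish is that taking left parts is compatible with the Bruhat actions: $L(B(U,\cdot))$ equals, up to reparametrization (irrelevant for the curve), a Bruhat action $B(V,\cdot)$ of some $V=V_{U}\in Up^1_3$ applied to $L(\cdot)$; this should be read off from the formula for the differential $D_{(\1,\1)}\Pi_4$ and the explicit decomposition formulas of Theorem~\ref{th0}. Since $U$ fixes $\1$, the element $V$ fixes $I\in SO_3$; but the Bruhat cell of $I$ is a single point, so $B(V',I)=I$ for \emph{every} $V'\in Up^1_3$, and hence $B(V',\cdot)$ carries $\mathcal{L}\SS^2(\1)$ into itself. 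As $Up^1_3$ is contractible, choosing (continuously in $\gamma$) a path $V'_s$ from $V$ to the identity yields a homotopy $s\mapsto B(V'_s,\gamma)$ inside $\mathcal{L}\SS^2(\1)$ from $B(V,\gamma)=L(\widehat{\gamma})$ to $\gamma$, proving the lemma.

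I expect the main obstacle to be precisely this compatibility statement: a priori the upper-triangular group $Up^1_4$ does not respect the product structure $\mathrm{Spin}_4\simeq\SS^3\times\SS^3$ (and the orbit of a frame under $Up^1_4$ is $6$-dimensional, whereas the slice of fixed left component is only $3$-dimensional), so one cannot simply contract $U$ to the identity inside $Up^1_4$ while keeping the left endpoint at $\1$. One must instead check by hand, using the logarithmic-derivative description of Frenet frames together with the decomposition formulas of Theorem~\ref{th0}, that the Bruhat orbit of a curve in $\SS^3$ projects to a Bruhat orbit of its left part. An alternative, more hands-on route would be to build the homotopy $L(\widehat{\gamma})\simeq\gamma$ directly by deforming $\widehat{\gamma}$ inside $\mathcal{L}\SS^3(\1,-\k)$ while monitoring the induced deformation of the left part and ensuring its endpoint frame stays equal to $I$; but this seems no simpler than the compatibility argument.
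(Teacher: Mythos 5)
Your proposal follows the same skeleton as the paper's (very terse) proof: the easy direction is ``apply $\hat{\ }$ to a homotopy'', the hard direction is ``apply the left-part map $L$''. Where you depart is that you notice, correctly, that the hard direction only works if $L(\hat\beta)$ equals (or is uniformly homotopic to) $\beta$ — something the paper declares simply ``clear''. Since $\hat\gamma = T^+_{\varepsilon,\delta}\big((\gamma,RR_{\varepsilon,\delta}\gamma)\big)$ and $T^+_{\varepsilon,\delta}$ is the Bruhat-action homeomorphism from Proposition~\ref{bruhathomeo}, the identity $(\hat\gamma)_l=\gamma$ is not automatic, and you are right to flag the need for a lemma.

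The problem is that your proposed route to the lemma has a genuine gap, and one that you yourself acknowledge. Everything rests on the compatibility claim $L\circ B(U,\cdot)=B(V_U,\cdot)\circ L$ for a suitable $V_U\in Up^1_3$, and this is neither proved nor even obviously true. The Bruhat action of $Up^1_4$ on $\mathrm{Spin}_4\simeq\SS^3\times\SS^3$ is not ``product-compatible'': the Bruhat decomposition of $\mathrm{Spin}_4$ has $2^{4}\cdot 4!=384$ cells, which is not the square of the number $2^{3}\cdot 3!=48$ of cells of $\mathrm{Spin}_3$, so the cells of $\mathrm{Spin}_4$ cannot all be products of cells of $\mathrm{Spin}_3$; hence, a priori, the left component of $B(U,(z_l,z_r))$ depends on $z_r$ and not just on $z_l$. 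Because of this, $L\circ B(U,\cdot)$ need not factor through $L$ at all, let alone as a Bruhat action of some $V_U\in Up^1_3$. Your subsequent steps (``$V$ fixes $I$'', contract in $Up^1_3$) are fine once the factorization is granted, but they cannot make up for the factorization not being established. The more robust statement you actually need is only that $L\circ\hat{\ }\,$ is homotopic to the identity of $\mathcal{L}\SS^2(\1)$ through maps into $\mathcal{L}\SS^2(\1)$, uniformly over compact families — a strictly weaker statement than your factorization claim, and one whose proof should avoid pretending $Up^1_4$ acts through $Up^1_3$. As written, your proposal stops at exactly the point where the work has to happen, so it does not yet constitute a complete proof; but it does correctly identify and isolate the step that the paper's own argument also leaves to the reader.
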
 

\begin{proof}
It is sufficient to consider the case where $\alpha_0,\alpha_1 : K \rightarrow \mathcal{L}\SS^3(\1,-\k)$ (the case where $\alpha_0,\alpha_1 : K \rightarrow \mathcal{L}\SS^3(-\1,\k)$ is, of course, the same). We know that $\alpha_0=\hat{\beta_0}$ and $\alpha_1=\hat{\beta_1}$ for some continuous map $\beta_0,\beta_1 : K \rightarrow \mathcal{L}\SS^2(\1)$.

On the one hand, if $H$ is a homotopy between $\alpha_0$ and $\alpha_1$, it can be decomposed as $H=(H_l,H_r)$, and it is clear that $H_l$ gives a homotopy between $\beta_0$ and $\beta_1$. On the other hand, if $H$ is a homotopy between $\beta_0$ and $\beta_1$, $\hat{H}$ provides a homotopy between $\alpha_0$ and $\alpha_1$. 
\end{proof}

Recall that from Proposition~\ref{nico}, we have maps
\[ h_{2k-2} : \SS^{2k-2} \rightarrow \mathcal{L}\SS^2((-\1)^k) \] 
that gives extra topology to $\mathcal{L}\SS^2((-\1)^k)$ with respect to the space of generic curves. Theorem~\ref{th5} will now be an easy consequence of the following proposition. 

\begin{proposition}\label{nico2}
Consider an integer $k \geq 2$. If $k$ is even, the maps
\[ \hat{h}_{2k-2} : \SS^{2k-2} \rightarrow \mathcal{L}\SS^3(\1,-\k) \]
are homotopic to constant maps in $\mathcal{G}\SS^3(\1,-\k)$ but satisfy $\hat{m}_{2k-2}(\hat{h}_{2k-2})=\pm 1$.
If $k$ is odd, the maps
\[ \hat{h}_{2k-2} : \SS^{2k-2} \rightarrow \mathcal{L}\SS^3(-\1,\k) \]
are homotopic to constant maps in $\mathcal{G}\SS^3(\1,-\k)$ but satisfy $\hat{m}_{2k-2}(\hat{h}_{2k-2})=\pm 1$.
\end{proposition}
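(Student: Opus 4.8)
\textbf{Proof plan for Proposition~\ref{nico2}.}

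The plan is to leverage Proposition~\ref{nico} together with the relaxation-reflexion construction $\hat{\cdot}$ of \S\ref{s84}, and the compatibility statements proved in Propositions~\ref{propp}, \ref{homot}, \ref{Genericas} and \ref{Genericas2}. Recall that for $k$ even, $(-\1)^k=\1$, so $h_{2k-2}:\SS^{2k-2}\to\mathcal{L}\SS^2(\1)$ and we may apply the relaxation-reflexion functor for curves in $\mathcal{L}\SS^2(\1)$ to obtain $\hat{h}_{2k-2}:\SS^{2k-2}\to\mathcal{L}\SS^3(\1,-\k)$ (Definition~\ref{relaxcurve3}); for $k$ odd, $(-\1)^k=-\1$, so $h_{2k-2}:\SS^{2k-2}\to\mathcal{L}\SS^2(-\1)$ and the analogous construction lands in $\mathcal{L}\SS^3(-\1,\k)$. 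I will treat the case $k$ even; the case $k$ odd is identical with $\1$ replaced by $-\1$ and $-\k$ replaced by $\k$ throughout.

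The first step is the cohomological count $\hat{m}_{2k-2}(\hat{h}_{2k-2})=\pm1$. By construction, for $\gamma\in\mathcal{L}\SS^2(\1)$ the curve $\hat{\gamma}\in\mathcal{L}\SS^3(\1,-\k)$ has left part equal to $\gamma$ (up to the homeomorphism $T^+_{\varepsilon,\delta}$, which does not affect the left part since it only reparametrizes the final Frenet frame by a Bruhat action preserving left/right decompositions --- here I should check that $T^+_{\varepsilon,\delta}$ acts trivially on left parts, or more precisely that the associated homeomorphism $B(U,\cdot)$ on the $\mathrm{Spin}_4=\SS^3\times\SS^3$ level decomposes componentwise, which follows from Theorem~\ref{th1} and the fact that the Bruhat action on $\mathcal{L}\SS^3$ corresponds to independent Bruhat actions on the two $\SS^3$ factors). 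Consequently the left part of $\hat{h}_{2k-2}(s)$ is $h_{2k-2}(s)$, and since $\hat{\mathcal{M}}_k(\1,-\k)$ is defined precisely by the condition $\gamma_l\in\mathcal{M}_k(\1)$, a transversal intersection of $\hat{h}_{2k-2}$ with $\hat{\mathcal{M}}_k(\1,-\k)$ corresponds to a transversal intersection of $h_{2k-2}$ with $\mathcal{M}_k(\1)$ (the codimensions agree, both being $2k-2$, by Proposition~\ref{manifold} and Proposition~\ref{manifold2}, and the normal bundles match up under the decomposition). Hence $\hat{m}_{2k-2}(\hat{h}_{2k-2})=m_{2k-2}(h_{2k-2})=\pm1$ by Proposition~\ref{nico}.

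The second step is that $\hat{h}_{2k-2}$ is homotopic to a constant map in $\mathcal{G}\SS^3(\1,-\k)$. By Proposition~\ref{nico}, $h_{2k-2}$ is homotopic to a constant in $\mathcal{G}\SS^2(\1)$; moreover $h_{2k-2}$ is tight, and writing $h_{2k-2}^\ast$ for the result of adding a pair of loops, Proposition~\ref{genericas2} and the fact that $h_{2k-2}$ is homotopic to a constant in $\mathcal{G}\SS^2(\1)$ imply that $h_{2k-2}^\ast$ is homotopic to a constant in $\mathcal{L}\SS^2(\1)$. Now apply Proposition~\ref{propp}: $(\hat{h}_{2k-2})^\#=\widehat{(h_{2k-2})^\ast}$ is homotopic in $\mathcal{L}\SS^3(\1,-\k)$ to $\widehat{h_{2k-2}^\ast}$, and by Proposition~\ref{homot} (applied with $\beta_0=h_{2k-2}^\ast$, $\beta_1$ a constant), $\widehat{h_{2k-2}^\ast}$ is homotopic in $\mathcal{L}\SS^3(\1,-\k)$ to a constant map, since $h_{2k-2}^\ast$ is homotopic to a constant in $\mathcal{L}\SS^2(\1)$. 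Thus $(\hat{h}_{2k-2})^\#$ is homotopic to a constant in $\mathcal{L}\SS^3(\1,-\k)$. Finally, by Proposition~\ref{Genericas}, $\hat{h}_{2k-2}$ is homotopic to $(\hat{h}_{2k-2})^\#$ inside $\mathcal{G}\SS^3(\1,-\k)$; combining, $\hat{h}_{2k-2}$ is homotopic to a constant in $\mathcal{G}\SS^3(\1,-\k)$, as desired. (Alternatively, one invokes Proposition~\ref{Genericas2} directly with $\alpha=\hat{h}_{2k-2}$ and $\alpha^\#=(\hat{h}_{2k-2})^\#$.)

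The step I expect to be the main obstacle --- or at least the one requiring the most care --- is verifying that the relaxation-reflexion functor interacts correctly with the $\ast$, $\#$, and multiconvexity structures: specifically, that $\hat{\mathcal{M}}_k(\1,-\k)$ pulls back to $\mathcal{M}_k(\1)$ compatibly with orientations and normal framings (so that the signed count $\hat{m}_{2k-2}(\hat{h}_{2k-2})$ is literally $m_{2k-2}(h_{2k-2})$ and not merely equal up to sign ambiguity one cannot control), and that the homeomorphisms $T^\pm_{\varepsilon,\delta}$ and the choice of small parameters $\varepsilon,\delta$ can be made continuously in $s\in\SS^{2k-2}$ so that $\hat{h}_{2k-2}$ is genuinely well-defined and continuous. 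Both points are ``soft'' but need Theorem~\ref{th1}'s homeomorphism to be used carefully; once they are in place, the homotopy-theoretic part is a formal consequence of the propositions of \S\ref{s82}--\S\ref{s84}. With Proposition~\ref{nico2} in hand, Theorem~\ref{th5} follows: $\hat{h}_{2k-2}$ (for $k$ even, giving $4\mid j$ with $j=2k-2$) and its analogue (for $k$ odd, giving $4\mid(j+2)$) produce classes in $H^{2k-2}(\mathcal{L}\SS^3(\mathbf{1},-\1),\R)\cong H^{2k-2}(\mathcal{L}\SS^3(\1,-\k),\R)$ and $H^{2k-2}(\mathcal{L}\SS^3(-\mathbf{1},\1),\R)\cong H^{2k-2}(\mathcal{L}\SS^3(-\1,\k),\R)$ respectively, detected by $\hat{m}_{2k-2}$, which are not in the image of the cohomology of the corresponding spaces of generic curves; adding this one extra dimension to $\mathrm{dim}\,H^j(\mathcal{G}\SS^3,\R)$ (using that the inclusion is homotopically surjective by Theorem~\ref{topo}) yields the stated inequalities.
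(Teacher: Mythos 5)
Your proposal follows the paper's proof step for step: the intersection-number equality $\hat m_{2k-2}(\hat h_{2k-2}) = m_{2k-2}(h_{2k-2})$ reduced to Proposition~\ref{nico}, and the null-homotopy in $\mathcal{G}\SS^3$ obtained by chaining Proposition~\ref{nico} $\rightarrow$ Proposition~\ref{genericas2} $\rightarrow$ Proposition~\ref{propp} $\rightarrow$ Proposition~\ref{homot} $\rightarrow$ the easy direction of Proposition~\ref{Genericas2} (equivalently Proposition~\ref{Genericas}). So the structure is exactly the paper's, and the argument is correct.

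One caution about your parenthetical justification for the intersection count. You want to explain why the left part of $\hat h_{2k-2}(s)$ is $h_{2k-2}(s)$ after the identification $T^+_{\varepsilon,\delta}$, and you appeal to ``the fact that the Bruhat action on $\mathcal{L}\SS^3$ corresponds to independent Bruhat actions on the two $\SS^3$ factors.'' This is not a fact: the Bruhat action of $Up^1_4$ lifted to $\mathrm{Spin}_4 \simeq \SS^3\times\SS^3$ does not decompose componentwise --- $Up^1_4$ is step-$3$ nilpotent while $Up^1_3\times Up^1_3$ is only step-$2$ nilpotent, so they are not even isomorphic as groups, and more concretely the Bruhat action alters $\|\gamma'\|$, $\kappa_\gamma$ and $\tau_\gamma$ in a coupled way that mixes the $(b_l,d)$ and $(b_r,d)$ data of Proposition~\ref{propth1}. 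The paper simply asserts the equality of intersection numbers ``by definition of $\hat h_{2k-2}$ and $\hat m_{2k-2}$''; what one actually needs is that $T^+_{\varepsilon,\delta}$ carries the submanifold $\{\gamma : \gamma_l\in\mathcal{M}_k(\1)\}$ of the source to $\hat{\mathcal{M}}_k(\1,-\k)$ preserving transversal intersection count --- which, since the intersection number is a homotopy invariant and all the constructions depend continuously on $\varepsilon,\delta$ and the curve, is indeed soft but deserves a different justification than the one you sketch. Your ``main obstacle'' instinct is exactly right; the proposed mechanism for overcoming it is not.
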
 

\begin{proof}
Let us consider the case where $k$ is even (the case where $k$ is odd is exactly the same). By definition of $\hat{h}_{2k-2}$ and $\hat{m}_{2k-2}$, it is clear that
\[ \hat{m}_{2k-2}(\hat{h}_{2k-2})=m_{2k-2}(h_{2k-2}) \]
and therefore, from Proposition~\ref{nico}, we have $\hat{m}_{2k-2}(\hat{h}_{2k-2})=\pm 1$.

It remains to prove that $\hat{h}_{2k-2}$ is homotopic to a constant map in $\mathcal{G}\SS^3(\1,-\k)$ . From  Proposition~\ref{nico}, we know that $h_{2k-2}$ is homotopic to a constant map in $\mathcal{G}\SS^2(\1)$. Using Proposition~\ref{genericas2}, this implies that $h_{2k-2}^\ast$ is homotopic to a constant map in $\mathcal{L}\SS^2(\1)$. Let us denote by $c : K \rightarrow \mathcal{L}\SS^2(\1)$ this constant map; then         $\hat{c} : K \rightarrow \mathcal{L}\SS^3(\1,-\k)$ is also a constant map. Now by Proposition~\ref{propp} $\hat{h}_{2k-2}^\#$ is homotopic in $\mathcal{L}\SS^3(\1,-\k)$ to $\widehat{h_{2k-2}^\ast}$. Since $h_{2k-2}^\ast$ is homotopic to $c$ in $\mathcal{L}\SS^2(\1)$, it follows from Proposition~\ref{homot} that $\widehat{h_{2k-2}^\ast}$ is homotopic to the constant map $\hat{c}$ in $\mathcal{L}\SS^3(\1,-\k)$, and so $\hat{h}_{2k-2}^\#$ is homotopic to the constant map $\hat{c}$ in $\mathcal{L}\SS^3(\1,-\k)$. Using Proposition~\ref{Genericas2} (we will only use the easy direction which follows from Proposition~\ref{Genericas2}), this shows that $\hat{h}_{2k-2}$ is homotopic to a constant map in $\mathcal{G}\SS^3(\1,-\k)$, which is what we wanted to prove.    
\end{proof}

\medskip

Therefore, given a integer $k \geq 2$, $\hat{h}_{2k-2}: \SS^{2k-2} \rightarrow \mathcal{L}\SS^3((-\1)^k,(-\1)^{(k-1)}\k)$ defines extra generators in $\pi_{2k-2}(\mathcal{L}\SS^3((-\1)^k,(-\1)^{(k-1)}\k))$ as compared to $\pi_{2k-2}(\mathcal{G}\SS^3((-\1)^k,(-\1)^{(k-1)}\k))$.

Using Proposition~\ref{nico2}, it will be easy to conclude.

\begin{proof}[{} of Theorem~\ref{th5}]
First let us recall that the inclusion
\[ \mathcal{L}\SS^3(z_l,z_r) \subset \mathcal{G}\SS^3(z_l,z_r)  \]
always induce surjective homomorphisms between homology groups with real coefficients. Therefore, for any $j \geq 1$, we have injective homomorphisms between cohomology groups with real coefficients 
\[ H^j(\mathcal{G}\SS^3(z_l,z_r),\R) \simeq H^j(\Omega\SS^3 \times \Omega\SS^3) \rightarrow H^j(\mathcal{L}\SS^3(z_l,z_r),\R).  \]
In our case, this implies
\begin{equation*}
\mathrm{dim}\; H^j(\mathcal{L}\SS^3(-\mathbf{1},\1),\R)=\mathrm{dim}\; H^j(\mathcal{L}\SS^3(\mathbf{1},-\k),\R) \geq 
\begin{cases}
0 & j\;\mathrm{odd} \\
l+1 & j=2l, 
\end{cases}
\end{equation*}
and
\begin{equation*}
\mathrm{dim}\; H^j(\mathcal{L}\SS^3(\mathbf{1},-\1),\R)=\mathrm{dim}\; H^j(\mathcal{L}\SS^3(-\mathbf{1},\k),\R) \geq
\begin{cases}
0 & j\;\mathrm{odd} \\
l+1 & j=2l.
\end{cases}
\end{equation*}
But now Proposition~\ref{nico2} gives, for $k \geq 2$ even, an extra element $\hat{m}_{2k-2}$ in the cohomology of degree $2k-2$ for $\mathcal{L}\SS^3(-\mathbf{1},\1) \simeq \mathcal{L}\SS^3(\mathbf{1},-\k)$. Writing $j=2l$, this gives an extra element when $j=2l$ with $l$ odd, therefore 
\begin{equation*}
\mathrm{dim}\; H^j(\mathcal{L}\SS^3(-\mathbf{1},\1),\R) \geq 
\begin{cases}
0 & j\;\mathrm{odd} \\
l+2 & j=2l, \; l\;\mathrm{odd}  \\
l+1 & j=2l, \; l\;\mathrm{even}.
\end{cases}
\end{equation*}
Similarly, Proposition~\ref{nico2} gives, for $k \geq 2$ odd, an extra element $\hat{m}_{2k-2}$ in the cohomology of degree $2k-2$ for $\mathcal{L}\SS^3(\mathbf{1},-\1) \simeq \mathcal{L}\SS^3(-\mathbf{1},\k)$. Writing $j=2l$, this gives an extra element when $j=2l$ with $l$ even, and so
\begin{equation*}
\mathrm{dim}\; H^j(\mathcal{L}\SS^3(\mathbf{1},-\1),\R) \geq
\begin{cases}
0 & j\;\mathrm{odd} \\
l+1 & j=2l, \; l\;\mathrm{odd}  \\
l+2 & j=2l, \; l\;\mathrm{even}.
\end{cases}
\end{equation*}
This ends the proof.
\end{proof}

\bibliography{bibliography}

\begin{thebibliography}{10}  \addcontentsline{toc}{chapter}{\bibname}
  \arial\markboth{\bibname}{\bibname}

\bibitem{Ani98}
\MakeUppercase{\normalfont Anisov, S.}
\newblock Convex curves in $\mathbb{P}^n$.
\newblock {\bf Proc. Steklov Inst. Mathd}, v.221, n.2, p. 3--39, 1998.

\bibitem{BT82}
\MakeUppercase{\normalfont Bott, R.; Tu, L.~W.}
\newblock {\bf Differential forms in algebraic topology,}.
\newblock Springer Verlag, 1982.

\bibitem{BH70}
\MakeUppercase{\normalfont Burghelea, D.; Henderson, D.}
\newblock Smoothings and homeomorphisms for hilbert manifolds.
\newblock {\bf Bull. Am. Math. Soc.}, v.76, p. 1261--1265, 1970.

\bibitem{CL84}
\MakeUppercase{\normalfont Coddington, E.; Levinson, N.}
\newblock {\bf Theory of ordinary differential equations}.
\newblock Krieger Publishing Company, 1984.

\bibitem{Fel68}
\MakeUppercase{\normalfont Feldman, E.~A.}
\newblock Deformations of closed space curves.
\newblock {\bf J. of Differential Geometry}, v.2, p. 67--75, 1968.

\bibitem{Fel71}
\MakeUppercase{\normalfont Feldman, E.~A.}
\newblock Nondegenerate curves on a {R}iemannian manifold.
\newblock {\bf J. of Differential Geometry}, v.5, p. 187--210, 1971.

\bibitem{Gro86}
\MakeUppercase{\normalfont Gromov, M.}
\newblock {\bf Partial Differential Relations}.
\newblock Springer Verlag, 1986.

\bibitem{Hir59}
\MakeUppercase{\normalfont Hirsch, M.~W.}
\newblock Immersions of manifolds.
\newblock {\bf Trans. of the {AMS}}, v.93, p. 242--276, 1959.

\bibitem{KS92}
\MakeUppercase{\normalfont Khesin, B.~A.; Shapiro, B.~Z.}
\newblock Nondegenerate curves on {S}$^2$ and orbit classification of the
  {Z}amolodchikov algebra.
\newblock {\bf Commun. Math. Phys}, v.145, n.2, p. 357--362, 1992.

\bibitem{KS99}
\MakeUppercase{\normalfont Khesin, B.~A.; Shapiro, B.~Z.}
\newblock Homotopy classification of nondegenerate quasiperiodic curves on the
  2-sphere.
\newblock {\bf Publ. Inst. Math. (Beograd)}, v.80, n.66, p. 127--156, 1999.

\bibitem{Lit70}
\MakeUppercase{\normalfont Little, J.~A.}
\newblock Nondegenerate homotopies of curves on the unit 2-sphere.
\newblock {\bf J. of Differential Geometry}, v.4, n.3, p. 339--348, 1970.

\bibitem{Lit71}
\MakeUppercase{\normalfont Little, J.~A.}
\newblock Third order nondegenerate homotopies of space curves.
\newblock {\bf J. of Differential Geometry}, v.5, n.3-4, p. 503--515, 1971.

\bibitem{Sal09I}
\MakeUppercase{\normalfont Saldanha, N.~C.}
\newblock The homotopy and cohomology of spaces of locally convex curves in the
  sphere - {I}.
\newblock {\bf arXiv:0905.2111v1}, 2009.

\bibitem{Sal09II}
\MakeUppercase{\normalfont Saldanha, N.~C.}
\newblock The homotopy and cohomology of spaces of locally convex curves in the
  sphere - {II}.
\newblock {\bf arXiv:0905.2116v1}, 2009.

\bibitem{Sal13}
\MakeUppercase{\normalfont Saldanha, N.~C.}
\newblock The homotopy type of spaces of locally convex curves in the sphere.
\newblock {\bf arXiv:1207.4221}, 2013.

\bibitem{SS12}
\MakeUppercase{\normalfont Saldanha, N.~C.; Shapiro, B.~Z.}
\newblock Spaces of locally convex curves in $\mathbb{S}^n$ and combinatorics
  of the group {B}$^{+}_{n+1}$.
\newblock {\bf Journal of Singularities}, v.4, p. 1--22, 2012.

\bibitem{SS91}
\MakeUppercase{\normalfont Shapiro, B.~Z.; Shapiro, M.~Z.}
\newblock On the number of connected components in the space of closed
  nondegenerate curves on $\mathbb{S}^n$.
\newblock {\bf Bulletin of the AMS}, v.25, n.1, p. 75--79, 1991.

\bibitem{Sha93}
\MakeUppercase{\normalfont Shapiro, M.~Z.}
\newblock Topology of the space of nondegenerate curves.
\newblock {\bf Math. USSR}, v.57, p. 106--126, 1993.

\bibitem{Sma58}
\MakeUppercase{\normalfont Smale, S.}
\newblock Regular curves on {R}iemannian manifolds.
\newblock {\bf Trans. of the {AMS}}, v.87, p. 492--512, 1958.

\bibitem{Sma59b}
\MakeUppercase{\normalfont Smale, S.}
\newblock The classification of immersions of spheres in euclidean spaces.
\newblock {\bf Ann. of Math.}, v.69, n.2, p. 327--344, 1959.

\bibitem{Sma59}
\MakeUppercase{\normalfont Smale, S.}
\newblock A classification of immersions of the two-sphere.
\newblock {\bf Trans. of the {AMS}}, v.90, p. 281--290, 1959.

\bibitem{Whi37}
\MakeUppercase{\normalfont Whitney, H.}
\newblock On regular closed curves in the plane.
\newblock {\bf Compositio Mathematica}, v.4, n.1, p. 276--284, 1937.

\bibitem{Zul12}
\MakeUppercase{\normalfont Z\"{u}hlke, P.}
\newblock {\bf Homotopies of curves on the 2-sphere with geodesic curvature in
  a prescribed interval}.
\newblock Rio de Janeiro, 2015.
\newblock {D}octoral~{T}hesis, Pontif\'{i}cia Universidade Cat\'{o}lica do Rio
  de Janeiro.

\end{thebibliography}

\end{document}